\theoremstyle{plain}
\newtheorem{stelling}{Theorem}
\renewcommand\thestelling{\textnormal{\Alph{stelling}}}
\newtheorem{theorem}[subsection]{Theorem}
\newtheorem{corollary}[subsection]{Corollary}
\newtheorem{lemma}[subsection]{Lemma}
\newtheorem{proposition}[subsection]{Proposition}
\newtheorem{conjecture}[subsection]{Conjecture}
\theoremstyle{definition}
\newtheorem{definition}[subsection]{Definition}
\newtheorem{example}[subsection]{Example}
\theoremstyle{remark}
\newtheorem{remark}[subsection]{Remark}
\newcommand{\emptyprop}{q}
\newcommand \acf{algebraically closed field}
\newcommand \after{\circ}
\newcommand \ann[2]{\operatorname{Ann}_{#1}(#2)}
\newcommand \binomial[2]{{\bigl( \begin{matrix} #1\cr#2\cr\end{matrix} \bigr)}}
\newcommand \ch{characteristic}
\newcommand \CM{Coh\-en-Mac\-au\-lay}
\newcommand \complet[1]{\widehat {#1}} 
\newcommand \DVR{discrete valuation ring}
\newcommand \ext[4]{\operatorname{Ext}_{#1}^{#2}(#3,#4)}
\renewcommand \hom [3]{\operatorname{Hom}_{#1}(#2,#3)} 
\newcommand \homo{homomorphism}
\newcommand \id{\mathfrak a}
\renewcommand\iff{if and only if}
\newcommand\implication[2]{\eqref{#1}~$\Rightarrow$~\eqref{#2}}
\newcommand\into{\hookrightarrow}
\newcommand \inverse[2]{{#1^{-1}(#2)}}
\newcommand \iso{\cong}
\newcommand \loc{{\mathcal {O}}}
\newcommand \los{\L os' Theorem}
\newcommand \map[1]{{\newcommand{\tmpprop}{#1q}  \if\tmpprop\emptyprop \to\else \xrightarrow{{\phantom{i}{#1}\phantom{i}}}\fi}} 
\newcommand \maxim{\mathfrak m}
\newcommand \nat{\mathbb N}
\newcommand \norm[1]{\left|#1\right|}
\newcommand \op\operatorname
\newcommand \pol[2]{#1[#2]}
\newcommand \pow[2]{#1[[#2]]}
\newcommand \pr{\mathfrak p}
\newcommand \primary{\mathfrak g}
\newcommand \range [2]{#1,\dots,#2}
\newcommand \rij[2]{(#1_1,\dots,#1_{#2})}
\let\sub\subseteq
\newcommand \tensor{\otimes}
\newcommand \tor[4]{\operatorname{Tor}^{#1}_{#2}(#3,#4)}
\newcommand \zet{\mathbb Z}
\newcommand \exactseq [5]{0\to{#1}\:\map{#2}\:{#3}\:\map{#4}\:{#5}\to0}
\newcommand \Exactseq [3]{0\to {#1}\to {#2}\to {#3}\to 0}
\newcommand{\commdiagram}[9][]{%
\begin{equation}
{\newcommand{\tmpprop}{#1q} 
\if\tmpprop\emptyprop \relax\else \label{#1}\fi}
\begin{aligned}%
\mbox{
\begin{picture}(130,90)%
\put(120,70){\vector( 0,-1){50}}%
\put(10,80){\vector( 1, 0){100}}%
\put(0,70){\vector( 0,-1){50}}%
\put(10,10){\vector( 1, 0){100}}%
\put(115,80){\makebox(0,0)[l]{$#4$}}%
\put(5,80){\makebox(0,0)[r]{$#2$}}%
\put(115,10){\makebox(0,0)[l]{$#9$}}%
\put(5,10){\makebox(0,0)[r]{$#7$}}%
\put(-3,50){\makebox(0,0)[r]{$#5$}}
\put(123,50){\makebox(0,0)[l]{$#6$}}
\put(60,3){\makebox(0,0)[c]{$#8$}}
\put(60,88){\makebox(0,0)[c]{$#3$}}
\end{picture}}
\end{aligned}
\end{equation}}
\newcommand\commtrianglefront[7][]{%
\begin{equation}
{\newcommand{\tmpprop}{#1q} 
\if\tmpprop\emptyprop \relax\else \label{#1}\fi}
\begin{aligned}%
\mbox{
\begin{picture}(120,80)%
\put(55,68){\vector(-1,-2){30}}
\put(65,68){\vector(1,-2){30}}
\put(30,5){\vector(1,0){60}}
\put(60,75){\makebox(0,0)[c]{$#2$}}
\put(25,5){\makebox(0,0)[r]{$#4$}}
\put(95,5){\makebox(0,0)[l]{$#6$}}
\put(60,0){\makebox(0,0)[c]{$#5$}}
\put(37,43){\makebox(0,0)[r]{$#3$}}
\put(83,43){\makebox(0,0)[l]{$#7$}}
\end{picture}}
\end{aligned}
\end{equation}}
\newcommand\commtriangleback[7][]{%
\begin{equation}
{\newcommand{\tmpprop}{#1q}
\if\tmpprop\emptyprop \relax\else \label{#1}\fi}
\begin{aligned}%
\mbox{
\begin{picture}(120,80)%
\put(55,70){\vector(-1,-2){30}}
\put(65,70){\vector(1,-2){30}}
\put(30,5){\vector(1,0){60}}
\put(60,75){\makebox(0,0)[c]{$#2$}}
\put(25,5){\makebox(0,0)[r]{$#6$}}
\put(95,5){\makebox(0,0)[l]{$#4$}}
\put(60,0){\makebox(0,0)[c]{$#5$}}
\put(37,43){\makebox(0,0)[r]{$#7$}}
\put(83,43){\makebox(0,0)[l]{$#3$}}
\end{picture}}
\end{aligned}
\end{equation}}
\newcommand\NYCCT{\address{Department of Mathematics\\
City University of New York\\
365 Fifth Avenue\\ 
New York, NY 10016 (USA)}
\email{hschoutens@citytech.cuny.edu}}
\newcommand \ul[1]{#1\mathstrut_\natural}
\newcommand\ulmax{\ul\maxim}
\newcommand \sep[1]{{#1}_{\text{sep}}}
\newcommand \ulsep[1]{#1\mathstrut_\sharp}
\newcommand \seq[2]{#1\mathstrut_{#2}}
\newcommand \ord[2]{\op{ord}_{#1}(#2)}
\newcommand  \hull[1]{\mathfrak D(#1)}
\newcommand  \mult[1]{\op{mult}(#1)}
\newcommand  \pardeg[1]{\op{pardeg}(#1)}
\newcommand\INIT{INIT-degree}
\renewcommand  \c{ca\-ta}
\newcommand \tuple[1]{\mathbf{#1}} 
\newcommand  \C{Cata}
\newcommand  \pdim{geometric dimension}
\newcommand  \cheight{geometric codimension}
\newcommand  \Cheight{Geometric codimension}
\newcommand  \Pdim{Geometric dimension}
\newcommand  \cCM{\c-\CM}
\newcommand  \pCM{pseudo-\CM}
\newcommand  \creg{\c-regular}
\newcommand  \preg{pseudo-regular}
\newcommand  \cgor{\c-Goren\-stein}
\newcommand  \pgor{pseudo-Goren\-stein}
\newcommand  \cci{\c-`complete intersection'}
\newcommand \pd[1]{\op{gdim}(#1)}
\newcommand \cht[1]{\op{gcodim}(#1)}
\newcommand \kd[1]{\op{dim}(#1)}
\newcommand \cld[1]{\op{cldim}(#1)}
\newcommand \ed[1]{\op{embdim}(#1)}
\newcommand \ud[1]{\op{udim}(#1)}
\newcommand \cad[1]{\op{cdim}(#1)}
\newcommand \pid[1]{\op{pidim}(#1)}
\newcommand \frd[1]{\op{frdim}(#1)}
\newcommand  \Cech{\u{C}ech}
\newcommand \spec[1] {\op{Spec}(#1)}
\newcommand \clspec[1] {\op{CL-Spec}(#1)}
\newcommand \conspec[1]{\op{FR-Spec}(#1)}
\renewcommand \inf[1]{\mathfrak I_{#1}}
\newcommand \red[1]{#1_{\text{red}}}
\newcommand \gr[2]{\op{gr}_{#1}(#2)}
\newcommand  \fr{finitely related}
\newcommand  \con[1]{$#1$-related}
\renewcommand\nat{\mathbb N}
\renewcommand\zet{\mathbb Z}
\newcommand \fp[1]{finitely $#1$-presented}
\newcommand\bertin{Bertin-Serre regular}
\newcommand \fpt{finite presentation type}
\title {Dimension and singularity theory for local rings of finite embedding
dimension}
\author{Hans Schoutens}
\date{\today}
\thanks{Partially supported by the National Science Foundation and a PSC-CUNY
grant}
\keywords{smooth equivalence relations, jets, classification, ultraproducts, cataproducts}
\subjclass{13L05,13D22,13H10,03C20}
\begin{document}

\begin{abstract}
In this paper, an algebraic theory  for local rings of finite
embedding dimension is developed. Several extensions of (Krull) dimension are
proposed, which are then used to generalize singularity notions from commutative
algebra. Finally, variants of the homological theorems are shown to hold  in equal
\ch. 

This theory is then applied to Noetherian local rings in order to get: (i) over a \CM\ local ring, uniform bounds on the Betti numbers of a
\CM\ module in terms of  dimension and multiplicity, and  similar bounds for the Bass numbers of a finitely generated module;
(ii)   a characterization for being respectively analytically
unramified, analytically irreducible, unmixed, quasi-unmixed, normal, \CM,  pseudo-rational, or weakly F-regular in terms of certain
uniform   arithmetic behavior;  
(iii)  in mixed \ch,  the Improved New Intersection Theorem
when the residual \ch\ or
ramification index is large with respect to dimension (and some other
numerical invariants).
\end{abstract}

\maketitle

\section{Introduction}

This paper is devoted to the study of local
rings of finite embedding dimension, where by a \emph{local ring}, we
mean a not necessarily Noetherian,
commutative ring $R$ with a   unique maximal ideal $\maxim$,\footnote{In the literature, such   rings are
also called \emph{quasi-local}.} and where the 
\emph{embedding
dimension} of $R$, denoted $\ed R$, is the minimal number of elements 
generating $\maxim$. 
We will see that there are various ways
of extending the dimension and singularity theory of Noetherian local rings to
this larger class. The motivation for this study comes from the subclass of
\emph{ultra-Noetherian} local rings: these are the ultraproducts of Noetherian
local rings of fixed embedding dimension. I had used these ultra-Noetherian rings
in my previous work on transfer
from positive to zero \ch\ (\cite{SchSymPow,SchBS}) and on
non-standard tight closure (\cite{SchAsc,SchLogTerm,SchNSTC,SchBCM,SchRatSing}),
but the actual study of their
properties was only prompted by the papers \cite{SchMixBCMCR,SchMixBCM}, where it
was essential to have a generalized dimension and  singularity theory 
to get
asymptotic versions of various homological theorems in mixed \ch. It was
this realization that led me to develop a systematic `local algebra' for these
rings. Consequently, we will be able to  derive from this study some improved
asymptotic versions   in the
final section. For some other recent papers studying ultraproducts of Noetherian
rings, see \cite{OlbSay,OlbSaySha,OlbSha}.

Closely related to a local ring of finite embedding dimension are two
local rings which are always Noetherian: its graded ring and its completion.
Especially through the latter the study of local rings of finite embedding 
dimension is greatly facilitated. Accordingly, I will use the modifier
\emph{\c-} to indicate that a property is inherited by completion. In contrast,
for ultra-Noetherian local rings, the prefix
\emph{ultra-} is used to refer to properties that are inherited by the
ultraproduct. The main goal is
now to find conditions under which both   versions agree, which often requires
the introduction of a third, intrinsic (\emph{pseudo-}) variant.
To study these variants, we introduce the
notion of a
\emph{cataproduct}, defined
as the completion of the ultraproduct. In fact, the cataproduct is
obtained from the ultraproduct by factoring out the \emph{ideal of
infinitesimals}, that is to say, the ideal of elements lying in each power of
the maximal ideal. In \cite{SchUlBook}, both the ultraproduct and the cataproduct
are called \emph{chromatic products}, inspired by our musical notation $\ul R$ and $\ulsep R$ respectively (a third chromatic product, not discussed in this paper, is called the \emph{proto-product} and
denoted $R_\flat$).

What follows is a brief outline of the present paper. To illustrate the
methods and concepts, I will here only treat the
special case that $(\ul R,\ulmax)$ is an ultra-Noetherian local ring, realized
as the ultraproduct of Noetherian local rings $(\seq Rn,\seq\maxim n)$ of the same embedding dimension.
Section~\ref{s:finemb} contains general facts of local rings of finite
embedding dimension, by far the most important of which is the already
mentioned result that its completion is Noetherian
(Theorem~\ref{T:finembcomp}). In particular,   the cataproduct $\ulsep
R$ is Noetherian.\footnote{Special cases of this result were already observed and
used by various authors \cite{SchAsc,BDDL,OlbSaySha}.}
Our first task is now to develop  a good dimension theory, which is done  in 
\S\S\ref{s:pdim}--\ref{s:ul}. Krull dimension in this context is of minor use,
as it is always infinite  for example for ultra-Noetherian local rings, except
when
almost all $\seq Rn$ are Artinian of a fixed length $l$, in which case $\ul R$
is also zero-dimensional and has length $l$. A
first variant, called
\emph\pdim,
is inspired by the geometric intuition that dimension is the least number of
hypersurfaces cutting out a finitely supported subscheme. Specifically, the
\emph\pdim, $\pd {\ul R}$,
of
$\ul R$ is the least number $d$ of non-units $x_1,\dots,x_d$ such that $\ul R/\rij
xd\ul R$ is
Artinian. Other variants are obtained by the general principle discussed above:
the \emph{ultra-dimension}, $\ud{\ul R}$,
of $\ul R$ is the common dimension of almost all $\seq Rn$; and  its
\emph{\c-dimension}   is the dimension of its completion, that is to
say, 
of   
$\ulsep R$. It turns out that the
\c-dimension is equal to the \pdim\ (Theorem~\ref{T:pdim}). These dimensions also
have a combinatorial nature: whereas Krull dimension
is the combinatorial dimension of the full spectrum $\spec{\ul R}$,  the ultra-dimension of $\ul R$ is equal to the combinatorial dimension of the
subset of all associated prime ideals of finitely generated ideals; the \c-dimension
 is equal to the  combinatorial dimension of the
subset of all $\ulmax$-adically closed prime ideals
(Theorem~\ref{T:uldim}; see also \cite{OlbSha} for some related results).
The ultra-dimension
of $\ul R$ is at most its \c-dimension, with equality precisely when almost all $\seq
Rn$ have the same
parameter degree (Theorem~\ref{T:isodim}).

Our next step  is to develop a singularity theory for local rings of finite
embedding dimension. Three options present themselves to us:
\c-singularities via completions (\S\ref{s:cata}); ultra-singularities via ultraproducts
(\S\ref{s:ulsing}); and pseudo-singularities via homological
algebra (\S\ref{s:sCM}). For instance, $\ul R$ is called \emph{cata-regular} if 
$\ulsep R$ is regular; \emph{ultra-regular} if almost all $\seq Rn$ are
regular; and \emph{pseudo-regular} if its depth equals its embedding
dimension.    Requiring each of the quantities
\begin{equation}\label{eq:uldimineqintro}
\op{depth}(R) \leq \ud{\ul R}\leq \pd{\ul R}\leq \ed{\ul R}
\end{equation}
in this chain of inequalities  to be equal to the
last   turns out to determine these regularity conditions, in decreasing order
of strength: pseudo-regularity, ultra-regularity, and cata-regularity
respectively
(note that we do not observe such a distinction in the Noetherian case). In
fact,
the two first conditions are equal  (Theorem~\ref{T:psing}). Moreover,
Serre's criterion for regularity
extends to this larger class (Theorem~\ref{T:sreg}). In particular, for
coherent local rings of finite embedding dimension,  regular in
the sense of Bertin  (\cite{Bert,Glaz})  implies \preg, and the converse holds
for uniformly
coherent local rings of finite embedding dimension (Theorem~\ref{T:unifcoh}).
Next, variants of the \CM\ property are analyzed---for instance, by equating
the first quantity
in \eqref{eq:uldimineqintro} with respectively the second and third, we get
the notions of  ultra-\CM\ and pseudo-\CM\ local rings. Unfortunately, 
these variants behave less well. For instance, although the class of local
\CM\   rings of  fixed dimension and multiplicity is
closed under cataproducts (Corollary~\ref{C:CMulsep}), the converse
need not be true, that is to say, $\ulsep R$ can be \CM\ without the $\seq Rn$
being \CM. At the source of these discrepancies lies the fact that a sequence can
be quasi-regular without being regular in non-Noetherian rings.  In
\ref{E:uldimineq}, we present 
an example showing that all   of the four quantities in
\eqref{eq:uldimineqintro} can be different. 
 Although $\ul R$ is rarely coherent,   under an additional 
\pCM\ assumption, it behaves much like one: any $\ulmax$-primary
ideal, and more generally, any
finitely generated ultra-\CM\ module  is finitely presented. 
Another
generalization of the \CM\ condition for local rings of finite embedding
dimension, motivated by model-theoretic considerations, was introduced in
\cite{SchAA};
we show that up to a Nagata extension of the ring (which can be taken to be
trivial in the ultra-Noetherian case),
this condition is equivalent with being \pCM\ (Theorem~\ref{T:CMde}).
Some further
characterizations of the various types of \CM\ singularities are given in 
\S\ref{s:catanorm} by means of an analogue of  Noether Normalization
for the class of local rings of finite embedding dimension.

Once  we have developed a sufficiently well-behaved singularity theory, we
analyze
the homological theory of the class of local rings of finite embedding dimension;
this is the contents of \S\ref{s:INIT}. We show that most
homological theorems, properly restated, hold in an arbitrary equi\ch\ local
ring of finite embedding dimension. The main tool is the existence of  an analogue of 
big \CM\ algebras for this class of rings. In fact, it suffices to assume
that only the completion is equi\ch, which is a strictly weaker condition, as
I will explain below. As an application, we provide the following
partial answer to a question raised by Glaz (\cite{GlazCohReg}) about the
extent to which   split  subrings of
coherent regular local rings   are \CM\ (note that in the Noetherian case, either condition would imply that $R$ is \CM; for a different answer, see  \cite[Corollary 4.5]{HamMar}). 

\begin{corollary}\label{C:GlazConj}
If $(R,\maxim)$ is a local ring of finite embedding dimension containing a
field, and if $S$
a coherent regular local ring locally of finite type over $R$, such that $R\to
S$ is cyclically pure (e.g., split), then there exists a (Noetherian) regular local subring $(A,\pr)$ of $R$ such that each maximal $A$-regular sequence is a maximal quasi-regular sequence in $R$, and each  $R/\pr^nR$ is a finite,  free $A/\pr^n$-module.
\end{corollary}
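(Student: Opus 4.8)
The plan is to deduce the corollary from two earlier results: the big Cohen--Macaulay machinery of \S\ref{s:INIT} and the Noether Normalization analogue of \S\ref{s:catanorm}. The hypothesis on $S$ will be used only to prove that $R$ is \pCM; granting that, the subring $(A,\pr)$ together with all of its asserted properties is exactly what Noether Normalization provides for a \pCM\ local ring of finite embedding dimension.

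First I would record the reductions. Since $R$ contains a field, so does $\complet R$, which is moreover Noetherian by Theorem~\ref{T:finembcomp}; hence $\complet R$ is an equi\ch\ Noetherian local ring and the big \CM\ algebras of \S\ref{s:INIT} are at our disposal. Next, $S$ is a localization of a finitely generated $R$-algebra, so it again has finite embedding dimension; being coherent regular, it is \preg\ by Theorem~\ref{T:unifcoh} (equivalently, by the extended Serre criterion of Theorem~\ref{T:sreg}), and therefore $\complet S$ is a regular Noetherian local ring. Put $d:=\pd R$ and fix a system of parameters $\tuple x=\range{x_1}{x_d}$ of $R$, i.e.\ non-units with $R/\tuple xR$ Artinian.

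The heart of the matter is the classical trick that a cyclically pure subring of a big \CM\ algebra has its systems of parameters quasi-regular. Cyclic purity of $R\to S$ gives $IS\cap R=I$ for every ideal $I$ of $R$, in particular $\maxim^nS\cap R=\maxim^n$ for all $n$; passing to the limit produces a cyclically pure embedding of $\complet R$ (on finitely generated ideals, which is all we shall need) into a regular equi\ch\ Noetherian ring built from $\complet S$ --- concretely a suitable completion and localization of $\complet S$, operations preserving regularity in equal \ch\ --- which, by the results of \S\ref{s:INIT}, can be taken to be a big \CM\ $\complet R$-algebra $B$ on which $\tuple x$ is a regular sequence. If now $a\,x_{i+1}\in(\range{x_1}{x_i})R$, then this membership persists in $B$, regularity of $\tuple x$ on $B$ gives $a\in(\range{x_1}{x_i})B$, and cyclic purity returns $a\in(\range{x_1}{x_i})B\cap R=(\range{x_1}{x_i})R$. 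Hence $\tuple x$ is quasi-regular in $R$, so $\op{depth}(R)=d=\pd R$ and $R$ is \pCM\ (whence also \cCM, i.e.\ $\complet R$ is \CM). Applying the Noether Normalization theorem of \S\ref{s:catanorm} to the \pCM\ ring $R$ now yields the regular Noetherian local subring $(A,\pr)$ with the stated properties: $\complet R$ is module-finite and (since $A$ is regular, $\complet R$ is \CM, and $\dim A=\dim\complet R$) free over $A$, and as $\pr R$ is $\maxim$-primary we get $R/\pr^nR=\complet R/\pr^n\complet R$ finite free over $A/\pr^n$ for each $n$, while a maximal $A$-regular sequence is a system of parameters of $R$ of length $d$, hence a maximal quasi-regular sequence there.

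I expect the genuine obstacle to be the ``transfer to completions'' above: one must compare the $\maxim S$-adic and $\maxim_S$-adic topologies on $S$, confirm that the pertinent completion/localization of the regular ring $\complet S$ remains regular and still receives $\complet R$ cyclically purely, and --- most delicately --- check that the system of parameters $\tuple x$ of $R$ has the right geometric codimension $d$ in this ring, so that it is honestly $B$-regular rather than merely quasi-regular. This is exactly the point where the homological substance of \S\ref{s:INIT} (big \CM\ algebras, the Improved New Intersection Theorem in equal \ch) must replace naive Koszul arguments over $R$ itself; everything else is bookkeeping with \pdim, depth, and completion.
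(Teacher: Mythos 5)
Your second half is essentially the paper's argument and is fine: once one knows that $R$ is \cCM, Theorem~\ref{T:cflatcCM} produces a cata-flat Noether cata-normalization $(A,\pr)\sub R$, and Theorem~\ref{T:pCM} together with Lemma~\ref{L:quot} then give the quasi-regularity of the images of maximal $A$-regular sequences and the freeness of $R/\pr^nR$ over $A/\pr^n$. The gap is in your first half, where you bypass the paper's generalized Hochster--Roberts theorem (Theorem~\ref{T:wHR}) and try to re-derive the needed Cohen--Macaulayness by a purity descent. Your key step requires an algebra $B$ that is simultaneously (i) a balanced big \CM\ algebra for $\complet R$, so that the chosen generic sequence $\tuple x$ is $B$-regular, and (ii) cyclically pure over $R$, so that $(x_1,\dots,x_i)B\cap R=(x_1,\dots,x_i)R$. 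Neither is available. A completion or localization of $\complet S$ has no reason to be big \CM\ over $\complet R$: the images of a generic sequence of $R$ in $\complet S$ need not generate an ideal of height $\pd R$, hence need not form a regular sequence even though $\complet S$ is regular --- that failure is exactly the depth of the Hochster--Roberts theorem, so it cannot be waved away by ``a suitable completion and localization.'' Conversely, the big \CM\ algebras that \S\ref{s:INIT} actually supplies are the Hochster--Huneke algebras over $\complet R$, and for those there is no purity statement $R\to B$ to descend with (Proposition~\ref{P:cycpur} yields $IB\cap R=$ closure of $I$ only when $R$ is \preg, which is not the present situation). The paper's route is the opposite of yours: it shows that $\complet R\to\complet S$ is cyclically pure --- using that every $\maxim\complet R$-primary ideal is extended from $R$ (Lemma~\ref{L:Art}) and that $\id S$ is closed because $S/\id S$ is Noetherian, so Lemma~\ref{L:quot} applies --- and then invokes the classical Hochster--Roberts theorem of \cite{HHbigCM2} for the complete Noetherian rings $\complet R\to\complet S$, rather than reproving it; Corollary~\ref{C:bertin} (not Theorem~\ref{T:unifcoh}, which assumes uniform coherence) is what makes $S$ \preg, whence $\complet S$ regular.

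A related overreach: your argument purports to show $\op{depth}(R)=\pd R$, i.e.\ that $R$ is \pCM. The paper only obtains that $R$ is \cCM, and explicitly flags the \pCM\ strengthening as an open question in the remark following Theorem~\ref{T:wHR}; indeed \cCM\ does not imply \pCM\ in general (see Example~\ref{e:preg}). Fortunately the corollary as stated only needs quasi-regularity of generic sequences, which \cCM\ already gives via Theorem~\ref{T:pCM}, so the correct fix is to weaken your intermediate claim to ``$R$ is \cCM'' and to obtain it via Theorem~\ref{T:wHR} as above, not to strengthen the machinery of \S\ref{s:INIT}.
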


In the final three sections, we apply the theory to ultra-Noetherian rings to obtain new results about
Noetherian local rings. In \S\ref{s:ub}, we derive     uniform bounds on
 Betti and Bass
numbers. In the literature, one often studies the asymptotic growth of the
\emph{Betti numbers} $\beta_n(M)=\op{dim}_k(\tor RnMk)$,
as $n$ goes
to infinity, for $M$  a finitely generated module    over a
Noetherian local   ring $R$ with residue field $k$. In contrast,  varying the
module and fixing $n$, we show in
Theorem~\ref{T:unifbetti}  that over a local \CM\ ring $R$, the
$n$-th Betti number  of a module $M$ of finite length  is bounded by a
function which only depends on the dimension and multiplicity of $R$ and the
length of $M$. In particular, if $P_R(t):=\sum_n\beta_n(k)t^n$ denotes the \emph{Poincare
series} of   $R$, then we show:

\begin{corollary}\label{C:Poincare}
For each $d,e\geq 0$, there exists a power series $P_{d,e}(t)\in\pow\zet t$
such that the Poincare series $P_R(t)$ of any $d$-dimensional local \CM\ ring
$R$ of multiplicity $e$ is   dominated by $P_{d,e}(t)$, meaning that $P_{d,e}(t)-P_R(t)$ has non-negative coefficients.
\end{corollary}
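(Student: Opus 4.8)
The plan is to read this off directly from Theorem~\ref{T:unifbetti} by taking for the module $M$ the residue field $k$ itself. Recall that $P_R(t)=\sum_{n\geq 0}\beta_n(k)t^n$, where $\beta_n(k)=\op{dim}_k(\tor Rnkk)$; since $R$ is Noetherian, each $\tor Rnkk$ is a finitely generated $R$-module killed by $\maxim$, hence a finite-dimensional $k$-vector space, so that $P_R(t)$ is genuinely an element of $\pow\zet t$ with non-negative coefficients. As an $R$-module, $k$ has length one, so it falls under the scope of Theorem~\ref{T:unifbetti}.

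Applying Theorem~\ref{T:unifbetti} to this $M$, we obtain for each $n$ a real bound $B_n=B_n(d,e)$ --- the bound furnished there, evaluated at length $1$ --- such that $\beta_n(k)\leq B_n$ for \emph{every} $d$-dimensional local \CM\ ring $R$ of multiplicity $e$; crucially, $B_n$ depends only on $d$, $e$ and $n$, not on $R$. Put $c_n:=\lceil B_n\rceil$, a non-negative integer (Betti numbers are non-negative integers, and $B_n$ is finite). Then $\beta_n(k)\leq c_n$ for every such $R$, uniformly in $R$.

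It remains only to assemble these coefficients. Set $P_{d,e}(t):=\sum_{n\geq 0}c_n t^n\in\pow\zet t$. For any $d$-dimensional local \CM\ ring $R$ of multiplicity $e$, the coefficient of $t^n$ in $P_{d,e}(t)-P_R(t)$ equals $c_n-\beta_n(k)\geq 0$, so $P_{d,e}(t)$ dominates $P_R(t)$, as asserted. Note that $P_{d,e}(t)$ need not be the Poincare series of any ring whatsoever; it is only required to lie in $\pow\zet t$, which it does by construction.

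Thus the corollary carries no difficulty of its own: all the substance, and the only appeal to cataproducts, is packaged inside Theorem~\ref{T:unifbetti}. I expect that theorem to be proved by contradiction --- a sequence of local \CM\ rings $(\seq Rn,\seq\maxim n)$ of dimension $d$ and multiplicity $e$ for which $\beta_{n_0}(\seq kn)\to\infty$ for some fixed $n_0$ would, passing to the ultraproduct $\ul R$ and the cataproduct $\ulsep R$, yield (via Corollary~\ref{C:CMulsep} and the dimension theory developed earlier) a Noetherian \CM\ local ring of dimension $d$ and multiplicity $e$ with finite $n_0$-th Betti number, whereas \los\ forces the $n_0$-th Betti number of the residue field over $\ul R$ to be infinite; comparing the Betti numbers over $\ul R$, $\ulsep R$ and the $\seq Rn$ then produces the contradiction. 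The present corollary is simply the instance $M=k$ of that result.
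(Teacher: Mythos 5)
Your proposal is correct and coincides with the paper's own argument: the paper proves Corollary~\ref{C:Poincare} precisely by applying Theorem~\ref{T:unifbetti} to the residue field (a \CM\ module of length, hence multiplicity, one) and collecting the resulting bounds $\Delta(d,e,1,n)$ into a dominating series in $\pow\zet t$, exactly as you do. Your closing sketch of how Theorem~\ref{T:unifbetti} itself is proved also matches the paper's ultraproduct/cataproduct argument in spirit, but that is not needed here since you correctly treat it as a black box.
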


Recall that a \CM\ local ring $R$ is called of \emph{bounded multiplicity type} if there is a bound on the multiplicity of all of its indecomposable maximal \CM\ modules. According to the Brauer-Thrall conjectures   such a ring is   expected to be of \emph{finite representation type}, meaning that there exist only finitely many indecomposable maximal \CM\ modules (see, for instance, \cite{DieRed,PopMCM,YoBT} for some known cases of this conjecture). In support of this, we prove the following `universal resolution' for maximal \CM\ modules:

\begin{corollary}\label{C:BT}
Suppose $R$ is a local \CM\ ring of bounded multiplicity type. There exists an $R$-algebra $Z$, and a complex of finite free $Z$-modules $\mathcal F_\bullet$, such that for every   indecomposable maximal \CM\ module $M$,   there exists a section $Z\to R$, such that $\mathcal F_\bullet\tensor_ZR$ is a free resolution of $M$.\end{corollary}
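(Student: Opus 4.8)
The plan is to reduce the whole statement to the uniform Betti‑number bound of Theorem~\ref{T:unifbetti} and then to package resolutions into one ``universal'' complex. \emph{Step 1 (a uniform Betti bound).} Put $d=\dim R$ and let $e$ bound the multiplicity of the indecomposable maximal \CM\ $R$-modules (I take $R$ Noetherian, as the hypotheses implicitly require). Given such a module $M$, I would first pass to the faithfully flat extension $R(t):=R[t]_{\maxim R[t]}$: this preserves $d$, $\mult R$ and the \CM\ property, makes the residue field infinite, and, for $M(t):=M\tensor_R R(t)$, preserves both $\mult{M(t)}=\mult M\le e$ and all $\beta_i(M(t))=\beta_i(M)$. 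Over $R(t)$, choose a system of parameters $\underline x$ generating a minimal reduction of the maximal ideal; since $R(t)$ and $M(t)$ are \CM, $\underline x$ is a regular sequence on each, so reduction modulo $\underline x$ carries a minimal free resolution of $M(t)$ to a minimal free resolution of $\overline M:=M(t)/\underline x M(t)$ over the Artinian ring $\overline R:=R(t)/\underline x R(t)$, whence (combined with the base change above) $\beta_i^R(M)=\beta_i^{\overline R}(\overline M)$ for all $i$. As $\overline R$ has length $e(\underline x;R(t))=\mult R$ and $\overline M$ has length $e(\underline x;M(t))=\mult M\le e$, Theorem~\ref{T:unifbetti} applied to $\overline M$ over $\overline R$ bounds $\beta_i(M)$ by some integer $b_i$ depending only on $i$, $\mult R$ and $e$.

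\emph{Step 2 (a common rank vector).} Set $c_i:=b_0+b_1+\dots+b_i$. For an indecomposable maximal \CM\ module $M$, start from its minimal free resolution $F_\bullet$, of ranks $r_i\le b_i$, and form $P_\bullet:=F_\bullet\oplus\bigoplus_{i\ge1}G^{(i)}$, where $G^{(i)}$ is the split exact complex $R^{a_i}\xrightarrow{\mathrm{id}}R^{a_i}$ placed in homological degrees $i$ and $i-1$, with $a_i\ge0$ to be chosen. Each $G^{(i)}$ is acyclic, so $P_\bullet$ is again a free resolution of $M$, and $\op{rank}P_j=r_j+a_j+a_{j+1}$ for $j\ge1$, while $\op{rank}P_0=r_0+a_1$. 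Taking $a_1:=c_0-r_0$ and recursively $a_{j+1}:=c_j-r_j-a_j$, an immediate induction (using $r_j\le b_j$ and $0\le a_j\le c_{j-1}$) gives $a_j\ge0$ and $\op{rank}P_j=c_j$ for every $j$. Thus every indecomposable maximal \CM\ module admits a free resolution whose $j$-th term has rank exactly $c_j$.

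\emph{Step 3 (the universal complex).} Let $Z$ be the $R$-algebra presented by the entries of matrices $X_i$ of indeterminates of size $c_{i-1}\times c_i$ ($i\ge1$), modulo the ideal generated by the entries of all products $X_{i-1}X_i$; let $\mathcal F_\bullet$ be the complex of finite free $Z$-modules with $\mathcal F_i=Z^{c_i}$ and differential $\mathcal F_i\to\mathcal F_{i-1}$ given by $X_i$ (the relations defining $Z$ are precisely what makes $\mathcal F_\bullet$ a complex). An $R$-algebra homomorphism $Z\to R$---that is, a section of the structure map $R\to Z$---is exactly a choice of matrices over $R$ of these sizes with vanishing consecutive products, i.e.\ a complex of finite free $R$-modules with $i$-th term $R^{c_i}$. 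For a given indecomposable maximal \CM\ module $M$, the resolution $P_\bullet$ from Step~2 is such a complex, hence yields a section $\sigma_M\colon Z\to R$, and by construction $\mathcal F_\bullet\tensor_Z R=P_\bullet$ along $\sigma_M$ is a free resolution of $M$; that is the assertion of the corollary.

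\emph{Main obstacle.} Granting Theorem~\ref{T:unifbetti}, Steps 2 and 3 are formal bookkeeping---the only things to watch are that $Z$ is neither Noetherian nor of finite type over $R$ and that $\mathcal F_\bullet$ is an infinite complex, ``finite free'' being meant termwise. The one place calling for care is the reduction in Step~1: to bring $R$ down to an \emph{Artinian} ring while keeping the relevant lengths controlled, one is essentially forced to pass to an infinite residue field and to use a \emph{minimal} reduction $\underline x$ of $\maxim$, and one must invoke indecomposability of $M$ only to bound $\mult M$ (which survives the base change to $R(t)$), nothing further.
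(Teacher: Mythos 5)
Your proof is correct and is essentially the paper's own argument: the paper likewise feeds the uniform Betti bound of Theorem~\ref{T:unifbetti} (which, since bounded multiplicity type bounds the multiplicities of the indecomposable maximal \CM\ modules, applies to them directly with $d=\kd R$, $e=\mult R$) into the generic-matrix construction of Corollary~\ref{C:univres}, carried out over $R$ so that $R$-algebra maps $Z\to R$ are sections. Your Step~1 reduction to an Artinian quotient via a minimal reduction is therefore an unnecessary (though correct) detour, while your Step~2 padding by split exact summands to achieve the exact ranks $c_j$ is a welcome piece of bookkeeping that the paper's sketch leaves implicit.
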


The theory also gives applications to preservation of properties under
infinitesimal deformations,  of which the next result is but an example
(recall that an \emph{invertible ideal} is a principal
ideal generated by a non zero-divisor):

\begin{corollary}\label{C:invertible}
Let $R$ be a local \CM\ ring and let $I\sub R$ be an invertible ideal. There exists a positive integer
$a:=a(I)$ with the property that if $J\sub R$ such that $R/J$ is \CM\ of
multiplicity at most the multiplicity of $R/I$, and such that
$I+\maxim^a=J+\maxim^a$, then $J$ is invertible.
\end{corollary}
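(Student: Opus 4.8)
The plan is to argue by contradiction via cataproducts, exactly in the spirit announced in the introduction. Suppose the statement fails for some local \CM\ ring $R$ and some invertible ideal $I=xR$ with $x$ a non-zero-divisor. Then for every $n$ there is an ideal $J_n\sub R$ with $R/J_n$ \CM\ of multiplicity at most that of $R/I$, with $I+\maxim^n=J_n+\maxim^n$, yet $J_n$ is not invertible. Form the ultraproduct $\ul R$ of the constant sequence $R$, i.e.\ the ultrapower, and let $\ul J$ be the ultraproduct of the $J_n$. The key observation is that the condition $I+\maxim^n=J_n+\maxim^n$ forces $I$ and $\ul J$ to have the same image in the cataproduct $\ulsep R=\ul R/\inf{\ul R}$: indeed $x-j_n\in\maxim^n$ for a suitable $j_n\in J_n$ (and symmetrically), so the ultraproduct of the $j_n$ differs from $x$ by an element of $\inf{\ul R}$. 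Hence $\ul J\ulsep R=x\ulsep R=I\ulsep R$ in $\ulsep R$.

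Next I would use that $\ulsep R$ is a Noetherian local \CM\ ring (Theorem~\ref{T:finembcomp}, since $R$ is \CM\ its completion, which is $\ulsep R$ here, is \CM), in which $I\ulsep R$ is generated by the non-zero-divisor $x$, hence is an invertible ideal of $\ulsep R$. The plan is to show that the \CM ness and multiplicity hypothesis on the $R/J_n$ pass to $\ulsep R/\ul J\ulsep R$: by Corollary~\ref{C:CMulsep} the class of local \CM\ rings of bounded dimension and multiplicity is closed under cataproducts, and the $R/J_n$ all have multiplicity $\le e:=\mult{R/I}$ and dimension $\le\kd R$, so $\ulsep R/\ul J\ulsep R$ is \CM\ of multiplicity $\le e$. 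But $\ulsep R/\ul J\ulsep R=\ulsep R/I\ulsep R$, which is \CM\ of multiplicity exactly $e$ and a hypersurface section of the \CM\ ring $\ulsep R$. Now one invokes the elementary Noetherian fact that if $A$ is local \CM, $x$ a non-zero-divisor, $J\sub A$ an ideal with $A/J$ \CM\ of multiplicity $\le\mult{A/xA}$ and $J+xA$-comparable in the appropriate sense (here $J\sub$ the radical-closure giving $\sqrt J=\sqrt{xA}$ after the cataproduct identification, so in particular $\op{ht}J\ge1$, and $\mult{A/J}\le\mult{A/xA}$), then $J$ is principal generated by a non-zero-divisor: this is because $xA\sub$ a height-one ideal inside a \CM\ (hence unmixed) ring, and the multiplicity inequality forces $J=xA$ after passing to the localization at each minimal prime and using associativity of multiplicity together with $R/xA\onto R/J$. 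Thus $\ul J\ulsep R$ is invertible in $\ulsep R$.

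Finally I would descend from "invertible in $\ulsep R$" back to "$J_n$ invertible in $R$ for almost all $n$," contradicting the choice of the $J_n$. For this, note that invertibility of $\ul J\ulsep R$ means $\ul J\ulsep R=u\ulsep R$ with $u$ a non-zero-divisor; lifting $u$ to $\ul R$ and then to the components, one gets $u_n\in J_n$ with $J_nR/\maxim^mR=u_nR/\maxim^mR$ for every fixed $m$ and almost all $n$, and $u_n$ a non-zero-divisor modulo $\maxim^m$. By Krull intersection in the Noetherian ring $R$ (or rather: for a \emph{fixed} target power, Łos's theorem plus the finite presentation of $J_n$ coming from $R$ being \CM\ — more precisely $R/I$ \CM\ of the same multiplicity forces $J_n$ finitely generated, cf.\ the coherence-type remarks around Theorem~\ref{T:CMde}) one concludes $J_n=u_nR$ with $u_n$ a non-zero-divisor for almost all $n$, i.e.\ $J_n$ invertible, the desired contradiction. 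The integer $a:=a(I)$ is then produced by the usual compactness/ultraproduct unwinding: the existence of a uniform $a$ is equivalent to the non-existence of a sequence of counterexamples with $a=n$, which is what we just refuted.

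\medskip

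\noindent\textbf{Main obstacle.} The delicate point is the second paragraph: getting the \CM ness and, crucially, the \emph{multiplicity} of $R/J_n$ to transfer correctly to the cataproduct and then forcing the ideal equality $\ul J\ulsep R = I\ulsep R$ to upgrade to invertibility of $\ul J\ulsep R$ rather than merely an inclusion of radicals. One must be careful that $\ul J\ulsep R$ is itself the cataproduct of the $R/J_n$ (not something larger) — this uses that $I+\maxim^n=J_n+\maxim^n$ pins $\ul J$ down modulo infinitesimals — and that the multiplicity does not drop, which is where Corollary~\ref{C:CMulsep} and the hypersurface structure of $I\ulsep R$ in the \CM\ ring $\ulsep R$ do the real work. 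The descent in the third paragraph is comparatively routine once one knows $J_n$ is finitely generated, which is exactly the kind of "coherence under a \pCM\ hypothesis" phenomenon highlighted in the introduction.
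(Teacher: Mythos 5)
Your opening step is correct: $I+\maxim^n=J_n+\maxim^n$ does force $\ul J\ulsep R=x\ulsep R$ in the catapower, and since $x$ remains a non-zero-divisor there (flatness of $R\to\ulsep R$, Theorem~\ref{T:flatsep}), this extended ideal is invertible \emph{for free} -- so your second paragraph's ``elementary Noetherian fact'' is doing no work, and up to that point the multiplicity hypothesis has not been used at all. The genuine gap is the descent in your third paragraph, and it is fatal as written. From invertibility of $\ul J\ulsep R$ you only obtain, for each \emph{fixed} $m$, that $J_n+\maxim^m=u_nR+\maxim^m$ for almost all $n$, where the exceptional set of indices depends on $m$; no single $n$ is guaranteed to satisfy these congruences for all $m$, so Krull's intersection theorem cannot be applied componentwise, and \los\ does not help because ``$J_n=u_nR$'' is not what the congruences express. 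Concretely, take $R=\pow k{u,v}$, $I=uR$ and $J_n=(u,v^n)R$: then $R/J_n$ is \CM, $I+\maxim^n=J_n+\maxim^n$, $\ul J\ulsep R=u\ulsep R$ is invertible, and for each fixed $m$ one has $J_n+\maxim^m=uR+\maxim^m$ for all $n\geq m$ -- every step you actually justify holds -- yet no $J_n$ is principal (each $R/J_n$ is Artinian, while $R/fR$ is one-dimensional for $f\neq0$). Since the multiplicity of $R/J_n$ is $n$ here, this also shows the statement is false without the multiplicity bound; hence any correct proof must use that bound precisely at the descent step, which your argument never genuinely does (Noetherianity of $R$ already makes every $J_n$ finitely generated, so the appeal to coherence-type finiteness supplies nothing).

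What is missing is the transfer of Betti numbers from the cataproduct back to the components, and this is exactly where the multiplicity hypothesis enters. Since almost all $R/J_n$ are \CM\ $R$-modules of a common multiplicity $\leq e$, and the ultrapower of the \CM\ ring $R$ is isodimensional and ultra-\CM, Proposition~\ref{P:uCMfpt} (whose engine is the Tor-vanishing of Corollary~\ref{C:tor}, the step that prevents minimal generators from collapsing modulo $\inf{\ul R}$) yields $\beta_i(R/J_n)=\beta_i(\ulsep R/x\ulsep R)$ for almost all $n$; as $\beta_1(\ulsep R/x\ulsep R)=1$ and $\beta_2(\ulsep R/x\ulsep R)=0$, almost all $J_n$ are principal with a regular generator, the desired contradiction. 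The paper's own proof packages this once and for all as Theorem~\ref{T:deformbetti}: since $\beta_1(R/I)=1$ and $\beta_2(R/I)=0$, choose $\varepsilon$ from that theorem for the data $(\kd R,\mult R,\mult{R/I})$ and then $a$ with $e^{-a}\leq\varepsilon$; the hypothesis $I+\maxim^a=J+\maxim^a$ gives $d(R/I,R/J)\leq\varepsilon$, whence $\beta_1(R/J)=1$ and $\beta_2(R/J)=0$, i.e.\ $J$ is invertible. If you repair your third paragraph by invoking Proposition~\ref{P:uCMfpt} (or Theorem~\ref{T:deformbetti}) instead of the fixed-$m$ approximation argument, your compactness framework becomes essentially the paper's proof.
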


It is not clear yet whether similar bounds exist if we drop the \CM\ assumption in these results. In \S\ref{s:ua}, we characterize ring-theoretic properties in
terms of uniform arithmetic in the ring. For instance,
in Theorem~\ref{T:domCh}, we reprove, as an
illustration of our methods, that multiplication is bounded in $R$ \iff\ $R$ is
analytically irreducible.  Whereas the
ultraproduct method   only gives the existence of a
uniform bound, we know in this particular case, by the work of H\"ubl-Swanson
\cite{HueSwa,Swa},
that  these bounds  can be taken to be linear. Nonetheless, our method is far
more versatile, 
allowing us to derive in \S\ref{s:charub} many more  characterizations of ring-theoretic properties
  in terms of  certain  uniform asymptotic
behavior of ($\maxim$-adic) \emph{order} and (parameter) \emph{degree}. For
instance, one can characterize the
\CM\
property as follows:

\renewcommand\thestelling{\textbf{\ref{T:ubCM}}}
\begin{stelling}
 For each quadruple $(d,e,a,b)$ there exists a bound $\delta(d,e,a,b)$ with the
following property. A $d$-dimensional Noetherian local ring $(R,\maxim)$ of
multiplicity $e$ is \CM\ \iff\ for each ideal $I$ generated by $d-1$ elements, and for
 any two elements $x,y\in R$, if $R/(I+xR)$ has
length at most $a$ and $y$ does not belong to $I+\maxim^b$, then $xy$ does not
belong to $I+\maxim^{\delta(d,e,a,b)}$.
\end{stelling}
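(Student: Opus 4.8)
The two implications have very different weights, and I would treat them separately. The implication ``the displayed arithmetic condition, understood for all $a$ and $b$ with the bounds $\delta(d,e,a,b)$, implies that $R$ is \CM'' is the soft one, and I would prove it by contraposition: if $R$ is not \CM, a standard argument---using a maximal $R$-regular sequence together with a generic choice of parameters, after passing to $R\to R[T]_{\maxim R[T]}$ if the residue field is finite---produces an ideal $I$ generated by $d-1$ elements that is part of a system of parameters, a completing parameter $x$, and an element $y\notin I$ with $xy\in I$; setting $a_0:=\op{length}(R/(I+xR))$, Krull's intersection theorem in the Noetherian ring $R/I$ yields a $b$ with $y\notin I+\maxim^b$, while $xy\in I\sub I+\maxim^N$ for every $N$, so the condition fails for the data $(d,e,a_0,b)$ no matter how $\delta$ is chosen. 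The real content is the converse---that a single function $\delta(d,e,a,b)$ works simultaneously for all \CM\ rings of dimension $d$ and multiplicity $e$---and here I would argue by contradiction using ultraproducts.

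So suppose no such $\delta(d,e,a,b)$ exists. Then for each $N$ there are a \CM\ local ring $(\seq RN,\seq\maxim N)$ of dimension $d$ and multiplicity $e$, an ideal $I_N$ generated by $d-1$ elements, and $x_N,y_N\in\seq RN$, with $\seq RN/(I_N+x_N\seq RN)$ of length $\leq a$, with $y_N\notin I_N+\seq\maxim N^b$, but with $x_Ny_N\in I_N+\seq\maxim N^N$. The first thing to observe is that these rings have uniformly bounded embedding dimension: whenever $\mathfrak q=(\range{z_1}{z_d})$ is generated by a system of parameters of a $d$-dimensional Noetherian local ring $R$, one has $\op{length}(R/\mathfrak q)\geq 1+\dim_k\bigl(\maxim/(\maxim^2+\mathfrak q)\bigr)\geq 1+\ed R-d$, so $\ed R\leq\op{length}(R/\mathfrak q)+d-1$; taking $z_1,\dots,z_{d-1}$ to be a set of generators of $I_N$ and $z_d=x_N$, this gives $\ed{\seq RN}\leq a+d-1$. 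Refining the ultrafilter so that the embedding dimensions are constant, I form the ultra-Noetherian local ring $\ul R$, the ultraproduct of the $\seq RN$, and let $\ul I,\ul x,\ul y$ be the ultraproducts of the $I_N,x_N,y_N$. By \los, $\ul I$ is generated by $d-1$ elements of $\ulmax$, the ring $\ul R/(\ul I+\ul x\ul R)$ has length $\leq a$ and is hence Artinian (so that $\pd{\ul R}\leq d$), and $\ul y\notin\ul I+\ulmax^b$; since $\ud{\ul R}=d$ (the common dimension of the $\seq RN$) and $\ud{\ul R}\leq\pd{\ul R}$ by~\eqref{eq:uldimineqintro}, we conclude $\pd{\ul R}=d$, whence $\ulsep R$ is $d$-dimensional by Theorem~\ref{T:pdim}. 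Moreover, as for each fixed $k$ we have $\seq\maxim N^N\sub\seq\maxim N^k$ for almost all $N$, \los\ gives $\ul x\ul y\in\bigcap_k(\ul I+\ulmax^k)$.

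The crux is now to descend to the cataproduct $\ulsep R=\ul R/\inf{\ul R}$, which is Noetherian by Theorem~\ref{T:finembcomp}, $d$-dimensional as just noted, and \CM\ by Corollary~\ref{C:CMulsep} (a cataproduct of \CM\ local rings of fixed dimension and multiplicity). Write $\bar{\ul x},\bar{\ul y}$ and $\bar{\ul I}:=\ul I\ulsep R$ for the images in $\ulsep R$. Since $(\ul I+\ul x\ul R)\ulsep R$ again has Artinian quotient, $\bar{\ul I}$ is generated by a partial system of parameters of $\ulsep R$, to which $\bar{\ul x}$ is a completing parameter; as $\ulsep R$ is \CM, a system of parameters is a regular sequence, so $\bar{\ul x}$ is a non-zero-divisor modulo $\bar{\ul I}$. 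On the other hand, applying the surjection $\ul R\onto\ulsep R$ to the containment $\ul x\ul y\in\bigcap_k(\ul I+\ulmax^k)$ gives $\bar{\ul x}\bar{\ul y}\in\bigcap_k(\bar{\ul I}+\ulsep\maxim^k)=\bar{\ul I}$, the last equality being Krull's intersection theorem in the Noetherian ring $\ulsep R$. Hence $\bar{\ul y}\in\bar{\ul I}$, i.e.\ $\ul y\in\ul I+\inf{\ul R}\sub\ul I+\ulmax^b$ (because $\inf{\ul R}=\bigcap_k\ulmax^k$), contradicting $\ul y\notin\ul I+\ulmax^b$.

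I expect the main obstacle to be exactly this descent to the cataproduct. Over the non-Noetherian ring $\ul R$, the relation $\ul x\ul y\in\bigcap_k(\ul I+\ulmax^k)$ does \emph{not} by itself place $\ul x\ul y$ in $\ul I\ul R$, because Krull's intersection theorem is not available there; the needed conclusion is recovered only after factoring out the ideal of infinitesimals, that is, in the cataproduct---and it is precisely there that the \CM\ hypothesis on the $\seq RN$, transported via Corollary~\ref{C:CMulsep}, turns the evident system of parameters into a genuine regular sequence. A secondary subtlety, easy to miss, is that the colength bound $\op{length}(R/(I+xR))\leq a$ is exactly what keeps the embedding dimension bounded---and hence $\ul R$ ultra-Noetherian---whereas $d$-dimensional Noetherian local rings of fixed multiplicity may have arbitrarily large embedding dimension.
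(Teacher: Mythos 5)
Your converse direction (that \CM{}ness with fixed $(d,e)$ forces a uniform bound $\delta(d,e,a,b)$) is correct and is essentially the paper's own proof: take an ultraproduct of counterexamples, note the cataproduct is Noetherian and \CM\ by Corollary~\ref{C:CMulsep}, observe that the images of $I$ and $x$ generate a parameter ideal, hence form a regular sequence, and conclude with Krull's intersection theorem in $\ulsep R$. Your way of securing bounded embedding dimension from the colength bound $a$ is a harmless variant of the paper's use of multiplicity/parameter degree (Remark~\ref{R:isodim}, Theorem~\ref{T:isodim}).

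The gap is in what you call the soft direction. Your contrapositive needs the following nontrivial fact: if $R$ is not \CM, then there exist an ideal $I$ generated by $d-1$ elements that is part of a system of parameters, a completing parameter $x$, and $y\notin I$ with $xy\in I$. This is true, but it is not delivered by ``a maximal regular sequence together with a generic choice of parameters.'' Killing a maximal regular sequence $y_1,\dots,y_t$ (with $t=\op{depth}(R)<d$) leaves a depth-zero ring of dimension $d-t$, which is in general bigger than one, and it is not clear---and you do not argue---that cutting further by generic parameters preserves depth zero: the socle witness $y$ with $(y_1,\dots,y_t):y=\maxim$ can land in the enlarged ideal, and genericity alone does not rule this out. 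The statement you are asserting is in fact logically equivalent to the implication being proved, and the paper establishes that implication directly, without contraposition: given a system of parameters $(z_1,\dots,z_d)$ and a relation $a_1z_1+\dots+a_iz_i=0$, apply the displayed condition with $I=(z_1,\dots,z_{i-1},z_{i+1}^k,\dots,z_d^k)$, $x=z_i$, $y=a_i$, for every $k$ (the colength is finite for each $k$), to get $a_i\in I$, and then Krull's intersection theorem yields $a_i\in(z_1,\dots,z_{i-1})$; hence every system of parameters is regular and $R$ is \CM. If you prefer to keep the contrapositive, the same powering trick repairs your construction: with $y$ as above and $y_1,\dots,y_t,z_{t+1},\dots,z_d$ a system of parameters, set $I_k:=(y_1,\dots,y_t,z_{t+2}^k,\dots,z_d^k)$ and $x:=z_{t+1}$; then $xy\in(y_1,\dots,y_t)\sub I_k$ for all $k$, while Krull's intersection theorem forces $y\notin I_k$ for $k\gg0$. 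Either way, the missing ingredient is exactly this $k$-th power/Krull argument, which your sketch omits.
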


 As already
mentioned, our methods only prove the existence of uniform bounds (and
possibly   their dependence on other invariants), but say nothing about the
nature of these bounds. It would
be interesting to see whether for instance these new bounds also have a   linear
character.

However, the main application of this paper is discussed in the final section.
Here we derive some asymptotic versions of  the homological theorems in
mixed \ch.
 Whereas the papers  \cite{SchMixBCMCR,SchMixBCM} relied on a deep
result from model theory, the so-called Ax-Kochen-Ershov theorem, to carry out
transfer from mixed to equal \ch,\footnote{In fact, although not mentioned
explicitly in these papers (but see \cite[\S14]{SchUlBook}), these methods make
heavily use of proto-products,
one of the chromatic products not studied in this paper.} 
the
present paper departs from the following simple observation: if the $(\seq Rn,\seq\maxim n)$ have mixed
\ch\ $\seq pn$, then their cataproduct $\ulsep R$ is equi\ch\  in
the following two cases: (i)
the  $\seq pn$ grow unboundedly (in which case the  ultraproduct $\ul R$ is already equi\ch), or
(ii),
 almost all $\seq
pn$ are equal to a fixed prime number $p$, but the ramification index, that is to say, the
  $\seq\maxim n$-adic order of $p$, grows
unboundedly (in which case $\ul R$ still has mixed \ch\ $p$).  Thus we prove:

\renewcommand\thestelling{\textbf{\ref{T:asymhc}}}
\begin{stelling}[Asymptotic Improved New Intersection Theorem]
 For each triple of positive integers $(m,r,l)$ there exists a bound
$\kappa(m,r,l)$
with 
the following property.  Let $(R,\maxim)$ be  a mixed \ch\ Noetherian local ring
 of embedding dimension $m$ and let $F_\bullet$ be a
finite complex of finitely generated free  $R$-modules of rank at most $r$.  If
each $H_i(F_\bullet)$, for $i>0$, has length at most $l$ and 
$H_0(F_\bullet)$ has a non-zero minimal generator generating a
submodule of length at most $l$, then the length of $F_\bullet$ is at 
least the dimension of $R$, provided either  the residual \ch\   or the
ramification index  of $R$ is  at least $\kappa(m,r,l)$.
\end{stelling}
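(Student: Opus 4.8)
The plan is to argue by contradiction, replacing a hypothetical sequence of counterexamples by their cataproduct and then invoking the form of the Improved New Intersection Theorem valid for local rings of finite embedding dimension whose completion is equi\ch\ (\S\ref{s:INIT}). Suppose no bound $\kappa(m,r,l)$ exists. Then for each $n$ there is a mixed \ch\ Noetherian local ring $(R_n,\maxim_n)$ of embedding dimension $m$ and a finite complex $F_{n,\bullet}$ of finitely generated free $R_n$-modules of rank at most $r$, such that: every $H_i(F_{n,\bullet})$ with $i>0$ has length at most $l$; $H_0(F_{n,\bullet})$ has a non-zero minimal generator $\mu_n$ generating a submodule of length at most $l$; the length $s_n$ of $F_{n,\bullet}$ satisfies $s_n<\dim R_n$; and yet either the residual \ch\ or the ramification index of $R_n$ is at least $n$. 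Since $s_n<\dim R_n\leq\ed{R_n}=m$, the numbers $s_n$ and the ranks of the $F_{n,i}$ are bounded, so after shrinking to a set in the ultrafilter I may assume $s_n=s$ for a fixed $s$ and that these ranks are constant in $n$. Passing to ultraproducts then yields a length-$s$ complex $G_\bullet$ of finitely generated free $\ul R$-modules of rank at most $r$ over the ultra-Noetherian local ring $(\ul R,\ulmax)$, which has embedding dimension $m$ and, by Theorem~\ref{T:finembcomp}, Noetherian completion $\ulsep R$ (the cataproduct).

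Next I would transfer the hypotheses to $G_\bullet$. Because the homology of a complex of finite free modules is assembled from kernels and images of matrices, \los\ gives $H_i(G_\bullet)=\up n H_i(F_{n,\bullet})$; for $i>0$ the right-hand side has length at most $l$, hence $H_i(G_\bullet)$ is a finite-length $\ul R$-module of length at most $l$. Likewise $\mu:=\up n\mu_n$ is an element of $H_0(G_\bullet)$ that is non-zero, generates a submodule of length at most $l$, and does not lie in $\ulmax H_0(G_\bullet)$ --- each claim by \los, using that forming ultraproducts commutes with forming $\ulmax$ and the relevant finite-length submodules --- so $\mu$ is a non-zero minimal generator of $H_0(G_\bullet)$ generating a submodule of length at most $l$. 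It remains to see that $\ulsep R$ is equi\ch. If the set of $n$ with residual \ch\ at least $n$ lies in the ultrafilter, then the residual characteristics tend to infinity and $\ul R$, hence $\ulsep R$, has \ch\ zero; otherwise the set of $n$ whose ramification index is at least $n$ lies in the ultrafilter, the residual \ch\ $p$ of $\ul R$ is then a prime that belongs to every power of $\ulmax$, so $p=0$ in $\ulsep R$ and $\ulsep R$ has equal \ch\ $p$. In either case $\ul R$ is a local ring of finite embedding dimension with equi\ch\ completion.

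To finish, I would compare dimensions. From $s_n+1\leq\dim R_n$ for almost all $n$ and the definition of the ultra-dimension, $s+1\leq\up n\dim R_n=\ud{\ul R}$; combining this with the inequalities \eqref{eq:uldimineqintro} applied to $\ul R$ and with Theorem~\ref{T:pdim} (which identifies the \c-dimension $\dim\ulsep R$ with $\pd{\ul R}$) gives $s+1\leq\ud{\ul R}\leq\pd{\ul R}$, i.e.\ the length $s$ of $G_\bullet$ is strictly less than $\pd{\ul R}$. On the other hand, applying to $G_\bullet$ the Improved New Intersection Theorem for local rings of finite embedding dimension with equi\ch\ completion from \S\ref{s:INIT} shows that the length of $G_\bullet$ is at least $\pd{\ul R}=\dim\ulsep R$. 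This contradiction yields the desired bound $\kappa(m,r,l)$.

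The step I expect to be the real obstacle --- and the reason this theorem sits at the end of the paper --- is having the finite-embedding-dimension form of the Improved New Intersection Theorem in hand: it is obtained through the analogue of big \CM\ algebras constructed in \S\ref{s:INIT}, and crucially it requires only the \emph{completion} (not $\ul R$ itself) to be equi\ch. This is exactly what makes case (ii) work --- fixed residual \ch\ $p$ with unboundedly ramified $R_n$, where $\ul R$ still carries mixed \ch\ $p$ while $\ulsep R$ turns equi\ch. By comparison, the transfer of $G_\bullet$ and its homology through the ultraproduct, and the accompanying dimension bookkeeping via \S\S\ref{s:pdim}--\ref{s:ul}, are routine once \los\ is available.
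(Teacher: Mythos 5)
Your proposal is correct and follows essentially the same route as the paper: take an ultraproduct of a hypothetical sequence of counterexamples, transfer the rank, length, and minimal-generator hypotheses by \los, note (as in Lemma~\ref{L:infram}) that in either case the completion $\ulsep R$ is equi\ch\ so that $\ul R$ is equi\ch\ or infinitely ramified, compare ultra-dimension with \pdim, and conclude with Corollary~\ref{C:INIT}. The only difference is cosmetic: you handle the two alternatives (large residual \ch\ versus large ramification) in a single ultraproduct with an internal case split, whereas the paper treats them in two stages, and your phrase ``the residual \ch\ $p$ of $\ul R$ is then a prime'' should allow for the sub-case where the residual characteristics are also unbounded, in which case $\ul R$ is simply equi\ch\ zero and the argument goes through unchanged.
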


It should be noted that some Homological Conjectures, such as the Direct Summand
Conjecture and the Hochster-Roberts theorem on the \CM{}ness of pure subrings of regular local
rings, at present elude our methods, and so no asymptotic versions in the style of this
paper are known (but see \cite[\S9 and \S10]{SchMixBCM} for   different
asymptotic versions).
 
I conclude the paper with a sketch of an argument
that  
derives the full version from its asymptotic counterpart, provided   the 
bounding function does not grow too fast. For example, if for some prime  $p$,
 the bound $\kappa(m,r,l)$ on the ramification in the above theorem can be taken to be
of the form $c(m,r)l^{\alpha(m,r)}$, for some real valued functions $c(m,r)$ and
$\alpha(m,r)$ with $\alpha(m,r)<1$, for all $m$ and $r$, then the Improved New Intersection Theorem
holds in mixed \ch\ $p$.

\section{Finite embedding dimension}\label{s:finemb}

Although we will mainly be interested in the maximal adic topology of a local
ring, we start our exposition in a more general setup.

\subsection{Filtrations}\label{s:adic}
Recall that a \emph{filtration} $\mathfrak I=(I_n)_n$ on a ring $A$ is a
descending
chain of ideals $A=I_0\supseteq  I_1 \supseteq\dots \supseteq I_n \supseteq
\dots$. An important instance of a filtration is obtained
by taking the  powers of a fixed ideal $I\sub A$, that is to say, $I_n:=I^n$; we
call this the \emph{$I$-adic filtration} on $A$. A
 filtration $\mathfrak I$ defines a   topology on $A$, called the \emph{$\mathfrak
I$-adic topology} of $A$, by taking for basic open
subsets all   cosets of all  $I_n$. If $B$ is an $A$-algebra, then $\mathfrak IB$ is the \emph{extended
filtration} on $B$ given by the ideals $I_nB$, and hence the natural \homo\
$A\to B$ is continuous with respect to the respective  adic topologies.
The intersection of all $I_n$ will be denoted by $\mathfrak
I_\infty$. Hence the $\mathfrak I$-adic topology is Haussdorf (separated) \iff\
$\mathfrak I_\infty=(0)$. Accordingly,  the quotient
$A/ {\mathfrak I}_\infty$ is called  the \emph{${\mathfrak I}$-adic separated quotient} of $A$. 
The \emph{${\mathfrak I}$-adic completion}  of $A$ is defined
as the inverse limit of the $A/I_n$ and is denoted $\complet A_{\mathfrak I}$. There is a natural map $A\to \complet
A_{\mathfrak I}$ whose kernel is equal to $ {\mathfrak I}_\infty$. In fact, $A$ and its ${\mathfrak I}$-adic 
separated quotient have the same ${\mathfrak I}$-adic completion. In 
general, $\complet A_{\mathfrak I}$, although complete in the
inverse limit topology, need not be complete in the ${\mathfrak I}\complet 
A_{\mathfrak I}$-adic topology.

Given a filtration $\mathfrak I=(I_n)_n$ we define its
\emph{associated graded module}, where we view $A$ with its trivial grading, as
the direct sum
\begin{equation*}
\gr {\mathfrak I}A:= \bigoplus_{n=0}^\infty I_n/I_{n+1}.
\end{equation*} 
The \emph{initial form} $\op{in}_{\mathfrak I}(a)\in\gr {\mathfrak I}A$ and the
\emph{$\mathfrak I$-adic order} $\ord{\mathfrak I}{a}$ of
an  element $a\in A$ are defined as follows. If $a\in
I_n\setminus I_{n+1}$ for some $n$, then we set $\ord{\mathfrak I}a:=n$ 
and we let $\op{in}_{\mathfrak I}(a)$ be the 
image of $a$ in $I_n/I_{n+1}$; otherwise $a\in {\mathfrak I}_\infty$, in which case we set
$\ord{\mathfrak I}a:=\infty$ and $\op{in}_{\mathfrak I}(a):=0$.  For $J$ an
ideal in $A$, we let $\op{in}_{\mathfrak I}(J)$   be the ideal in
$\gr {\mathfrak I}A$  generated by all $\op{in}_{\mathfrak I}(a)$ with $a\in J$.
If $J=\rij
anA$, then $\op{in}_{\mathfrak I}(J)$ is in general larger than the ideal generated by 
the $\op{in}_{\mathfrak I}(a_i)$ (even if $A$ is Noetherian!). 

Alternatively, we
may
think of a filtration as given by a function $f\colon A\to \bar\nat:=\nat\cup\{\infty\}$
such that $f(a+b)$ and $f(ab)$ are greater than or equal to respectively the
minimum and the maximum of $f(a)$ and $f(b)$; we express this by calling   
$f$  \emph{filtering}. Given a filtering function $f$, 
the ideals $I_n$
of all elements $a\in A$ for which $f(a)\geq n$  form a filtration. Conversely,
given a filtration $\mathfrak I$, the function $\ord{\mathfrak
I}\cdot$ is filtering. Suppose $f$ is filtering. If $f(ab)\geq f(a)+f(b)$, then
we call $f$   \emph{multiplicative}  (this then corresponds to the
property that $I_nI_m\sub I_{n+m}$); and if $0$ is the only element of
infinite $f$-value (so that the corresponding filtration is separated) and
$f(ab)=f(a)+f(b)$, then $f$ is called a \emph{valuation}. If $\mathfrak I$ is
multiplicative, then $\gr{\mathfrak I}A$ admits the structure of a ring and 
as such is graded. This applies in particular to any ideal adic filtration.

We now specify these notions to the case of interest, where $\mathfrak  I$ is 
the $\maxim$-adic filtration of a local ring  $(R,\maxim)$.
The topology on $R$ is always assumed to be 
the $\maxim$-adic topology, so that when we say
that $R$ is  separated  or complete, we are always referring to this 
topology. With this in mind, the \emph{ideal of
infinitesimals} of $R$ is  the intersection of all $\maxim^n$, and will be 
denoted $\inf R$. The $\maxim$-adic order of an element $x\in R$
is denoted $\ord Rx$ or just $\ord{}x$. The ($\maxim$-adic) separated quotient
$R/\inf R$ is denoted $\sep R$;  the graded ring associated to 
$\maxim$ is denoted $\gr{}R$; and the completion of $R$
is denoted $\complet R$. By construction, $\complet R$ is a 
complete local ring whose maximal ideal is equal to
the inverse limit of the $\maxim/\maxim^n$. However, this maximal 
ideal may be strictly larger than $\maxim\complet R$,
so that  $\complet R$ need not be complete in the $\maxim\complet 
R$-adic topology.

Let $(S,\mathfrak n)$ be a second local ring and let $R\to S$ be a 
ring \homo. We call this \homo\ \emph{local}, or we
say that $S$ is a \emph{local} $R$-algebra, if 
$\maxim S\sub\mathfrak n$; if we have equality, then we call the \homo\ \emph{unramified}. A local  \homo\ induces  local \homo{s} $\sep
R\to \sep S$ and  $\complet R\to \complet S$. The  natural map $R\to 
\complet R$  is   local. It is flat if $R$ is
Noetherian,  but no so in general.

\subsection*{Finite embedding dimension}
Suppose from now on that $R$ has moreover finite embedding dimension, that is
to say, that $\maxim$ is finitely generated.
Since $\gr{}R$ is generated by $\maxim/\maxim^2$ as an algebra over the field 
$R/\maxim$, it is itself a Noetherian local ring. For each
$n$, let $\complet\maxim_n$ be the kernel of the natural map 
$\complet R\to R/\maxim^n$. It follows that
$\maxim^n/\maxim^{n+1}\iso \complet \maxim_n/\complet\maxim_{n+1}$, 
so that $\gr {}R$ is equal to the graded ring $\gr{\mathfrak M}{\complet R}$
associated to the filtration  $\mathfrak M:=(\complet\maxim_n)_n$ on $\complet 
R$. By \cite[Proposition 7.12]{Eis}, an ideal
$I\sub \complet R$ is generated by elements $a_1,\dots,a_n$ if its 
initial from $\op{in}_{\mathfrak M}(I)$ in $\gr{\mathfrak M}{\complet R}$ is generated by the
initial forms $\op{in}_{\mathfrak M}(a_1),\dots,\op{in}_{\mathfrak M}(a_n)$. Therefore, since
$\gr{\mathfrak M}{\complet R}\iso 
\gr{}R$ is Noetherian, so is  $\complet R$. Moreover,
since $\maxim^n\complet R$ has the same initial form as 
$\complet\maxim_n$, both ideals are equal. In
particular, for each $n$, we have an isomorphism $R/\maxim^n\iso 
\complet R/\maxim^n\complet R$. In conclusion, we
  have proven:

\begin{theorem}\label{T:finembcomp}
If $(R,\maxim)$ is a local ring of finite embedding dimension, then its
completion $\complet R$ is a complete Noetherian local ring with maximal ideal
$\maxim\complet R$. \qed
\end{theorem}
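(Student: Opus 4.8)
The strategy is to route the whole argument through the associated graded ring $\gr{}R$, which is Noetherian for elementary reasons, and then transport Noetherianity up to the completion using the standard filtered-ring lifting result \cite[Proposition 7.12]{Eis}. First I would note that $\gr{}R=\bigoplus_n\maxim^n/\maxim^{n+1}$ is Noetherian: as $\maxim$ is finitely generated, $\maxim/\maxim^2$ is finite-dimensional over the field $R/\maxim$, and $\gr{}R$ is generated as an $(R/\maxim)$-algebra by $\maxim/\maxim^2$, hence is a quotient of a polynomial ring in finitely many variables over a field, so Hilbert's basis theorem applies.

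Next I would set $\complet\maxim_n:=\ker(\complet R\to R/\maxim^n)$ and work with the filtration $\mathfrak M:=(\complet\maxim_n)_n$ on $\complet R$. Since the transition maps $R/\maxim^m\to R/\maxim^n$ (for $m\geq n$) are surjective, each $\complet R\to R/\maxim^n$ is surjective, so $\complet R/\complet\maxim_n\iso R/\maxim^n$; moreover $\complet R$ is the inverse limit of the $\complet R/\complet\maxim_n$, so it is separated and complete for $\mathfrak M$. Passing to consecutive quotients gives $\complet\maxim_n/\complet\maxim_{n+1}\iso\maxim^n/\maxim^{n+1}$, hence an isomorphism of graded rings $\gr{\mathfrak M}{\complet R}\iso\gr{}R$; in particular $\gr{\mathfrak M}{\complet R}$ is Noetherian by the first step. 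Now \cite[Proposition 7.12]{Eis} tells us that in a ring complete for a filtration whose associated graded ring is Noetherian, an ideal $I$ is generated by $a_1,\dots,a_r$ as soon as $\op{in}_{\mathfrak M}(I)$ is generated by $\op{in}_{\mathfrak M}(a_1),\dots,\op{in}_{\mathfrak M}(a_r)$; since $\op{in}_{\mathfrak M}(I)$ is finitely generated for every $I$, so is $I$, and therefore $\complet R$ is Noetherian.

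It remains to pin down the maximal ideal, which I would do by proving $\maxim^n\complet R=\complet\maxim_n$ for all $n$. The inclusion $\maxim^n\complet R\sub\complet\maxim_n$ is immediate. For the reverse, one checks that, because $\gr{}R$ is generated in degree one, the ideals $\maxim^n\complet R$ and $\complet\maxim_n$ have the same initial form in $\gr{\mathfrak M}{\complet R}$ (namely the ideal of all homogeneous elements of degree $\geq n$), so \cite[Proposition 7.12]{Eis}, applied to $\complet\maxim_n$ with a finite set of generators taken from $\maxim^n\complet R$, forces $\complet\maxim_n\sub\maxim^n\complet R$. Consequently $\maxim\complet R=\complet\maxim_1$ is the maximal ideal of $\complet R$ and $\complet R/\maxim^n\complet R\iso R/\maxim^n$ for every $n$.

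The one point that needs care—and the reason the hypothesis of finite embedding dimension enters—is the distinction, already warned about for general local rings, between the inverse-limit (equivalently, $\mathfrak M$-adic) topology on $\complet R$ and the $\maxim\complet R$-adic topology: \cite[Proposition 7.12]{Eis} must be invoked for the former, and finite embedding dimension is precisely what makes $\maxim^n\complet R=\complet\maxim_n$, so that the two topologies, and their associated graded rings, agree. Beyond keeping this straight, I do not anticipate any genuine obstacle: the first step is immediate, and everything else is bookkeeping around this identification of filtrations.
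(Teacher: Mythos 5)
Your proposal is correct and follows essentially the same route as the paper: Noetherianity of $\gr{}R$ from finite embedding dimension, the identification $\gr{\mathfrak M}{\complet R}\iso\gr{}R$ for the filtration $\complet\maxim_n=\ker(\complet R\to R/\maxim^n)$, the lifting of generators via \cite[Proposition 7.12]{Eis}, and the equality $\maxim^n\complet R=\complet\maxim_n$ obtained by comparing initial ideals. The extra care you take in spelling out why the two filtrations have the same initial forms is exactly the (tersely stated) key point of the paper's argument.
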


\begin{corollary}
If a local ring $(R,\maxim)$ has finite embedding dimension, then each
$\maxim$-primary ideal is finitely generated.
\end{corollary}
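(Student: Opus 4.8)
The plan is to leverage Theorem~\ref{T:finembcomp} together with the isomorphisms $R/\maxim^n\iso\complet R/\maxim^n\complet R$ established just before it, reducing the statement to Noetherianity of the completion. The one genuinely new ingredient is thus already in hand; what remains is the standard ``finitely generated modulo a finitely generated subideal'' argument, so I do not expect a real obstacle.

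First I would observe that since $\maxim=(x_1,\dots,x_m)$ is finitely generated, so is each of its powers $\maxim^N$, namely by the finitely many monomials of degree $N$ in $x_1,\dots,x_m$. Next, given an $\maxim$-primary ideal $\id$, its radical is $\maxim$, so each generator $x_i$ satisfies $x_i^{k_i}\in\id$ for some $k_i$; taking $N$ large enough (for instance $N\geq k_1+\dots+k_m$) gives $\maxim^N\sub\id$.

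Then I would pass to the quotient $\bar R:=R/\maxim^N$. By the remarks preceding Theorem~\ref{T:finembcomp}, $\bar R\iso\complet R/\maxim^N\complet R$, which is a homomorphic image of the Noetherian ring $\complet R$ and hence Noetherian. Consequently the image $\id/\maxim^N$ is a finitely generated ideal of $\bar R$; lifting a finite generating set to elements $a_1,\dots,a_s\in\id$ yields $\id=a_1R+\dots+a_sR+\maxim^N$. Combining this with the first step, $\id$ is generated by $a_1,\dots,a_s$ together with the degree-$N$ monomials in $x_1,\dots,x_m$, hence is finitely generated. The only point needing care is ensuring $N$ is chosen so that $\maxim^N\sub\id$ before passing to the quotient, but this is immediate from $\maxim$ being finitely generated and $\id$ being $\maxim$-primary.
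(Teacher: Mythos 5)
Your argument is correct and follows essentially the same route as the paper: choose $N$ with $\maxim^N\sub\id$, use that $\maxim^N$ is finitely generated and that $R/\maxim^N$ is Noetherian to lift a finite generating set. The only cosmetic difference is that you justify Noetherianity of $R/\maxim^N$ via Theorem~\ref{T:finembcomp} and the isomorphism $R/\maxim^N\iso\complet R/\maxim^N\complet R$, whereas the paper notes directly that $R/\maxim^N$ is Artinian (it has finite length, since each $\maxim^i/\maxim^{i+1}$ is a finite-dimensional vector space over $R/\maxim$), so the detour through the completion is not needed.
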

\begin{proof}
Immediate from the fact that $R/\maxim^n$
is Artinian and $\maxim^n$ is finitely generated, for every $n$.
\end{proof}

An ideal $I$ in a local ring $(R,\maxim)$ is called \emph{closed} if it is
closed in the $\maxim$-adic topology, that is to say, if $I$ is equal to the
intersection of all $I+\maxim^n$ with $n\in\nat$.

\begin{lemma}\label{L:quot} 
Let $(R,\maxim)$ be a local ring of 
finite embedding dimension and let $I$  be an arbitrary
ideal in $R$. The completion of $R/I$ is $\complet R/I\complet R$. In 
particular, $I\complet R\cap R=I$ \iff\ $I$ is
closed.
\end{lemma}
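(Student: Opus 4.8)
The plan is to identify $\complet{R/I}$ with $\complet R/I\complet R$ by comparing finite truncations, and then to read off the second assertion by tracing kernels around a commutative square.

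First I would pin down the finite quotients of $R/I$. Writing $\maxim(R/I)=(\maxim+I)/I$ for the maximal ideal of $R/I$, one has $(\maxim(R/I))^n=(\maxim^n+I)/I$, so that $(R/I)\big/(\maxim(R/I))^n\iso R/(\maxim^n+I)$ and hence $\complet{R/I}=\varprojlim_n R/(\maxim^n+I)$. On the completion side, Theorem~\ref{T:finembcomp} tells us that $\complet R$ is Noetherian with maximal ideal $\maxim\complet R$; therefore $\complet R/I\complet R$ is again a complete Noetherian local ring and so equals its own completion, that is, $\complet R/I\complet R=\varprojlim_n\complet R/(\maxim^n\complet R+I\complet R)$. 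The crucial ingredient is now the isomorphism $R/\maxim^n\iso\complet R/\maxim^n\complet R$ established just before Theorem~\ref{T:finembcomp}; under it the image $(I+\maxim^n)/\maxim^n$ of $I$ corresponds to $(I\complet R+\maxim^n\complet R)/\maxim^n\complet R$, and killing these gives $R/(\maxim^n+I)\iso\complet R/(\maxim^n\complet R+I\complet R)$. These isomorphisms are induced by the natural map $R\to\complet R$ (equivalently, by $R/I\to\complet R/I\complet R$), hence are compatible with the transition maps of the two inverse systems, and passing to the limit yields $\complet{R/I}\iso\complet R/I\complet R$, compatibly with the canonical maps from $R/I$.

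For the second statement of the lemma I would consider the commutative square with edges $R\to\complet R$, $R\to R/I$, $\complet R\to\complet R/I\complet R$, and the canonical completion map $R/I\to\complet{R/I}=\complet R/I\complet R$ just constructed. Computing the kernel of the composite $R\to\complet R/I\complet R$ down the left side and across the bottom: the kernel of $R/I\to\complet{R/I}$ is the ideal of infinitesimals $\inf{R/I}=\bigcap_n(\maxim(R/I))^n=\big(\bigcap_n(\maxim^n+I)\big)/I$ (the kernel of a ring's map to its completion being the intersection of the filtration, as recalled in \S\ref{s:adic}), so the kernel of the composite is the closure $\bigcap_n(I+\maxim^n)$ of $I$. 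Running across the top and down the right side instead, the same kernel is the set of $x\in R$ whose image in $\complet R$ lies in $I\complet R$, i.e.\ $I\complet R\cap R$. Equating the two descriptions gives $I\complet R\cap R=\bigcap_n(I+\maxim^n)$, and this common ideal equals $I$ exactly when $I$ is closed.

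I do not expect a genuine obstacle here; the one place that demands care is checking that the level-$n$ comparison map is literally the isomorphism obtained by reducing $R/\maxim^n\iso\complet R/\maxim^n\complet R$ modulo the image of $I$, and that these fit together over the inverse system so that the resulting isomorphism of completions is canonical rather than merely abstract. This becomes routine once one recalls that the composite $R\to\complet R\to\complet R/\maxim^n\complet R$ is the natural surjection $R\to R/\maxim^n$.
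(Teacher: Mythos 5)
Your proof of the first assertion is essentially the paper's: the paper base-changes the isomorphisms $R/\maxim^n\iso\complet R/\maxim^n\complet R$ to get $(R/I)/\maxim^n(R/I)\iso S/\maxim^nS$ with $S:=\complet R/I\complet R$, and then invokes completeness of $S$ (a quotient of the complete Noetherian ring $\complet R$) to conclude $S=\complet{R/I}$; your level-by-level inverse-limit formulation is the same argument spelled out. For the second assertion you take a genuinely different, and in fact tidier, route. The paper first proves $I\complet R\cap R=I$ for $\maxim$-primary $I$ (from the isomorphism $R/I\iso\complet R/I\complet R$), extends this to closed ideals by writing them as intersections of the ideals $I+\maxim^n$, and handles the converse separately by observing that $I\complet R\cap R=I$ makes $R/I$ embed into its completion, hence separated. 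You instead compute the kernel of the composite $R\to\complet R/I\complet R$ along the two sides of the commutative square: one way it is the closure $\bigcap_n(I+\maxim^n)$ of $I$ (kernel of completion being $\inf{R/I}$), the other way it is $I\complet R\cap R$. This yields the identity $I\complet R\cap R=\bigcap_n(I+\maxim^n)$ in one stroke, from which both implications of the equivalence follow at once; it also directly recovers the remark the paper makes only after its proof, namely that the contraction $I\complet R\cap R$ is always the closure of $I$. The one point your argument genuinely relies on, compatibility of the identification $\complet{R/I}\iso\complet R/I\complet R$ with the canonical maps from $R/I$, is exactly the care you flag, and it holds because your level-$n$ isomorphisms are induced by $R\to\complet R$.
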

\begin{proof} Let $\bar R:=R/I$ and let $S:=\complet R/I\complet 
R=\complet R\tensor_R\bar R$. The isomorphism
$R/\maxim^n\iso \complet R/\maxim^n\complet R$ induces by base change 
an isomorphism $\bar R/\maxim^n\bar R\iso
S/\maxim^n  S$. Hence $\bar R$ and $S$ have the same completion. 
However, since $\complet R$ is   complete, so is $S$,
showing that it is the completion of $\bar R$.

Applied with $I$ an $\maxim$-primary ideal, we get an isomorphism 
$R/I\iso \complet R/I\complet R$ showing that
$I\complet R\cap R=I$, that is to say, that $I$ is \emph{contracted 
from $\complet R$}. Since this property is preserved
under arbitrary intersections,  every  closed ideal $I$ is 
contracted from $\complet R$, as it is   the
intersection of the $\maxim$-primary ideals  $I+\maxim^n$. 
Conversely, if $I\complet R\cap R=I$, then $R/I$ embeds in
$\complet R/I\complet R$, and by the first assertion, this is its 
completion. In particular, $R/I$ is separated, that is
to say, $I$ is closed.
\end{proof}

The above proof shows that the closure of an ideal $I$ is equal to $I\complet 
R\cap R$. In particular, any closed ideal is the closure of a finitely generated ideal, since $\complet R$ is Noetherian by Theorem~\ref{T:finembcomp}. Moreover, the ascending chain
condition holds for closed ideals  in $R$:  if $I_1\sub 
I_2\sub\dots$ is an increasing chain of closed ideals in
$R$, then, since $\complet R$ is Noetherian, their extension to 
$\complet R$ must become stationary, say $I_n\complet
R=I_{n+k}\complet R$ for all $k$, and hence contracting back to $R$ 
gives $I_n =I_{n+k} $ for all $k$. This immediately yields:

\begin{corollary}
A local ring is Noetherian \iff\ it has finite embedding dimension and every ideal is closed.\qed
\end{corollary}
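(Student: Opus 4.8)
The plan is to prove the two implications separately; both are immediate consequences of the discussion preceding the statement.

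For the implication from right to left, I would assume that $R$ has finite embedding dimension and that every ideal of $R$ is closed, and deduce the ascending chain condition for all ideals. The key observation, already recorded in the paragraph just above, is that the ascending chain condition holds for \emph{closed} ideals in any local ring of finite embedding dimension: given an increasing chain of closed ideals, extend it to $\complet R$, use that $\complet R$ is Noetherian by Theorem~\ref{T:finembcomp} so that the extended chain stabilizes, and contract back, invoking Lemma~\ref{L:quot} to see that closed ideals are contracted from $\complet R$. Under the hypothesis that \emph{every} ideal is closed, this gives the ascending chain condition for arbitrary ideals, i.e., $R$ is Noetherian.

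For the converse, suppose $R$ is Noetherian. Then $\maxim$ is finitely generated, so $R$ has finite embedding dimension by definition. Given an arbitrary ideal $I\sub R$, to show that $I$ is closed amounts to showing $I=\bigcap_n(I+\maxim^n)$; after passing to the Noetherian local ring $R/I$, this is the assertion that the ideal of infinitesimals $\inf{(R/I)}$ is zero, which is the Krull intersection theorem.

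The main point to be careful about is simply recognizing that the hypothesis ``every ideal is closed'' upgrades the already-established ascending chain condition for closed ideals to genuine Noetherianity; beyond the Krull intersection theorem there is no real obstacle here, the substantive content being entirely contained in Theorem~\ref{T:finembcomp}.
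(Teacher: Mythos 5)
Your proposal is correct and follows essentially the same route the paper intends: the forward direction is Krull's intersection theorem, and the reverse direction upgrades the ascending chain condition for closed ideals (established just before the corollary via Theorem~\ref{T:finembcomp} and Lemma~\ref{L:quot}) to full Noetherianity under the hypothesis that every ideal is closed. Nothing is missing.
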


\begin{corollary}\label{C:clmin} 
A  closed ideal in a local ring $R$ 
of finite embedding dimension has finitely many
minimal primes and each of them is  closed. 
\end{corollary}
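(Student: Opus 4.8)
The plan is to descend to the completion $\complet R$, which is Noetherian by Theorem~\ref{T:finembcomp}, and to exploit the fact, implicit in the proof of Lemma~\ref{L:quot}, that any ideal contracted from $\complet R$ is closed. So fix a closed ideal $I\sub R$. Since $\complet R$ is Noetherian, $I\complet R$ has only finitely many minimal primes $\complet\pr_1,\dots,\complet\pr_s$, and $\sqrt{I\complet R}=\complet\pr_1\cap\dots\cap\complet\pr_s$. Setting $\pr_i:=\complet\pr_i\cap R$, each $\pr_i$ is a prime of $R$ containing $I$, and since it is the contraction of an ideal of $\complet R$ it satisfies $\pr_i\complet R\cap R=\pr_i$; by the `contracted $\Rightarrow$ closed' direction of Lemma~\ref{L:quot} it is therefore closed.

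The crux is then the radical identity $\sqrt I=\sqrt{I\complet R}\cap R$. One inclusion is automatic. For the other, if $x\in R$ with $x^n\in I\complet R$, then $x^n\in I\complet R\cap R=I$ precisely because $I$ is closed (Lemma~\ref{L:quot}), so $x\in\sqrt I$. Since contraction commutes with finite intersections, this gives $\sqrt I=\pr_1\cap\dots\cap\pr_s$. Finally, any prime $\pr$ of $R$ with $I\sub\pr$ contains $\sqrt I$, hence contains some $\pr_j$; if $\pr$ is moreover minimal over $I$, then $I\sub\pr_j\sub\pr$ forces $\pr=\pr_j$. Thus the minimal primes of $I$ form a subset of the finite set $\{\pr_1,\dots,\pr_s\}$, so there are finitely many of them and each is closed.

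I do not expect a real obstacle here, but one point requires care: the natural map $R\to\complet R$ need not be flat, so $\spec{\complet R}\to\spec R$ need not be surjective, and one cannot prove the claim by directly lifting an arbitrary minimal prime of $I$ to a prime of $\complet R$. The argument must instead route through the radical identity above, and it is exactly at that step that the closedness hypothesis on $I$ is indispensable (the conclusion can genuinely fail for non-closed ideals, since then $\sqrt I$ may be strictly smaller than $\sqrt{I\complet R}\cap R$).
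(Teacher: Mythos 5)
Your proof is correct and follows essentially the same route as the paper: pass to the Noetherian completion, contract the finitely many minimal primes of $I\complet R$, and use $I\complet R\cap R=I$ (closedness via Lemma~\ref{L:quot}) to see that every prime containing $I$ contains one of these contractions, each of which is closed because it is contracted from $\complet R$. The only cosmetic difference is that you package the key step as the radical identity $\sqrt I=\sqrt{I\complet R}\cap R$, whereas the paper instead observes that a power of the product of the contractions lands in $I$.
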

\begin{proof} Let $I$ be a closed ideal and let $\mathfrak 
Q_1,\dots,\mathfrak Q_s$ be the minimal prime ideals of
$I\complet R$. Let $\mathfrak q_i:=\mathfrak Q_i\cap R$ and let $J$ 
be their product. Hence $J^n\sub  I\complet R$
for some $n$. By Lemma~\ref{L:quot}, we have $J^n\sub 
I\complet R\cap R=I$. Hence any prime ideal $\pr$ of $R$
containing $I$ contains   one of the $\mathfrak q_i$. This shows that 
all minimal prime ideals of $I$ must be among the
$\mathfrak q_i$.
\end{proof}

\begin{corollary}\label{C:spec} 
If $(R,\maxim)$ is a local ring  of 
finite embedding dimension, then the image of the
map $\spec{\complet R}\to \spec R$ consists precisely of the 
closed prime ideals of $R$.
\end{corollary}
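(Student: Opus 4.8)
The plan is to prove the two inclusions of the claimed equality separately; in each direction the argument is transported to the completion, which by Theorem~\ref{T:finembcomp} is Noetherian with maximal ideal $\maxim\complet R$.

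\emph{The image is contained in the set of closed primes.} Let $\PR\in\spec{\complet R}$ and set $\pr:=\PR\cap R$, the contraction along the natural map $\iota\colon R\to\complet R$. Since $\complet R$ is Noetherian local with maximal ideal $\maxim\complet R$, Krull's intersection theorem, applied in $\complet R/\PR$, yields $\PR=\bigcap_n(\PR+\maxim^n\complet R)$. Now I would take $\iota$-preimages: since preimage commutes with arbitrary intersections and $\iota(\maxim^n)\sub\maxim^n\complet R$ (so that $\pr+\maxim^n\sub\iota^{-1}(\PR+\maxim^n\complet R)$), this gives $\pr=\iota^{-1}(\PR)=\bigcap_n\iota^{-1}(\PR+\maxim^n\complet R)\supseteq\bigcap_n(\pr+\maxim^n)\supseteq\pr$, hence $\pr=\bigcap_n(\pr+\maxim^n)$; that is, $\pr$ is closed. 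Note that this goes through even though $\iota$ need not be injective.

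\emph{Every closed prime lies in the image.} Let $\pr$ be a closed prime of $R$. By Lemma~\ref{L:quot}, closedness is equivalent to $\pr\complet R\cap R=\pr$. Let $\mathfrak Q_1,\dots,\mathfrak Q_s$ be the (finitely many, as $\complet R$ is Noetherian) minimal primes of $\pr\complet R$, and put $\mathfrak q_i:=\mathfrak Q_i\cap R$. Exactly as in the proof of Corollary~\ref{C:clmin}, some power of the product $\mathfrak q_1\cdots\mathfrak q_s$ is contained in $\pr\complet R\cap R=\pr$, so, $\pr$ being prime, $\mathfrak q_i\sub\pr$ for some $i$. On the other hand $\pr=\pr\complet R\cap R\sub\mathfrak Q_i\cap R=\mathfrak q_i$, whence $\pr=\mathfrak q_i=\mathfrak Q_i\cap R$ belongs to the image of $\spec{\complet R}\to\spec R$.

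I expect the only genuine subtlety to be in the first inclusion: translating the $\maxim\complet R$-adic closedness of the prime $\PR$ in the Noetherian ring $\complet R$ into the $\maxim$-adic closedness of its contraction in the (possibly non-Noetherian, possibly non-separated) ring $R$. This translation rests precisely on the identification of the maximal ideal of $\complet R$ with $\maxim\complet R$ from Theorem~\ref{T:finembcomp}, together with the elementary compatibility $\iota(\maxim^n)\sub\maxim^n\complet R$. The second inclusion is essentially a reprise of the argument for Corollary~\ref{C:clmin}, now using that a prime ideal is its own unique minimal prime.
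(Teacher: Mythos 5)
Your proof is correct, and the harder inclusion is argued along a genuinely different route than the paper's. For the containment of the image in the set of closed primes, the paper just quotes Lemma~\ref{L:quot}: if $\pr=\PR\cap R$, then $\pr\sub\pr\complet R\cap R\sub\PR\cap R=\pr$, so $\pr$ is contracted from $\complet R$, hence closed; your direct Krull-intersection computation in $\complet R/\PR$ proves the same fact from scratch and is sound (including the remark that injectivity of $R\to\complet R$ is irrelevant). The real divergence is in the converse. The paper takes an ideal $\mathfrak N\sub\complet R$ maximal subject to $\mathfrak N\cap R=\pr$ and shows it is prime by the classical maximality trick: if $fg\in\mathfrak N$ with $f,g\notin\mathfrak N$, one finds $a,b\in R\setminus\pr$ with $a\in\mathfrak N+f\complet R$ and $b\in\mathfrak N+g\complet R$, whence $ab\in\mathfrak N\cap R=\pr$, a contradiction; this uses only Lemma~\ref{L:quot} and the existence of such a maximal ideal, and would work without any chain condition on $\complet R$. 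You instead exploit Noetherianity of $\complet R$ (Theorem~\ref{T:finembcomp}) and rerun the radical-power argument from the proof of Corollary~\ref{C:clmin}: some power of $\mathfrak q_1\cdots\mathfrak q_s$ lies in $\pr\complet R\cap R=\pr$, so $\mathfrak q_i\sub\pr$ for some $i$, while $\pr\sub\pr\complet R\cap R\sub\mathfrak Q_i\cap R=\mathfrak q_i$ forces $\pr=\mathfrak q_i$. Your route yields slightly more information, namely that $\pr$ is the contraction of a \emph{minimal} prime of $\pr\complet R$, at the cost of invoking finiteness of minimal primes in the Noetherian completion, whereas the paper's argument produces a maximal witness and is independent of Noetherianity; both hinge on the same key input, the equality $\pr\complet R\cap R=\pr$ for a closed prime supplied by Lemma~\ref{L:quot}.
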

\begin{proof} By Lemma~\ref{L:quot}, the image of the map consists of 
closed prime ideals. To prove the converse, let
$\pr$ be an arbitrary closed prime ideal of $R$. By 
Lemma~\ref{L:quot}, we have $\pr=\pr\complet R\cap R$. Let
$\mathfrak N$ be maximal in $\complet R$ with the property  that 
$\pr=\mathfrak N\cap R$. I claim that $\mathfrak N$ is
a prime ideal, showing   that $\pr$ lies in the image of 
$\spec{\complet R}\to \spec R$. To prove the claim,
suppose $fg\in\mathfrak N$, but $f,g\notin\mathfrak N$. By maximality, 
there exist $a,b\in R\setminus\pr$ such that
$a\in\mathfrak N+f\complet R$ and $b\in\mathfrak N+g\complet R$. 
Hence $ab\in \mathfrak N+fg\complet R=\mathfrak N$ and
since $ab\in R$, we get $ab\in\mathfrak N\cap R=\pr$, contradicting 
that $\pr$ is prime.
\end{proof}

\begin{lemma}\label{L:Art} 
If the completion of a local ring 
$(R,\maxim)$ of finite embedding dimension is Artinian,
then so is $R$.
\end{lemma}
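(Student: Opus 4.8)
The plan is to exploit the very tight relationship between $R$ and $\complet R$ established above, namely that $R/\maxim^n\iso\complet R/\maxim^n\complet R$ for every $n$ and that $\maxim^n\complet R=\complet\maxim_n$. First I would recall that an Artinian local ring is exactly a Noetherian local ring of Krull dimension $0$, equivalently one in which the maximal ideal is nilpotent. So if $\complet R$ is Artinian, then $(\maxim\complet R)^N=0$ for some $N$; equivalently $\complet\maxim_N=0$, which by the isomorphism $\maxim^N/\maxim^{N+1}\iso\complet\maxim_N/\complet\maxim_{N+1}$ forces $\maxim^N=\maxim^{N+1}$.

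The next step is to upgrade $\maxim^N=\maxim^{N+1}$ to $\maxim^N=0$. Here I would use that $R$ has finite embedding dimension, so $\maxim^N$ is a finitely generated ideal, and apply the Nakayama lemma: from $\maxim^N=\maxim\cdot\maxim^N$ and $\maxim^N$ finitely generated over the local ring $R$ we conclude $\maxim^N=0$. Thus $\maxim$ is nilpotent. Consequently $R=R/\maxim^N\iso\complet R/\maxim^N\complet R=\complet R/(\maxim\complet R)^N$, and since $\complet R$ is Artinian (hence Noetherian), so is this quotient; therefore $R$ is Artinian. (Alternatively, once $\maxim^N=0$ one observes directly that $R\iso R/\maxim^N\iso\complet R/\complet\maxim_N=\complet R$, so $R$ is literally its own completion and inherits whatever $\complet R$ has.)

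I do not expect a serious obstacle here; the only subtlety is making sure one does not try to deduce $\maxim$ nilpotent directly from $\complet R$ Artinian without using finite embedding dimension (which enters through the finite generation of $\maxim^N$, so that Nakayama applies, and through Theorem~\ref{T:finembcomp}, which is what guarantees $\maxim^n\complet R=\complet\maxim_n$ in the first place). A cleaner packaging, if one prefers, is: every ideal of $R$ contracted from $\complet R$, together with $\maxim^N=0$, shows every ideal of $R$ equals $I+\maxim^N=I$, hence is closed, and $R/\maxim^n$ is Artinian for all $n$ with $\maxim^N=0$, giving that $R$ itself has a composition series. Either route is short; the main point to get right is simply the interplay $\complet\maxim_N=0\Leftrightarrow\maxim^N=\maxim^{N+1}$ followed by Nakayama.
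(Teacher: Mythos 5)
Your proposal is correct and follows essentially the same route as the paper: both deduce $\maxim^N=\maxim^{N+1}$ from the identification of $R/\maxim^n$ with $\complet R/\maxim^n\complet R$ (equivalently the graded pieces), then use finite generation of $\maxim^N$ and Nakayama to get $\maxim^N=0$, so that $R\iso\complet R$ is Artinian. No gaps.
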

\begin{proof} 
By assumption, $\maxim^n\complet R=0$, for some $n$. 
Since $R/\maxim^{n+1}\iso \complet
R/\maxim^{n+1}\complet R=\complet R$, we get 
$\maxim^n/\maxim^{n+1}=0$. Since $\maxim$ is finitely generated, we
may apply Nakayama's Lemma and conclude that $\maxim^n=0$, which 
implies that $R$ is Artinian.
\end{proof}

\subsection{Infinite ramification}\label{s:ram}
We conclude this section with a  note on ramification in mixed \ch, which we
will use occasionally. Let $(R,\maxim)$ be a local ring with residue field $k$.
We say that $R$ is
\emph{equi\ch} (or has \emph{equal \ch}) if $R$ and $k$ have the same \ch;
in the remaining case, that is to say, if $R$ has \ch\ $0$ and $k$ \ch\ $p$, we
say that $R$ has \emph{mixed \ch} $p$.
A local ring is equi\ch\ \iff\ it contains a field.

For the next definition, assume that the residue field of $R$ has 
  \ch\ $p$.
We call $\ord{}p$ the \emph{ramification index} of $R$. We say $R$ is
\emph{unramified} if
its ramification index   is one; and \emph{infinitely ramified}, if its
ramification index is infinite, that is to say, if $p\in\inf R$. If $R$ is
infinitely ramified and
Noetherian (or just separated),  then in fact it has equal \ch\ $p$ (in the literature
this  is also deemed as an instance of an `unramified' local ring, but for us,
it will be more useful to make the distinction). However, in
the general case, a local ring can have \ch\ zero and be infinitely ramified (see   Lemma~\ref{L:infram} below).
It follows that the separated quotient and the completion of an infinitely
ramified local ring are both equi\ch.

\section{\Pdim}\label{s:pdim}

The \emph{dimension} $\kd A$ of a ring $A$ will always mean its Krull 
dimension, that is to say, the maximal length  (possible
infinite) of a chain of prime ideals in $A$. The \emph{dimension} of an ideal
$I\sub A$ is the dimension of its residue ring $A/I$.
If $R$ is local and
 Noetherian, then its dimension is always finite, but
without the Noetherian assumption, it is generally infinite. In this section, we 
propose a first substitute for Krull dimension for
an arbitrary local ring $(R,\maxim)$; other alternatives will be discussed in   \S\ref{s:bluedim}.

\begin{definition} 
We define the \emph{\pdim}  of $R$ recursively as 
follows. We say that $R$ has \pdim\ zero, and we
write $\pd R=0$,  \iff\ $R$ is Artinian. For arbitrary $d$, we say that $\pd 
R\leq d$, if there exists $x\in \maxim$ such that $\pd
{R/xR}\leq d-1$. Finally, we say that $R$ has \pdim\ equal to $d$ if 
$\pd R\leq d$, but not $\pd R\leq d-1$, and we
simply  write $\pd R:=d$. If there is no $d$ such that $\pd R\leq d$, 
then we set $\pd R:=\infty$.
\end{definition}

It follows that $\pd R\leq \ed R$.  In fact, $R$ has finite 
\pdim\ \iff\ it has finite embedding dimension. If
$R$ has finite embedding dimension then $\pd R=0$ \iff\ $\maxim$ is 
nilpotent. The following fact is immediate from the
definition.

\begin{lemma}\label{L:dimdef} 
If $(R,\maxim)$ is a local ring and 
$a\in\maxim$, then
\begin{equation*}
\pd R-1\leq \pd{R/aR}\leq\pd R.
\end{equation*}\qed
\end{lemma}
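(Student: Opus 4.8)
The plan is to handle the two inequalities separately. The lower bound $\pd R-1\le\pd{R/aR}$ falls straight out of the recursive definition: if $\pd{R/aR}$ is infinite there is nothing to prove, and if $\pd{R/aR}=e$ is finite, then $a$ is an element of $\maxim$ for which $R/aR$ has geometric dimension at most $e$, so by definition $\pd R\le e+1=\pd{R/aR}+1$. Thus that direction costs nothing.

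The content lies in the upper bound $\pd{R/aR}\le\pd R$, which I would prove by induction on $d:=\pd R$ (we may assume $d$ finite, since otherwise there is nothing to prove). When $d=0$ the ring $R$ is Artinian, hence so is the quotient $R/aR$, giving $\pd{R/aR}=0$. For $d\ge 1$, the definition of geometric dimension furnishes an $x\in\maxim$ with $\pd{R/xR}\le d-1$; the image $\bar a$ of $a$ lies in the maximal ideal of $R/xR$, so the induction hypothesis applied to the local ring $R/xR$ gives $\pd{(R/xR)/\bar a(R/xR)}\le d-1$. The one thing to observe now is the symmetry of the iterated quotient, $(R/xR)/\bar a(R/xR)\iso R/(xR+aR)\iso(R/aR)/\bar x(R/aR)$, where $\bar x$ is the image of $x$ in $R/aR$, which lies in its maximal ideal; hence $\pd{R/aR}\le d$ by the recursive definition, closing the induction.

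I do not anticipate a genuine obstacle. The only points requiring a moment's care are that the recursive definition should be read as a biconditional, so that $\pd R\le d$ with $d\ge 1$ really does produce a witness $x\in\maxim$ with $\pd{R/xR}\le d-1$, and that any element of the maximal ideal of a quotient ring of $R$ can be represented by an element of $\maxim$, which is automatic. An equivalent but slightly longer route would be to first record that $\pd R\le d$ holds exactly when $\maxim$ contains $d$ elements whose quotient ring is Artinian, from which $\pd{R/I}\le\pd R$ for an arbitrary ideal $I$ (and in particular for $I=aR$) is immediate by reducing such elements modulo $I$; I would nonetheless keep the direct induction above, as it is self-contained.
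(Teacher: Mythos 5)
Your proof is correct, and it is essentially the argument the paper has in mind: the lemma is stated there with no proof ("immediate from the definition"), and your two steps --- reading the recursive definition as a biconditional for the lower bound, and the short induction with the iterated-quotient isomorphism $(R/xR)/\bar a(R/xR)\iso R/(xR+aR)\iso(R/aR)/\bar x(R/aR)$ for the upper bound --- are exactly the routine unfolding of that definition. No gaps; note only that your alternative route via the characterization by $d$ elements generating an $\maxim$-primary ideal is the content of the paper's subsequent Lemma on minimal generation, which is itself proved using this lemma, so the direct induction you chose is the right order of reasoning.
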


The \pdim\ can be formulated, as in the Noetherian case, in terms of the 
minimal number of generators of an $\maxim$-primary
ideal (showing that \pdim\ and Krull dimension agree for Noetherian 
local rings):

\begin{lemma}\label{L:mingen} 
The \pdim\ of a local ring $(R,\maxim)$ 
of finite embedding dimension is the least
possible number of  elements generating  an $\maxim$-primary ideal.
\end{lemma}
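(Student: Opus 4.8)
The plan is to prove this by a two-way inequality, mirroring the classical argument for Krull dimension but working in the completion $\complet R$, which is Noetherian by Theorem~\ref{T:finembcomp}. Write $e(R)$ for the least number of elements generating an $\maxim$-primary ideal of $R$. I first observe the dictionary between $\maxim$-primary ideals of $R$ and of $\complet R$: if $I$ is $\maxim$-primary in $R$, then some power of $\maxim$ lies in $I$, so the same power of $\maxim\complet R$ lies in $I\complet R$, and $I\complet R$ is $\maxim\complet R$-primary; conversely, by Lemma~\ref{L:quot} applied to an $\maxim$-primary ideal, $I\complet R\cap R=I$, and extension preserves the property of containing a power of the maximal ideal, so this is a bijection that does not increase (indeed preserves, after contracting) the number of generators. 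In particular $e(R)=e(\complet R)$, and since $\complet R$ is Noetherian, $e(\complet R)=\dim\complet R$ by the classical theorem on systems of parameters.

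For the inequality $\pd R\le e(R)$: if $x_1,\dots,x_d$ generate an $\maxim$-primary ideal $I$ of $R$, then $R/(x_1,\dots,x_{d-1})R$ has $\maxim$-primary ideal generated by the single image of $x_d$; more carefully, I would argue by induction on $d$ that $\pd R\le d$ whenever an $\maxim$-primary ideal is generated by $d$ elements. The base case $d=0$ is the definition of $\pd R=0$. For the inductive step, take $x_1\in\maxim$ among the generators; then in $\bar R:=R/x_1R$ the images of $x_2,\dots,x_d$ generate an $\maxim\bar R$-primary ideal, so $\pd{\bar R}\le d-1$ by induction, and hence $\pd R\le d$ by definition. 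This shows $\pd R\le e(R)$.

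For the reverse inequality $e(R)\le \pd R$, suppose $\pd R=d$, so there are $x_1,\dots,x_d\in\maxim$ with $R/(x_1,\dots,x_d)R$ Artinian. Then $(x_1,\dots,x_d)R+\maxim^n$ equals $\maxim^n$ eventually, or rather: $R/(x_1,\dots,x_d)R$ Artinian and of finite embedding dimension means, by Nakayama (as in Lemma~\ref{L:Art}'s proof), that $\maxim^N\subseteq (x_1,\dots,x_d)R$ for some $N$, so $(x_1,\dots,x_d)R$ is $\maxim$-primary and $e(R)\le d$. Combining, $\pd R=e(R)$.

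The main obstacle, and the only place that needs genuine care, is the claim that $\pd R=0$ (i.e.\ $R$ Artinian, equivalently $\maxim$ nilpotent in this setting) forces an $\maxim$-primary ideal generated by zero elements, namely $(0)$ itself — this is exactly the statement that $\maxim$ nilpotent makes $(0)$ an $\maxim$-primary ideal, which is immediate, so in fact there is no real obstacle here. The one subtlety worth flagging is ensuring that ``$\maxim$-primary'' is used consistently to mean ``contains a power of $\maxim$'' (which for finite embedding dimension coincides with having radical $\maxim$ and with $R/I$ Artinian, via $R/\maxim^n$ Artinian); once that is pinned down, both directions are bookkeeping and the result follows. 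Alternatively, one could bypass the induction entirely by routing through $\complet R$: $\pd R\le e(R)=e(\complet R)=\dim\complet R$ and, in the other direction, $\dim\complet R=e(\complet R)\le\pd R$ using that a $\pd R$-element $\maxim$-primary-producing sequence in $R$ extends to one in $\complet R$. I would present the elementary induction as the main argument since it does not invoke the (harder) fact that $\pd R$ equals $\dim\complet R$.
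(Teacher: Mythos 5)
Your main argument is correct and essentially the paper's own proof: both directions are elementary inductions unwinding the recursive definition of \pdim\ (the paper packages your ``a $d$-generated $\maxim$-primary ideal gives $\pd R\le d$'' step via Lemma~\ref{L:dimdef}, and its existence direction is exactly your unrolling of the definition together with the observation that an Artinian quotient forces the ideal to be $\maxim$-primary). The completion detour you sketch as an alternative is best avoided here, as you yourself note, since the comparison $\pd R=\kd{\complet R}$ is Theorem~\ref{T:pdim}, which is proved later using this very lemma.
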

\begin{proof} Let $d:=\pd R$.   By Lemma~\ref{L:dimdef}, there exists 
no sequence $\tuple y$ of length less than $d$
such that $R/\tuple yR$ has \pdim\ zero. It follows that any 
$\maxim$-primary ideal is generated by at least $d$
elements. So remains to show that there exists a tuple of length $d$ 
generating an $\maxim$-primary ideal. We induct on
  $d$, where the case $d=0$ is clear, since then  $(0)$ is 
$\maxim$-primary. By definition, we can choose $x_1\in\maxim$
such that $\pd{R/x_1R}=d-1$. By induction, there exist elements 
$x_2,\dots,x_d$ whose image in $R/x_1R$ generate an
$\maxim(R/x_1R)$-primary ideal. Hence $\rij xdR$ is $\maxim$-primary.
\end{proof}

\begin{theorem}\label{T:pdim} 
Let $(R,\maxim)$ be a local ring of 
finite embedding dimension. The following numbers are
all equal.
\begin{itemize}
\item the \pdim\ $d$ of $R$;
\item the least possible number of elements $d'$ generating an 
$\maxim$-primary ideal;
\item the dimension $\complet d$ of the completion $\complet R$ of $R$;
\item the dimension $\overline d$ of the graded ring $\gr {}R$ associated to $R$;
\item  the degree $\underline d$ of the \emph{Hilbert-Samuel 
polynomial} $\op{HS}_R$, where $\op{HS}_R$ is the
unique
polynomial  with rational coefficients for which $\op{HS}_R(n)$
equals 
the length of $R/\maxim^n$ for all large $n$;
\item the \pdim\ $\sep d$ of the separated quotient $\sep R$;
\end{itemize}
\end{theorem}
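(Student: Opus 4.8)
The plan is to funnel all six quantities through the completion $\complet R$, which is Noetherian with maximal ideal $\maxim\complet R$ by Theorem~\ref{T:finembcomp}, and then appeal to classical dimension theory for Noetherian local rings. The equality $d=d'$ is exactly Lemma~\ref{L:mingen}, so the work is to show $d'=\complet d$ and then to reduce $\overline d$, $\underline d$ and $\sep d$ to $\complet d$.

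First I would show $\complet d\le d'$. By Lemma~\ref{L:mingen} choose an $\maxim$-primary ideal generated by $d'$ elements $\rij x{d'}$. Then $R/\rij x{d'}R$ is Artinian of finite embedding dimension, so its maximal ideal is nilpotent, i.e.\ $\maxim^N\sub\rij x{d'}R$ for some $N$; extending to $\complet R$ gives $(\maxim\complet R)^N\sub\rij x{d'}\complet R$, so $\rij x{d'}\complet R$ is $\maxim\complet R$-primary. Krull's generalized principal ideal theorem in the Noetherian ring $\complet R$ then yields $\dim\complet R\le d'$.

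The reverse inequality $d\le\complet d$ is the step I expect to be the main obstacle, precisely because $R$ is usually much smaller than $\complet R$ and its ideals need not be closed, so a system of parameters of $\complet R$ has no reason to lie in $R$. I would get around this by approximation: pick a system of parameters $\rij y{\complet d}$ of $\complet R$, so that $(\maxim\complet R)^m\sub\rij y{\complet d}\complet R$ for some $m$, and use the isomorphism $R/\maxim^{m+1}\iso\complet R/\maxim^{m+1}\complet R$ to lift each $y_i$ to some $x_i\in R$ with $x_i-y_i\in\maxim^{m+1}\complet R$; since $y_i\in\maxim\complet R$, the lift $x_i$ lies in $\maxim$. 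Setting $\mathfrak x:=\rij x{\complet d}\complet R$ one gets $\maxim^m\complet R\sub\mathfrak x+\maxim^{m+1}\complet R$, and because $\maxim^{m+1}\complet R=(\maxim\complet R)(\maxim^m\complet R)$, Nakayama's lemma applied to the finitely generated $\complet R$-module $(\mathfrak x+\maxim^m\complet R)/\mathfrak x$ gives $\maxim^m\complet R\sub\mathfrak x$. Thus $\complet R/\mathfrak x$ is Artinian; by Lemma~\ref{L:quot} it is the completion of $R/\rij x{\complet d}R$, so Lemma~\ref{L:Art} forces $R/\rij x{\complet d}R$ to be Artinian, whence $\pd R\le\complet d$. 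Together with the previous paragraph this gives $d=d'=\complet d$.

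It remains to compare $\overline d$, $\underline d$ and $\sep d$ with $\complet d$, and here the identifications already recorded in the excerpt do the job. The associated graded ring $\gr{}R$ was shown to coincide with the associated graded ring of $\complet R$ with respect to its maximal ideal, so $\overline d=\dim\complet R$ by Noetherian dimension theory. Next, $\op{HS}_R(n)$ is the length of $R/\maxim^n$, which equals the length of $\complet R/(\maxim\complet R)^n$, so $\op{HS}_R=\op{HS}_{\complet R}$ and hence $\underline d=\deg\op{HS}_{\complet R}=\dim\complet R$. Finally, the separated quotient $\sep R=R/\inf R$ has finite embedding dimension, and $R$ and $\sep R$ have the same completion; applying the already-established equality between pseudo-dimension and the dimension of the completion to $\sep R$ gives $\sep d=\dim\complet R=\complet d$. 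This closes the loop and shows all six numbers coincide.
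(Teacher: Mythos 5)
Your proposal is correct, and the overall skeleton (reduce everything to $\complet R$, which is Noetherian by Theorem~\ref{T:finembcomp}; get $d=d'$ from Lemma~\ref{L:mingen}; get $\overline d=\underline d=\complet d$ from $\gr{}R\iso\gr{}{\complet R}$ and $\op{HS}_R=\op{HS}_{\complet R}$; get $\sep d$ by applying the result to $\sep R$, which has the same completion) matches the paper. Where you genuinely diverge is in the key inequality $\pd R\leq\complet d$. The paper proves it through the graded ring: it picks a homogeneous system of parameters $\rij\xi{\overline d}$ of $\gr{}R$, lifts each $\xi_i$ to an element $x_i\in R$ with $\op{in}(x_i)=\xi_i$, and uses the isomorphism $\gr{}R/\op{in}(I)\iso\gr{}{R/I}$ to conclude that $\gr{}{R/I}$ is Artinian, hence that $R/I$ has nilpotent maximal ideal, giving $d\leq\overline d$ directly. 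You instead approximate a system of parameters of $\complet R$ itself: lift $\rij y{\complet d}$ modulo $\maxim^{m+1}\complet R$ to elements of $\maxim$ via $R/\maxim^{m+1}\iso\complet R/\maxim^{m+1}\complet R$, use Nakayama in the Noetherian ring $\complet R$ to absorb the error term, and then descend Artinian-ness through Lemmas~\ref{L:quot} and \ref{L:Art}. Both arguments are sound; the paper's route has the side benefit of yielding $d\leq\overline d$ without a detour (the graded ring being the object whose dimension is in question anyway), while yours is arguably more elementary, relying only on Nakayama and the completion machinery already set up in \S\ref{s:finemb}, and it makes transparent the "approximation" reason why a system of parameters of $\complet R$ can be replaced by one coming from $R$.
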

\begin{proof} 
The equality of $d$ and $d'$ is given by 
Lemma~\ref{L:mingen}. We already observed that $\gr {}R$ and
$\complet R$ are Noetherian and that we have isomorphisms 
$\maxim^n/\maxim^{n+1}\iso \maxim^n\complet
R/\maxim^{n+1}\complet R$ for all $n$. Hence  $\op{HS}_R=\op{HS}_{\complet 
R}$ and $\gr{}R\iso \gr{}{\complet R}$. It follows
that $\underline d=\complet d$, by the Hilbert-Samuel theory and that 
$\overline d=\complet d$ by \cite[Theorem
13.9]{Mats}. This shows already that $\overline d=\complet d=\underline d$.

Let $\rij yd$ be a tuple in $R$ generating an $\maxim$-primary ideal. 
Since $\rij yd\complet R$ is then $\maxim\complet
R$-primary,  $\complet d\leq d$. Finally, let $\rij \xi{\overline d}$ be 
a homogeneous system of parameters of $\gr{}R$ and
choose $x_i\in R$ such that $\xi_i=\op{in}(x_i)$. Let $I:=\rij x{\overline 
d}R$. By \cite[Exercise 5.3]{Eis}, we have an
isomorphism
\begin{equation*}
\gr{}R/\op{in}(I) \iso \gr{}{R/I}.
\end{equation*} Since $\rij\xi{\overline d}\gr{}R\sub \op{in}(I)$, we see 
that $\gr{}R/\op{in}(I)$ is Artinian, whence so is
$\gr{}{R/I}$. This in turn means that $R/I$ has a nilpotent maximal 
ideal, so that $d\leq\overline d$ by definition of
\pdim. This proves that the  first five numbers in the statement are 
equal. That they are also equal to the last,
$\sep d$, follows by applying the result to $\sep R$ together 
with the fact that $R$ and $\sep R$ have the same
completion.
\end{proof}

\begin{remark}\label{R:mult} 
If the leading coefficient of the 
Hilbert-Samuel polynomial is written as $e/d!$, with
$d:=\pd R$, then we call $e$ the \emph{multiplicity} of $R$ and we denote it
$\mult R$. It  follows that $R$ has the same multiplicity as its
completion and as its separated quotient.
\end{remark}

\begin{corollary}
If $R$ is a local ring of \pdim\ one, then there exists $N\in\nat$ such that every closed ideal is the closure of an $N$-generated ideal.
\end{corollary}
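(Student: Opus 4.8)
The statement is a uniform-boundedness assertion about closed ideals in a local ring $R$ of \pdim\ one. Since $\pd R = 1$, by Theorem~\ref{T:pdim} the completion $\complet R$ is a one-dimensional Noetherian local ring, and every closed ideal of $R$ is contracted from $\complet R$ (Lemma~\ref{L:quot}); moreover the closure of any ideal $I$ equals $I\complet R \cap R$, and $\complet R$ being Noetherian means every closed ideal is the closure of a finitely generated ideal. So what must be proven is that the \emph{number} of generators needed (up to closure) is uniformly bounded by some $N$ depending only on $R$. First I would reduce to $\complet R$: if every ideal of $\complet R$ is generated by $N$ elements, then every closed ideal $I$ of $R$ satisfies $I = I\complet R \cap R$ where $I\complet R$ is $N$-generated, say by elements $a_1,\dots,a_N \in \complet R$; approximating each $a_i$ modulo a high power $\maxim^k\complet R$ by an element $b_i \in R$ (using $R/\maxim^k \iso \complet R/\maxim^k\complet R$) one gets $(b_1,\dots,b_N)\complet R + \maxim^k\complet R = I\complet R + \maxim^k\complet R$, and taking closures and letting the approximation be good enough, the closure of $(b_1,\dots,b_N)R$ is $I$. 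So the problem becomes: a one-dimensional Noetherian local ring has a uniform bound $N$ on the minimal number of generators of its ideals.

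**The core step.** The key fact is that in a one-dimensional Noetherian local ring $(\complet R, \complet\maxim)$ there is a uniform bound on $\mu(I)$, the minimal number of generators, over all ideals $I$. The standard route: pick a system of parameters, i.e.\ an element $x \in \complet\maxim$ that is part of a system of parameters (a non-unit avoiding minimal primes of positive... here just a parameter), so that $\complet R/x\complet R$ is Artinian of some length $\ell$. Then for any ideal $I$, one compares $I$ with $x\complet R$: the module $(I + x\complet R)/x\complet R$ sits inside $\complet R/x\complet R$ hence needs at most $\ell$ generators, and $(I \cap x\complet R)$, being a submodule of $x\complet R \iso \complet R$ up to the annihilator... more cleanly, one uses that $I/xI$ surjects onto $I/(I\cap x\complet R) \iso (I+x\complet R)/x\complet R \subseteq \complet R/x\complet R$, while the kernel $(I\cap x\complet R)/xI$ is a module of finite length bounded in terms of $\ell$ (here the one-dimensionality, equivalently the Artinian-ness of $\complet R/x\complet R$, does the work, perhaps via the depth/associated-primes structure or via $\op{length}(\complet R/x\complet R)$ controlling things). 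By Nakayama, $\mu(I) = \mu(I/xI) \le \mu\bigl((I+x\complet R)/x\complet R\bigr) + \op{length}\bigl((I\cap x\complet R)/xI\bigr) \le \ell + (\text{bound})$, giving the desired $N$.

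**Where the difficulty lies.** The main obstacle is making the bound on $\op{length}\bigl((I\cap x\complet R)/xI\bigr)$ — or whatever torsion-type correction term one chooses — genuinely uniform over all $I$ and explicit enough to be finite. One clean way: since $x$ is a parameter, $x$ is a non-zerodivisor on $\complet R/H^0_{\complet\maxim}(\complet R)$; replacing $\complet R$ by its quotient by the (finite-length) ideal $H^0_{\complet\maxim}(\complet R)$ of $\complet\maxim$-torsion, which changes $\mu$ of ideals by a bounded amount, reduces to the case where $\complet R$ has depth one and $x$ is a non-zerodivisor. Then $I \cap x\complet R = xI$ for $I$ itself $\complet\maxim$-... no — rather one uses that $xI \subseteq I \cap x\complet R$ with quotient annihilated by a power of $\complet\maxim$ depending only on $\complet R$ (e.g.\ if $\complet\maxim^t \subseteq x\complet R + (0:x)$, which holds for suitable $t$ by Artin--Rees or since $\complet R/x\complet R$ is Artinian), hence of length at most $\ell\cdot(\text{something})$. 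I would spell out this torsion bookkeeping carefully; it is routine one-dimensional Noetherian commutative algebra, but the uniformity is exactly the point and must not be glossed over. Once $N$ is produced for $\complet R$, the descent to closed ideals of $R$ via $\maxim^k$-approximation as above completes the proof.
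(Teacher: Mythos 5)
Your overall route -- pass to the one-dimensional Noetherian completion $\complet R$ (Theorem~\ref{T:pdim}), obtain a uniform bound $N$ on the number of generators of its ideals, then descend to closed ideals of $R$ -- is the paper's route, but your descent step has a genuine gap. You take generators $a_1,\dots,a_N$ of $I\complet R$, approximate them modulo $\maxim^k\complet R$ by elements $b_i\in R$, and assert that ``letting the approximation be good enough, the closure of $(b_1,\dots,b_N)R$ is $I$.'' Congruence of generators modulo a high power of the maximal ideal does not control the ideal they generate: in $R=\complet R=\pow k{x,y}/(xy)$ (a ring of \pdim\ one, in which every ideal is closed), take $I=(x)$, $a_1=x$ and $b_1=x+y^k$; then $b_1\equiv a_1\bmod\maxim^k$ for arbitrarily large $k$, yet the closure of $b_1R$ is $(x+y^k)\neq(x)=I$. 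So the inference fails no matter how fine the approximation, unless you control where the $b_i$ live. The repair is exactly the paper's move, and it makes approximation unnecessary: since $I\complet R$ is generated by the image of $I$, Nakayama's Lemma lets you choose the $N$ generators $a_1,\dots,a_N$ already \emph{inside} $I$; then by Lemma~\ref{L:quot} the closure of $\rij aNR$ equals $\rij aN\complet R\cap R=I\complet R\cap R=I$, the last equality because $I$ is closed. (Your approximation can be salvaged, but only by choosing the $b_i$ in $I$ -- the image of $I$ is dense in $I\complet R$ -- and then invoking Artin--Rees to upgrade congruence modulo $\maxim^k\complet R$ to congruence modulo $\maxim I\complet R$, after which Nakayama gives $(b_1,\dots,b_N)\complet R=I\complet R$; none of this bookkeeping appears in your sketch.)

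Concerning your ``core step'': the uniform bound on $\mu(I)$ over all ideals of a one-dimensional Noetherian local ring is precisely the Akizuki--Cohen theorem, which the paper simply cites. Your sketch of it is incomplete exactly where you flag it yourself: the correction term $(I\cap x\complet R)/xI\iso(I\colon x)/I$ has length at most $\ell(\complet R/x\complet R)$ when $\complet R/I$ has finite length (kernel and cokernel of multiplication by $x$ on a finite-length module have equal length), but for ideals contained in a one-dimensional minimal prime, and for the reduction modulo $H^0_{\maxim\complet R}(\complet R)$, separate arguments are needed and the uniformity is the whole point. Since the statement is classical, citing it as the paper does is the cleaner and safer move.
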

\begin{proof}
By Theorem~\ref{T:pdim}, the completion $\complet R$ is a one-dimensional Noetherian local ring, and hence by the Akizuki-Cohen theorem (\cite{AkiId,CohId}), there is some $N$ such that every ideal in $\complet R$ is generated by at most $N$ elements. Let $I\sub R$ be an arbitrary ideal. Since $I\complet R$ is generated by at most $N$ elements, we may choose by Nakayama's Lemma $a_1,\dots,a_N\in I$ such that $I\complet R=\rij aN\complet R$. Contracting this equality back to $R$ shows, by Lemma~\ref{L:quot}, that $I$ is the closure of $\rij aNR$.
\end{proof}

It is well-known that one may take $N$ to be equal to the multiplicity of $R$, in case the latter is \CM. In view of Remark~\ref{R:mult} and our definition in \S\ref{s:cata} below, the same holds true under the assumption that $R$ is \cCM.

\subsection{Generic sequences} 
A tuple $\tuple x$ is 
called \emph{generic}, if it generates an $\maxim$-primary ideal and its
length is equal to the \pdim\ of $R$; it   is called \emph{part of a generic
sequence}, if it can be
extended to a generic sequence.   If $x$ is a single element
which is part of a generic sequence, then we simply call $x$ a \emph{generic
element}.

\begin{lemma}\label{L:gen} 
Let $(R,\maxim)$ be a local ring of \pdim\ 
$d$. A tuple $\rij xe$   is part of
a generic sequence  \iff\ $R/\rij xeR$ has \pdim\ $d-e$.

In particular, $x$ is
generic \iff\ $\pd{R/xR}=\pd R-1$.
\end{lemma}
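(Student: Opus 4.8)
The plan is to prove the equivalence by induction on $e$, using Lemma~\ref{L:dimdef} as the main engine and Lemma~\ref{L:mingen} to translate statements about \pdim\ into statements about numbers of generators of $\maxim$-primary ideals.

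First I would dispose of the final sentence: it is simply the case $e=1$ of the first assertion (a single element $x$ is trivially extendable to a generic sequence exactly when it is part of a generic sequence), so it suffices to prove the main equivalence. For the forward direction, suppose $\rij xe$ is part of a generic sequence $\rij xd$. Then $\rij xdR$ is $\maxim$-primary, so its image in $\bar R := R/\rij xeR$ generates an $\maxim\bar R$-primary ideal, whence $\pd{\bar R}\leq d-e$ by Lemma~\ref{L:mingen}. On the other hand, applying Lemma~\ref{L:dimdef} repeatedly (once for each of $x_1,\dots,x_e$) gives $\pd{\bar R}\geq \pd R - e = d-e$, so $\pd{\bar R} = d-e$. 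For the converse, suppose $\pd{R/\rij xeR} = d-e$. By Lemma~\ref{L:mingen} applied to $R/\rij xeR$, there exist $x_{e+1},\dots,x_d \in \maxim$ whose images generate an $\maxim(R/\rij xeR)$-primary ideal, and then $\rij xdR$ is $\maxim$-primary and has length $d = \pd R$, so it is a generic sequence extending $\rij xe$; thus $\rij xe$ is part of a generic sequence.

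The one point requiring a little care is the repeated application of Lemma~\ref{L:dimdef}: I would note that for each $i$ the element $x_{i+1}$ (image in $R/\rij xiR$) lies in the maximal ideal of that quotient, so Lemma~\ref{L:dimdef} applies at each stage to give $\pd{R/\rij x{i+1}R} \geq \pd{R/\rij xiR} - 1$, and telescoping over $i = 0,\dots,e-1$ yields $\pd{R/\rij xeR}\geq \pd R - e$. This is routine but is the step most easily glossed over; it is also the only place where the recursive nature of \pdim\ is really used. Everything else is a direct invocation of Lemma~\ref{L:mingen}, and I do not anticipate any genuine obstacle.

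Here is the proof.

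\begin{proof}
The second assertion is the case $e=1$ of the first, so it suffices to prove the equivalence for an arbitrary tuple $\rij xe$.

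Suppose first that $\rij xe$ is part of a generic sequence, say $\rij xe$ extends to a generic sequence $\rij xd$. Then $\rij xdR$ is $\maxim$-primary, so the images of $x_{e+1},\dots,x_d$ in $\bar R:=R/\rij xeR$ generate an $\maxim\bar R$-primary ideal, whence $\pd{\bar R}\leq d-e$ by Lemma~\ref{L:mingen}. Conversely, applying Lemma~\ref{L:dimdef} successively to the element $x_{i+1}$ in the local ring $R/\rij xiR$, for $i=0,\dots,e-1$, we obtain
\begin{equation*}
\pd{R/\rij xeR}\geq \pd R-e=d-e.
\end{equation*}
Hence $\pd{R/\rij xeR}=d-e$.

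Conversely, assume $\pd{R/\rij xeR}=d-e$. By Lemma~\ref{L:mingen} applied to the local ring $R/\rij xeR$, there exist elements $x_{e+1},\dots,x_d\in\maxim$ whose images in $R/\rij xeR$ generate an $\maxim(R/\rij xeR)$-primary ideal. Then $\rij xdR$ is $\maxim$-primary and has length $d=\pd R$, so $\rij xd$ is a generic sequence extending $\rij xe$. Thus $\rij xe$ is part of a generic sequence.
\end{proof}
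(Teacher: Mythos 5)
Your proof is correct and follows essentially the same route as the paper's: the converse direction is identical, and your forward direction merely spells out the verification the paper compresses into ``one checks'' (namely $\pd{R/\rij xeR}\leq d-e$ via Lemma~\ref{L:mingen} and $\pd{R/\rij xeR}\geq d-e$ via repeated use of Lemma~\ref{L:dimdef}). No gaps.
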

\begin{proof} 
Suppose $\rij xe$ is part of a generic sequence and enlarge it to a generic
sequence $\rij xd$. One checks that  (the image of) $(x_{e+1},\dots,x_d)$   is a generic
sequence in $R/\rij xeR$. This
shows that $\pd{R/\rij xeR}=d-e$. Conversely, assume $\pd{R/\rij xeR}=d-e$.
Choose a tuple $(x_{e+1},\dots,x_d)$ in $R$ so that its image in 
$R/\rij xeR$ is a generic sequence. Since $\rij xd$ generates an $\maxim$-primary
ideal and has length $d$, it is generic.
\end{proof}

\begin{proposition}\label{P:sop} 
Let   $(R,\maxim)$ be a local ring 
of finite embedding dimension. A sequence  in $R$ is generic \iff\
its image in $\complet R$ is a system of parameters. 
\end{proposition}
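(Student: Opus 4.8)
The plan is to unwind the definitions on both sides and reduce the equivalence to a statement purely about the Noetherian local ring $\complet R$. By definition, a sequence $\tuple x = \rij xd$ in $R$ is generic precisely when it generates an $\maxim$-primary ideal and $d = \pd R$. By Theorem~\ref{T:pdim}, $\pd R = \kd{\complet R}$, and by Lemma~\ref{L:quot} (applied to the $\maxim$-primary ideal $\tuple xR$, or directly via the isomorphisms $R/\maxim^n \iso \complet R/\maxim^n\complet R$), the ideal $\tuple xR$ is $\maxim$-primary in $R$ if and only if $\tuple x\complet R$ is $\maxim\complet R$-primary in $\complet R$. On the other hand, $\tuple x$ is a system of parameters of $\complet R$ exactly when it is an $\maxim\complet R$-primary sequence of length $\kd{\complet R}$. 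So both conditions translate into the same pair of requirements on $\complet R$, and the equivalence follows.

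Concretely, the steps I would carry out are: first, record that $\pd R = \kd{\complet R}$ by Theorem~\ref{T:pdim}, so the length condition "$d = \pd R$" on the $R$-side is literally the same as the length condition "$d = \kd{\complet R}$" on the $\complet R$-side. Second, show the primariness conditions match: if $\tuple xR$ is $\maxim$-primary then $\maxim^n \sub \tuple xR$ for some $n$, hence $\maxim^n\complet R \sub \tuple x\complet R$, so $\tuple x\complet R$ is $\maxim\complet R$-primary; conversely if $\tuple x\complet R$ is $\maxim\complet R$-primary, then $\maxim^n\complet R \sub \tuple x\complet R$ for some $n$, and contracting to $R$ using $\maxim^n\complet R \cap R = \maxim^n$ and Lemma~\ref{L:quot} gives $\maxim^n \sub \tuple xR$ (alternatively one can invoke that $R/\tuple xR$ has Artinian completion $\complet R/\tuple x\complet R$ and apply Lemma~\ref{L:Art}). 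Third, combine: $\tuple x$ generic $\iff$ ($\tuple xR$ is $\maxim$-primary and $d = \pd R$) $\iff$ ($\tuple x\complet R$ is $\maxim\complet R$-primary and $d = \kd{\complet R}$) $\iff$ $\tuple x$ is a system of parameters of $\complet R$.

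I do not expect a serious obstacle here; the only point requiring a little care is the contraction direction of the primariness equivalence, where one must use that $\maxim^n\complet R$ contracts to $\maxim^n$ (which is part of what was established in the proof of Theorem~\ref{T:finembcomp} and Lemma~\ref{L:quot}) rather than some larger ideal — but this is exactly the content already available. A clean way to phrase the whole argument is to note that passing to completion induces a bijection between $\maxim$-primary ideals of $R$ and $\maxim\complet R$-primary ideals of $\complet R$ preserving the number of generators needed, via $I \mapsto I\complet R$ with inverse $J \mapsto J \cap R$; then "generic in $R$" and "system of parameters in $\complet R$" are the same notion read off through this bijection together with the dimension identity of Theorem~\ref{T:pdim}.
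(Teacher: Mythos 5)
Your proof is correct and follows essentially the same route as the paper: both directions reduce to the dimension identity of Theorem~\ref{T:pdim} together with transferring primariness through the completion, and the alternative you give for the converse (the completion of $R/\tuple xR$ is $\complet R/\tuple x\complet R$ by Lemma~\ref{L:quot}, which is Artinian, so $R/\tuple xR$ is Artinian by Lemma~\ref{L:Art}) is exactly the paper's argument. One caution about your first phrasing of the converse: contracting $\maxim^n\complet R\sub\tuple x\complet R$ only yields $\maxim^n\sub\tuple x\complet R\cap R$, and by Lemma~\ref{L:quot} this intersection is the \emph{closure} of $\tuple xR$, which need not equal $\tuple xR$, since finitely generated ideals in this setting can fail to be closed (a nonzero infinitesimal in an ultra-DVR generates a non-closed principal ideal). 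That step can be repaired without Lemma~\ref{L:Art} -- from $R/\maxim^{n+1}\iso\complet R/\maxim^{n+1}\complet R$ one gets $\maxim^n\sub\tuple xR+\maxim^{n+1}$ and then Nakayama applied to the finitely generated module $\maxim^n$ gives $\maxim^n\sub\tuple xR$ -- but the Lemma~\ref{L:Art} alternative you already state does the job, so the proof stands as written.
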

\begin{proof} 
One direction has already been noted, so let $\tuple 
x$ be a tuple in $R$ whose image in $\complet R$ is a system of parameters. By
Theorem~\ref{T:pdim}, the \pdim\ of $R$ is   equal to the length of 
this tuple. Let $J:=\tuple xR$. By
Lemma~\ref{L:quot}, the completion of $R/J$ is $\complet R/J\complet 
R$. As the latter is Artinian, so must the former
be by Lemma~\ref{L:Art}, showing that $\tuple x$ is generic.
 \end{proof}

It follows that   $\rij xd$ is generic \iff\ so is
$(x_1^{n_1},\dots,x_d^{n_d})$. However, this does in general not imply that 
$(\op{in}(x_1),\dots,\op{in}(x_d))$ is a system of
parameters in $\gr {}R$ (this even fails in the Noetherian case as 
the example $\{\xi^2,\xi\zeta+\zeta^3\}$ in $\pow k{\xi,\zeta}$ shows).
Immediately from Proposition~\ref{P:sop} and \cite[Theorem 14.5]{Mats} we get:

\begin{corollary}\label{C:anind}
Any generic sequence $\tuple x$ in $R$ is \emph{analytically independent} in the
sense
that if $F(\xi)$ is a homogeneous form over $R$ such that $F(\tuple x)=0$, then all
coefficients of $F(\xi)$ lie in the maximal ideal of $R$.\qed
\end{corollary}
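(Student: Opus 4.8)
The plan is to lift the question to the completion, where analytic independence of a system of parameters is classical. Let $\tuple x=\rij xd$ be a generic sequence in $R$, so $d=\pd R$. By Theorem~\ref{T:finembcomp} the completion $\complet R$ is Noetherian, and by Proposition~\ref{P:sop} the image of $\tuple x$ in $\complet R$ is a system of parameters of $\complet R$. Hence, by \cite[Theorem 14.5]{Mats}, this image is analytically independent in $\complet R$, meaning that any homogeneous form with coefficients in $\complet R$ that vanishes at $\tuple x$ has all of its coefficients in $\maxim\complet R$.

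Now suppose $F(\xi)$ is a homogeneous form over $R$ with $F(\tuple x)=0$ in $R$. Pushing the coefficients of $F$ forward along the natural local homomorphism $R\to\complet R$ yields a homogeneous form over $\complet R$, and the relation $F(\tuple x)=0$ persists in $\complet R$; so the previous paragraph shows that every coefficient of $F$ maps into $\maxim\complet R$. Since these coefficients already lie in $R$, and since the isomorphism $R/\maxim\iso\complet R/\maxim\complet R$ from Theorem~\ref{T:finembcomp} identifies $\maxim$ with the preimage of $\maxim\complet R$ under $R\to\complet R$, we conclude that every coefficient of $F$ lies in $\maxim$, which is exactly the analytic independence of $\tuple x$.

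There is essentially no obstacle here: the whole argument rests on Proposition~\ref{P:sop} together with the classical Noetherian statement \cite[Theorem 14.5]{Mats}. The only point that deserves a word of care is that $R\to\complet R$ need not be injective (its kernel is $\inf R$); but this causes no difficulty, since all we use is that the contraction of $\maxim\complet R$ to $R$ equals $\maxim$, which is immediate from the isomorphism $R/\maxim\iso\complet R/\maxim\complet R$.
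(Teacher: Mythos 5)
Your proof is correct and follows exactly the route the paper takes: the paper derives this corollary "immediately from Proposition~\ref{P:sop} and \cite[Theorem 14.5]{Mats}", i.e.\ the generic sequence becomes a system of parameters in the Noetherian completion, where analytic independence is classical, and the coefficients are then pulled back along $R\to\complet R$ using $R/\maxim\iso\complet R/\maxim\complet R$. Your added remark about the non-injectivity of $R\to\complet R$ being harmless is a sensible clarification but not a deviation from the paper's argument.
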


\subsection{Threshold primes}\label{s:tre}   
By Proposition~\ref{P:sop}, $x$ is
generic \iff\ the image of $x$ 
in $\complet R$ is part of a system of
parameters. More concretely, let $d$ be the \pdim\ of $R$ and let 
$\pr_1,\dots\pr_s$ be the $d$-dimensional   prime
ideals of $\complet R$. Note that $\complet R$ itself has dimension 
$d$ by Theorem~\ref{T:pdim}, so that all its
$d$-dimensional primes are   minimal (but there may be other minimal 
prime ideals, of lower dimension). We call the
$\mathfrak q_i:=\pr_i\cap R$ the \emph{threshold} primes of $R$. By 
Corollary~\ref{C:spec}, every threshold prime $\mathfrak q$ is
closed and contains no proper closed prime ideals. Moreover, $R/\mathfrak q$ 
has the same \pdim\ as $R$ by Theorem~\ref{T:pdim},
since $\complet R/\mathfrak q\complet R$ has the same dimension as $\complet
R$.  By a \emph{threshold prime} of an ideal $I$, we mean a threshold prime of
its residue ring $R/I$. Proposition~\ref{P:sop} yields the following criterion for
genericity.

\begin{corollary}\label{C:tre}
An element $x\in R$ is generic  \iff\ it is not contained in any threshold
prime of $R$. In particular, the product of any two generic elements is again 
generic. \qed
\end{corollary}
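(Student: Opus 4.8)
\textbf{Proof proposal for Corollary~\ref{C:tre}.} The plan is to transfer everything to the completion, where the statement becomes the classical fact that an element of a Noetherian local ring is part of a system of parameters precisely when it avoids the minimal primes of maximal dimension. Write $d:=\pd R$, so that $\kd{\complet R}=d$ by Theorem~\ref{T:pdim}, and let $\pr_1,\dots,\pr_s$ be the $d$-dimensional (hence minimal) primes of $\complet R$, with $\mathfrak q_i:=\pr_i\cap R$ the corresponding threshold primes. For $x\in R$, denote by $\bar x$ its image in $\complet R$; since $x\in R$ we have $x\in\mathfrak q_i$ \iff\ $\bar x\in\pr_i$. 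By the observation recorded just before the statement (i.e.\ Proposition~\ref{P:sop}), $x$ is generic \iff\ $\bar x$ is part of a system of parameters of $\complet R$; and combining Lemma~\ref{L:gen}, Theorem~\ref{T:pdim} and Lemma~\ref{L:quot}, this is in turn equivalent to $\kd{\complet R/\bar x\complet R}=d-1$. So it suffices to prove the following claim in the Noetherian local ring $\complet R$: $\kd{\complet R/\bar x\complet R}=d-1$ \iff\ $\bar x$ lies in none of the $\pr_i$.

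For the forward direction I would argue by contraposition: if $\bar x\in\pr_i$ for some $i$, then $\bar x\complet R\sub\pr_i$, so $\complet R/\pr_i$ is a quotient of $\complet R/\bar x\complet R$, forcing $\kd{\complet R/\bar x\complet R}\geq \kd{\complet R/\pr_i}=d$. For the converse, suppose $\bar x\notin\pr_i$ for all $i$ (we may assume $\bar x$ is a non-unit, as a generic element lies in $\maxim$ and no $\mathfrak q_i$ contains a unit). Let $\mathfrak P$ be any minimal prime over $\bar x\complet R$; by Krull's principal ideal theorem $\operatorname{ht}\mathfrak P\leq 1$, and $\mathfrak P$ contains some minimal prime $\pr$ of $\complet R$. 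If $\pr$ has dimension $d$ then $\pr$ is one of the $\pr_i$, and since $\bar x\in\mathfrak P\setminus\pr$ we get $\mathfrak P\supsetneq\pr$, whence $\kd{\complet R/\mathfrak P}<d$; if instead $\kd{\complet R/\pr}<d$ then a fortiori $\kd{\complet R/\mathfrak P}\leq d-1$. Taking the maximum over all such $\mathfrak P$ gives $\kd{\complet R/\bar x\complet R}\leq d-1$, and the reverse inequality is again Krull's principal ideal theorem. This proves the claim, and hence the first assertion.

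For the ``in particular'' part: each threshold prime $\mathfrak q_i=\pr_i\cap R$ is a prime ideal of $R$, being the contraction of a prime ideal along $R\to\complet R$. If $x$ and $y$ are both generic, then by the first part neither lies in any $\mathfrak q_i$, so by primeness $xy\notin\mathfrak q_i$ for every $i$, and hence $xy$ is generic.

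I do not anticipate a serious obstacle here: the only substantive ingredient is the standard characterization of elements belonging to a system of parameters in a Noetherian local ring, and the rest is bookkeeping with the dictionary between $R$ and $\complet R$ already set up in \S\ref{s:finemb} and \S\ref{s:pdim}. The one point requiring a little care is the reduction to the case $\bar x\in\maxim\complet R$ (so that ``minimal prime over $\bar x\complet R$'' is nonempty and the dimension count is meaningful), which is handled by noting that generic elements are non-units and threshold primes are proper.
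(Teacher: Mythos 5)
Your proof is correct and follows essentially the same route the paper intends: the paper treats the corollary as immediate from Proposition~\ref{P:sop} and the definition of threshold primes (generic $\Leftrightarrow$ the image in $\complet R$ is part of a system of parameters $\Leftrightarrow$ it avoids the $d$-dimensional primes $\pr_i$, which contract to the $\mathfrak q_i$), and you merely write out the standard Noetherian fact in $\complet R$ that the paper leaves unstated. The only wrinkle — that a unit avoids all threshold primes without being generic — is an implicit non-unit convention in the paper's statement as well, and you flag and handle it appropriately.
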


\begin{corollary}\label{C:genprim}
Any $\maxim$-primary ideal  contains a generic sequence. More precisely, if
$R$ is a $Z$-algebra and $I\sub Z$ an ideal such that $IR$ is
$\maxim$-primary, then there exists a tuple $\tuple x$ over $Z$ with entries in
$I$ such that its image in $R$ is a generic sequence.
\end{corollary}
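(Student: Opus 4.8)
The plan is to reduce to the completion, where we have a genuine Noetherian local ring and can invoke prime avoidance. By Theorem~\ref{T:pdim}, the \pdim\ $d$ of $R$ equals the dimension of $\complet R$, and since $IR$ is $\maxim$-primary, its extension $IR\complet R = I\complet R$ is $\maxim\complet R$-primary. So the first step is to find a system of parameters for $\complet R$ whose entries come from $I$; by Proposition~\ref{P:sop}, lifting such a tuple back to $R$ yields a generic sequence. For the stronger statement, I would work with the images $\bar\xi_j$ in $\complet R$ of a (finite) generating set $\xi_1,\dots,\xi_m$ of $I$ as a $Z$-module (or as an ideal of $Z$); since the $\bar\xi_j$ generate an $\maxim\complet R$-primary ideal, some subset of $\complet R$-linear combinations of them forms a system of parameters.

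The key step is the standard parameter-selection argument in the Noetherian local ring $\complet R$: given finitely many elements $a_1,\dots,a_m$ generating an $\maxim\complet R$-primary ideal, one picks $y_1,\dots,y_d$ one at a time, each $y_k$ a $\complet R$-linear (indeed, after the usual reduction, a $\zet$- or residue-field-linear) combination of the $a_j$, avoiding all the minimal primes of $(y_1,\dots,y_{k-1})$ of maximal dimension. Prime avoidance applies because $\{a_j\}$ is not contained in any such prime (else that prime would contain the $\maxim\complet R$-primary ideal $(a_1,\dots,a_m)$, forcing it to be $\maxim\complet R$ itself, which has dimension $0<d-k+1$). This is where I expect the only real subtlety: to make the combinations have coefficients \emph{over $Z$} rather than over $\complet R$, one passes to the residue field $k=R/\maxim=\complet R/\maxim\complet R$, and one must know that $Z$ (or its image) surjects onto enough of $k$, or else enlarge $Z$ harmlessly; alternatively, one takes coefficients to be $0$ or $1$, which suffices when $k$ is infinite, and handles finite $k$ by the usual trick of replacing some $a_j$ by high powers or by noting that a finite residue field still permits integer-coefficient combinations avoiding finitely many primes (since the relevant quotients are infinite-dimensional over $k$ in the directions that matter). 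In the ultra-Noetherian or general case the cleanest route is to choose the $y_k$ as $Z$-linear combinations directly, using that the $\xi_j$ generate $I\subseteq Z$ and invoking prime avoidance for the finitely many threshold primes of the successive quotients, which by Corollary~\ref{C:tre} is exactly the condition for genericity.

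Having produced $\tuple x=(x_1,\dots,x_d)$ with entries in $I$ whose image in $\complet R$ is a system of parameters, Proposition~\ref{P:sop} immediately gives that $\tuple x$ is a generic sequence in $R$, and by construction each $x_k\in IR$ (or is the image of an element of $I\subseteq Z$), completing the proof. The first, weaker assertion is the special case $Z=R$, $I=$ the given $\maxim$-primary ideal.
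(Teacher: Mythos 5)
The weak assertion (the case $Z=R$) does go through on your route, and your closing sentence---choose each element by prime avoidance against the threshold primes of the successive quotients---is in substance the paper's proof. But the main line of your argument does not prove the ``more precisely'' statement: the standard parameter-selection inside $\complet R$ produces elements that are $\complet R$-linear (or residue-field-linear) combinations of the images of generators of $I$, i.e.\ elements of $I\complet R$, whereas the statement demands honest elements of $I\sub Z$. The patches you propose do not close this gap. Taking coefficients in $\{0,1\}$ fails even over an infinite residue field: in $k[x,y]$ the $0/1$-combinations of the generators $x,y$ of $(x,y)$ are $0$, $x$, $y$, $x+y$, each lying in one of the primes $(x)$, $(y)$, $(x+y)$, although $(x,y)$ is contained in none of them. ``Enlarging $Z$'' is not harmless either, since the conclusion must produce entries in the given ideal $I$ of the given ring $Z$; and note that $I$ need not be finitely generated, so one cannot even start from a finite generating set $\xi_1,\dots,\xi_m$.

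The idea you are missing is that no linear combinations are needed at all: apply ordinary prime avoidance inside $Z$ itself. The paper argues by induction on $d:=\pd R$. For $d>0$, let $\mathfrak q_1,\dots,\mathfrak q_s$ be the threshold primes of $R$. If $I$ were contained in the union of the primes $\mathfrak q_i\cap Z$ of $Z$, prime avoidance in $Z$ would give $I\sub\mathfrak q_i\cap Z$ for some $i$, hence $IR\sub\mathfrak q_i$, forcing $\mathfrak q_i=\maxim$ because $IR$ is $\maxim$-primary---impossible when $d>0$. So some $x\in I$ maps outside every threshold prime, is therefore generic by Corollary~\ref{C:tre}, and $\pd{R/xR}=d-1$ by Lemma~\ref{L:gen}; since $I(R/xR)$ is again primary to the maximal ideal, the induction hypothesis supplies the remaining $d-1$ entries from $I$. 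This completely sidesteps the residue-field, coefficient, and finite-generation issues your write-up wrestles with; the completion-based reduction via Proposition~\ref{P:sop} that you lead with is correct but only enters implicitly, through the fact that the threshold primes are the contractions of the top-dimensional primes of $\complet R$ and are finite in number.
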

\begin{proof}
We prove the last assertion by induction on the \pdim\ $d$ of 
$R$. Since there is nothing
to show if $d=0$, we may assume $d>0$. Let $\mathfrak q_1,\dots,\mathfrak q_s$
be the threshold primes of $R$. Towards a contradiction, suppose $I$ is contained
in the union
of the $\mathfrak q_i\cap Z$. By prime avoidance, there is some $i$ such that
$I\sub\mathfrak q_i\cap Z$. But then $IR\sub\mathfrak q_i$, forcing $\mathfrak
q_i=\maxim$, thus contradicting by Corollary~\ref{C:tre} that $d>0$. Hence
there exists $x\in I$ so that its image in $R$ lies outside every threshold prime
of $R$, and therefore is generic by Corollary~\ref{C:tre}. By
Lemma~\ref{L:gen}, the \pdim\ of $R/xR$ is $d-1$. Therefore, by
induction, we can find a tuple $\tuple y$ of length $d-1$ with entries in $I$ so that its
image in
$R/xR$ is generic. The desired sequence is now given by  adding $x$ to this tuple
$\tuple y$.
\end{proof}

In \cite{HamMar}, the authors introduce the notion of a \emph{strong
parameter sequence}. It should be noted that this is different from our
present notion of generic
sequence. For example, if $V$ is an ultra-\DVR\ (see Example~\ref{e:preg} for
more details), and $x$ a non-zero infinitesimal in $V$, then $x$ is
$V$-regular, whence a strong parameter  by \cite[Proposition 3.3(f)]{HamMar},
but $x$ is clearly not generic (in fact, the unique threshold prime of $V$ is
the ideal of infinitesimals $\inf V$).

\subsection{\Cheight}
Given an ideal
$I$ in a local ring $(R,\maxim)$ of finite embedding dimension, we call its
\emph{\cheight} the maximal length of a tuple in $I$ that is part of a generic
sequence and we denote it $\cht I$. In particular, an ideal is $\maxim$-primary
\iff\ its \cheight\ equals the \pdim\ of $R$.
Our terminology is justified by the next result.

\begin{proposition}\label{P:ght}
Let $(R,\maxim)$ be a local ring of finite embedding dimension. For every ideal
$I\sub
R$, we have an equality $\cht I=\pd R-\pd{R/I}$.
\end{proposition}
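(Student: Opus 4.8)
The plan is to prove the equality $\cht I = \pd R - \pd{R/I}$ by establishing the two inequalities separately, in each case reducing to the already-developed machinery of generic sequences and completions.

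First I would prove $\cht I \leq \pd R - \pd{R/I}$. Let $\tuple x = \rij xe$ be a tuple in $I$ that is part of a generic sequence, with $e = \cht I$ realizing the maximum. By Lemma~\ref{L:gen}, the fact that $\tuple x$ is part of a generic sequence means $\pd{R/\tuple xR} = \pd R - e$. Since $\tuple xR \sub I$, we have a surjection $R/\tuple xR \onto R/I$, and hence (using Lemma~\ref{L:dimdef} iteratively, or more directly the monotonicity of \pdim\ under quotients, which is immediate since an $\maxim$-primary ideal of $R/I$ pulls back to one in $R/\tuple xR$) we get $\pd{R/I} \leq \pd{R/\tuple xR} = \pd R - e$, i.e.\ $e \leq \pd R - \pd{R/I}$.

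Next I would prove the reverse inequality $\cht I \geq \pd R - \pd{R/I}$, which is where the real work lies. Set $d := \pd R$ and $c := \pd{R/I}$; I must produce a tuple of length $d - c$ inside $I$ that is part of a generic sequence of $R$. The idea is to pass to the completion: by Theorem~\ref{T:pdim}, $\kd{\complet R} = d$ and, using Lemma~\ref{L:quot}, $\kd{\complet R/I\complet R} = \kd{\complet{R/I}} = c$. In the Noetherian ring $\complet R$, standard dimension theory (e.g.\ \cite[Theorem 13.5 or 13.6]{Mats}) gives that $\op{ht}(I\complet R) \geq d - c$ is not quite what I want directly; rather, I want elements of $I\complet R$ forming part of a system of parameters. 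The cleanest route is: choose a minimal prime $\mathfrak P$ of $I\complet R$ with $\kd{\complet R/\mathfrak P} = c$, so that $\op{dim} \mathfrak P$-component is exactly $c$; then by prime avoidance one can pick $a_1,\dots,a_{d-c} \in I\complet R$ successively so that $a_j$ avoids all minimal primes of $(a_1,\dots,a_{j-1})\complet R$ of dimension $> c$ — this makes $(a_1,\dots,a_{d-c})\complet R$ have dimension $c$, and such a tuple is part of a system of parameters of $\complet R$ since it drops dimension by $d - c$. The obstacle here is that the $a_j$ lie in $I\complet R$, not in $I$; to fix this, I would instead invoke Corollary~\ref{C:genprim} applied to the ring $\complet R/(\text{lower-dimensional primes})$ or, more carefully, use that $I\complet R$ is generated by elements of $I$ together with the prime-avoidance argument carried out \emph{inside} $R$ using the threshold primes of $R/(a_1,\dots,a_{j-1})R$ — i.e.\ run the induction of Corollary~\ref{C:genprim}'s proof but relative to the partial quotient. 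Concretely: by Corollary~\ref{C:tre}, an element is generic iff it avoids the threshold primes; one picks $x_1 \in I$ avoiding all threshold primes of $R$ \emph{that do not contain $I$} (possible by prime avoidance provided $I$ is not contained in any threshold prime, which holds exactly when $\pd{R/I} < \pd R$), making $\pd{R/x_1 R} = d - 1$ while $\pd{R/(x_1 R + I)} = c$ still (since quotienting by a single generic element drops \pdim\ by one for both $R$ and, if $I \not\sub$ threshold primes, controls $R/I$). Iterating $d - c$ times yields the desired tuple in $I$ part of a generic sequence.

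The main obstacle I anticipate is the inductive step of the second inequality: ensuring that after choosing $x_1,\dots,x_j \in I$ part of a generic sequence, the \pdim\ of $R/(\tuple x R + I)$ is still exactly $c$ and not smaller — equivalently, that we have not "wasted" a generic element. This should follow from Lemma~\ref{L:gen} and Lemma~\ref{L:dimdef}: since $\pd{R/\tuple x R} = d - j$ and $R/(\tuple x R + I)$ is a quotient of it by (the image of) $I$, we have $\pd{R/(\tuple x R + I)} \geq (d-j) - ?$; the point is to show it equals $c$, which one gets by comparing with $\complet R$ again via Lemma~\ref{L:quot} and Theorem~\ref{T:pdim}, since in the Noetherian completion the dimension of $I\complet R + \tuple x \complet R$ is $c$ precisely because the $x_i$ were chosen avoiding the dimension-$> c$ minimal primes. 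So the proof is really a bookkeeping exercise translating the Noetherian dimension-theoretic prime-avoidance argument through the faithfully-informative (if not faithfully flat) map $R \to \complet R$, using Theorem~\ref{T:pdim}, Lemma~\ref{L:quot}, Corollary~\ref{C:spec} and Corollary~\ref{C:tre} as the dictionary.
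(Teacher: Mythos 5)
Your proposal is correct, and it runs on the same engine as the paper's proof---Lemma~\ref{L:gen}, Corollary~\ref{C:tre}, and prime avoidance among the finitely many threshold primes---but it is organized differently. The paper proves the equality in one stroke: take a tuple $\tuple y\sub I$ of maximal length $h=\cht I$ that is part of a generic sequence, note that by maximality the image of $I$ in $S:=R/\tuple yR$ can contain no generic element and hence lies in some threshold prime $\mathfrak q$ of $S$, and then sandwich $\pd S\geq\pd{S/IS}\geq\pd{S/\mathfrak q}=\pd S$ to conclude $\pd{R/I}=\pd R-h$. You instead split into two inequalities, getting $\leq$ from monotonicity of \pdim\ under quotients, and $\geq$ by building a tuple of length $\pd R-\pd{R/I}$ inside $I$ by the inductive prime-avoidance argument of Corollary~\ref{C:genprim} run relative to $I$; this is a perfectly valid ``dual'' of the paper's maximality argument, at the cost of an induction. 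Two remarks: the detour through $\complet R$ in the middle of your write-up is unnecessary (you abandon it yourself, correctly), and the ``main obstacle'' you flag at the end is a non-issue: since every $x_i$ is chosen inside $I$, one has $\tuple xR+I=I$, so $\pd{R/(\tuple xR+I)}=\pd{R/I}=c$ automatically, with no need to compare dimensions in the completion; the only thing the induction genuinely needs is that $IS_j$ avoids the threshold primes of $S_j:=R/\rij xjR$ as long as $j<\pd R-c$, which follows exactly as in your first step because a containment $IS_j\sub\mathfrak q$ would force $c=\pd{S_j/IS_j}\geq\pd{S_j/\mathfrak q}=\pd{S_j}=\pd R-j>c$.
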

\begin{proof}
%
Let $d$   be the \pdim\ of  $R$ and let $h$ be the \cheight\ of $I$. Choose a
tuple $\tuple y$ in $I$ of length $h$ which is part of a generic sequence of
$R$. Put $S:=R/\tuple yR$, so that $\pd S=d-h$ by Lemma~\ref{L:gen}. Since $IS$
contains no generic element, it must be contained in some threshold prime
$\mathfrak q$ of $S$ by
Corollary~\ref{C:tre}. From the inclusions $IS\sub\mathfrak q$ we get $\pd S\geq
\pd{S/IS}\geq
\pd{S/\mathfrak
q}=\pd S$, and hence all these \pdim{s} are equal to $d-h$. Since   $S/IS=R/I$,
we are done.
\end{proof}

\subsection{Parameter degree and degree}\label{s:pardeg}
We conclude this section with another genericity criterion, in terms of an  
invariant which was introduced  for Noetherian rings in 
\cite{SchABCM,SchMixBCM} and which will play a crucial role
in what follows. The \emph{parameter degree} of a
local ring $R$ of finite embedding dimension is by definition the minimal 
length of a residue ring $R/\tuple xR$, where $\tuple x$ runs
over all possible generic sequences of $R$. We denote the parameter
degree of $R$ by $\pardeg R$.
We will show in Lemma~\ref{L:pardegmult} below that the multiplicity of $R$ is
bounded by its parameter degree  and indicate when they are equal.

Closely related to this is an invariant, which for want of a better name, we 
call \emph{degree} and which is defined as follows. Let $R$ be a
 local ring of \pdim\ $d\geq 1$. We define the \emph{degree}
$\op{deg}_R(x)$ of an element $x$  to be the least possible length of a residue
ring $R/(xR+\tuple yR)$,
where $\tuple y$ runs over all tuples of length $d-1$ inside the maximal
ideal. Hence, if $x$ is a unit, its degree is zero;  if $x$  is generic, its
degree  is the parameter degree of $R/xR$; and in the remaining case,
its degree is infinite. In particular,  we showed:

\begin{corollary}\label{C:degfin}
An non-unit in a non-Artinian local ring $R$ of finite embedding dimension is
generic
\iff\
its degree is finite. Moreover, the parameter degree of $R$ is the minimum of the degrees of all
non-units in $R$.
\qed
\end{corollary}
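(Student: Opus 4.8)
The plan is to unwind the definition of $\op{deg}_R$ using the genericity criteria already established --- chiefly Lemma~\ref{L:dimdef}, Lemma~\ref{L:gen} and Lemma~\ref{L:mingen} --- as the statement is essentially a reformulation of the discussion preceding it. Since $R$ is non-Artinian, $d:=\pd R\geq 1$, so $\op{deg}_R$ is defined. Fix a non-unit $x\in R$ and write $\bar R:=R/xR$, again a local ring of finite embedding dimension. By Lemma~\ref{L:dimdef}, $\pd{\bar R}$ equals $d$ or $d-1$, and by Lemma~\ref{L:gen} it equals $d-1$ precisely when $x$ is generic.

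I would then argue by these two cases for the first assertion. If $x$ is not generic, then $\pd{\bar R}=d$, so no tuple $\tuple y$ of length $d-1$ in $\maxim$ can generate an $\maxim$-primary ideal of $\bar R$ (Lemma~\ref{L:mingen}); hence each $R/(xR+\tuple yR)=\bar R/\tuple y\bar R$ is non-Artinian, i.e.\ of infinite length, and $\op{deg}_R(x)=\infty$. If $x$ is generic, then $\pd{\bar R}=d-1$, and among the length-$(d-1)$ tuples $\tuple y$ in $\maxim$ those for which $\bar R/\tuple y\bar R$ has finite length are exactly the generic sequences of $\bar R$; minimizing the length over these recovers $\pardeg{R/xR}$, which is a finite positive integer because $\bar R$ has finite embedding dimension (so it possesses generic sequences, and the corresponding residue rings are Artinian, hence of finite length). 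Thus $\op{deg}_R(x)<\infty$; this proves the biconditional and also records, for later use, the formula $\op{deg}_R(x)=\pardeg{R/xR}$ whenever $x$ is generic.

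For the second assertion I would prove both inequalities. For every non-unit $x$ one has $\pardeg R\leq\op{deg}_R(x)$: this is vacuous when $x$ is not generic, and when $x$ is generic one lifts a generic sequence of $\bar R=R/xR$ realizing $\pardeg{R/xR}$ back to $\maxim$ and prepends $x$, obtaining a length-$d$ sequence of $R$ that generates an $\maxim$-primary ideal, hence a generic sequence of $R$ with the same residue ring, so $\op{deg}_R(x)=\pardeg{R/xR}\geq\pardeg R$. Conversely, choose a generic sequence $(x_1,\dots,x_d)$ of $R$ with $R/(x_1,\dots,x_d)R$ of minimal length $\pardeg R$; then $x_1$ is a non-unit, and feeding the tuple $(x_2,\dots,x_d)$ into the definition of degree gives $\op{deg}_R(x_1)\leq\pardeg R$. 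Hence the minimum of $\op{deg}_R$ over all non-units equals $\pardeg R$, attained at $x_1$. There is no genuine obstacle here; the only point demanding a little care is that the minimum defining $\op{deg}_R(x)$ ranges over \emph{all} length-$(d-1)$ tuples in $\maxim$, so one must observe that the non-generic choices contribute $\infty$ and hence do not affect it, while the surviving choices assemble into $\pardeg{R/xR}$ when $x$ itself is generic.
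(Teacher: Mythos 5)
Your proposal is correct and follows essentially the same route as the paper: the corollary is recorded there as an immediate consequence of the observation, made right after the definition of degree, that a generic non-unit has degree equal to $\pardeg{R/xR}$ while a non-generic non-unit has infinite degree, and your case analysis via Lemmas~\ref{L:dimdef}, \ref{L:gen} and \ref{L:mingen} just makes that observation (and the resulting two inequalities for the parameter degree) explicit. The details you supply, including lifting a generic sequence of $R/xR$ and prepending $x$, are exactly the implicit argument.
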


\section{Extended dimensions}\label{s:bluedim}

In this section, we introduce several other dimension notions for a local
ring $(R,\maxim)$. With an \emph{extended dimension}, we mean an invariant
on the class of local rings taking values in $\bar\nat:=\nat\cup\{\infty\}$ which
agrees with Krull dimension on the subclass of all Noetherian local rings. 
Clearly, Krull dimension itself is an extended dimension, and so is \pdim\ by
the results from the previous section.
Note, however, that embedding dimension is \emph{not} an extended dimension.

Recall that a partially ordered set $\Gamma$ has
\emph{combinatorial dimension} (or, \emph{height}) $d$ if any proper
(ascending) chain
in $\Gamma$
 has length at most $d$ (meaning that it contains at most 
$d+1$ elements). Hence, the dimension of a ring $A$ is the 
combinatorial dimension of $\spec A$ (the set of
all prime ideals ordered by inclusion). Given ideals $J\sub \pr$ in $A$ with $\pr$ prime, we say that $\pr$ is an
\emph{associated} prime of $J$ if $\pr$ is of the form $(J:a)$;  a \emph{minimal}
prime of $J$ if no prime ideal is properly contained
between $J$ and $\pr$; and a \emph{minimal associated} prime of $J$ if it is
associated and no associated prime of $J$  is properly contained between $J$ and
$\pr$.

\subsection{Cl-dimension}
Let $\clspec R$ be the subset of $\spec R$ consisting of all closed prime
ideals of $R$. Note that the maximal ideal as well as the threshold primes (see
\S\ref{s:tre}) belong to $\clspec R$. In fact, we showed in Corollary~\ref{C:spec}
that $\clspec R$ is  the image of the canonical map $\spec{\complet
R}\to \spec R$.  We call the combinatorial dimension of $\clspec R$ the
\emph{cl-dimension} of $R$ and denote it $\cld R$. It is clear that $\cld
R=\kd R$ when $R$ is Noetherian, showing that
cl-dimension is an extended dimension.

\subsection{Fr-dimension}\label{s:frdim}
We say that an ideal $I\sub R$ is \emph{$n$-generated}, if there exists a
tuple $\tuple x$ of length $n$ such that $\tuple xR=I$. We say that an ideal
$\id\sub R$ is \emph{\con n} if it is of the form $\id=(I: a)$ with $I$ an
$n$-generated ideal. An ideal $\id$ is called \emph\fr\ if it is \con n for
some $n<\infty$. Let $\conspec R$ be the subset of $\spec R$ consisting of
all \fr\ prime ideals, that is to say, all associated prime ideals of finitely generated ideals of $R$. We call the combinatorial dimension of $\conspec R$ the
\emph{fr-dimension} of $R$ and denote it $\frd R$. When $R$ is
Noetherian, every ideal is \fr\ whence $\frd R=\kd R$, showing that
fr-dimension is an extended dimension. We define the related notion of a \emph{strongly \fr} prime ideal as a prime ideal $\pr$ of the form $(I:a)$ with $I$ finitely generated and $a\notin\pr$. A priori, not every \fr\ prime ideal is strong, but see Corollaries~\ref{C:loculpow} and \ref{C:locup}.

\subsection{Pi-dimension}
We say that $R$ has
\emph{pi-dimension} at most $d$, if
$\maxim$ is a minimal associated prime of a $d$-generated ideal. The
pi-dimension, $\pid R$, of $R$ is then the least $d$ such that $R$ has pi-dimension at most $d$. That  pi-dimension is an extended dimension follows from   Krull's  Principal
Ideal theorem  (from which it borrows its name; see for instance
\cite[Theorem 8.10]{Mats}).

\begin{theorem}\label{T:dimineq}
For an arbitrary local ring $(R,\maxim)$, we have the following inequalities
between extended dimensions:
\begin{enumerate}

\item\label{i:kd} $\frd R,\cld R\leq \kd
R$;
\item\label{i:pidpd} $\pid R\leq \pd R$;
\item\label{i:cld} $\cld R \leq \pd
R$, with equality if $\pd R$ is finite.
\end{enumerate}
Moreover,  each of these inequalities can be strict.
\end{theorem}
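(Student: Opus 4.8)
The plan is to treat the three parts in turn; parts~\ref{i:kd} and~\ref{i:pidpd} are brief, and essentially all the content is in~\ref{i:cld}. For~\ref{i:kd}, both $\conspec R$ and $\clspec R$ are by definition subsets of the partially ordered set $\spec R$, so every chain inside one of them is in particular a chain in $\spec R$; passing to combinatorial dimensions yields $\frd R,\cld R\le\kd R$. For~\ref{i:pidpd} I may assume $\pd R=d<\infty$, so that $R$ has finite embedding dimension. By Lemma~\ref{L:mingen} there is an $\maxim$-primary ideal $I$ generated by $d$ elements; then $R/I$ is Artinian local, hence its maximal ideal is an associated prime of $(0)$ (take a nonzero element of the socle), i.e.\ $\maxim=(I:a)$ for a suitable $a\in R$, so $\maxim$ is an associated prime of $I$. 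Since $\sqrt I=\maxim$, it is the only prime containing $I$, hence the only associated prime of $I$, so trivially a minimal one; thus $\pid R\le d=\pd R$.

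For the inequality $\cld R\le\pd R$ in~\ref{i:cld} I may assume $\pd R=d<\infty$. The key point is the following \emph{strict descent}: if $\pr\subsetneq\pr'$ are closed prime ideals of $R$, then $\pd{R/\pr'}<\pd{R/\pr}$. Granting it, for any chain $\pr_0\subsetneq\pr_1\subsetneq\cdots\subsetneq\pr_k$ of closed primes one gets $\pd{R/\pr_0}>\pd{R/\pr_1}>\cdots>\pd{R/\pr_k}\ge 0$, whence $k\le\pd{R/\pr_0}\le\pd R$ (the last inequality by Proposition~\ref{P:ght}, since $\cht{\pr_0}\ge 0$), and therefore $\cld R\le\pd R$. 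To prove strict descent I would replace $R$ by the separated domain $S:=R/\pr$ and put $\bar{\pr}':=\pr'/\pr$, a nonzero closed prime of $S$ with $S/\bar{\pr}'=R/\pr'$. If $\pd{S/\bar{\pr}'}=\pd S$, then $\cht{\bar{\pr}'}=0$ by Proposition~\ref{P:ght}, so $\bar{\pr}'$ contains no generic element; by Corollary~\ref{C:tre} together with prime avoidance (there being only finitely many threshold primes) $\bar{\pr}'$ lies inside some threshold prime $\mathfrak t$ of $S$. Since $\mathfrak t$ contains no proper closed prime while $\bar{\pr}'$ is itself a closed prime, $\bar{\pr}'=\mathfrak t$; but then $(0)\subsetneq\bar{\pr}'$ is a proper closed prime of the separated ring $S$ lying inside $\mathfrak t$, a contradiction.

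For the reverse inequality $\cld R\ge\pd R$ when $d:=\pd R<\infty$ I would induct on $d$. If $d=0$ then $R$ is Artinian and $\clspec R=\{\maxim\}$. If $d\ge 1$, pick a threshold prime $\mathfrak q$ of $R$; then $R/\mathfrak q$ is a separated domain with $\pd{R/\mathfrak q}=d$, so it has a generic element $x$, and pulling a threshold prime of $(R/\mathfrak q)/x(R/\mathfrak q)$ back to $R$ gives a closed prime $\pr_1\supsetneq\mathfrak q$ with $\pd{R/\pr_1}=d-1$; by induction there is a chain of $d$ closed primes of $R$ starting at $\pr_1$, and prepending $\mathfrak q$ produces $d+1$ of them. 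This establishes $\cld R=\pd R$ whenever $\pd R<\infty$. For the strictness of the displayed inequalities I would use two kinds of examples: an ultra-\DVR\ $V$ (Example~\ref{e:preg}), which has $\kd V=\infty$ while both $\clspec V$ and $\conspec V$ consist of just two primes, so that $\cld V=\frd V=1<\kd V$; and the localization $R$ of $k[x,y_1,y_2,\dots]/(xy_i,\,y_iy_j)$ at $(x,y_1,y_2,\dots)$, which has infinite embedding dimension, hence $\pd R=\infty$, but in which $\maxim$ is the only prime---hence the only associated prime---of the principal ideal $xR$, so that $\pid R=1$ and $\cld R=1$, showing that the inequalities in~\ref{i:pidpd} and~\ref{i:cld} can be strict.

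The main obstacle is the strict-descent statement underlying $\cld R\le\pd R$. The naive attempt---to lift a chain of closed primes of $R$ to a chain of primes of the Noetherian completion $\complet R$---fails, because a closed prime of $R$ may have, among the primes of $\complet R$ lying over it, only a minimal prime of $\complet R$, leaving nothing below it to continue the chain. One is therefore forced to argue intrinsically in $R$, exploiting that a threshold prime admits no proper closed prime below it, while the zero ideal of the separated domain $R/\pr$ is already a proper closed prime of it.
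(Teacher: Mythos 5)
Your proposal is correct, and parts \eqref{i:kd} and \eqref{i:pidpd} coincide with the paper's argument; the difference lies in \eqref{i:cld}. For the upper bound $\cld R\leq\pd R$ the paper simply invokes Corollary~\ref{C:spec}: $\clspec R$ is the image of $\spec{\complet R}$, hence has combinatorial dimension at most $\dim\complet R=\pd R$. You instead prove an intrinsic strict-descent lemma (a proper inclusion of closed primes strictly lowers the \pdim\ of the quotient), using Proposition~\ref{P:ght}, Corollary~\ref{C:tre}, prime avoidance over the finitely many threshold primes, the fact from \S\ref{s:tre} that threshold primes are closed and contain no proper closed prime, and the observation that $(0)$ is a closed prime of the separated domain $R/\pr$. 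This is longer, but it squarely addresses the point you flag at the end: chains in $\clspec R$ need not lift to chains in $\spec{\complet R}$ (going-down can fail for $R\to\complet R$), so your route is more self-contained at this step, at the price of leaning on the stated (unproved-in-detail) assertion of \S\ref{s:tre}. For the reverse inequality both proofs induct on $d$ with prime avoidance; the paper chooses a generic $x$ avoiding the minimal closed primes, gets a chain in $R/xR$ and appends a minimal closed prime at the bottom, while you first pass to $R/\mathfrak q$ for a threshold prime, then to a generic hypersurface, pull back a threshold prime and induct upward. One small imprecision there: the induction applied to $R/\pr_1$ yields a chain of closed primes of $R$ all \emph{containing} $\pr_1$, not necessarily starting at $\pr_1$; since they all strictly contain $\mathfrak q$, prepending $\mathfrak q$ still gives a chain of length $d$, so nothing breaks. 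Finally, your strictness examples differ from the paper's (Examples~\ref{E:pid} and \ref{E:cld}): the ultra-\DVR\ handles \eqref{i:kd}, and your ring $k[x,y_1,y_2,\dots]/(xy_i,y_iy_j)$ localized at $(x,y_1,y_2,\dots)$ does give $\pid R=\cld R=1<\infty=\pd R$; just note explicitly that $\maxim$ is actually an \emph{associated} prime of $xR$ (e.g.\ $\maxim=(xR:y_1)$), not merely the unique prime containing it, before concluding $\pid R\leq 1$.
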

\begin{proof}
Inequalities \eqref{i:kd} are immediate from the definition.  In order to
show inequality~\eqref{i:pidpd}, we may   assume that 
$\pd R=d<\infty$. By definition,
 $R/I$ is an Artinian local ring for some $d$-generated ideal
$I$. It follows that $\maxim$ is a minimal associated prime of $I$, whence the
pi-dimension of $R$ is at most $d$.   

So remains to prove
\eqref{i:cld}. There is nothing to show if $R$ has infinite \pdim, so assume
$R$ has finite \pdim, say, $d$ (whence also finite embedding dimension). By
Corollary~\ref{C:spec}, there is
 a surjective map $\spec{\complet R}\to \clspec
R$. In particular, the combinatorial dimension of $\clspec R$ is at 
most the dimension of $\complet R$, that is to say,
in view of Theorem~\ref{T:pdim}, at most $d$. So remains to prove the other
inequality by  induction on 
$d$. There is nothing to show if $d=0$, so we may
assume $d>0$. By Corollary~\ref{C:spec}, the minimal elements in 
$\clspec R$ are the contractions of the minimal primes
of $\complet R$. Hence there are only finitely many of them, all 
different from the maximal ideal $\maxim$. By prime
avoidance, we may choose $x\in\maxim$ outside all these finitely many 
prime ideals. In particular, since the threshold
primes are among these, $x$ is generic and hence $R/xR$ has \pdim\ 
$d-1$. By induction, the combinatorial dimension of
$\clspec{R/xR}$ is $d-1$. By Lemma~\ref{L:quot}, the completion of $R/xR$ is
$\complet R/x\complet R$. The \homo\ $\complet
R\to\complet R/x\complet R$ induces an injection $\spec{\complet 
R/x\complet R}\into \spec{\complet R}$, whose image
is the subset of all prime ideals of $\complet R$ containing $x$. It 
follows that the canonical injection
$\spec {R/xR}\into \op {Spec}R$ maps   $\clspec{R/xR}$ into the 
subset of $\clspec R$ consisting of all closed prime
ideals containing $x$. Using this and the fact that the combinatorial 
dimension of $\clspec {R/xR}$ is $d-1$, we can
find a proper chain of closed primes ideals  $\mathfrak 
q_1\varsubsetneq\mathfrak
q_2\varsubsetneq\dots\varsubsetneq\mathfrak q_d=\maxim$ in $R$ with 
$x\in\mathfrak q_1$. Let $\mathfrak q_0$ be a
minimal element  of $\clspec R$ lying inside  $\mathfrak q_1$. Since 
by construction $x\notin\mathfrak q_0$, the
$\mathfrak q_i$ form a proper chain of length $d$, showing that the 
combinatorial dimension of  $\clspec R$ is at least
$d$. This proves \eqref{i:cld}.

Finally, the local ring in Example~\ref{E:pid} (respectively, in
Example~\ref{E:cld}) shows that in general, the
inequalities~\eqref{i:kd} and \eqref{i:pidpd} (respectively, inequality~\eqref{i:cld})
are strict. 
\end{proof}

\begin{example}\label{E:pid}
Let  $\ul R$ be the
ultraproduct (see \S\ref{s:ul} for more details) of the $A/\pr^n$ for
$n=1,2\dots$, where
$(A,\pr)$ is a $d$-dimensional Noetherian local ring, for $d>0$. Its  pi-dimension
and fr-dimension are equal to zero, its  \pdim\ and cl-dimension are equal
to $d$, and its Krull dimension is infinite. 
\end{example}

\begin{example}\label{E:cld}
Let   $(\ul R,\ulmax)$ be the
ultraproduct of the $A_n/\maxim_n^2$ for $n=1,2\dots$, where $(A_n,\maxim_n)$ is the power series
ring over a field $k$ in  $n$ indeterminates.
Since $\ulmax^2=0$ in $\ul R$, the
 local ring $\ul R$ has cl-dimension  and Krull dimension   equal
to zero, but its embedding dimension, whence its \pdim, is  infinite.
\end{example}

There is a more instructive way to see \eqref{i:pidpd}: the
\pdim\
of a local ring $(R,\maxim)$ of finite embedding dimension
is at most $d$ \iff\ $\maxim$ is a minimal prime of a $d$-generated ideal
(that is to say, the same definition as for pi-dimension, but omitting the
term `associated').

 Let `e-dim' be some  extended dimension. We call e-dim
\emph{first-order} if
the property $\op{e-dim}(\cdot)=d$ is first-order in the sense of \S\ref{s:fop}
below, for every $d\in\nat$. Moreover, to prove this, it suffices to show that
the property $\op{e-dim}(\cdot)\leq d$ is first-order.

\begin{lemma}\label{L:fodim}
Fr-dimension and pi-dimension are first-order; \pdim, cl-dimension and Krull
dimension are not. 
\end{lemma}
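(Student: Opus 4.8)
The plan is to establish each half of the statement separately, using the notion of first-order property for local rings of a fixed embedding dimension (as set up in \S\ref{s:fop} below, to which we defer the precise coding conventions).

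\textbf{Fr-dimension and pi-dimension are first-order.} By the remark immediately preceding the lemma, it suffices, for each fixed $d$, to express $\frd R\leq d$ and $\pid R\leq d$ by a first-order sentence. First I would treat $\pid R$. To say that $\pid R\leq d$ is to say that there exist elements $x_1,\dots,x_d$ such that $\maxim$ is a minimal associated prime of $I:=\rij xdR$. The condition ``$\maxim$ is associated to $I$'' is ``there exists $a$ with $(I:a)=\maxim$'', i.e.\ $a\notin I$ but $ax_i'\in I$ for every generator $x_i'$ of $\maxim$ (here we use that $\maxim$ is generated by a fixed number of elements, namely $\ed R$, so membership in and generation of $\maxim$ are first-order). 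Minimality among associated primes says: for no $b$ is $(I:b)$ a prime ideal strictly between $I$ and $\maxim$; primality of $(I:b)$ is first-order (``for all $u,v$, if $uv\in(I:b)$ then $u\in(I:b)$ or $v\in(I:b)$, and $1\notin(I:b)$''), and ``strictly between'' is expressed with one extra existential witness on each side. Quantifying the witnesses $x_1,\dots,x_d,a,b,\dots$ over $R$ gives a first-order sentence. The argument for $\frd R\leq d$ is the same in spirit: a prime $\pr$ is \con n means $\pr=(I:a)$ with $I$ an $n$-generated ideal, which one spells out with $n+1$ existential witnesses plus the first-order primality condition on $(I:a)$; then $\frd R\leq d$ asserts that every such $\pr$ (ranging over all $n\leq$ some bound? — no: one must be careful here) sits in no proper chain of length $d+1$ of \con{}-primes. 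The subtlety is that ``\fr'' allows arbitrarily large $n$; but a chain of \con{}-primes of length $d+1$ is witnessed by finitely many ideals, each with its own finite generating tuple, so ``there exists a proper chain of \con{}-primes of length $d+1$'' is a single existential first-order statement (existentially quantify all the generators and all the colon-witnesses), and $\frd R\leq d$ is its negation. I expect this bookkeeping — keeping the number of quantified witnesses finite and uniform in $R$ — to be the main technical point, though it is routine once one notices that a chain of bounded length only involves boundedly many ideals.

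\textbf{\Pdim, cl-dimension, Krull dimension are not first-order.} The standard way to show a property is not first-order is to exhibit a family of local rings (of a fixed embedding dimension) for which the property holds for each member but fails for their ultraproduct, or vice versa; by \los\ this contradicts first-orderness. For \pdim, take $(A_n,\maxim_n)$ the power series ring $\pow k{\range{\xi_1}{n}}$ — wait, these do not have fixed embedding dimension. Instead I would use a one-parameter family such as $\seq Rn:=\pow k{x,y}/(x^n)$ — each has embedding dimension $2$ and \pdim\ $1$ — whose ultraproduct $\ul R$ has embedding dimension $2$ but in which $x$ is a nonzero infinitesimal, and one checks $\pd{\ul R}$ jumps (indeed, the completion of $\ul R$ is $\pow{k'}{x,y}/(0)$ by \L os and Theorem~\ref{T:finembcomp}-type reasoning, giving \pdim\ $2\neq 1$); since all $\seq Rn$ satisfy $\pd{\cdot}=1$ but $\ul R$ does not, $\pd{\cdot}=1$ cannot be first-order, and by the same ultraproduct $\cld{\cdot}=1$ and $\kd{\cdot}=1$ fail to be first-order (note $\kd{\seq Rn}=1$ while $\kd{\ul R}=\infty$). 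Alternatively, and perhaps more transparently, Example~\ref{E:cld} already exhibits rings $A_n/\maxim_n^2$ of \emph{unbounded} embedding dimension with cl-dimension and Krull dimension $0$; the honest route within the fixed-embedding-dimension framework is the family above. The main obstacle in this half is choosing the witnessing family so that it genuinely lives in a single $\noe{m}$ for fixed $m$ while still forcing the dimension invariant to change under ultraproduct; I would resolve it with the family $\pow k{x,y}/(x^n)$ (or its variants), invoking \los\ together with Theorem~\ref{T:finembcomp} and Theorem~\ref{T:pdim} to compute the dimension of the ultraproduct via its completion.
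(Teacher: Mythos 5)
Your pi-dimension argument and your ultraproduct counterexamples follow the paper's route, but the fr-dimension step has a genuine gap, and it sits exactly at the point you flag as the main technical one. The claim that ``there exists a proper chain of length $d+1$ consisting of \fr\ prime ideals'' is a single existential first-order sentence is false: the chain involves only $d+2$ ideals, but each is of the form $(I:a)$ with $I$ finitely generated, and there is no bound on the number of generators of these $I$; a first-order formula has a fixed finite supply of quantified variables, so you cannot ``existentially quantify all the generators'' uniformly in $R$. Consequently the sentence you propose does not exist, and $\frd R\leq d$ cannot be taken to be its negation. The repair is the paper's: for each fixed $n$ write a sentence $\tau_{n,d}$ asserting that there is no proper chain of length $d+1$ of \con n prime ideals (now all generators and colon witnesses come in tuples of length $n$, so this is genuinely first-order), observe that any chain of \fr\ primes of length $d+1$ consists of \con n primes for one common $n$ (pad the generating tuples), and axiomatize $\frd R\leq d$ by the infinite theory $\{\tau_{n,d}:n\in\nat\}$. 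This suffices because a first-order property in the sense of \S\ref{s:fop} may be given by a theory, not necessarily by a single sentence.

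Two further corrections. First, your pi-dimension sentence should not invoke a generating set of $\maxim$: membership in $\maxim$ is first-order simply as ``being a non-unit'', which is how the paper phrases it; as written, your axioms depend on $\ed R$, so your opening restriction to local rings of a fixed embedding dimension is neither what \S\ref{s:fop} sets up nor what the lemma asserts --- fortunately it is unnecessary. Second, in the family $\seq Rn:=\pow k{x,y}/x^n\pow k{x,y}$ the element $x$ is \emph{not} a nonzero infinitesimal of the ultraproduct: it has order one in each $\seq Rn$, hence order one in $\ul R$ by \los. What is true, and what actually drives your computation, is that the ultraproduct of the relations $x^n$ is infinitesimal, so the completion of $\ul R$ is $\pow{\ul k}{x,y}$, giving $\pd{\ul R}=\cld{\ul R}=2$ and $\kd{\ul R}\geq 2$ (indeed infinite), against the common value $1$ in every factor. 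With these repairs your negative half is the same \los-based argument as the paper's, which simply invokes the Artinian family $A/\pr^n$ of Example~\ref{E:pid}.
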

\begin{proof}
The assertion is obvious for pi-dimension, since we can express in a first-order way
that the maximal ideal $\maxim$ of a local ring
is of the form $(I:a)$ for some $d$-generated ideal $I$ such
that no prime ideal of the form $(I:b)$ is properly contained in $\maxim$  (note
that $\maxim$ admits a
first-order definition as the collection of all non-units). As
for fr-dimension,   for each $n$, let $\tau_{n,d}$ be the statement expressing
that there does
not exist a proper chain of length $d+1$ consisting of \con n prime ideals.
Hence a local ring has fr-dimension at most $d$ \iff\ $\tau_{n,d}$ holds in
it, for all $n$. 

The local ring in Example~\ref{E:pid} shows that Krull dimension,
cl-dimension and
\pdim\ are not first-order. 
\end{proof}

\section{Ultra-Noetherian rings}\label{s:ul}

Before we further develop the `local algebra' of local rings of finite embedding
dimension, we introduce an important subclass, arising as
ultraproducts of Noetherian local rings. Fix an infinite index set $W$ and a 
non-principal ultrafilter on $W$.  We will moreover assume that the ultrafilter
is countably incomplete. This is equivalent  with the existence of a function
$f\colon W\to\nat$ such that for each $k$,  the set of all $w\in W$ for
which $f(w)\geq k$  belongs to the ultrafilter.  If $W$ is countable, then any
non-principal ultrafilter is countably incomplete, and this is the situation 
we will find ourselves in all applications.\footnote{In fact, it is consistent
with ZF to assume that every non-principal ultrafilter on any infinite set is
countably incomplete. Moreover, for most of what we say, we will not need
to assume that the ultrafilter is countably incomplete; it is only used
explicitly in Lemma~\ref{L:ulcomp} below.} 
For each $w\in W$, let $\seq  Rw$ be a local ring and let $\ul R$ be the 
\emph{ultraproduct} of the $\seq Rw$ (for a quick review on ultraproducts, see
\cite[\S1]{SchNSTC}; for
more details see for instance, \cite{EkUP,Hod,Roth,SchUlBook}). It is important to note that  $\seq Rw$   are
not uniquely defined by $\ul R$ (not even almost all; see the example  in \S\ref{s:fop}). 
By
\los, $\ul R$ is a local ring with maximal ideal $\ulmax$ equal to the 
ultraproduct of the maximal ideals $\seq\maxim w$. If for some $m$, almost all
$\seq Rw$ have embedding dimension at most $m$, then we say  that the $\seq Rw$
have \emph{bounded embedding dimension}; a similar usage
will be applied to other numerical invariants. Hence if the $\seq Rw$ have
bounded embedding dimension, then $\ul R$ has finite embedding dimension, whence
finite \pdim.   
In case all $\seq Rw$ are equal to a single local ring $R$, we refer to $\ul R$  
as the \emph{ultrapower} of $R$.

When dealing with ultraproducts, \los\ is an extremely useful tool for
transferring properties between almost all $\seq Rw$ and $\ul R$. However,
this only applies to first-order properties (see \S\ref{s:fop} below for more
details). In view of this, we introduce the following more general set-up for discussing transfer through
ultraproducts.  Let $\mathbf P$ be a property of local rings of finite embedding
dimension
and let $R$ be a local ring. We call $R$ \emph{\c-$\mathbf P$} if it has finite embedding dimension and its completion
has property $\mathbf P$. In particular, by Theorem~\ref{T:finembcomp},  
any such ring is, in our newly devised terminology,  \c-Noetherian. We
call a local ring   \emph{ultra-$\mathbf P$} if it is equal to an
ultraproduct $\ul R$ of local rings $\seq
Rw$ of bounded embedding dimension  almost all of which satisfy property
$\mathbf P$. In particular, $\ul R$
has finite embedding dimension too. In fact, according to this terminology, 
an \emph{ultra-ring} is any ultraproduct of local rings   of bounded embedding
dimension; and
 an \emph{ultra-Noetherian} ring is   any ring isomorphic to an ultraproduct of
Noetherian local rings   of bounded embedding dimension.   It is important to 
notice that the well-known duality between rings and affine
schemes breaks down under ultraproducts:

\begin{proposition}\label{P:ulscheme}
Let $\seq Rw$ be Noetherian local rings of bounded embedding dimension and let
$\ul R$ be their ultraproduct. Then the ultraproduct of the $\spec{\seq Rw}$ is
equal to $\conspec{\ul R}$.
\end{proposition}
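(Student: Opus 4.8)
The plan is to make the obvious map from the ultraproduct $\up w\spec{\seq Rw}$ into $\spec{\ul R}$ explicit and identify its image with $\conspec{\ul R}$. To a sequence $(\seq{\pr}w)$ with each $\seq{\pr}w\in\spec{\seq Rw}$ we associate the ideal $\ul{\pr}:=\up w\seq{\pr}w$ of $\ul R$; by \los\ this is again a prime ideal (the residue ring $\ul R/\ul{\pr}$ is the ultraproduct of the $\seq Rw/\seq{\pr}w$, and ``being a domain'' is first-order). First I would record that this assignment is injective, indeed an embedding of partially ordered sets: if $\up w\seq{\pr}w\sub\up w\seq{\pr'}w$ while $\seq{\pr}w\not\sub\seq{\pr'}w$ for all $w$ in a set of the ultrafilter, then choosing $\seq aw\in\seq{\pr}w\setminus\seq{\pr'}w$ for such $w$ produces, again by \los, an element $\up w\seq aw$ lying in $\ul{\pr}$ but not in $\up w\seq{\pr'}w$, a contradiction. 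So the statement reduces to showing that a prime ideal of $\ul R$ has the form $\up w\seq{\pr}w$ with almost all $\seq{\pr}w$ prime exactly when it lies in $\conspec{\ul R}$ (see \S\ref{s:frdim}), i.e., is an associated prime of a finitely generated ideal of $\ul R$.

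The inclusion $\conspec{\ul R}\sub\up w\spec{\seq Rw}$ is routine. Write such a prime as $\ul{\pr}=(I:a)$, where $I=\tuple x\ul R$ for a tuple $\tuple x=\up w\seq{\tuple x}w$ of some fixed length and $a=\up w\seq aw$; set $\seq Iw:=\seq{\tuple x}w\seq Rw$ and $\seq{\pr}w:=(\seq Iw:\seq aw)$. Since forming the colon against a fixed finitely generated ideal commutes with ultraproducts (an immediate application of \los), we get $\ul{\pr}=\up w\seq{\pr}w$; and as $\ul{\pr}$ is prime while ``$(\seq Iw:\seq aw)$ is a prime ideal'' is first-order in the parameters $\seq{\tuple x}w$ and $\seq aw$, \los\ forces $\seq{\pr}w$ to be a prime ideal of $\seq Rw$ for almost all $w$. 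Hence $(\seq{\pr}w)$ represents a point of $\up w\spec{\seq Rw}$ mapping to $\ul{\pr}$.

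The reverse inclusion $\up w\spec{\seq Rw}\sub\conspec{\ul R}$ carries the content of the proposition, and the main obstacle is visible at once: each $\seq{\pr}w$ is finitely generated ($\seq Rw$ being Noetherian), but the number of generators it requires need not be bounded as $w$ varies (this already happens for suitable primes inside a single regular local ring of embedding dimension four), so $\ul{\pr}$ itself will usually not be a finitely generated ideal of $\ul R$, only a finitely \emph{related} one. The way around this is to exhibit $\seq{\pr}w$ not via its own generators but as an \emph{associated} prime of an ideal whose number of generators is bounded independently of $w$: since $\op{ht}(\seq{\pr}w)\leq\kd{\seq Rw}\leq\ed{\seq Rw}\leq m$ for almost all $w$, where $m$ bounds the embedding dimensions, Krull's height theorem yields $m$ elements $x_{w,1},\dots,x_{w,m}\in\seq{\pr}w$ for which $\seq{\pr}w$ is a minimal prime of $\seq Jw:=(x_{w,1},\dots,x_{w,m})\seq Rw$, and in a Noetherian ring a minimal prime of an ideal is an associated prime of it, so $\seq{\pr}w=(\seq Jw:\seq aw)$ for some $\seq aw\in\seq Rw$. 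Putting $J:=\up w\seq Jw$, an $m$-generated ideal of $\ul R$, and $a:=\up w\seq aw$, the same colon-with-ultraproduct remark gives $\ul{\pr}=\up w\seq{\pr}w=(J:a)$, so $\ul{\pr}\in\conspec{\ul R}$. Combined with the injectivity and the inclusion of the previous step, this identifies $\up w\spec{\seq Rw}$ with $\conspec{\ul R}$ as partially ordered sets; the one step that is genuine commutative algebra rather than bookkeeping with \los\ is precisely this passage from the a priori unbounded presentations of the $\seq{\pr}w$ to a uniformly bounded one, which is exactly why $\conspec{\ul R}$, and not all of $\spec{\ul R}$, is the right target.
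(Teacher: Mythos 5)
Your proposal is correct and follows essentially the same route as the paper: the inclusion $\conspec{\ul R}\sub\up w\spec{\seq Rw}$ by writing an \fr\ prime as $(I:a)$ with $I$ finitely generated and applying \los\ to see that almost all componentwise colons are prime, and the converse by bounding the height of each $\seq\pr w$ via the bounded embedding dimension and invoking Krull's height theorem to realize $\seq\pr w$ as an associated prime of a uniformly boundedly generated ideal, so that \los\ makes $\ul\pr$ \fr. The only differences are cosmetic (you bound by the embedding dimension $m$ where the paper uses the common dimension $d$, and you add the harmless observation that the comparison map is an order embedding).
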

\begin{proof}
Recall that $\conspec{\ul R}$ consists of all \fr\ prime ideals of $\ul R$
(see \S\ref{s:frdim}).
If $\ul I$ is a finitely generated ideal in $\ul R$, say of the form
$(\ul{x_1},\dots,\ul{x_n})\ul
R$, and if $\seq{x_i}w\in\seq Rw$ are such that their ultraproduct is
equal to $\ul {x_i}$, then the ultraproduct of the ideals
$\seq Iw:=(\seq{x_1}w,\dots,\seq{x_n}w)\seq Rw$ is equal to $\ul I$. Moreover,
if $\ul y\in \ul R$ is the ultraproduct of elements $\seq yw\in\seq Rw$, then
$(\ul I:\ul y)$ is equal to the ultraproduct of the $(\seq Iw:\seq yw)$. Since $(\ul
I:\ul y)$ is prime, so are almost all $(\seq Iw:\seq yw)$ by \los. Hence any \fr\
prime ideal in $\ul R$ lies in the ultraproduct of the $\spec{\seq Rw}$. 

Conversely, for each $w$, let $\seq \pr w$ be a prime ideal in $\seq Rw$, and 
let $\ul\pr$ be their ultraproduct. By \los, $\ul\pr$ is prime. Since the
$\seq Rw$ have bounded embedding dimension, they also have bounded dimension.
Therefore, there is a $d$ such that  almost each $\seq Rw$ has dimension $d$ (in
the terminology
of \S\ref{s:uldim} below, $d$ is the ultra-dimension of $\ul R$). By 
Krull's Principal Ideal theorem, almost each $\seq\pr w$ is 
$d$-related, whence so is $\ul\pr$ by \los.
\end{proof}

In particular, the ultraproduct of the $\spec{\seq Rw}$ does not depend on
the choice of the $\seq Rw$ having as ultraproduct $\ul R$. 
The local algebra of rings of finite embedding dimension is hampered by the fact that very few localizations have finite embedding dimension. We will discuss one case here (see Corollary~\ref{C:locpreg} for another one). We first prove a bound for Noetherian   rings.\footnote{In \S\S\ref{s:ub} and \ref{s:ua}, we adopt the reverse strategy, by developing bounds from our local algebra results.} For a Noetherian ring $A$,  let $\gamma(A)\in\nat\cup\{\infty\}$ be the supremum of all $\ed{A_\pr}$, where $\pr$ runs through all   prime ideals of $A$.

\begin{proposition}\label{P:unifembdim}
If $A$ is a $d$-dimensional, excellent ring, then   $\gamma(A)<\infty$. In fact, if $A$ is equi\ch\ and local, then $\gamma(A)\leq d+\rho$, where   $\rho$ is the parameter degree of $A$.
\end{proposition}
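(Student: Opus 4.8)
The plan is to bound the local embedding dimension $\ed{A_\pr}$ uniformly over all primes $\pr$, using that $A$ is excellent (so that completions behave well) together with the fact that the generic formal fibres of an excellent ring have bounded dimension. First I would reduce to the local, and then to the complete, case: if $A$ is $d$-dimensional excellent, then for each prime $\pr$ the localization $A_\pr$ is excellent of dimension at most $d$, so it suffices to bound $\ed{A_\pr}$ by a function of $d$ alone (and of the parameter degree in the local equicharacteristic case). The key classical input is Krull's Principal Ideal theorem together with the following estimate: if $(B,\maxim_B)$ is a Noetherian local ring of dimension $e$ and $B/\maxim_B^{\,?}$-length considerations give parameter degree $\rho_B$, then $\maxim_B$ is generated by a system of parameters $x_1,\dots,x_e$ together with at most $\rho_B-1$ further elements; more precisely, if $y_1,\dots,y_e$ is a system of parameters with $\operatorname{length}(B/\tuple yB)=\rho_B$, then $\maxim_B/\tuple yB$ is an ideal in an Artinian ring of length $\rho_B$, hence generated by at most $\rho_B-1$ elements, so $\ed B\leq e+\rho_B-1\leq e+\rho_B$. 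This gives the stated bound $\gamma(A)\leq d+\rho$ for $A$ equicharacteristic local once one knows $\pardeg{A_\pr}\leq\pardeg A$ for every $\pr$.

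So the heart of the matter is the inequality $\pardeg{A_\pr}\leq\pardeg A$ for an equicharacteristic excellent local ring $A$ and an arbitrary prime $\pr$. Here I would argue as follows. Choose a system of parameters $\tuple x=x_1,\dots,x_d$ of $A$ realizing the parameter degree, so $\operatorname{length}(A/\tuple xA)=\pardeg A=:\rho$. By prime avoidance applied inside the excellent (hence catenary, equidimensional after passing to $A/\mathfrak{p}_0$ for a suitable minimal prime) ring, one can arrange the first $\operatorname{ht}(\pr)$ of the $x_i$ to lie in $\pr$ and to form part of a system of parameters of $A_\pr$; this is where excellence is used, to guarantee that dimension behaves well under localization ($\dim A_\pr + \dim A/\pr = \dim A$ on each component). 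Then $x_1,\dots,x_{\operatorname{ht}\pr}$ extends to a system of parameters of $A_\pr$, and a length count — comparing $A_\pr/(x_1,\dots,x_{\operatorname{ht}\pr})A_\pr$ with a suitable localization of $A/(x_1,\dots,x_{\operatorname{ht}\pr})A$ — shows its contribution to parameter degree is at most $\rho$. For the non-local, non-equicharacteristic case, I would instead invoke the finiteness of the formal fibres of an excellent ring: the embedding dimensions of the completions $\widehat{A_\pr}$ are controlled by $d$ plus a bound coming from the (finitely many, uniformly bounded) generic formal fibre dimensions, and $\ed{A_\pr}=\ed{\widehat{A_\pr}}$.

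The main obstacle I expect is the localization step: ensuring that a system of parameters of $A$ can be chosen compatibly with a given prime $\pr$, i.e.\ that $\operatorname{ht}(\pr)$ of its members lie in $\pr$ and remain a (partial) system of parameters after localizing. In full generality (without excellence) $A_\pr$ need not even be equidimensional, and the naive length comparison can fail; excellence — specifically, universal catenarity and the fact that $\dim A_\pr = \dim A - \dim(A/\pr)$ holds componentwise for excellent equidimensional $A$, together with openness of the regular locus — is exactly what repairs this. A secondary technical point is that $\pardeg{A/\pr_0}\le\pardeg A$ for minimal primes $\pr_0$, which one needs to pass to the equidimensional case; this follows since a system of parameters of $A$ maps to a system of parameters of $A/\pr_0$ and lengths only drop under the surjection $A/\tuple xA \onto (A/\pr_0)/\tuple x(A/\pr_0)$. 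Once these reductions are in place, the final bound $\gamma(A)\le d+\rho$ is the elementary Artinian-ring generator count described above.
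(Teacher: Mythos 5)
Your elementary estimate $\ed B\leq \kd B+\pardeg B-1$ is fine (it is the observation recorded in Remark~\ref{R:isodim}), but both pillars of your plan have gaps. For the first statement, there is in fact no argument in your proposal: no bound on $\ed{A_\pr}$ exists in terms of $d=\kd A$ alone, so ``reduce to the local case and bound by a function of $d$'' cannot be the reduction, and regularity or boundedness of the formal fibres says nothing about the embedding dimensions of the rings $A_\pr$ themselves (nor does the triviality $\ed{A_\pr}=\ed{\complet{A_\pr}}$). What excellence is actually needed for here is openness of the regular locus: after bounding the contribution of the finitely many minimal primes (which costs a bound $N$ on their numbers of generators, and is why this reduction is harmless for finiteness but would destroy the explicit bound $d+\rho$), one may assume $A$ is an excellent domain; on a dense open $D(f)$ inside the regular locus one has $\ed{A_\pr}=\op{ht}(\pr)\leq d$, and primes containing $f$ are handled by induction on dimension via $\ed{A_\pr}\leq\gamma(A/fA)+1$. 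Some such induction is indispensable and is missing from your sketch. Note also that your passage to $A/\pr_0$ goes the wrong way for the quantitative statement: quotients only decrease $\ed{}$ and $\pardeg{}$ of the localizations, so they cannot be used to bound $\ed{A_\pr}$ or $\pardeg{A_\pr}$ from above.

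For the local equicharacteristic bound, your reduction to the inequality $\pardeg{A_\pr}\leq\pardeg A$ is a reasonable idea, but the proposed proof of that inequality does not work, and this is the heart of the matter. Prime avoidance produces elements \emph{outside} finitely many primes; it cannot force members of a colength-minimizing system of parameters \emph{into} $\pr$, and the systems of parameters realizing $\pardeg A$ may all avoid $\pr$ — once you insist that $x_1,\dots,x_{\op{ht}(\pr)}$ lie in $\pr$ you lose all control on $\ell(A/\tuple xA)$. Nor is the ``length count'' supplied: $A_\pr/(x_1,\dots,x_h)A_\pr$ is the localization at a minimal prime of a positive-dimensional quotient, and bounding its length by $\rho$ is a genuine multiplicity-type statement (in the spirit of Nagata's $e(A_\pr)\leq e(A)$ for quasi-unmixed rings), not a formal consequence of catenarity. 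The paper avoids any localization statement for parameter degree: it first proves $\gamma(A)\leq\gamma(\complet A)$ — comparing $A_\pr$ with $\complet A_{\mathfrak q}$ for $\mathfrak q$ minimal over $\pr\complet A$, using that $\complet A/\pr\complet A$ is reduced by excellence, so $\pr\complet A_{\mathfrak q}=\mathfrak q\complet A_{\mathfrak q}$, and then faithfully flat descent along $A_\pr\to\complet A_{\mathfrak q}$ — and then, with $A$ complete, takes a Noether normalization $R\subset A$ with $R$ regular of dimension $d$ and $A$ generated by $\rho$ elements as an $R$-module (via \cite[Proposition 3.5]{SchABCM}); for $\primary:=\pr\cap R$, the ideal $\primary A_\pr$ needs at most $d$ generators and $A_\pr/\primary A_\pr$ has length at most $\rho$, whence $\ed{A_\pr}\leq d+\rho$. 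To salvage your route you would need an actual proof that parameter degree does not increase under localization; as written, that is the missing idea.
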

\begin{proof}
We prove the first statement by induction on $d$. 
Let $\pr_1,\dots,\pr_s$ be the minimal prime ideals of $A$, and let $N$ be a bound on their number of generators. Since any prime ideal $\pr$ contains one of the $\pr_i$,  we see that $\gamma(A)$ is bounded by the maximum of all $\gamma(A/\pr_i)+N$.  Hence we may assume without loss of generality that $A$ is an excellent domain. Therefore,  its regular locus is non-empty and open. Let $U=\op{Spec}A_f$ be a non-empty affine open contained in the regular locus of $A$. By regularity, $\ed{A_\pr}\leq d$, for any $\pr\in U$, and so we only need to show a bound for  those prime ideals containing $f$. Put $\bar A:=A/fA$. Note that $\bar A$ has  Krull dimension $d-1$ and is again excellent, so that by induction $\gamma(\bar A)<\infty$.  Therefore, for any prime ideal $\pr$ of $A$ containing $f$, we have an estimate $\ed{A_\pr}\leq\gamma(\bar A)+1$, finishing the proof of the first assertion.

Assume next that $A$ is moreover equi\ch\ and  local, with parameter degree $\rho$. I claim that $\gamma(A)\leq\gamma(\complet A)$, where $\complet A$ is the completion of $A$. Assuming the claim, we may take $A$ to be complete, since parameter degree does not change under completion. By the Cohen structure theorem, $A$ contains a $d$-dimensional regular local subring $R$ over which it is finite. Moreover, by \cite[Proposition 3.5]{SchABCM}, we may choose $R$ so that $A$ is generated by $\rho$ elements as an $R$-module. Let $\pr$ be a prime ideal in $A$ and put $\primary:=\pr\cap R$.  By base change, the fiber ring $A_\primary/\primary A_\primary$ has dimension $\rho$ over the residue field of $\primary$. Moreover, $A_\pr/\primary A_\pr$ is a direct summand of $A_\primary/\primary A_\primary$ by the structure theorem of Artinian local rings (\cite[Corollary 2.16]{Eis}), whence has length at most $\rho$. In particular, $\ed{A_\pr/\primary A_\pr}\leq \rho$. Since $R$ is regular, $\primary R_\primary$ is generated by at most $d$ elements, whence so is $\primary A_\pr$. It follows that $\ed{A_\pr}\leq \rho +d$, as we wanted to show.

To prove the claim, let $\mathfrak q$ be a minimal prime ideal of $\pr \complet A$. Since $A/\pr$ is excellent, its completion $\complet A/\pr\complet A$ is reduced. Therefore, the localization of $\complet A/\pr\complet A$ at $\mathfrak q$ is a field, showing that $\pr\complet A_{\mathfrak q}=\mathfrak q\complet A_{\mathfrak q}$, an ideal generated by at most $\gamma(\complet A)$ elements. Since $A_\pr\to \complet A_{\mathfrak q}$ is faithfully flat, $\pr A_\pr$ is therefore also generated by at most $\gamma(\complet A)$ elements, showing that $\gamma(A)\leq\gamma(\complet A)$.
\end{proof}

\begin{corollary}\label{C:loculpow}
If $R$ is an excellent local ring, then any localization of its ultrapower  $\ul R$ at a \fr\ prime ideal has finite embedding dimension. Moreover, every \fr\ prime ideal of $\ul R$ is strong.
\end{corollary}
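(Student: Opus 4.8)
The plan is to derive both statements from the uniform embedding-dimension bound $\gamma(R)<\infty$ furnished by Proposition~\ref{P:unifembdim} (valid since $R$ is excellent), together with \los\ and the description, in Proposition~\ref{P:ulscheme}, of the \fr\ prime ideals of $\ul R$ as ultraproducts of prime ideals of $R$. So I will fix a \fr\ prime $\ul\pr$ of $\ul R$ and, applying Proposition~\ref{P:ulscheme} with $\seq Rw:=R$ for all $w$, realize $\ul\pr$ as the ultraproduct of prime ideals $\seq\pr w$ of $R$, passing freely between the two descriptions. Set $N:=\gamma(R)$; then $\ed{R_{\seq\pr w}}\leq N$ for every $w$, by Proposition~\ref{P:unifembdim}.

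For the first assertion, the crucial step will be to identify $(\ul R)_{\ul\pr}$ with the ultraproduct $S$ of the Noetherian local rings $R_{\seq\pr w}$. By functoriality of the ultraproduct, the localization maps $R\to R_{\seq\pr w}$ induce a ring homomorphism $\ul R\to S$; since $\ul s\notin\ul\pr$ forces $\seq sw\notin\seq\pr w$ for almost all $w$, every element of $\ul R\setminus\ul\pr$ maps to a unit in $S$, so this homomorphism factors through a map $(\ul R)_{\ul\pr}\to S$. I would then check, directly from \los, that this is an isomorphism: it is injective because if $\ul x/\ul u$ maps to $0$ then almost all $\seq xw$ are killed by some $\seq tw\notin\seq\pr w$, so already $\ul x/\ul u=0$ in $(\ul R)_{\ul\pr}$; and it is surjective because a class of $S$ represented by a sequence $\seq xw/\seq sw$ (with $\seq sw\notin\seq\pr w$) is the image of $\ul x/\ul s$. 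As having embedding dimension at most $N$ is a first-order property, \los\ then gives $\ed S\leq N$, so $(\ul R)_{\ul\pr}$ has finite embedding dimension.

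For the second assertion, I would produce, for almost every $w$, a presentation of $\seq\pr w$ of the required strong form with a uniform bound on the number of generators. Since $\ed{R_{\seq\pr w}}\leq N$, I can pick $\seq{b_1}w,\dots,\seq{b_N}w\in\seq\pr w$ whose images generate $\seq\pr w R_{\seq\pr w}$, and set $\seq Jw:=(\seq{b_1}w,\dots,\seq{b_N}w)R$. Then $\seq Jw R_{\seq\pr w}=\seq\pr w R_{\seq\pr w}$, and contracting back to $R$ gives $\seq Jw R_{\seq\pr w}\cap R=\seq\pr w$. By the standard primary-decomposition description of the contraction of an ideal from a localization in a Noetherian ring, $\seq Jw R_{\seq\pr w}\cap R=(\seq Jw:\seq sw)$ for a single $\seq sw\in R\setminus\seq\pr w$ (one takes $\seq sw$ inside the intersection of those primary components of $\seq Jw$ whose radical is not contained in $\seq\pr w$). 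Hence $\seq\pr w=(\seq Jw:\seq sw)$ with $\seq sw\notin\seq\pr w$, for almost all $w$. Forming ultraproducts, and using---exactly as in the proof of Proposition~\ref{P:ulscheme}---that the colon of a finitely generated ideal by an element commutes with ultraproducts, I obtain $\ul\pr=((\ul{b_1},\dots,\ul{b_N})\ul R:\ul s)$ with $\ul s\notin\ul\pr$; since $(\ul{b_1},\dots,\ul{b_N})\ul R$ is finitely generated, this exhibits $\ul\pr$ as a strongly \fr\ prime ideal.

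The step I expect to be the main obstacle is the identification $(\ul R)_{\ul\pr}\iso S$: localization does not commute with ultraproducts in general, and what makes it work here is precisely that $\ul\pr$ is itself realized as the ultraproduct of the $\seq\pr w$, so that ``being a denominator avoiding $\ul\pr$'' transfers correctly through \los. Once this is settled, the rest is a routine combination of the bound $\gamma(R)<\infty$ with first-order transfer.
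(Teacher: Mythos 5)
Your proposal is correct and follows essentially the same route as the paper's proof: realize $\ul\pr$ as an ultraproduct of primes of $R$ via Proposition~\ref{P:ulscheme}, invoke the uniform bound $\gamma(R)$ from Proposition~\ref{P:unifembdim}, identify $(\ul R)_{\ul\pr}$ with the ultraproduct of the $R_{\seq\pr w}$, and transfer a uniformly bounded presentation $\seq\pr w=(\seq Jw:\seq sw)$ through \los. You merely flesh out two steps the paper states without proof (the localization-ultraproduct identification and the existence of the single element $\seq sw$ via primary decomposition), and both verifications are sound.
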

\begin{proof}
Let  $\pr$ be a \fr\ prime ideal of $\ul R$. By Proposition~\ref{P:ulscheme}, we can find prime ideals $\seq\pr w$ in $R$ with ultraproduct equal to $\pr$. Let $\gamma(R)$ be the bound given by Proposition~\ref{P:unifembdim} on the embedding dimension of all $R_{\seq\pr w}$. Since $(\ul R)_\pr$ is the ultraproduct of the   $R_{\seq\pr w}$, its embedding dimension is at most $\gamma(R)$ as well. In fact, we can find ideals $\seq Iw\sub\seq \pr w$ generated by at most $\gamma(R)$ elements, so that $\seq IwR_{\seq\pr w}=\seq\pr wR_{\seq\pr w}$. Hence, there exists $\seq aw\notin\seq\pr w$, such that $(\seq Iw:\seq aw)=\seq\pr w$. Taking ultraproducts, we see that $\pr$ is strongly \fr\ (see \S\ref{s:frdim} for the definition).
\end{proof}

In fact, we have the following more general version of the second assertion.

\begin{proposition}\label{P:sfr}
A \fr\ prime ideal $\pr$ in an ultra-Noetherian local ring   $\ul R$ is strongly \fr\ \iff\ $(\ul R)_\pr$ has finite \pdim.
\end{proposition}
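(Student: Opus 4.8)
The plan is to read the problem off from the Noetherian local rings $\seq Rw$ whose ultraproduct is $\ul R$, using \los. Throughout, recall that a local ring has finite \pdim\ precisely when it has finite embedding dimension, so what must be shown is: $\pr$ is strongly \fr\ \iff\ the maximal ideal $\pr(\ul R)_\pr$ of $(\ul R)_\pr$ is finitely generated. Two preliminary facts set the stage. First, since $\pr$ is \fr, Proposition~\ref{P:ulscheme} lets me write $\pr$ as the ultraproduct of prime ideals $\seq\pr w\sub\seq Rw$. Second, localization commutes with ultraproducts, as already used in the proof of Corollary~\ref{C:loculpow}, so $(\ul R)_\pr$ is the ultraproduct of the Noetherian local rings $(\seq Rw)_{\seq\pr w}$ and $\pr(\ul R)_\pr$ is the ultraproduct of the ideals $\seq\pr w(\seq Rw)_{\seq\pr w}$.

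For the implication `$\pr$ strongly \fr\ $\Rightarrow$ $(\ul R)_\pr$ has finite \pdim' (which needs nothing about ultraproducts), write $\pr=(I:a)$ with $I$ finitely generated and $a\notin\pr$. Since $a$ is a unit in $(\ul R)_\pr$, I would check directly that $\pr(\ul R)_\pr=I(\ul R)_\pr$: the inclusion $\supseteq$ holds because $I\sub(I:a)=\pr$, and $\sub$ because every $x\in\pr$ satisfies $xa\in I$, hence $x=(xa)a^{-1}\in I(\ul R)_\pr$. Thus $\pr(\ul R)_\pr$ is finitely generated.

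For the converse, assume $(\ul R)_\pr$ has finite \pdim, hence finite embedding dimension, say $m$. Being $m$-generated is a first-order property of the maximal ideal, which is itself first-order definable as the set of non-units, so \los\ yields that almost all $(\seq Rw)_{\seq\pr w}$ have embedding dimension at most $m$. Fix such a $w$: clearing denominators I may choose $\seq{a_1}w,\dots,\seq{a_m}w\in\seq\pr w$ whose images generate $\seq\pr w(\seq Rw)_{\seq\pr w}$, and set $\seq Jw:=(\seq{a_1}w,\dots,\seq{a_m}w)\seq Rw$. Here the \emph{ultra-Noetherian} hypothesis enters decisively: the $\seq Rw$-module $\seq\pr w/\seq Jw$ is finitely generated and vanishes after localization at $\seq\pr w$, so it is annihilated by some $\seq sw\notin\seq\pr w$, that is, $\seq sw\seq\pr w\sub\seq Jw$; together with $\seq Jw\sub\seq\pr w$ and the primeness of $\seq\pr w$, this forces $\seq\pr w=(\seq Jw:\seq sw)$. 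Finally I take ultraproducts: putting $\ul{a_i}$ for the ultraproduct of the $\seq{a_i}w$ and $\ul s$ for that of the $\seq sw$, \los\ gives $\ul s\notin\pr$, and, exactly as in the proof of Proposition~\ref{P:ulscheme}, the ideal $\ul J:=(\ul{a_1},\dots,\ul{a_m})\ul R$ is the ultraproduct of the $\seq Jw$ while $(\ul J:\ul s)$ is the ultraproduct of the $(\seq Jw:\seq sw)$. Hence $\pr=(\ul J:\ul s)$ with $\ul J$ finitely generated and $\ul s\notin\pr$, i.e.\ $\pr$ is strongly \fr.

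The main obstacle is not any single hard estimate but the infrastructure supporting the converse: one must be confident that $(\ul R)_\pr$ really is the model-theoretic ultraproduct of the component localizations $(\seq Rw)_{\seq\pr w}$ (so \los\ can be applied in both directions), that finite embedding dimension descends to almost all of these components as a first-order statement, and that passing to the finitely generated ideal $\ul J$ and to the colon ideal $(\ul J:\ul s)$ commutes with the ultraproduct. Granting this dictionary, the argument is merely the transfer through it of the trivial Noetherian fact that, for a Noetherian local ring $A$ with prime ideal $\mathfrak q$, the extension $\mathfrak q A_{\mathfrak q}$ is $m$-generated exactly when $\mathfrak q=(J:s)$ in $A$ for some $m$-generated ideal $J$ and some $s\notin\mathfrak q$.
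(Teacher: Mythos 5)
Your proof is correct and takes essentially the same route as the paper: the easy direction is identical, and for the converse both arguments realize $\pr$ as an ultraproduct of primes $\seq\pr w$ via Proposition~\ref{P:ulscheme}, use Noetherianity of the components to produce $\seq sw\notin\seq\pr w$ with $\seq\pr w=(\seq Jw:\seq sw)$ for uniformly finitely generated $\seq Jw$, and ascend by taking ultraproducts of the colon ideals. The only cosmetic difference is that the paper fixes a finitely generated ideal $I\sub\pr$ in $\ul R$ with $I(\ul R)_\pr=\pr(\ul R)_\pr$ and descends it by \los, whereas you transfer the embedding-dimension bound to the localizations $(\seq Rw)_{\seq\pr w}$ and choose generators componentwise.
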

\begin{proof}
Note that a local ring has finite \pdim\ \iff\ it has finite embedding dimension. One direction is true in any ring $A$: if $\pr$ is strongly \fr, say, of the form $(I:s)$ with $I\sub A$ finitely generated and $s\notin\pr$, then $\pr A_\pr=IA_\pr$, showing that $A_\pr$ has finite embedding dimension.

Conversely, suppose $(\ul R)_\pr$ has finite \pdim, whence finite embedding dimension. In particular, there exists a finitely generated ideal $I\sub \pr$ such that $I(\ul R)_\pr=\pr(\ul R)_\pr$. By \los\ and Proposition~\ref{P:ulscheme}, we can find   ideals $\seq Iw\sub\seq \pr w$ so that their respective ultraproducts are $I$ and $\pr$. In particular, almost all $\seq\pr w$ are prime and $\seq Iw(\seq Rw)_{\seq\pr w}=\seq\pr w(\seq Rw)_{\seq\pr w}$, for almost all $w$. Hence, we can find $\seq sw\notin\seq\pr w$ such that $\seq\pr w=(\seq Iw:\seq sw)$. Letting $\ul s$ be the ultraproduct of the $\seq sw$, we get $\pr=(I:\ul s)$ and $\ul s\notin\pr$, showing that $\pr$ is strong.
\end{proof}

\subsection{First-order properties}\label{s:fop}
A property $\mathbf P$ of rings is called \emph{first-order} if  there
exists a first-order theory $\Pi$, in the language of rings, such that $R$ is a
model of $\Pi$ \iff\ $R$ satisfies $\mathbf P$.   \los\ states that if
$\mathbf P$ is
first-order, then   ultra-$\mathbf P$ implies $\mathbf P$. Although we will not use this here, the
converse is also true,  due to a theorem of Keisler-Shelah (see for instance
\cite[Theorem 9.5.7]{Hod}). It follows that if $\mathbf P$ is not first-order,
then there exists an ultra-ring $\ul S$ which is at the same time ultra-$\mathbf
P$
and ultra-non-$\mathbf P$. Indeed, by what we just said, there exist $\seq
Rw$ of bounded embedding dimension satisfying $\mathbf
P$ so that there ultraproduct $\ul R$ does not satisfy $\mathbf P$. Let $\ul S$ be any
ultrapower of $\ul R$. Since $\ul S$ is then also an ultraproduct of the $\seq
Rw$, but for a larger underlying index set, $\ul S$ is both ultra-$\mathbf P$
and ultra-non-$\mathbf P$. 

For an ultra-Noetherian example, consider the property $\mathbf
C_0$: `being a Noetherian local ring of \ch\
zero'. The ultraproduct $\ul V$ of all the rings of $p$-adic integers $\zet_p$
(with respect to some non-principal ultrafilter on the set of prime numbers)
is ultra-$\mathbf C_0$, but by the Ax-Kochen-Ershov theorem, this ring can also
be realized as the ultraproduct of non-$\mathbf C_0$ local rings, to wit, the
$\pow{\mathbb F_p}t$, where $t$ is a single indeterminate and $\mathbb F_p$ is the
$p$ element field (see also Example~\ref{E:ulNN} below).  

\subsection*{Cataproducts} 
Let $\seq Rw$ be Noetherian local rings of bounded embedding dimension and let
$\ul R$ be their ultraproduct. The separated quotient
 of $\ul R$, that is to say, the factor ring $\ulsep R:=\ul R/\inf {\ul R}$, is
called the \emph{cataproduct} of the $\seq Rw$. If all $\seq Rw$ are equal to a single ring $R$, then we call $\ulsep R$ the \emph{catapower} of $R$. This terminology is justified
by:

\begin{lemma}\label{L:ulcomp} 
The cataproduct of  local 
rings of bounded embedding dimension is equal to the  completion of their
ultraproduct, whence in particular is  Noetherian.
\end{lemma}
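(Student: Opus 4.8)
Let $(\seq Rw)_w$ be Noetherian local rings of bounded embedding dimension, with ultraproduct $(\ul R,\ulmax)$. By hypothesis almost all $\seq Rw$ have embedding dimension at most some $m$, so by \los\ the maximal ideal $\ulmax$ is generated by at most $m$ elements; thus $\ul R$ has finite embedding dimension and Theorem~\ref{T:finembcomp} applies: $\complet{\ul R}$ is Noetherian with maximal ideal $\ulmax\complet{\ul R}$, and for every $n$ we have $R/\ulmax^n \iso \complet{\ul R}/\ulmax^n\complet{\ul R}$. Since the cataproduct $\ulsep R = \ul R/\inf{\ul R}$ is by construction the $\ulmax$-adic separated quotient of $\ul R$, it has the same completion as $\ul R$ (as recorded in \S\ref{s:adic}). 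So the entire content of the lemma is to show that $\ulsep R$ is already $\ulmax$-adically complete; granting that, $\ulsep R$ coincides with its own completion $=\complet{\ul R}$, which is Noetherian by Theorem~\ref{T:finembcomp}, and we are done. The plan is therefore to prove completeness of $\ulsep R$ directly.

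**The main step: completeness via countable incompleteness.** Fix $f\colon W\to\nat$ witnessing countable incompleteness, so that for each $k$ the set $W_k := \{w : f(w)\geq k\}$ lies in the ultrafilter. Given a Cauchy sequence in $\ulsep R$ — equivalently, a compatible system of elements $\bar a_n \in \ulsep R/\ulmax^n\ulsep R$ — I would lift each $\bar a_n$ to some $a_n\in\ul R$, and then lift $a_n$ in turn to a sequence $(\seq{a_n}w)_w$ with $\seq{a_n}w\in\seq Rw$. The key move is a diagonalization over the sets $W_k$: for $w\in W_k\setminus W_{k+1}$ (i.e. $f(w)=k$), set $\seq bw := \seq{a_k}w$, and let $\ul b$ be the ultraproduct of the $\seq bw$; then one checks, using that $a_{n}-a_{n'}\in\ulmax^{\min(n,n')}$ in $\ul R$ up to an infinitesimal and that $\seq\maxim w^{k}$ is respected along $W_k$, that the image of $\ul b$ in $\ulsep R$ is congruent to $\bar a_n$ modulo $\ulmax^n$ for every $n$. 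This produces the desired limit, so $\ulsep R$ is complete. (One could alternatively phrase this as: the natural map $\ulsep R\to\varprojlim \ulsep R/\ulmax^n\ulsep R = \varprojlim R/\ulmax^n = \complet{\ul R}$ is an isomorphism — injectivity is immediate since $\ulsep R$ is separated, and surjectivity is exactly the diagonalization above.)

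**The expected obstacle.** The routine parts — finite embedding dimension, $\complet{\ul R}$ Noetherian, same completion as $\ul R$ — all follow from \S\ref{s:finemb} with no effort. The one genuinely non-formal point is the diagonalization: one must be careful that the $\ulmax$-adic filtration on $\ul R$ is not a priori the extended filtration from the $\seq\maxim w^n$ in a way that behaves well under the ultraproduct (indeed $\ulmax^n$ is the ultraproduct of the $\seq\maxim w^n$ by \los, which is precisely what makes this work, but the infinitesimals $\inf{\ul R}$ are exactly what obstructs naive separatedness of $\ul R$ itself). So the heart of the argument is checking that the diagonal element $\ul b$ genuinely realizes the prescribed residues modulo every power of $\ulmax$ after passing to $\ulsep R$; this is where countable incompleteness of the ultrafilter is used, and it is the only place in the lemma where it is needed. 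Once that verification is in hand, the identification $\ulsep R = \complet{\ul R}$ and hence Noetherianity are immediate from Theorem~\ref{T:finembcomp}.
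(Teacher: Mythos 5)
Your strategy is the same as the paper's: reduce everything to showing that the separated quotient $\ulsep R$ is complete, and manufacture limits by diagonalizing along a function $f$ witnessing countable incompleteness. But the step you dispose of with ``one checks'' is precisely where the argument, as you have set it up, breaks down, and it is where almost all of the paper's proof lives. For a fixed $n$, \los\ only gives $\seq aw(n)\equiv\seq aw(n+1)\bmod \seq\maxim w^n$ for $w$ in some ultrafilter set $D_n$ depending on $n$. To conclude that the diagonal element $\seq bw:=\seq aw(f(w))$ satisfies $\seq bw\equiv\seq aw(n)\bmod\seq\maxim w^n$ for almost all $w$, you need, at each individual index $w$, the entire chain of congruences for $j=n,\dots,f(w)-1$, i.e.\ membership in unboundedly many of the $D_j$ simultaneously; that is not a finite intersection and is not delivered by \los. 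Worse, with arbitrary lifts the diagonal is not even determined by the data $\ul a(0),\ul a(1),\dots$: for $w$ with $f(w)=k$, the value $\seq bw$ depends only on the representative chosen for $\ul a(k)$ on the set $\{f=k\}$, which is \emph{not} in the ultrafilter, so by altering representatives on these negligible sets (which changes no $\ul a(n)$) one can make $\ul b$ equal to any prescribed element of $\ul R$. Hence the asserted congruences are simply false for a general choice of lifts, and the issue is not where countable incompleteness enters but the coherence of the representatives.

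The missing ingredient is a synchronization of representatives before diagonalizing. The paper does this by a recursive modification: at the (small) set of indices where the congruence at level $n$ fails, it replaces the representative of $\ul a(n+1)$ by that of $\ul a(n)$, so that afterwards $\seq aw(n)\equiv\seq aw(n+1)\bmod\seq\maxim w^n$ holds for \emph{all} $n$ and \emph{all} $w$, while each ultraproduct $\ul a(n)$ is unchanged; only then does the diagonal along $f$ satisfy the required congruences, by applying \los\ for each fixed $n$ on the set $\{f\geq n\}$. (An equivalent repair is to shrink the diagonalizing function instead, e.g.\ taking $g(w)$ to be the largest $k\leq f(w)$ with $w\in D_0\cap\dots\cap D_{k-1}$ and diagonalizing along $g$.) With such a correction inserted, your argument coincides with the paper's; the surrounding reductions---finite embedding dimension of $\ul R$ via \los, the identification of the completions of $\ul R$ and $\ulsep R$, injectivity from separatedness, and Noetherianity via Theorem~\ref{T:finembcomp}---are all fine. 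Also note a small simplification available to you: since $\inf{\ul R}\sub\ulmax^n$ for every $n$, your lifts automatically satisfy $a_n-a_{n+1}\in\ulmax^n$ exactly, not merely up to an infinitesimal.
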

\begin{proof} 
Let $(\ul R,\ulmax)$ be the ultraproduct of
Noetherian local rings $(\seq Rw,\seq\maxim w)$ of embedding dimension at most
$e$,
and let $\ulsep R$ be their cataproduct, that is to say, $\ul R/\inf{\ul R}$.
We start with showing
that any Cauchy sequence $\ul{\tuple a}\colon\nat\to \ul R$ has a limit.  After
taking
 a subsequence if necessary, we may assume that
$\ul{\tuple a}(n)\equiv\ul{\tuple a}(n+1)\mod\ulmax^n$, for all $n$.
 For each $n$, choose
  $\seq {\tuple a}w(n)\in
\seq Rw$ such that their ultraproduct is equal to $\ul{\tuple a}(n)$.
 By \los, we have for a fixed $n$ that
\begin{equation}\label{eq:almostCS}
\seq {\tuple a}w(n)\equiv \seq {\tuple a}w(n+1)\mod\seq\maxim w^n
\end{equation}
for almost all $w$, say, for all $w$ in $D_n$. I claim that we can modify the 
$\seq
{\tuple a}w(n)$ in such way that \eqref{eq:almostCS} holds for all $n$ and all
$w$. More precisely, for each $n$ there exist
$\seq{\tilde{\tuple a}}w(n)$ with ultraproduct equal to $\ul{\tuple a}(n)$, such
that 
\begin{equation}
\label{eq:allCS}
\seq {\tilde{\tuple a}}w(n)\equiv \seq {\tilde{\tuple a}}w(n+1)\mod\seq\maxim w^n
\end{equation}
for all $n$ and $w$.
We will construct the $\seq{\tilde{\tuple a}}w(n)$ recursively from the
$\seq{\tuple a}w(n)$.
When $n=0$, no modification is required 
(since by assumption $\seq\maxim w^0=\seq Rw$), and hence we set $\seq{\tilde{\tuple
a}}w(0):=\seq{{\tuple a}}w(0)$ and $\seq{\tilde{\tuple
a}}w(1):=\seq{{\tuple a}}w(1)$. So assume we have defined already the
$\seq{\tilde{\tuple a}}w(j)$ for $j\leq n$  such that \eqref{eq:allCS}  holds
for all $w$. Now, for those $w$ for which
\eqref{eq:almostCS} fails for some $j\leq n$, that is to say, for $w\notin
(D_0\cup\dots\cup D_n)$,
let $\seq {\tilde{\tuple a}}w(n+1)$ be equal to $\seq {\tilde{\tuple a}}w(n)$;
for the remaining $w$, that is to say, for  almost all $w$, we make no
changes: $\seq{\tilde{\tuple a}}w(n+1):=\seq{{\tuple
a}}w(n+1)$. It is now easily seen that \eqref{eq:allCS} holds for all $w$. Since, for every  $n$, almost each $\seq{\tilde{\tuple a}}w(n)$ is equal to $\seq {\tuple a}w(n)$, their ultraproduct   is  $\ul{\tuple a}(n)$,
thus establishing our claim.

So we may assume \eqref{eq:almostCS} holds for all $n$ and $w$.  
Let $f\colon W\to \nat$ be a function on the index set 
$W$ such that for each $n$, almost all $f(w)\geq n$ (this is where we use that the ultrafilter is countably
incomplete; if $W=\nat$, we can of course simply take the identity map). Let $\ul
b$ be the ultraproduct of the $\seq {\tuple a}w(f(w))$. Since $\seq {\tuple
a}w(f(w))\equiv\seq {\tuple a}w(n)\mod\seq\maxim w^n$ for almost all $w$ by \eqref{eq:allCS},
\los\
yields
$\ul b\equiv\ul{\tuple a}(n)\mod\ulmax^n$, for each $n$, showing that $\ul
b$ is a
limit of $\ul{\tuple a}$. Although this limit might not be unique, it will be
in the separated quotient $\ulsep R$, showing that the latter is a complete local
ring, equal therefore to
$\complet {\ul R}$. Noetherianity now follows from Theorem~\ref{T:finembcomp}.
\end{proof}

\begin{corollary}\label{C:clos}
The closure of an ideal $I$ in an ultra-Noetherian ring $\ul R$ is equal to
$I+\inf {\ul R}$. In particular, if $\ulsep R$ is the cataproduct of the $\seq Rw$, and $\ul I$ the ultraproduct of ideals $\seq Iw\sub\seq Rw$, 
then $\ulsep R/\ul I\ulsep R$ is the cataproduct of the $\seq Rw/\seq Iw$.
\end{corollary}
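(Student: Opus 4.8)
The plan is to deduce the first statement directly from results already in hand, and then to extract the ``in particular'' by a short computation with quotient maps. For an arbitrary ideal $I\sub\ul R$: since $\ul R$ is ultra-Noetherian it has finite embedding dimension, so the observation following Lemma~\ref{L:quot} gives that its closure equals $I\complet{\ul R}\cap\ul R$. By Lemma~\ref{L:ulcomp} the completion $\complet{\ul R}$ is the cataproduct $\ulsep R=\ul R/\inf{\ul R}$, and under this identification the canonical map $\ul R\to\complet{\ul R}$ becomes the quotient map modulo $\inf{\ul R}$; in particular it is \emph{surjective}, a feature of the cataproduct situation that is not available for a general local ring of finite embedding dimension. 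Hence $I\complet{\ul R}$ is just the image $(I+\inf{\ul R})/\inf{\ul R}$ of $I$, whose contraction to $\ul R$ is the full preimage $I+\inf{\ul R}$, as claimed. (One can also argue directly: $\ulsep R$ is Noetherian with maximal ideal $\ulmax\ulsep R$, so Krull's intersection theorem applied to $\ulsep R/I\ulsep R$ gives $\bigcap_n\bigl(I\ulsep R+\ulmax^n\ulsep R\bigr)=I\ulsep R$; pulling this equality back along the quotient map $\ul R\to\ulsep R$ identifies the closure of $I$ with the preimage of $I\ulsep R$, namely $I+\inf{\ul R}$.)

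For the second statement, write $\ul R$ for the ultraproduct of the $\seq Rw$ and $\ul I$ for the ultraproduct of the $\seq Iw$. Ultraproducts commute with the formation of quotient rings (by \los), so the ultraproduct of the $\seq Rw/\seq Iw$ is $\ul R/\ul I$ and their cataproduct is $(\ul R/\ul I)/\inf{\ul R/\ul I}$. Now $\inf{\ul R/\ul I}=\bigcap_n\ulmax^n(\ul R/\ul I)=\bigl(\bigcap_n(\ulmax^n+\ul I)\bigr)/\ul I$, and by the first statement applied to $\ul I\sub\ul R$ this equals $(\ul I+\inf{\ul R})/\ul I$; hence the cataproduct of the $\seq Rw/\seq Iw$ is $\ul R/(\ul I+\inf{\ul R})$. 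On the other hand $\ulsep R=\ul R/\inf{\ul R}$ and $\ul I\ulsep R$ is the image of $\ul I$, namely $(\ul I+\inf{\ul R})/\inf{\ul R}$, so $\ulsep R/\ul I\ulsep R$ is also $\ul R/(\ul I+\inf{\ul R})$. The two rings therefore coincide.

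I do not expect a genuine obstacle here: the substantive input --- that the cataproduct is Noetherian and equals the completion of the ultraproduct --- is Lemma~\ref{L:ulcomp}, and the remainder is bookkeeping with the various quotient maps together with the standard fact that ultraproducts commute with quotients. The one point that deserves care is the surjectivity of $\ul R\to\complet{\ul R}$, which makes the identification of the closure with $I+\inf{\ul R}$ transparent; this surjectivity genuinely relies on $\complet{\ul R}$ being Noetherian, so the analogous description of closures for an arbitrary local ring of finite embedding dimension would be correspondingly less clean.
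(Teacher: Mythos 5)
Your argument is correct and follows essentially the same route as the paper: Lemma~\ref{L:ulcomp} identifies $\complet{\ul R}$ with the Noetherian quotient $\ulsep R=\ul R/\inf{\ul R}$, Krull's intersection theorem makes $\ul I\ulsep R$ closed, and Lemma~\ref{L:quot} (the closure equals the contraction of the extension) yields both assertions. Your handling of the ``in particular'' by computing $\inf{\ul R/\ul I}=(\ul I+\inf{\ul R})/\ul I$ directly is just a slightly more explicit form of the paper's appeal to Lemma~\ref{L:quot}.
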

\begin{proof}
Since $\ulsep R:=\ul R/\inf{\ul  R}$ is Noetherian by Lemma~\ref{L:ulcomp}, the
ideal $I\ulsep R$ is closed by Krull's intersection theorem.  All assertions now
follow from   Lemma~\ref{L:quot}.
\end{proof}

\begin{corollary}\label{C:compulsep}
The cataproduct $\ulsep R$ of Noetherian local rings $\seq Rw$ of
bounded embedding dimension is equal to the cataproduct $\ulsep S$ of
their completions.
\end{corollary}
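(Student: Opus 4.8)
The plan is to produce a canonical map $\ulsep R\to\ulsep S$ and show it is an isomorphism by checking, via Lemma~\ref{L:ulcomp}, that $\ul R$ and the ultraproduct $\ul S$ of the completions have the same $\maxim$-adic completion. Write $(\seq Sw,\seq{\mathfrak n}w):=(\complet{\seq Rw},\seq\maxim w\complet{\seq Rw})$, a complete Noetherian local ring. Since $\seq Rw/\seq\maxim w^n\iso\seq Sw/\seq{\mathfrak n}w^n$ for all $n$, the embedding dimension of $\seq Sw$ equals that of $\seq Rw$, so the $\seq Sw$ again have bounded embedding dimension and their cataproduct $\ulsep S$ is defined; by Lemma~\ref{L:ulcomp} we have $\ulsep R=\complet{\ul R}$ and $\ulsep S=\complet{\ul S}$. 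The completion maps $\seq Rw\to\seq Sw$ induce, by functoriality of ultraproducts, a \homo\ $\ul R\to\ul S$, which is local because $\seq\maxim w\seq Sw=\seq{\mathfrak n}w$ for each $w$; passing to completions, this yields the desired map $\ulsep R\to\ulsep S$.

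To see it is an isomorphism it suffices to show that $\ul R\to\ul S$ induces an isomorphism $\ul R/\ulmax^n\iso\ul S/\ul{\mathfrak n}^n$ for every $n$, where $\ul{\mathfrak n}$ is the maximal ideal of $\ul S$, that is to say (by \los), the ultraproduct of the $\seq{\mathfrak n}w$. Fix $n$. Because the $\seq Rw$ have embedding dimension bounded by some $e$, each $\seq\maxim w^n$ is generated by the finitely many monomials of degree $n$ in a fixed sequence of $e$ generators of $\seq\maxim w$, and hence by \los\ the ultraproduct of the ideals $\seq\maxim w^n$ is exactly $\ulmax^n$; consequently $\ul R/\ulmax^n$ is the ultraproduct of the $\seq Rw/\seq\maxim w^n$, and similarly $\ul S/\ul{\mathfrak n}^n$ is the ultraproduct of the $\seq Sw/\seq{\mathfrak n}w^n$. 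For each $w$, the map $\seq Rw\to\seq Sw$ induces the standard isomorphism $\seq Rw/\seq\maxim w^n\iso\seq Sw/\seq{\mathfrak n}w^n$ (a Noetherian local ring agrees with its completion modulo each power of the maximal ideal), and the ultraproduct of these isomorphisms is again an isomorphism, namely the map $\ul R/\ulmax^n\to\ul S/\ul{\mathfrak n}^n$ induced by $\ul R\to\ul S$. This proves the claim, and therefore $\ulsep R\iso\ulsep S$.

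The only point requiring the hypothesis of bounded embedding dimension is the commutation of ``$n$-th power of the maximal ideal'' with ultraproducts; beyond that the argument is a routine application of \los\ together with Lemma~\ref{L:ulcomp}, so I anticipate no genuine obstacle.
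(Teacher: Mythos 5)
Your proof is correct and follows essentially the same route as the paper: both arguments come down to showing that $\ul R$ and the ultraproduct $\ul S$ of the completions have the same completion (the paper phrases this as ``$\ulmax\ul S=\ul{\mathfrak n}$ and $\ul R$ is dense in $\ul S$'' via \los, while you verify the induced isomorphisms $\ul R/\ulmax^n\iso\ul S/\ul{\mathfrak n}^n$ directly), and then invoke Lemma~\ref{L:ulcomp}. Your explicit use of bounded embedding dimension to identify $\ulmax^n$ with the ultraproduct of the $\seq\maxim w^n$ just spells out a detail the paper leaves implicit.
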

\begin{proof}
Let $(\ul R,\ulmax)$ and $(\ul S,\ul{\mathfrak n})$ be the ultraproduct of
respectively the $\seq Rw$ and the $\seq {\complet R}w$. By \los, $\ulmax
\ul S=\ul{\mathfrak n}$ and $\ul R$ is dense in $\ul S$. Hence both rings have
the same completion, which  by Lemma~\ref{L:ulcomp} is respectively the cataproduct
of the $\seq Rw$ and of the $\seq{\complet R}w$.
\end{proof}

However, this is not the only case in which different rings can have the same 
cataproduct. Let $(R,\maxim)$ be a local ring of
finite embedding dimension. A filtration $\mathfrak I=(I_n)_n$ on $R$ is called
\emph{analytic} if its extension $\mathfrak I\complet R$ induces a Haussdorf
topology  on $\complet R$, or, equivalently, if the intersection of all
$I_n\complet R$ is zero. In particular, the $\maxim$-adic filtration is
analytic by Theorem~\ref{T:finembcomp}. Given two filtrations $\mathfrak
I=(I_n)_n$ and $\mathfrak J=(J_n)_n$,
we say that $\mathfrak I$ is \emph{bounded} by $\mathfrak J$,  if the $\mathfrak I$-adic
topology is stronger than or equal to the $\mathfrak J$-adic topology, that is
to say, for each fixed $N$, we have $I_n\sub J_N$ for all sufficiently big
$n$. 

\begin{lemma}[Chevalley]\label{L:Chev}
A filtration on a Noetherian local ring $(R,\maxim)$ is analytic \iff\ it is
bounded by the $\maxim$-adic filtration.
\end{lemma}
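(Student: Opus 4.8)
The plan is to reduce everything to the completion $A:=\complet R$, which is a Noetherian local ring with maximal ideal $\maxim A$ by Theorem~\ref{T:finembcomp}, and there prove the classical form of Chevalley's lemma for complete Noetherian local rings. Unravelling the definition, $\mathfrak I=(I_n)_n$ is analytic precisely when $\bigcap_n I_nA=0$, so I want to show this is equivalent to: for every $N$, $I_n\sub\maxim^N$ for all $n\gg 0$. The implication from boundedness to analyticity is the easy half: if $I_n\sub\maxim^N$ for $n\gg 0$, then $I_nA\sub\maxim^NA=(\maxim A)^N$ for $n\gg 0$, so $\bigcap_nI_nA\sub\bigcap_N(\maxim A)^N=0$ by the Krull intersection theorem in $A$. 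For the converse I would work with the descending chain of ideals $J_n:=I_nA$ of $A$, which satisfies $\bigcap_nJ_n=0$; once I show that for each fixed $N$ one has $J_n\sub\maxim^NA$ for all $n\gg 0$, contracting this inclusion along the faithfully flat map $R\to A$, using $I_nA\cap R=I_n$ and $\maxim^NA\cap R=\maxim^N$, yields $I_n\sub\maxim^N$ for $n\gg 0$, which is exactly boundedness.

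The core argument proceeds by contradiction: suppose that for some $N$ one has $J_n\not\sub\maxim^NA$ for every $n$. First I would exploit that each quotient $A/\maxim^kA$ is Artinian, so the descending chain formed by the images of the $J_n$ stabilizes; choosing a nondecreasing sequence of indices $m(k)\geq k$ past the point of stabilization, I set $L_k:=J_{m(k)}+\maxim^kA$ for $k\geq N$. The routine checks here are that the $L_k$ descend, that $L_{k+1}+\maxim^kA=L_k$ for all $k\geq N$ (using $m(k+1)\geq m(k)$ together with the stabilization at level $k$), and that $L_N\supsetneq\maxim^NA$ since $J_{m(N)}\not\sub\maxim^NA$.

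The heart of the matter — and the step I expect to be the main obstacle — is to manufacture a nonzero element of $\bigcap_nJ_n$ using completeness. Picking $x\in L_N\setminus\maxim^NA$ and using the relations $L_{k+1}+\maxim^kA=L_k$ repeatedly, I would build recursively a sequence $x_N=x,\,x_{N+1},\,x_{N+2},\dots$ with $x_k\in L_k$ and $x_{k+1}\equiv x_k\pmod{\maxim^kA}$. This sequence is Cauchy, so by completeness of $A$ it has a limit $x^{\ast}\in A$ with $x^{\ast}\equiv x_k\pmod{\maxim^kA}$ for all $k$; since $x^{\ast}\equiv x\pmod{\maxim^NA}$, we get $x^{\ast}\notin\maxim^NA$, in particular $x^{\ast}\neq 0$. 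On the other hand, fixing $n$ and choosing $k$ with $m(k)\geq n$ gives $L_k=J_{m(k)}+\maxim^kA\sub J_n+\maxim^kA$, so $x_k\in J_n+\maxim^kA$, and therefore $x^{\ast}\in J_n+\maxim^kA$; as the ideals $J_n+\maxim^kA$ are nested in $k$, this holds for every $k$, whence $x^{\ast}\in\bigcap_k\bigl(J_n+\maxim^kA\bigr)=J_n$ by the Krull intersection theorem in the Noetherian local ring $A/J_n$. As $n$ is arbitrary, $x^{\ast}\in\bigcap_nJ_n=0$, contradicting $x^{\ast}\neq 0$. I would stress that the completeness of $A=\complet R$ is genuinely used to produce $x^{\ast}$ — the analogous statement fails over a non-complete Noetherian local ring — which is precisely why \emph{analytic} is defined through the completion; the remaining manipulations are bookkeeping with $\maxim$-primary ideals and faithful flatness of $R\to\complet R$.
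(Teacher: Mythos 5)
Your proof is correct, and its skeleton is the same as the paper's: pass to the completion $\complet R$ (Noetherian, with maximal ideal $\maxim\complet R$), get boundedness~$\Rightarrow$~analyticity from Krull's intersection theorem, and get the hard direction from Chevalley's theorem for complete Noetherian local rings, contracting back along the faithfully flat map $R\to\complet R$. The only real difference is that the paper simply cites the classical Chevalley theorem (Matsumura, Exercise 8.7) at that point, whereas you reprove it: the stabilization of the images of the $J_n$ in the Artinian quotients $A/\maxim^kA$, the auxiliary ideals $L_k=J_{m(k)}+\maxim^kA$ with $L_{k+1}+\maxim^kA=L_k$, and the use of completeness to produce a limit $x^{\ast}\in\bigl(\bigcap_nJ_n\bigr)\setminus\maxim^NA$ constitute a correct and self-contained proof of that classical ingredient, at the cost of doing work the paper outsources to the literature.
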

\begin{proof}
If $\mathfrak I=(I_n)_n$ is analytic, then the intersection of all $I_n\complet
R$ is zero. By Chevalley's theorem (see for instance \cite[Exercise
8.7]{Mats})  we have for fixed $N$ an inclusion $I_n\complet
R\sub\maxim^N\complet R$ for
$n$ sufficiently big. By faithful flatness, $I_n\sub \maxim^N$ for $n\gg0$. The
converse is immediate from Krull's intersection theorem (see for instance
\cite[Theorem 8.10]{Mats}).
\end{proof}

\begin{corollary}\label{C:filt}
If  $(\seq In)_n$ is an analytic  filtration on  a   Noetherian  local ring
$R$, then the catapower $\ulsep R$ of $R$ is isomorphic to the cataproduct
$\ulsep S$ of the $R/\seq In$. 
\end{corollary}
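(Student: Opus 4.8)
The plan is to realise $\ulsep S$ as the completion of an explicit quotient of the ultrapower $\ul R$ of $R$, and then to observe that this quotient differs from $\ul R$ only by infinitesimals, so that it has the same completion. Throughout, fix a function $f\colon W\to\nat$ witnessing the countable incompleteness of the ultrafilter (as in \S\ref{s:ul}), put $\seq Sw:=R/I_{f(w)}$, and let $\ul S$ be the ultraproduct of the $\seq Sw$, so that $\ulsep S$ is by definition the cataproduct of the $\seq Sw$. Since each $\seq Sw$ is a homomorphic image of $R$, it has embedding dimension at most $\ed R$; hence the $\seq Sw$ have bounded embedding dimension, and Lemma~\ref{L:ulcomp} gives $\ulsep S=\complet{\ul S}$. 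Likewise $\ulsep R=\complet{\ul R}$, and $\ul R$ has finite embedding dimension $\ed R$.

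The next step is to construct the comparison map. The coordinatewise surjections $R\onto R/I_{f(w)}$ induce a surjection $\ul R\onto\ul S$; let $\ul I$ denote its kernel, so that $\ul I$ consists precisely of those $\ul a\in\ul R$ admitting a representative $(a_w)_w$ with $a_w\in I_{f(w)}$ for almost all $w$, and $\ul S\iso\ul R/\ul I$. The heart of the argument is the claim that $\ul I\sub\inf{\ul R}$. To see this, take $\ul a\in\ul I$ with a representative $(a_w)_w$ as above, and fix $N\in\nat$. Since $R$ is Noetherian and $(I_n)_n$ is analytic, Chevalley's Lemma~\ref{L:Chev} provides an index $k_N$ with $I_n\sub\maxim^N$ for all $n\geq k_N$; by countable incompleteness, $f(w)\geq k_N$ for almost all $w$, whence $a_w\in I_{f(w)}\sub\maxim^N$ for almost all $w$. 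Since $\maxim^N$ is finitely generated, this forces $\ul a\in\ulmax^N$. As $N$ was arbitrary, $\ul a\in\bigcap_N\ulmax^N=\inf{\ul R}$, proving the claim.

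Finally I would conclude by invoking Lemma~\ref{L:quot} for the ideal $\ul I$ in the finite-embedding-dimension ring $\ul R$: it yields $\complet{\ul S}=\complet{\ul R/\ul I}=\complet{\ul R}/\ul I\complet{\ul R}$. But $\inf{\ul R}$ is exactly the kernel of $\ul R\to\complet{\ul R}$, so the inclusion $\ul I\sub\inf{\ul R}$ forces $\ul I\complet{\ul R}=0$, and hence $\complet{\ul S}=\complet{\ul R}$. Together with the identifications $\ulsep S=\complet{\ul S}$ and $\ulsep R=\complet{\ul R}$ from the first step, this gives the desired isomorphism $\ulsep S\iso\ulsep R$ (and, in particular, shows that the cataproduct of the $R/I_n$ does not depend on the choice of $f$). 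The only genuine content is the inclusion $\ul I\sub\inf{\ul R}$, and even that reduces at once to the characterisation of analytic filtrations in Chevalley's lemma together with the defining property of a countably incomplete ultrafilter; everything else is bookkeeping with the results of \S\ref{s:finemb}.
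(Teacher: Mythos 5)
Your proof is correct and follows essentially the same route as the paper's: both take the natural surjection from the ultrapower of $R$ onto the ultraproduct of the quotients and use Chevalley's Lemma~\ref{L:Chev} together with \los\ (and the countable incompleteness of the ultrafilter) to see that everything in its kernel is infinitesimal, so that the separated quotients, i.e.\ the completions, coincide. The only cosmetic differences are that you conclude via Lemma~\ref{L:quot} applied to the kernel $\ul I\sub\inf{\ul R}$, where the paper instead checks injectivity of the induced map $\ulsep R\to\ulsep S$ directly (after reducing to $R$ complete, which neither argument really needs), and that you reindex the family as $R/I_{f(w)}$ instead of working with the index $n$ itself, which agrees with the statement upon taking $W=\nat$ and $f$ the identity.
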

\begin{proof}
Without loss of generality, we may assume $R$ is complete. The natural
surjections $R\to R/I_n$ induce a map
$\ulsep R\to\ulsep S$, which is again surjective by \los. Let $\ul x$ be an
element in the ultrapower $\ul R$ of $R$ so
that its image in $\ulsep R$ is in the kernel of $\ulsep R\to\ulsep S$. Choose
$\seq xn\in R$ with ultraproduct equal to $\ul x$ and fix   $N$. Since
$\ul
x\in\inf{\ul S}$, almost each $\seq xn\in\maxim^N(R/\seq In)$.  By Lemma~\ref{L:Chev},
almost each $I_n\sub\maxim^N$ and hence almost each $\seq xn\in\maxim^N$. By \los, $\ul x\in\maxim^N\ul
R$.  Since  
$N$ was arbitrary,   $\ul x$ lies in $\inf{\ul R}$ and hence its image is zero
in $\ulsep R$, showing that $\ulsep R\to\ulsep S$ is also injective. 
\end{proof}

It should be noted that the corresponding ultraproducts $\ul R$ and $\ul S$, however, are far from equal, as, for instance,  $\conspec{\ul S}$ is always a singleton by Proposition~\ref{P:ulscheme}.
Contrary to the Noetherian case, the natural map $R\to\complet R$ does
not need to be flat if $R$ has finite embedding dimension.
We nevertheless expect
some vestige of (faithful) flatness to hold. One example of this is given by
Lemma~\ref{L:quot}, namely $I=I\complet R\cap R$ for any closed
ideal $I$. It is well-known (see for instance
\cite[Theorem 2.2]{SchBetti}) that the latter property already follows from 
the vanishing of $\tor R1{\complet R}k$, where $k$ is the residue field of
$R$. For ultra-Noetherian local rings, where   completion and separated
quotient coincide by Lemma~\ref{L:ulcomp},  this latter property does indeed
hold:

\begin{proposition}\label{P:betti1}
For every ultra-Noetherian local ring $\ul R$ with residue field $\ul k$, we have  $\tor
{\ul R}1{\ulsep R}{\ul k}=0$.
\end{proposition}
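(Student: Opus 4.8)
The plan is to reduce the vanishing of $\tor{\ul R}1{\ulsep R}{\ul k}$ to the single identity $\inf{\ul R}=\ulmax\inf{\ul R}$ for the ideal of infinitesimals of $\ul R$, and then to establish that identity by an overspill argument on the index set, much as in the proof of Lemma~\ref{L:ulcomp}.

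First I would carry out the reduction. By Lemma~\ref{L:ulcomp} the cataproduct $\ulsep R$ equals $\ul R/\inf{\ul R}$, so there is a short exact sequence of $\ul R$-modules $\Exactseq{\inf{\ul R}}{\ul R}{\ulsep R}$. Tensoring over $\ul R$ with $\ul k=\ul R/\ulmax$ and using $\tor{\ul R}1{\ul R}{\ul k}=0$ gives an exact sequence
\begin{equation*}
0\to\tor{\ul R}1{\ulsep R}{\ul k}\to\inf{\ul R}\tensor_{\ul R}\ul k\xrightarrow{\alpha}\ul R\tensor_{\ul R}\ul k\xrightarrow{\beta}\ulsep R\tensor_{\ul R}\ul k\to0.
\end{equation*}
Since $\ulsep R$ is the completion of $\ul R$, Theorem~\ref{T:finembcomp} (applied to $\ul R$) shows its maximal ideal is $\ulmax\ulsep R$ and, by the case $n=1$ of the isomorphism $\ul R/\ulmax^n\iso\ulsep R/\ulmax^n\ulsep R$, its residue field is again $\ul k$; hence $\ulsep R\tensor_{\ul R}\ul k=\ulsep R/\ulmax\ulsep R=\ul k$, and $\beta$ is the natural, hence bijective, map $\ul R/\ulmax\to\ulsep R/\ulmax\ulsep R$. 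Therefore $\alpha=0$, and $\tor{\ul R}1{\ulsep R}{\ul k}\iso\inf{\ul R}\tensor_{\ul R}\ul k=\inf{\ul R}/\ulmax\inf{\ul R}$, so everything comes down to proving $\inf{\ul R}=\ulmax\inf{\ul R}$.

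For this, write $\ul R$ as an ultraproduct of Noetherian local rings $(\seq Rw,\seq\maxim w)$ of embedding dimension at most a fixed $e$, and fix $b\in\inf{\ul R}$ with representatives $\seq bw\in\seq Rw$. For almost all $w$ I would pick generators $\seq{y_1}w,\dots,\seq{y_e}w$ of $\seq\maxim w$ (padding by repetition), with ultraproducts $\ul y_i\in\ulmax$. By \los, for each $n$ the set of $w$ with $\seq bw\in\seq\maxim w^n$ lies in the ultrafilter, since $b\in\ulmax^n$. Invoking countable incompleteness, fix $f\colon W\to\nat$ with $\{w:f(w)\geq n\}$ in the ultrafilter for every $n$, and put $k(w):=\min\bigl(f(w),\ord{\seq Rw}{\seq bw}\bigr)$. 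Then $k(w)<\infty$, one has $\seq bw\in\seq\maxim w^{k(w)}$, and for every $n$ almost all $k(w)\geq n$ (intersect the two relevant members of the ultrafilter). For almost all $w$ with $k(w)\geq1$ I would expand $\seq bw$ as an $\seq Rw$-combination of degree-$k(w)$ monomials in the $\seq{y_i}w$ and collect the leading factor of each monomial, obtaining $\seq bw=\sum_{i=1}^e\seq{c_i}w\seq{y_i}w$ with $\seq{c_i}w\in\seq\maxim w^{k(w)-1}$ (set $\seq{c_i}w:=0$ on the remaining small index set). Let $\ul c_i$ be the ultraproduct of the $\seq{c_i}w$. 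Since almost all $k(w)-1\geq n$, \los\ gives $\ul c_i\in\ulmax^n$ for every $n$, that is, $\ul c_i\in\inf{\ul R}$; and $\sum_i\ul c_i\ul y_i=b$ by \los. As $b$ was arbitrary, $\inf{\ul R}\sub\ulmax\inf{\ul R}$, hence equality.

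The step I expect to be the main obstacle is exactly this identity $\inf{\ul R}=\ulmax\inf{\ul R}$: the ideal $\inf{\ul R}$ is usually not finitely generated, so Nakayama's lemma is unavailable, and although $b$ belongs to $\ulmax\cdot\ulmax^n$ for every $n$, one must extract a single coefficient tuple all of whose entries are infinitesimal — which is precisely what the diagonal function $k(w)$ accomplishes, and which is where the countable incompleteness of the ultrafilter enters (alternatively one may phrase this step using $\aleph_1$-saturation of $\ul R$). The only other point needing care is that almost every $\seq\maxim w$ is generated by the chosen $\seq{y_i}w$, so that $\seq\maxim w^{k(w)}$ is generated by the degree-$k(w)$ monomials in them; this is immediate since almost all $\seq Rw$ are Noetherian local of embedding dimension $\leq e$.
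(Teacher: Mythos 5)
Your proof is correct and follows essentially the same route as the paper: reduce via the short exact sequence $\Exactseq{\inf{\ul R}}{\ul R}{\ulsep R}$ to the identity $\inf{\ul R}=\ulmax\inf{\ul R}$, and then prove that identity by writing each representative as a combination of generators of $\seq\maxim w$ with coefficients of order one less than its order, so that the ultraproducts of the coefficients are infinitesimal. The only cosmetic difference is that the paper dispenses with countable incompleteness at this step by choosing non-zero representatives $\seq aw$, whose $\seq\maxim w$-adic order is then automatically finite, in place of your capping function $f$.
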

\begin{proof}
 From the exact sequence 
$$
\Exactseq {\inf {\ul R}}{\ul R}{\ulsep  R}
$$
we get after tensoring   over $\ul k$ an exact sequence
$$
0\to \tor {\ul R}1{\ulsep  R}{\ul k}\to \inf {\ul R}/\ulmax\inf {\ul
R}\to {\ul k}\to {\ul k}\to 0, 
$$
where $\ulmax$ is the maximal ideal of ${\ul R}$. In particular, the first
Betti number of $\ulsep  R$ vanishes \iff\ $\ulmax\inf {\ul R}=\inf {\ul R}$. To
prove the
latter equality, let $(\seq Rw,\seq\maxim w)$ be Noetherian local rings with ultraproduct   $\ul R$.
Let  $\ul a$ be a non-zero element in $\inf {\ul R}$ and choose non-zero $\seq
aw\in\seq Rw$ so
that their ultraproduct is equal to $\ul a$. Let $\ulmax$ be generated by
$\ul{x_1}, \dots,\ul{x_e}$
and,
for each $i$, choose $\seq{x_i}w\in\seq Rw$ whose ultraproduct equals $\ul
{x_i}$.
By \los, $\seq\maxim w=(\seq{x_1}w,\dots,\seq{x_e}w)\seq Rw$. If $\seq aw$ has
order $\seq nw$, then we can find  $\seq{b_i}w\in\seq Rw$ of order $\seq nw-1$
such that $\seq aw=\seq{x_1}w\seq{b_1}w+\dots+\seq{x_e}w\seq{b_e}w$. Let $\ul 
{b_i}$ be the
ultraproduct of the $\seq{b_i}w$. Fix some $N$. Since $\ul a\in\inf {\ul R}$, its order
is strictly bigger than $N$ and hence so is almost each  $\seq nw$. Therefore,
almost each $\seq{b_i}w$ has order at least $N$ and hence $\ul {b_i}\in\ulmax^N$.
Since this holds for all $N$, we get $\ul {b_i}\in \inf {\ul R}$. Since
$\ul a=\ul {x_1}\ul {b_1}+\dots+\ul {x_e}\ul {b_e}$ by \los, we are done.
\end{proof}

\begin{corollary}
Let $(\ul R,\ulmax)$ be an ultra-Noetherian local ring and $I$ an ideal in
$\ul R$. If $I$ is closed, then so is   $I\ulmax^n$ for every $n$.
\end{corollary}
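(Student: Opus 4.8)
The plan is to reduce the statement to the identity $\ulmax\inf {\ul R}=\inf {\ul R}$ established (as its key step) inside the proof of Proposition~\ref{P:betti1}, together with the description of closures furnished by Corollary~\ref{C:clos}.

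First I would reformulate closedness in the ultra-Noetherian setting. By Corollary~\ref{C:clos}, the closure of an arbitrary ideal $J$ of $\ul R$ equals $J+\inf {\ul R}$; hence $J$ is closed precisely when $\inf {\ul R}\subseteq J$. Thus the hypothesis that $I$ is closed translates into $\inf {\ul R}\subseteq I$, and it suffices to prove $\inf {\ul R}\subseteq I\ulmax^n$, since a second application of Corollary~\ref{C:clos} then shows $I\ulmax^n$ to be closed.

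To obtain that inclusion, I would iterate $\ulmax\inf {\ul R}=\inf {\ul R}$: a one-line induction on $n$ gives $\ulmax^n\inf {\ul R}=\inf {\ul R}$ for every $n$, and feeding in $\inf {\ul R}\subseteq I$ yields
\[
\inf {\ul R}=\ulmax^n\inf {\ul R}\subseteq\ulmax^n I=I\ulmax^n,
\]
as required. I do not anticipate any real obstacle here: all of the substance lies in Proposition~\ref{P:betti1}, whose proof produces $\ulmax\inf {\ul R}=\inf {\ul R}$ via a \los\ argument lifting a division of an infinitesimal of $\ul R$ by the generators of $\ulmax$ back to the factor rings. The one point demanding care is the equivalence ``closed $\iff$ contains $\inf {\ul R}$'', which rests on the coincidence of completion and separated quotient for ultra-Noetherian rings (Lemma~\ref{L:ulcomp}); once that is granted the argument is only a couple of lines.
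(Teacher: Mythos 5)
Your proof is correct and follows essentially the same route as the paper: Corollary~\ref{C:clos} gives that closedness of an ideal amounts to containing $\inf{\ul R}$, and the identity $\ulmax\inf{\ul R}=\inf{\ul R}$ from the proof of Proposition~\ref{P:betti1} (iterated to $\ulmax^n\inf{\ul R}=\inf{\ul R}$) then yields $\inf{\ul R}\sub I\ulmax^n$, so a second application of Corollary~\ref{C:clos} finishes the argument exactly as in the paper.
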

\begin{proof}
By Corollary~\ref{C:clos}, we have $\inf{\ul R}\sub I$. Since $\inf{\ul
 R}=\ulmax^n\inf{\ul R}$ by the proof
of Proposition~\ref{P:betti1}, we get   $\inf{\ul R}\sub I\ulmax^n$,
showing that $I\ulmax^n$ is closed by another application of
Corollary~\ref{C:clos}.
\end{proof}

We may extend the notion of cataproduct to modules as well: for each $w$, let
$\seq Mw$ be an $\seq Rw$-module, and let $\ul M$ be their ultraproduct. It follows that $\ul M$ is an $\ul R$-module. We
define the \emph{cataproduct} of the $\seq Mw$ as the $\ulsep R$-module $\ulsep M:=\ul M\tensor_{\ul R}\ulsep R=\ul M/\inf{\ul R}\ul M$ given by
base change. If $\seq Nw\sub\seq Mw$ are submodules, then $\ul N\sub \ul
M$. However, the induced \homo\ $\ulsep N\to\ulsep M$ may fail to be injective.
The following result is an exercise on \los\   (see for instance \cite{SchEC}), and the proof is left to the reader.

\begin{proposition}\label{P:ullen}
Let $\ul M$ and $\ulsep M$ be the respective ultraproduct and cataproduct of the $\seq Mw$. Almost each $\seq Mw$ is minimally generated by   $s$ elements (respectively,
has length $s$),
\iff\ $\ul M$ is minimally generated by  $s$ elements (respectively,
has length $s$), \iff\ so does $\ul M$. \qed
\end{proposition}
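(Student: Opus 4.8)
The plan is to split the triple equivalence into two halves. The passage between the statement about almost all $\seq Mw$ and the statement about $\ul M$ I would handle by \los\ once I check that both properties at issue are first-order; the passage between $\ul M$ and its quotient $\ulsep M=\ul M/\inf{\ul R}\ul M$ is elementary commutative algebra resting on the inclusion $\inf{\ul R}=\bigcap_n\ulmax^n\subseteq\ulmax$. Throughout I would work under the hypothesis that almost all $\seq Mw$ are finitely generated over $\seq Rw$ with a uniform bound on the number of generators --- harmless and automatic in the directions one normally needs, but genuinely required: without it the implication that $\ulsep M$ having the property forces $\ul M$ to have it can fail. Take $\seq Mw$ to be the fraction field of an ultra-\DVR, so that $\ulsep M=0$ while $\ul M\neq0$.

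First I would record that each property is first-order in the two-sorted language of modules over rings. A module $M$ over a ring $A$ has length $\le t$ exactly when there is no tuple $x_1,\dots,x_{t+1}$ in $M$ with $x_{i+1}\notin Ax_1+\dots+Ax_i$ for every $i\le t$, which is a universal first-order sentence; hence the property of having length exactly $s$ is first-order, being the conjunction of length $\le s$ with the negation of length $\le s-1$. Likewise being generated by $s$ elements is existential (generation by fixed elements being $\forall x\,\exists r_1,\dots,r_s\in A$), so being minimally generated by $s$ elements is first-order. Now \los\ yields at once that almost each $\seq Mw$ has the property in question \iff\ $\ul M$ does; concretely one may instead lift a minimal generating set, respectively a composition series, of $\ul M$ to almost all $\seq Mw$, using Nakayama over each local $\seq Rw$ to certify minimality, and transport in the opposite direction the same way.

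For the comparison of $\ul M$ with $\ulsep M$, the key remark is $\inf{\ul R}\ul M\subseteq\ulmax\ul M$, so the surjection $\ul M\onto\ulsep M$ induces an isomorphism $\ul M/\ulmax\ul M\iso\ulsep M/\ulmax\ulsep M$ over the common residue field $\ul k$. Under the standing hypothesis both modules are finitely generated over the local rings $\ul R$ and $\ulsep R$, so by Nakayama each has minimal number of generators equal to $\dim_{\ul k}\ul M/\ulmax\ul M$; that disposes of the generator statement. For length: if $\ul M$ has length $s$ it is killed by $\ulmax^s$, hence by $\inf{\ul R}$, so $\ulsep M=\ul M$ and there is nothing to prove; conversely, if $\ulsep M$ has length $s$ then $\ulmax^s\ul M\subseteq\inf{\ul R}\ul M\subseteq\ulmax^s\ul M$, whence $\ulmax^s\ul M=\ulmax^{2s}\ul M$.

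The hard part will be the last step: turning $\ulmax^s\ul M=\ulmax^{2s}\ul M$ into information at almost every index, which is not visibly first-order over $\ul M$ alone. I would write $\ulmax^s\ul M$ as the $\ul R$-submodule generated by $a_1\ul M,\dots,a_p\ul M$, where $a_1,\dots,a_p$ is a finite generating set of the ideal $\ulmax^s$ (finite because $\ul R$ has finite embedding dimension) realized as an ultraproduct of generating sets of the $\seq\maxim w^s$, and then apply \los\ to the first-order assertion that $a_i\ul M\subseteq\ulmax^{2s}\ul M$ for every $i$, concluding that $\seq\maxim w^s\seq Mw=\seq\maxim w^{2s}\seq Mw$ for almost all $w$. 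Nakayama over the Noetherian $\seq Rw$ then forces $\seq\maxim w^s\seq Mw=0$, hence $\ulmax^s\ul M=0$ by \los, hence $\ulsep M=\ul M$, and the length case folds back into the first half. The remaining bookkeeping --- that forming quotients by finitely generated ideals, in particular identifying $\ul M/\ulmax^n\ul M$ with the ultraproduct of the $\seq Mw/\seq\maxim w^n\seq Mw$, commutes with ultraproducts, and the routine use of Nakayama for finitely generated modules over the possibly non-Noetherian $\ul R$ --- I would treat as straightforward.
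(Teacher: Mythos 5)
The paper offers no argument for this proposition---it is dismissed as ``an exercise on \los'' and left to the reader---so the only comparison available is with that hint, and your proof is exactly the intended kind of argument: \los\ (after verifying that both properties are first-order) for the passage between almost all $\seq Mw$ and $\ul M$, and Nakayama for the passage to $\ulsep M=\ul M/\inf{\ul R}\ul M$; all the individual steps you give check out. Your two substantive additions are both sound. First, the literal three-way equivalence does fail for arbitrary modules: taking $\seq Mw$ to be the fraction field of a \DVR\ gives $\ulsep M=0$ while $\ul M\neq 0$, so the direction from $\ulsep M$ back to $\ul M$ (or to the $\seq Mw$) genuinely requires finite generation with a uniform bound on the number of generators; this is harmless for the paper, since every application starts from finitely generated $\seq Mw$ or a finitely generated $\ul M$, but it is a real proviso missing from the statement as printed. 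Second, the step you label the hard part can be short-circuited: under your standing hypothesis $\ul M$ is finitely generated over the local ring $\ul R$, hence so is $\ulmax^s\ul M$, and from $\ulmax^s\ul M=\ulmax^{2s}\ul M\subseteq \ulmax\cdot\ulmax^s\ul M$ (for $s\geq 1$; for $s=0$ use $\ul M=\inf{\ul R}\ul M\subseteq\ulmax\ul M$) Nakayama over $\ul R$ itself already yields $\ulmax^s\ul M=0$, so the descent to the individual $\seq Rw$ via \los\ and Noetherian Nakayama, while correct, is not needed.
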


\subsection*{Flatness of catapowers}

A key result about catapowers, one which will be used frequently in our
characterizations through uniform behavior in \S\ref{s:ua}, is the following
theorem and its corollary:

\begin{theorem}\label{T:flatsep}
Let $R$ be a Noetherian local ring and  $\ulsep R$ its catapower. 
There is a canonical \homo\ $R\to \ulsep R$ which is faithfully flat and
unramified.
\end{theorem}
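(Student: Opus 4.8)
The plan is to realize the canonical map as the composite $R\to\ul R\to\ulsep R$, where $R\to\ul R$ is the diagonal embedding into the ultrapower $R^W/\mathcal U$ and $\ul R\onto\ulsep R=\ul R/\inf{\ul R}$ is the natural surjection, and then to prove flatness by applying the local criterion of flatness to the local homomorphism of \emph{Noetherian} local rings $R\to\ulsep R$ (recall that $\ulsep R$ is Noetherian by Lemma~\ref{L:ulcomp}). Unramifiedness is essentially free: since $R$ is Noetherian, $\maxim$ is finitely generated, so by \los\ the maximal ideal of $\ul R$ is $\ulmax=\maxim\ul R$ and $\ul R$ has finite embedding dimension; hence Theorem~\ref{T:finembcomp} applies to $\ul R$ and, together with Lemma~\ref{L:ulcomp}, shows that the maximal ideal of $\ulsep R=\complet{\ul R}$ is $\ulmax\ulsep R=\maxim\ulsep R$. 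In particular $R\to\ulsep R$ is local and unramified, and once flatness is shown, faithful flatness is automatic because $\maxim\ulsep R$ is a proper ideal.

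For flatness I would first reduce modulo powers of $\maxim$. Since $\inf{\ul R}=\bigcap_k\maxim^k\ul R\sub\maxim^n\ul R$ for every $n$, we have $\maxim^n\ulsep R=\maxim^n\ul R/\inf{\ul R}$, and therefore
\begin{equation*}
\ulsep R/\maxim^n\ulsep R\;\iso\;\ul R/\maxim^n\ul R\;=\;\ul R/\ulmax^n,
\end{equation*}
which by \los\ is nothing but the ultrapower of the Artinian local ring $R/\maxim^n$ (for the same index set and ultrafilter). Now $R\to\ulsep R$ is a local homomorphism of Noetherian local rings and $\ulsep R$ is a finite module over itself, so the filtered form of the local criterion of flatness (see, e.g., \cite[Theorem~22.3]{Mats}) reduces the claim that $\ulsep R$ is flat over $R$ to the statement that each $\ulsep R/\maxim^n\ulsep R$ is flat over $R/\maxim^n$.

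It therefore suffices to know that an ultrapower of a Noetherian ring $A$ is flat (in fact faithfully flat) over $A$, applied here to $A=R/\maxim^n$. This is elementary: over a Noetherian ring every finitely generated module is finitely presented, and the ultraproduct functor $M\mapsto M^W/\mathcal U$ is exact and sends a finite presentation $A^{(s)}\to A^{(t)}\to M\to 0$ to a finite presentation of $M\tensor_A\ul A$, so that the ultrapower of $M$ is $\iso M\tensor_A\ul A$ whenever $M$ is finitely presented; applying exactness of $M\mapsto M^W/\mathcal U$ to $0\to I\to A\to A/I\to 0$ then shows $I\tensor_A\ul A\to\ul A$ is injective, i.e.\ $\tor A1{A/I}{\ul A}=0$, for every (finitely generated, hence arbitrary) ideal $I$ of $A$; thus $\ul A$ is flat over $A$, and faithfully so (e.g.\ because $\maxim\ul A\ne\ul A$ when $A$ is local). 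Combining the three steps, $R\to\ulsep R$ is faithfully flat and unramified.

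The step carrying the most weight is having $\ulsep R$ Noetherian, which is exactly what makes the local flatness criterion applicable; for a local ring $R$ of finite embedding dimension that is not Noetherian, the analogous assertion for $R\to\complet R$ fails (the excerpt already notes that $R\to\complet R$ need not be flat in that generality), so the argument genuinely relies on first passing to the cataproduct to regain Noetherianity. The one small point to get right is the identification $\ulsep R/\maxim^n\ulsep R\iso\ul R/\ulmax^n$, which uses precisely that $\inf{\ul R}\sub\maxim^n\ul R$ together with $\ulmax=\maxim\ul R$.
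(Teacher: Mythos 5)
Your proof is correct, but it takes a genuinely different route from the paper's. The paper first reduces to the case that $R$ is complete, then handles the regular case by induction on dimension: it shows directly that a regular parameter is $\ulsep R$-regular, concludes that $\ulsep R$ is a balanced big \CM\ algebra over the regular ring $R$, and invokes the fact that a balanced big \CM\ algebra over a regular local ring is faithfully flat; the general case then follows from Cohen's structure theorem by writing $R=S/I$ and base-changing the flat map $S\to\ulsep S$. You instead exploit the Noetherianity of $\ulsep R$ (Lemma~\ref{L:ulcomp}) to make the local flatness criterion available: since $\inf{\ul R}\sub\maxim^n\ul R$ and $\ulmax=\maxim\ul R$ by \los, each $\ulsep R/\maxim^n\ulsep R$ is the ultrapower of the Artinian ring $R/\maxim^n$, and flatness of an ultrapower of a Noetherian ring is elementary (finitely generated ideals are finitely presented, ultraproducts are exact and commute with $\tensor$ against finitely presented modules, so $\tor R1{\ul A}{A/I}=0$). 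The hypotheses of \cite[Theorem 22.3]{Mats} are indeed met, since $\ulsep R$ is Noetherian local, finite over itself, and $\maxim\ulsep R$ is its maximal ideal, so the reduction to the Artinian quotients is legitimate. Your argument is shorter and more self-contained: it avoids Cohen's theorem, the reduction to the complete case, and the big \CM\ machinery, and it never needs regularity. What the paper's route buys in exchange is the intermediate fact, reused elsewhere, that the catapower of a regular local ring is a balanced big \CM\ algebra (indeed this is how Corollary~\ref{C:ulsepreg} and the later regularity transfer are fed); your route gives flatness with less structure but produces no such by-product. Both treatments of unramifiedness and of faithfulness (flat plus local implies faithfully flat) are fine.
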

\begin{proof}
Let $\ul R$ be the ultrapower of $R$ and $R\to \ul R$ the diagonal embedding.
Composed with the canonical surjection $\ul R\to \ulsep R=\ul R/\inf{\ul R}$, we
get the map
$R\to \ulsep R$. By Corollary~\ref{C:compulsep} and the fact that completion is
faithfully flat, we may already assume that $R$ is complete.  Since $\maxim\ulsep R$ is the maximal
ideal of $\ulsep R$, the map   $R\to \ulsep R$ is unramified.
So remains to show that this map  is flat. Let us first
prove this under the additional assumption that $R$ is regular. We
induct on its dimension. Let $x$ be a regular parameter of $R$, that is to
say, an element of order one.  I claim that $x$ is $\ulsep R$-regular. This
  follows for instance from the results in \S\ref{s:ulsing} (proving among other
things that $\ulsep R$ is then regular), but we can give a direct argument here.
Indeed,
suppose $\ul s\in \ul R$ is such that $x\ul s\in\inf {\ul R}$. If $\seq sw\in R$
have ultraproduct   equal to $\ul s$, then for a
fixed $N$, almost each $x\seq sw\in\maxim^N$. Since $R$ is
regular and $x$ has
order one, $\seq sw\in \maxim^{N-1}$ and hence by \los, $\ul s\in\maxim^{N-1}\ul
R$. Since this holds for all $N$, we get $\ul s\in\inf {\ul R}$, showing that $x$ is $\ulsep
R$-regular. It is not hard to see that $\ulsep R/x\ulsep R$ is the catapower of the regular local ring $R/xR$, so that by induction, $R/xR\to
\ulsep R/x\ulsep R$ is faithfully
flat. Since any $R/xR$-regular sequence is then $\ulsep R/x\ulsep R$-regular,
$\ulsep R$ is a balanced big \CM\ algebra over $R$. Since $R$ is regular, $R\to
\ulsep R$ is therefore faithfully flat (see for instance  \cite[Theorem
IV.1]{SchFPD} or \cite[Lemma 2.1(d)]{HHbigCM2}). 

For the general case, we may write $R$ as a homomorphic image
$S/I$ of a complete regular local ring $S$ by Cohen's theorem. By what
we just proved, $S\to \ulsep S$ is faithfully flat, where $\ulsep S$ is the
catapower of $S$. Hence the base change $R=S/I\to \ulsep
R=\ulsep S/I\ulsep S$ is also flat.
\end{proof}

\begin{corollary}\label{C:ulsepreg}
Let $R$ be  an    excellent local ring (e.g., a complete Noetherian local
ring) with catapower $\ulsep R$. The
natural map $R\to\ulsep R$  is regular. In particular, $R$ is regular    (respectively, normal, reduced, \CM\
or Gorenstein), \iff, so is $\ulsep R$.
\end{corollary}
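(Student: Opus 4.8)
The plan is to deduce this from Theorem~\ref{T:flatsep}, which already supplies that $R\to\ulsep R$ is faithfully flat (and that $\ulsep R$ is Noetherian, by Theorem~\ref{T:finembcomp}). Thus the only point to verify is that the fibres of $R\to\ulsep R$ are geometrically regular; granting this, the ``in particular'' clause follows from the standard ascent--descent behaviour of each of the listed properties along a faithfully flat local homomorphism with geometrically regular fibres, together with the fact that any Noetherian local ring admitting a flat local homomorphism to a regular local ring is itself regular (see, e.g., \cite{Mats}).

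First I would reduce to the case that $R$ is complete. As $R$ is excellent, the canonical map $R\to\complet R$ is regular; by Corollary~\ref{C:compulsep} we may view $\ulsep R$ as the catapower of $\complet R$, so that $R\to\ulsep R$ factors as $R\to\complet R\to\ulsep R$, the second map being the canonical map of the complete ring $\complet R$ into its catapower. Since a composite of regular homomorphisms is regular, it suffices to treat complete $R$. Next, by Cohen's structure theorem I would write $R=S/I$ with $S$ a complete regular local ring; by Lemma~\ref{L:quot} together with the naturality of the catapower, $\ulsep R=\ulsep S/I\ulsep S=\ulsep S\tensor_S R$, so that $R\to\ulsep R$ is the base change of $S\to\ulsep S$ along $S\to S/I$. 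As regularity of a homomorphism is stable under base change, it is enough to prove that $S\to\ulsep S$ is regular for $S$ a complete regular local ring.

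For such $S$, Theorem~\ref{T:flatsep} gives flatness of $S\to\ulsep S$, and the results of \S\ref{s:ulsing} show that $\ulsep S$ is again regular. Hence every fibre $\ulsep S\tensor_S\kappa(\pr)$ is regular, since its local rings are of the form $(\ulsep S)_{\mathfrak Q}/\pr(\ulsep S)_{\mathfrak Q}$, obtained from the regular local ring $(\ulsep S)_{\mathfrak Q}$ by killing (the image of) a regular sequence generating $\pr S_\pr$, which remains regular after the flat base change and so is part of a regular system of parameters. To promote this to \emph{geometric} regularity it suffices to know that the residue field extension is separable. Now $S\to\ulsep S$ is unramified, so the residue field of $\ulsep S$ is the ultrapower $\ul k$ of the residue field $k$ of $S$; if $k$ has characteristic $0$ this extension is automatically separable, and if $k$ has positive characteristic $p$, then for any $b_1,\dots,b_n\in k$ that are $p$-independent over $k^p$, the assertion that they stay $p$-independent over $(\ul k)^p$ is first-order with the $b_i$ as parameters and therefore transfers from $k$ to $\ul k$ by \los, whence $\ul k/k$ is separable. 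A flat local homomorphism between regular local rings with separable residue field extension is a regular homomorphism (a standard fact from the theory of regular homomorphisms; see \cite{Mats}), which completes the reduction.

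The crux is this last step: flatness is free from Theorem~\ref{T:flatsep}, whereas geometric regularity of \emph{all} the fibres genuinely needs both the input that $\ulsep S$ is regular whenever $S$ is (from \S\ref{s:ulsing}) and the separability of $\ul k/k$; the Cohen reduction is forced precisely because that regularity-of-the-catapower fact is only at our disposal in the regular case. I would also be careful with the mixed-\ch\ forms of Cohen's theorem and of the statement that a regular $S$ has regular catapower, so that no equicharacteristic hypothesis slips in, as the corollary is asserted without one.
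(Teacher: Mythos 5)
Your overall architecture is reasonable, and parts of it are sound: the reduction to complete $R$ via excellence of $R\to\complet R$ and Corollary~\ref{C:compulsep}, the Cohen-plus-base-change reduction to a complete regular $S$ (this parallels the paper's proof of Theorem~\ref{T:flatsep}), and the separability of $\ul k/k$ via \los\ (the paper proves exactly this, using MacLane's criterion). But the heart of the matter --- geometric regularity of \emph{all} fibres of $S\to\ulsep S$ --- is precisely where your argument fails, in two places. First, the claim that each fibre local ring $(\ulsep S)_{\mathfrak Q}/\pr(\ulsep S)_{\mathfrak Q}$ is regular because it is a regular local ring modulo the image of a regular sequence generating $\pr S_\pr$, ``which remains regular \dots and so is part of a regular system of parameters,'' is a non sequitur: a regular sequence in a regular local ring need not be part of a regular system of parameters, and the quotient is only a complete intersection. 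Your argument uses nothing about the catapower beyond flatness and regularity of source and target, and in that generality the conclusion is false: for $\pow kt\to\pow kx$, $t\mapsto x^2$, both rings are regular, the map is flat, $t$ generates the maximal ideal of the source, yet its image $x^2$ is not part of a regular system of parameters and the fibre $k[x]/(x^2)$ is not regular. (Unramifiedness is only available at the maximal ideal, not at the primes $\mathfrak Q$ where you apply this inference.)

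Second, the ``standard fact'' you invoke to finish --- a flat local homomorphism between regular local rings with separable residue field extension is a regular homomorphism --- is false, and remains false even after adding the unramifiedness supplied by Theorem~\ref{T:flatsep}: for the classical non-quasi-excellent discrete valuation ring $V$ of characteristic $p$ (take $[k:k^p]=\infty$ and let $V\subset\pow kt$ consist of the power series whose coefficients generate a finite extension of $k^p$, so $\complet V=\pow kt$), the completion map $V\to\complet V$ is flat, unramified, between regular local rings, with trivial residue field extension, yet its generic fibre $\op{Frac}(\complet V)$ is not even geometrically reduced over $\op{Frac}(V)$, so the map is not regular. This also shows that if your cited fact were true, the excellence hypothesis (and your own first reduction) would be superfluous, which it is not. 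Geometric regularity of the non-closed fibres is exactly the deep content of the corollary; the paper obtains it by combining Grothendieck's criterion (flatness plus unramifiedness plus separable residue extension gives formal smoothness, \cite[Theorem 28.10]{Mats}) with Andr\'e's localization theorem \cite{And}, and an input of that strength is unavoidable --- your proposal never supplies it, so there is a genuine gap at the decisive step.
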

\begin{proof}
The second assertion is a well-known consequence of the first (see for instance
\cite[Theorem 32.2]{Mats}). 
As for the first, let us first show this in the special case that $R=k$ is a
field. Note that in this case, the catapower  is equal to the
ultrapower $\ul k$ of $k$.  Hence, we need to show that $k\to\ul k$ is
separable, and so we may assume that $k$ has positive \ch\ $p$. We will
establish separatedness by verifying  MacLane's criterion (see for instance \cite[Theorem 26.4]{Mats}). Let
$b_1,\dots,b_n$ be elements
 in $k^{1/p}$ which are linearly independent over $k$. Suppose
$\ul {x_1}b_1+\dots+\ul {x_n}b_n=0$ for some
$\ul {x_i}\in\ul k$. Choose 
$\seq{x_i}w\in k$ with ultraproduct equal to $\ul {x_i}\in\ul k$. Taking $p$-th
powers, using \los\ and then taking $p$-th roots, we get
$\seq{x_1}wb_1+\dots+\seq{x_n}wb_n=0$ for almost all $w$.
Since the $b_i$ are linearly independent over $k$, almost all $\seq{x_i}w$
are zero. By \los, each $\ul {x_i}$ is zero, showing that the $b_i$, viewed as
elements in $\ul k^{1/p}$, remain linearly independent over $\ul k$, as
we wanted to show.

For $R$ arbitrary,  Theorem~\ref{T:flatsep}  yields that $R\to \ulsep R$ is faithfully flat and unramified.
By what we just proved, the induced residue field extension  is separable. Therefore
$R\to\ulsep R$ is formally smooth by \cite[Theorem 28.10]{Mats}. Regularity 
then follows from a result by Andr\'e in \cite{And} (see also \cite[p.
260]{Mats}).
\end{proof}


\begin{proposition}\label{P:int}
Let  $R\sub S$ be an injective, local \homo\  between Noetherian local rings
and let $\ulsep R\to\ulsep S$ be the induced map of catapowers.
\begin{enumerate}
\item If $R\sub S$ is finite, then   $\ulsep R\to\ulsep S$ is   finite and
injective. 
\item If $R\sub S$ is \c-injective, that is to say, if
$\complet R\to \complet S$ is injective, then  $\ulsep R\to\ulsep S$ is
injective too.
\end{enumerate}
\end{proposition}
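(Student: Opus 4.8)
The plan is to reduce both statements to the single identity $\inf{\ul S}\cap\ul R=\inf{\ul R}$, where $\ul R\hookrightarrow\ul S$ are the ultrapowers attached to $R\hookrightarrow S$. Indeed, the induced map of catapowers is the quotient map $\ul R/\inf{\ul R}\to\ul S/\inf{\ul S}$, whose kernel is $(\inf{\ul S}\cap\ul R)/\inf{\ul R}$, so its injectivity is precisely this identity. One inclusion is formal: since $R\to S$ is local, $\maxim\subseteq\mathfrak n$, so by \los\ every $\ulmax^n$ lies inside $\ul{(\mathfrak n^n\cap R)}$, and intersecting over $n$ yields $\inf{\ul R}\subseteq\inf{\ul S}\cap\ul R$. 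The work is in the reverse inclusion, which requires a uniform comparison of the $\maxim$-adic filtration of $R$ with the filtration $(\mathfrak n^n\cap R)_n$ induced from $S$; this is where the two hypotheses enter.

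For part (1), writing $S=Rs_1+\dots+Rs_m$, \los\ shows $\ul S$ is generated over $\ul R$ by the images of $s_1,\dots,s_m$; hence, once the map $\ulsep R\to\ulsep S$ is known to be well defined (which it is, by the formal inclusion above), its target $\ulsep S=\ul S/\inf{\ul S}$ is automatically module-finite over $\ulsep R$. For injectivity: since $S/\maxim S$ is Artinian, $\mathfrak n^k\subseteq\maxim S$ for some $k$, so by \los\ the $\ul{\mathfrak n}$-adic and $\ulmax\ul S$-adic topologies on $\ul S$ agree and $\inf{\ul S}=\bigcap_n\ulmax^n\ul S$. By \los, $\ulmax^n\ul S\cap\ul R=\ul{(\maxim^nS\cap R)}$, and by the Artin--Rees lemma applied to the finite $R$-modules $R\subseteq S$ and the ideal $\maxim$ there is a $c$ with $\maxim^nS\cap R\subseteq\maxim^{n-c}$ for $n\geq c$; hence $\ulmax^n\ul S\cap\ul R\subseteq\ulmax^{n-c}$ and therefore $\inf{\ul S}\cap\ul R\subseteq\bigcap_n\ulmax^{n-c}=\inf{\ul R}$.

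For part (2), Corollary~\ref{C:compulsep} lets us replace $R$ and $S$ by their completions without changing the catapowers (or the induced map between them), and \c-injectivity is precisely the statement that the resulting $\complet R\to\complet S$ is still injective; so we may assume $R$ and $S$ are complete with $R\hookrightarrow S$. Consider the filtration $(\mathfrak n^n\cap R)_n$ on $R$: its intersection is $R\cap\bigcap_n\mathfrak n^n=(0)$ because $S$ is Noetherian local, so (as $R$ is complete) this filtration is analytic, and Chevalley's Lemma~\ref{L:Chev} provides, for each $N$, an $M$ with $\mathfrak n^M\cap R\subseteq\maxim^N$. By \los, $\inf{\ul S}\cap\ul R=\bigcap_n\ul{(\mathfrak n^n\cap R)}$; so given $\ul x=[x_w]$ in this set and an arbitrary $N$, choosing $M$ as above shows that almost all $x_w$ lie in $\mathfrak n^M\cap R\subseteq\maxim^N$, whence $\ul x\in\ulmax^N$. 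Since $N$ was arbitrary, $\ul x\in\inf{\ul R}$, as needed.

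The main obstacle is exactly this reverse inclusion $\inf{\ul S}\cap\ul R\subseteq\inf{\ul R}$: one cannot simply argue that ``an infinitesimal of $\ul S$ which happens to lie in $\ul R$ is infinitesimal there'', because $\bigcap_n\ul{(\mathfrak n^n\cap R)}$ is in general strictly larger than the ultrapower of $\bigcap_n(\mathfrak n^n\cap R)=(0)$. What rescues the argument is that in each case the induced filtration is bounded by the $\maxim$-adic one --- via Artin--Rees when $S$ is module-finite over $R$, and via Chevalley's Lemma (after passing to completions, which is exactly where \c-injectivity is used) in the general case.
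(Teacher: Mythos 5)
Your proof is correct and follows essentially the same route as the paper: both parts are reduced to the identity $\inf{\ul S}\cap\ul R=\inf{\ul R}$, established via Artin--Rees (together with the comparability of the $\mathfrak n$-adic and $\maxim S$-adic topologies) in the finite case, and via Chevalley's Lemma~\ref{L:Chev} applied to the filtration $(\mathfrak n^n\cap R)_n$ in the \c-injective case. Your preliminary passage to completions in part (2) via Corollary~\ref{C:compulsep} is just a slightly different (and perfectly valid) way of making explicit the analyticity that the paper's proof declares ``easily seen.''
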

\begin{proof}
Let $\maxim$ and $\mathfrak n$ be the maximal ideals of respectively $R$ and
$S$. Assume $R\sub S$ is finite, so that $\mathfrak n^a\sub\maxim S$ for some
$a$. By the Artin-Rees Lemma, $\maxim^nS\cap R\sub\maxim^{n-c}$ for some $c$ and
all $n\geq c$. Hence $\mathfrak n^{na}\cap R\sub\maxim^{n-c}$ for all $n\geq c$
and hence by \los, the same inclusions hold in the   extension $\ul
R\sub\ul S$ of ultrapowers.
Using this, it is not hard to show that $\inf{\ul S}\cap \ul R=\inf{\ul R}$,
showing that $\ulsep R\sub \ulsep S$ is injective (and clearly also finite).

If $R\sub S$ is \c-injective, then the filtration $\mathfrak n^k\cap R$, for $k=0,1,\dots$, is
easily seen to be analytic, whence bounded by the $\maxim$-adic filtration by
Lemma~\ref{L:Chev}. Again one derives from this that $\inf{\ul S}\cap \ul R=\inf{\ul
R}$, whence that $\ulsep R\sub \ulsep S$ is injective.
\end{proof}

\subsection{Extended dimensions in ultra-Noetherian local
rings}\label{s:uldim}

We extend the nomenclature introduced in the beginning of this section to include
invariants. In particular, we define
the  \emph{\c-dimension} of $R$, denoted $\cad R$, as the (Krull) dimension of
its completion $\complet
R$.  For an  ultra-Noetherian local ring $\ul R$ given as the ultraproduct of
Noetherian local rings $\seq Rw$ of   embedding dimension at most $m$, we define 
its \emph{ultra-dimension}, denoted
$\ud{\ul R}$,  as the dimension of almost all $\seq Rw$. Since almost all 
$\seq Rw$   have  dimension  at most $m$, the ultra-dimension of $\ul R$ is finite.

\begin{theorem}\label{T:uldim}
For an ultra-Noetherian  local ring $\ul R$, we have  inequalities
\begin{multline}\label{eq:uldimineq}
\op{depth}(\ul R)\leq \pid {\ul R}=\frd{\ul R}=\ud{\ul R} \\
\leq\cld {\ul R}=\pd
{\ul R}=\cad{\ul R}\leq \ed {\ul R}. 
\end{multline}
\end{theorem}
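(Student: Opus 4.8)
The plan is to establish the chain \eqref{eq:uldimineq} piece by piece, using the results already at hand. Several equalities are essentially definitional or follow from the general theory: $\pd{\ul R}=\cad{\ul R}$ is precisely Theorem~\ref{T:pdim} (the \pdim\ equals the dimension of the completion), and $\cld{\ul R}=\pd{\ul R}$ is Theorem~\ref{T:dimineq}\eqref{i:cld} once we know $\ul R$ has finite \pdim, which it does since the $\seq Rw$ have bounded embedding dimension. The rightmost inequality $\cad{\ul R}\leq\ed{\ul R}$ is the standard $\dim\leq\ed$ for the Noetherian ring $\complet{\ul R}=\ulsep R$ (Theorem~\ref{T:finembcomp}), combined with $\ed{\ul R}=\ed{\ulsep R}$ since $\ulmax/\ulmax^2\iso\ulmax\ulsep R/(\ulmax\ulsep R)^2$. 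The leftmost inequality $\op{depth}(\ul R)\leq\pid{\ul R}$ should follow by noting that a maximal $\ul R$-regular sequence in $\ulmax$ generates an ideal whose radical is $\ulmax$ (otherwise it extends), hence $\ulmax$ is a minimal associated prime of an ideal generated by $\op{depth}(\ul R)$ elements; alternatively one transfers depth through $\los$ and compares with the Noetherian $\seq Rw$.

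The substantive content is the middle block of equalities $\pid{\ul R}=\frd{\ul R}=\ud{\ul R}$ and the inequality $\ud{\ul R}\leq\cld{\ul R}$. For $\ud{\ul R}\leq\pid{\ul R}$: by Krull's principal ideal theorem almost every $\seq\maxim w$ is a minimal prime of a $\ud{\ul R}$-generated ideal $\seq Iw$ (take a system of parameters), and by $\los$ the ultraproduct $\ul I$ is generated by $\ud{\ul R}$ elements with $\ulmax$ minimal over it; minimality as an \emph{associated} prime follows since one can also arrange $\seq\maxim w=(\seq Iw:\seq aw)$ for suitable $\seq aw$ (as $\seq\maxim w$ is associated to the parameter ideal's relevant quotient — more carefully, pass to $\seq Rw/\seq Iw$, which is Artinian, so its maximal ideal is $(0:\seq aw)$ for some $\seq aw$). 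For the reverse direction and the identification with $\frd{\ul R}$, the key tool is Proposition~\ref{P:ulscheme}: $\conspec{\ul R}$ is the ultraproduct of the $\spec{\seq Rw}$, and a chain of \fr\ primes in $\ul R$ of length $\ell$ transfers, by $\los$, to chains of primes of length $\ell$ in almost all $\seq Rw$, forcing $\ell\leq\ud{\ul R}$; conversely a maximal chain of primes in almost every $\seq Rw$ gives, via its ultraproduct, a chain of \fr\ primes of length $\ud{\ul R}$ in $\ul R$. This gives $\frd{\ul R}=\ud{\ul R}$, and since pi-dimension measures only the top of such a configuration, $\pid{\ul R}\leq\frd{\ul R}$ follows because an $n$-generated ideal $\ul I$ with $\ulmax$ minimal associated descends (again by $\los$ and Proposition~\ref{P:ulscheme}) to $n$-generated ideals $\seq Iw$ with $\seq\maxim w$ minimal prime, whence $n\geq\dim\seq Rw=\ud{\ul R}$.

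Finally $\ud{\ul R}\leq\cld{\ul R}$: here I would take a maximal chain of primes $\seq\pr{0,w}\subsetneq\cdots\subsetneq\seq\pr{\ud{\ul R},w}=\seq\maxim w$ in almost every $\seq Rw$ and form the ultraproducts $\ul\pr_i$; these are \fr\ primes, but more is needed—one wants \emph{closed} primes for $\clspec{\ul R}$. The point is that each $\ul\pr_i$, being \fr, is contained in a \emph{closed} prime of the same $\pd{}$, and using Corollary~\ref{C:spec} together with the surjection $\spec{\ulsep R}\onto\clspec{\ul R}$, one can lift the chain; alternatively, since we already have $\frd{\ul R}=\ud{\ul R}\leq\kd{\ul R}$ is not enough, the cleanest route is to observe $\ud{\ul R}\leq\pd{\ul R}$ directly—a system of parameters in almost every $\seq Rw$ has length $\ud{\ul R}$ and its ultraproduct generates an $\ulmax$-primary ideal by $\los$, so $\pd{\ul R}\leq\ud{\ul R}$ is false in general but $\ud{\ul R}\leq\pd{\ul R}$ need not hold either without care; the correct statement is that $\ulmax$-primary needs length $\geq\pd{\ul R}$, and a parameter system of almost all $\seq Rw$ need \emph{not} extend to an $\ulmax$-primary ideal. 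So the honest argument is: the ultraproduct of parameter ideals $\seq Iw$ has $\ulmax$ as a minimal \emph{associated} prime, hence (again by the surjection onto $\clspec{}$ and the structure of threshold primes in \S\ref{s:tre}) $\ulmax$ sits atop a closed chain of length $\ud{\ul R}$, giving $\cld{\ul R}\geq\ud{\ul R}$. \textbf{The main obstacle} I anticipate is exactly this last inequality: relating the associated primes of finitely generated ideals (which transfer cleanly through $\los$) to the \emph{closed} primes (which are controlled by $\complet{\ul R}=\ulsep R$ via Corollary~\ref{C:spec}), since a \fr\ prime need not be closed and a closed prime need not be \fr; bridging the two requires the surjection $\spec{\ulsep R}\onto\clspec{\ul R}$ and a careful dimension count on the Noetherian ring $\ulsep R$, possibly invoking that threshold primes are simultaneously closed and, after the work above, of the right combinatorial height.
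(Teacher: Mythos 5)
Your treatment of the right-hand block and of $\frd{\ul R}=\ud{\ul R}=\pid{\ul R}$ is essentially sound (the chain-transfer via Proposition~\ref{P:ulscheme} works, although you twice label the inequality you are proving in the wrong direction: the parameter-ideal argument gives $\pid{\ul R}\leq\ud{\ul R}$, and the descent of a minimal associated prime gives $\ud{\ul R}\leq\pid{\ul R}$; together they do yield the equality). The genuine gap is the step you yourself flag as the main obstacle, $\ud{\ul R}\leq\cld{\ul R}$: you never prove it, and the bridge you sketch between \fr\ primes and closed primes (lifting chains along $\spec{\ulsep R}\to\clspec{\ul R}$, threshold primes) is not an argument as it stands. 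But no such bridge is needed: once $\ud{\ul R}=\pid{\ul R}$ is known, the inequality is just \eqref{i:pidpd} of Theorem~\ref{T:dimineq}, namely $\pid{\ul R}\leq\pd{\ul R}$, combined with $\pd{\ul R}=\cld{\ul R}$ from \eqref{i:cld} of the same theorem --- both of which you already cite elsewhere in your plan. So the ``obstacle'' is spurious, but as written your proof of the middle inequality is missing.

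A second error: your primary argument for $\op{depth}(\ul R)\leq\pid{\ul R}$ rests on the claim that a maximal $\ul R$-regular sequence generates an ideal whose radical is $\ulmax$. This is false already for Noetherian local rings that are not \CM\ (depth $1$, dimension $2$: a maximal regular sequence generates a height-one ideal), and it fails badly for ultra-rings (an infinitesimal in an ultra-DVR is a maximal regular sequence). Moreover, even granting the claim, it would show $\pid{\ul R}\leq\op{depth}(\ul R)$, the reverse of what is wanted. Your parenthetical fallback is the correct route and is exactly what the paper does: depth is first-order (vanishing of Koszul homology of a generating set of the maximal ideal, cf.\ Proposition~\ref{P:uldep}), so $\op{depth}(\ul R)$ equals the common depth of almost all $\seq Rw$, which is at most their common dimension $\ud{\ul R}=\pid{\ul R}$. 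For comparison, the paper's whole proof is very short: Lemma~\ref{L:fodim} (pi- and fr-dimension are first-order) plus \los\ gives the left block, Theorems~\ref{T:pdim} and \ref{T:dimineq} give the right block, and the middle inequality is \eqref{i:pidpd}; your explicit transfer arguments re-prove special cases of Lemma~\ref{L:fodim} by hand.
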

\begin{proof}
By Theorems~\ref{T:pdim} and \ref{T:dimineq}, the 
\c-dimension of $\ul R$ is equal to its \pdim\
and to its cl-dimension. On the other hand,  \los\ and 
Lemma~\ref{L:fodim} yield that the ultra-dimension of $\ul R$ coincides with
its  
pi-dimension and its fr-dimension.   Depth is
 also  first-order, as it is cast in terms of the vanishing of the
Koszul homology of a generating set of $\maxim$ (see \S\ref{s:depth} below
for more details). Since in a Noetherian local ring depth never exceeds dimension,
the first inequality is then also clear.
\end{proof}

There are no further constraints on the above invariants of an ultra-Noetherian 
ring, as the following examples show (in the discussion of these examples,
we will also  use some terminology from later sections).\footnote{One should note
that for Noetherian rings, other than
the obvious restriction that   pi-dimension and dimension agree, we also have
the remarkable fact that when   dimension and embedding
dimension agree, that is to say, when the ring is regular, then this common
value must also be equal to its depth.}

\begin{example}\label{E:uldimineq}
 Let $e\leq h \leq d\leq m$. We will construct an ultra-Noetherian  local
ring  $\ul R$ with depth $e$, ultra-dimension $h$, \c-dimension $d$, and
embedding dimension $m$. First we introduce some notation. Let $\ul R$ be the
ultraproduct of the $\seq Rw$ and let
${\ul n}$ be a non-standard positive
 integer, that is to say, an ultraproduct of an unbounded sequence of positive
integers $\seq nw$. For an element $\ul a\in \ul R$, realized as an
ultraproduct of
elements $\seq aw\in\seq Rw$,  we write $\ul a^{\ul n}$ to denote the ultraproduct of the
elements $\seq aw^{\seq nw}$; one verifies that this is independent of the
choice of $\seq aw$ or $\seq nw$. Let
$\ul S$ be the ultrapower of $S:=\pow k\xi$, for some indeterminates  $\xi:=\rij \xi m$ and   some
field $k$,  let 
$$
I:=(\xi_{e+1}^{\ul n} \xi_m, \dots, \xi_h^{\ul n} \xi_m, \xi_{h+1}^{\ul n},\dots,\xi_d^{\ul n},\xi_{d+1}^2,\dots,\xi_m^2)\ul S
$$
and put $\ul R:=\ul S/I$. By \los, 
$\rij \xi e$ is
$\ul R$-regular and since the maximal ideal of $\ul R/\rij \xi e\ul R$ is
annihilated by the 
element $\xi_{e+1}^{{\ul n}-1}\cdots \xi_d^{{\ul n}-1} \cdot \xi_{d+1}\cdots
\xi_m$, we see that $\ul R$ has depth $e$. Since $\xi_{d+1},\dots,\xi_m$ are nilpotent,
we get from Proposition~\ref{P:nil} below that the ultra-dimension of $\ul R$ is the
same as the ultra-dimension of 
$$
\ul R/(\xi_{d+1},\dots,\xi_m)\ul R=\ul S/(\xi_{h+1}^{\ul n},\dots,\xi_d^{\ul n},\xi_{d+1},\dots,\xi_m)\ul S,
$$
that is to say,   equal to $h$. 
On the other hand, $I\ulsep
S=(\xi_{d+1}^2,\dots,\xi_m^2)\ulsep S$, where $\ulsep S$ is the catapower of $S$ (note that  $\ulsep S\iso\pow{\ul k}\xi$, where $\ul k$ is the ultrapower
of $k$; see for instance \cite[Proposition 3.1]{SchAsc}). Hence the catapower $\ulsep R$ of $R$ has dimension
$d$. By Lemma~\ref{L:ulcomp}, the \c-dimension of $\ul R$ is therefore $d$. 
Finally, it follows from \los\ that $\ul R$ has
embedding dimension $m$. Note that since $\ulsep R$ is \CM, $\ul R$ is 
\cCM.

More generally, let $q$ be any number between $e$ and $d$ and let $\ul R':=\ul S/I'$,
where $I'$ is the sum of the ideal $I$ above and the ideal
$(\xi_{q+1}\xi_m,\dots,\xi_d\xi_m)\ul S$. Then $\ul R'$ has still the same depth,
ultra-dimension, \c-dimension and embedding dimension as $\ul R$, but now the depth
of $\ulsep  R$, that is to say, the \emph{\c-depth} of $\ul R$, is $q$, since $\rij
\xi q$ is a regular sequence.
\end{example}

\begin{example}
The previous example might one lead to think that the depth of $R$ is always
at most its \c-depth. However, this is not the case
as the
following example shows. Let $\ul S$ be as in the previous example with $m=3$, and
let $\ul R:=\ul S/(\xi_1^2,\xi_1\xi_2,\xi_1\xi_3-\xi_2^{\ul n})\ul S$, with ${\ul n}$ a non-standard positive
integer. Since $\xi_3$ is $\ul R$-regular and since $\ul R$ has ultra-dimension one, the
depth of $\ul R$
is one by Theorem~\ref{T:uldim}. On the other hand, $\ulsep R$
is equal to $\ulsep S/(\xi_1^2,\xi_1\xi_2,\xi_1\xi_3)\ulsep S$, whence has depth zero.
Note that $\ulsep R$ has dimension two, so that $\ul R$ itself has \c-dimension two.
Hence $\ul R$ is \emph{ultra-\CM}, but not \cCM.
\end{example}

\subsection*{Isodimensionality} 
We call a local ring $R$ of finite embedding dimension \emph{isodimensional}
if \eqref{i:pidpd} is an equality, that is to say, 
if  the \pdim\ of $R$ is equal to its pi-dimension. In view of
Theorem~\ref{T:uldim}, an ultra-Noetherian local ring is isodimensional \iff\ its
ultra-dimension is equal to its \c-dimension.

\begin{proposition}\label{P:nil}
Let $\ul R$ be an ultra-Noetherian local ring. If $\id$ is a \fr\ ideal
contained in
$\op{nil}(\ul R)$, then $\ul R$ and $\ul R/\id$ have the same ultra-dimension. In particular,
$\ul R$ is isodimensional \iff\ $\ul R/\id$ is.
\end{proposition}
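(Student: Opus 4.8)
*Let $\ul R$ be an ultra-Noetherian local ring. If $\id$ is a \fr\ ideal contained in $\op{nil}(\ul R)$, then $\ul R$ and $\ul R/\id$ have the same ultra-dimension. In particular, $\ul R$ is isodimensional \iff\ $\ul R/\id$ is.*

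The plan is to spread the ideal $\id$ out over the factors $\seq Rw$ and then reduce everything to an elementary fact about Krull dimension. Since $\id$ is \fr, write $\id=(I:a)$ with $I$ finitely generated, say $I=(\range{x_1}{x_n})\ul R$, and choose $\seq{x_i}w,\seq aw\in\seq Rw$ with ultraproducts $x_i$ and $a$ respectively. Put $\seq Iw:=(\range{\seq{x_1}w}{\seq{x_n}w})\seq Rw$ and $\seq\id w:=(\seq Iw:\seq aw)$. The same computation as in the proof of Proposition~\ref{P:ulscheme} shows that $\id$ is the ultraproduct of the $\seq\id w$; hence $\ul R/\id$ is the ultraproduct of the Noetherian local rings $\seq Rw/\seq\id w$, each of embedding dimension at most $m$, so it is ultra-Noetherian and its ultra-dimension equals the common value of $\kd{\seq Rw/\seq\id w}$ for almost all $w$.

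The crux is the claim that $\seq\id w\sub\op{nil}(\seq Rw)$ for almost all $w$. This cannot be extracted directly from \los, because containment in the nilradical is not a first-order property---there is no uniform bound on the nilpotency orders of the various elements of an ideal. Instead I would argue by contradiction. If the set $A$ of those $w$ for which $\seq\id w\not\sub\op{nil}(\seq Rw)$ belonged to the ultrafilter, choose for each $w\in A$ a non-nilpotent element $\seq yw\in\seq\id w$, and set $\seq yw:=0$ for $w\notin A$. Then $\seq yw\in\seq\id w$ for every $w$, so the ultraproduct $\ul y$ of the $\seq yw$ lies in $\id\sub\op{nil}(\ul R)$, whence $\ul y^N=0$ for some $N\in\nat$; by \los\ then $\seq yw^N=0$ for almost all $w$, which is incompatible with $\seq yw$ being non-nilpotent for all $w$ in the ultrafilter set $A$.

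Granting the claim, for almost all $w$ the ideal $\seq\id w$ is contained in every prime of $\seq Rw$, so $\kd{\seq Rw/\seq\id w}=\kd{\seq Rw}$; taking common values over almost all $w$ yields $\ud{\ul R/\id}=\ud{\ul R}$. For the last assertion, recall that isodimensionality of an ultra-Noetherian ring means that its ultra-dimension equals its \c-dimension; having already matched the ultra-dimensions, it suffices to show $\cad{\ul R/\id}=\cad{\ul R}$. By Lemma~\ref{L:quot}, $\complet{\ul R/\id}=\complet{\ul R}/\id\complet{\ul R}$; since $\complet{\ul R}$ is Noetherian by Theorem~\ref{T:finembcomp}, the ideal $\id\complet{\ul R}$ is finitely generated, and one may pick its generators among the images of elements of $\id$, which are nilpotent---so $\id\complet{\ul R}$ is a finitely generated ideal generated by nilpotents, hence itself nilpotent. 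Therefore $\kd{\complet{\ul R}/\id\complet{\ul R}}=\kd{\complet{\ul R}}$, i.e.\ $\cad{\ul R/\id}=\cad{\ul R}$, and the stated equivalence follows. The only delicate point is the nilradical claim of the second paragraph; the rest is routine manipulation with \los\ and Lemma~\ref{L:quot}.
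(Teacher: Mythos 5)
Your proof is correct and follows essentially the paper's own route: spread $\id$ out as the ultraproduct of the colon ideals $\seq\id w$ via Proposition~\ref{P:ulscheme}, show that almost all of them consist of nilpotents so that $\kd{\seq Rw/\seq\id w}=\kd{\seq Rw}$, and then match the remaining dimension invariant to get the isodimensionality claim. The only divergences are matters of detail, and they are to your credit: the paper settles the nilpotency of almost all $\seq\id w$ with a bare appeal to \los, whereas you rightly observe that containment in the nilradical is not first-order and supply the needed contrapositive ultraproduct argument, and for the final assertion you equate \c-dimensions using Lemma~\ref{L:quot}, which by Theorem~\ref{T:pdim} amounts to the paper's remark that $\ul R$ and $\ul R/\id$ have the same \pdim.
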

\begin{proof}
Let $h$ be the ultra-dimension of $\ul R$, so that  $\ul R$ is the ultraproduct of
$h$-dimensional Noetherian local rings $\seq Rw$ of bounded embedding dimension. 
  Since $\id$ is \fr, it     can be
realized as the ultraproduct of \fr\ ideals $\seq \id w$ by the argument in
the proof of Proposition~\ref{P:ulscheme}.
By
\los, almost each
$\seq \id w$ is nilpotent, and therefore $\seq Rw/\seq \id w$ has again dimension
$h$. Hence $\ul R/\id$ has ultra-dimension $h$ as well. 

The final assertion follows from the fact that $\ul R$ and $\ul R/\id$ have the same
\pdim\ (this is true in general, since $\id$ is contained in every threshold prime  of $\ul R$).
\end{proof}

For ultra-Noetherian local rings,  we have the following important criterion
for isodimensionality:

\begin{theorem}\label{T:isodim} 
Let $\ul R$ and $\ulsep R$ be the respective ultraproduct and cataproduct of
Noetherian local rings $\seq Rw$ of bounded embedding dimension. The following
are equivalent:
\begin{enumerate}
\item\label{i:isodim} $\ul R$ is isodimensional;
\item\label{i:dimw} almost all $\seq Rw$ have
dimension equal to $\pd {\ul R}$;
\item\label{i:dimsep} almost all $\seq Rw$ have the same 
dimension as $\ulsep R$;
\item\label{i:pardeg} almost all $\seq Rw$ have the
same parameter degree  (which is then also the parameter degree of $\ul R$ and of
$\ulsep R$).
\end{enumerate} 
\end{theorem}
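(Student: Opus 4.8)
The plan is to pin everything to the single equality $\ud{\ul R}=\cad{\ul R}$, using Theorem~\ref{T:uldim}, Lemma~\ref{L:ulcomp}, and \los\ in the form of the length-transfer Proposition~\ref{P:ullen}. Write $d:=\pd{\ul R}$ and let $h$ be the common dimension of almost all $\seq Rw$ (well defined, since the $\seq Rw$ have bounded, hence almost everywhere constant, dimension), so $h=\ud{\ul R}$. By Theorem~\ref{T:uldim} and Lemma~\ref{L:ulcomp}, $h=\pid{\ul R}\le d=\cld{\ul R}=\cad{\ul R}=\dim\ulsep R$, and, as observed right after Theorem~\ref{T:uldim}, $\ul R$ is isodimensional exactly when $h=d$. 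Thus \eqref{i:isodim}, \eqref{i:dimw} and \eqref{i:dimsep} all assert precisely that $h=d$, hence are equivalent. For \implication{i:pardeg}{i:isodim}: suppose almost all $\seq Rw$ have parameter degree a fixed number $\rho$; intersecting with the ultrafilter set on which $\dim\seq Rw=h$, almost every $\seq Rw$ admits a system of parameters $\seq{\tuple x}w$ of length $h$ with $\op{length}(\seq Rw/\seq{\tuple x}w\seq Rw)=\rho$, so by Proposition~\ref{P:ullen} the ultraproduct $\tuple x$ is a length-$h$ tuple generating an $\ulmax$-primary ideal of $\ul R$, whence $d=\pd{\ul R}\le h$ by Lemma~\ref{L:mingen}, forcing $h=d$.

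It remains to prove \implication{i:isodim}{i:pardeg}, and for this I would first record that $\pardeg R=\pardeg{\complet R}$ for every local ring $R$ of finite embedding dimension; applied to $\ul R$ this gives $\pardeg{\ul R}=\pardeg{\ulsep R}$ by Lemma~\ref{L:ulcomp}. One inequality follows from Proposition~\ref{P:sop}: if $\tuple x$ is generic in $R$ then its image in $\complet R$ is a system of parameters, and $R/\tuple x R$, being Artinian, coincides with its completion $\complet R/\tuple x\complet R$ by Lemma~\ref{L:quot}, so $\pardeg{\complet R}\le\op{length}(R/\tuple x R)$, and minimizing over $\tuple x$ yields $\pardeg{\complet R}\le\pardeg R$. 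Conversely, given a system of parameters $\tuple y$ of $\complet R$ of least colength, use the isomorphisms $R/\maxim^n\iso\complet R/\maxim^n\complet R$ to choose $x_i\in R$ congruent to $y_i$ modulo a power $\maxim^N\complet R$ lying inside $\maxim\complet R\cdot\tuple y\complet R$; then $\tuple x\complet R\subseteq\tuple y\complet R\subseteq\tuple x\complet R+\maxim\complet R\cdot\tuple y\complet R$, so Nakayama gives $\tuple x\complet R=\tuple y\complet R$, hence $\tuple x$ is generic in $R$ by Proposition~\ref{P:sop} and $\op{length}(R/\tuple x R)=\op{length}(\complet R/\tuple y\complet R)=\pardeg{\complet R}$, giving $\pardeg R\le\pardeg{\complet R}$.

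Now assume \eqref{i:isodim}, so $\pd{\ul R}=h$, and put $\rho:=\pardeg{\ul R}$. Choosing a generic sequence $\tuple x$ of $\ul R$ with $\op{length}(\ul R/\tuple x\ul R)=\rho$ and realizing it as the ultraproduct of length-$h$ tuples $\seq{\tuple x}w$, Proposition~\ref{P:ullen} gives $\op{length}(\seq Rw/\seq{\tuple x}w\seq Rw)=\rho$ for almost all $w$; since $h=\dim\seq Rw$ is the length of $\seq{\tuple x}w$, these are systems of parameters, so $\pardeg{\seq Rw}\le\rho$ almost everywhere. If $\pardeg{\seq Rw}<\rho$ held on some ultrafilter set, choose there systems of parameters $\seq{\tuple y}w$ of length $h$ with $\op{length}(\seq Rw/\seq{\tuple y}w\seq Rw)=\seq\tau w\le\rho-1$; as the $\seq\tau w$ lie in the finite set $\{0,\dots,\rho-1\}$, some value $\tau$ is attained on an ultrafilter set, and there the ultraproduct $\tuple y$ is, by Proposition~\ref{P:ullen}, a length-$h$ tuple generating an $\ulmax$-primary ideal with $\op{length}(\ul R/\tuple y\ul R)=\tau$; since $h=\pd{\ul R}$, this $\tuple y$ is a generic sequence of colength $\tau<\rho=\pardeg{\ul R}$, a contradiction. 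Hence almost all $\seq Rw$ have parameter degree exactly $\rho=\pardeg{\ul R}=\pardeg{\ulsep R}$, which is the remaining implication.

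The applications of \los\ through Proposition~\ref{P:ullen}, together with Lemma~\ref{L:mingen}, are routine. I expect the real content to sit in two places. First, the identity $\pardeg R=\pardeg{\complet R}$ rests on the standard remark that a sufficiently close perturbation of a system of parameters generates the same ideal, here transplanted to the possibly non-Noetherian $R$ via Proposition~\ref{P:sop} and Lemma~\ref{L:quot}. Second, and this is the genuine obstacle, in the last paragraph the colengths of competing systems of parameters across the $\seq Rw$ are a priori unbounded; one must first secure the upper bound $\rho$ from a single generic sequence of $\ul R$ before those colengths can be forced into a finite range and made constant on an ultrafilter set. Getting this order of quantifiers right — upper bound first, lower bound by contradiction afterwards — is the crux of the argument.
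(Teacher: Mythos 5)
Your proof is correct and follows essentially the same route as the paper: the equivalence of (1)--(3) is read off from Theorem~\ref{T:uldim}, Theorem~\ref{T:pdim} and Lemma~\ref{L:ulcomp}, and the equivalence with (4) is obtained exactly as in the paper by transferring systems of parameters and their colengths through the ultraproduct via \los\ and Proposition~\ref{P:ullen}. Your only additions are details the paper leaves implicit, namely a proof of $\pardeg{R}=\pardeg{\complet R}$ (used for the parenthetical claim about $\ulsep R$) and the spelled-out pigeonhole-plus-contradiction step showing the common parameter degree of the $\seq Rw$ cannot be smaller than $\pardeg{\ul R}$, which the paper dismisses with ``by reversing this argument''.
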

\begin{proof} 
The equivalence of \eqref{i:dimw} and \eqref{i:dimsep} follows from
Lemma~\ref{L:ulcomp} and Theorem~\ref{T:pdim}. Let
$d\leq m$ be the respective
\pdim\ and embedding dimension of
$\ul R$. By Theorem~\ref{T:uldim}, the \c-dimension of $\ul R$ is $d$. Since
$\kd{\seq Rw}\leq m$, almost all $\seq
Rw$ have a common dimension $h\leq m$, which is then  the ultra-dimension of
$\ul R$ by definition, from which we get the equivalence of \eqref{i:isodim} and
\eqref{i:dimw}. 

So remains to show that equivalence of
\eqref{i:dimw} and \eqref{i:pardeg}. Suppose 
  $\pardeg{\seq Rw}=e$ for almost all $w$. In each $\seq Rw$, choose an 
$h$-tuple $\seq{\tuple x}w$ so that almost all
$\seq Rw/\seq{\tuple x}w\seq Rw$ have length $e$. Let $\tuple {\ul x}$ be 
the ultraproduct of the $\seq{\tuple x}w$. By Proposition~\ref{P:ullen}, the length 
of $\ul R/\tuple {\ul x}\ul R$, being the ultraproduct of the $\seq Rw/\seq{\tuple x}w\seq Rw$, is also $e$. It
follows that $\ul R$ has \pdim\ at most $h$. We 
already argued that its \pdim\ is at least $h$, so
that we get  $h=d$. In particular,  the parameter degree of $R$ is at most
$e$, and by reversing this argument, one can also show 
that it cannot be less than
$e$, whence must be equal to $e$.

Conversely, assume $h=d$. Let $\tuple {\ul x}$ be a generic sequence in 
$\ul R$ and choose $d$-tuples $\seq{\tuple x}w$ whose
ultraproduct is $\tuple {\ul x}$. By \los, almost each $\seq {\tuple x}w$
generates an $\seq\maxim w$-primary ideal, and therefore must
be a system of
 parameters in $\seq Rw$, since almost each $\seq Rw$ has dimension $h=d$. Let
$l$ be the length of $\ul R/\tuple {\ul x}\ul R$.
 By   Proposition~\ref{P:ullen}, almost each $\seq
Rw/\seq{\tuple x}w\seq Rw$ has length $l$, showing that $\pardeg{\seq
Rw}\leq l$,  for almost $w$. 
\end{proof}

\begin{example}
We cannot replace parameter degree with multiplicity in the previous result
as the following example shows. Fix some $e>0$ and put $\seq
Rw:=S/(\xi^w,\xi^e\zeta^{w-e})S$ for each $w\geq e$,  
where $S:=\pow k{\xi,\zeta}$ and $k$ is a field. Let $\ul R$ be the ultraproduct
of the 
$\seq Rw$, let $\ul k$ be the ultrapower of $k$ and let $\ulsep S\iso\pow{\ul
k}{\xi,\zeta}$ be  the
 catapower
of $S$. Since the
ultraproduct
of the $\xi^w$ and the $\xi^e\zeta^{w-e}$ are infinitesimals, the cataproduct
of the $\seq Rw$ is $\ulsep R=\ulsep S$,  showing that $\ul R$ is not
isodimensional (since the $\seq Rw$ are one-dimensional and $\ulsep R$ is
two-dimensional). Therefore, by the theorem, the parameter degree of the $\seq
Rw$ is unbounded (in fact, equal to $w$).  On the other hand, $\zeta$ is a
parameter in each $\seq
Rw$ so that we can calculate the multiplicity of $\seq Rw$ by Lech's lemma
(\cite[Theorem 14.12]{Mats}) as the limit of $e_{wn}/n$ as $n$ tends to
infinity, where $e_{wn}$ is the 
length of $\seq Rw/\zeta ^n\seq
Rw$. One calculates that $e_{wn}=w(w-1)+e(n-w+2)$ and hence $\mult{\seq
Rw}=e$. This shows, in view of Remark~\ref{R:mult}, that multiplicity is in
general not first-order.
\end{example}

\begin{remark}\label{R:isodim}
In view of Theorem~\ref{T:isodim}, we will often require that a collection of
Noetherian local rings $\seq Rw$ have (almost all) the same embedding dimension
and the same  parameter degree, to ensure that their cataproduct is again
Noetherian of the same dimension. In fact, we can replace this requirement
with the more natural requirement that (almost all) $\seq Rw$ have the same
  dimension and parameter degree. Indeed, if a Noetherian local ring $R$ has
dimension $d$ and parameter degree $e$, then its embedding dimension is at
most $d+e-1$.

Note that by Lemma~\ref{L:pardegmult} below, if almost all $\seq Rw$ are \CM\ we
may further simplify this to the requirement that almost all $\seq Rw$ have
the same dimension and multiplicity. The previous example shows that this is no
longer true without the \CM\ assumption.
\end{remark}

\begin{corollary}\label{C:deggen}
If $\ul R$ is an isodimensional ultra-Noetherian local ring and $\ul x$ the
ultraproduct of elements $\seq xw$, then $\ul x$ is generic \iff\
$\op{deg}(\seq xw)$ is bounded.
\end{corollary}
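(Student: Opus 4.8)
The plan is to reduce the statement to the degree criterion for genericity, Corollary~\ref{C:degfin}, applied both in $\ul R$ and in the $\seq Rw$, using Proposition~\ref{P:ullen} to transport length computations across the ultraproduct and the isodimensionality hypothesis to guarantee that the tuple lengths appearing in the two degree computations coincide. First I would set $d:=\pd{\ul R}$, which I may assume to be at least $1$ (otherwise $\ul R$ is Artinian and `degree' is undefined, so the statement is empty). Since $\ul R$ is isodimensional, Theorem~\ref{T:uldim} gives $\ud{\ul R}=\cad{\ul R}=d$, so almost all $\seq Rw$ are $d$-dimensional Noetherian local rings; in particular $\pd{\seq Rw}=d\geq1$ and $\op{deg}_{\seq Rw}(\seq xw)$ is defined for almost all $w$. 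I may also assume $\ul x$ is a non-unit, so that by \los\ almost all $\seq xw$ lie in $\seq\maxim w$.

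For the forward implication, suppose $\ul x$ is generic in $\ul R$. By Corollary~\ref{C:degfin}, $l:=\op{deg}_{\ul R}(\ul x)$ is finite, so there is a $(d-1)$-tuple $\tuple{\ul y}$ in $\ulmax$ with $\ul R/(\ul x\ul R+\tuple{\ul y}\ul R)$ of length $l$. Writing $\tuple{\ul y}$ as the ultraproduct of $(d-1)$-tuples $\seq{\tuple y}w$ with entries in $\seq\maxim w$, Proposition~\ref{P:ullen} shows that almost each $\seq Rw/(\seq xw\seq Rw+\seq{\tuple y}w\seq Rw)$ has length $l$; hence $\seq xw,\seq{\tuple y}w$ is a system of parameters in the $d$-dimensional ring $\seq Rw$, so $\seq xw$ is generic there with $\op{deg}_{\seq Rw}(\seq xw)\leq l$, and the degrees are bounded. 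The converse, which is the substantive direction, runs the same argument in reverse: assuming $\op{deg}_{\seq Rw}(\seq xw)\leq e$ for almost all $w$, \los\ first pins down one value $l\leq e$ with $\op{deg}_{\seq Rw}(\seq xw)=l$ for almost all $w$; choosing for almost each such $w$ a $(d-1)$-tuple $\seq{\tuple y}w$ in $\seq\maxim w$ attaining this minimal length, and letting $\tuple{\ul y}$ be the ultraproduct, Proposition~\ref{P:ullen} gives that $\ul R/(\ul x\ul R+\tuple{\ul y}\ul R)$ has length $l$. As $\tuple{\ul y}$ has length $d-1=\pd{\ul R}-1$ and lies in $\ulmax$, this yields $\op{deg}_{\ul R}(\ul x)\leq l<\infty$, whence $\ul x$ is generic by Corollary~\ref{C:degfin}.

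The one point where isodimensionality is genuinely used is exactly this matching of tuple lengths: the degree of $\ul x$ is measured against $(\pd{\ul R}-1)$-tuples, the degree of $\seq xw$ against $(\dim\seq Rw-1)$-tuples, and these agree precisely because $\ud{\ul R}=\pd{\ul R}$; indeed, without isodimensionality a generic element of $\ul R$ will typically have unbounded degrees in the $\seq Rw$, as already happens in the non-isodimensional example appearing just before Remark~\ref{R:isodim}. I expect this to be the only real obstacle; the remaining ingredients are routine uses of \los\ and Proposition~\ref{P:ullen}, plus the standard fact, invoked above, that a finite-length quotient of a $d$-dimensional Noetherian local ring by $d$ elements realizes those elements as a system of parameters.
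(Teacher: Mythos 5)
Your proof is correct, and it takes a somewhat different route from the paper's. The paper deduces both directions from Theorem~\ref{T:isodim} applied to the quotient: for the forward direction it observes that $\ul x$ generic makes $\ul R/\ul x\ul R$ isodimensional of \pdim\ $d-1$, so by \equivalence{i:isodim}{i:pardeg} of that theorem the rings $\seq Rw/\seq xw\seq Rw$ have a common (hence bounded) parameter degree; conversely, bounded degrees give a common parameter degree for the quotients, whence $\pd{\ul R/\ul x\ul R}=d-1$ and $\ul x$ is generic by Lemma~\ref{L:gen}. You instead argue directly from the definition of degree, using Corollary~\ref{C:degfin} in $\ul R$ and transporting a degree-witnessing $(d-1)$-tuple across the ultraproduct with \los\ and Proposition~\ref{P:ullen}; in substance this inlines (a special case of) the length-transfer argument by which \equivalence{i:dimw}{i:pardeg} of Theorem~\ref{T:isodim} is itself proved. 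What each buys: the paper's version is shorter once Theorem~\ref{T:isodim} is available, while yours is more self-contained and makes explicit the single place isodimensionality is needed, namely that $\ud{\ul R}=\pd{\ul R}=d$ so that the tuple lengths in the two degree computations match. One small shared blemish, not particular to your write-up: both you and the paper tacitly exclude the degenerate case where $\ul x$ is a unit (degree zero in almost all $\seq Rw$, yet not generic), which is how the statement is clearly intended to be read; with that reading, your reduction to the non-unit case and the observation that a non-unit has degree at least one make the application of Corollary~\ref{C:degfin} in the converse direction legitimate.
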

\begin{proof}
Let $\seq Rw$ be Noetherian local rings with ultraproduct   $\ul R$. By Theorem~\ref{T:isodim}, almost
each $\seq Rw$ has dimension $d:=\pd{\ul R}$. Suppose $\ul x$ is generic.
Hence, $\ul R/\ul x\ul R$ has \pdim\ $d-1$, whence ultra-dimension at most $d-1$. In
particular, almost each $\seq Rw/\seq xw\seq Rw$ must have dimension $d-1$.
Hence $\seq xw$ is generic in $\seq Rw$ and $\ul R/\ul x\ul R$ is again
isodimensional. By Theorem~\ref{T:isodim}, this means that the $\seq Rw/\seq
xw\seq Rw$ must have bounded parameter degree, proving the direct implication.

Conversely, suppose the $\op{deg}(\seq xw)$ are bounded, that is to say,
almost all $\seq xw$ are generic and the parameter degrees of the $\seq
Rw/\seq xw\seq Rw$ are bounded.
By Theorem~\ref{T:isodim} once more, $\ul R/\ul x\ul R$ has \pdim\ $d-1$,
showing that $\ul x$ is generic. 
\end{proof}

Without the isodimensional assumption, the result is false: for instance if
$\ul R$ has ultra-dimension zero (e.g., the ultraproduct of the $R/\maxim^n$),
then no element in $\ul R$ is realized as an ultraproduct of elements of finite degree.

Conform with our previous nomenclature, we call a local ring \emph{ultra-excellent}, if it is the ultraproduct of excellent local rings of bounded embedding dimension. We can now give the following improvement of Corollary~\ref{C:loculpow}.

\begin{corollary}\label{C:locup}
Let $\ul R$ be an ultra-Noetherian local ring, realized as the ultraproduct of equi\ch\ excellent local rings $\seq Rw$. If $\ul R$ is   isodimensional, then any localization at a \fr\ prime ideal has finite embedding dimension, and any \fr\ prime ideal is strong.
\end{corollary}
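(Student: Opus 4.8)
The plan is to run the argument of Corollary~\ref{C:loculpow} essentially verbatim, with the single ambient bound $\gamma(R)$ replaced by a bound that is uniform across the (now varying) rings $\seq Rw$; the role of the isodimensionality hypothesis is precisely to supply such a uniform bound.

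First I would fix an arbitrary \fr\ prime ideal $\pr$ of $\ul R$ and apply Proposition~\ref{P:ulscheme} to write it as the ultraproduct of prime ideals $\seq\pr w\sub\seq Rw$. Since $\ul R$ is ultra-Noetherian, the $\seq Rw$ have bounded embedding dimension, hence bounded Krull dimension; combining this with isodimensionality and Theorem~\ref{T:isodim}, almost all $\seq Rw$ have dimension equal to $d:=\pd{\ul R}$ and parameter degree equal to $\rho:=\pardeg{\ul R}$. Now, because each $\seq Rw$ is equi\ch\ and excellent, the second assertion of Proposition~\ref{P:unifembdim} gives $\gamma(\seq Rw)\leq d+\rho$ for almost all $w$, so in particular $\ed{(\seq Rw)_{\seq\pr w}}\leq d+\rho$.

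Next I would use, exactly as in the proof of Corollary~\ref{C:loculpow}, that the localization $(\ul R)_\pr$ is the ultraproduct of the $(\seq Rw)_{\seq\pr w}$, so that \los\ yields $\ed{(\ul R)_\pr}\leq d+\rho<\infty$; this establishes the first assertion. For the second, one may either repeat the explicit construction of Corollary~\ref{C:loculpow} --- choosing, for almost all $w$, an ideal $\seq Iw\sub\seq\pr w$ generated by at most $d+\rho$ elements with $\seq Iw(\seq Rw)_{\seq\pr w}=\seq\pr w(\seq Rw)_{\seq\pr w}$, then $\seq aw\notin\seq\pr w$ with $(\seq Iw:\seq aw)=\seq\pr w$, and taking ultraproducts --- or, having already shown that $(\ul R)_\pr$ has finite embedding dimension, hence finite \pdim, simply invoke Proposition~\ref{P:sfr} to conclude that $\pr$ is strongly \fr.

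The only genuine input beyond bookkeeping is the uniformity of the bound $d+\rho$: without isodimensionality the parameter degrees of the $\seq Rw$ can be unbounded (witness the example following Theorem~\ref{T:isodim}), and then Proposition~\ref{P:unifembdim} produces only a separate finite value $\gamma(\seq Rw)$ for each $w$, which need not survive passage to the ultraproduct. So the principal obstacle is already dispatched by the hypothesis, and what remains is the application of \los\ together with the commutation of localization with ultraproducts at a prime which is itself an ultraproduct of primes --- a fact already used in Corollary~\ref{C:loculpow}.
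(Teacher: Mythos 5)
Your proposal is correct and follows essentially the same route as the paper: realize $\pr$ as an ultraproduct of primes via Proposition~\ref{P:ulscheme}, use Theorem~\ref{T:isodim} to get a uniform parameter degree $\rho$ from isodimensionality, apply Proposition~\ref{P:unifembdim} to bound $\ed{(\seq Rw)_{\seq\pr w}}$ by $d+\rho$, pass to the ultraproduct, and conclude strongness from Proposition~\ref{P:sfr}. The paper's proof is exactly this, with the strongness handled by Proposition~\ref{P:sfr} rather than the explicit colon-ideal construction you offer as an alternative.
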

\begin{proof}
Let $\pr\in\conspec{\ul R}$. By Proposition~\ref{P:ulscheme}, there exist prime ideals $\seq\pr w\sub\seq Rw$ with ultraproduct equal to $\pr$. By Theorem~\ref{T:isodim}, there is some $\rho$, such that almost each $\seq Rw$ has   parameter degree $\rho$. Hence, by Proposition~\ref{P:unifembdim}, almost each $(\seq Rw)_{\seq\pr w}$ has embedding dimension at most $d+\rho$, where $d$ is the common dimension of almost all $\seq Rw$ (that is to say, the ultra-dimension, whence \pdim, of $\ul R$). Since $(\ul R)_\pr$ is the ultraproduct of the $(\seq Rw)_{\seq\pr w}$, its embedding dimension is at most $d+\rho$.   Proposition~\ref{P:sfr} then implies that $\pr$ is strong.
\end{proof}

We actually showed that each stalk of $\spec{\ul R}$ at a point belonging to $\conspec{\ul R}$ has embedding dimension  at most $d+\rho$, where $d$ is the \pdim\ of $\ul R$ and $\rho$ its   parameter degree.   Inspecting the proof of Proposition~\ref{P:unifembdim}, we see that almost each $(\seq Rw)_{\seq\pr w}$ has parameter degree at most $\rho$, showing that each stalk is also isodimensional, of ultra-dimension, whence \pdim, at most $d$.

\section{\C-singularities}\label{s:cata}

According to the definitions in \S\ref{s:ul}, a local ring  of finite
embedding dimension is \emph{\creg} if its completion is a regular
(Noetherian) local ring.

\begin{theorem}\label{T:preg}
 Let $(R,\maxim)$ be a local ring of 
\pdim\ $d$ and let $k$ be its residue field. The following are equivalent:
\begin{enumerate}
\item\label{i:preg} $R$ is \creg;
\item\label{i:sep} $\sep R$ is \creg;
\item\label{i:emb} $\pd R=\ed R$;
\item\label{i:maxgen} $\maxim$  is generated by a generic
sequence;
\item\label{i:qreg} $\maxim$ is generated by a quasi-regular sequence;
\item\label{i:gr} $\gr{}R$ is isomorphic to $\pol k\xi$, with $\xi$ a 
$d$-tuple of indeterminates.
\end{enumerate}
\end{theorem}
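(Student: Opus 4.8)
The plan is to deduce all six equivalences by assembling standard facts about the (Noetherian) completion $\complet R$ and the associated graded ring $\gr{}R$, with Theorems~\ref{T:finembcomp} and~\ref{T:pdim} as the essential inputs. First, since $R$ and $\sep R$ have the same completion, \equivalence{i:preg}{i:sep} is immediate. For \equivalence{i:preg}{i:emb}, recall that $\complet R$ is a Noetherian local ring with maximal ideal $\maxim\complet R$ (Theorem~\ref{T:finembcomp}), that $\dim\complet R=\pd R=d$ (Theorem~\ref{T:pdim}), and that the ring isomorphism $R/\maxim^2\iso\complet R/\maxim^2\complet R$ identifies $\maxim/\maxim^2$ with $\maxim\complet R/(\maxim\complet R)^2$, whence $\ed{\complet R}=\ed R$. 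Since a Noetherian local ring is regular exactly when its dimension equals its embedding dimension, $\complet R$ is regular \iff\ $d=\ed R$, which is condition \eqref{i:emb}.

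Next I would treat the two conditions \eqref{i:maxgen} and \eqref{i:gr}, which are intrinsic to $R$. If $\maxim$ is generated by a generic sequence, that sequence has length $\pd R$ by definition of genericity and it generates $\maxim$, so $\ed R\leq\pd R$; combined with the general inequality $\pd R\leq\ed R$, this forces equality, giving \implication{i:maxgen}{i:emb}. Conversely, if $\pd R=\ed R=d$, then any minimal generating set $\rij xd$ of $\maxim$ generates the $\maxim$-primary ideal $\maxim$ and has length $d=\pd R$, hence is generic, giving \implication{i:emb}{i:maxgen}. For the graded statement, observe that $\gr{}R$ is a Noetherian standard graded $k$-algebra whose degree-one component is $\maxim/\maxim^2$ (of $k$-dimension $\ed R$) and whose Krull dimension equals $d$ by Theorem~\ref{T:pdim}; hence there is a graded surjection $\pol k\xi\onto\gr{}R$ with $\xi$ a tuple of $\ed R$ indeterminates. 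If $\ed R=d$, its kernel $J$ is a homogeneous ideal with $\dim(\pol k\xi/J)=d=\dim\pol k\xi$, which forces $J=0$, so $\gr{}R\iso\pol k\xi$ in $d$ variables; conversely, if $\gr{}R\iso\pol k\xi$ with $\xi$ a $d$-tuple, then $\maxim/\maxim^2$ has $k$-dimension $d$, i.e.\ $\ed R=d$. This establishes \equivalence{i:emb}{i:gr}.

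Finally, \equivalence{i:gr}{i:qreg} comes straight from the definition of a quasi-regular sequence: a sequence $\rij xn$ generating $\maxim$ is quasi-regular precisely when the canonical graded surjection $\pol k\xi\to\gr{}R$ sending $\xi_i$ to $\op{in}(x_i)$ (with $\xi$ an $n$-tuple) is an isomorphism; thus such a sequence exists \iff\ $\gr{}R$ is a polynomial ring over $k$, and then comparing Krull dimensions with $\dim\gr{}R=d$ forces $n=d$. (Concretely, from \eqref{i:gr} one takes a minimal generating set $\rij xd$ of $\maxim$; its initial forms form a $k$-basis of the degree-one component of $\gr{}R$, hence generate the standard graded ring $\gr{}R$, and the resulting graded surjection $\pol k\xi\onto\gr{}R$ between $d$-dimensional $k$-algebras is an isomorphism by the kernel-dimension argument used above.) Chaining \equivalence{i:preg}{i:sep}, \equivalence{i:preg}{i:emb}, \equivalence{i:emb}{i:maxgen}, \equivalence{i:emb}{i:gr}, and \equivalence{i:gr}{i:qreg} shows that all six conditions are equivalent. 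The proof is essentially bookkeeping; the one point that needs care is that regularity of $\complet R$ and the polynomial-ring shape of $\gr{}R$ both get reduced to the single numerical identity $\pd R=\ed R$, which is legitimate only because $\dim\complet R=\dim\gr{}R=\pd R$ by Theorem~\ref{T:pdim}.
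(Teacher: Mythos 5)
Your proof is correct, and it runs on the same engine as the paper's: Theorem~\ref{T:finembcomp}, the dimension equalities of Theorem~\ref{T:pdim}, the graded surjection $\pol k\xi\onto\gr{}R$ killed by a dimension count, Nakayama's Lemma, and the identification of quasi-regularity with injectivity of that surjection. The one real difference is where cata-regularity is hooked into the chain. The paper proves \equivalence{i:preg}{i:gr} directly, using the graded isomorphism $\gr{}R\iso\gr{}{\complet R}$ together with the criterion that a Noetherian local ring is regular \iff\ its associated graded ring is a polynomial ring (\cite[Theorem 14.4]{Mats}), so that \eqref{i:gr} becomes the pivot and \eqref{i:sep}, \eqref{i:qreg} come along from the same graded data. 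You instead prove \equivalence{i:preg}{i:emb}, using $\ed{\complet R}=\ed R$ (from $R/\maxim^2\iso\complet R/\maxim^2\complet R$) and the elementary numerical criterion that a Noetherian local ring is regular \iff\ its dimension equals its embedding dimension, making the purely numerical condition \eqref{i:emb} the pivot; this buys you a slightly more elementary citation at the cost of having to re-derive \eqref{i:gr} from \eqref{i:emb} inside $\gr{}R$, which you do by the same kernel-dimension argument the paper uses for \implication{i:maxgen}{i:gr}. The remaining links --- \equivalence{i:emb}{i:maxgen} straight from the definition of \pdim, Nakayama for recovering a $d$-element generating set, and the definitional treatment of \eqref{i:qreg}, including your correct observation that comparing Krull dimensions forces a quasi-regular generating sequence to have length exactly $d$ --- coincide with the paper's argument, so the two proofs differ only in this choice of pivot and key lemma.
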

\begin{proof}
 The equivalence of \eqref{i:preg} and  \eqref{i:sep} is clear
since $\sep R$ has the same completion as $R$, and their equivalence with
\eqref{i:gr}
 follows from \cite[Theorem 14.4]{Mats},  since we
have an isomorphism of graded rings $\gr{}R\iso \gr{}{\complet R}$. The
equivalence of \eqref{i:emb} and \eqref{i:maxgen} is clear from the definition
of \pdim. Suppose \eqref{i:maxgen} holds, so that $\maxim$ is generated by a
generic sequence $\rij xd$.
 There is a natural surjective \homo\ $\pol k\xi\to
\gr{}R$ which maps $\xi_i$ to $\op{in}(x_i)$, where $\xi=\rij \xi d$. Since both 
rings have the same dimension by
Theorem~\ref{T:pdim}, the kernel must be zero, proving \eqref{i:gr}. 
Conversely, assume $\gr{} R\iso \pol k\xi$. Hence
$\maxim/\maxim^2$ is generated by $d$ elements, and therefore, by 
Nakayama's Lemma $\maxim$ is generated by $d$
elements, showing that \eqref{i:maxgen} holds.

Remains to show the equivalence of the other conditions with 
\eqref{i:qreg}. Recall that $\tuple x$ is quasi-regular if
$F(\tuple x)=0$, for a homogeneous polynomial $F\in\pol R\xi$, implies 
that $F$ has all its coefficients in $I:=\tuple
xR$. This is equivalent with the natural epimorphism 
$\pol{(R/I)}{\xi_1,\dots,\xi_d}\to \gr IR$ being injective, whence an
isomorphism (see for instance \cite[\S16]{Mats}). Hence taking 
$I=\maxim$, we see that \eqref{i:qreg} is equivalent with
\eqref{i:gr}.
\end{proof}

\begin{remark}\label{R:preg}
 In the above proof, we actually showed 
that if  $R$ is \creg\ of \pdim\ $d$, then any
$d$-tuple generating $\maxim$ is quasi-regular. We will shortly show 
(Theorem~\ref{T:pCM} below) that then  every generic sequence is quasi-regular.
The ring $R$ in the next example shows that a generic sequence
generating the maximal ideal in a \creg\ local ring is not necessarily a regular
sequence.
\end{remark}

\begin{example}\label{e:preg}
 A local ring of \pdim\ zero is \creg\ 
\iff\ it is a field. A local ring of \pdim\ one is
\creg\ \iff\ its maximal ideal is generated by a non-nilpotent 
element. For instance, let $\ul V$ be an ultraproduct of
\DVR{s} (an \emph{ultra-DVR} for short), or more generally, a valuation ring of
finite embedding
 dimension (which is then automatically one). If
$x$ is an element in the ideal of infinitesimals $\inf {\ul V}$ of 
$\ul V$, then $R:=\ul V/x \ul V$ is \creg\ of \pdim\ one. If
$x\neq 0$, then $R$ is not a domain. In fact, $R$ has then depth 
zero (and so is not \preg\ in the sense of
\S\ref{D:sCM} below).
\end{example}

The following fact, however, is noteworthy: if $R$ is moreover 
separated, then any quasi-regular element is regular; see
for instance \cite[Theorem 16.3]{Mats}. In fact, we have the following result:

\begin{corollary}  If a  \creg\ local ring is separated, then
it is a
 domain. More generally, the separated quotient of a
\creg\ local ring is a domain.
\end{corollary}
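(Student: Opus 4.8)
The plan is to reduce to the separated case and then invoke the classical principle that a local ring which is separated and whose associated graded ring (for the $\maxim$-adic filtration) is a domain must itself be a domain.

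First I would handle the reduction. The $\maxim$-adic separated quotient $\sep R:=R/\inf R$ has the same completion as $R$, namely $\complet R$; hence if $R$ is \creg\ then so is $\sep R$ --- this is precisely the equivalence of conditions \eqref{i:preg} and \eqref{i:sep} in Theorem~\ref{T:preg}. Since $\sep R$ is visibly separated, establishing the first assertion (that every separated \creg\ local ring is a domain) immediately yields the second. So from now on I assume that $R$ itself is separated and \creg.

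Next, by the equivalence of \eqref{i:preg} with \eqref{i:gr} in Theorem~\ref{T:preg}, the associated graded ring $\gr{}R$ is isomorphic to a polynomial ring $\pol k\xi$ over the residue field $k$, and in particular is an integral domain. Now take non-zero $a,b\in R$. Because $R$ is separated, $\inf R=(0)$, so neither $a$ nor $b$ lies in $\inf R$; thus $m:=\ord{}a$ and $n:=\ord{}b$ are both finite, and the initial forms $\op{in}(a)\in\maxim^m/\maxim^{m+1}$ and $\op{in}(b)\in\maxim^n/\maxim^{n+1}$ are non-zero homogeneous elements of $\gr{}R$. Since $\gr{}R$ is a domain, $\op{in}(a)\op{in}(b)\neq 0$; but this product is, by the very definition of the multiplication on $\gr{}R$, the image of $ab$ in $\maxim^{m+n}/\maxim^{m+n+1}$. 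Hence $ab\notin\maxim^{m+n+1}$, and in particular $ab\neq 0$, so $R$ has no zero-divisors.

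I do not expect a genuine obstacle here. The one point deserving care is the identity $\op{in}(a)\op{in}(b)=\op{in}(ab)$ whenever the left-hand side is non-zero --- equivalently, that a domain structure on $\gr{}R$ forces $\ord{}(ab)=\ord{}a+\ord{}b$ for non-zero $a,b$ --- which is immediate from the way multiplication on the associated graded ring is defined. Everything else has already been recorded: that $\sep R$ is again \creg, and that $\gr{}R$ is a polynomial ring, both come directly from Theorem~\ref{T:preg}.
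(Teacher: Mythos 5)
Your proof is correct, but it follows a genuinely different route from the paper's. The paper disposes of the corollary in one line: since the kernel of $R\to\complet R$ is exactly $\inf R$, the separated quotient $\sep R$ embeds into $\complet R$, which is a regular Noetherian local ring and hence a domain, so $\sep R$ (and in particular $R$ itself when it is separated) is a domain. You instead reduce to the separated case via the equivalence \eqref{i:preg}$\Leftrightarrow$\eqref{i:sep} of Theorem~\ref{T:preg} and then use the equivalence with \eqref{i:gr}: $\gr{}R\iso\pol k\xi$ is a domain, and the classical initial-form argument (nonzero elements of a separated ring have finite order, nonzero initial forms, and $\op{in}(a)\op{in}(b)$ is by definition the class of $ab$ in $\maxim^{m+n}/\maxim^{m+n+1}$) gives $ab\neq0$. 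Both arguments are sound; the paper's buys brevity by exploiting the injection into the completion directly, while yours stays at the level of the graded ring and actually proves a bit more, namely that $\ord{}{ab}=\ord{}a+\ord{}b$ on the separated quotient, i.e.\ that the order function is a valuation there --- which is essentially the observation the paper uses later to show that a \creg\ ring of \pdim\ one has $\sep R$ a \DVR. The only point needing care, as you note, is the well-definedness and multiplicativity of the graded ring for the $\maxim$-adic filtration, which holds since ideal-adic filtrations are multiplicative.
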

\begin{proof} 
Immediate from the fact that $\sep R$ embeds in $\complet R$  and
the fact that Noetherian regular local rings are always domains.
\end{proof}

\begin{corollary}
If $R$ is   \creg, then so is any homomorphic image $R/I$, for
$I\sub\inf R$.
\end{corollary}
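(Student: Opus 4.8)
The plan is to show that passing to the quotient by an ideal of infinitesimals changes neither the completion nor, a fortiori, its regularity. First, since $R$ is \creg\ it has finite embedding dimension, and hence so does $R/I$: its maximal ideal $\maxim/I$ is generated by the images of a finite generating set of $\maxim$. Thus it is meaningful to ask whether $R/I$ is \creg.

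Next I would invoke Lemma~\ref{L:quot}: the completion of $R/I$ is $\complet R/I\complet R$. So everything reduces to the claim that $I\complet R=0$. This is where the hypothesis $I\sub\inf R=\bigcap_n\maxim^n$ enters: for every $n$ we have $I\sub\maxim^n$, hence $I\complet R\sub\maxim^n\complet R$. By Theorem~\ref{T:finembcomp} the ring $\complet R$ is Noetherian with maximal ideal $\maxim\complet R$, and $\maxim^n\complet R=(\maxim\complet R)^n$ (as recorded in the discussion preceding that theorem); therefore Krull's intersection theorem gives $\bigcap_n\maxim^n\complet R=0$, and consequently $I\complet R=0$.

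Putting these together, $\complet{R/I}=\complet R/I\complet R=\complet R$, which is regular because $R$ is \creg; hence $R/I$ is \creg, as desired. There is no genuine obstacle in this argument: the only step that is not completely formal is the identification $\maxim^n\complet R=(\maxim\complet R)^n$ together with the Noetherianity of $\complet R$, and both are furnished by Theorem~\ref{T:finembcomp} and its proof.
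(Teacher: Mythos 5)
Your argument is correct. You show directly that $\complet{R/I}\iso\complet R$: by Lemma~\ref{L:quot} the completion of $R/I$ is $\complet R/I\complet R$, and since $I\sub\inf R$ gives $I\complet R\sub\bigcap_n\maxim^n\complet R=\bigcap_n(\maxim\complet R)^n=0$ by Krull's intersection theorem in the Noetherian local ring $\complet R$ (Theorem~\ref{T:finembcomp}), this completion equals $\complet R$ itself, which is regular. The paper instead disposes of the statement in one line: since $I\sub\inf R$, the rings $R$ and $R/I$ have the same separated quotient, and by the equivalence \eqref{i:preg}~$\Leftrightarrow$~\eqref{i:sep} of Theorem~\ref{T:preg}, being \creg\ is detected on the separated quotient. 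The two routes rest on the same underlying fact---killing infinitesimals does not change the completion---but yours makes that fact explicit via Lemma~\ref{L:quot} and Krull's intersection theorem (and in passing proves the slightly stronger statement $\complet{R/I}\iso\complet R$), whereas the paper's proof simply reuses the already-established equivalence with the separated quotient and is correspondingly shorter. One cosmetic remark: the identity $\maxim^n\complet R=(\maxim\complet R)^n$ that you flag as the only non-formal step is in fact a formal property of extended ideals; the genuinely non-trivial input is that $\maxim\complet R$ is the maximal ideal of the Noetherian ring $\complet R$, which is exactly what Theorem~\ref{T:finembcomp} provides.
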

\begin{proof}
Since $R$ and $R/I$ have the same separated quotient,   the result follows from Theorem~\ref{T:preg}.
\end{proof}

\begin{corollary} 
For each $d$, the class of \creg\ local rings of 
\pdim\ $d$ is first-order definable.
\end{corollary}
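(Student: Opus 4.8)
The plan is to exhibit an explicit first-order theory $\Pi_d$ in the language of rings whose models are exactly the \creg\ local rings of \pdim\ $d$. The starting point is Theorem~\ref{T:preg}: for a local ring $(R,\maxim)$ with residue field $k$, being \creg\ of \pdim\ $d$ is equivalent to $\maxim$ being generated by $d$ elements (condition~\eqref{i:maxgen}, which for a ring of \pdim\ $d$ amounts to exactly this) together with $\gr{}R\iso\pol k\xi$ for a $d$-tuple $\xi$ of indeterminates (condition~\eqref{i:gr}). Now, once $\maxim=\rij xd R$ is $d$-generated, $\gr{}R$ is generated over $k$ in degree one by the initial forms $\op{in}(x_i)$ --- because $\maxim\cdot\maxim^n=\maxim^{n+1}$ --- hence is a graded quotient of $\pol k{\xi_1,\dots,\xi_d}$ via $\xi_i\mapsto\op{in}(x_i)$; comparing Hilbert functions degree by degree, this surjection is an isomorphism \iff\ $\dim_k(\maxim^n/\maxim^{n+1})=\binom{n+d-1}{d-1}$ for every $n$. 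So the whole problem reduces to producing, for each fixed $n$, a single first-order sentence $\sigma_n$ expressing ``$\maxim$ is generated by $d$ elements and $\dim_k(\maxim^n/\maxim^{n+1})=\binom{n+d-1}{d-1}$''. (One could equivalently route $\sigma_n$ through condition~\eqref{i:qreg}, asserting that some $d$-element generating set of $\maxim$ is quasi-regular in degree $n$; by Remark~\ref{R:preg} it then holds for every such set.)

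Producing $\sigma_n$ is bounded-quantifier bookkeeping. First, ``$(R,\maxim)$ is a local ring'' is itself first-order ($0\neq1$ together with $\forall x\,((\exists y\,xy=1)\vee(\exists z\,(1-x)z=1))$), and within this class $\maxim$ is definable as the set of non-units. Inside a formula that opens with $\exists x_1,\dots,x_d$ and the clauses ``each $x_i$ is a non-unit'' and ``every non-unit lies in $\rij xd R$'' --- which together pin down $\maxim=\rij xd R$ --- the ideal $\maxim^n$ is generated by the $\binom{n+d-1}{d-1}$ monomials $x^\alpha$ with $|\alpha|=n$, so ``$a\in\maxim^n$'' abbreviates the existential formula $\exists(r_\alpha)_{|\alpha|=n}\ a=\sum_{|\alpha|=n}r_\alpha x^\alpha$, involving only finitely many new variables. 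Then ``$\dim_k(\maxim^n/\maxim^{n+1})\geq c$'' becomes: there exist $a_1,\dots,a_c\in\maxim^n$ such that for all $\lambda_1,\dots,\lambda_c$, if $\sum_i\lambda_ia_i\in\maxim^{n+1}$ then every $\lambda_i$ is a non-unit --- which is precisely $k$-linear independence, over $k=R/\maxim$, of the classes of the $a_i$ in $\maxim^n/\maxim^{n+1}$. The sentence ``$\dim_k(\maxim^n/\maxim^{n+1})=c$'' is then the conjunction of the ``$\geq c$'' sentence with the negation of the ``$\geq c+1$'' sentence, and $\sigma_n$ is its instance with $c=\binom{n+d-1}{d-1}$; because $d$ is fixed, every quantifier block occurring here has bounded length, so this genuinely is a first-order sentence.

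Finally I would let $\Pi_d$ consist of the commutative ring axioms, the locality axiom above, and all the $\sigma_n$ for $n\geq1$ (observe that $\sigma_1$ already forces $\ed R=d$, and every $\sigma_n$ forces $\maxim$ to be $d$-generated). The verification that the models of $\Pi_d$ are exactly the \creg\ local rings of \pdim\ $d$ is then immediate. If $R$ is \creg\ of \pdim\ $d$, Theorem~\ref{T:preg} gives that $R$ is local with $\maxim$ $d$-generated and $\gr{}R\iso\pol k\xi$, so its Hilbert function is $n\mapsto\binom{n+d-1}{d-1}$ and any $d$ generators of $\maxim$ witness every $\sigma_n$; hence $R\models\Pi_d$. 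Conversely, if $R\models\Pi_d$, then $R$ is local and $\maxim$ is $d$-generated, so $R$ has finite embedding dimension and thus finite \pdim\ ($\pd R\leq\ed R$); the reduction of the first paragraph yields $\gr{}R\iso\pol k\xi$ with $\xi$ a $d$-tuple, whence $\pd R=\dim\gr{}R=\dim\pol k\xi=d$ by Theorem~\ref{T:pdim}, and $R$ is \creg\ by Theorem~\ref{T:preg}. The only genuinely delicate point --- and the only place this argument could fail --- is the first-order encoding of $\dim_k(\maxim^n/\maxim^{n+1})=\binom{n+d-1}{d-1}$: the a priori infinitary datum ``all of $\maxim^n$'' must be replaced by a bounded quantifier block, which is legitimate only because the number of generators of $\maxim$, hence of $\maxim^n$, is capped by the fixed $d$. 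Without such a uniform cap one is back in the situation of Lemma~\ref{L:fodim}, where \pdim\ itself fails to be first-order.
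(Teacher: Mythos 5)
Your proof is correct, and it follows the same overall strategy as the paper: present an explicit axiomatization, with one sentence per degree $n$, of one of the equivalent conditions in Theorem~\ref{T:preg}, exploiting that the fixed bound $d$ on the number of generators of $\maxim$ makes each degree-$n$ condition expressible with finitely many quantifiers. The only difference is which equivalent condition gets encoded. The paper's theory $T_d$ consists of the sentence $\sigma_d$ (``$\maxim$ is generated by some $d$-tuple'') together with sentences asserting quasi-regularity of any generating $d$-tuple in each degree $n$, i.e.\ condition~\eqref{i:qreg}; the converse direction there is supplied by Remark~\ref{R:preg}, which guarantees that a \creg\ ring of \pdim\ $d$ satisfies these sentences for \emph{every} generating $d$-tuple. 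You instead encode condition~\eqref{i:gr} via the Hilbert function: $\maxim$ is $d$-generated and $\dim_k(\maxim^n/\maxim^{n+1})=\binom{n+d-1}{d-1}$ for all $n$, which you correctly translate into first-order sentences by expressing membership in $\maxim^n$ through the monomials in the chosen generators and $k$-linear independence through the non-unit predicate. The two encodings express the same isomorphism $\pol k\xi\iso\gr{}R$ --- the paper states injectivity of the graded surjection directly (no homogeneous relation among the generators with a unit coefficient), while you state equality of graded dimensions and invoke the elementary fact that a surjection of finite-dimensional $k$-vector spaces of equal dimension is bijective; your route also needs Theorem~\ref{T:pdim} to recover $\pd R=d$ from $\dim\gr{}R=d$, which you use correctly. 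Neither version has a gap, and your closing remark correctly identifies the bounded generation of $\maxim$ as the one point that makes the encoding legitimate, in contrast with the failure recorded in Lemma~\ref{L:fodim}.
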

\begin{proof}
 Observe that a ring is local \iff\ any sum of two 
non-units is again a non-unit. In fact, an element lies
in the maximal ideal of a local ring \iff\ it is not a unit. 
Therefore, the maximal ideal of a local ring is definable,
as is expressing that some  element lies in the maximal ideal. In 
particular,  the formula $\lambda_{d,n}(\tuple x,\tuple a)$ is first
order, where $\lambda_{d,n}(\tuple x,\tuple a)$ is the formula in the variables 
$\tuple x:=\rij xd$ and $\tuple a:=(a_\nu)_\nu$, for  $\nu$ running
over all $d$-tuples in $\nat^d$ whose sum $\norm\nu$ is $n$, expressing that
\begin{equation}
\label{eq:dn}
\begin{split} &\textsl{if $\tuple x$ generates the maximal  ideal and if 
$\sum_{\norm\nu=n} a_\nu  \tuple x^\nu=0$,}\\
&\textsl{then some $a_\nu$ lies in the maximal ideal.}
\end{split}
\end{equation}
Let $T_d$ be the theory  consisting of all sentences $(\forall 
\tuple x,\forall \tuple a)\lambda_{d,n}(\tuple x,  \tuple a)$, for $n=1,2,\dots$,
together with the sentence $\sigma_d$ expressing that the maximal 
ideal is generated by some $d$-tuple. I claim that
$T_d$ axiomatizes the class of \creg\ local rings of \pdim\ $d$. Indeed, suppose 
that $(R,\maxim)$   satisfies $T_d$. By $\sigma_d$,
there is a $d$-tuple $\tuple x$ such that $\maxim=\tuple xR$. Since 
$\lambda_{d,n}(\tuple x,\tuple a)$ holds for all
tuples $\tuple a$ in $R$, we see that $\tuple x$ is quasi-regular. 
Hence $R$ is \creg\ by Theorem~\ref{T:preg}.
Conversely, if $R$ is \creg\ of \pdim\ $d$, then it satisfies $T_d$ 
by Remark~\ref{R:preg}.
\end{proof}

This immediately gives a large class of \creg\ local rings. Namely, 
any ultraproduct of   regular local rings
of dimension $d$ is \creg, of \pdim\ $d$. We will address this 
situation further in \S\ref{s:ulsing} below.

\begin{corollary} 
A  local ring $R$ of \pdim\ one   is \creg\ \iff\ 
$\sep R$ is a \DVR.
\end{corollary}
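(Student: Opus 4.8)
The plan is to reduce the statement to the characterization of \creg\ local rings of \pdim\ one from Theorem~\ref{T:preg} together with the structure theory of one-dimensional complete Noetherian local rings. First I would invoke Theorem~\ref{T:pdim}, which tells us that a local ring $R$ of \pdim\ one has completion $\complet R$ a one-dimensional Noetherian local ring, and likewise that $\sep R$ has \pdim\ one with the same completion $\complet R$ as $R$. So the question is purely about when this common completion is a \DVR. By definition, $R$ is \creg\ \iff\ $\complet R$ is a regular (Noetherian) local ring, and a one-dimensional regular local ring is exactly a \DVR. Hence $R$ is \creg\ \iff\ $\complet R$ is a \DVR, and the same statement holds verbatim for $\sep R$ since it has the same completion.

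Next I would dispose of the easy direction: if $\sep R$ is a \DVR, then in particular $\sep R$ is a Noetherian local ring which equals its own completion (a \DVR\ is complete only if it is complete — but we do not even need completeness here), so $\complet R$, being the completion of the \DVR\ $\sep R$, is again a \DVR\ (completion of a \DVR\ is a \DVR), and therefore $R$ is \creg. Actually the cleanest route avoids worrying about completeness of $\sep R$: $\sep R$ a \DVR\ implies $\sep R$ is regular Noetherian, hence \creg\ by the implication \eqref{i:sep}$\Rightarrow$\eqref{i:preg} (trivially, or directly: its completion is a \DVR), and then \equivalence{i:sep}{i:preg} of Theorem~\ref{T:preg} gives that $R$ is \creg.

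For the converse, suppose $R$ is \creg\ of \pdim\ one. By Theorem~\ref{T:preg}\eqref{i:gr}, $\gr{}R\iso\pol k\xi$ with $\xi$ a single indeterminate, i.e.\ $\gr{}R$ is a polynomial ring in one variable over the residue field $k$. Since $\gr{}R\iso\gr{\mathfrak M}{\complet R}$ and the latter is reduced (it is $k[\xi]$), a standard fact (an element of positive order whose initial form is not a zero-divisor generates a prime, and here $\gr{}R$ is a domain) shows $\complet R$ is a domain; being a one-dimensional regular local domain, $\complet R$ is a \DVR. Now I claim $\sep R$ is already a \DVR. Indeed, $\sep R$ is separated of \pdim\ one and \creg, so by the fact quoted in the excerpt (a quasi-regular element of a separated local ring is regular; \cite[Theorem 16.3]{Mats}), a generic generator $x$ of the maximal ideal of $\sep R$—which is quasi-regular by Remark~\ref{R:preg}—is a non-zero-divisor. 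Thus $\maxim_{\sep R}=x\sep R$ is generated by a regular element, and by Theorem~\ref{T:preg}\eqref{i:gr} we have $\bigcap_n \maxim_{\sep R}^n=\bigcap_n x^n\sep R=0$. A separated local domain (it is a domain since it embeds in $\complet R$) whose maximal ideal is principal and generated by a non-zero-divisor, with trivial infinite intersection of powers of the maximal ideal, is a \DVR: the $x$-adic order function is a discrete valuation. This gives that $\sep R$ is a \DVR, completing the equivalence.

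The main obstacle, and the step deserving the most care, is the converse: concluding that $\sep R$ \emph{itself} (not merely its completion) is a \DVR. The subtlety is that $\sep R$ need not be Noetherian a priori, so one cannot simply quote the classification of Noetherian one-dimensional regular local rings for $\sep R$; one must genuinely use separatedness to upgrade quasi-regularity to regularity of the parameter (via \cite[Theorem 16.3]{Mats}) and then check by hand that a separated local domain with principal maximal ideal generated by a non-zero-divisor is a \DVR—which amounts to verifying that every nonzero element is a unit times a power of the uniformizer, an argument that uses exactly $\bigcap_n\maxim^n=0$. Everything else is a routine assembly of Theorems~\ref{T:pdim} and \ref{T:preg} and Remark~\ref{R:preg}.
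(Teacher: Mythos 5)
Your proof is correct and takes essentially the paper's route: the easy direction is the same appeal to Theorem~\ref{T:preg}, and for the converse the paper simply observes that the order function on $\sep R$ agrees with the valuation of the \DVR\ $\complet R$, while you spell out the same idea intrinsically by checking by hand that $\sep R$ --- a separated domain (via the embedding into $\complet R$) with principal maximal ideal --- is a \DVR; your detour through quasi-regularity and \cite[Theorem 16.3]{Mats} is redundant once $\sep R$ is known to be a domain. One small slip: the separatedness $\bigcap_n\maxim^n\sep R=0$ is immediate from the construction of the separated quotient (since $\inf R\sub\maxim^n$ for all $n$), not a consequence of Theorem~\ref{T:preg}\eqref{i:gr}, but this is harmless.
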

\begin{proof} 
Assume $R$ is \creg\, so that  $\complet R$ is  a \DVR\ with
valuation $\ord{\complet R}\cdot$.  
Since $\ord
{\sep R}a=\ord{\complet R}a$ for all $a\in\sep R$, also  $\ord{\sep R}\cdot$ is a
valuation, showing 
that $\sep R$ is a \DVR. Conversely, if $\sep R$ is a \DVR,
then $R$ is \creg\ by Theorem~\ref{T:preg}.
\end{proof}

\subsection*{Cata-\CM\ local rings}
We now turn to the study of \cCM\ local rings of finite embedding dimension, that
is to say, local rings whose completion is \CM. Clearly, any \creg\ local ring is
\cCM.

\begin{theorem}\label{T:pCM}
A local ring   of finite embedding dimension is \cCM\ \iff\ its separated
quotient is \cCM\ \iff\ some (equivalently, every) generic sequence is
quasi-regular.
\end{theorem}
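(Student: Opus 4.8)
The plan is to reduce everything to standard facts about the Noetherian local ring $\complet R$. Since $\inf R$ is contained in every power of $\maxim$, the rings $R$, $\sep R=R/\inf R$ and $\complet R$ all have the same completion, so ``$R$ is \cCM'' and ``$\sep R$ is \cCM'' merely assert that $\complet R$ is \CM; this settles the first equivalence. The remaining content is the equivalence with quasi-regularity. As a generic sequence always exists (by Lemma~\ref{L:mingen} a minimal generating tuple of an $\maxim$-primary ideal has length $\pd R$, hence is generic), it suffices to prove the cycle of implications: $\complet R$ is \CM\ $\Rightarrow$ every generic sequence of $R$ is quasi-regular $\Rightarrow$ some generic sequence of $R$ is quasi-regular $\Rightarrow$ $\complet R$ is \CM.

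The crucial preliminary step is a transfer of quasi-regularity between $R$ and $\complet R$. Fix a generic sequence $\tuple x=\rij xd$ and put $I:=\tuple xR$. Since $R/I$ is Artinian, the ideal $I$, and hence each power $I^n$, is $\maxim$-primary, and therefore closed; by Lemma~\ref{L:quot} this gives isomorphisms $R/I^n\iso\complet R/I^n\complet R$ for every $n\geq1$, whence $I^n/I^{n+1}\iso I^n\complet R/I^{n+1}\complet R$. These fit together into an isomorphism of graded rings $\gr IR\iso\gr{I\complet R}{\complet R}$ over $R/I\iso\complet R/I\complet R$, and it intertwines the canonical surjections $\pol{(R/I)}{\xi_1,\dots,\xi_d}\to\gr IR$ and $\pol{(\complet R/I\complet R)}{\xi_1,\dots,\xi_d}\to\gr{I\complet R}{\complet R}$ (where $\xi_i$ maps to the class of $x_i$ in $I/I^2$, respectively in $I\complet R/I^2\complet R$). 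Hence $\tuple x$ is quasi-regular in $R$ \iff\ $\tuple x\complet R$ is quasi-regular in $\complet R$.

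Now I would work entirely inside $\complet R$. By Proposition~\ref{P:sop}, $\tuple x\complet R$ is a system of parameters of the Noetherian local ring $\complet R$, and $I\complet R\sub\maxim\complet R$ lies in the Jacobson radical; so by \cite[Theorem 16.3]{Mats} the sequence $\tuple x\complet R$ is quasi-regular \iff\ it is a regular sequence. On the other hand, by the classical characterization of \CM\ rings (see, e.g., \cite[\S17]{Mats}), $\complet R$ is \CM\ \iff\ some system of parameters of $\complet R$ is a regular sequence \iff\ every system of parameters is a regular sequence. Feeding these two facts into the transfer step closes the cycle: if $\complet R$ is \CM, then every system of parameters of $\complet R$, in particular every $\tuple x\complet R$ coming from a generic sequence $\tuple x$ of $R$, is regular, so $\tuple x$ is quasi-regular in $R$; and if some generic sequence $\tuple x$ is quasi-regular in $R$, then $\tuple x\complet R$ is a quasi-regular system of parameters of $\complet R$, hence a regular sequence, hence $\complet R$ is \CM. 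The analogous statement with $\sep R$ in place of $R$ then follows by applying the same argument to $\sep R$, which has completion $\complet R$ and whose generic sequences are precisely the images of those of $R$.

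The only point requiring genuine care is the transfer step of the second paragraph — concretely, checking that the graded isomorphism $\gr IR\iso\gr{I\complet R}{\complet R}$ really does intertwine the two polynomial-ring presentations, so that one surjection is bijective exactly when the other is. This reduces to noting that the class of $x_i$ in $I/I^2$ corresponds to the class of $x_i$ in $I\complet R/I^2\complet R$ under $I/I^2\iso I\complet R/I^2\complet R$, which is automatic from the construction of the isomorphisms. Everything else is a direct invocation of Proposition~\ref{P:sop}, Lemmas~\ref{L:quot} and~\ref{L:mingen}, and standard results in \cite{Mats}.
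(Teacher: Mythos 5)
Your proof is correct and takes essentially the same route as the paper: transfer quasi-regularity between $R$ and $\complet R$ via the graded isomorphism $\gr{\tuple xR}R\iso\gr{\tuple x\complet R}{\complet R}$ coming from Lemma~\ref{L:quot}, then work in the Noetherian ring $\complet R$ using Proposition~\ref{P:sop}, \cite[Theorem 16.3]{Mats}, and the standard characterization of \CM{}ness by regular systems of parameters. The only difference is that you spell out the intertwining of the two polynomial presentations of the graded rings, a verification the paper leaves implicit.
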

\begin{proof} 
Let $(R,\maxim)$ be a local ring of \pdim\ $d$ and let $\tuple
x$ be a generic sequence. Since $\gr{\tuple xR}R\iso \gr{\tuple x\complet
R}{\complet R}$, the sequence $\tuple x$ is   quasi-regular in $R$ \iff\ it
is so in $\complet R$. Since $R$ and $\sep R$ have the same completion, we 
only need to show the equivalence of the first and
last condition. Suppose that $\tuple x$ is generic.  
Since $\tuple x$ is  $\complet R$-quasi-regular, it is   $\complet R$-regular
 by \cite[Theorem 16.3]{Mats} and the fact that $\complet R$ is Noetherian. Since
 $\complet R$ has dimension $d$ by Theorem~\ref{T:pdim}, it is
\CM, showing that $R$ is \cCM.

Conversely, suppose $\complet R$ is \CM. Since $\tuple x$ is a system
of parameters in $\complet R$, it is $\complet R$-regular, whence 
$\complet R$-quasi-regular. By our previous observation, $\tuple x$ is then
quasi-regular in $R$.
\end{proof}

\begin{corollary}\label{C:cregmult}
A local ring of finite embedding dimension is \creg\ \iff\ it is \cCM\ and has
multiplicity one.
\end{corollary}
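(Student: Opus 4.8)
The plan is to reduce everything to the completion. Since $R$ has finite embedding dimension, $\complet R$ is a complete Noetherian local ring by Theorem~\ref{T:finembcomp}; by definition $R$ is \creg\ (respectively, \cCM) exactly when $\complet R$ is regular (respectively, \CM), and $\mult R=\mult{\complet R}$ by Remark~\ref{R:mult}. Hence Corollary~\ref{C:cregmult} is equivalent to the purely Noetherian statement that a complete Noetherian local ring $A$ is regular if and only if it is \CM\ of multiplicity one, and it is this that I would prove.

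For the forward direction I would invoke Theorem~\ref{T:preg}: if $R$ is \creg\ it is \cCM\ (as already observed before the corollary), and, writing $d:=\pd R$, we have $\gr{}R\iso\pol k\xi$ with $\xi$ a $d$-tuple of indeterminates. Comparing graded pieces gives $\op{length}(R/\maxim^n)=\binom{n+d-1}{d}$ for all $n\geq 1$, a polynomial in $n$ of degree $d$ with leading coefficient $1/d!$; by the definition in Remark~\ref{R:mult} this says exactly that $\mult R=1$.

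For the converse, put $A:=\complet R$ and $d:=\kd A=\pd R$, and assume $A$ is \CM\ of multiplicity one. First suppose the residue field of $A$ is infinite. Choose a minimal reduction $\mathfrak q=(x_1,\dots,x_d)A$ of the maximal ideal $\maxim A$ of $A$; then $x_1,\dots,x_d$ is a system of parameters, and since passing to a reduction leaves the Hilbert--Samuel multiplicity unchanged, the multiplicity $e(\mathfrak q;A)$ of $\mathfrak q$ equals $\mult A=1$. Because $A$ is \CM, the sequence $x_1,\dots,x_d$ is regular, so $\op{length}(A/\mathfrak q)=e(\mathfrak q;A)=1$; hence $A/\mathfrak q$ is a field, $\mathfrak q=\maxim A$, and $A$ is regular, i.e.\ $R$ is \creg. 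If the residue field of $A$ is finite, I would first replace $A$ by $\complet{A[t]_{\maxim A[t]}}$: this is again a complete Noetherian local ring, faithfully flat and unramified over $A$, with the same dimension, multiplicity, and \CM/regular status, but now with the infinite residue field $k(t)$, so the previous case applies and descends.

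The only non-formal ingredients here are three standard facts about Hilbert--Samuel multiplicities: existence of a minimal reduction of $\maxim A$ generated by $\dim A$ elements when the residue field is infinite; invariance of multiplicity under reduction; and $\op{length}(A/\mathfrak q)=e(\mathfrak q;A)$ for a parameter ideal $\mathfrak q$ in a \CM\ local ring. The only point requiring a little care is the reduction to an infinite residue field, which the gonflement $A\to\complet{A[t]_{\maxim A[t]}}$ handles; alternatively one may simply cite the classical theorem that a \CM\ Noetherian local ring is regular precisely when its multiplicity equals one.
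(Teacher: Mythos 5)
Your proof is correct and follows essentially the same route as the paper: both directions are reduced to the completion via Theorem~\ref{T:finembcomp} and Remark~\ref{R:mult}, after which the claim becomes the classical Noetherian fact that a \CM\ (or, as the paper uses it, unmixed) local ring of multiplicity one is regular. The only difference is that the paper simply cites Nagata's theorem for this last step, whereas you reprove it via minimal reductions together with a gonflement to reach an infinite residue field, which is a valid but optional elaboration.
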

\begin{proof}
If a local ring $R$ is \creg, its completion $\complet R$
is regular, whence has multiplicity one. Since $R$ and its completion $\complet
R$ 
have the same multiplicity by
Remark~\ref{R:mult}, the direct implication is clear. Conversely, if $R$ is
\cCM\  and $\mult R=1$, then $\complet R$ is \CM\ with $\mult{\complet R}=1$ by
Remark~\ref{R:mult}. Since $\complet R$ is unmixed, it is regular by
\cite[Theorem 40.6]{Nag}, showing that $R$ is \creg.
\end{proof}

\begin{lemma}\label{L:pardegmult} 
The multiplicity of $R$ is at most 
its parameter degree. If $R$ has infinite residue field then we have equality
\iff\ $R$ is \cCM.
\end{lemma}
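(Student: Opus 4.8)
The plan is to push the problem into the Noetherian local ring $\complet R$ and apply classical facts about Hilbert--Samuel multiplicities of parameter ideals. Write $e(\mathfrak q;A)$ for the Hilbert--Samuel multiplicity of an $\maxim_A$-primary ideal $\mathfrak q$ of a Noetherian local ring $A$. Two preliminary remarks will be used throughout: (a) $\mult R=\mult{\complet R}$ by Remark~\ref{R:mult}; and (b) if $\tuple x$ is a generic sequence of $R$, then its image in $\complet R$ is a system of parameters (Proposition~\ref{P:sop}) and, since $\tuple xR$ is then $\maxim$-primary, hence closed, Lemma~\ref{L:quot} gives $R/\tuple xR\iso\complet R/\tuple x\complet R$, so these residue rings have the same finite length. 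In particular, taking minima over all generic sequences, $\pardeg R\geq\pardeg{\complet R}$.

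For the first assertion I would invoke the standard chain, valid for any system of parameters $\tuple y$ of a Noetherian local ring $A$ of dimension $d$:
\begin{equation*}
\mult A=e(\maxim_A;A)\ \leq\ e(\tuple yA;A)\ \leq\ \op{length}(A/\tuple yA),
\end{equation*}
the first inequality being the monotonicity of multiplicity under enlarging the ideal and the second the usual bound for parameter ideals (see \cite[\S14]{Mats}). Taking the minimum over all $\tuple y$ yields $\mult A\leq\pardeg A$; with $A=\complet R$ this gives, by (a) and (b), $\mult R=\mult{\complet R}\leq\pardeg{\complet R}\leq\pardeg R$.

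Now assume $\mult R=\pardeg R$. Choose a generic sequence $\tuple x$ of $R$ with $\op{length}(R/\tuple xR)=\pardeg R$; by (b) its image is a system of parameters of $\complet R$ with $\op{length}(\complet R/\tuple x\complet R)=\pardeg R=\mult{\complet R}$. Substituting this into the displayed chain (with $A=\complet R$, $\tuple y=\tuple x$) forces $e(\tuple x\complet R;\complet R)=\op{length}(\complet R/\tuple x\complet R)$, and hence $\complet R$ is \CM\ by the multiplicity criterion for Cohen--Macaulayness: a Noetherian local ring is \CM\ \iff\ some (equivalently, every) parameter ideal $\mathfrak q$ satisfies $e(\mathfrak q)=\op{length}(A/\mathfrak q)$ (see \cite[\S14, \S17]{Mats} or \cite{Nag}). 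Thus $R$ is \cCM; note this implication uses nothing about the residue field.

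For the converse, suppose $R$ is \cCM\ and $k:=R/\maxim$ is infinite; it suffices to produce one generic sequence $\tuple x$ of $R$ with $\op{length}(R/\tuple xR)\leq\mult R$, for then $\pardeg R\leq\mult R$ and equality follows from the first assertion. The case $\pd R=0$ is trivial ($R$ is Artinian and both quantities equal $\op{length}(R)$), so let $d:=\pd R\geq1$. By \S\ref{s:finemb} and Theorem~\ref{T:pdim}, $\gr{}R\iso\gr{}{\complet R}$ is a standard graded $k$-algebra of dimension $d$; since $k$ is infinite, graded Noether normalization supplies a homogeneous system of parameters $\xi_1,\dots,\xi_d$ of $\gr{}R$ consisting of linear forms, so $\xi_i\in(\gr{}R)_1=\maxim/\maxim^2$. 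Lift each $\xi_i$ to $x_i\in\maxim$, whence $\op{in}(x_i)=\xi_i$ and $(\xi_1,\dots,\xi_d)\sub\op{in}(\tuple x\complet R)$. As $(\xi_1,\dots,\xi_d)$ is primary to the irrelevant maximal ideal of $\gr{}{\complet R}$, the ring $\gr{}{\complet R}/\op{in}(\tuple x\complet R)\iso\gr{}{\complet R/\tuple x\complet R}$ (by \cite[Exercise 5.3]{Eis}) is Artinian, so $\complet R/\tuple x\complet R$ is Artinian, $\tuple x\complet R$ is $\maxim\complet R$-primary, and, being generated by $d=\dim\complet R$ elements, $\tuple x$ is a system of parameters of $\complet R$---hence generic in $R$ by Proposition~\ref{P:sop}. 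Moreover, since $\gr{}{\complet R}$ is the fiber cone of $\maxim\complet R$ and the initial forms $\xi_i$ generate an ideal primary to its irrelevant ideal, $\tuple x\complet R$ is a reduction of $\maxim\complet R$, so $e(\tuple x\complet R;\complet R)=e(\maxim\complet R;\complet R)=\mult{\complet R}$ by Northcott--Rees. Because $\complet R$ is \CM, the multiplicity criterion gives $\op{length}(\complet R/\tuple x\complet R)=e(\tuple x\complet R;\complet R)=\mult{\complet R}=\mult R$, and $\op{length}(R/\tuple xR)=\op{length}(\complet R/\tuple x\complet R)$ by (b). This is the desired sequence. The main obstacle---and where the infinite residue field is essential---is precisely this construction of a linear homogeneous system of parameters of $\gr{}R$ lifting to a generic reduction of $\maxim$ in $R$, together with the correct bookkeeping of the multiplicity-theoretic inputs (monotonicity, the bound $e(\mathfrak q)\leq\op{length}(A/\mathfrak q)$, invariance of multiplicity under reductions, and the Cohen--Macaulay criterion).
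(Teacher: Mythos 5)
Your proof is correct. For the first assertion you follow essentially the same route as the paper: pass to $\complet R$ via Proposition~\ref{P:sop} and Lemma~\ref{L:quot}, and compare $\mult{\complet R}\leq e(\tuple x\complet R)\leq \op{length}(\complet R/\tuple x\complet R)$. For the equality criterion, however, you take a genuinely different (more self-contained) route: the paper simply invokes its author's earlier result \cite[Lemma 3.3]{SchABCM} for Noetherian rings and then transfers to $R$ using that $R$ and $\complet R$ have the same multiplicity \emph{and the same parameter degree}, whereas you reprove the Noetherian-case statement from classical multiplicity theory: the direction ``equality $\Rightarrow$ \cCM'' via the criterion that $e(\mathfrak q)=\op{length}(A/\mathfrak q)$ for some parameter ideal forces \CM{}ness (this is indeed \cite[Theorem 17.11]{Mats}, so your somewhat vague citation should be sharpened to that), and the converse by lifting a linear homogeneous system of parameters of $\gr{}R$ (available since $k$ is infinite) to elements of $\maxim\sub R$ generating a reduction of $\maxim\complet R$, so that $e=\op{length}$ by \CM{}ness. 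What the paper's route buys is brevity; what yours buys is transparency: it isolates exactly where the infinite residue field enters (existence of a generic sequence that is a reduction), it correctly observes that ``equality $\Rightarrow$ \cCM'' needs no hypothesis on $k$, and, by constructing the reduction directly inside $R$, it sidesteps the equality $\pardeg R=\pardeg{\complet R}$, which the paper asserts without proof and which would itself require a small approximation argument (lifting a system of parameters of $\complet R$ modulo a high power of $\maxim$ back to $R$). All the auxiliary steps you use (genericity via Proposition~\ref{P:sop}, the isomorphism $\gr{}R/\op{in}(I)\iso\gr{}{R/I}$, invariance of multiplicity under reductions) are sound.
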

\begin{proof} 
Let $\tuple x$ be a generic sequence of $R$. By 
Proposition~\ref{P:sop}, it is a system of parameters in
$\complet R$ and $R/\tuple xR\iso \complet R/\tuple x\complet R$ by 
Lemma~\ref{L:quot}. The common length of the
latter two quotients is at least the multiplicity of the ideal 
$\tuple x\complet R$ by \cite[Theorem 14.10]{Mats} which
in turn is at most $\mult{\complet R}$ by \cite[Formula 
14.4]{Mats}. The desired inequality now follows
from this, since $R$ and $\complet R$ have the same multiplicity by 
Remark~\ref{R:mult}.

The last assertion holds if $R$ is Noetherian by \cite[Lemma 3.3]{SchABCM}.
The general case follows from this since $R$ and $\complet R$ have the same
multiplicity and the same parameter degree. 
\end{proof}

\begin{theorem}\label{T:pgor}
A local ring   of finite embedding dimension is \cgor\ (respectively, a \cci)
\iff\
so is its separated quotient,  \iff\ it admits a quasi-regular,
generic sequence $\tuple x$ such that $R/\tuple xR$ is Gorenstein
(respectively, a complete intersection).
\end{theorem}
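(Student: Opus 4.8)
The plan is to push everything onto the completion $\complet R$, which is Noetherian by Theorem~\ref{T:finembcomp}, and then to quote the classical fact that, for a Noetherian local ring $A$ and an $A$-regular sequence $\tuple y$, one has that $A$ is Gorenstein (respectively, a complete intersection) if and only if $A/\tuple yA$ is. I would begin by disposing of the equivalence of the first two conditions, which is essentially free: $R$ and $\sep R$ have the same completion and $\sep R$ still has finite embedding dimension, so $\complet R$ is Gorenstein (respectively a complete intersection) exactly when $\complet{\sep R}$ is, which by definition says $R$ is \cgor\ (respectively a \cci) exactly when $\sep R$ is. It then remains to connect these with the third, sequence-theoretic, condition.

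For the forward direction I would assume $\complet R$ is Gorenstein (respectively a complete intersection), note that in either case $\complet R$ is \CM\ so that $R$ is \cCM, and invoke Theorem~\ref{T:pCM} to conclude that every generic sequence of $R$ is quasi-regular; such a sequence $\tuple x$ exists because $R$ has finite embedding dimension (Lemma~\ref{L:mingen}). By Proposition~\ref{P:sop} the image of $\tuple x$ in $\complet R$ is a system of parameters, hence a $\complet R$-regular sequence because $\complet R$ is \CM; and since $\tuple xR$ is $\maxim$-primary, Lemma~\ref{L:quot} gives $R/\tuple xR\iso \complet R/\tuple x\complet R$. Quotienting $\complet R$ by the regular sequence $\tuple x$ preserves the Gorenstein (respectively complete-intersection) property, so $\complet R/\tuple x\complet R$, and hence $R/\tuple xR$, is Gorenstein (respectively a complete intersection). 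I would remark that this shows more, namely that \emph{every} generic sequence of $R$ has this property, paralleling Theorem~\ref{T:pCM}.

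For the converse I would start from a quasi-regular generic sequence $\tuple x$ with $R/\tuple xR$ Gorenstein (respectively a complete intersection). Since the generic sequence $\tuple x$ is quasi-regular, Theorem~\ref{T:pCM} again yields that $R$ is \cCM, so $\complet R$ is \CM; then, as before, $\tuple x$ becomes a system of parameters (Proposition~\ref{P:sop}) and so a regular sequence in $\complet R$, and Lemma~\ref{L:quot} identifies $\complet R/\tuple x\complet R$ with $R/\tuple xR$. Thus $\complet R$ is a \CM\ local ring carrying a regular sequence $\tuple x$ with Gorenstein (respectively complete-intersection) quotient, and the converse half of the classical fact forces $\complet R$ itself to be Gorenstein (respectively a complete intersection), i.e.\ $R$ is \cgor\ (respectively a \cci).

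The one ingredient not already available in the excerpt is the classical behaviour of the Gorenstein and complete-intersection properties under a regular sequence; that is where I would point to the literature (for instance \cite{Mats}). I do not anticipate a real obstacle: the subtleties of the non-Noetherian, finite-embedding-dimension setting have all been dealt with in \S\ref{s:finemb} and \S\ref{s:cata}, and the three facts I lean on --- $\complet R$ is Noetherian, a generic sequence of $R$ is a system of parameters of $\complet R$, and the quotient of $R$ by a generic sequence already coincides with the corresponding quotient of $\complet R$ --- are Theorems~\ref{T:finembcomp} and \ref{T:pCM}, Proposition~\ref{P:sop}, and Lemma~\ref{L:quot}. The only place calling for a little care is the complete-intersection case, where I would unwind ``complete intersection'' for $\complet R$ via Cohen's structure theorem, presenting $\complet R$ as a complete regular local ring modulo a regular sequence, so that the cited fact applies verbatim.
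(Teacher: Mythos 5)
Your proposal is correct and follows essentially the same route as the paper's proof: reduce to $\complet R$ via the shared completion of $R$ and $\sep R$, use Theorem~\ref{T:pCM} to pass between quasi-regularity of a generic sequence and the \CM{}ness of $\complet R$, identify $R/\tuple xR$ with $\complet R/\tuple x\complet R$ by Lemma~\ref{L:quot}, and conclude by the classical fact that the Gorenstein (respectively complete-intersection) property passes between a Noetherian local ring and its quotient by a regular sequence (the paper cites \cite[Theorem 2.3.4 and Proposition 3.1.19]{BH} where you point to the literature). The only cosmetic difference is that you spell out the two implications separately and observe that every generic sequence works, which the paper's biconditional phrasing already encapsulates.
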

\begin{proof}
  Let $(R,\maxim)$ be a local ring of \pdim\ $d$. Since $R$ and $\sep R$ have
the same completion, we 
only need to show the equivalence of the first and last condition. Suppose
$\tuple x$ is a quasi-regular, generic sequence. In particular, $R$ is \cCM\ by
Theorem~\ref{T:pCM}, whence $\complet
R$ is \CM\ and $\tuple x$ is $\complet R$-regular. Moreover, $R/\tuple xR\iso
\complet R/\tuple x\complet R$
 by Lemma~\ref{L:quot}.
Therefore the former is Gorenstein (respectively, a complete intersection) \iff\
the latter is, \iff\   $\complet R$ is (see
\cite[Theorem 2.3.4 and Proposition 3.1.19]{BH}).
\end{proof}

\begin{proposition}
A local ring of finite embedding dimension is \cgor\ \iff\ there exists a
quasi-regular, generic sequence generating an irreducible ideal. When this is
the case, every generic sequence is quasi-regular and generates an irreducible
ideal.
\end{proposition}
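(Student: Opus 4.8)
The plan is to reduce everything, via the isomorphism $R/\tuple xR\iso\complet R/\tuple x\complet R$ for $\tuple x$ generic (combining Lemma~\ref{L:quot} with the fact that a generic sequence generates an $\maxim$-primary ideal), to the classical fact that an Artinian local ring is Gorenstein \iff\ its zero ideal is irreducible, together with Theorem~\ref{T:pgor}. First I would dispose of the forward implication and the final ``when this is the case'' clause simultaneously. Assuming $R$ is \cgor, its completion $\complet R$ is Gorenstein, hence \CM, so $R$ is \cCM\ and Theorem~\ref{T:pCM} tells us that \emph{every} generic sequence $\tuple x$ of $R$ is quasi-regular. For such a $\tuple x$, Proposition~\ref{P:sop} makes it a system of parameters in $\complet R$, hence a $\complet R$-regular sequence, so $\complet R/\tuple x\complet R$ is Artinian and Gorenstein, since modding out a Gorenstein local ring by a regular sequence preserves Gorensteinness (\cite[Proposition 3.1.19]{BH}); therefore its zero ideal is irreducible. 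Since $\tuple xR$ is $\maxim$-primary, Lemma~\ref{L:quot} gives $R/\tuple xR\iso\complet R/\tuple x\complet R$, and irreducibility of the zero ideal transports across this ring isomorphism, so $\tuple xR$ is an irreducible ideal of $R$. In particular there exists a quasi-regular, generic sequence generating an irreducible ideal.

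For the converse, suppose $\tuple x$ is a quasi-regular, generic sequence with $\tuple xR$ irreducible. Because $\tuple xR$ is $\maxim$-primary and $\maxim$ is finitely generated, $R/\tuple xR$ is an Artinian local ring (it is a quotient of $R/\maxim^n$ for suitable $n$), and its zero ideal is irreducible by hypothesis; hence $R/\tuple xR$ is Gorenstein. Now Theorem~\ref{T:pgor}, in the form ``$R$ is \cgor\ \iff\ it admits a quasi-regular generic sequence $\tuple x$ with $R/\tuple xR$ Gorenstein,'' immediately yields that $R$ is \cgor.

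I expect no serious obstacle here: the argument is essentially bookkeeping around Lemma~\ref{L:quot}, Proposition~\ref{P:sop}, Theorem~\ref{T:pCM} and Theorem~\ref{T:pgor}. The only point needing a moment's care is confirming that ``irreducible ideal'' is the correct Artinian-local incarnation of the Gorenstein property and that this property is genuinely carried across the isomorphism $R/\tuple xR\iso\complet R/\tuple x\complet R$ — but irreducibility of an ideal is manifestly invariant under ring isomorphisms, so this is routine. If one prefers not to invoke the Artinian Gorenstein $\leftrightarrow$ irreducible equivalence as a black box, one can instead cite the general statement that for a \CM\ local ring $A$ and a system of parameters $\tuple x$ one has $A$ Gorenstein \iff\ $\tuple xA$ irreducible (\cite[Theorem 3.2.10]{BH}), and apply it directly to $A=\complet R$, which streamlines the passage through $\complet R/\tuple x\complet R$.
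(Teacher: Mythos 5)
Your proposal is correct and follows essentially the same route as the paper: the paper's (very terse) proof likewise combines Theorem~\ref{T:pgor} with the observation that for a quasi-regular generic sequence $\tuple x$ the ideal $\tuple xR$ is irreducible \iff\ $R/\tuple xR$ is Gorenstein, which is exactly the Artinian equivalence you reduce to via Lemma~\ref{L:quot}, Proposition~\ref{P:sop} and Theorem~\ref{T:pCM}. You have merely written out explicitly the steps the paper leaves implicit, including the ``every generic sequence'' clause.
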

\begin{proof}
Let $\tuple x$ be a quasi-regular, generic sequence. The result is now
immediate from the fact that $\tuple xR$ is irreducible \iff\  $R/\tuple xR$
is Gorenstein.
\end{proof}

\section{Pseudo-singularities}\label{s:sCM}

The \c-singularities from the previous section do not always correspond to
their `ultra' versions (which will be treated in the next  section). To this end
we will define some stronger versions
of these \c-singularities, defined intrinsically, that is to say, without
reference to the completion. Throughout this section, $(R,\maxim)$ is a local
ring of finite embedding dimension.

\subsection{Grade and depth.}\label{s:depth}
 Let $A$ be an arbitrary ring and
$I$ a
 finitely generated ideal in $A$. Choose a tuple of
generators $\tuple x=\rij xn$ of $I$. The \emph{grade} of $I$, 
denoted $\op{grade}(I)$, is by definition equal to
$n-h$, where $h$ is the largest value $i$ for which the $i$-th Koszul 
homology $H_i(\tuple x;A)$ is non-zero. One shows
that the grade of $I$ does not depend on the choice of generators 
$\tuple x$. For   a local ring $R$ of finite
embedding dimension, we define its \emph{depth} as the grade of its 
maximal ideal; it is non-zero   \iff\
its maximal ideal is not an associated prime.   

Grade, and hence 
depth, \emph{deforms well}, in the sense that the
\begin{equation}
\label{eq:defdep}
\op{grade}(I(A/\tuple xA))= \op{grade}(I) -\norm{\tuple x}
\end{equation}
for every $A$-regular sequence $\tuple x$ contained in $I$. If $R$ has 
\pdim\ $d$, then its depth is at most $d$. Indeed, by
definition, the grade of a finitely generated ideal never exceeds its 
minimal number of generators, and by
\cite[Proposition 9.1.3]{BH}, the depth of $R$ is equal to the grade 
of any of its $\maxim$-primary ideals. 

The relationship between depth and the
length of a regular sequence (sometimes called the \emph{naive depth} of $R$) is
less straightforward in the non-Noetherian
case and requires an additional definition. For a local ring $(R,\maxim)$ and a finite tuple
of  indeterminates $\xi:=\rij \xi n$, we will denote the localization of 
$\pol R\xi$ at the ideal $\maxim\pol R\xi$ by $R(\xi)$ (this is sometimes called the
$n$-fold \emph{Nagata extension} of $R$).
It follows that $R\to R(\xi)$ is faithfully flat and unramified, with closed fiber   
equal to the residue field extension $k\sub k(\xi)$, where $k$ is the residue field of $R$ and $k(\xi)$ the field of 
fractions of $\pol k\xi$.

\begin{lemma}\label{L:dd}
 Let $(R,\maxim)$ be a local ring of finite embedding
dimension   and let $\xi$ be a tuple of indeterminates.
Then $R$ and $R(\xi)$ have the same \pdim\ and the same depth.
\end{lemma}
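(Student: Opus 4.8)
The plan is to treat \pdim\ and depth separately, in both cases exploiting that $R\to R(\xi)$ is faithfully flat and unramified with residue field extension $k\subseteq k(\xi)$, as recorded just before the statement. First observe that $\maxim R(\xi)$ is the maximal ideal of $R(\xi)$ and is generated by any generating tuple of $\maxim$, so $R(\xi)$ again has finite embedding dimension; in particular the statement makes sense and the characterizations of Theorem~\ref{T:pdim} are available for $R(\xi)$.

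For the equality $\pd{R(\xi)}=\pd R$ I would argue through the associated graded ring (equivalently, the Hilbert--Samuel polynomial). Since $R(\xi)$ is flat over $R$ and $\maxim^n/\maxim^{n+1}$ is annihilated by $\maxim$, base change gives $\maxim^nR(\xi)/\maxim^{n+1}R(\xi)\cong k(\xi)\otimes_k(\maxim^n/\maxim^{n+1})$, hence a graded ring isomorphism $\gr{}{R(\xi)}\cong k(\xi)\otimes_k\gr{}R$. Extension of scalars along a field extension preserves Krull dimension, so $\dim\gr{}{R(\xi)}=\dim\gr{}R$, and Theorem~\ref{T:pdim} yields $\pd{R(\xi)}=\pd R$. (Alternatively, the base change $R/\maxim^n\to (R/\maxim^n)(\xi)=R(\xi)/\maxim^nR(\xi)$ is a faithfully flat local extension of Artinian rings with residue field extension of length one, so $R(\xi)/\maxim^nR(\xi)$ has the same length over $R(\xi)$ as $R/\maxim^n$ has over $R$; thus $\op{HS}_{R(\xi)}=\op{HS}_R$ and one invokes the degree clause of Theorem~\ref{T:pdim}.)

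For depth I would argue directly from the Koszul definition of grade recalled in \S\ref{s:depth}. Fix a generating tuple $\tuple x=\rij xn$ of $\maxim$; its image generates $\maxim R(\xi)$, so $\op{depth}(R)$ and $\op{depth}(R(\xi))$ are both computed as $n-h$, where $h$ is the largest index with non-vanishing Koszul homology of the \emph{same} tuple $\tuple x$ over $R$, respectively over $R(\xi)$. Since $K_\bullet(\tuple x;R(\xi))=K_\bullet(\tuple x;R)\otimes_RR(\xi)$ and $R\to R(\xi)$ is flat, $H_i(\tuple x;R(\xi))\cong H_i(\tuple x;R)\otimes_RR(\xi)$; by faithful flatness, $H_i(\tuple x;R(\xi))=0$ if and only if $H_i(\tuple x;R)=0$. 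Hence the top non-vanishing index coincides, so the two grades---that is, $\op{depth}(R)$ and $\op{depth}(R(\xi))$---are equal.

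I do not anticipate a genuine obstacle: both parts are instances of the principle that faithfully flat base change preserves exactly the Koszul-homological and dimension-theoretic data defining depth and \pdim, and $R\to R(\xi)$ is such an extension with a length-one residue field extension. The one point needing care is that the depth comparison must use a \emph{common} generating set of $\maxim$ and $\maxim R(\xi)$; this is legitimate because, as noted in \S\ref{s:depth}, the grade of a finitely generated ideal is independent of the chosen generators.
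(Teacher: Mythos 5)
Your proposal is correct, and for the depth half it is essentially the paper's argument; the difference lies in how you handle the \pdim. The paper proves $\pd{R(\xi)}=\pd R$ by induction on $d:=\pd R$ straight from the recursive definition: the base case is the observation that $R$ is Artinian \iff\ $R(\xi)$ is, and for $d>0$ one chooses $x\in\maxim$ with $\pd{R/xR}=d-1$ and uses $(R/xR)(\xi)\iso R(\xi)/xR(\xi)$ together with the induction hypothesis in both directions. You instead route the comparison through the characterizations of Theorem~\ref{T:pdim}, identifying $\gr{}{R(\xi)}$ with $k(\xi)\tensor_k\gr{}R$ (equivalently, matching Hilbert--Samuel functions, using that the extension is unramified so that lengths are preserved); this is a valid and arguably slicker argument, whose only extra input is the invariance of Krull dimension of a finitely generated $k$-algebra under field base change, and it buys a statement-level explanation (the completions of $R$ and $R(\xi)$ have matching numerical data) rather than an inductive manipulation of the definition. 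For depth, both you and the paper rest on faithful flatness of $R\to R(\xi)$: the paper disposes of it with a citation to \cite[Proposition 9.1.2]{BH}, whereas you unwind that citation into the flat base change of Koszul homology on a common generating tuple of $\maxim$ and $\maxim R(\xi)$, correctly noting that grade is independent of the chosen generators; this is self-contained and equally sound.
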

\begin{proof} 
Let $d$ be the \pdim\ of $R$ and $e$ its depth. We will 
induct on $d$ to show that $\pd{R(\xi)}=d$. It is
easy to see that $R$ is Artinian \iff\ $R(\xi)$ is, thus proving the 
case $d=0$. In the general case, we may choose
$x\in\maxim$ so that $\pd{R/xR}=d-1$. By induction, $(R/xR)(\xi)\iso R(\xi)/xR(\xi)$ has 
\pdim\ $d-1$, showing that $\pd {R(\xi)}\leq d$. On
the other hand, induction also shows that $\pd{R(\xi)}>d-1$, so that we 
get $\pd{R(\xi)}=d$, as required.

As for depth, this follows from \cite[Proposition 9.1.2]{BH} since 
$R\to R(\xi)$ is faithfully flat.
\end{proof}

We can now characterize depth in terms of regular sequences:

\begin{lemma}\label{L:NC} 
For a local ring  $R$ of finite embedding
dimension, its depth is equal to  the maximal length of an
$R(\xi)$-regular sequence, where $\xi$ runs over all finite tuples of 
indeterminates. More precisely, if $R$ has depth $e$, then we can find a regular
sequence  $\rij ye$ in $R\rij \xi e$ which  is part of a generic sequence.
\end{lemma}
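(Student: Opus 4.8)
The plan is to prove the two inequalities separately; the bound $\op{depth}(R)\ge$ (length of any $R(\xi)$-regular sequence) is the easy one. If $\tuple y$ is an $R(\xi)$-regular sequence, it lies in the maximal ideal $\maxim R(\xi)$ of $R(\xi)$, so iterating the deformation property of grade \eqref{eq:defdep} with $A=R(\xi)$ and $I=\maxim R(\xi)$ yields
\[
\op{depth}\!\bigl(R(\xi)/\tuple yR(\xi)\bigr)=\op{grade}\!\bigl(\maxim R(\xi)\bigr)-\norm{\tuple y};
\]
the left side is non-negative and $\op{grade}(\maxim R(\xi))=\op{depth}(R(\xi))=\op{depth}(R)$ by Lemma~\ref{L:dd}, whence $\norm{\tuple y}\le\op{depth}(R)$.

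For the opposite inequality I would prove the more precise assertion by induction on $e:=\op{depth}(R)$, the case $e=0$ being vacuous. Assume $e\ge1$; the crux is the claim that \emph{there are an indeterminate $\xi_1$ and an element $y_1\in\maxim R(\xi_1)$ which is at once a nonzerodivisor on $R(\xi_1)$ and a generic element}. Granting it, \eqref{eq:defdep} (with Lemma~\ref{L:dd}) gives $\op{depth}(R(\xi_1)/y_1R(\xi_1))=e-1$; as this ring still has finite embedding dimension, the induction hypothesis supplies indeterminates $\xi_2,\dots,\xi_e$ and a regular sequence $\bar y_2,\dots,\bar y_e$ in $\bigl(R(\xi_1)/y_1R(\xi_1)\bigr)(\xi_2,\dots,\xi_e)\cong R(\xi_1,\dots,\xi_e)/y_1R(\xi_1,\dots,\xi_e)$ that is part of a generic sequence. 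Lifting to $y_2,\dots,y_e\in R(\xi_1,\dots,\xi_e)$, the tuple $y_1,\dots,y_e$ is $R(\xi_1,\dots,\xi_e)$-regular ($y_1$ stays a nonzerodivisor under the flat extension $R(\xi_1)\to R(\xi_1,\dots,\xi_e)$, and $\bar y_2,\dots,\bar y_e$ is regular modulo $y_1$); and since $y_1$ is generic, Lemmas~\ref{L:dd} and \ref{L:gen} yield $\pd{R(\xi_1,\dots,\xi_e)/(y_1,\dots,y_e)R(\xi_1,\dots,\xi_e)}=\pd R-e$, so by Lemma~\ref{L:gen} the tuple $y_1,\dots,y_e$ is part of a generic sequence, as required.

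To prove the claim—this is the main obstacle—note first that $\op{depth}(R)=e\ge1$ means the maximal ideal of $R$, hence of $R(\xi_1)$ (Lemma~\ref{L:dd}), is not an associated prime, i.e.\ $(0:_R\maxim)=0$. Writing $\maxim=(x_1,\dots,x_m)$, I would look for $y_1$ of the form $y=x_1+x_2\xi_1+\dots+x_m\xi_1^{m-1}+\xi_1^m w$ with $w\in\maxim R[\xi_1]$: any such $y$ has $x_1,\dots,x_m$ among its coefficients, so by McCoy's theorem a zerodivisor of this form would be killed by some $c\in R\setminus\{0\}$ with $cx_1=\dots=cx_m=0$, i.e.\ $c\in(0:_R\maxim)=0$, impossible; hence every such $y$ is a nonzerodivisor on $R[\xi_1]$, and so on $R(\xi_1)$. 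It remains to choose the correction $w$ so that $y$ also lies outside the finitely many threshold primes $\mathfrak q_1,\dots,\mathfrak q_s$ of $R(\xi_1)$ (Corollary~\ref{C:tre}), which are proper since $\pd{R(\xi_1)}=\pd R\ge e\ge1$; as $\xi_1$ is a unit in $R(\xi_1)$, the condition $y\in\mathfrak q_i$ pins $w$ down to a single coset of $\mathfrak q_i\cap R[\xi_1]$, and a prime-avoidance argument—using that $\maxim$ is finitely generated and that the residue field $k(\xi_1)$ of $R(\xi_1)$ is infinite, so finitely many cosets of proper ideals cannot cover $\maxim R[\xi_1]$—produces an admissible $w$. (To keep exactly $e$ indeterminates one adjoins $\xi_1,\dots,\xi_e$ at the start so the residue field is already infinite, and performs this step at stage $i$ inside the factorization $R(\xi_1,\dots,\xi_e)=R(\dots\widehat{\xi_i}\dots)(\xi_i)$.) The genuinely non-formal point is exactly this simultaneity: in a non-Noetherian ring the zerodivisors need not form a finite union of primes, so a nonzerodivisor must be manufactured via the Nagata extension and McCoy's theorem, while genericity is a finite prime-avoidance condition relative to the threshold primes; everything else—the easy inequality, \eqref{eq:defdep}, the inductive assembly, and the bookkeeping of geometric dimensions via Lemmas~\ref{L:dd} and \ref{L:gen}—is routine.
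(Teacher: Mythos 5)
Your proof is correct in substance and follows the same overall skeleton as the paper's (pass to a Nagata extension, find a single element that is simultaneously a regular element and generic, cut it out, deform depth via \eqref{eq:defdep} and Lemmas~\ref{L:dd} and \ref{L:gen}, and induct), but the construction of the key element differs, and the paper's choice is slicker. The paper starts not from generators of $\maxim$ but from a \emph{generic} sequence $\rij xd$: the ideal $\mathfrak n=\rij xdR$ is $\maxim$-primary, hence has grade equal to the depth $e\geq 1$, so by \cite[Proposition 9.1.3]{BH} (essentially your McCoy argument, applied to $\mathfrak n$ instead of $\maxim$) the element $y_1:=x_1+x_2\xi_1+\dots+x_d\xi_1^{d-1}$ is already $R[\xi_1]$-regular; and genericity of $y_1$ is then automatic, because $(y_1,x_2,\dots,x_d)R(\xi_1)=\mathfrak nR(\xi_1)$ is primary to the maximal ideal, so $R(\xi_1)/y_1R(\xi_1)$ has \pdim\ exactly $d-1$ with no appeal to threshold primes. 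This eliminates your correction term $w$ and the entire coset-avoidance step, and moreover yields the sharper statement recorded in Remark~\ref{R:NC}. In your version the avoidance step is the only delicate point: as literally stated, ``finitely many cosets of proper ideals cannot cover $\maxim R[\xi_1]$'' is not enough --- one needs each subgroup $\mathfrak q_i\cap\maxim R[\xi_1]$ to have \emph{infinite index} in $\maxim R[\xi_1]$ (a Neumann-type covering lemma), which does hold here because $R(\xi_1)/\mathfrak q_i$ is a local domain whose residue field $k(\xi_1)$ is infinite, so multiplying a fixed nonzero image by units produces infinitely many distinct classes. So your argument can be completed, but that extra lemma is precisely the work the paper's choice of $y_1$ makes unnecessary; also note the residue field of $R(\xi_1)$ is automatically infinite, so your parenthetical device of adjoining all $e$ indeterminates at the outset is not needed.
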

\begin{proof} 
In view of Lemma~\ref{L:dd}, it suffices to prove the second assertion.  To
this end, we need to construct,  by Lemma~\ref{L:gen}, an $R(\xi)$-regular
 sequence $\rij ye$ such that the
\pdim\
of $R(\xi)/\rij
yeR(\xi)$ is  $d-e$, where $\xi:=\rij \xi e$. We induct on
the 
depth $e$ of $R$, where there is nothing to show if $e=0$. 
Let $\rij xd$ be a generic sequence and let
$\mathfrak n$ be the ideal generated by this sequence. Since $\mathfrak n$ is
then  $\maxim$-primary, its grade is   $e$. By
\cite[Proposition 9.1.3]{BH}, the element 
$$
y_1:=x_1+x_2\xi_1+\dots+x_d\xi_1^{d-1}
$$
 is an $\pol R{\xi_1}$-regular 
element.
Since $\pol R{\xi_1}\to R(\xi_1)$ is flat, $y_1$ is   $R(\xi_1)$-regular. 
Let $S:=R(\xi_1)/y_1R(\xi_1)$. Since
$S/(x_2,\dots,x_d)S\iso (R/\mathfrak n)(\xi_1)$, it is Artinian. 
Therefore, the \pdim\ of $S$ is at most $d-1$. By Lemma~\ref{L:dd}, the
\pdim\  of $S$ cannot be less, and hence it is equal to $d-1$. In particular,   we are done in case $e=1$.

Assume therefore $e>1$.   It follows from Lemma~\ref{L:dd} and 
\eqref{eq:defdep} that $S$ has depth $e-1$. By induction,
there exists an $S(\xi_2,\dots,\xi_e)$-regular sequence $(y_2,\dots,y_e)$ such that
$S(\xi_2,\dots,\xi_e)/(y_2,\dots,y_e)S(\xi_2,\dots,\xi_e)$ has \pdim\ $d-e$. 
Hence with $\xi:=\rij \xi e$, the sequence $\rij ye$ is
$R(\xi)$-regular and part of a generic sequence.
\end{proof}

\begin{remark}\label{R:NC} 
The argument even shows that, for a given generic
sequence $\rij xd$,  we may choose an $R(\xi)$-regular
 sequence 
$\rij ye$ so that
\begin{equation*} (y_1,\dots,y_e,x_{e+1},\dots,x_d)R(\xi)=\rij xdR(\xi).
\end{equation*}
In particular, if $R$ is moreover \creg, then we may take $\rij ye$ 
equal to a generating set of the maximal ideal of
$R(\xi)$. 
\end{remark}

For ultra-Noetherian rings, no such extension is necessary, since depth is first-order
definable:

\begin{proposition}\label{P:uldep}
The depth of an ultra-Noetherian local ring $R$ is equal to the maximal length
of an $R$-regular sequence.\qed
\end{proposition}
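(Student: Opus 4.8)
The plan is to derive the statement directly from \los, using two facts already at our disposal: that depth is first-order for local rings of bounded embedding dimension (as noted in the proof of Theorem~\ref{T:uldim}, where it is cast in terms of the vanishing of the Koszul homology of a generating set of the maximal ideal), and that in a \emph{Noetherian} local ring depth equals the maximal length of a regular sequence. So write $\ul R$ as the ultraproduct of Noetherian local rings $(\seq Rw,\seq\maxim w)$ of embedding dimension at most $m$, and put $e:=\op{depth}(\ul R)$.

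Since depth is first-order, \los\ forces almost all $\seq Rw$ to have depth $e$. As each $\seq Rw$ is Noetherian, there is then for almost all $w$ an $\seq Rw$-regular sequence $\seq{\tuple y}w$ of length $e$, while no $\seq Rw$-regular sequence has length $e+1$. Now observe that, for each fixed $\ell$, ``$R$ has a regular sequence of length $\ell$'' is first-order: it asserts the existence of $y_1,\dots,y_\ell$ with $1\notin(y_1,\dots,y_\ell)R$ such that, for every $i\leq\ell$ and every $a$, the implication $y_ia\in(y_1,\dots,y_{i-1})R\Rightarrow a\in(y_1,\dots,y_{i-1})R$ holds. By \los\ (with $\ell=e$), the ultraproduct of the $\seq{\tuple y}w$ is an $\ul R$-regular sequence of length $e$, so the maximal length of an $\ul R$-regular sequence is at least $e$; and by \los\ applied to the negation (with $\ell=e+1$), which holds in almost all $\seq Rw$, the ring $\ul R$ admits no regular sequence of length $e+1$, so that maximal length is at most $e$. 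Hence it equals $e=\op{depth}(\ul R)$.

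No step here presents a real obstacle---this is presumably why the statement was recorded without proof---but two minor points warrant a line of care: the first-order formulation of ``regular sequence of length $\ell$'' just indicated, and the fact that a regular sequence generating a proper ideal automatically consists of non-units (if some $y_i$ were a unit then $(y_1,\dots,y_i)R=R$, and hence $(y_1,\dots,y_\ell)R=R$), so that whether or not one insists the sequence lie in $\ulmax$ makes no difference---in keeping with the convention under which Lemma~\ref{L:NC} is stated.
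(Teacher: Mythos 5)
Your proposal is correct and is precisely the argument the paper intends: the proposition is recorded without proof because, as you spell out, it follows from the first-order definability of depth (via Koszul homology, as noted before Theorem~\ref{T:uldim}) together with \los\ applied to the likewise first-order statement ``there exists a regular sequence of length $\ell$'', combined with the classical equality of depth and maximal regular-sequence length in the Noetherian rings $\seq Rw$. Your two cautionary points (the first-order formulation of regularity of a sequence, and the equivalence between requiring the $y_i$ to be non-units and requiring $(y_1,\dots,y_\ell)R$ to be proper in a local ring) are both handled correctly and the argument has no gaps.
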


\subsection{Pseudo-singularities}\label{D:sCM}
We now introduce  some singularity
variants that are based on depth. Let $R$ be a local ring of finite embedding
dimension.  If the depth of  $R$ is equal to its embedding dimension, then   we call $R$
\emph\preg, and  if it is equal to its  \pdim, we call $R$ \emph\pCM. Immediate
from the definitions we get:

\begin{proposition}\label{P:pregCM}
A local ring of finite embedding dimension is \preg\ \iff\ it is \creg\
and \pCM.\qed
\end{proposition}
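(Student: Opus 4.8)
The plan is to observe that the whole statement collapses to the sandwich of extended dimensions $\op{depth}(R)\le \pd R\le \ed R$ together with the characterization of cata-regularity already obtained in Theorem~\ref{T:preg}. First I would recall why this chain holds: the inequality $\pd R\le \ed R$ was recorded right after the definition of \pdim, and the inequality $\op{depth}(R)\le \pd R$ is the observation from \S\ref{s:depth} that the grade of a finitely generated ideal never exceeds its minimal number of generators, combined with the fact that the depth of $R$ equals the grade of any of its $\maxim$-primary ideals (using \cite[Proposition 9.1.3]{BH}). So for a local ring of finite embedding dimension we always have $\op{depth}(R)\le \pd R\le \ed R$.

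Next I would simply unwind the definitions. By definition $R$ is \preg\ exactly when $\op{depth}(R)=\ed R$. Because the three quantities form an increasing chain, the equality of the extreme terms is equivalent to the simultaneous equality of the two consecutive pairs, i.e.\ to $\op{depth}(R)=\pd R$ \emph{and} $\pd R=\ed R$. The first of these equalities is precisely the definition of $R$ being \pCM, and the second, by the equivalence of conditions \eqref{i:emb} and \eqref{i:preg} in Theorem~\ref{T:preg}, is equivalent to $\complet R$ being regular, that is, to $R$ being \creg. Conjoining the two directions gives: $R$ is \preg\ \iff\ it is both \pCM\ and \creg.

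There is no real obstacle here; the argument is immediate once the chain $\op{depth}(R)\le \pd R\le \ed R$ is in place. The only points requiring a little care are to cite the bound $\op{depth}(R)\le \pd R$ to the correct paragraph of \S\ref{s:depth} and to invoke the clause of Theorem~\ref{T:preg} that identifies \creg\ with $\pd R=\ed R$ rather than one of the other equivalent formulations.
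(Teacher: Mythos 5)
Your proof is correct and is exactly the argument the paper intends: the proposition is stated with a \qed\ as ``immediate from the definitions,'' resting on the chain $\op{depth}(R)\leq\pd R\leq\ed R$ (with $\op{depth}(R)\leq\pd R$ justified in \S\ref{s:depth} just as you cite) together with the equivalence \equivalence{i:preg}{i:emb} of Theorem~\ref{T:preg}. You have simply written out the same reasoning in full, so there is nothing to add.
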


In order to derive  a homological characterization of
\preg{ity} analogous to Serre's characterization for regularity, we need some
additional definitions.

\subsection{Finite presentation type}\label{s:FFR}
We say that an $R$-module $M$ 
admits a \emph{finite free resolution} (of length $n$),
if there exists an exact sequence
\begin{equation}\label{eq:ffr}
 0\to F_n\to F_{n-1}\to\dots\to F_1\to F_0\to M\to 0
\end{equation} 
with each $F_i$ a finitely generated free $R$-module. 
The alternating sum of the ranks of the $F_i$ is
called the \emph{Euler number} $\op{Eul}(M)$ of $M$. It follows from 
Schanuel's Lemma that $\op{Eul}(M)$ does not depend on the
choice of finite free resolution, and by \cite[Theorem 19.7]{Mats}, it 
is always non-negative. Also,  if
\begin{equation*} 
0\to H\to G_m\to G_{m-1}\to\dots\to G_1\to G_0\to M\to 0
\end{equation*} 
is an arbitrary exact sequence with all $G_i$ 
finitely generated free $R$-modules, then  $H$   is also
finitely generated, and $\op{Eul}(M)$ is the alternating sum of the ranks of the 
$G_i$ and of $\op{Eul}(H)$ (see
\cite[\S19]{Mats} for more details).

In general, very few modules admit a finite free resolution, and hence we
introduce the following weaker version: we say that an $R$-module is
\emph{\fp
n}, if it admits finitely generated    $i$-th syzygies   for $i=\range 0n$, or
equivalently, if there exists an exact sequence as in \eqref{eq:ffr}, but
without the initial zero, with all $F_i$ finitely generated free $R$-modules.
Hence $M$ is \fp 0 \iff\ it is finitely generated, and $M$ is \fp 1 \iff\ it
is finitely presented. We will say that an $R$-module has \emph\fpt, if it 
is \fp n, for all $n$. Although
these definitions do not require $R$ to be local, the next one does: we
call a $R$-module complex $(G_\bullet,d_\bullet)$
   \emph{minimal} if the kernel of each morphism $d_i$ lies inside
$\maxim G_i$.

\begin{lemma}\label{L:min}
Let $(R,\maxim)$ be a local ring with residue field $k$. An $R$-module $M$  is
\fp n
\iff\
there exist a minimal exact sequence
\begin{equation*}
F_n\to F_{n-1}\to\dots\to F_1\to F_0\to M\to 0\tag{$F_\bullet$}
\end{equation*}
with each $F_i$ a finitely generated free $R$-module. Moreover, if this is the case then the
$i$-th Betti number $\beta^R_i(M)$ 
of $M$, that is to say, the vector space dimension of $\tor RiMk$, is equal to
the rank of $F_i$, for all $i\leq n$, showing that $F_\bullet$ is unique up to
isomorphism.
\end{lemma}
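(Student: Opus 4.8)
The plan is to establish, by a single induction on $n$, both the stated equivalence and the formula $\beta^R_i(M)=\op{rank}F_i$ for $i\le n$, and then to deduce uniqueness by the usual lifting argument for minimal resolutions. The implication ``a minimal exact sequence $F_\bullet$ exists $\Rightarrow$ $M$ is \fp n'' is immediate, since $F_\bullet$ is in particular an exact sequence $F_n\to\dots\to F_0\to M\to 0$ with finitely generated free terms. For $n=0$: $M$ is \fp 0 exactly when it is finitely generated, and then, choosing $\range{m_1}{m_b}$ in $M$ whose residues form a $k$-basis of $M/\maxim M$, Nakayama's Lemma gives a surjection $F_0:=R^b\onto M$, which is minimal because a kernel element with a unit coordinate would let us shorten the generating set. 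Thus $F_0\to M\to0$ is a minimal exact sequence of length $0$, and $\beta^R_0(M)=\dim_k M/\maxim M=\op{rank}F_0$.

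For the inductive step I assume the assertion for $n-1$ and take $M$ to be \fp n with $n\ge1$. Form the minimal cover $F_0\onto M$ as above and put $\Omega:=\ker(F_0\to M)\sub\maxim F_0$. By Schanuel's Lemma, together with the elementary stability of the classes ``\fp m'' under finite direct sums and under passage to direct summands (see \cite[\S19]{Mats}), the module $\Omega$ is \fp{n-1}. By the induction hypothesis $\Omega$ carries a minimal exact sequence $F_n\to\dots\to F_1\to\Omega\to0$ with finitely generated free terms, and splicing it with $\Exactseq\Omega{F_0}M$ yields an exact sequence $F_n\to\dots\to F_0\to M\to0$ that is again minimal: the map $F_1\to F_0$ factors as $F_1\onto\Omega\into\maxim F_0$, the kernel of $F_0\to M$ is $\Omega\sub\maxim F_0$, and the higher differentials are those of the minimal sequence for $\Omega$. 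Conversely, a minimal exact sequence of length $n$ for $M$ restricts, after deleting $F_0$ and $M$, to a minimal exact sequence of length $n-1$ for $\ker(F_0\to M)$, whence $M$ is \fp n. Finally, tensoring $\Exactseq\Omega{F_0}M$ with $k$ and using $\tor Ri{F_0}k=0$ for $i\ge1$ gives $\tor RiMk\iso\tor R{i-1}\Omega k$ for all $i\ge1$, so $\beta^R_i(M)=\beta^R_{i-1}(\Omega)=\op{rank}F_i$ for $1\le i\le n$ by induction, and the case $i=0$ was settled above.

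For uniqueness, let $F_\bullet$ and $F_\bullet'$ be minimal exact sequences of length $n$ for $M$; the formula just proved gives $\op{rank}F_i=\op{rank}F_i'$ for all $i\le n$. Lift $\op{id}_M$ to a morphism of complexes $\phi\colon F_\bullet\to F_\bullet'$, which exists since the $F_i$ are free and $F_\bullet'\to M$ is exact. Arguing by induction on the degree, $\phi$ restricts to an isomorphism between the successive syzygy modules of $F_\bullet$ and $F_\bullet'$: this is clear in degree $0$, the syzygy there being $M$ itself, and in general, since by minimality the natural surjection of $F_i$ onto the $(i-1)$-st syzygy of $F_\bullet$ is a minimal cover (and likewise for $F_\bullet'$), the isomorphism of syzygies supplied by the previous step forces $\phi_i\tensor_R k$, hence $\phi_i$, to be an isomorphism (surjective by Nakayama's Lemma, then bijective as a surjective endomorphism of a finitely generated module, after identifying $F_i'$ with $F_i$). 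Therefore $\phi$ is an isomorphism of complexes, and $F_\bullet$ is unique up to isomorphism. The only non-routine ingredient in all of this is the standard homological fact, valid over an arbitrary local ring, that a first syzygy of an \fp n module is \fp{n-1}; everything else is bookkeeping with Nakayama's Lemma.
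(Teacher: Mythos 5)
Your proof is correct. The equivalence itself follows essentially the paper's skeleton: both arguments induct on $n$ down to the \fp 0 case, which is Nakayama's Lemma, your version merely making explicit the step the paper leaves tacit, namely that the kernel $\Omega$ of a minimal cover $F_0\onto M$ of a \fp n module is \fp{n-1}. The summand-stability you invoke for that step is exactly Lemma~\ref{L:FFR} of the paper, proved immediately afterwards by the same Schanuel induction and independently of the present lemma, so there is no circularity (citing that lemma would be preferable to \cite[\S19]{Mats}). Where you genuinely diverge is the Betti-number computation: the paper augments the minimal sequence on the left by one (possibly infinite rank) free module mapping onto $\ker(F_n\to F_{n-1})$ and tensors the augmented exact sequence with $k$; minimality makes every induced differential zero, so $\tor RiMk\iso F_i\tensor k$ for all $i\leq n$ in one stroke. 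You instead dimension-shift along $\Exactseq{\Omega}{F_0}M$ (using $\Omega\sub\maxim F_0$ to handle $i=1$) and feed the truncated minimal sequence for $\Omega$ back into the induction. Both are valid; the paper's argument is shorter and needs no further syzygy bookkeeping once minimality is in hand, while yours proves the rank formula for an arbitrary minimal sequence within the same induction and supplies the comparison-map lifting argument for uniqueness up to isomorphism of complexes, which the paper asserts but does not spell out.
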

\begin{proof}
One
direction is immediate and the other can by induction be reduced to the
case that $M$ is \fp0, that is to say, finitely generated. This case is then
simply a reformulation of Nakayama's Lemma.  To prove the last assertion,
augment $F_\bullet$ by adding on the left a free module $F_{n+1}$, possibly of
infinite rank, which maps onto the kernel of
$F_n\to F_{n-1}$. Tensoring this exact sequence with $k$ gives a complex in
which all morphisms are zero and hence its  $i$-th homology is  
$F_i\tensor k$, for $i=\range 0n$. Since this homology is also
equal to $\tor RiMk$, we proved the second assertion.
\end{proof}

Since a projective module over a local ring is always free 
(\cite[Theorem 2.5]{Mats}), a necessary and sufficient
condition for an $R$-module $M$ to have a finite free resolution is 
that $M$ has projective dimension $n<\infty$ and is \fp n. By the previous
result, such a module then admits a unique minimal finite free resolution.

\begin{lemma}\label{L:FFR} 
Any direct summand of an $R$-module with a 
finite free resolution has itself a finite free
resolution. Similarly, any direct summand of a \fp n module is again \fp
n.
\end{lemma}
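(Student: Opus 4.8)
The plan is to prove the second assertion first---that a direct summand of a \fp n module is again \fp n---and then derive the statement about finite free resolutions from it together with the criterion noted just after Lemma~\ref{L:min} (an $R$-module has a finite free resolution \iff\ it has finite projective dimension $p$ and is \fp p).

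I would prove the second assertion by induction on $n$. The base case $n=0$ is trivial: a direct summand $N$ of a finitely generated module $M$ is a homomorphic image of $M$ under a projection, hence finitely generated. For the inductive step, write $M=N\oplus N'$ with $M$ \fp{n+1}. Since $M$ is finitely generated, so are $N$ and $N'$, so I choose finite free modules $G$, $G'$ with surjections $G\onto N$, $G'\onto N'$ and kernels $K$, $K'$; then $G\oplus G'\onto M$ is a surjection from a finite free module with kernel $K\oplus K'$. The heart of the matter is to show that $K\oplus K'$---the first syzygy of an arbitrary finite free presentation of the \fp{n+1} module $M$---is \fp n. Granting this, the inductive hypothesis, applied to the \fp n module $K\oplus K'$, shows that its direct summand $K$ is \fp n; splicing a finite free presentation of $K$ witnessing \fp n onto the exact sequence $0\to K\to G\to N\to 0$ then exhibits $N$ as \fp{n+1}. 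The argument for $N'$ is the same.

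For the first assertion, let $M$ have a finite free resolution $0\to F_m\to\dots\to F_0\to M\to 0$. Truncating this sequence---and for $n>m$ padding it on the left with copies of the zero module---shows that $M$ is \fp n for every $n$, while the resolution itself shows that the projective dimension of $M$ is at most $m$. Now let $N$ be a direct summand of $M$, say $M=N\oplus N'$. By the second assertion, $N$ is \fp n for all $n$; and since $\op{Ext}_R^i(N,-)$ is a direct summand of $\op{Ext}_R^i(M,-)$, the projective dimension $p$ of $N$ is at most that of $M$, hence finite. Thus $N$ is \fp p of projective dimension $p$, and so by the criterion after Lemma~\ref{L:min} it has a finite free resolution.

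The main obstacle is the syzygy step inside the induction: that the kernel $L$ of any surjection $P\onto M$ from a finite free module $P$ onto a \fp{n+1} module $M$ is \fp n. To see this, pick a finite free exact sequence $F_{n+1}\to\dots\to F_0\to M\to 0$ witnessing \fp{n+1}; its first syzygy $S:=\op{im}(F_1\to F_0)$ is \fp n, being resolved by $F_1,\dots,F_{n+1}$. By Schanuel's Lemma (already used in \S\ref{s:FFR} to make the Euler number well defined), $L\oplus F_0$ is isomorphic to $S\oplus P$, hence is \fp n since it is $S$ together with a finite free summand; as $L$ is a direct summand of $L\oplus F_0$, the inductive hypothesis at level $n$ gives that $L$ is \fp n. All the remaining ingredients---that direct summands of finitely generated modules are finitely generated, that finite free presentations splice, and the reduction of ``finite free resolution'' to ``finite projective dimension plus \fp p''---are routine.
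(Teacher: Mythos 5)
Your proof is correct, and it rests on the same two pillars as the paper's: induction on $n$ and Schanuel's Lemma. The difference is organizational. The paper runs a single simultaneous induction for both statements: given a length-$n$ resolution (respectively a witness that $M\oplus N$ is \fp n), it compares its first syzygy $K$ with the kernels $G$, $H$ of chosen finite free surjections onto $M$ and $N$, and Schanuel gives $K\oplus R^a\oplus R^b\iso G\oplus H\oplus R^m$, so the induction hypothesis applied to the summand $G$ finishes both cases at once. You instead isolate the \fp n statement, prove inside the induction the (Schanuel-based) stability fact that the kernel of \emph{any} finite free surjection onto a \fp{n+1} module is \fp n, and then recover the finite-free-resolution statement afterwards from two extra ingredients: that a direct summand has projective dimension at most that of the ambient module (via additivity and the Ext-vanishing characterization of projective dimension), and the criterion recorded after Lemma~\ref{L:min} that a finite free resolution is the same as finite projective dimension plus being \fp p for $p$ the projective dimension. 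Your route buys a reusable syzygy lemma and a clean reduction of the resolution statement to the presentation statement, at the cost of invoking the projective-dimension characterization; the paper's simultaneous induction is slightly leaner, needing nothing beyond Schanuel and the freeness of projectives over a local ring in the base case. Both are complete proofs.
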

\begin{proof} 
We prove both results simultaneously. Suppose $M\oplus N$ has a finite free
resolution of
length $n$ as in \eqref{eq:ffr} (respectively, of the form $F_\bullet$). We will
show by induction on $n$
that $M$ has a finite free resolution (respectively, is \fp n). If $n=0$,
that is to say, if $M\oplus
 N$ is free, then $M$ is projective whence free (respectively, if $n=0$, that
is to say, $M\oplus N$ is finitely generated, then so is $M$).
Hence assume $n>0$ and choose an exact sequence
\begin{equation}
\label{eq:MN}
\Exactseq K {R^m} {M\oplus N}
\end{equation}
such that $K$  admits a finite free resolution of length $n-1$
(respectively is \fp{n-1}). 
Clearly, $M$ and $N$ must also be finitely
generated, so that we can choose exact sequences
\begin{align*} 
&\Exactseq G{R^a}M\\
&\Exactseq H{R^b}N.
\end{align*} 
Taking the direct sum of these last two exact sequences 
and comparing it with \eqref{eq:MN}, we get from
Schanuel's Lemma  an isomorphism $K\oplus R^a\oplus R^b\iso G\oplus 
H\oplus R^n$. Since the module at the left hand side
has a finite free resolution of length $n-1$ (respectively, is \fp{n-1}), our
induction
 hypothesis yields that $G$ has a  finite free resolution (respectively, is
\fp{n-1}),
whence so does $M$ (respectively, whence $M$ is \fp n).
\end{proof}

\begin{theorem}\label{T:sreg} 
A local ring of finite embedding dimension is \preg\ \iff\ its 
residue field admits a finite free resolution.
\end{theorem}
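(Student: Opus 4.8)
The plan is to read off both implications from the Koszul complex on a minimal generating set of $\maxim$, the harder direction also requiring a version of the Auslander--Buchsbaum formula. Suppose first that $R$ is \preg, and fix a minimal generating set $\tuple x=\rij xm$ of $\maxim$, where $m=\ed R$. By definition $\op{depth}(R)=\op{grade}(\maxim)=m-h$, where $h$ is the largest index with $H_h(\tuple x;R)\ne 0$; since $\op{depth}(R)=m$ by hypothesis, $h=0$, that is, $H_i(\tuple x;R)=0$ for all $i>0$. As $H_0(\tuple x;R)=R/\tuple xR=k$, the Koszul complex $K_\bullet(\tuple x;R)$ is then a finite free resolution of $k$, proving this direction.

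For the converse, suppose $k$ has a finite free resolution. If $m=0$ then $R=k$ and there is nothing to prove, so assume $m\ge 1$. By Lemma~\ref{L:min} and the remarks following Lemma~\ref{L:FFR}, $k$ has a minimal finite free resolution $F_\bullet\colon 0\to F_n\to\dots\to F_0=R\to k\to 0$ with $n=\op{pd}_R(k)$ and $\op{rank}F_i=\beta_i:=\dim_k\tor Rikk$; in particular all differentials of $F_\bullet$ have entries in $\maxim$ and $\beta_1=m$. I would now combine two ingredients. First, the standard lower bound $\beta_i\ge\binom mi$ for the Betti numbers of the residue field of a local ring (which comes from the injectivity of the natural map $\bigwedge^i(\maxim/\maxim^2)\to\tor Rikk$); taking $i=m$ gives $\beta_m\ge 1$, hence $n\ge m$. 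Second, the Auslander--Buchsbaum formula for a module admitting a finite free resolution over a local ring: since $\op{depth}_R(k)=0$ (because $\hom Rkk\ne 0$), it gives $\op{depth}(R)=\op{pd}_R(k)+\op{depth}_R(k)=n$. Together with the elementary chain $\op{depth}(R)\le\pd R\le\ed R=m$ these force $m\le n=\op{depth}(R)\le m$, so $\op{depth}(R)=m=\ed R$ and $R$ is \preg. (Incidentally $\pd R=m$ also shows $\complet R$ is regular, so $R$ is \creg, consistent with Proposition~\ref{P:pregCM}.)

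The point that needs genuine work --- and the main obstacle --- is the Auslander--Buchsbaum equality in this non-Noetherian context, i.e.\ $\op{depth}(R)\ge n$. One route: the top differential $\partial_n\colon F_n\to F_{n-1}$ is injective with entries in $\maxim$, and localizing $F_\bullet$ at any prime $\pr\ne\maxim$ produces a bounded exact, hence split, complex of finite free $R_\pr$-modules (as $k_\pr=0$); consequently the ideal $I_r(\partial_n)$ of $r$-minors of $\partial_n$, with $r=\op{rank}F_n$, becomes the unit ideal at every such $\pr$, so $\sqrt{I_r(\partial_n)}=\maxim$. Thus $I_r(\partial_n)$ is $\maxim$-primary, whence $\op{grade}(I_r(\partial_n))=\op{grade}(\maxim)=\op{depth}(R)$, and the Buchsbaum--Eisenbud acyclicity criterion (valid over an arbitrary commutative ring) yields $\op{depth}(R)=\op{grade}(I_r(\partial_n))\ge n$. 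Alternatively, $\op{depth}(R)=n$ follows by induction on $n$ along the syzygies $\Omega_j$ of $k$, using the long exact $\op{Ext}$-sequences attached to $0\to\Omega_{j+1}\to F_j\to\Omega_j\to 0$ and the fact that each $\ext RikR$ is annihilated by $\maxim$ (hence is a $k$-vector space, so that maps induced by matrices with entries in $\maxim$ vanish on it); the decisive step is $j=n-1$, where $\Omega_n\cong F_n$ is free and the inclusion $\Omega_n\hookrightarrow F_{n-1}$ is exactly $\partial_n$. Everything else --- manipulating Koszul complexes, minimal resolutions and grade --- is routine.
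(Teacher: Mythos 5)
Your forward direction is exactly the paper's: depth equal to embedding dimension kills the higher Koszul homology of a minimal generating set, so the Koszul complex resolves $k$. Your converse, however, is a genuinely different, global argument: you squeeze $\op{depth}(R)$ between $\op{pd}_R(k)$ and $\ed R$, getting $\op{depth}(R)\ge\op{pd}_R(k)$ from the Buchsbaum--Eisenbud acyclicity criterion applied to the minimal finite free resolution, and $\op{pd}_R(k)\ge\ed R$ from the lower bound $\beta_i(k)\ge\binom{m}{i}$. The paper instead inducts on $\ed R$: positive depth, a Nagata extension (Lemma~\ref{L:NC}) to make a minimal generator $x$ regular, the splitting $\maxim/x\maxim\iso k\oplus N$, and Lemma~\ref{L:FFR} to descend the finite free resolution to $R/xR$. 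Your first ingredient is essentially fine in this setting: the acyclicity criterion in its non-Noetherian form (BH Theorem~9.1.6, or Northcott's version, with the Koszul/polynomial grade the paper uses for depth) gives $\op{grade}(I_r(\partial_n))\ge n$ in the ``only if'' direction, and since minimality puts that ideal inside $\maxim$, monotonicity of grade (or the $\maxim$-primariness you establish, together with BH 9.1.2--9.1.3) bounds it by $\op{depth}(R)$; your localization argument is correct but not even needed for this.

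The genuine gap is the other ingredient, which you treat as routine. The inequality $\beta_i(k)\ge\binom{m}{i}$ --- equivalently the injectivity of $\bigwedge^i(\maxim/\maxim^2)\to\tor Rikk$ --- is not an off-the-shelf citation here: it is a nontrivial theorem (Serre, Assmus, Gulliksen--Levin), and its standard proofs, via the divided-power/Hopf-algebra structure of $\tor R{\bullet}kk$ or Tate's acyclic closure, are developed for \emph{Noetherian} local rings, whereas the present theorem concerns arbitrary local rings of finite embedding dimension. In this generality the bound requires an argument (one would have to check that the exterior-algebra injection survives without Noetherian hypotheses, using, say, that $k$ here admits a finite minimal free resolution, so all $\tor Rikk$ are finite dimensional), and it is carrying at least as much weight as the Auslander--Buchsbaum inequality you identify as ``the main obstacle'': without it your chain only yields $m\ge\op{depth}(R)\ge\op{pd}_R(k)$, which does not give \preg{ity}. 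This is precisely the content the paper's induction proves by hand. Your alternative route (b) does not repair this and has its own defect: an Ext-vanishing induction controls the classical, regular-sequence notion of depth, and relating that to the Koszul-grade depth in the definition of \preg\ is exactly the non-Noetherian subtlety the paper must handle with Nagata extensions (Lemma~\ref{L:NC}). So either prove the Betti-number lower bound in this generality, or replace it by an induction on $\ed R$ along the lines of the paper.
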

\begin{proof} 
Suppose first that $(R,\maxim)$ is \preg\ of \pdim\ 
$d$. Let $\tuple x$ be a generic sequence generating
$\maxim$. Since $R$ has depth $d$, all $H_i(\tuple x;R)$ vanish, 
showing that the Koszul complex $K_\bullet(\tuple x)$ of $\tuple x$ is
exact, yielding the desired finite free resolution of the residue field $k$.

Conversely, assume that $k$ has a finite free resolution
\begin{equation*} 
0\to R^{a_n}\to R^{a_{n-1}}\to\dots\to R^{a_1}\to 
R\to k\to 0.
\end{equation*} 
Let $m$ be the  embedding dimension of $R$ (so that we 
may choose $a_1=m$). Observe that both hypothesis
and conclusion are invariant under taking a Nagata extension of the form 
$R\sub R(\xi)$ (by faithful flatness), so that at
any time we may make such an extension if needed. There is nothing to 
show if $m=0$, so we induct on $m>0$. By
\cite[Theorem 19.6]{Mats}, the depth of $R$ must be positive. By 
Lemma~\ref{L:NC}, we may assume after making a Nagata
extension,  that some  minimal generator $x$ of 
$\maxim$ is $R$-regular. Put $S:=R/xR$, so that its
embedding dimension  is $m-1$. For each $i>1$, we have an isomorphism
\begin{equation*}
\tor Ri Sk\iso \tor R{i-1}S\maxim\iso\tor S{i-1}S{\maxim/x\maxim}=0
\end{equation*} 
 since   $x$ is 
$R$-regular, whence also $\maxim$-regular. This implies that
the complex
\begin{equation*} 
0\to S^{a_n}\to S^{a_{n-1}}\to\dots\to S^{a_1}
\end{equation*} 
is acyclic, that is to say, is a finite free 
resolution of $\maxim\tensor S=\maxim/x\maxim$. I claim
that $k$ is a direct summand of $\maxim/x\maxim$. Assuming the claim, 
  Lemma~\ref{L:FFR} then yields that $k$ admits a
finite free resolution as an $S$-module. Therefore, by our induction 
hypothesis, $S$ is \preg, whence has depth $m-1$.
It follows from \eqref{eq:defdep} that $R$ has depth $m$, showing 
that it is   \preg.

To prove the claim, choose $x_2,\dots,x_m\in\maxim$ so that 
$(x,x_2,\dots,x_m)R=\maxim$. Let $H$ be the $R$-submodule  of $\maxim/x\maxim$
 generated by the image of $x$. Hence $H\iso k$ and we want to show that $H$ is a
direct summand of $\maxim/x\maxim$.   Let $N$ be the submodule generated by the images of the 
$x_2,\dots,x_m$ in $\maxim/x\maxim$, so that $\maxim/x\maxim=H+N$. Let
$a\in\maxim$ and suppose its image    in $\maxim/x\maxim$ lies in 
$H\cap N$. It follows that we can write $a$ in two
different ways, namely as $a=a_1x=a_2x_2+\dots+a_mx_m+rx$ with 
$a_i\in R$ and $r\in\maxim$. By Nakayama's lemma, we
therefore must have $a_1\equiv r\equiv0\mod\maxim$, that it so say, 
$a\in x\maxim$. In other words, we showed that
$H\cap N=0$ and hence that $\maxim/x\maxim\iso H\oplus N$, as required.
\end{proof}

\begin{remark}\label{R:sreg}
Under the assumptions of the theorem,  $k$ has 
projective dimension equal to the \pdim\ of $R$ and
$\op{Eul}(k)=0$ (use the Koszul complex to calculate both numbers). The 
Koszul complex is minimal and therefore $\tor Rikk$
has dimension equal to $\binomial ni$ for all $i$.
\end{remark}

\begin{remark}
Using a similar argument, one can show that $R$ is 
\pCM\ \iff\ there exists a generic sequence $\tuple
x$ such that $R/\tuple xR$ has a finite free resolution (which then 
can be chosen to be the Koszul complex $K_\bullet(\tuple x)$ of $\tuple
x$). For a related result, see Proposition~\ref{P:uCMfpt} below.
\end{remark}

To not confuse with our present terminology we deviate from  \cite{Bert} or
\cite[\S5]{Glaz} by calling a ring  \emph{\bertin}, if every finitely generated
ideal has finite projective
dimension. If $R$ is moreover coherent, then it is shown that any finitely
generated ideal admits a finite free resolution. Applied to the maximal ideal,
we get immediately from Theorem~\ref{T:sreg}:

\begin{corollary}\label{C:bertin}
A coherent \bertin\ local ring   of finite
embedding dimension is
\preg.\qed
\end{corollary}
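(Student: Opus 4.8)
The plan is to reduce the statement directly to Theorem~\ref{T:sreg}, whose hypothesis is that the residue field $k$ admits a finite free resolution. So the whole task is to produce such a resolution from the Bertin-Serre regularity and coherence hypotheses, applied to the maximal ideal.

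First I would observe that since $R$ has finite embedding dimension, $\maxim$ is finitely generated, so $\bertin$ regularity applies to it: $\maxim$ has finite projective dimension, say $n$. Next I would build a (possibly infinite) free resolution of $\maxim$ step by step, $\cdots\to F_1\to F_0\to\maxim\to0$, choosing each $F_i$ to be a finitely generated free $R$-module surjecting onto the $i$-th syzygy; this is possible because coherence of $R$ guarantees that each successive syzygy of the finitely generated module $\maxim$ is again finitely generated. Because $\maxim$ has projective dimension $n$, the $n$-th syzygy is projective and finitely generated, hence finitely generated free (a projective module over a local ring is free, \cite[Theorem 2.5]{Mats}). Thus $\maxim$ admits a finite free resolution of length $n$ --- this is exactly the fact attributed to \cite{Bert} (and \cite[\S5]{Glaz}) in the paragraph preceding the corollary, and I would simply invoke it rather than reprove it. Splicing this resolution onto the short exact sequence $\Exactseq\maxim Rk$ yields a finite free resolution of $k$ of length $n+1$.

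Having a finite free resolution of $k$, Theorem~\ref{T:sreg} immediately gives that $R$ is $\preg$, completing the proof.

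There is really no serious obstacle here: the only non-formal input is the cited Bertin-Glaz fact that over a coherent ring a finitely generated ideal of finite projective dimension has a finite free resolution, and once that (together with Theorem~\ref{T:sreg}) is granted, the argument is a one-line splicing. If one wanted to be fully self-contained, the mild point to be careful about is that coherence is what keeps all the intermediate syzygies finitely generated, so that the truncated resolution of $k$ is genuinely a complex of \emph{finitely generated} free modules; everything else is routine.
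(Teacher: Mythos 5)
Your proposal is correct and follows essentially the same route as the paper: the paper simply cites the Bertin--Glaz fact that in a coherent \bertin\ ring every finitely generated ideal (here $\maxim$, finitely generated by the finite embedding dimension hypothesis) has a finite free resolution, and then applies Theorem~\ref{T:sreg} after the obvious splicing with $\Exactseq\maxim Rk$. Your only addition is a sketch of that cited fact (coherence keeping the syzygies finitely generated, plus the Schanuel-type observation that the $n$-th syzygy is projective, hence free), which is fine but not needed beyond the citation.
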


For the converse, we have the following:

\begin{corollary}\label{C:fp}
Let $(R,\maxim)$ be a \preg\ local ring of \pdim\ $d$,  and let be $M$ an $R$-module.
If
 $M$ is \fp {d+1},
then $M$ has finite projective dimension (at most $d$).
\end{corollary}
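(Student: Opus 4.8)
The plan is to produce, directly from a minimal presentation, a finite free resolution of $M$ of length $d$, by showing that its $d$-th syzygy is free --- in fact that the next syzygy vanishes. The trivial case $d=0$ I would dispose of first: a \preg\ local ring of \pdim\ zero is a field (it is Artinian of embedding dimension $0$, so $\maxim=0$), whence $M$ is free. So assume $d\geq 1$. The crucial input is that the residue field $k$ of $R$ has a finite free resolution of length $d$: existence is Theorem~\ref{T:sreg}, and its length equals $\pd R=d$ by Remark~\ref{R:sreg} (concretely, it is the Koszul complex on a generic sequence generating $\maxim$). Consequently $\tor RiNk=0$ for every $R$-module $N$ and every $i>d$.

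Next, since $M$ is \fp{d+1}, Lemma~\ref{L:min} furnishes a minimal exact sequence $F_{d+1}\to F_d\to\cdots\to F_0\to M\to 0$ with each $F_i$ a finitely generated free $R$-module. I would set $N:=\ker(F_d\to F_{d-1})$, which equals $\op{im}(F_{d+1}\to F_d)$, hence is finitely generated, and which by minimality of the complex is contained in $\maxim F_d$. Writing $K:=\op{im}(F_d\to F_{d-1})\sub F_{d-1}$ for the $d$-th syzygy, there is a short exact sequence $0\to N\to F_d\to K\to 0$. Chopping the minimal complex into short exact sequences and shifting dimensions (each $F_i$ being flat) identifies $\tor R1Kk$ with $\tor R{d+1}Mk$, which vanishes by the previous paragraph. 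Tensoring $0\to N\to F_d\to K\to 0$ with $k$ then gives an injection $N\tensor_Rk\into F_d\tensor_Rk$; but this map is induced by the inclusion $N\sub F_d$, and since $N\sub\maxim F_d$ it is the zero map, so $N\tensor_Rk=0$. As $N$ is finitely generated, Nakayama's Lemma yields $N=0$. Hence $0\to F_d\to F_{d-1}\to\cdots\to F_0\to M\to 0$ is a finite free resolution of $M$, and in particular $M$ has projective dimension at most $d$.

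I expect the only delicate point to be the dimension shift establishing $\tor R1Kk\iso\tor R{d+1}Mk$: one runs it through the short exact sequences $0\to K_i\to F_i\to K_{i-1}\to 0$ for $i=0,\dots,d-1$ (with $K_{-1}:=M$ and $K_{d-1}=K$), each step using $\tor Rj{K_{i-1}}k\iso\tor R{j-1}{K_i}k$ with $j=d+1-i\geq 2$, which is exactly where the hypothesis \fp{d+1} (rather than merely \fp d) is used --- it is what makes $F_{d+1}$, and with it the finitely generated module $N$, available. Everything else --- the vanishing $\tor R{>d}{\cdot}{k}=0$ and the concluding Nakayama step --- is routine once the finite free resolution of $k$ of length $d$ is in hand.
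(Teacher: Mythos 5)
Your proof is correct and follows essentially the same route as the paper: both take the minimal exact sequence of length $d+1$ supplied by Lemma~\ref{L:min}, use the finite free resolution of $k$ of length $d$ (Theorem~\ref{T:sreg} and Remark~\ref{R:sreg}) to get $\tor R{d+1}Mk=0$, and conclude that the sequence truncates to a finite free resolution of $M$ of length $d$. The only difference is cosmetic: the paper invokes the Betti-number identification $\beta_{d+1}(M)=\op{rank}(F_{d+1})$ from Lemma~\ref{L:min} to conclude $F_{d+1}=0$ at once, whereas you re-derive the needed vanishing by hand, via dimension shifting and Nakayama applied to $N=\ker(F_d\to F_{d-1})\sub\maxim F_d$.
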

\begin{proof}
By Lemma~\ref{L:min}, there exists a minimal exact sequence $F_\bullet$
with $n=d+1$, and the $i$-th Betti number of $M$ is the rank of $F_i$.
However, $k$ has projective dimension $d$ by Remark~\ref{R:sreg}, and hence  
$\beta_{d+1}(M)=0$, showing that $F_{d+1}=0$.
\end{proof}

\begin{corollary}
Let $R$ be a \preg\ local ring of \pdim\ one. If $R$ is coherent, then it is
\bertin.
\end{corollary}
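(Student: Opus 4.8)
The plan is to deduce everything from Corollary~\ref{C:fp}. Since $R$ is \preg\ of \pdim\ one, that corollary, applied with $d=1$, says that any $R$-module which is \fp2 has projective dimension at most $1$. So it suffices to show that every finitely generated ideal $I\sub R$ is \fp2; coherence is exactly what delivers this.

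I would argue as follows. Coherence of $R$ means every finitely generated ideal is finitely presented, and it is a standard fact (see \cite[\S5]{Glaz}) that this property propagates: over a coherent ring every finitely presented module admits a resolution by finitely generated free modules, i.e.\ is \fp n for every $n$. For the case at issue one can also see it by hand: pick a finite presentation $F_1\to F_0\to I\to 0$ by finitely generated free modules; the image of $F_1\to F_0$ is a finitely generated submodule of the finitely generated free module $F_0$, hence finitely presented by coherence, and then the exact sequence
$$
0\to\ker(F_1\to F_0)\to F_1\to\op{im}(F_1\to F_0)\to 0
$$
shows that $\ker(F_1\to F_0)$ is finitely generated, since a submodule of a finitely generated module whose quotient is finitely presented is itself finitely generated. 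Extending $F_1\to F_0\to I\to 0$ on the left by a finitely generated free cover of this kernel exhibits $I$ as \fp2.

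Now apply Corollary~\ref{C:fp} with $M=I$ and $d=1$: the ideal $I$, being \fp{d+1}, has finite projective dimension (in fact at most one). Since $I$ was an arbitrary finitely generated ideal, $R$ is \bertin\ by definition.

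The only step with any content is the coherence bookkeeping that upgrades ``finitely presented'' to ``\fp2''; the rest is a one-line appeal to Corollary~\ref{C:fp}. (One could also remark that \preg\ of \pdim\ one forces $\ed R=1$ and $R$ to be \creg, so that $\sep R$ is a \DVR\ by the results of \S\ref{s:cata}, but this extra structural information is not needed for the argument.)
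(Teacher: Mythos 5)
Your proof is correct and follows essentially the paper's own argument: coherence supplies the finitely generated syzygies, and Corollary~\ref{C:fp} with $d=1$ finishes. The only immaterial difference is that the paper applies Corollary~\ref{C:fp} to $R/I$, which is already \fp 2 once $I$ is finitely presented, whereas you apply it to $I$ itself after one extra (valid) coherence step to upgrade $I$ from finitely presented to \fp 2.
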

\begin{proof}
Let $I$ be a finitely generated ideal. Since $R$ is coherent, it is finitely
presented. Hence $R/I$ is \fp 2, and therefore has finite projective dimension
by Corollary~\ref{C:fp}.
\end{proof}

We cannot expect for this result to also hold if the \pdim\ $d$ is strictly
bigger than one,
since a
coherent \bertin\ ring is   \emph{\CM\ in the sense of \cite{HamMar}} and
therefore
admits a regular sequence of length   $d$ (that is to say, in such a ring,
naive depth always  equals depth).  To obtain a converse, we require a
stronger coherence condition:

\begin{theorem}\label{T:bertin}
A local ring of finite embedding dimension is coherent and  \bertin\ \iff\ it is
\preg\ and  every finitely
generated ideal has \fpt.
\end{theorem}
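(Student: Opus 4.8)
The plan is to establish the two implications separately, in each case reducing to results already in hand: Corollary~\ref{C:bertin}, which says that a coherent \bertin\ local ring of finite embedding dimension is \preg, and Corollary~\ref{C:fp}, which says that over a \preg\ local ring of \pdim\ $d$ every \fp{d+1} module has projective dimension at most $d$. No new idea is needed beyond combining these with standard facts about coherent rings.

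For the forward direction, suppose $R$ is coherent and \bertin. Pseudo-regularity then follows at once from Corollary~\ref{C:bertin}, so it remains to see that every finitely generated ideal $I$ has \fpt. Here I would invoke the usual coherence bootstrap: over a coherent ring every finitely generated free module is coherent, and every finitely generated submodule of a coherent module is again coherent, hence finitely presented. Thus $I$ is finitely presented; choosing a surjection $R^{n}\onto I$, its kernel is a finitely generated submodule of $R^{n}$, hence finitely presented; iterating shows all higher syzygies of $I$ are finitely generated, i.e. $I$ is \fp m for every $m$. Note that this half uses coherence only, not \bertin{ity}.

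For the converse, suppose $R$ is \preg\ and every finitely generated ideal has \fpt, and let $d$ denote the \pdim\ of $R$; by \preg{ity} this is also the depth and the embedding dimension of $R$. Coherence is immediate, since an ideal with \fpt\ is in particular \fp1, which is to say finitely presented. To prove \bertin{ity}, fix a finitely generated ideal $I$. Since $I$ is \fp d, splicing a length-$d$ free presentation of $I$ (Lemma~\ref{L:min}) onto the short exact sequence $\Exactseq IR{R/I}$ shows that $R/I$ is \fp{d+1}. By Corollary~\ref{C:fp}, $R/I$ then has projective dimension at most $d$, and reading off the same short exact sequence, $I$, being the first syzygy of $R/I$, has finite projective dimension. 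Hence every finitely generated ideal of $R$ has finite projective dimension, i.e. $R$ is \bertin.

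I expect no real obstacle: the whole argument is a bookkeeping exercise once Corollaries~\ref{C:bertin} and \ref{C:fp} are available. The two points deserving a moment's care are the coherence bootstrap in the forward direction (which I would spell out via coherence of finitely generated free modules over a coherent ring) and the index shift in the converse, so that ``$I$ is \fp d'' lines up exactly with the ``\fp{d+1}'' hypothesis of Corollary~\ref{C:fp}; the degenerate case $d=0$, in which \preg{ity} forces $R$ to be a field and every ideal trivially admits a finite free resolution, is absorbed into the general argument without change.
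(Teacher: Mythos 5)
Your proof is correct, and your converse is essentially the paper's argument: from \fpt\ of $I$ you get that $R/I$ is \fp{d+1}, and Corollary~\ref{C:fp} then gives finite projective dimension; you are in fact a bit more careful than the paper, which leaves implicit the (easy) point that coherence itself follows because every finitely generated ideal, being \fp1, is finitely presented. Where you genuinely diverge is the forward direction: the paper quotes Glaz (\cite{GlazCoh}) for the fact that over a coherent \bertin\ ring every finitely generated ideal admits a finite free resolution, and reads off \fpt\ as a consequence, whereas you obtain \fpt\ by the coherence bootstrap alone (finitely generated submodules of coherent modules are coherent, so all syzygies of a finitely generated ideal stay finitely generated). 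Your route is more elementary and makes visible that \bertin{ity} plays no role in this half---coherence already forces \fpt\ of all finitely generated ideals---while the paper's citation buys the stronger conclusion of actual finite free resolutions at the price of the external reference. Both halves otherwise rest on the same two inputs, Corollary~\ref{C:bertin} for \preg{ity} and Corollary~\ref{C:fp} for the converse, so there is no gap.
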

\begin{proof}
If $R$ is coherent and \bertin, then any finitely generated ideal has a finite
free resolution by \cite{GlazCoh}, whence has in particular \fpt. Moreover, $R$ is
\preg\
by Corollary~\ref{C:bertin}. To prove the converse, let $I$ be a finitely
generated ideal. By assumption, $I$, whence also $R/I$, is \fp n, and therefore
has finite projective dimension by Corollary~\ref{C:fp} applied with $n$
sufficiently large.
\end{proof}

In \cite{Soub}, Soublin calls a ring $R$ \emph{uniformly coherent}\footnote{This
is quite a strong
hypothesis, even for Noetherian rings, for which it forces, among other things,
that the dimension is at most two.} if
there
exists a function $\alpha\colon\nat\to\nat$ such that any morphism $R^n\to R$
has a kernel generated by at most $\alpha(n)$ elements. 

\begin{theorem}\label{T:unifcoh}
Let $R$ be a uniformly coherent local ring of finite embedding dimension. Then
every finitely generated ideal of $R$ has \fpt. In particular, $R$ is \preg\
\iff\
it is \bertin.
\end{theorem}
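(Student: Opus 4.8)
The ``in particular'' clause is immediate from Theorem~\ref{T:bertin} once the first assertion is in hand: that theorem identifies the coherent \bertin\ local rings of finite embedding dimension, among all local rings of finite embedding dimension, as exactly the \preg\ ones in which every finitely generated ideal has \fpt. So if we know that a uniformly coherent $R$ is coherent and has all of its finitely generated ideals of \fpt, then ``$R$ is \bertin'' is equivalent to ``$R$ is coherent and \bertin'', which by Theorem~\ref{T:bertin} is equivalent to ``$R$ is \preg\ and every finitely generated ideal has \fpt'', hence to ``$R$ is \preg''. Thus the whole content is the first assertion.

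For that, the plan is the obvious iteration of coherence along a free resolution. First I would note that uniform coherence trivially implies ordinary coherence, since in particular the kernel of every $R$-linear map $R^n\to R$ is finitely generated, and the images of such maps are precisely the finitely generated ideals; hence every finitely generated ideal of $R$ is finitely presented. Now fix a finitely generated ideal $I$ and a surjection $R^{n_0}\to I$; its kernel $\Omega_1$ is finitely generated because $I$ is finitely presented, and it is a finitely generated submodule of the finitely generated free module $R^{n_0}$. Over a coherent ring such a submodule is again finitely presented (one of the standard reformulations of coherence), so I may repeat the construction with $\Omega_1$ in place of $I$: the second syzygy $\Omega_2$ is finitely generated, is again a finitely generated submodule of a finitely generated free module, hence finitely presented, and so on. By induction, every syzygy $\Omega_i$ of $I$ is finitely generated, which by the description of \fp{i} recalled in Lemma~\ref{L:min} (in the local case one may even take the resolution minimal) says exactly that $I$ is \fp{i} for all $i$, i.e.\ has \fpt.

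The argument presents no real difficulty; the single point needing care is that the coherence step ``a finitely generated submodule of a finitely generated free module is finitely presented'' must be reapplied at each stage of the induction --- equivalently, one is using that a finitely presented module over a coherent ring is coherent, so that its syzygies remain finitely presented. It may be worth remarking in passing that ordinary coherence already suffices for the first assertion; the stronger hypothesis of uniform coherence is invoked here only to land $R$ in a familiar and well-studied class.
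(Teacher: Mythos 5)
Your proof is correct, and your treatment of the ``in particular'' clause via Theorem~\ref{T:bertin} is exactly how the paper concludes as well. The difference lies in how the crucial step---finite presentation of each successive syzygy---is justified. The paper runs the same syzygy iteration, but it obtains ``every finitely generated submodule of a finite free module is finitely presented'' by citing Soublin's theorem (\cite{Soub}, or \cite[Corollary 2.3]{AschBou}) that uniform coherence forces the countable direct product $R^\nat$ to be a coherent module, and then embedding the submodule into $R^\nat$; this is the only point where uniform coherence is used. You instead appeal to the standard characterization of coherent rings (finitely generated submodules of finite free modules are finitely presented as soon as every finitely generated ideal is; equivalently, over a coherent ring every finitely presented module is a coherent module, so all its syzygies are finitely generated---see \cite{GlazCoh}), so your argument needs only ordinary coherence, which uniform coherence trivially supplies via the kernels of maps $R^n\to R$. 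Your closing remark is therefore right and is in fact a strengthening of the statement: the first assertion, and with it the equivalence of \preg\ and \bertin\ (via Theorem~\ref{T:bertin} together with Corollary~\ref{C:bertin}), already holds for coherent local rings of finite embedding dimension, since coherence of $R$ is by this bootstrapping equivalent to every finitely generated ideal having \fpt. What the paper's route buys is simply that it bypasses this bootstrapping fact, at the cost of invoking Soublin's nontrivial result and of a hypothesis stronger than what your more elementary argument requires.
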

\begin{proof}
By \cite{Soub} or \cite[Corollary 2.3]{AschBou}, the countable direct
product $R^\nat$ is coherent. Since a finitely generated submodule of a
finitely generated free $R$-module embeds in $R^\nat$, it
is  finitely presented. Applied to the syzygies of a finitely generated
ideal $I$, we see that $I$ has \fpt. The second
assertion then follows from Theorem~\ref{T:bertin}. 
\end{proof}

\subsection*{Pseudo-\CM\ local rings}

Recall that we called $R$   \pCM, if its depth equals its \pdim.

\begin{theorem}\label{T:pCMcomp} 
A \pCM\ local ring  is \cCM.
\end{theorem}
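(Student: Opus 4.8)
The plan is to pass to a Nagata extension of $R$, where a full-length regular sequence becomes available, apply Theorem~\ref{T:pCM} there, and then descend the Cohen--Macaulay property along completions.

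Let $(R,\maxim)$ be \pCM, say $\op{depth}(R)=\pd R=d$, and set $\xi:=\rij\xi d$. First I would record that the Nagata extension $R(\xi)$ again has finite embedding dimension (its maximal ideal is $\maxim R(\xi)$), and that, by Lemma~\ref{L:dd}, it has \pdim\ and depth both equal to $d$. The point of the extension is that, although $R$ itself may carry no $R$-regular sequence of length $d$, Lemma~\ref{L:NC} supplies an $R(\xi)$-regular sequence $\rij yd$ which is part of a generic sequence of $R(\xi)$; since it has length $\pd{R(\xi)}=d$, Lemma~\ref{L:gen} shows $R(\xi)/\tuple yR(\xi)$ is Artinian, so $\tuple y$ is itself a generic sequence of $R(\xi)$. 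As a regular sequence is always quasi-regular (\cite[Theorem 16.2]{Mats}), $\tuple y$ is a quasi-regular generic sequence of $R(\xi)$, so Theorem~\ref{T:pCM}, applied to $R(\xi)$, gives that $R(\xi)$ is \cCM; that is, $\complet{R(\xi)}$ is Cohen--Macaulay.

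It remains to transfer this back to $\complet R$. Here I would identify $\complet{R(\xi)}$ with the completion of a Nagata extension of the Noetherian ring $\complet R$. Indeed, applying the Nagata construction to the isomorphisms $R/\maxim^n\iso\complet R/\maxim^n\complet R$ of Theorem~\ref{T:finembcomp} yields compatible isomorphisms $R(\xi)/\maxim^nR(\xi)\iso(\complet R)(\xi)/\maxim^n(\complet R)(\xi)$, where $(\complet R)(\xi):=(\complet R[\xi])_{\maxim\complet R[\xi]}$; passing to inverse limits and using that $(\complet R)(\xi)$ is a Noetherian local ring, one obtains $\complet{R(\xi)}=\widehat{(\complet R)(\xi)}$. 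Since for Noetherian local rings both completion and the Nagata extension $A\mapsto A(\xi)$ (a faithfully flat local homomorphism with regular closed fibre $k(\xi)$) preserve and reflect the Cohen--Macaulay property, via the dimension and depth formulas for flat local maps (see e.g. \cite[\S23]{Mats}), we conclude that $\complet R$ is Cohen--Macaulay if and only if $(\complet R)(\xi)$ is, if and only if $\widehat{(\complet R)(\xi)}=\complet{R(\xi)}$ is --- which we have just shown. Hence $R$ is \cCM.

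The step I expect to require the most care is the identification $\complet{R(\xi)}=\widehat{(\complet R)(\xi)}$: it is precisely Theorem~\ref{T:finembcomp} (Noetherianity of the completion) that keeps $\complet{R(\xi)}$ from being strictly larger than $\widehat{(\complet R)(\xi)}$, and that makes it legitimate to invoke the elementary fact ``$A$ is \CM\ iff $A(\xi)$ is \CM'' only in the Noetherian setting, where it holds.
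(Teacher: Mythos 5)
Your proof is correct, but it follows a genuinely different route from the paper's. The paper stays inside $R$ itself: since $\op{depth}(R)=\pd R=d$ and the depth equals the grade of any $\maxim$-primary ideal (\cite[Proposition 9.1.3]{BH}), the $d$-generated ideal $\tuple xR$ of a generic sequence has grade $d$, so all higher Koszul homology $H_i(\tuple x;R)$ vanishes; the vanishing of $H_1$ already gives quasi-regularity of $\tuple x$ (Bourbaki), and Theorem~\ref{T:pCM} finishes in one line --- no Nagata extension and no descent are needed, precisely because the Koszul-homological definition of depth substitutes for an honest regular sequence. You instead manufacture a genuine regular sequence via Lemma~\ref{L:NC} in $R(\xi)$, use regular $\Rightarrow$ quasi-regular, apply Theorem~\ref{T:pCM} to $R(\xi)$, and then descend: your identification $\complet{R(\xi)}\iso\widehat{(\complet R)(\xi)}$ (resting on $R/\maxim^n\iso\complet R/\maxim^n\complet R$ and the compatibility of Nagata extensions with quotients, as in the proof of Lemma~\ref{L:dd}) is sound, as is the Noetherian-level transfer of the \CM\ property along completion and along the faithfully flat local map $\complet R\to(\complet R)(\xi)$ with field closed fiber. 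So each step checks out; what your approach costs is the extra bookkeeping of the descent, and what it buys is that you never need the fact that vanishing of $H_1$ alone implies quasi-regularity --- only the elementary implication regular $\Rightarrow$ quasi-regular. (A slightly shorter variant of your descent: by the ``equivalently, every'' clause of Theorem~\ref{T:pCM} applied to $R(\xi)$, a generic sequence $\tuple x$ of $R$, which remains generic in $R(\xi)$, is quasi-regular there, and quasi-regularity descends to $R$ by faithful flatness, avoiding the completion comparison altogether.)
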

\begin{proof} 
Let $R$ be a \pCM\ local ring of \pdim\ $d$ and let $\tuple x$ be a generic
sequence. Since $R$ has depth $d$, the grade of
$\mathfrak n:=\tuple xR$ is $d$, implying that all $H_i(\tuple 
x;R)$ vanish, for $i>0$. For $i=1$, this yields that
$\tuple x$ is   quasi-regular by \cite[Ch. X, \S9, Th\'eor\`eme 
1]{BourAlg}. Hence $R$ is \cCM\ by Theorem~\ref{T:pCM}.
\end{proof}

The converse is in general false: $R$ can be \cCM\ without 
being \pCM; an example is provided by the depth
zero \creg\ ring in \ref{e:preg}.   On the other hand, neither is it the 
case that in a \pCM\ local ring $R$ every
$R$-regular element is $\complet R$-regular. For instance, $R$ could 
be a non-separated domain, in which case any
non-zero element in the ideal of infinitesimals is $R$-regular, but 
zero in $\complet R$. This also gives an example of
an $R$-regular element which is not part of a generic subset. From the proof of
\cite[Theorem 16.3]{Mats}, it follows that if $R$ is separated and 
\cCM, then  every generic element is $R$-regular. In
particular, we showed that if $R$ has \pdim\ one, then $R$ is \cCM\ \iff\ $\sep
R$ is \pCM.

\begin{example}\label{e:isoCM}
Let $\seq Rw:=A/(\xi^2,\xi\zeta^w)A$ where $A:=\pow k{\xi,\zeta}$. It follows
that all $\seq
Rw$ have depth zero and dimension one. Hence their ultraproduct $\ul R$ has depth
zero
and ultra-dimension one. The cataproduct $\ulsep R$ is isomorphic to
$\pow{\ul k}{\xi,\zeta}/\xi^2\pow {\ul k}{\xi,\zeta}$, where $\ul k$ is the
ultrapower of $k$. This is
a one-dimensional \CM\ local ring. Hence $\ul R$ is \cCM\ and has \pdim\ one. In
conclusion, $\ul R$ is isodimensional and \cCM, but not \pCM.
%
\end{example}

\begin{corollary}\label{C:pregmult}
A local ring of finite embedding dimension is \preg\ \iff\ it is \pCM\ and has
multiplicity one.
\end{corollary}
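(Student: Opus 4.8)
The statement to prove is that a local ring $R$ of finite embedding dimension is \preg\ \iff\ it is \pCM\ and $\mult R=1$. The plan is to run it in parallel with the already-established \cCM\ analogue, Corollary~\ref{C:cregmult}, which says that $R$ is \creg\ \iff\ it is \cCM\ and $\mult R=1$. The bridge between the two pairs of notions is Proposition~\ref{P:pregCM}: $R$ is \preg\ \iff\ it is \creg\ and \pCM. So the corollary essentially falls out once one checks that the multiplicity-one condition interacts correctly.

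For the forward direction, assume $R$ is \preg. By Proposition~\ref{P:pregCM} it is both \creg\ and \pCM; in particular it is \pCM. And since it is \creg, Corollary~\ref{C:cregmult} gives $\mult R=1$ (or one can simply note that $\complet R$ is regular, hence of multiplicity one, and $R$ has the same multiplicity as $\complet R$ by Remark~\ref{R:mult}). That settles one implication. For the converse, assume $R$ is \pCM\ with $\mult R=1$. Being \pCM, $R$ is in particular \cCM\ by Theorem~\ref{T:pCMcomp}. Then $R$ is \cCM\ of multiplicity one, so Corollary~\ref{C:cregmult} yields that $R$ is \creg. Now $R$ is \creg\ and \pCM, so by Proposition~\ref{P:pregCM} again, $R$ is \preg, as desired.

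I do not anticipate any real obstacle here: every step is a direct invocation of a result already on hand (Proposition~\ref{P:pregCM}, Theorem~\ref{T:pCMcomp}, Corollary~\ref{C:cregmult}, Remark~\ref{R:mult}), and no new calculation is needed. The only point worth a word of care is the direction ``\pCM\ $\Rightarrow$ \cCM,'' which is exactly the content of Theorem~\ref{T:pCMcomp} and uses that a generic sequence of a \pCM\ ring has vanishing positive Koszul homology, hence is quasi-regular; but that work has already been done. Thus the proof reduces to: \preg\ $=$ \creg\ $+$ \pCM\ (Prop.~\ref{P:pregCM}), \pCM\ $\Rightarrow$ \cCM\ (Thm.~\ref{T:pCMcomp}), and \creg\ $=$ \cCM\ $+$ (mult.\ one) (Cor.~\ref{C:cregmult}), combined in the obvious way.

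\begin{proof}
If $R$ is \preg, then by Proposition~\ref{P:pregCM} it is \creg\ and \pCM. Being \creg, its completion $\complet R$ is regular, whence of multiplicity one; since $R$ and $\complet R$ have the same multiplicity by Remark~\ref{R:mult}, we get $\mult R=1$. Conversely, suppose $R$ is \pCM\ with $\mult R=1$. By Theorem~\ref{T:pCMcomp}, $R$ is then \cCM. Hence $R$ is \cCM\ of multiplicity one, so $R$ is \creg\ by Corollary~\ref{C:cregmult}. Being both \creg\ and \pCM, $R$ is \preg\ by Proposition~\ref{P:pregCM}.
\end{proof}
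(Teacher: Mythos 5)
Your proof is correct and follows essentially the same route as the paper: the direct implication via Proposition~\ref{P:pregCM} together with Corollary~\ref{C:cregmult} (equivalently, Remark~\ref{R:mult}), and the converse via Theorem~\ref{T:pCMcomp}, Corollary~\ref{C:cregmult}, and Proposition~\ref{P:pregCM}. Nothing to add.
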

\begin{proof}
The direct implication follows from Proposition~\ref{P:pregCM} and
Corollary~\ref{C:cregmult}. Conversely, if $R$
has multiplicity one and is \pCM, then it is \cCM\ by Theorem~\ref{T:pCMcomp},
whence \creg\ by  Corollary~\ref{C:cregmult}, and the result now follows from
Proposition~\ref{P:pregCM}.
\end{proof}

\begin{corollary} 
Let $R$ be a  \pCM\ local ring $R$ of \pdim\ two. 
If $R$ is either a domain or separated, then any generic sequence is $R$-regular.
\end{corollary}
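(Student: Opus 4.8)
The plan is to extract the conclusion from the vanishing of the first Koszul homology of the given generic sequence, a vanishing that is forced by the \pCM\ hypothesis together with $\pd R=2$.

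Write $\tuple x=(x_1,x_2)$ for the generic sequence, so that $\tuple xR$ is $\maxim$-primary. Since $R$ is \pCM\ of \pdim\ $2$, its depth equals $2$, and the depth of $R$ equals the grade of any $\maxim$-primary ideal (see \S\ref{s:depth}); hence $\op{grade}(\tuple xR)=2$. As $(x_1,x_2)$ is a two-element generating set of $\tuple xR$, the definition of grade forces $H_i(x_1,x_2;R)=0$ for all $i>0$, and in particular $H_1(x_1,x_2;R)=0$. From this I would chase the Koszul complex $K_\bullet(x_1,x_2;R)$ to record two facts: (i) any relation $x_2b=x_1c$ exhibits $(-c,b)$ as a $1$-cycle, hence as a $1$-boundary $\lambda(x_2,-x_1)$, whence $b\in x_1R$; thus $(x_1R:_Rx_2)=x_1R$, i.e.\ $x_2$ is a nonzerodivisor on $R/x_1R$; and (ii) if $x_1a=0$, then $(a,0)$ is a $1$-cycle, hence $(a,0)=\lambda(x_2,-x_1)$, giving $a=\lambda x_2$ with $\lambda x_1=0$, and iterating this on $\lambda$ shows $a\in\bigcap_n x_2^nR$.

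The remaining step is to see that $x_1$ itself is a nonzerodivisor, and this is the only place where the hypothesis on $R$ enters. If $R$ is a domain, then $x_1\neq0$ — otherwise $R/x_1R=R$ would have \pdim\ $2$, contradicting $\pd{R/x_1R}=\pd R-1=1$ of Lemma~\ref{L:gen} — so $x_1$ is a nonzerodivisor. If instead $R$ is separated, then $x_2\in\maxim$ gives $\bigcap_n x_2^nR\subseteq\bigcap_n\maxim^n=0$, so fact (ii) forces $a=0$; again $x_1$ is a nonzerodivisor. Combining either case with (i) yields that $(x_1,x_2)$ is an $R$-regular sequence. The only delicate point is the asymmetry in this last step: $H_1=0$ by itself already buys the regularity of $x_2$ modulo $x_1$, but passing from $H_1=0$ to ``$x_1$ is a nonzerodivisor'' genuinely needs the extra hypothesis, since without it an element annihilating $x_1$ could be a nonzero infinitesimal (as in the depth-zero \creg\ ring of \ref{e:preg}).
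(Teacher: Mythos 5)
Your proof is correct and follows essentially the same route as the paper: depth $2$ gives the vanishing of the Koszul homology of the generic pair (so every relation on $(x_1,x_2)$ is a Koszul relation), from which regularity of $x_2$ modulo $x_1$ is immediate, while the domain hypothesis or separatedness (via $\ann_R x_1=x_2\ann_R x_1\sub\bigcap_n\maxim^n=0$) kills the annihilator of $x_1$. The only cosmetic difference is indexing: the paper attributes the relation statement to $H_2(x,y;R)=0$, whereas you phrase it via $H_1$, but the content and the iteration argument are identical.
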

\begin{proof} 
Let $(x,y)$ be a generic sequence in $R$. If $R$ is a domain, then $x$ is
$R$-regular. Let us show that the same holds if $R$ is separated. Since
$R$ has 
depth two by assumption, $H_2(x,y;R)=0$. This
means that whenever $ax+by=0$ for some $a,b\in R$ then 
$(a,b)=r(y,-x)$ for some $r\in R$. In particular, if $a\in \ann
Rx$, then $(a,0)=r(y,-x)$ for some $r\in R$, showing that $a\in y\ann 
Rx$. In other words, $\ann Rx=y\ann Rx$ so that by
induction $\ann Rx=y^n\ann Rx$ whence $\ann Rx\sub \inf R=0$. This 
concludes the proof that $x$ is $R$-regular. Using once more the
above characterization of $H_2=0$, we see  that in either case, $y$ is $R/xR$-regular, 
whence $(x,y)$ is $R$-regular.
\end{proof}

We can generalize Proposition~\ref{P:betti1} substantially  under an additional
\CM\
assumption.

\begin{proposition}\label{P:tor} 
Let $R$ be a local ring   of finite embedding dimension and let $M$ be an
$R$-module of finite length.
If $R$ is \pCM, then $\tor  Ri{\complet R}M$ vanishes for all $i>0$.
\end{proposition}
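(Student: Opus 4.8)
The plan is to reduce the statement to a Koszul computation over the completion, exploiting that \pCM{}ness makes a generic sequence behave well both in $R$ and in $\complet R$. Put $d:=\pd R$; since $R$ is \pCM, $\op{depth}(R)=d$, and as $M$ has finite length there is an $N$ with $\maxim^NM=0$. First I would fix a generic sequence $\tuple x=(x_1,\dots,x_d)$ of $R$ (which exists by Lemma~\ref{L:mingen}) and replace it by $(x_1^n,\dots,x_d^n)$ for some $n\geq N$; this is still a generic sequence by the remark after Proposition~\ref{P:sop}, and now $(x_1^n,\dots,x_d^n)R\subseteq\maxim^n\subseteq\ann RM$, so --- renaming the new tuple $\tuple x$ --- the module $M$ becomes a module over $\bar R:=R/\tuple xR$. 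It therefore suffices to prove $\tor Ri{\complet R}M=0$ for $i>0$ when $M$ is an arbitrary $\bar R$-module.

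Next I would show that the Koszul complex $K_\bullet(\tuple x;R)$ is an $R$-free resolution of $\bar R$ which stays exact after base change to $\complet R$. Indeed, $\tuple xR$ is $\maxim$-primary, so by \cite[Proposition 9.1.3]{BH} its grade equals $\op{depth}(R)=d$; since it is generated by the $d$ elements $\tuple x$, the Koszul description of grade (\S\ref{s:depth}) forces $H_i(\tuple x;R)=0$ for $i>0$, so $K_\bullet(\tuple x;R)$ resolves $\bar R$. On the other hand the image of $\tuple x$ in $\complet R$ is a system of parameters by Proposition~\ref{P:sop}, and, $R$ being \pCM, the ring $\complet R$ is \CM\ of dimension $d$ by Theorems~\ref{T:pCMcomp} and~\ref{T:pdim}; hence that image is a $\complet R$-regular sequence and $H_i(\tuple x;\complet R)=0$ for $i>0$. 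Using $K_\bullet(\tuple x;R)\tensor_R\complet R\iso K_\bullet(\tuple x;\complet R)$ together with Lemma~\ref{L:quot} for the $\maxim$-primary (hence closed) ideal $\tuple xR$, this yields
\begin{equation*}
\tor Rq{\complet R}{\bar R}=H_q\bigl(K_\bullet(\tuple x;\complet R)\bigr)=\begin{cases}\complet R/\tuple x\complet R\iso\bar R & \text{if }q=0,\\[2pt] 0 & \text{if }q>0.\end{cases}
\end{equation*}

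Finally I would take a free resolution $\cdots\to\bar R^{b_1}\to\bar R^{b_0}\to M\to 0$ of $M$ over $\bar R$, lift it termwise along $K_\bullet(\tuple x;R)\to\bar R$ to a double complex of free $R$-modules whose total complex is an $R$-free resolution of $M$, and then invoke the resulting change-of-rings spectral sequence
\begin{equation*}
E^2_{pq}=\tor{\bar R}p{\tor Rq{\complet R}{\bar R}}M\ \Longrightarrow\ \tor R{p+q}{\complet R}M.
\end{equation*}
By the computation of $\tor Rq{\complet R}{\bar R}$ above, $E^2_{pq}=0$ for $q>0$, while $E^2_{p0}=\tor{\bar R}p{\bar R}M$ equals $M$ for $p=0$ and $0$ for $p>0$; hence the spectral sequence degenerates at $E^2$ and $\tor Ri{\complet R}M=0$ for all $i>0$, as desired.

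I expect the one genuinely delicate point to be the opening reduction: one cannot argue directly with the residue field $k$, since under the mere \pCM\ hypothesis (as opposed to \preg) $k$ need not have finite projective dimension over $R$, whereas the quotient $\bar R=R/\tuple xR$ by a generic sequence does possess the finite Koszul resolution $K_\bullet(\tuple x;R)$ --- and, crucially, that resolution remains acyclic after base change to $\complet R$ exactly because \pCM{}ness makes $\complet R$ Cohen--Macaulay, turning the generic sequence into a regular one. Everything after that is routine homological bookkeeping.
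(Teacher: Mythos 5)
Your argument is correct, and it takes a genuinely different route from the paper's. The paper also begins by placing a generic sequence inside $\ann RM$ (via Corollary~\ref{C:genprim}, rather than your device of raising a fixed generic sequence to powers, which works just as well), but it then passes to a Nagata extension $R(\xi)$ so that, by Remark~\ref{R:NC}, the generic sequence becomes an honest $R$-regular sequence; faithfully flat descent transfers the Tor vanishing back to $R$, and the conclusion follows from the classical deformation isomorphism $\tor Ri{\complet R}M\iso\tor {R/\tuple xR}i{\complet R/\tuple x\complet R}M$ together with $R/\tuple xR\iso\complet R/\tuple x\complet R$. You avoid both the Nagata extension and any appeal to genuine regular sequences: since $R$ is \pCM, the $d$-generated $\maxim$-primary ideal $\tuple xR$ has $\op{grade}(\tuple xR)=\op{depth}(R)=d$, so Koszul acyclicity over $R$ is automatic (quasi-regularity is all you use), acyclicity over $\complet R$ comes from the \CM{}ness of the completion via Theorems~\ref{T:pCMcomp} and~\ref{T:pdim}, and the base-change spectral sequence plays the role of the deformation lemma. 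What your route buys is that the whole argument stays inside $R$ itself, with no flat descent; what the paper's route buys is brevity, since Lemma~\ref{L:NC}, Remark~\ref{R:NC} and the Tor deformation property are already set up and reused elsewhere. One small caveat: lifting the $\bar R$-free resolution of $M$ termwise along $K_\bullet(\tuple x;R)\to\bar R$ does not literally yield a double complex (consecutive lifted maps compose only to something null-homotopic), but this is immaterial, because the spectral sequence $E^2_{pq}=\tor{\bar R}p{\tor Rq{\complet R}{\bar R}}M\Rightarrow\tor R{p+q}{\complet R}M$ you invoke is the standard Cartan--Eilenberg base-change sequence and can simply be quoted; with your computation of $\tor Rq{\complet R}{\bar R}$ it degenerates exactly as you say.
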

\begin{proof} 
Since $M$ has finite length, its annihilator is $\maxim$-primary, and hence
contains a generic sequence by Corollary~\ref{C:genprim}. 
Since $R\to R(\xi)$ is faithfully flat, the vanishing of 
the Tor's is unaffected by such an extension. Hence, after some Nagata
extension, we may assume, using   Remark~\ref{R:NC}, that $R$ admits  an
$R$-regular, 
generic sequence $\tuple x$ contained in the annihilator of $M$. Since $\complet
R$ is
\CM\ by Theorem~\ref{T:pCMcomp}, the sequence $\tuple x$ is also $\complet R$-regular. By a well-known deformation
property of Tor modules, we get
\begin{equation*}
\tor Ri{\complet R}M\iso\tor {R/\tuple xR}i{\complet R/\tuple x\complet R}M
\end{equation*} 
for all $i>0$. From this the vanishing then follows 
since ${R/\tuple xR}\iso{\complet R/\tuple
x\complet R}$ by Lemma~\ref{L:Art}.
\end{proof}

 Given a module $M$ over a
local ring $R$ of finite embedding dimension,  we
define its \emph\pdim\    to be  the \pdim\ of   $R/\ann RM$,  
and we denote it  $\pd M$. Since the notions of grade and depth also extend to
modules, we may call a
finitely generated $R$-module $M$   \emph\pCM, if
its \pdim\ equals its depth. 
 
\begin{corollary}\label{C:tor}
Let $R$ be a local ring   of finite embedding dimension and let $M$ be a
finitely generated $R$-module. If both $R$ and $M$ are \pCM, then $\tor
Ri{\complet R}M=0$, for all $i>0$.
\end{corollary}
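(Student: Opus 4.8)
The plan is to induct on the common value $e:=\pd M=\op{depth}(M)$ (these agree since $M$ is \pCM). If $e=0$, then $R/\ann RM$ is Artinian, so $\maxim^NM=0$ for some $N$; being finitely generated, $M$ then has finite length, and the conclusion is exactly Proposition~\ref{P:tor}, which applies because $R$ is \pCM. So assume $e\geq1$, and that the result is known for all finitely generated \pCM\ modules of \pdim\ $<e$ over \pCM\ local rings of finite embedding dimension.

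Set $A:=R/\ann RM$, a local ring of finite embedding dimension with $\pd A=e$, over which $M$ is a faithful module with $\op{depth}_A(M)=\op{depth}_R(M)=e$ (the Koszul homology of a generating set of $\maxim$ on $M$ is the same whether read over $R$ or over $A$). First I would produce, after at most one Nagata extension, an element $z\in\maxim$ that is $M$-regular and whose image in $A$ is generic. To this end, pick a generic sequence $x_1,\dots,x_e$ of $A$, lift each $x_i$ to $R$, and put $z:=x_1+x_2\xi_1+\dots+x_e\xi_1^{e-1}\in R(\xi_1)$. The ideal $(x_1,\dots,x_e)A$ is $\maxim A$-primary, so its grade on $M$ is $\op{depth}_A(M)=e\geq1$; hence by \cite[Proposition 9.1.3]{BH} the element $z$ is regular on $M(\xi_1):=M\otimes_RR(\xi_1)$. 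Moreover $(z,x_2,\dots,x_e)$ generates in $A(\xi_1)$ the same $\maxim A(\xi_1)$-primary ideal as $(x_1,\dots,x_e)$, a generating set of length $e=\pd{A(\xi_1)}$, so the image of $z$ in $A(\xi_1)$ is part of a generic sequence, hence generic (Lemma~\ref{L:mingen}). Since \pdim, depth, the \pCM\ property, and the vanishing of the relevant Tor modules are all unaffected by replacing $(R,M)$ by $(R(\xi_1),M(\xi_1))$ -- exactly as in the proof of Proposition~\ref{P:tor}, by faithful flatness and Lemma~\ref{L:dd} -- I may assume such a $z$ already lies in $\maxim$.

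Now $M/zM$ is again \pCM, of \pdim\ $e-1$: its depth is $\op{depth}(M)-1=e-1$ because depth deforms well along the $M$-regular element $z$ (cf.~\eqref{eq:defdep}), while $\op{Supp}(M/zM)=V(\ann RM+zR)$ gives $\pd{M/zM}=\pd{A/zA}=e-1$ by Lemma~\ref{L:gen}, the image of $z$ in $A$ being generic. By the induction hypothesis, $\tor Ri{\complet R}{M/zM}=0$ for all $i>0$. Applying $-\otimes_R\complet R$ to the short exact sequence $\Exactseq MM{M/zM}$ whose injection is multiplication by $z$, we obtain a long exact sequence in which, for each $i\geq1$, the modules $\tor R{i+1}{\complet R}{M/zM}$ and $\tor Ri{\complet R}{M/zM}$ flanking $\tor Ri{\complet R}M\xrightarrow{z}\tor Ri{\complet R}M$ both vanish; hence multiplication by $z$ is bijective on $\tor Ri{\complet R}M$ for every $i\geq1$. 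As $z\in\maxim$ and $\tor Ri{\complet R}M$ is finitely generated over the Noetherian local ring $\complet R$ -- here one uses that a finitely generated \pCM\ module has \fpt\ (Proposition~\ref{P:uCMfpt}), so that $\tor Ri{\complet R}M$ is computed as a subquotient of a finite free $\complet R$-module -- Nakayama's lemma forces $\tor Ri{\complet R}M=0$, completing the induction.

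The only step that is not routine bookkeeping is this last appeal to finite generation of $\tor Ri{\complet R}M$ over $\complet R$: without knowing that \pCM\ modules have \fpt\ one cannot invoke Nakayama's lemma. The remaining ingredients -- the general-linear-combination construction of $z$ in the spirit of Lemma~\ref{L:NC}, the stability of the \pCM\ property under passing to $M/zM$, and the long exact sequence -- are straightforward, and the whole argument closely parallels the proof of Proposition~\ref{P:tor}.
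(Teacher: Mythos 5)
Up to the long exact sequence your argument is the paper's: induction on the \pdim\ of $M$, the base case via Proposition~\ref{P:tor}, a Nagata extension to produce an $M$-regular element that is generic modulo $\ann RM$ (the ``module analogue of Lemma~\ref{L:NC}'' the paper leaves to the reader, which you carry out correctly), and the long exact sequence showing multiplication by $z$ is surjective on $T:=\tor Ri{\complet R}M$. The divergence, and the genuine gap, is your final step. You invoke Proposition~\ref{P:uCMfpt} to get that $M$ has \fpt, hence that $T$ is finitely generated over $\complet R$, and then apply Nakayama. But Proposition~\ref{P:uCMfpt} is a statement about isodimensional, ultra-\CM\ \emph{ultra-Noetherian} local rings, whereas Corollary~\ref{C:tor} is asserted for an arbitrary local ring of finite embedding dimension; nothing in the paper (or in your argument) yields \fpt\ for finitely generated \pCM\ modules in that generality. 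Worse, the citation is circular: the paper's proof of Proposition~\ref{P:uCMfpt} itself appeals to Corollary~\ref{C:tor} to obtain the vanishing of $\tor {\ul R}i{\ulsep R}{\ul M}$, so it cannot be used as an ingredient here.

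The paper closes the induction without any finiteness of $T$, and only uses surjectivity (not your bijectivity) of multiplication by the chosen element $x\in\maxim$: from $T=xT$ it deduces $T=\inf RT$, and then observes that $T$ is a module over $\complet R$ (it is the homology of a complex of $\complet R$-modules), while $\inf R$ is precisely the kernel of $R\to\complet R$, so $\inf R\complet R=0$ and hence $\inf RT=0$; therefore $T=0$, with no appeal to Nakayama or to finite generation. So your proof can be repaired simply by replacing the \fpt-plus-Nakayama step with this observation that $\inf R$ annihilates every $\complet R$-module; everything before that point stands as written.
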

\begin{proof}
We induct on the \pdim\ $e$ of $M$. If $e=0$, then $M$ is a finitely generated
module over the Artinian local ring $R/\ann RM$, whence has finite length, and
the result follows from Proposition~\ref{P:tor}. So assume $e>0$. As far as
proving the vanishing  is concerned, we may always, by faithfully flat descent,
take a Nagata extension of $R$. Hence, by the module analogue of
Lemma~\ref{L:NC} (the proof of which is left to the reader), we may assume,
after possibly taking a Nagata extension, that $x$ is an $M$-regular element.
From the exact sequence
$$
\exactseq MxM{}{M/xM}
$$
we get, by tensoring with $\complet R$, part of a long exact sequence
$$
  0=\tor R{i+1}{\complet R}{M/xM}\to \tor  Ri{\complet R}M\map x\tor Ri{\complet
R}M\to \tor R{i+1}{\complet R}{M/xM}=0 
$$
where the two outer modules are zero by induction. Fix $i$ and put $T:= \tor 
Ri{\complet R}M$. Since $T=xT$, we have $T=\inf RT$. As $\complet R$ is
Noetherian, $\inf R\complet R$ vanishes, whence so does $\inf RT$, since $T$ is
the homology of a complex of modules over $\complet R$. This shows $T=0$,
completing our proof.
\end{proof}

\begin{example}\label{s:aa} 
In \cite{SchAA}, a class of local rings 
was introduced  which extends the class of \CM\
local rings. More precisely, for each $d,e\geq 0$, let $CM_{d,e}$ be 
the class of all local rings $R$ such that there
exists an $R$-regular sequence of length $d$ and such that the 
minimal length of a homomorphic image $R/\tuple xR$ is
$e$, where $\tuple x$ is an arbitrary tuple in $R$ of length $d$. 
The latter condition implies that $R$ has \pdim\
at most $d$, and   the former that its depth is at least $d$. It 
follows that $R$ is \pCM\ of \pdim\ $d$. Let $\tuple
x$ be an arbitrary tuple of length $d$. Suppose   $R/\tuple xR$ is 
Artinian of length $l$ (by assumption $l\geq e$).
Hence $R/\tuple xR\iso\complet R/\tuple x\complet R$ and $\tuple 
x$ is generic in $R$. Moreover, $\tuple x$ is
$\complet R$-regular, since $\complet R$ is \CM. It follows that the 
ideal $\tuple x\complet R$ has multiplicity $l$.
For a general choice of system of parameters $\tuple y$ in $\complet 
R$, the ideal $\tuple y\complet R$ is a reduction
of $\maxim\complet R$ (\cite[Theorem 14.14]{Mats}), so that the 
multiplicity of $\tuple y\complet R$ is equal to $\mult{\complet R}$. By
assumption, the minimal value of the 
multiplicity of an ideal generated by a $d$-tuple
from $R$ is $e$. Since these form a general subset of all $d$-tuples 
in $\complet R$, we showed that $\complet R$ has
multiplicity $e$, whence so does $R$ by Remark~\ref{R:mult}. In fact, we have
the following characterization of these classes:
\end{example}

\begin{theorem}\label{T:CMde}
A local ring $R$ is \pCM\ of \pdim\ $d$ and multiplicity 
$e$ \iff\ $R(\xi)$ belongs to the class $CM_{d,e}$
for some ($d$-)tuple of indeterminates $\xi$.
\end{theorem}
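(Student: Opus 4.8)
The plan is to pass back and forth between $R$, its Nagata extension $R(\xi)$, and the completion $\complet{R(\xi)}$, using that the associated graded ring is unchanged under both passages, and that $R\to R(\xi)$ is faithfully flat with closed fiber the field $k(\xi)$, so that base change along it preserves the length of each $R/\maxim^n$; in particular $\op{HS}_R=\op{HS}_{R(\xi)}$ and $\mult R=\mult{R(\xi)}$. (The case $d=0$ is immediate, so assume $d\geq 1$.) For the implication $R(\xi)\in CM_{d,e}\Rightarrow R$ is \pCM\ of \pdim\ $d$ and multiplicity $e$: the reasoning of Example~\ref{s:aa}, applied to the local ring $R(\xi)$ in place of $R$ (whose residue field $k(\xi)$ is infinite), shows that $R(\xi)$ is \pCM\ of \pdim\ $d$ and multiplicity $e$; by Lemma~\ref{L:dd}, $R$ and $R(\xi)$ have the same \pdim\ and the same depth, so $R$ is \pCM\ of \pdim\ $d$, and $\mult R=\mult{R(\xi)}=e$.

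For the converse, assume $R$ is \pCM\ of \pdim\ $d$ and multiplicity $e$, and fix $\xi:=\rij\xi d$. By Lemma~\ref{L:dd} and the above, $R(\xi)$ is again \pCM\ of \pdim\ $d$ and multiplicity $e$; by Theorem~\ref{T:pCMcomp} it is \cCM, that is, its completion $\complet{R(\xi)}$ is \CM, of dimension $d$ (Theorem~\ref{T:pdim}) and multiplicity $e$ (Remark~\ref{R:mult}). Since $R$ has depth $d=\pd R$, Lemma~\ref{L:NC} together with Remark~\ref{R:NC} (taken there with $e=d$) yields an $R(\xi)$-regular sequence of length $d$, which is the first requirement for $R(\xi)\in CM_{d,e}$. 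For the second requirement, note that a $d$-tuple $\tuple x$ in $R(\xi)$ gives an Artinian quotient $R(\xi)/\tuple xR(\xi)$ precisely when $\tuple x$ is a generic sequence (as $\pd{R(\xi)}=d$), and for such $\tuple x$ one has $\op{length}(R(\xi)/\tuple xR(\xi))\geq\mult{R(\xi)}=e$ by Lemma~\ref{L:pardegmult}, while other $d$-tuples give quotients of infinite length; hence the minimum over all $d$-tuples is at least $e$, and it remains to realize the value $e$.

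To realize it, consider $G:=\gr{}{R(\xi)}\iso\gr{}{\complet{R(\xi)}}$, a standard graded algebra of dimension $d$ over the infinite field $k(\xi)$. Since $k(\xi)$ is infinite, a general choice of $d$ elements of $G_1$ gives a homogeneous system of parameters $\eta_1,\dots,\eta_d$ of $G$ consisting of degree-one elements (graded Noether normalization); lift $\eta_j$ to $x_j\in\maxim R(\xi)$ with $\op{in}(x_j)=\eta_j$ and set $\tuple x:=\rij xd$. Since $(\eta_1,\dots,\eta_d)\sub\op{in}(\tuple xR(\xi))$ and $G/(\eta_1,\dots,\eta_d)G$ is Artinian, so is $\gr{}{R(\xi)/\tuple xR(\xi)}\iso G/\op{in}(\tuple xR(\xi))$ (by \cite[Exercise 5.3]{Eis}, as in the proof of Theorem~\ref{T:pdim}), and hence $R(\xi)/\tuple xR(\xi)$ has nilpotent maximal ideal, so it is Artinian. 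Moreover $\tuple x\complet{R(\xi)}$ is a reduction of $\maxim\complet{R(\xi)}$: since the degree-one forms $\op{in}(x_j)$ form a homogeneous system of parameters of $G$, the graded ring $G$ is module-finite over the subring $k(\xi)[\op{in}(x_1),\dots,\op{in}(x_d)]$, whence $\maxim^{n+1}\complet{R(\xi)}=\tuple x\maxim^n\complet{R(\xi)}$ for all sufficiently large $n$ (standard multiplicity theory, \cite[\S14]{Mats}). As $\complet{R(\xi)}$ is \CM\ of dimension $d$, the parameter ideal $\tuple x\complet{R(\xi)}$ is generated by a regular sequence, so its colength equals its Hilbert-Samuel multiplicity, which by the reduction property equals $\mult{\complet{R(\xi)}}$; together with Lemma~\ref{L:quot} (giving $R(\xi)/\tuple xR(\xi)\iso\complet{R(\xi)}/\tuple x\complet{R(\xi)}$) and Remark~\ref{R:mult} this gives
\begin{equation*}
\op{length}\bigl(R(\xi)/\tuple xR(\xi)\bigr)=\op{length}\bigl(\complet{R(\xi)}/\tuple x\complet{R(\xi)}\bigr)=\mult{\complet{R(\xi)}}=e,
\end{equation*}
so $R(\xi)\in CM_{d,e}$, completing the converse.

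The main obstacle is this last step: one cannot directly choose a general system of parameters of $\complet{R(\xi)}$ realizing its multiplicity, because $R(\xi)\to\complet{R(\xi)}$ need not be surjective and such a sequence need not descend to a $d$-tuple of $R(\xi)$. The resolution is to choose the parameters at the level of the common associated graded ring $G=\gr{}{R(\xi)}=\gr{}{\complet{R(\xi)}}$, taking them of degree one (possible precisely because the residue field $k(\xi)$ has been made infinite by the Nagata extension) and lifting to $R(\xi)$; the remaining verifications are routine uses of \S\ref{s:pdim}, \S\ref{s:sCM}, and standard multiplicity theory.
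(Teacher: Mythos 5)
Your proof is correct, and its overall decomposition coincides with the paper's: the implication from $R(\xi)\in CM_{d,e}$ is obtained, exactly as in the paper, by running the discussion of Example~\ref{s:aa} for $R(\xi)$ and then descending via Lemma~\ref{L:dd} and $\mult{R}=\mult{R(\xi)}$; and in the converse both arguments get the length-$d$ regular sequence from Lemma~\ref{L:NC} and identify the minimal colength of $d$-tuples with the multiplicity (via Lemma~\ref{L:pardegmult}, Theorem~\ref{T:pCMcomp}, Remark~\ref{R:mult}) by exhibiting a $d$-generated reduction of the maximal ideal. Where you genuinely diverge is in how that reduction is produced. The paper chooses a generic sequence in $R$ itself and appeals to the ``general'' reduction statement \cite[Theorem 14.14]{Mats} together with the density of $R$-tuples among tuples of $\complet R$ (the ``same argument as above'' of Example~\ref{s:aa}); you instead work inside $R(\xi)$, where the residue field $k(\xi)$ has been made infinite, lift a degree-one homogeneous system of parameters of the common associated graded ring $\gr{}{R(\xi)}\iso\gr{}{\complet{R(\xi)}}$, and check the reduction property and the resulting colength computation directly. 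Your variant buys two things: it is insensitive to whether $R$ has finite residue field (the infinite-residue-field hypothesis behind \cite[Theorem 14.14]{Mats} is supplied automatically, since the tuple is chosen only after the Nagata extension, which is all that membership in $CM_{d,e}$ requires), and it replaces the informal ``general subset of all $d$-tuples in $\complet R$'' density step by an explicit lift through the graded ring; the paper's route is shorter because it reuses the Example~\ref{s:aa} computation verbatim and, as you note in your final remark, because it does not spell out how to descend a general system of parameters from the completion. Your auxiliary verifications (Artinianness of $R(\xi)/\tuple xR(\xi)$ via $\gr{}{R(\xi)/\tuple xR(\xi)}$, reduction implies equal multiplicity, colength equals multiplicity for parameter ideals in the \CM\ completion, and the identification $R(\xi)/\tuple xR(\xi)\iso\complet{R(\xi)}/\tuple x\complet{R(\xi)}$ from Lemma~\ref{L:quot}) are all standard and correctly cited.
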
 
\begin{proof}
Since $R$ and $R(\xi)$ 
are easily seen to have the same multiplicity (by
comparing their completions), one direction follows from the previous 
discussion. On the other hand, suppose $R$ is
\pCM\ of \pdim\ $d$ and multiplicity $e$. By the same argument as 
above, we may choose a generic sequence $\tuple x$ in
$R$ such that $\tuple x\complet R$ is a reduction of $\maxim\complet 
R$, whence has multiplicity $e$. It follows from
Lemma~\ref{L:Art} that $R/\tuple xR$ has length $e$ and by a similar 
argument that this is the least possible length.
In order to construct an $R$-regular sequence, we have to go to an 
extension $R(\xi)$ by Lemma~\ref{L:NC} and this
extension is then in the class $CM_{d,e}$. 
\end{proof}

In particular, by Corollary~\ref{C:pregmult}, a local ring $R$ is \preg\ \iff\
$R(\xi)$ belongs to $CM_{d,1}$ for some $d$ and some $d$-tuple of
indeterminates $\xi$. Moreover, by
Proposition~\ref{P:uldep}, an ultra-Noetherian local ring belongs to
$CM_{d,e}$ \iff\ it is \pCM\ of \pdim\ $d$ and multiplicity $e$.

Let $R$ be a local ring of finite embedding dimension. We  say that $R$  is 
\emph\pgor, if it is   \pCM\ and  there exists a generic sequence $\tuple x$
such that $R/\tuple xR$ is an Artinian Gorenstein ring.

\begin{proposition}
A \pCM\ local ring is \pgor\ \iff\ it is \cgor.

In fact, let  $(R,\maxim)$ be a \pCM\ local ring  of \pdim\ $d$ and let $k$
be its residue field.  If $R$ is \pgor, then $\ext RikR=0$, for all
$i\neq d$ and $\ext RdkR\iso k$. Conversely, if $\ext RikR$ vanishes for some
$i>d$ or if $\ext RdkR\iso k$, then $R$ is
\pgor.
\end{proposition}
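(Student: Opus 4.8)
The plan is to push the whole question down to the Artinian local ring $S:=R/\tuple xR$ cut out by a generic sequence, using that over a \pCM\ ring the Koszul complex of such a sequence is a genuine finite free resolution.

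First I would dispose of the equivalence \pgor\ $\iff$ \cgor\ (for a \pCM\ ring $R$). If $R$ is \pgor\ it is \cCM\ (Theorem~\ref{T:pCMcomp}), so by Theorem~\ref{T:pCM} every generic sequence is quasi-regular; applying this to the generic sequence $\tuple x$ with $R/\tuple xR$ Artinian Gorenstein supplied by the definition of \pgor, Theorem~\ref{T:pgor} gives that $R$ is \cgor. Conversely, if $R$ is \cgor, Theorem~\ref{T:pgor} produces a quasi-regular generic sequence $\tuple x$ with $R/\tuple xR$ Gorenstein; as $\tuple x$ is generic, $R/\tuple xR$ is Artinian, so the standing \pCM\ hypothesis makes $R$ \pgor.

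Next, fix any generic sequence $\tuple x=(x_1,\dots,x_d)$ (one exists by Corollary~\ref{C:genprim}) and set $S:=R/\tuple xR$, an Artinian local ring with residue field $k$ which by Lemma~\ref{L:quot} is isomorphic to $\complet R/\tuple x\complet R$. Since $R$ is \pCM\ of \pdim\ $d$, its depth equals $d$, hence by \cite[Proposition 9.1.3]{BH} the $\maxim$-primary ideal $\tuple xR$ has grade $d=\norm{\tuple x}$, so $H_i(\tuple x;R)=0$ for all $i>0$ and $K_\bullet:=K_\bullet(\tuple x;R)$ is a finite free resolution of $S$ of length $d$. Koszul self-duality then yields $\ext Ri{S}{R}=H^i(\operatorname{Hom}_R(K_\bullet,R))=H_{d-i}(\tuple x;R)$, which is $0$ for $i\neq d$ and $\cong S$ for $i=d$. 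Feeding this into the change-of-rings Grothendieck spectral sequence $E_2^{p,q}=\ext Sp{k}{\ext Rq{S}{R}}\Rightarrow\ext R{p+q}{k}{R}$ — available because $\operatorname{Hom}_R(S,-)$ sends injective $R$-modules to injective $S$-modules, with no Noetherian hypothesis on $R$ — the only non-vanishing row is $q=d$, so the sequence collapses and $\ext RikR=0$ for $i<d$, $\ext RikR\cong\ext S{i-d}{k}{S}$ for $i\geq d$. Finally, $\tuple x$ is a $\complet R$-regular system of parameters ($\complet R$ is \CM\ by Theorem~\ref{T:pCMcomp}), so by \cite[Proposition 3.1.19]{BH} $S$ is Gorenstein iff $\complet R$ is, i.e.\ iff $R$ is \cgor, i.e.\ (by the first step) iff $R$ is \pgor.

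It then remains only to read off the Ext conditions from Gorenstein-ness of the Artinian local ring $S$: $S$ is Gorenstein iff its socle $\operatorname{Hom}_S(k,S)$ is one-dimensional, and in that case $S$ is self-injective, so $\ext SjkS=0$ for all $j>0$. Hence if $R$ is \pgor\ then $\ext RikR=0$ for $i\neq d$ and $\ext RdkR\cong\operatorname{Hom}_S(k,S)\cong k$; conversely $\ext RdkR\cong k$ forces $\operatorname{Hom}_S(k,S)\cong k$, while $\ext RikR=0$ for some $i>d$ forces $\ext S{i-d}{k}{S}=0$ with $i-d\geq 1$, and inspecting the minimal injective resolution of $S$ over itself (whose $j$-th term is $E_S(k)^{\dim_k\ext SjkS}$) this forces $\operatorname{injdim}_S S<\infty$; in either case $S$ is Gorenstein and hence $R$ is \pgor. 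I expect the one delicate point to be the reduction in the third paragraph — checking that $K_\bullet(\tuple x;R)$ is an honest finite free resolution over the possibly non-Noetherian ring $R$, which is exactly what the grade computation (as in the proof of Theorem~\ref{T:pCMcomp}) secures; having this lets us compute $\ext RikR$ directly over $R$ and avoid completing or passing to a Nagata extension, either of which would reintroduce finite-presentation and flat-base-change complications. Everything else is formal homological algebra valid over arbitrary rings.
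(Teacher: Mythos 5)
Your proposal is correct, but it takes a genuinely different route from the paper's at the key reduction step. The paper first passes to a Nagata extension $R(\xi)$ via Lemma~\ref{L:NC} so that the generic sequence $\tuple x$ becomes an honest regular sequence, invokes commutation of Ext with the faithfully flat base change $R\to R(\xi)$, and then quotes the classical Rees-type isomorphism $\ext RikR\iso\ext {R/\tuple xR}{i-d}k{R/\tuple xR}$ together with \cite[Theorem 18.1]{Mats} for the Artinian quotient. You instead stay inside $R$: the \pCM\ hypothesis gives $\op{grade}(\tuple xR)=d$, so the Koszul complex is already a finite free resolution of $S:=R/\tuple xR$ over the possibly non-Noetherian $R$; self-duality computes $\ext RiSR$ (concentrated in degree $d$, where it is $S$), and the change-of-rings spectral sequence --- which, as you note, exists for arbitrary ring maps because $\hom RS{-}$ preserves injectives --- collapses to the same isomorphism $\ext RikR\iso\ext S{i-d}kS$. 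The endgame coincides in both proofs: everything is decided by the Artinian local ring $S$, where you re-derive the ingredients of Matsumura's Theorem 18.1 (socle criterion, vanishing of a Bass number forcing finite injective dimension); since $S$ is Noetherian of finite length this is unobjectionable, and you could equally cite that theorem wholesale, as the paper does. What your route buys is that it needs neither the Nagata extension nor flat base change of Ext applied to the residue field (a point requiring some care over a non-coherent ring, where higher syzygies of $k$ need not be finitely generated), at the modest cost of Koszul self-duality and a Grothendieck spectral sequence; your deduction of the first assertion directly from Theorems~\ref{T:pCM}, \ref{T:pCMcomp} and \ref{T:pgor} is likewise sound, and your use of $S\iso\complet R/\tuple x\complet R$ from Lemma~\ref{L:quot} correctly bridges the chosen generic sequence with the existential one in the definition of \pgor.
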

\begin{proof}
Let $\tuple x$ be a generic sequence in $R$. By Lemma~\ref{L:dd}, the extension
$R(\xi)$ is again \pCM\ and $\tuple x$ is generic in $R(\xi)$. Since 
 $$
 R/\tuple xR\to (R/\tuple xR)(\xi)\iso R(\xi)/\tuple xR(\xi)
 $$
  is   faithfully flat and unramified, the former is  Gorenstein \iff\ the latter
is.
 Since the Ext-functors commute with faithfully flat base change, 
 we may replace $R$ by $R(\xi)$ everywhere and assume by Lemma~\ref{L:NC} that  
$\tuple x$ is a regular sequence.

In particular, $R$ is \pgor\ \iff\ $R/\tuple xR\iso \complet R/\tuple
x\complet R$ is Gorenstein \iff\ $\complet R$ is Gorenstein, since   $\tuple x$ is  $\complet
R$-regular. This already proves the first assertion. Since $\tuple x$ is $R$-regular, we have 
\begin{equation}\label{eq:Rees}
\ext RikR\iso \ext {R/\tuple xR}{i-d}k{R/\tuple xR}
\end{equation}
where we let $\ext
Rj\cdot\cdot$   be the zero functor for negative $j$ (see for instance \cite[Lemma
3.1.16]{BH} and the proof of (3)~$\Leftrightarrow$~(1) of \cite[Theorem
16.6]{Mats}). The final assertion now follows from \cite[Theorem
18.1]{Mats} applied to the Artinian local ring $R/\tuple xR$.
\end{proof}

 It follows that if $R$ is \pgor, then $R/\tuple xR$ is Gorenstein for every
generic sequence $\tuple x$.

\section{Ultra-singularities}\label{s:ulsing}

We now compare the `cata' and `pseudo' versions from the previous two sections
with their `ultra' counterparts. Throughout this section, unless mentioned
explicitly, $\ul R$ is an ultra-Noetherian local ring with maximal ideal
$\ul\maxim$ and residue field $\ul k$,
realized
as the ultraproduct of Noetherian local rings $(\seq Rw,\seq\maxim w)$ of bounded
embedding
dimension and  residue field $\seq kw$. Recall (Lemma~\ref{L:ulcomp})  
that the cataproduct $\ulsep R$ of the $\seq Rw$ is the separated quotient as
well as the completion of $\ul R$, and it is in particular Noetherian.

\begin{theorem}\label{T:psing} 
For an ultra-Noetherian local ring $\ul R$, the following are equivalent
\begin{enumerate}
\item\label{c:sreg} $\ul R$ is  \preg;
\item\label{c:areg} $\ul R$ is ultra-regular;
\item\label{c:preg} $\ul R$ is  \creg\ and isodimensional.
\end{enumerate} 
\end{theorem}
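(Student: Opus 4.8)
The plan is to run the cycle of implications \implication{c:sreg}{c:preg}, \implication{c:preg}{c:areg} and \implication{c:areg}{c:sreg}, with the chain of (in)equalities \eqref{eq:uldimineq} of Theorem~\ref{T:uldim} and the equivalence of cata-regularity with the identity $\pd{\ul R}=\ed{\ul R}$ from Theorem~\ref{T:preg} doing essentially all the work. For the first implication: if $\ul R$ is \preg, then $\op{depth}(\ul R)=\ed{\ul R}$ by definition, so every inequality in \eqref{eq:uldimineq} collapses to an equality; in particular $\ud{\ul R}=\cad{\ul R}$, so $\ul R$ is isodimensional (by the criterion recorded in \S\ref{s:uldim}), and $\pd{\ul R}=\ed{\ul R}$, so $\ul R$ is \creg\ by Theorem~\ref{T:preg}. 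This step is pure bookkeeping.

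For \implication{c:preg}{c:areg}: set $d:=\ed{\ul R}$. Cata-regularity gives $\pd{\ul R}=d$ by Theorem~\ref{T:preg}, while isodimensionality together with Theorem~\ref{T:uldim} gives $\ud{\ul R}=\cad{\ul R}=\pd{\ul R}=d$. By the very definition of ultra-dimension almost all $\seq Rw$ then have Krull dimension $d$; and since ``$\maxim$ is generated by $d$, but not by $d-1$, elements'' is a first-order property holding in $\ul R$, \los\ forces almost all $\seq Rw$ to have embedding dimension $d$ as well. A Noetherian local ring whose Krull dimension equals its embedding dimension is regular, so $\ul R$ is ultra-regular. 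The one point worth flagging is that one should \emph{not} route this through the multiplicity-one characterization of \creg\ (Corollary~\ref{C:cregmult}) together with the coincidence of multiplicity and parameter degree (Lemma~\ref{L:pardegmult}): the latter needs an infinite residue field, which an ultraproduct of finite fields of bounded cardinality need not have, whereas comparing embedding dimension with Krull dimension directly sidesteps this.

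For \implication{c:areg}{c:sreg}: realize $\ul R$ as the ultraproduct of regular local rings $\seq Rw$ and put $d:=\ud{\ul R}$, so almost all $\seq Rw$ are regular of dimension $d$ (the common value being finite because the embedding dimensions are bounded), hence have embedding dimension $d$ and depth $d$. By \los, $\ed{\ul R}=d$, and since depth is first-order (as recalled in the proof of Theorem~\ref{T:uldim}; see also Proposition~\ref{P:uldep}), $\op{depth}(\ul R)=d$ too, so $\op{depth}(\ul R)=\ed{\ul R}$, which is exactly \preg{ity}. I do not expect a genuine obstacle: the only delicate point is the first-order transfer of embedding dimension and of depth through the ultraproduct, which is routine in view of \S\ref{s:fop} and Lemma~\ref{L:fodim}. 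As a byproduct the argument records that, for ultra-Noetherian local rings, \preg{ity} and ultra-regularity coincide, as announced in the introduction.
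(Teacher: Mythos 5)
Your proof is correct and follows essentially the same route as the paper's: the chain of (in)equalities from Theorem~\ref{T:uldim}, the characterization $\pd{\ul R}=\ed{\ul R}$ of \creg{ity} from Theorem~\ref{T:preg}, and \los\ applied to the first-order notions of embedding dimension and depth. The only difference is organizational: you run the cycle \eqref{c:sreg}$\Rightarrow$\eqref{c:preg}$\Rightarrow$\eqref{c:areg}$\Rightarrow$\eqref{c:sreg}, which in passing spells out the \los\ step (dimension equals embedding dimension for almost all $\seq Rw$) that the paper leaves implicit in its terse treatment of \eqref{c:preg}$\Rightarrow$\eqref{c:sreg}.
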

\begin{proof} 
Let $\ul R$ be the ultraproduct of Noetherian local rings $\seq Rw$ of bounded
embedding dimension. If $\ul R$ is
\preg, then it is isodimensional by Theorem~\ref{T:dimineq} and therefore
\creg\ by Theorem~\ref{T:preg}. Moreover, by \los, almost
all $\seq Rw$ are regular since embedding dimension and depth are first-order
definable. This shows that $\ul R$ is ultra-regular, and  the
converse follows along the same lines. Finally, if $\ul R$ is \creg\
and isodimensional, then it is \preg, again by Theorem~\ref{T:dimineq}.
\end{proof}

The same proof also shows that $\ul R$ is ultra-regular \iff\ it is not
ultra-singular. In view of Lemma~\ref{L:ulcomp}, we may rephrase the theorem
as follows:

\begin{corollary}
Let $\seq Rw$ be Noetherian local rings of the same  dimension and
parameter degree and let $\ulsep R$ be their cataproduct. Then almost
all $\seq Rw$ are regular \iff\ $\ulsep R$ is. \qed
\end{corollary}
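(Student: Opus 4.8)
The plan is to deduce this directly from Theorem~\ref{T:psing} and Lemma~\ref{L:ulcomp}, the only real content being to check that the hypothesis on dimensions and parameter degrees is exactly what makes the ultraproduct isodimensional. Write $\ul R$ for the ultraproduct of the $\seq Rw$. By the definition of \creg\ in \S\ref{s:ul}, together with Lemma~\ref{L:ulcomp}, which identifies $\ulsep R$ with the completion $\complet{\ul R}$, the assertion ``$\ulsep R$ is regular'' is literally the assertion ``$\ul R$ is \creg''.

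First I would record that we are genuinely in the ultra-Noetherian setting: if almost all $\seq Rw$ have common dimension $d$ and common parameter degree $e$, then by Remark~\ref{R:isodim} almost all have embedding dimension at most $d+e-1$, so the $\seq Rw$ have bounded embedding dimension and $\ul R$ is an ultra-Noetherian local ring of finite embedding dimension with $\complet{\ul R}=\ulsep R$. Next, the common value of the parameter degrees is precisely the criterion of Theorem~\ref{T:isodim}, so $\ul R$ is isodimensional. Now Theorem~\ref{T:psing} applies: for an ultra-Noetherian local ring, being \creg\ and isodimensional is equivalent to being ultra-regular, that is, to almost all $\seq Rw$ being regular. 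Composing the equivalences --- $\ulsep R$ regular \iff\ $\ul R$ is \creg\ \iff\ ($\ul R$ is \creg\ and isodimensional, the latter automatic under our hypotheses by Theorem~\ref{T:isodim}) \iff\ $\ul R$ is ultra-regular \iff\ almost all $\seq Rw$ are regular --- yields the corollary.

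The only place where the strength of the hypothesis is actually used is the isodimensionality step, and it cannot be dropped: if the parameter degrees of the $\seq Rw$ are unbounded, the cataproduct may jump up in dimension, and one obtains an ultraproduct that is \creg\ (indeed \cCM) while almost none of the $\seq Rw$ are regular, in the spirit of Example~\ref{e:isoCM}. So I do not anticipate any genuine obstacle beyond verifying this hypothesis; everything else is a direct chaining of Lemma~\ref{L:ulcomp}, Theorem~\ref{T:isodim} and Theorem~\ref{T:psing}.
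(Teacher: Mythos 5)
Your argument is correct and is exactly the route the paper intends: the corollary is stated there as an immediate rephrasing of Theorem~\ref{T:psing} via Lemma~\ref{L:ulcomp}, with isodimensionality coming from the common parameter degree by Theorem~\ref{T:isodim} and bounded embedding dimension from Remark~\ref{R:isodim}, just as you chain them. Nothing further is needed.
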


\begin{corollary}\label{C:locpreg}
Any localization of an ultra-regular local ring at a \fr\ prime ideal is ultra-regular. 
\end{corollary}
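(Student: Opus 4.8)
The plan is to reduce the statement to Proposition~\ref{P:ulscheme}. Fix a representation of the given ultra-regular local ring as an ultraproduct $\ul R$ of Noetherian local rings $(\seq Rw,\seq\maxim w)$ of embedding dimension at most some fixed $m$, almost all of which are regular, and let $\pr\in\conspec{\ul R}$ be a \fr\ prime ideal. We want to exhibit $(\ul R)_\pr$ as an ultraproduct of regular local rings of bounded embedding dimension.

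First I would apply Proposition~\ref{P:ulscheme} to produce prime ideals $\seq\pr w\sub\seq Rw$ whose ultraproduct is $\pr$. Next, exactly as in the proof of Corollary~\ref{C:loculpow}, the localization $(\ul R)_\pr$ is the ultraproduct of the localizations $(\seq Rw)_{\seq\pr w}$: writing $\pr=(I:\ul a)$ with $I=(\ul{x_1},\dots,\ul{x_n})\ul R$ finitely generated, choose representatives $\seq{x_i}w$ and $\seq aw$, so that $\seq\pr w=(\seq Iw:\seq aw)$ with $\seq Iw=(\seq{x_1}w,\dots,\seq{x_n}w)\seq Rw$, and then a denominator $\ul s\notin\pr$ corresponds under \los\ to denominators $\seq sw\notin\seq\pr w$ for almost all $w$; this makes the natural map from the ultraproduct of the $(\seq Rw)_{\seq\pr w}$ to $(\ul R)_\pr$ a well-defined bijection. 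The key observation is then elementary: a localization of a regular local ring is again regular, and its embedding dimension equals its Krull dimension, which is at most $\kd{\seq Rw}\leq\ed{\seq Rw}\leq m$. Hence almost all $(\seq Rw)_{\seq\pr w}$ are regular local rings of embedding dimension at most $m$, and therefore their ultraproduct $(\ul R)_\pr$ is, by definition, ultra-regular.

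The only point requiring a little care is the interchange of localization and ultraproduct at a \fr\ prime ideal; but since this was already carried out in the proof of Corollary~\ref{C:loculpow}—and nothing there used excellence beyond Proposition~\ref{P:ulscheme} and the explicit colon-ideal description of \fr\ primes—I would simply cite it. So there is in fact no serious obstacle; the content of the corollary is that ultra-regularity, unlike being \preg\ or \cCM, survives passage to stalks at \fr\ points because it is controlled purely by the first-order behavior (regularity and a uniform embedding-dimension bound) of the component rings $\seq Rw$.
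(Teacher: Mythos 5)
Your proof is correct and is essentially the paper's own argument: both use Proposition~\ref{P:ulscheme} to realize $\pr$ as an ultraproduct of primes $\seq\pr w$, identify $(\ul R)_\pr$ with the ultraproduct of the localizations $(\seq Rw)_{\seq\pr w}$, and observe that these are regular of dimension (hence embedding dimension) bounded by that of the $\seq Rw$. The only cosmetic difference is that you route the localization-versus-ultraproduct identification through the proof of Corollary~\ref{C:loculpow}, whereas the paper uses it directly; the underlying \los\ argument is the same.
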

\begin{proof}
Let $\ul R$ be an ultra-regular local ring, obtained as the ultraproduct of $d$-dimensional regular local rings $\seq Rw$, and let $\pr\in\conspec{\ul R}$. By Proposition~\ref{P:ulscheme}, there exist prime  ideals $\seq\pr w\sub\seq Rw$ whose ultraproduct is equal to $\pr$. Since almost each $(\seq Rw)_{\seq\pr w}$ is regular of dimension at most $d$,   their ultraproduct $(\ul R)_\pr$ is ultra-regular (of \pdim\ at most $d$).
\end{proof}

We conclude our discussion of ultra-regular rings with an ultraproduct version of Corollary~\ref{C:ulsepreg}. 

\begin{corollary}
The canonical embedding    $R\to \ul R$ of an excellent local ring in its ultrapower has ultra-regular fibers at \fr\ prime ideals: for every $\pr\in\conspec {\ul R}$, the fiber ring 
$(\ul R/\primary\ul R)_\pr$ is ultra-regular, where $\primary=\pr\cap R$.
\end{corollary}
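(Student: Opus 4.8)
The plan is to pull the problem back along the residue map $R\onto R/\primary$ and reduce to the case where the prime being localized contracts to the \emph{generic} point of an excellent local domain; there the openness of the regular locus will finish the argument.

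First I would apply Proposition~\ref{P:ulscheme} to the ultrapower $\ul R$ of the Noetherian local ring $R$ to realize $\pr$ as the ultraproduct of prime ideals $\seq\pr w$ of $R$. The ideal $\primary:=\pr\cap R$ is prime and, $R$ being Noetherian, finitely generated; since $\primary\sub\pr$ it lies in $\seq\pr w$ for almost all $w$. As $\primary$ is finitely generated, $\ul R/\primary\ul R$ is canonically the ultrapower $\ul S$ of the excellent local domain $S:=R/\primary$, and under this identification $\pr$ corresponds to the ultraproduct $\bar\pr$ of the primes $\seq\pr w/\primary$ of $S$. Arguing exactly as in the proof of Corollary~\ref{C:loculpow}, the fiber ring $(\ul R/\primary\ul R)_\pr=(\ul S)_{\bar\pr}$ is the ultraproduct of the local rings $S_{\seq\pr w/\primary}$, which have bounded embedding dimension (by Corollary~\ref{C:loculpow} applied to $S$, or by Proposition~\ref{P:unifembdim}). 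So it suffices to show that $S_{\seq\pr w/\primary}$ is regular for almost all $w$.

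The crucial observation is that $\bar\pr$ contracts to the zero ideal of $S$: an element of $S$ lifted to $a\in R$ lies in $\bar\pr$ precisely when $a\in\pr$, i.e.\ when $a\in\pr\cap R=\primary$, i.e.\ when its image in $S$ vanishes. Thus $\bar\pr$ sits over the generic point of the domain $S$. Since $R$ is excellent, so is its quotient $S$, so the regular locus $\op{Reg}(S)\sub\spec S$ is open; being nonempty (it contains the generic point, as $S$ is a domain) it contains a basic open $D(f)$ with $f\neq 0$, so that $S_f$ is regular. Because $f\neq 0$ and $\bar\pr$ lies over $(0)$, we have $f\notin\bar\pr$, so by \los\ (applied to the realization of $\bar\pr$ as the ultraproduct of the $\seq\pr w/\primary$) we get $f\notin\seq\pr w/\primary$ for almost all $w$; for such $w$ the prime $\seq\pr w/\primary$ lies in $D(f)\sub\op{Reg}(S)$, whence $S_{\seq\pr w/\primary}$ is regular. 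Combined with the reduction above, $(\ul R/\primary\ul R)_\pr$ is an ultraproduct of regular local rings of bounded embedding dimension, hence ultra-regular.

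I expect the only point needing care to be the identification in the second step of $(\ul R/\primary\ul R)_\pr$ with the ultraproduct of the $S_{\seq\pr w/\primary}$, which requires knowing that localizing an ultrapower at a \fr\ prime commutes with forming the ultraproduct, exactly as in the proof of Corollary~\ref{C:loculpow}; no genuinely new difficulty appears, since the substance of the argument is simply the recognition that $\primary=\pr\cap R$ forces $\pr$ to sit over the generic point of $R/\primary$, after which excellence produces $f$ and \los\ concludes.
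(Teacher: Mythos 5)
Your proof is correct and follows essentially the same route as the paper's: both reduce to the domain $S=R/\primary$ (whose ultrapower is $\ul R/\primary\ul R$), use that $\pr$ contracts to $(0)$ in $S$ together with excellence (openness of the regular locus, equivalently the singular locus being a proper closed set) and \los\ to place almost all the component primes in the regular locus, and then conclude via the localization argument of Corollary~\ref{C:locpreg}/\ref{C:loculpow}. The only cosmetic difference is that you phrase the avoidance via a basic open $D(f)$ in the regular locus, whereas the paper phrases it via the defining ideal of the singular locus.
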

\begin{proof}
To show that $(\ul R/\primary\ul R)_\pr$ is ultra-regular,  we may replace  $R$ by $R/\primary$, since $\ul R/\primary \ul R$ is the ultrapower of $R/\primary$, and assume without loss of generality that $R$ is a domain  and $\pr\cap R=(0)$. By Corollary~\ref{C:loculpow}, the localization $(\ul R)_\pr$ has finite embedding dimension, and $\pr$ is the ultraproduct of prime ideals $\seq\pr w\in\spec R$. Since $R$ is an excellent domain, its singular locus is a proper, closed subset, say, defined by a non-zero ideal $I\sub R$. If almost all $\seq\pr w$ would belong to this singular locus, then they would almost all  contain $I$, whence so would $\pr$, contradicting that $\pr\cap R=(0)$. Hence almost all $\seq\pr w$ are in the regular locus, and the result now follows from the proof of Corollary~\ref{C:locpreg}.\end{proof}


%

\subsection*{Ultra-\CM\ local rings}
Recall that $\ul R$ is called \emph{ultra-\CM} if almost all $\seq Rw$ are
\CM. We can characterize this property in terms of the fundamental inequalities \eqref{eq:uldimineqintro}.

\begin{theorem}\label{T:ulCM}
For an ultra-Noetherian local ring $\ul R$, the following are equivalent
\begin{enumerate}
\item  $\ul R$ is ultra-\CM;
\item the depth of $\ul R$ equals its ultra-dimension.
\end{enumerate}
In particular, $\ul R$ is \pCM\ \iff\ it is ultra-\CM\ and isodimensional.
\end{theorem}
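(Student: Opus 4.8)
The plan is to reduce both parts of the statement to \los, using the first-order definability of depth together with the dimension inequalities of Theorem~\ref{T:uldim}. Write $\ul R$ as the ultraproduct of Noetherian local rings $(\seq Rw,\seq\maxim w)$ of embedding dimension at most some fixed $m$.

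For the equivalence of the two listed conditions I would argue as follows. As recorded in the proof of Theorem~\ref{T:uldim}, depth is a first-order property here: it is detected by the vanishing of the top Koszul homology of a generating tuple of $\maxim$, and $m$ bounds the number of generators needed, so a single sentence expresses ``$\op{depth}(\cdot)\le e$'' for all $\seq Rw$ at once. Hence by \los\ one has $\op{depth}(\ul R)=e$ exactly when almost all $\seq Rw$ have depth $e$; and by definition $\ud{\ul R}$ is the common value of $\kd{\seq Rw}$ for almost all $w$. Therefore, if $\ul R$ is ultra-\CM, then almost all $\seq Rw$ satisfy $\op{depth}(\seq Rw)=\kd{\seq Rw}$, and transferring this common value through the ultraproduct gives $\op{depth}(\ul R)=\ud{\ul R}$. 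Conversely, if $\op{depth}(\ul R)=\ud{\ul R}$, then almost all $\seq Rw$ simultaneously have depth equal to $\op{depth}(\ul R)$ and dimension equal to $\ud{\ul R}$, hence are \CM, so $\ul R$ is ultra-\CM.

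For the final clause, recall from Theorem~\ref{T:uldim} the chain $\op{depth}(\ul R)\le\ud{\ul R}\le\pd{\ul R}$ together with $\pd{\ul R}=\cad{\ul R}$. Saying that $\ul R$ is \pCM\ means $\op{depth}(\ul R)=\pd{\ul R}$, which collapses this chain to a string of equalities: equality in the first inequality, $\op{depth}(\ul R)=\ud{\ul R}$, is exactly the second condition above, hence (by the part just proved) says $\ul R$ is ultra-\CM; equality in the second, $\ud{\ul R}=\pd{\ul R}=\cad{\ul R}$, says by definition that $\ul R$ is isodimensional. The converse is the same computation read backwards: ultra-\CM\ gives $\op{depth}(\ul R)=\ud{\ul R}$ and isodimensional gives $\ud{\ul R}=\pd{\ul R}$, so $\op{depth}(\ul R)=\pd{\ul R}$.

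I do not expect any substantive obstacle; the only point that needs a little care is the uniformity in $w$ of the first-order sentence encoding depth, which is available precisely because the $\seq Rw$ have bounded embedding dimension — exactly the observation already invoked in Theorem~\ref{T:uldim} and Proposition~\ref{P:uldep}.
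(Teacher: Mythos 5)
Your proposal is correct and follows essentially the same route as the paper: the equivalence of the two listed conditions is exactly the paper's appeal to the first-order definability of depth (via Koszul homology, uniform in $w$ because of the bounded embedding dimension), and the final clause is the same collapse of the chain $\op{depth}(\ul R)\leq\ud{\ul R}\leq\pd{\ul R}$ from Theorem~\ref{T:uldim}. Your write-up merely spells out details the paper leaves implicit.
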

\begin{proof}
The first assertion follows immediately from the fact that depth is 
first-order. The second assertion is now also clear,
since a \pCM\ must be  isodimensional by Theorem~\ref{T:uldim}. 
\end{proof}

\begin{remark}
Note that unlike in the regular case, isodimensionality
together with being \cCM\ is not sufficient for being \pCM, as
example~\ref{e:isoCM} shows.

Also note that ultra-\CM\ does not imply \pCM\ nor even \cCM. Namely, let
$(R,\maxim)$ be a non-\CM\ local ring and let $\ul R$ and $\ulsep R$ be the
respective ultraproduct and cataproduct 
of the $R/\maxim^n$.   Corollaries~\ref{C:filt} and \ref{C:ulsepreg} together imply
that  $\ulsep R$ is  not \CM. Hence $\ul R$ is not
\cCM, although
it is clearly
ultra-\CM\ (there is no contradiction with the above theorem, since $\ul R$ is not 
isodimensional). 
\end{remark}

\begin{corollary}\label{C:CMulsep}
The cataproduct of \CM\ local rings having the same dimension
and the same multiplicity, is again \CM.
\end{corollary}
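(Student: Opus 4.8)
The plan is to realize the given family of \CM\ rings as an ultra-Noetherian local ring and feed it through the pseudo-Cohen--Macaulay machinery of \S\ref{s:sCM}. So let $\seq Rw$ be \CM\ Noetherian local rings of common dimension $d$ and common multiplicity $e$, with ultraproduct $\ul R$ and cataproduct $\ulsep R$. First I would note that by Abhyankar's inequality each $\seq Rw$ has embedding dimension at most $d+e-1$ (the bound recorded in Remark~\ref{R:isodim}), so the family has bounded embedding dimension, $\ul R$ is genuinely ultra-Noetherian, and Lemma~\ref{L:ulcomp} tells us that $\ulsep R$ is Noetherian and equals the completion of $\ul R$. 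Since almost all $\seq Rw$ are \CM, $\ul R$ is by definition ultra-\CM. The heart of the matter is then to check that $\ul R$ is \emph{isodimensional}; granting that, Theorem~\ref{T:ulCM} gives that $\ul R$ is \pCM, hence \cCM\ by Theorem~\ref{T:pCMcomp}, and ``\cCM'' says precisely that $\complet{\ul R}=\ulsep R$ is \CM, which is the assertion.

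To prove isodimensionality I would invoke the criterion of Theorem~\ref{T:isodim}: $\ul R$ is isodimensional exactly when almost all $\seq Rw$ have the same parameter degree. For \CM\ rings the parameter degree coincides with the multiplicity by Lemma~\ref{L:pardegmult}, so all the $\seq Rw$ would have parameter degree $e$ and we would be done --- \emph{except} that Lemma~\ref{L:pardegmult} asserts the equality $\pardeg R=\mult R$ only under the hypothesis of an infinite residue field. To dispose of this I would pass to the Nagata extensions $\seq Rw(\xi)$ in one indeterminate: by Lemma~\ref{L:dd} these have the same \pdim\ and the same depth, hence are again \CM\ of dimension $d$, they keep the same multiplicity $e$ (compare completions, as in the proof of Theorem~\ref{T:CMde}), and they now have infinite residue field $\seq kw(\xi)$; so Lemma~\ref{L:pardegmult} applies to them, and the argument of the first paragraph shows that the cataproduct of the $\seq Rw(\xi)$ is \CM. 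One then transfers this back to $\ulsep R$ along the faithfully flat, unramified base change $\ulsep R\to(\text{cataproduct of the }\seq Rw(\xi))$ induced by the inclusions $\seq Rw\hookrightarrow\seq Rw(\xi)$, using that the \CM\ property descends along a faithfully flat local homomorphism.

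I expect the only real obstacle to be precisely this residue-field bookkeeping: one must verify carefully that the induced map on cataproducts is faithfully flat (and unramified), so that \CM-ness descends. A route that sidesteps Nagata extensions altogether would be to argue that the parameter degree of a \CM\ local ring of dimension $d$ and multiplicity $e$ takes only finitely many possible values --- it equals $e$ whenever the residue field is large enough, and for the finitely many small residue fields there are, up to isomorphism, only finitely many such rings (their embedding dimension being bounded by $d+e-1$) --- so that the ultrafilter automatically selects a common parameter degree and Theorem~\ref{T:isodim} applies directly. Apart from this point, the proof is a purely formal concatenation of Theorems~\ref{T:isodim}, \ref{T:ulCM} and \ref{T:pCMcomp} with Lemma~\ref{L:ulcomp}, and needs no new estimate.
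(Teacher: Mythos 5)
Your argument has the same skeleton as the paper's proof, which runs: \CM\ together with fixed dimension and multiplicity gives isodimensionality of $\ul R$ (the paper simply cites Remark~\ref{R:isodim} for this), then Theorem~\ref{T:ulCM} makes $\ul R$ \pCM\ and Theorem~\ref{T:pCMcomp} makes it \cCM, i.e., $\ulsep R$ is \CM. You are right that Remark~\ref{R:isodim} leans on Lemma~\ref{L:pardegmult}, whose equality of parameter degree and multiplicity carries an infinite-residue-field hypothesis, and the worry is not vacuous: for ${\mathbb F}_2[[x,y]]/(xy(x+y))$ the multiplicity is $3$ while the parameter degree is $4$. The problem is that your patch does not close. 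The entire weight of your second paragraph rests on the claim that the induced map from $\ulsep R$ to the cataproduct of the $\seq Rw(\xi)$ is faithfully flat, and this is exactly what you leave unproved. It is not routine bookkeeping: flatness assertions about maps of cataproducts are precisely the kind of statement this paper has to work for (compare the proof of Theorem~\ref{T:flatsep}). The claim is in fact true, and one workable route is this: modulo $\maxim^n$ the map becomes the ultraproduct of the flat maps $\seq Rw/\seq\maxim w^n\to(\seq Rw/\seq\maxim w^n)(\xi)$; the sources are Artinian of bounded length, so their ultraproduct is again an Artinian, whence Noetherian, local ring (Proposition~\ref{P:ullen}), its finitely generated ideals are ultraproducts of ideals, tensoring with them commutes with the ultraproduct and injectivity passes through, so each reduction modulo $\maxim^n$ is flat; since the cataproduct of the $\seq Rw(\xi)$ is complete, the local flatness criterion \cite[Theorem 22.3]{Mats} then gives flatness, and \CM{}ness descends along the resulting faithfully flat local map. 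Without some argument of this kind, your proof is incomplete at its one genuinely nontrivial step.

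Your fallback route is worse off: it is false that, for a fixed finite residue field and bounded dimension, multiplicity and embedding dimension, there are only finitely many \CM\ local rings up to isomorphism. The hypersurfaces ${\mathbb F}_2[[x,y]]/(y^2-x^n)$, for $n\geq 2$, are pairwise non-isomorphic one-dimensional \CM\ rings of multiplicity $2$ and embedding dimension $2$. So the ultrafilter cannot be forced to select a common parameter degree in the way you describe; what that route would actually require is a uniform bound on the parameter degree of a \CM\ local ring in terms of its dimension and multiplicity alone, which is precisely the point at issue and not something you may assume.
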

\begin{proof}
Let $\ul R$ and $\ulsep R$ be the respective ultraproduct and 
cataproduct of  Noetherian local rings $\seq Rw$ of the same
multiplicity and the same dimension. If almost all $\seq Rw$ are \CM, then $\ul R$ is
 isodimensional by Remark~\ref{R:isodim}. Therefore, $\ul R$ is \pCM\
by Theorem~\ref{T:ulCM}, and hence   $\ulsep R$ is \CM\ by Theorem~\ref{T:pCMcomp}.
\end{proof}

 Let us call an ultra-module $\ul M$, that is to say, an
ultraproduct of $\seq Rw$-modules $\seq Mw$, \emph{ultra-\CM}, if almost all
$\seq Mw$ are \CM. Although such a module need not be finitely generated, we
 have:

\begin{lemma}\label{L:pCMmod}
For each $w$, let $\seq Mw$ be a finitely generated module over $\seq Rw$, and
let $\ul M$ be their ultraproduct. If
almost all $\seq Rw$ are \CM, of the same dimension and multiplicity, then
$\ul M$ is   finitely generated and \pCM\   
\iff\ almost all $\seq Mw$ are \CM\ of the same multiplicity.
\end{lemma}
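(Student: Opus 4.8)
The plan is to reduce everything to statements about lengths and Betti numbers of residue modules, all of which are first-order (via \los) and stable under the cataproduct constructions developed above. First I would handle the ``if'' direction. Assume almost all $\seq Mw$ are \CM\ of the same multiplicity $\mu$; since almost all $\seq Rw$ are \CM\ of the same dimension $d$ and multiplicity, they all have the same parameter degree by Lemma~\ref{L:pardegmult}, so by Theorem~\ref{T:isodim} the ring $\ul R$ is isodimensional with $\pd{\ul R}=d$, and by Theorem~\ref{T:ulCM} it is \pCM. Now I claim $\ul M$ is finitely generated: since almost all $\seq Mw$ are \CM\ of dimension $d$ and multiplicity $\mu$, they admit a uniformly bounded number $s$ of generators (a \CM\ module of given dimension and multiplicity over a \CM\ ring of given dimension and multiplicity has bounded minimal number of generators --- this follows, e.g., from the uniform Betti number bounds of \S\ref{s:ub}, or more elementarily by noting that $\seq Mw/\tuple x\seq Mw$ has length $\mu$ for a system of parameters $\tuple x$ and applying Nakayama), so by Proposition~\ref{P:ullen} the module $\ul M$ is minimally generated by $s$ elements. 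To see that $\ul M$ is \pCM, choose in each $\seq Rw$ a system of parameters $\tuple x_w$ for $\seq Mw$ that is also a system of parameters for $\seq Rw$ (possible since $\dim \seq Mw=\dim\seq Rw=d$); then $\seq Mw/\tuple x_w\seq Mw$ has length equal to the multiplicity $\mu$ of $\seq Mw$, and $\tuple x_w$ is $\seq Mw$-regular by the \CM\ hypothesis. Taking ultraproducts, $\tuple{\ul x}$ is a generic sequence of $\ul R$ (by Theorem~\ref{T:isodim} and Proposition~\ref{P:sop}), it is $\ul M$-regular by \los, and $\ul M/\tuple{\ul x}\ul M$ has finite length by Proposition~\ref{P:ullen}; hence $\tuple{\ul x}$ is a maximal $\ul M$-regular sequence inside a parameter ideal of $R/\ann{\ul R}{\ul M}$, so the depth of $\ul M$ equals $\pd{\ul M}=d$, i.e.\ $\ul M$ is \pCM.

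For the ``only if'' direction, assume $\ul M$ is finitely generated and \pCM. Finite generation of $\ul M$ gives, by Proposition~\ref{P:ullen}, a uniform bound $s$ on the minimal number of generators of almost all $\seq Mw$. The \pCM\ hypothesis means $\op{depth}(\ul M)=\pd{\ul M}$; as before isodimensionality of $\ul R$ is forced, and by Lemma~\ref{L:NC} (applied to the module $\ul M$, as indicated in the proof of Corollary~\ref{C:tor}) --- or directly by Proposition~\ref{P:uldep} since $\ul R$ is ultra-Noetherian --- there is an $\ul M$-regular generic sequence $\tuple{\ul x}$ with $\ul M/\tuple{\ul x}\ul M$ of finite length, say $\ell$. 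Realizing $\tuple{\ul x}$ as the ultraproduct of tuples $\tuple x_w$, \los\ together with Proposition~\ref{P:ullen} yields that for almost all $w$ the tuple $\tuple x_w$ is a system of parameters for both $\seq Rw$ and $\seq Mw$, is $\seq Mw$-regular, and $\seq Mw/\tuple x_w\seq Mw$ has length $\ell$. Being a parameter ideal that is $\seq Mw$-regular of length $d$, this shows $\op{depth}(\seq Mw)\geq d=\dim\seq Mw$, hence almost all $\seq Mw$ are \CM. Finally their common multiplicity: since $\tuple x_w\seq Rw$ is generated by a system of parameters, its multiplicity as an ideal on $\seq Mw$ equals (by \cite[Theorem 14.10 and Formula 14.4]{Mats}, using that $\seq Rw$, whence $\seq Mw$, is \CM) the Hilbert--Samuel multiplicity $\mult{\seq Mw}$ --- but we must be slightly careful, as $\tuple x_w$ need not be a reduction of $\seq\maxim w$ on $\seq Mw$. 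The fix is to choose $\tuple{\ul x}$ more carefully: by the argument in Example~\ref{s:aa} (choosing $\tuple{\ul x}$ so that $\tuple{\ul x}\ulsep R$ is a reduction of $\ulmax\ulsep R$, which can be arranged after a harmless Nagata extension and descends to almost all $w$ by \los), we may assume $\tuple x_w\seq Rw$ is a reduction of $\seq\maxim w$ on $\seq Mw$ for almost all $w$, whence $\ell=\mult{\seq Mw}$ for almost all $w$, giving the common multiplicity.

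The main obstacle I anticipate is precisely this last point: the na\"ive choice of a generic $\ul M$-regular sequence guarantees only that $\ul M/\tuple{\ul x}\ul M$ has some finite length, not that this length equals the \emph{multiplicity} of $\ul M$ (equivalently, the generic sequence need not be a reduction of the maximal ideal modulo $\ann{\ul R}{\ul M}$). One has to interleave the regularity requirement (which costs a Nagata extension, harmless since $R\to R(\xi)$ is faithfully flat and preserves multiplicity and parameter degree) with the reduction requirement, exactly as in the discussion preceding Theorem~\ref{T:CMde}. Once this is set up the rest is bookkeeping with \los, Proposition~\ref{P:ullen}, and the deformation property of depth \eqref{eq:defdep}.
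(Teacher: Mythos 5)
Your overall strategy is the paper's own: produce, for almost each $w$, an $\seq Mw$-regular system of parameters of $\seq Mw$, and transfer lengths through \los\ and Proposition~\ref{P:ullen}. However, two steps as written would fail. First, you assume $\dim\seq Mw=\dim\seq Rw=d$; the lemma does not require the $\seq Mw$ to be maximal \CM\ (and in the applications, e.g.\ Proposition~\ref{P:uCMfpt} and Theorem~\ref{T:unifbetti}, it is needed precisely for modules of finite length, of dimension zero). If $s:=\dim\seq Mw<d$, there is no system of parameters of $\seq Mw$ that is also one of $\seq Rw$, the ultraproduct $\ul{\tuple x}$ is not a generic sequence of $\ul R$, and the conclusion must read $\op{depth}(\ul M)=\pd{\ul M}=s$ rather than $d$. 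The repair is immediate — work throughout with the common value $s\leq d$, which is exactly what the paper does — but the step as stated is wrong, not merely unpolished.

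Second, in the converse your handling of multiplicity is both insufficiently justified and unnecessary. The Nagata extension you invoke is an extension of $\ul R$, which is not an ultraproduct of the $\seq Rw$, so ``descends to almost all $w$ by \los'' has no meaning there; to assume infinite residue fields you would have to replace each $\seq Rw$ by $\seq Rw(\xi)$ componentwise before forming the ultraproduct. More to the point, no reduction argument is needed: once almost all $\seq Mw$ are known to be \CM, the inclusion $\seq{\tuple x}w\seq Rw\sub\seq\maxim w$ gives $\mult{\seq Mw}\leq e(\seq{\tuple x}w;\seq Mw)=\ell(\seq Mw/\seq{\tuple x}w\seq Mw)=\ell$, and an integer invariant bounded by $\ell$ is constant on a set of the ultrafilter; this is exactly how the paper obtains ``the same multiplicity'' (its proof only asserts ``multiplicity at most $l$'' and lets the ultrafilter do the rest). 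Finally, a caution on circularity: do not appeal to the bounds of \S\ref{s:ub} for the finite generation of $\ul M$, since Theorem~\ref{T:unifbetti} is deduced from this very lemma; your elementary bound on the number of generators via $\ell(\seq Mw/\seq{\tuple x}w\seq Mw)$ and Nakayama is the correct route (and the existence of a parameter ideal with $\ell(\seq Mw/\seq{\tuple x}w\seq Mw)=\mult{\seq Mw}$, which you use there without comment, is itself the reduction-type fact the paper cites as \cite[Theorem 4.6.10]{BH}).
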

\begin{proof}
If almost
all $\seq Mw$ are \CM\ of multiplicity $l$, then there
exists, by \cite[Theorem 4.6.10]{BH}, an $\seq
Rw$-regular and $\seq Mw$-regular  sequence $\seq{\tuple x}w$ such that $\seq
Mw/\seq{\tuple x}w\seq Mw$ has length $l$. Since each sequence can have length at most $d$, almost 
all have the same length $s\leq d$.  The ultraproduct $\ul M/\ul{\tuple
x}\ul M$, too,  has length $l$ by Proposition~\ref{P:ullen}, where $\ul{\tuple x}$ is the ultraproduct
of the $\seq {\tuple
x}w$. In particular, $\ul M$ is finitely generated. Moreover, $\ul{\tuple x}$
is $\ul M$-regular, showing that $\ul M$ has depth at least $s$.   On the other hand, since $\ul
M/\ul{\tuple x}\ul M$ has finite length, the \pdim\ of $\ul M$ is at most $s$.
This proves that $\ul M$ is   \pCM. 

Conversely, assume $\ul M$ is \pCM\ and finitely generated. As depth is
first-order, by  the (module version of)
Proposition~\ref{P:uldep}, there exists an $\ul M$-regular sequence 
$\ul{\tuple x}$  such that $\ul M/\ul{\tuple x}\ul M$ has \pdim\ zero. 
As $\ul M$ is
finitely
generated,   $\ul M/\ul{\tuple x}\ul M$ has  finite length, say,
$l$. Letting $\seq{\tuple x}w$ be tuples in $\seq Rw$ having as
ultraproduct $\ul{\tuple x}$, the ultraproduct of the $\seq Mw/\seq{\tuple
x}w\seq Mw$ is equal to  $\ul M/\ul{\tuple x}\ul M$, and hence almost all $\seq Mw/\seq{\tuple
x}w\seq Mw$ have length $l$ by  Proposition~\ref{P:ullen}. Moreover, almost each $\seq{\tuple x}w$ is $\seq
Mw$-regular, showing that $\seq Mw$ is \CM, of multiplicity at most $l$, by
another application of \cite[Theorem 4.6.10]{BH}.  
\end{proof}

The next result, which is some type of coherence property for ultra-\CM\ local
rings,
will be used in
\S\ref{s:ub} to deduce some uniform bounds
on Betti numbers.
Recall that the
\emph{$i$-th Betti number} $\beta_i(M)$ of a module over a local ring $R$ with
residue field $k$ is the (possibly infinite) dimension of $\tor RiMk$; for the
notion of \fpt, see \S\ref{s:FFR}.

\begin{proposition}\label{P:uCMfpt}
If $\ul R$ is an isodimensional, ultra-\CM\ local ring, then every finitely
generated \pCM\ $\ul R$-module (e.g., every $\ul R$-module of finite length)
has
\fpt. More precisely, for any given $e$, if almost each $\seq Mw$ is a \CM\ $\seq Rw$-module of
multiplicity $e$,  then, for each $n$, 
almost all   $\seq Mw$ have the same
$n$-th Betti number as their ultraproduct $\ul M$ and as their cataproduct  $\ulsep M$.
 \end{proposition}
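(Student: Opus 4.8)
The plan is to reduce the statement to a uniform assertion about the individual modules $\seq Mw$ over the $\seq Rw$, prove that assertion by a multiplicity estimate on syzygies, and then transport the resulting finite free resolution across the (non-flat) maps $\ul R\to\seq Rw$ and $\ul R\to\ulsep R$. First I would normalise the set-up: since $\ul R$ is isodimensional and ultra-\CM, it is \pCM\ by Theorem~\ref{T:ulCM}, and combining Theorem~\ref{T:isodim} with the inequality $\mult{\seq Rw}\le\pardeg{\seq Rw}$ of Lemma~\ref{L:pardegmult} (so that multiplicity takes only finitely many values) I may discard a set not in the ultrafilter and assume all $\seq Rw$ are \CM\ of one fixed dimension $d$ and one fixed multiplicity $e_0$; by Lemma~\ref{L:pCMmod} I may also assume all $\seq Mw$ are \CM\ of one fixed multiplicity $e$, and that $\ul M$ is finitely generated with $\mu(\ul M)=\mu(\seq Mw)$ for almost all $w$ (Proposition~\ref{P:ullen}).

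The heart of the argument is a bound on $\beta_n^{\seq Rw}(\seq Mw)$ independent of $w$, for each $n$; this is where the \CM\ property of the $\seq Rw$ and $\seq Mw$ is essential. Writing $\op{syz}_j$ for the $j$-th syzygy of $\seq Mw$ in a minimal free resolution over $\seq Rw$ (so $\op{syz}_0=\seq Mw$ and $\beta_j^{\seq Rw}(\seq Mw)=\mu(\op{syz}_j)$), each $\op{syz}_j$ with $j\ge1$ has dimension $d$, since it embeds in a free module over the equidimensional \CM\ ring $\seq Rw$, and additivity of multiplicity along the short exact sequences $\Exactseq{\op{syz}_j}{\seq Rw^{\beta_{j-1}}}{\op{syz}_{j-1}}$ expresses the multiplicity of $\op{syz}_j$ as a fixed integer combination of $e_0$, $e$ and $\beta_0,\dots,\beta_{j-1}$. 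Passing to a Nagata extension (which changes nothing) to make the residue fields infinite and choosing a general system of parameters $\tuple t$ of $\seq Rw$ which is a reduction of the maximal ideal, one has $\mu(\op{syz}_j)\le$ the length of $\op{syz}_j/\tuple t\,\op{syz}_j$, and comparing the Koszul homology of $\tuple t$ on $\op{syz}_j$ with that on $\op{syz}_{j-1}$ (via Serre's formula for $e(\tuple t;-)$ and the bound $2^d\ell$ on the total length of the Koszul homology of $\tuple t$ on a module of length $\ell$) bounds this length recursively by a function of $d$, $e_0$, $e$ and $j$. As $\beta_0=\mu(\seq Mw)$ is already constant, this produces, for every $n$, a set in the ultrafilter on which $\beta_j^{\seq Rw}(\seq Mw)$ equals a fixed value $\beta_j$ for all $j\le n$.

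Then I would assemble the resolution. Fixing $n$ and choosing the minimal presentations $\seq Rw^{\beta_j}\to\op{syz}_j$ compatibly over the $\seq Rw$, their ultraproducts form a minimal complex of finite free $\ul R$-modules whose $j$-th syzygy is, by exactness of the ultraproduct and Proposition~\ref{P:ullen}, finitely generated; hence $\ul M$ is \fp n for all $n$ — that is, has \fpt\ — and $\beta_j^{\ul R}(\ul M)=\beta_j=\beta_j^{\seq Rw}(\seq Mw)$ for almost all $w$ and all $j\le n$. Finally, for the cataproduct: since $\ul R$ and $\ul M$ are both \pCM, Corollary~\ref{C:tor} gives $\tor{\ul R}i{\ulsep R}{\ul M}=0$ for $i>0$, so base-changing the minimal free resolution of $\ul M$ over $\ul R$ along $\ul R\to\ulsep R$ yields a complex of finite free $\ulsep R$-modules whose only homology is $\ul M\tensor_{\ul R}\ulsep R=\ulsep M$ — a free resolution of $\ulsep M$, still minimal because its differentials have entries in $\ulmax$ and $\ulmax\ulsep R$ is the maximal ideal of $\ulsep R$ (Theorem~\ref{T:finembcomp}). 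Thus $\beta_j^{\ulsep R}(\ulsep M)=\beta_j^{\ul R}(\ul M)$, which finishes the proof; the first sentence of the proposition is the special case in which $\ul M$ has finite length, since such a module is finitely generated and \pCM.

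I expect the uniform bound of the second step to be the main obstacle: it is \CM-specific, it forces one to argue with multiplicities and Koszul homology rather than formally, and it is precisely what makes the successive syzygies of $\ul M$ over the non-coherent ring $\ul R$ finitely generated in the third step. The only other pitfall, the non-flatness of $\ul R\to\ulsep R$ — which would otherwise obstruct comparing resolutions over $\ul R$ and $\ulsep R$ — is exactly what Corollary~\ref{C:tor} is invoked to bypass.
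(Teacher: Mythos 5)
Your proof of the main (``More precisely'') assertion is correct in outline, but it reaches the crux by a genuinely different route than the paper. The assembly steps coincide: ultraproducts of the minimal resolutions, exactness and minimality via \los, the vanishing of $\tor{\ul R}i{\ulsep R}{\ul M}$ from Corollary~\ref{C:tor} to pass to the cataproduct, and minimality over $\ulsep R$ because $\ulmax\ulsep R$ is its maximal ideal. Where you differ is in how the constancy of $\beta_n(\seq Mw)$ is obtained. You first prove an explicit uniform bound on $\beta_n(\seq Mw)$ in terms of $(d,e_0,e,n)$ by multiplicity and Koszul-homology estimates on syzygies---which is essentially the content of Theorem~\ref{T:unifbetti}, a statement the paper deduces \emph{from} this proposition, not the other way around. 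The paper instead inducts on $n$: at stage $n$ it assumes nothing about the size of the $n$-th free modules, base-changes the ultraproduct of the resolutions to the Noetherian ring $\ulsep R$, reads off that the top module is minimally generated by the finite number $\beta_n(\ulsep M)$, and transfers this back through the common residue field to get $\beta_n(\seq Mw)=\beta_n(\ulsep M)$ almost everywhere. So the paper's argument is softer and bound-free (uniformity comes for free from Noetherianity of the cataproduct), while yours does more work up front but yields an effective bound; both are legitimate.

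Two caveats. First, your uniform-bound step needs more care than you give it: the quoted ``$2^d\ell$'' estimate is for modules of \emph{finite length}, whereas the syzygies are not of finite length, and when $\dim\seq Mw<d$ the low syzygies are not even \CM. The repair is standard but must be said: after a Nagata extension choose a generic system of parameters $\tuple t$ of $\seq Rw$ which is a minimal reduction of $\seq\maxim w$ and whose first $s:=\dim\seq Mw$ entries form an $\seq Mw$-regular sequence with $\ell(\seq Mw/(t_1,\dots,t_s)\seq Mw)=e$; then $H_i(\tuple t;\op{syz}_j)\iso H_{i+j}(\tuple t;\seq Mw)$ for $i\geq1$ (positive Koszul homology of free modules vanishes since $\seq Rw$ is \CM\ and $\tuple t$ is a system of parameters), these have length at most $2^de$, and the four-term exact sequence gives the recursion $\beta_j\leq\ell(\op{syz}_j/\tuple t\,\op{syz}_j)\leq 2^de+e_0\beta_{j-1}$; the multiplicity additivity you invoke is not actually needed. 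Second, your closing sentence inverts the proposition: the first assertion concerns \emph{all} finitely generated \pCM\ modules, with finite length only as an example. To deduce it from the second assertion one must realize the module as an ultraproduct of \CM\ modules of a fixed multiplicity---this is exactly what the paper's appeal to Lemma~\ref{L:pCMmod} is for---and in the finite-length case this is automatic because such a module is finitely presented over $\ul R$ (it is a module over the Artinian ring $\ul R/\ulmax^l$), hence an ultraproduct of modules of the same length by Proposition~\ref{P:ullen}; your write-up skips this point.
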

\begin{proof}
In view of Lemma~\ref{L:pCMmod}, it suffices to prove the second assertion. We 
will show,  by induction 
on $n$, that 
$$
\beta_n(\ul M)=\beta_n(\ulsep M)=\beta_n(\seq Mw)
$$
for almost all $w$. The case $n=0$ follows from Proposition~\ref{P:ullen}, since $\ul M$ is
finitely generated by Lemma~\ref{L:pCMmod}. So assume $n\geq1$. 

Let 
$$
\seq {F_{n,}}w \to \seq {F_{n-1,}}w\to\dots\to \seq {F_{1,}}w\to \seq Mw\to 0
$$
be a minimal finite free resolution of   $\seq Mw$, with each $\seq{F_{i,}}w$ 
a finite free $\seq Rw$-module of rank $\seq{r_{i,}}w:=\beta_i(\seq Mw)$ (see
\S\ref{s:FFR}). 
Taking ultraproducts, we get by \los\ a minimal  resolution 
\begin{equation}\label{eq:ulfpt}
 \ul {F_{n,}}\to \ul{F_{n-1,}}\to\dots\to \ul {F_{1,}}\to \ul M\to 0
\end{equation}
By induction and Lemma~\ref{L:min}, we get $\ul{F_{i,}}\iso \ul R^{r_i}$, for $i<n$,  where $r_i$ is the common value of almost all  $\beta_i(\seq Mw)$.  
Theorem~\ref{T:ulCM} implies that $\ul R$ is \pCM, and hence  by
Corollary~\ref{C:tor}, all
$\tor {\ul R}i{\ulsep R}{\ul M}$ vanish for   $i>0$. Therefore, if we tensor \eqref{eq:ulfpt} with $\ulsep
R$, we get again a minimal  resolution
$$
 \ulsep {F_{n,}}\to \ulsep R^{r_{n-1}}\to\dots\to \ulsep R^{r_1}\to \ulsep M\to 0
$$
Since $\ulsep R$ is Noetherian and the resolution is minimal,
$r_i=\beta_i(\ulsep M)$ for $i<n$, and the
last module
in this resolution, $\ulsep {F_{n,}}$,  is generated by $r_n:=\beta_n(\ulsep M)$ elements. 
Tensoring with the common residue field $\ul k$ of $\ul R$ and $\ulsep R$, we
get 
$$
\ul k^{r_n}\iso \ulsep{F_{n,}}/\ul\maxim\ulsep{F_{n,}}\iso\ul{F_{n,}}/\ul\maxim\ul{F_{n,}}.
$$
 Since the latter module
is   the ultraproduct of the $\seq {F_{n,}}w/\seq\maxim w
\seq {F_{n,}}w\iso \seq kw^{\seq {r_{n,}}w}$, where $\seq kw$ is the residue
field of $\seq Rw$, we get $\seq{r_{n,}}w=r_n$ for almost all $w$, as we wanted to show.
\end{proof}

\begin{theorem}
A  \pCM\ ultra-Noetherian local ring  $\ul R$ is \cgor\  \iff\
it is ultra-Gorenstein; and it is  a \cci\  \iff\ it is an ultra-`complete
intersection'. 

In particular,
if $\seq Rw$ are \CM\ local rings having the same dimension and
multiplicity, then their cataproduct $\ulsep R$ is respectively Gorenstein
or a complete intersection  \iff\ so are almost all $\seq Rw$.
\end{theorem}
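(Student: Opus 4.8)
The plan is to contract $\ul R$ by a single quasi-regular generic sequence, so that all four properties become properties of an ultraproduct of Artinian local rings of bounded length. Since $\ul R$ is \pCM, it is \cCM\ by Theorem~\ref{T:pCMcomp}, it is ultra-\CM\ and isodimensional by Theorem~\ref{T:ulCM}, and $\ulsep R=\complet{\ul R}$. Fix any realization of $\ul R$ as an ultraproduct of Noetherian local rings $\seq Rw$ which are almost all \CM\ of dimension $d:=\pd{\ul R}=\ud{\ul R}$ (by Theorem~\ref{T:isodim}, this is the case for \emph{any} ultra-\CM\ realization, since the \pCM\ ring $\ul R$ is isodimensional by Theorem~\ref{T:uldim}). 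By Theorem~\ref{T:pCM} every generic sequence of $\ul R$ is quasi-regular, so I pick a generic sequence $\ul x$ of length $d$ (one exists by Lemma~\ref{L:mingen}) and realize it as the ultraproduct of $d$-tuples $\seq xw$ in $\seq Rw$. As $\ul x$ is generic, $\ul A:=\ul R/\ul x\ul R$ is Artinian of some finite length $l$, and $\ul A\iso\ulsep R/\ul x\ulsep R$ by Lemma~\ref{L:quot}; by Proposition~\ref{P:ullen}, almost all $\seq Aw:=\seq Rw/\seq xw\seq Rw$ are Artinian of length $l$, and since $\dim\seq Rw=d$ equals the length of $\seq xw$, this tuple is a system of parameters, hence a regular sequence in the \CM\ ring $\seq Rw$. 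I will prove, for this arbitrary realization, that $\ul R$ is \cgor\ (resp.\ a \cci) if and only if almost all $\seq Rw$ are Gorenstein (resp.\ complete intersections); the equivalence with ultra-Gorenstein (resp.\ ultra-complete-intersection) then follows, as any such presentation of $\ul R$ automatically consists of \CM\ rings of dimension $d$.

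The next step is to strip off $\ul x$ on both sides. On the ambient side, the proof of Theorem~\ref{T:pgor} shows that, for a quasi-regular generic sequence $\ul x$, the ring $\ul A\iso\ulsep R/\ul x\ulsep R$ is Gorenstein (resp.\ a complete intersection) if and only if $\ulsep R=\complet{\ul R}$ is, i.e.\ if and only if $\ul R$ is \cgor\ (resp.\ a \cci). On the fibrewise side, since $\seq Rw$ is \CM\ and $\seq xw$ is a regular system of parameters, $\seq Rw$ is Gorenstein (resp.\ a complete intersection) if and only if $\seq Aw$ is, by the standard behaviour of these properties under quotient by a regular sequence (e.g.\ \cite[Theorem 2.3.4 and Proposition 3.1.19]{BH}). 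So the whole statement reduces to the claim that the ultraproduct $\ul A$ of the Artinian local rings $\seq Aw$, all of length $l$, is Gorenstein (resp.\ a complete intersection) if and only if almost all $\seq Aw$ are.

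For the Gorenstein case I use that an Artinian local ring is Gorenstein exactly when its socle is a $1$-dimensional vector space over the residue field, equivalently is generated by one element. Choosing a finite generating tuple $\ul y_1,\dots,\ul y_e$ of $\ulmax$, the socle $(0:_{\ul A}\ulmax\ul A)=\bigcap_i(0:_{\ul A}\ul y_i)$ is a finite intersection of colon ideals, each of which is the ultraproduct of the corresponding colon ideals in the $\seq Aw$ (as in the proof of Proposition~\ref{P:ulscheme}); since finite intersections commute with ultraproducts, $\op{soc}(\ul A)$ is the ultraproduct of the $\op{soc}(\seq Aw)$, and Proposition~\ref{P:ullen} with $s=1$ finishes this case. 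For the complete-intersection case I use that an Artinian local ring $A$ with $\ed A=n$ is a complete intersection if and only if $\dim_k H_1\bigl(K_\bullet(\tuple z;A)\bigr)=n$, where $\tuple z$ is a minimal generating tuple of the maximal ideal and $K_\bullet$ its Koszul complex (the equality case of the inequality $\dim_k H_1\ge\ed A-\dim A$; see \cite{BH}). For a fixed finite tuple the Koszul complex is a fixed finite complex of finite free modules with differentials polynomial in the entries, so by \los\ its first homology $H_1$ — a vector space over the residue field, of bounded dimension — is the ultraproduct of the $H_1\bigl(K_\bullet(\seq zw;\seq Aw)\bigr)$; as both $n=\ed A$ and the dimension of a finitely generated module are preserved under ultraproducts (Proposition~\ref{P:ullen}), the transfer follows. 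This proves the first assertion. For the "in particular'' statement: if the $\seq Rw$ are \CM\ of the same dimension and multiplicity, they have the same parameter degree by Lemma~\ref{L:pardegmult}, so $\ul R$ is isodimensional by Theorem~\ref{T:isodim}, hence \pCM\ by Theorem~\ref{T:ulCM}, and since $\ulsep R=\complet{\ul R}$ by Lemma~\ref{L:ulcomp}, the claim follows by applying the first assertion to this realization.

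The step I expect to cause the most trouble is the complete-intersection direction: one must isolate a numerical, ultraproduct-stable characterization of "complete intersection'' for Artinian local rings — the Koszul-homology criterion above, or equivalently the statement $\beta^A_2(k)=\binom{n}{2}+n$ about the first Betti numbers — and verify that the bounded-length hypothesis makes the relevant invariants (embedding dimension, number of defining Koszul relations) uniformly bounded, so that \los\ and Proposition~\ref{P:ullen} genuinely apply. The Gorenstein half and all the reduction steps are routine once Theorems~\ref{T:pCM}, \ref{T:pgor}, \ref{T:ulCM} and Proposition~\ref{P:ullen} are in hand.
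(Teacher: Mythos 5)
Your argument is correct, and it is worth comparing with the paper's proof, which treats the two properties differently. For the Gorenstein half you follow essentially the paper's route---cut by a generic sequence, which is a system of parameters, whence a regular sequence, in almost all $\seq Rw$ and in $\ulsep R$, and transfer Gorensteinness across the ultraproduct of the Artinian quotients $\seq Aw$---except that where the paper simply invokes \los\ for Artinian local rings with a pointer to \cite{SchEC}, you make the transfer explicit via the socle, using that colon ideals of finitely generated ideals and finite intersections commute with ultraproducts, and Proposition~\ref{P:ullen} to compare lengths. For the complete-intersection half your route is genuinely different: the paper never passes to the Artinian quotient, but applies Proposition~\ref{P:uCMfpt} to identify $\beta_1(\ul k)$ and $\beta_2(\ul k)$ over $\ulsep R$ with the corresponding Betti numbers over almost all $\seq Rw$, and then uses the numerical relation between $\beta_2$, $\beta_1$ and $d$ from \cite[Theorem 2.3.3]{BH}; you instead push the complete-intersection property down to $\ul A$ and the $\seq Aw$ along the regular sequences (as in Theorem~\ref{T:pgor}) and transfer the equivalent criterion $\dim_k H_1(K_\bullet(\tuple z;A))=\ed A$ by \los\ on Koszul complexes together with Proposition~\ref{P:ullen}, the bounded length $l$ of the quotients keeping the Koszul homology of uniformly bounded length, as you note. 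Your version is more self-contained (it avoids Proposition~\ref{P:uCMfpt}, hence Corollary~\ref{C:tor}), treats both properties uniformly through a single Artinian reduction, and is more careful about the fact that being ultra-Gorenstein or an ultra-`complete intersection' only posits \emph{some} witnessing realization; the paper's version is shorter because Proposition~\ref{P:uCMfpt} is already in place at that point. One small repair to your final paragraph: Lemma~\ref{L:pardegmult} gives equality of multiplicity and parameter degree only when the residue field is infinite, so to deduce isodimensionality from ``same dimension and multiplicity'' for \CM\ rings you should rather cite Remark~\ref{R:isodim} (or directly Corollary~\ref{C:CMulsep}), which is exactly how the paper obtains the second assertion from the first.
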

\begin{proof}
The second assertion follows from the first in
view of Theorem~\ref{T:isodim} and   Theorem~\ref{T:ulCM}. We already observed
that $\ul R$ is isodimensional, so that $\ulsep R$ and almost all $\seq Rw$ have the
same dimension, $d$, say. Hence if $\tuple {\ul x}$ is a generic sequence in $\ul
R$,
realized as an ultraproduct of tuples $\seq {\tuple x}w$ in  $\seq Rw$, then
almost each $\seq{\tuple x}w$ is a system of parameters in $\seq Rw$, whence
$\seq Rw$-regular.
Therefore, almost all $\seq Rw$ are Gorenstein \iff\ so are almost all $\seq Rw/\seq{\tuple
x}w\seq Rw$. This in turn  is equivalent with $\ul R/\tuple
{\ul x}\ul R$ being Gorenstein by \los\ (using that these are Artinian local rings;
see
\cite{SchEC} for more details). Since $\ul R/\tuple {\ul x}\ul R\iso \ulsep R/\tuple
{\ul x}\ulsep R$, the latter is then equivalent with 
  $\ulsep R$ being Gorenstein.

By Proposition~\ref{P:uCMfpt}, we have a minimal free resolution of $\ulsep
R$-modules
 $$
\ulsep R^r \to \ulsep R^m\to \ulsep R\to \ul k\to 0
$$
 where $r=\beta_2(\ul k)=\beta_2(\seq kw)$ and
$m=\beta_1(\ul k)=\beta_1(\seq kw)$, for
almost all $w$. Moreover,  $\ulsep R$ has the same dimension $d$ as almost all
$\seq Rw$ by Theorem~\ref{T:ulCM}.   
By \cite[Theorem 2.3.3]{BH}, therefore,  
$\ulsep R$ is a complete intersection \iff\  $r=m(m+2)/2-d$, \iff\
almost all $\seq Rw$ are complete intersections.
\end{proof}

%
%

\subsection*{Lefschetz Hulls}
In \cite{SchAsc}, we showed that every Noetherian local ring $R$ of equal \ch\ 
zero (that is to say, containing the rationals) admits an ultra-Noetherian
faithfully flat extension
$\hull R$ which is \emph{Lefschetz}, meaning that $\hull R$ is the ultraproduct
of
 Noetherian local rings
$\seq Rw$ of prime \ch. In fact, the $\seq Rw$ may be chosen to be complete 
with algebraically closed residue field. We call
$\hull R$ a \emph{Lefschetz hull} of $R$. Although the construction can be made
more functorial, it still depends on a choice of a cardinal number larger than
the cardinality of $R$.
However, in case $R$ is of finite type over an uncountable\footnote{Strictly
speaking, of cardinality equal to $2^\lambda$, for some infinite cardinal
$\lambda$.} 
\acf\ of \ch\ zero, there is a canonical choice for $\hull R$, called the 
\emph{non-standard hull} of $R$ and denoted $R_\infty$; see \cite{SchNSTC} for
details. In view of our  characterizations of 
pseudo-singularities in this section, the following result is immediate from
\cite[Theorem 5.2]{SchAsc}:

\begin{theorem}
 A Noetherian local ring $R$ of equal \ch\ zero with Lefschetz hull $\hull R$  
is \CM\ (respectively, Gorenstein or   regular) \iff\   $\hull R$ is \pCM\ (respectively,
\pgor\
or \preg).
\end{theorem}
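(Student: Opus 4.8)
The plan is to rephrase each of the three conditions on $R$ in terms of the ultra-singularities of $\hull R$ and then to feed the result into the Lefschetz transfer of \cite[Theorem 5.2]{SchAsc}. Recall that $\hull R$ is realized as the ultraproduct of Noetherian local rings $\seq Rw$ of prime \ch, that $R\to\hull R$ is faithfully flat and unramified, and---this is the substance of \cite[Theorem 5.2]{SchAsc}---that $R$ is \CM\ (respectively, Gorenstein, regular) \iff\ almost all $\seq Rw$ are \CM\ (respectively, Gorenstein, regular), equivalently \iff\ $\hull R$ is ultra-\CM\ (respectively, ultra-Gorenstein, ultra-regular). The only extra fact I shall use is that $\hull R$ is \emph{isodimensional}, i.e., $\ud{\hull R}=\cad{\hull R}$: this is recorded in loc.\ cit., and also follows from the set-up, since the natural map $\complet R\to\complet{\hull R}$ is faithfully flat and unramified (its closed fibre being the residue field extension, whence $\cad{\hull R}=\kd R$), while almost all $\seq Rw$ have dimension $\kd R$ by construction.

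The argument would then run as follows. For the regular case, Theorem~\ref{T:psing} gives that $\hull R$ is ultra-regular \iff\ it is \preg, so $R$ is regular \iff\ $\hull R$ is \preg. For the \CM\ case, Theorem~\ref{T:ulCM} says that $\hull R$ is \pCM\ \iff\ it is ultra-\CM\ and isodimensional; since $\hull R$ is isodimensional, this means $\hull R$ is ultra-\CM\ \iff\ it is \pCM, and hence $R$ is \CM\ \iff\ $\hull R$ is \pCM. For the Gorenstein case, suppose first that $R$ is Gorenstein; then $R$ is \CM, so $\hull R$ is \pCM\ by the previous step, and almost all $\seq Rw$ are Gorenstein, so $\hull R$ is ultra-Gorenstein; by the characterization of \cgor\ \pCM\ ultra-Noetherian local rings obtained earlier, $\hull R$ is then \cgor, and a \pCM\ local ring that is \cgor\ is \pgor, so $\hull R$ is \pgor. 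Conversely, if $\hull R$ is \pgor, then it is \pCM, hence ultra-\CM\ and isodimensional, so $R$ is \CM; moreover \pgor\ implies \cgor, and a \pCM\ ultra-Noetherian local ring that is \cgor\ is ultra-Gorenstein, so almost all $\seq Rw$ are Gorenstein and therefore $R$ is Gorenstein.

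The step carrying all the weight is the input \cite[Theorem 5.2]{SchAsc}; everything else is bookkeeping with the pseudo--ultra dictionary of this section, which is exact precisely because $\hull R$ is isodimensional. It is worth stressing that this isodimensionality is not a luxury: by Example~\ref{e:isoCM} an isodimensional \cCM\ ultra-Noetherian local ring need not be \pCM, so in the \CM\ (and hence Gorenstein) part of the statement one genuinely uses the ultra-\CM\ property of $\hull R$ supplied by \cite[Theorem 5.2]{SchAsc}, not merely the \CM\ property of its completion $\complet{\hull R}$.
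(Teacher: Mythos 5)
Your proposal is correct and follows exactly the route the paper intends: the paper states the result as immediate from \cite[Theorem 5.2]{SchAsc} combined with the characterizations of this section (Theorems~\ref{T:psing}, \ref{T:ulCM} and the Gorenstein criterion for \pCM\ ultra-Noetherian rings), which is precisely the transfer-plus-bookkeeping you carry out, including the needed isodimensionality of $\hull R$.
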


\section{\C-normalizations}\label{s:catanorm}

An extremely useful fact in commutative algebra is the existence of Noether
normalizations: any finitely generated algebra over a field or any complete
Noetherian local domain admits a regular subring over which it is module-finite.
This result is not hard to show in equal \ch, so that we will adopt this
additional assumption in this section to formulate an analogue for   local rings of finite embedding
dimension. In the sequel, let  $(R,\maxim)$ be an equi\ch\ local ring with
residue field $k$ and let $\pi\colon R\to k$ denote the induced  surjection.

\subsection*{Weak coefficient fields}  
A subfield 
$\kappa$ of   $R$ is called a
\emph{weak coefficient field} of $R$ if the restriction of $\pi$ to $\kappa$
induces an algebraic extension $\pi(\kappa)\sub k$.   If this extension is an
isomorphism, then we call $\kappa$ a \emph{coefficient field} of $R$ (in the
literature one also encounters the notion of a \emph{quasi-coefficient}
defined as a weak coefficient field $\kappa$ for which the induced extension
$\pi(\kappa)\sub k$ is also    separable). The next result is well-known, but
its proof is included for convenience.

\begin{lemma}\label{L:qc} 
Let $(R,\maxim)$ be an equi\ch\   local 
ring. For any subfield $\kappa_0$ of $R$, we can
find a weak coefficient  field $\kappa$ of $R$ containing $\kappa_0$.

If, moreover, $R$  has \ch\ zero and is Henselian, then we can choose $\kappa$ to
be a  coefficient field.
\end{lemma}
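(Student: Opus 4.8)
The plan is to take for $\kappa$ a \emph{maximal} subfield of $R$ containing $\kappa_0$ and then show that maximality alone forces the two conclusions. First I would note that the collection of subfields of $R$ containing $\kappa_0$ is nonempty and closed under unions of chains, so Zorn's Lemma produces a maximal such subfield $\kappa$. Since $1\in\kappa$ but $1\notin\maxim$, the kernel of $\pi|_\kappa$ is a proper ideal of the field $\kappa$, hence zero; thus $\pi$ restricts to an isomorphism of $\kappa$ onto a subfield $\pi(\kappa)$ of $k$.

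Next I would establish the first assertion, namely that $k$ is algebraic over $\pi(\kappa)$. Arguing by contradiction, suppose $\bar y\in k$ is transcendental over $\pi(\kappa)$ and lift it to some $y\in R$. For any nonzero $f\in\pol\kappa T$, applying $\pi$ to its coefficients yields a nonzero polynomial over $\pi(\kappa)$, so $\pi(f(y))=\pi(f)(\bar y)\neq0$ by transcendence of $\bar y$; hence $f(y)\notin\maxim$, so $f(y)$ is a unit of $R$, and in particular $f(y)\neq 0$. Therefore $y$ is transcendental over $\kappa$, the subring $\kappa[y]\iso\pol\kappa T$ is a domain, and every nonzero element of it is a unit in $R$; consequently the inclusion $\kappa[y]\into R$ extends to an embedding of the fraction field $\kappa(y)$ into $R$. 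This gives a subfield of $R$ strictly containing $\kappa$ and containing $\kappa_0$, contradicting maximality. Hence $k/\pi(\kappa)$ is algebraic and $\kappa$ is a weak coefficient field.

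For the second assertion, suppose in addition that $R$ has \ch\ zero and is Henselian, and keep the maximal $\kappa$ above, so that $k/\pi(\kappa)$ is algebraic, whence separable. If $\pi(\kappa)\neq k$, choose $\bar z\in k\setminus\pi(\kappa)$ and let $g\in\pol{\pi(\kappa)}T$ be its minimal polynomial over $\pi(\kappa)$, which is irreducible and separable. Transporting $g$ through the isomorphism $\pi(\kappa)\iso\kappa$ yields a monic $\tilde g\in\pol\kappa T$ whose reduction modulo $\maxim$ equals $g$; since $\bar z$ is a simple root of $g$, the Henselian property provides a root $z\in R$ of $\tilde g$ with $\pi(z)=\bar z$. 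As irreducibility transports through the field isomorphism, $\tilde g$ is irreducible over $\kappa$, so $\kappa[z]\iso\pol\kappa T/(\tilde g)$ is a field, a subfield of $R$ properly containing $\kappa$ (because $\pi(z)=\bar z\notin\pi(\kappa)$) and still containing $\kappa_0$ — again contradicting maximality. Therefore $\pi(\kappa)=k$, and $\kappa$ is a coefficient field.

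I do not expect a genuine obstacle here, as this is a standard Zorn's-Lemma argument; the only points needing a little care are that all the auxiliary field embeddings really land inside $R$ — which rests on the local-ring fact that an element outside $\maxim$ is a unit — and that in \ch\ zero, separability of the algebraic extension $k/\pi(\kappa)$ is exactly what makes the Henselian root-lifting applicable.
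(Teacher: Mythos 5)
Your proof is correct and follows essentially the same route as the paper: a Zorn's Lemma maximal subfield $\kappa\supseteq\kappa_0$, algebraicity of $k$ over $\pi(\kappa)$ forced by maximality via the observation that elements outside $\maxim$ are units (your transcendence contradiction is just the contrapositive of the paper's direct argument), and in the Henselian characteristic-zero case a simple-root Hensel lifting of the minimal polynomial to enlarge $\kappa$, contradicting maximality. No gaps; the minor points you flag (injectivity of $\pi|_\kappa$, embedding of $\kappa(y)$, irreducibility of $\tilde g$) are handled adequately.
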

\begin{proof} 
Let $\kappa$ be maximal among all subfields of $R$ 
containing $\kappa_0$ (such a field exists by Zorn's lemma). We need to show that
the extension $\pi(\kappa)\sub k$ is algebraic, where $k$ is the residue field 
of $R$ and $\pi\colon R\to k$ the residue map.  
To this end, take an arbitrary element $u\in 
k\setminus\pi(\kappa)$. Let $a\in R$ be such that $\pi(a)=u$.  It follows that
$a\notin \kappa$. By maximality, the subring 
$\pol \kappa a$ of $R$ generated by $a$ must contain a
non-zero non-invertible element (lest $\kappa(a)$ be a larger 
subfield of $R$). This means that $P(a)\in\maxim$, for
some non-zero $P\in\pol \kappa \xi$. Hence taking reductions, we get 
$P^\pi(u)=0$ in $k$, where $P^\pi$ is the
polynomial obtained from $P$ by applying $\pi$ to each of its 
coefficients. Since $P^\pi$ is not identical zero,  $u$ is algebraic over $\pi(\kappa)$.

To prove the last assertion, assume by way of contradiction that $R$ has \ch\
zero and is Henselian, but that $\pi(\kappa)$ is strictly
contained in $k$. Take    $u\in k\setminus\pi(\kappa)$. Let 
$p$ be a minimal equation of $u$ over
$\pi(\kappa)$ and let $P\in\pol \kappa \xi$ be such that its image 
$P^\pi$ is equal to $p$. Since $u$ is a
single root of $p$, Hensel's Lemma yields  the existence of a root $a\in R$  of $P$ with 
$\pi(a)= u$. However, this
implies that the field $\pol \kappa \xi/P\pol \kappa \xi$ embeds in $R$ 
via the assignment $\xi\mapsto a$, contradicting the
maximality of $\kappa$.
\end{proof}

A local \homo\ $A\to R$ is called \emph{\c-integral} (respectively,
\emph{\c-finite},
\emph{\c-injective},
\emph{\c-surjective}, \emph{\c-flat}) if its completion 
$\complet A\to \complet R$ is integral
(respectively, finite, injective, surjective, flat).  Let $(R,\maxim)$ be a local
ring of finite embedding dimension.

\subsection*{\C-normalization} 
A \emph{\c-normalization} of $R$   is a 
\c-integral local \homo\ $\theta\colon (A,\pr)\to
(R,\maxim)$  such that $A$ is a (Noetherian) regular local ring and  $\pr R$ is 
$\maxim$-primary. We say that a \c-normalization
$\theta$  is   \emph{Cohen}, if $\pr R=\maxim$, and \emph{Noether} if 
$\theta$   is injective.

%

\begin{theorem}\label{T:qN} 
An equi\ch\ local ring of finite 
embedding dimension admits a \c-normalization, which can be
chosen to be either Cohen or Noether.
\end{theorem}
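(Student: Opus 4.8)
The plan is to build $\theta$ by hand from a weak coefficient field of $R$ together with a well-chosen generating sequence, and then to read off the defining properties of a \c-normalization from the completion $\complet R$, which is a complete Noetherian local ring by Theorem~\ref{T:finembcomp}. Since $R$ is equi\ch\ it shares its residue field $k$ with $\complet R$, so $\complet R$ is equi\ch\ as well, and by Lemma~\ref{L:qc} I may fix a weak coefficient field $\kappa\subseteq R$; thus $k$ is an algebraic---hence integral---extension of $\pi(\kappa)$, and $\kappa$ also embeds into $\complet R$ because $\kappa\cap\inf R=0$.

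For the Cohen version, I would take a minimal generating sequence $x_1,\dots,x_m$ of $\maxim$, so that $m=\ed R$, and put $A:=\pol{\kappa}{T_1,\dots,T_m}_{(T_1,\dots,T_m)}$, a regular local ring of embedding dimension $m$ with maximal ideal $\pr=(T_1,\dots,T_m)A$. Sending $T_i\mapsto x_i$ gives a $\kappa$-algebra homomorphism $\pol{\kappa}{T_1,\dots,T_m}\to R$, and it extends to $A$ since any $f\notin(T_1,\dots,T_m)$ has $f(0)\in\kappa^\times$, whence $f(x_1,\dots,x_m)$ is a unit of $R$. The resulting $\theta\colon A\to R$ is local and satisfies $\pr R=(x_1,\dots,x_m)R=\maxim$, so $\pr R$ is $\maxim$-primary (injectivity is not needed here, which is why this construction yields a Cohen, not necessarily a Noether, normalization). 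Passing to completions, $\complet A=\pow{\kappa}{T_1,\dots,T_m}$ maps into $\complet R$ with the maximal ideal of its image generating $\maxim\complet R$; as $k$ is algebraic over $\kappa$, the Cohen structure theorem then gives that $\complet R$ is integral over the image of $\complet A$, hence over $\complet A$. Thus $\theta$ is a Cohen \c-normalization.

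For the Noether version, I would instead take a generic sequence $y_1,\dots,y_d$ of $R$, where $d=\pd R$ (one exists by Lemma~\ref{L:mingen}, or by Corollary~\ref{C:genprim} applied to $\maxim$), and run the same construction with $A:=\pol{\kappa}{T_1,\dots,T_d}_{(T_1,\dots,T_d)}$ and $T_i\mapsto y_i$. Again $\theta\colon A\to R$ is a local homomorphism out of a regular local ring, now of dimension $d$, and $\pr R=(y_1,\dots,y_d)R$ is $\maxim$-primary precisely because $y_1,\dots,y_d$ is generic. The induced map $\complet A=\pow{\kappa}{T_1,\dots,T_d}\to\complet R$ is integral by the same reasoning, where Proposition~\ref{P:sop} tells me the images of the $y_i$ form a system of parameters of $\complet R$. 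Since $\complet A$ is a regular domain whose Krull dimension is $d$ and $\complet R$ also has Krull dimension $d$ by Theorem~\ref{T:pdim}, and an integral extension preserves Krull dimension, a non-zero kernel of $\complet A\to\complet R$ would produce a proper quotient of the $d$-dimensional domain $\complet A$ that still has dimension $d$, which is absurd; hence $\complet A\hookrightarrow\complet R$. As $A\hookrightarrow\complet A$ and the composite $A\to R\to\complet R$ agrees with $A\to\complet A\to\complet R$, it follows that $\theta\colon A\to R$ is injective, so $\theta$ is a Noether \c-normalization.

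The genuinely delicate step---and the one I expect to be the main obstacle---is the \c-integrality in these two constructions, i.e.\ verifying that $\complet R$ is integral over the power series subring $\complet A$. This is where Theorem~\ref{T:finembcomp} (Noetherianity of $\complet R$) and the equi\ch\ hypothesis (needed to run the Cohen structure theorem on $\complet R$) are really used, and it is the point where one must be careful about the choice of $\kappa$ when the residue-field extension $\pi(\kappa)\subseteq k$ is not finite; everything else is a routine check against the definitions of \S\ref{s:pdim} and the elementary properties of completion.
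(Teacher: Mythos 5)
Your construction of $\theta$ is exactly the paper's (localize $\pol\kappa\xi$ at the ideal of the indeterminates and send them to a generating set of $\maxim$ for the Cohen version, to a generic sequence for the Noether version), and your injectivity argument in the Noether case---integrality plus the dimension count from Theorem~\ref{T:pdim} against the $d$-dimensional domain $\complet A$---is also the paper's. The genuine gap is the step you yourself flag as ``the main obstacle'': the \c-integrality of $\theta$ is never proved. The sentence ``as $k$ is algebraic over $\kappa$, the Cohen structure theorem then gives that $\complet R$ is integral over the image of $\complet A$'' is not an argument. Cohen's theorem produces \emph{some} coefficient field $K$ of $\complet R$, but in positive \ch\ there need not exist one containing $\kappa$ when the algebraic extension $\pi(\kappa)\sub k$ is inseparable (for instance $R=\pow{\mathbb F_p(u)}t$ contains the subfield $\kappa=\mathbb F_p(u^p+t)$, and no coefficient field of $R$ contains $\kappa$: if $v\in K$ lifts $u$ and $u^p+t\in\kappa\sub K$, then $v^p-(u^p+t)$ is a nonzero element of $K\cap\maxim$); and without $\kappa\sub K$, algebraicity of $k$ over $\pi(\kappa)$ yields no integral equations over $\kappa$ inside $\complet R$ for the elements of $K$, let alone for all of $\complet R$. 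So precisely the delicate case---an infinite or inseparable residue extension---is left unhandled.

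The paper closes this gap without ever choosing a coefficient field of $\complet R$. After reducing, as you may, to the case that $A$ and $R$ are complete (only integrality of the completion is at issue), it sets $B_l:=\pi^{-1}(l)$ for each finite extension $\pi(\kappa)\sub l\sub k$. Since $\kappa+\maxim\sub B_l$, this is a local subring of $R$ with maximal ideal $\maxim$, and $A\to B_l$ is a local \homo\ with finite residue field extension whose extended maximal ideal is primary to $\maxim$ (because $\maxim\sub B_l$ and $\maxim_AR$ is $\maxim$-primary); as $A$ is complete and $B_l$ separated, $B_l$ is then a finite $A$-module by \cite[Theorem 8.4]{Mats}. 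Since $k$ is the union of its finite subextensions $l$ of $\pi(\kappa)$---this is exactly where the weak coefficient field from Lemma~\ref{L:qc} is used---$R$ is the union of the $B_l$, hence integral over $A$. With this (or an equivalent) argument inserted, your proof coincides with the paper's; as written, its central claim is assumed rather than established.
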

\begin{proof} 
Let $(R,\maxim)$ be an equi\ch\ local ring of 
finite embedding dimension. By Lemma~\ref{L:qc}, there
exists a weak coefficient field  $\kappa$ of $R$. Choose  a tuple 
$\tuple x:=\rij xn$ generating some $\maxim$-primary
ideal. Let $A$ be the localization of the polynomial ring $\pol 
\kappa \xi$ at the ideal generated by the indeterminates 
$\xi=\rij \xi n$. Let $\theta\colon A\to R$ 
be the (unique) $\kappa$-algebra \homo\ which sends
$\xi_i$ to $x_i$, for each $i$. To show that $\theta$ is a 
\c-normalization, we only need to show that its completion
is integral, since the other conditions are immediate. Therefore, 
without loss of generality, we may already assume that
$A$ and $R$ are complete, so that both rings are now Noetherian. Let 
$\pi\colon R\to k$ be the residue map and let $l$
be a finite extension of $\pi(\kappa)$ contained in $k$. Put 
$B_l:=\inverse\pi l$. Since $\kappa+\maxim\sub B_l$, one
checks easily that $B_l$ is a local ring with maximal ideal 
$\maxim$. The local \homo\ $A\to B_l$ induces a finite
extension of residue fields. Therefore, since $A$ is complete  and 
$B_l$ is separated, $B_l$ is finitely generated   as
an $A$-module by \cite[Theorem 8.4]{Mats}. Since $k$ is the union of 
all its finite extensions $l$ containing
$\pi(\kappa)$, so is  $R$   the union of all the $B_l$, showing 
that $R$ is integral over $A$.

It is clear that if we choose $\tuple x$ so that it generates 
$\maxim$, then $\theta$ is Cohen. Assume next that
$\tuple x$ is a generic sequence.
In particular, $\complet R$ has 
dimension $n$ by Theorem~\ref{T:pdim}. Since $\complet A$ is an
$n$-dimensional domain and  $\complet A\to\complet R$ is integral,
this map must be injective. But then so must $A\to R$ be, that is to say, $\theta$
is Noether.
\end{proof}

\begin{remark}\label{R:qnid} 
If $I$ is a finitely generated ideal of 
$R$, then we can always choose a
\c-normalization $A\to R$ with the additional property that there 
is some ideal $J\sub A$ with $JR=I$. Simply
choose $\tuple x$ so that it contains a set of generators of $I$.
\end{remark}

\begin{remark}
From the above proof it is also clear that if $R$ admits a coefficient field, 
then we can choose the \c-normalization $A\to R$ to be \c-finite.
\end{remark}

\begin{theorem}\label{T:cflatcCM}
An equi\ch\  local ring $R$ of finite embedding dimension   is 
\cCM\ \iff\ there exists a \c-flat,
\c-normalization $A\to R$.
\end{theorem}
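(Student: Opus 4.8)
The plan is to prove both implications by passing to the completion, using that $\complet R$ is Noetherian (Theorem~\ref{T:finembcomp}) and that a cata-normalization $\theta\colon(A,\pr)\to(R,\maxim)$ induces, after completion, a local \homo\ $\complet A\to\complet R$ with $\complet A$ regular and with $\pr\complet R=(\pr R)\complet R$ an $\maxim\complet R$-primary ideal: indeed, since $\pr R$ is $\maxim$-primary, Lemma~\ref{L:quot} gives an isomorphism $\complet R/(\pr R)\complet R\iso R/\pr R$, and the latter is Artinian. So the closed fibre of $\complet A\to\complet R$ is always Artinian, in particular \CM, and the real content is comparing flatness of $\complet A\to\complet R$ with the \CM\ property of $\complet R$.

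For the implication ($\Leftarrow$), suppose $\theta$ is a cata-flat cata-normalization, so that $\complet A\to\complet R$ is faithfully flat. Its base $\complet A$ is regular, hence \CM, and by the previous paragraph its closed fibre $\complet R/\pr\complet R\iso R/\pr R$ is Artinian, hence \CM. By the standard ascent of the \CM\ property along a flat local \homo\ (via the depth formula $\op{depth}\complet R=\op{depth}\complet A+\op{depth}(\complet R/\pr\complet R)$; see e.g.\ \cite[Theorem 23.3]{Mats} or \cite[\S2.1]{BH}), $\complet R$ is \CM, i.e.\ $R$ is \cCM.

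For ($\Rightarrow$), assume $\complet R$ is \CM. Pick a generic sequence $\tuple x=\rij xd$ of $R$, where $d:=\pd R=\kd{\complet R}$ by Theorem~\ref{T:pdim}, and let $\theta\colon A\to R$ be the associated (Noether) cata-normalization produced in Theorem~\ref{T:qN}: thus $A$ is the localization of $\pol\kappa{\xi_1,\dots,\xi_d}$ at $(\xi_1,\dots,\xi_d)$ for some weak coefficient field $\kappa$, with $\theta(\xi_i)=x_i$, and $\complet A=\pow\kappa{\xi_1,\dots,\xi_d}$ is a regular local ring whose maximal ideal $\pr\complet A=(\xi_1,\dots,\xi_d)$ is generated by a regular sequence. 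By Proposition~\ref{P:sop} the image of $\tuple x$ is a system of parameters of $\complet R$, and since $\complet R$ is \CM\ of dimension $d$, it is then a $\complet R$-regular sequence. Hence the Koszul complex $K_\bullet(\xi_1,\dots,\xi_d;\complet A)$ is a free resolution of $\complet A/\pr\complet A$, and for all $i$
\[
\tor{\complet A}i{\complet R}{\complet A/\pr\complet A}\iso H_i\bigl(K_\bullet(\xi_1,\dots,\xi_d;\complet A)\tensor_{\complet A}\complet R\bigr)=H_i\bigl(K_\bullet(x_1,\dots,x_d;\complet R)\bigr),
\]
which vanishes for $i>0$. Since $\complet R$ is a Noetherian local ring with $\pr\complet R\sub\maxim\complet R$, it is $\pr$-adically ideal-separated as an $\complet A$-module (Krull's intersection theorem applied to the finitely generated ideals $\id\complet R$, $\id\sub\complet A$), so the local criterion of flatness \cite[Theorem 22.3]{Mats} applies and the vanishing of $\tor{\complet A}1{\complet R}{\complet A/\pr\complet A}$ forces $\complet A\to\complet R$ to be flat, hence faithfully flat since it is local. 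Therefore $\theta$ is a cata-flat cata-normalization.

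The one point requiring care is that $\complet R$ need not be module-finite over $\complet A$ — the residue field extension $\pi(\kappa)\sub k$ may well be infinite — so one cannot argue simply that $\complet R$ is a \CM\ $\complet A$-module and invoke ``miracle flatness'' for finite modules; this is why the argument above uses the ideal-separated form of the local flatness criterion together with the Koszul computation. Everything else is routine transfer between $R$ and $\complet R$ via Lemma~\ref{L:quot} and Theorems~\ref{T:pdim} and \ref{T:qN}.
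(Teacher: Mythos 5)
Your proof is correct, and the backward direction coincides with the paper's (flat ascent of the \CM\ property with Artinian closed fibre). The forward direction, however, takes a genuinely different route. The paper simply takes \emph{any} Noether \c-normalization and invokes miracle flatness \cite[Theorem 23.1]{Mats}: $\complet A\to\complet R$ is a local \homo\ of Noetherian local rings of the same dimension with zero-dimensional closed fibre, $\complet A$ is regular and $\complet R$ is \CM, hence the map is flat --- and note that this criterion does \emph{not} require $\complet R$ to be module-finite over $\complet A$; it applies to any finitely generated module over the (Noetherian) target, in particular to $\complet R$ itself. So the concern that motivates your detour is unfounded: the paper's one-line argument is legitimate even when the residue field extension is infinite, and it yields the sharper remark (made right after the proof) that \emph{every} Noether \c-normalization of a \cCM\ ring is \c-flat. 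Your alternative --- choosing the specific \c-normalization of Theorem~\ref{T:qN} built from a generic sequence, so that $\complet A$ is a power series ring whose maximal ideal is generated by elements mapping to a system of parameters, computing $\tor{\complet A}i{\complet R}{\complet A/\pr\complet A}$ via the Koszul complex, and concluding flatness from the local criterion \cite[Theorem 22.3]{Mats} --- is nevertheless valid: the sop is $\complet R$-regular by \CM{}ness, so the higher Koszul homology vanishes, and ideal-separatedness holds because $\pr\complet R$ is $\maxim\complet R$-primary, so for every ideal $\id\sub\complet A$ the finitely generated $\complet R$-module $\id\tensor_{\complet A}\complet R$ carries the $\maxim\complet R$-adic topology and is separated by Krull's intersection theorem (this tensor module, rather than its image $\id\complet R$, is what must be separated; your parenthetical is slightly loose on this point). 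What your approach buys is self-containedness (Koszul homology plus the local flatness criterion instead of Theorem 23.1); what it gives up is generality, since it is tied to the particular \c-normalization coming from a generic sequence rather than working for an arbitrary Noether \c-normalization.
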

\begin{proof} 
If  $\complet A\to \complet R$ is flat with $A$ 
regular, then $\complet R$ is \CM\ by  \cite[Corollary to Theorem 23.3]{Mats},
since   the 
closed fiber has dimension zero. This proves one
direction. To prove the converse 
implication,  assume that $\complet R$ is \CM. Let
$A\to R$ be any Noether \c-normalization. Since
$\complet A\to
\complet R$ is a local \homo\ of Noetherian local
rings of the same dimension, with  closed fiber having dimension zero,  it  is flat by \cite[Theorem 23.1]{Mats},  because
$\complet A$   is regular and $\complet R$  is \CM.
\end{proof}

 From the proof it follows that any Noether \c-normalization of a 
\cCM\ local ring is \c-flat. We conclude this section with an instance of true
Noether Normalization:

\begin{theorem}\label{T:ulNN}
If  $\ul R$ is  an ultraproduct of equi\ch\ complete $d$-dimensional Noetherian
local rings, then $\ul R$ is isodimensional  \iff\
there exists an  ultra-regular local subring $\ul S\sub \ul R$ such that $\ul R$
is module-finite over it.
\end{theorem}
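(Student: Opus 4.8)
The plan is to prove the two implications separately, with \los\ as the main transfer tool (and noting that an ultraproduct of $d$-dimensional Noetherian local rings has pi-dimension, whence fr-dimension and ultra-dimension, at most $d$, so that ``isodimensional'' here forces finite embedding dimension, hence that the $\seq Rw$ have bounded embedding dimension and $\ul R$ is ultra-Noetherian; on the other side, module-finiteness over an ultra-regular ring automatically gives $\ul R$ finite embedding dimension).

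For the forward direction, suppose $\ul R$ is isodimensional. By Theorem~\ref{T:isodim}, almost all $\seq Rw$ then share a common parameter degree $\rho$. I would realize the desired subring level-wise: for almost every $w$, the Cohen structure theorem together with \cite[Proposition 3.5]{SchABCM}---invoked exactly as in the proof of Proposition~\ref{P:unifembdim}---produces a regular local subring $T_w\sub\seq Rw$, a power series ring over a coefficient field of $\seq Rw$, over which $\seq Rw$ is module-finite and generated by $\rho$ elements; for the finitely many remaining $w$ I take any Noether normalization $T_w\sub\seq Rw$. Every such $T_w$ is regular of dimension $d$ and embedding dimension $d$, so $\ul S:=\prod_w T_w/\mathcal U$ is ultra-regular; the inclusions $T_w\into\seq Rw$ are local (Noether normalization) and induce an injective local \homo\ $\ul S\to\ul R$; and since almost all $\seq Rw$ are generated by $\rho$ elements over $T_w$, \los\ gives that $\ul R$ is module-finite over $\ul S$. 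This settles the implication.

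For the converse, assume $\ul S\sub\ul R$ with $\ul S$ ultra-regular and $\ul R=\sum_{i=0}^N\ul S\ul{m_i}$ module-finite over $\ul S$, where $\ul{m_0}=1$. The strategy is to manufacture a \emph{second} realization of $\ul R$ as an ultraproduct of Noetherian local rings of \emph{uniformly bounded parameter degree}, and then conclude isodimensionality from Theorem~\ref{T:isodim}. Writing $\ul S$ as the ultraproduct of regular local rings $T_w$ of common dimension $e$, and picking a regular system of parameters $\ul{t_1},\dots,\ul{t_e}$ of $\ul S$ (which exists by Theorem~\ref{T:preg}) realized by regular systems of parameters of the $T_w$, I would first note that $\ul R/(\ul{t_1},\dots,\ul{t_e})\ul R=\ul R\tensor_{\ul S}\ul S/\maxim_{\ul S}$ has length at most $N+1$ over $\ul R$, being a vector space of dimension at most $N+1$ over the residue field of $\ul S$. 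Using the finitely many multiplication constants $\ul{m_i}\ul{m_j}=\sum_k\ul{c_{ijk}}\ul{m_k}$ with $\ul{c_{ijk}}\in\ul S$, together with the $\ul S$-module relations among the $\ul{m_i}$, one can present $\ul R$ as an ultraproduct $\prod_w R'_w/\mathcal U$ in which almost every $R'_w$ is a Noetherian local ring (local because $\ul R$ is, by \los) containing $T_w$ and module-finite over it on at most $N+1$ generators. For such $w$ a regular system of parameters of $T_w$ is a system of parameters of $R'_w$ whose residue ring has length at most $N+1$, so the parameter degree of $R'_w$ is at most $N+1$; passing to the ultrafilter set on which this common value is attained, Theorem~\ref{T:isodim} yields that $\ul R$ is isodimensional (and, as a byproduct, $e=d$).

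The main obstacle is exactly the step just described. In the forward direction the subring is built inside the given rings, so \los\ transports everything cleanly; but the abstract inclusion $\ul S\sub\ul R$ of ultraproducts in the converse need not be induced by ring maps $T_w\to\seq Rw$, so I cannot read it off factor by factor in the given realization of $\ul R$. The remedy---rebuilding $\ul R$ from the finite presentation data of its $\ul S$-algebra structure so that the regular subring $T_w$ is visible in each factor---is routine but fiddly bookkeeping with \los\ (of the kind carried out in \cite{SchEC}), and is where the care has to go. Everything else reduces to the dimension theory of local rings of finite embedding dimension (Theorems~\ref{T:pdim}, \ref{T:preg}, \ref{T:isodim}) and \los.
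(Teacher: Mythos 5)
Your forward direction (isodimensional implies the existence of the subring) is essentially the paper's own argument: Theorem~\ref{T:isodim} gives a common parameter degree $\rho$, the Cohen structure theorem together with the bound from \cite{SchABCM} (the paper cites \cite[Corollary 3.8]{SchABCM}; your route via \cite[Proposition 3.5]{SchABCM} as in Proposition~\ref{P:unifembdim} is the same content) produces level-wise regular subrings over which each $\seq Rw$ is generated by $\rho$ elements, and \los\ (Proposition~\ref{P:ullen}) transfers this to $\ul R$. That half is fine.

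The converse is where there is a genuine gap, and it is exactly the step you describe as ``routine but fiddly bookkeeping.'' The paper never rebuilds $\ul R$: it realizes $\ul S$ as an ultraproduct of regular local rings $\seq Sw$ and applies Proposition~\ref{P:ullen} directly to the $\seq Sw$-modules $\seq Rw$, i.e., it reads the finite extension level-wise from the outset. Your attempt to recover such a level-wise picture from the abstract inclusion $\ul S\sub\ul R$ by imposing the finitely many multiplication constants (plus ``the'' module relations) over the $T_w$ does not go through: $\ul S$ is not Noetherian, so the syzygy module of $\ul{m_0},\dots,\ul{m_N}$ need not be finitely generated and $\ul R$ need not be finitely presented as an $\ul S$-algebra. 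With only finitely many relations imposed, the ultraproduct of your rings $R'_w$ is some module-finite $\ul S$-algebra on $N+1$ generators, but nothing identifies it with $\ul R$; worse, there is not even a well-defined comparison map, because elements of $T_w$ have no realization inside $\seq Rw$ (the families $(T_w)_w$ and $(\seq Rw)_w$ are completely unrelated), so an ideal-membership certificate over $T_w[\xi]$ cannot be evaluated in $\seq Rw$, nor conversely. What the abstract inclusion honestly yields, by the computation you sketch, is only that a regular system of parameters $\ul{t_1},\dots,\ul{t_e}$ of $\ul S$ generates an ideal of $\ul R$ of colength at most $N+1$, whence by \los\ an $e$-generated $\seq\maxim w$-primary ideal of colength at most $N+1$ in almost every $\seq Rw$. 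Unless $e=d$ this does not bound the parameter degree (in $k[[x,y]]/(x^w)$ the maximal ideal is $2$-generated of colength one while the parameter degree is $w$), so Theorem~\ref{T:isodim} cannot be invoked; and proving $e=d$ from the abstract embedding is essentially the isodimensionality you are trying to establish. So the whole converse hangs on the unproved re-realization claim. To match the paper you should either take the hypothesis as providing the subring together with a compatible level-wise realization $\seq Sw\sub\seq Rw$ (which is what the paper's proof actually uses, and what the forward direction constructs), or supply a genuine argument for the reduction from the abstract inclusion; the latter is not a routine \los\ exercise.
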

\begin{proof}
Let us show that the if-direction holds for any ultra-Noetherian local ring
of finite embedding dimension. Let  
 $\ul S\sub \ul R$ be a finite extension with $\ul S$
ultra-regular, realized as the ultraproduct of regular local rings $\seq Sw$.
By Proposition~\ref{P:ullen},
if $\ul
R$ is generated by at most $N$ elements over $\ul S$, then almost each $\seq
Rw$ is generated by at most $N$ elements over $\seq Sw$. 
If $\seq{\tuple y}w$ is a regular system of parameters in $\seq Sw$, then its
image is a system of parameters in $\seq Rw$. Since $\seq Rw/\seq{\tuple
y}w\seq Rw$ has vector space dimension at most $N$ over the residue field of $\seq Sw$, 
its length is at most $N$, showing that the $\seq
Rw$ have bounded parameter degree. Hence, $\ul R$ is isodimensional by
Theorem~\ref{T:isodim}.

Conversely, assume $\ul R$ is as in the statement, so that in particular its \pdim\
is $d$. By Theorem~\ref{T:isodim}, almost all $\seq Rw$ have parameter degree $\rho$, for
some $\rho<\infty$.  By \cite[Corollary 3.8]{SchABCM}, almost each $\seq Rw$ is a
module-finite extension of a regular subring
$\seq Sw$, generated as an $\seq
Sw$-module by at most $\rho$ elements. Let $\ul S$ be the 
ultra-regular local ring given as the ultraproduct of the $\seq Sw$. Another application of
Proposition~\ref{P:ullen} yields that   $\ul R$ is generated by at most $\rho$ elements over $\ul S$.
\end{proof}

\begin{example}\label{E:ulNN}
The equi\ch\ condition is necessary as the following example shows. Fix a
prime number $p$ and an indeterminate $\xi$, and let
$\zet_p$ denote the ring of $p$-adic integers. Put $\seq
Rw:=\pol{\zet_p}\xi/(\xi^{2w+1}-p^2)\pol{\zet_p}\xi$   and
let $\ul R$ be the ultraproduct of the $\seq Rw$. Each $\seq Rw$ is a
one-dimensional complete local \CM\ domain with multiplicity (=parameter
degree) two. Hence $\ul R$ is isodimensional (indeed, the cataproduct
$\ulsep R\iso \pow {(\ulsep {\zet_p}/p^2\ulsep{\zet_p})}\xi$ is also one-dimensional, 
where   $\ulsep{\zet_p}$ is  the
catapower of $\zet_p$). 

Suppose there is an ultra-regular subring $\ul S\sub \ul R$ such that $\ul R$
is   generated as an $\ul S$-module by $N$ elements. Hence by \los, there  is a
regular subring $\seq Sw\sub \seq Rw$, such
that $\seq Rw$ is generated as an $\seq Sw$-module by $N$ elements, for almost all $w$. This,
however, contradicts \cite[Proposition 3.5 and Example 3.2]{SchABCM}, where it
is shown that the least number of generators for any regular subring of $\seq 
Rw$ must
be equal to the length of $\seq Rw/p\seq Rw$ (the so-called equi-parameter
degree of $\seq Rw$), that is to say, must be at least $2w+1$.

By varying the prime $p$ as well (say, by letting $p_w$ be an enumeration of
all prime numbers and replacing $p$ by $p_w$ in the definition of $\seq Rw$),
we can construct a similar counterexample
$\ul R$ which itself is equi\ch\ zero. This latter ring also shows the extent to
which the Ax-Kochen-Ershov theorem (\cite{AK,Ers65,Ers66}) holds. Namely,
let $\ul V$ be the ultraproduct of the $\zet_{p_w}$, so that $\ul V$ is in
particular ultra-regular.
By Ax-Kochen-Ershov, $\ul V$ is also
 the ultraproduct of the $\pow{ \mathbb F_{p_w}}t$ where $\mathbb  F_{p_w}$ is the
$p_w$-element field and $t$ a single indeterminate. Put $R'_w:=\pow
{\mathbb  F_{p_w}}{t,\xi}/(\xi^{2w+1}-t^2)\pow
{\mathbb  F_{p_w}}{t,\xi}$ and let $\ul R'$ be their ultraproduct (so that $\ul R'$ and
$R'_w$ are the equi\ch\ analogues of $\ul R$ and $\seq Rw$). Both $\ul R$ and $\ul R'$
contain $\ul V$ as
a subring in a natural way, but neither extension is   finite. However, there
is
a second embedding of $\ul V$ into $\ul R'$ making the latter a finite extension.
Namely,   $\ul V$ is also isomorphic with the subring given as the ultraproduct of
the $\pow{\mathbb  F_{p_w}}\xi$. Under this identification,  $\ul R'$
is isomorphic to $\pol {\ul V}t/(t^2-\alpha)\pol {\ul V}t$,  where
$\alpha$ is the ultraproduct of the $\xi^{2w+1}$. In conclusion, $\ul R$ and $\ul R'$
cannot be
isomorphic (note, however, that their cataproducts are isomorphic, 
to $\pow {\ul F}{t,\xi}/t^2\pow {\ul F}{t,\xi}$, where $\ul F$ is  the ultraproduct of
the $\mathbb  F_{p_w}$).
\end{example}

\begin{remark}\label{R:ulNNmix}
Using   \cite[Proposition 3.5]{SchABCM}, we can use the same argument to show
that if $\ul R$ is an
ultraproduct of complete $d$-dimensional Noetherian local rings of mixed \ch\
and of bounded
equi-parameter degree, then $\ul R$ admits an ultra-regular local subring $\ul S$
over which it is module-finite. Recall that the \emph{equi-parameter} degree of
a Noetherian local ring $A$ of mixed \ch\ $p$ is the least possible length of
a homomorphic image $A/I$ modulo a parameter ideal $I\sub A$ containing $p$. 
\end{remark}

\begin{corollary}
If $\ul S\sub \ul R$ is a local module-finite extension of ultra-Noetherian local
rings with $\ul S$ ultra-regular, then $\ul R$ is \pCM\ \iff\ it is flat over $\ul S$.
\end{corollary}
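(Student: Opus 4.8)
The plan is to realise the inclusion $\ul S\sub\ul R$ concretely at the level of the defining sequences and then play off miracle flatness for the components against an intrinsic depth computation on $\ul R$ itself. Write $\ul S$ as the ultraproduct of regular local rings $(\seq Sw,\seq{\mathfrak n}w)$ of bounded embedding dimension, and choose Noetherian local rings $\seq Rw$ with ultraproduct $\ul R$ so that almost all $\seq Sw\to\seq Rw$ are injective, local and module-finite (routine transfer, cf.\ the proof of Theorem~\ref{T:ulNN}). Since $\ul R$ is module-finite over $\ul S$, Proposition~\ref{P:ullen} yields a bound $N$ with almost each $\seq Rw$ generated by at most $N$ elements over $\seq Sw$. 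By the argument for the if-direction of Theorem~\ref{T:ulNN}, $\ul R$ is isodimensional; and since $\ul S$ is ultra-regular it is \preg\ and \creg\ (Theorem~\ref{T:psing}), so, with $d:=\pd{\ul S}$, we have $d=\ed{\ul S}=\op{depth}(\ul S)$ by Theorem~\ref{T:preg} together with the definition of a \preg\ local ring. A finite extension leaves Krull dimension unchanged, so almost all $\seq Sw$ and $\seq Rw$ have dimension $d$; hence the ultra-dimension of $\ul R$ is $d$, and $\pd{\ul R}=d$ since $\ul R$ is isodimensional (Theorem~\ref{T:uldim}).

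Suppose first $\ul R$ is \pCM. Then it is ultra-\CM\ by Theorem~\ref{T:ulCM} (using that $\ul R$ is isodimensional), so almost all $\seq Rw$ are $d$-dimensional \CM\ rings. For such $w$ the map $\seq Sw\to\seq Rw$ is local and module-finite with regular source and zero-dimensional closed fibre, so $\seq Rw$ is flat, hence (being finite) free, over $\seq Sw$ by the miracle-flatness criterion \cite[Theorem 23.1]{Mats}, say of rank $m_w\leq N$. As $\{m_w\}$ takes only finitely many values, $m_w$ equals a fixed $m$ for almost all $w$; thus almost all $\seq Rw$ are isomorphic to $\seq Sw^{m}$ as $\seq Sw$-modules, and therefore $\ul R\iso\ul S^{m}$, which is flat over $\ul S$.

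Conversely, suppose $\ul R$ is flat over $\ul S$. Being finitely generated and flat over the local ring $\ul S$, the $\ul S$-module $\ul R$ is free (see e.g.\ \cite[Theorem 7.10]{Mats}), say $\ul R\iso\ul S^{m}$. Now I compute $\op{depth}(\ul R)$ directly: with $\ulmax$ the maximal ideal of $\ul R$ and $\ul{\mathfrak n}$ that of $\ul S$, the quotient $\ul R/\ul{\mathfrak n}\ul R$ is finite over the residue field $\ul S/\ul{\mathfrak n}$, hence Artinian, so $\ul{\mathfrak n}\ul R$ is $\ulmax$-primary and therefore $\op{depth}(\ul R)=\op{grade}(\ulmax,\ul R)=\op{grade}(\ul{\mathfrak n}\ul R,\ul R)$ by \cite[Proposition 9.1.3]{BH}. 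Choosing generators $s_1,\dots,s_n$ of $\ul{\mathfrak n}$, the Koszul homology $H_i(s_1,\dots,s_n;\ul R)$ does not depend on whether $\ul R$ is regarded as an $\ul S$- or an $\ul R$-module, and it equals $H_i(s_1,\dots,s_n;\ul S)^{\oplus m}$ because $\ul R\iso\ul S^{m}$; hence $\op{grade}(\ul{\mathfrak n}\ul R,\ul R)=\op{grade}(\ul{\mathfrak n},\ul S)=\op{depth}(\ul S)=d$. Thus $\op{depth}(\ul R)=d=\pd{\ul R}$, i.e.\ $\ul R$ is \pCM.

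The only steps that are not purely formal are this last Koszul bookkeeping and the input that a finitely generated flat module over an arbitrary local ring is free; I expect the bookkeeping to be the main nuisance. Note that one cannot shortcut the converse implication by passing to completions: $\ul R\iso\ul S^{m}$ does give $\ulsep R\iso\ulsep S^{m}$, a free module over the regular ring $\ulsep S$, so that $\ul R$ is \cCM; but being \cCM\ and isodimensional does not force \pCM\ (Example~\ref{e:isoCM}), so one really must track the depth of $\ul R$ itself.
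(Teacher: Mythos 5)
Your proof is correct, and although it shares the paper's opening moves (realizing $\ul S\sub\ul R$ componentwise, isodimensionality via the if-direction of Theorem~\ref{T:ulNN}, and ultra-\CM{}ness plus miracle flatness for almost all $\seq Sw\sub\seq Rw$ in the forward direction), it then genuinely diverges from the paper's proof in both implications. For the direct implication the paper verifies flatness of $\ul R$ over $\ul S$ by checking $\tor {\ul S}1{\ul R}{\ul S/\ul I}=0$ for every finitely generated ideal $\ul I$, using \cite[Theorem 7.8(3)]{Mats} together with the fact that ultraproducts commute with homology; you instead upgrade componentwise flatness to componentwise freeness of rank at most $N$, extract a single rank $m$ occurring almost always, and conclude $\ul R\iso\ul S^m$ directly. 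For the converse the paper descends to the components: vanishing of $\tor{\ul S}1{\ul R}{\ul S/\ul{\mathfrak n}}$ passes to almost all $\tor{\seq Sw}1{\seq Rw}{\seq Sw/\seq{\mathfrak n}w}$, the local flatness criterion makes almost all $\seq Rw$ flat, hence \CM, over $\seq Sw$, and Theorem~\ref{T:ulCM} (with isodimensionality) yields that $\ul R$ is \pCM. You instead stay upstairs: finitely generated flat over the (possibly non-Noetherian) local ring $\ul S$ gives $\ul R\iso\ul S^m$ by \cite[Theorem 7.10]{Mats}, and the Koszul bookkeeping identifies the grade of the $\ulmax$-primary ideal $\ul{\mathfrak n}\ul R$ with the grade of $\ul{\mathfrak n}$ in $\ul S$ (legitimate, since the relevant Koszul complexes have the same underlying homology and the passage from $\ulmax$ to an $\ulmax$-primary ideal is exactly the paper's own use of \cite[Proposition 9.1.3]{BH}), so $\op{depth}(\ul R)=\op{depth}(\ul S)=d=\pd{\ul R}$, verifying the definition of \pCM\ directly. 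Your route buys a slightly stronger conclusion—under either hypothesis $\ul R$ is in fact a free $\ul S$-module of finite rank with $\op{depth}(\ul R)=\op{depth}(\ul S)$—and avoids both the commutation of Tor with ultraproducts and the local flatness criterion in the converse direction, at the cost of invoking the non-Noetherian freeness theorem for finitely generated flat modules over a local ring; the paper's route is more uniform, running the same Tor-transfer argument in both directions and reusing Theorem~\ref{T:ulCM}. Your closing observation that one cannot shortcut the converse through completions, since being \cCM\ and isodimensional does not imply \pCM\ (Example~\ref{e:isoCM}), correctly identifies the pitfall that makes the intrinsic depth computation (or the paper's descent) necessary.
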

\begin{proof}
Let $(\seq Sw,\seq{\mathfrak n}w)$ and $\seq Rw$ be Noetherian local rings with
ultraproduct equal
to $(\ul S,\ul {\mathfrak n})$ and $\ul R$ respectively. By \los, almost all $\seq
Sw\sub
\seq Rw$ are finite
extensions with $\seq Sw$   regular. Suppose first that
$\ul R$ is
\pCM, whence ultra-\CM\ by Theorem~\ref{T:ulCM}. Hence almost all $\seq
Rw$
are \CM, whence flat over $\seq Sw$. To show that $\ul R$ is flat over
$\ul S$, it suffices by \cite[Theorem 7.8(3)]{Mats} to show that $\tor {\ul
S}1{\ul R}{\ul S/\ul I}$ vanishes
for all finitely generated ideals $\ul I$ of $\ul S$. Choose $\seq Iw\sub\seq 
Sw$ whose ultraproduct equals $\ul I$. Since ultraproducts commute with homology,
  $\tor
{\ul S}1{\ul R}{\ul S/\ul I}$  is the ultraproduct of the $\tor {\seq Sw}1{\seq
Rw}{\seq Sw/\seq Iw}$. Since the latter  are zero by flatness, so is the former.

Conversely, suppose $\ul S\sub \ul R$ is flat. In particular, $\ul R$
isodimensional by (the proof of) Theorem~\ref{T:ulNN}. 
By the same argument as above,
the vanishing of $\tor {\ul S}1{\ul R}{\ul S/\ul {\mathfrak n}}$ implies the
vanishing of almost all $\tor {\seq
Sw}1{\seq Rw}{\seq Sw/\seq{\mathfrak n}w}$. By the local flatness criterion, this implies that almost all $\seq Rw$
are flat over $\seq Sw$, whence are \CM. Hence $\ul R$ is ultra-\CM, and therefore
\pCM\ by Theorem~\ref{T:ulCM}.
\end{proof}

\section{Homological theorems}\label{s:INIT}

In this section, we prove for local rings of finite embedding dimension the
counterparts of the homological theorems from commutative algebra, under the
the assumption that the completion is equi\ch. We start with an immediate
corollary of the definitions:

\begin{corollary}[Monomial Theorem]\label{C:mon}
 Let $R$ be a local ring
 of   \pdim\ $d$ and let $\tuple x$ be a generic
sequence in $R$. Suppose $R$ has either equal \ch\ or otherwise is infinitely
ramified (see \S\ref{s:ram}). If
$\nu_0,\dots,\nu_s\in\nat^d$ are multi-indices
 such that ${\nu_0}$ does not belong to the semigroup  generated by
${\nu_1},\dots,{\nu_s}$, then $\tuple
x^{\nu_0}$ does not lie in the ideal in $R$ generated by $\tuple 
x^{\nu_1},\dots \tuple x^{\nu_s}$.
\end{corollary}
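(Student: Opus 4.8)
The plan is to push everything down to the completion $\complet R$, where the statement becomes the classical equi\ch\ Monomial Theorem. First I would set $J:=(\tuple x^{\nu_1},\dots,\tuple x^{\nu_s})R$; this ideal is finitely generated, so its extension satisfies $J\complet R=(\tuple x^{\nu_1},\dots,\tuple x^{\nu_s})\complet R$, while trivially $J\subseteq J\complet R$ under the canonical map $R\to\complet R$. Hence it suffices to show $\tuple x^{\nu_0}\notin(\tuple x^{\nu_1},\dots,\tuple x^{\nu_s})\complet R$; note that only this one inclusion is used, so the possible failure of $R\to\complet R$ to be flat is irrelevant here. Now $\complet R$ is a complete Noetherian local ring by Theorem~\ref{T:finembcomp}, and by Proposition~\ref{P:sop} the image of the generic sequence $\tuple x$ in $\complet R$ is a system of parameters, so $\complet R$ has dimension $d$. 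Finally, $\complet R$ is equi\ch: if $R$ has equal \ch\ then $\complet R$ contains a field, while if $R$ is infinitely ramified then $\complet R$ has equal \ch\ $p$ by the discussion in \S\ref{s:ram}. We are thus reduced to the following known fact about a system of parameters $\tuple x=(x_1,\dots,x_d)$ in a complete equi\ch\ Noetherian local ring: if $\nu_0$ does not lie in the semigroup generated by $\nu_1,\dots,\nu_s$ --- equivalently, if $\xi^{\nu_0}$ lies outside the monomial ideal $(\xi^{\nu_1},\dots,\xi^{\nu_s})$ of $\pol k\xi$, that is, if no $\nu_i$ is coordinatewise $\leq\nu_0$ --- then $\tuple x^{\nu_0}\notin(\tuple x^{\nu_1},\dots,\tuple x^{\nu_s})\complet R$.

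For this last fact I would invoke the standard big \CM\ argument. Let $B$ be a balanced big \CM\ algebra over $\complet R$; such an algebra exists because $\complet R$ is equi\ch. Then $\tuple x$ is a $B$-regular sequence and $(\tuple x)B\neq B$, and an elementary induction on $d$ --- the base case $d=1$ using only that a power of a non-zero-divisor is again a non-zero-divisor together with $(\tuple x)B\neq B$, and the inductive step comparing $x_d$-adic orders --- shows that $\tuple x^{\nu_0}\in(\tuple x^{\nu_1},\dots,\tuple x^{\nu_s})B$ forces $\nu_i\leq\nu_0$ coordinatewise for some $i$. Since a relation $\tuple x^{\nu_0}=\sum_i b_i\,\tuple x^{\nu_i}$ holding in $\complet R$ persists in $B$, this contradicts the hypothesis on the $\nu_i$, and the proof is complete.

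The one genuine obstacle is the equicharacteristic input: the proof needs a balanced big \CM\ algebra over $\complet R$, hence needs $\complet R$ --- not $R$ itself --- to be equi\ch, and the two alternative assumptions on $R$ (equal \ch, or infinitely ramified) are precisely the two ways of securing this, via \S\ref{s:ram}. This is also the sense in which the corollary is, as announced, essentially immediate once Proposition~\ref{P:sop} has identified the generic sequences of $R$ with the systems of parameters of $\complet R$: the statement then reduces verbatim to the Monomial Theorem for the complete equi\ch\ Noetherian local ring $\complet R$. The remaining ingredients --- extending a finitely generated ideal along $R\to\complet R$, where only the trivial inclusion $J\subseteq J\complet R$ enters, and the monomial bookkeeping for a regular sequence in a big \CM\ algebra --- are routine.
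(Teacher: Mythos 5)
Your reduction is exactly the paper's: assume the membership held in $R$, push it into $\complet R$ (only the trivial inclusion $J\subseteq J\complet R$ is needed, so non-flatness of $R\to\complet R$ is irrelevant), invoke Proposition~\ref{P:sop} to see that $\tuple x$ becomes a system of parameters in the Noetherian ring $\complet R$, note that the two alternative hypotheses are precisely what make $\complet R$ equi\ch\ (\S\ref{s:ram}), and finish with the classical equi\ch\ Monomial Theorem, which the paper simply cites from \cite{HoHT} and which you re-derive from a balanced big \CM\ algebra over $\complet R$; that extra derivation is the standard proof of the cited result, so in substance your argument coincides with the paper's.

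One caveat, which concerns the statement more than your method: your parenthetical ``equivalently'' is not an equivalence. If no $\nu_i$ is coordinatewise $\leq\nu_0$, then indeed $\nu_0$ lies outside the semigroup generated by $\nu_1,\dots,\nu_s$, but the converse fails: for $d=1$, $\nu_0=3$, $\nu_1=2$ one has $3\notin\{2,4,6,\dots\}$, while $x^3\in x^2R$ always. Thus what your big-\CM\ bookkeeping (and the classical theorem) actually yields is the conclusion under the stronger hypothesis that $\nu_0\notin\bigcup_i(\nu_i+\nat^d)$, i.e.\ that $\xi^{\nu_0}$ lies outside the monomial ideal $(\xi^{\nu_1},\dots,\xi^{\nu_s})$, not under the literal semigroup hypothesis; and the same $d=1$ example shows the corollary is false under that literal reading, so no argument could close this gap. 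The intended hypothesis is clearly the order-theoretic one (read ``semigroup ideal generated by the $\nu_i$''), which still covers the special instance \eqref{eq:mon}; with that reading your proof is complete, and the discrepancy is a defect of the statement shared by the paper's own one-line proof rather than a flaw peculiar to yours.
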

\begin{proof}
 If  the contrary were true, then the same ideal 
membership holds in the completion $\complet R$. However,
by Proposition~\ref{P:sop}, the image of $\tuple x$ in $\complet R$ 
is a system of parameters, thus violating the usual
Monomial Theorem (see for instance \cite{HoHT}), since $\complet R$ is equi\ch.
\end{proof}

A special instance of the assertion (which is often already referred 
to as the Monomial Theorem) is the fact that for any
generic sequence $\rij xd$ in $R$, we have
\begin{equation}\label{eq:mon}
 (x_1\cdots x_d)^t\notin (x_1^{t+1},\dots,x_d^{t+1})R
\end{equation}
 for all $t$. In the Noetherian setup, the latter 
result suffices to show the so-called Direct Summand
Theorem (see for instance \cite[Lemma 9.2.2]{BH}). However, it is not 
clear how to derive this in the present setup (presently, I can only get a
weaker version, which I omit here). 

Next we have a look at the
Hochster-Roberts theorem. Although one can formulate a more general version,
we will only give the result for local \homo{s} $R\to S$ which are \emph{locally
of finite type}, meaning that $S$ is a localization of some finitely generated
$R$-algebra. Note that the class of local rings of finite embedding dimension 
is closed under such maps: if $(R,\maxim)\to (S,\mathfrak n)$ is locally of
finite type, then so is $R/\maxim\to S/\maxim S$. In particular,  $S/\maxim S$ is
Noetherian, and   $\mathfrak n(S/\maxim S)$ is finitely generated. Hence if
$\maxim$ is finitely generated, then so is $\mathfrak n$.

\begin{theorem}[Hochster-Roberts]\label{T:wHR} 
Let  $R\to S$ be a local \homo\ between
local rings of finite embedding dimension. 
Suppose $R$ has equal \ch\ or is infinitely ramified. If
$R\to S$ is cyclically pure and     locally of finite type, and if $S$ is \creg, then $R$ is \cCM.
\end{theorem}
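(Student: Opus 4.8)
The plan is to reduce the statement to the classical Hochster--Roberts theorem applied to the completion $\complet R$, exploiting the fact that finite embedding dimension is inherited by all the rings in sight and that completions of such rings are Noetherian by Theorem~\ref{T:finembcomp}. First I would observe that, since $R\to S$ is locally of finite type, $S$ has finite embedding dimension (as noted just before the statement), so both $\complet R$ and $\complet S$ are complete Noetherian local rings, and $\complet S$ is regular by hypothesis (\creg\ means precisely this). By the remarks in \S\ref{s:ram}, the hypothesis that $R$ is equi\ch\ or infinitely ramified guarantees that $\complet R$, hence also $\complet S$, is equi\ch; this is exactly the input needed to invoke the equal-\ch\ Hochster--Roberts theorem downstream. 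So the crux is to promote the cyclic purity of $R\to S$ to cyclic purity (or at least purity) of $\complet R\to\complet S$, and then apply the Noetherian theorem.

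The key steps, in order: (1) Set up $\complet R\to\complet S$ and check it is a local \homo\ of complete Noetherian local rings with $\complet S$ regular and $\complet R$ equi\ch. (2) Show that $R\to S$ being locally of finite type and cyclically pure forces $\complet R\to\complet S$ to be cyclically pure. Here the natural route is: a cyclically pure map $R\to S$ satisfies $IS\cap R=I$ for every ideal $I$ of $R$; apply this with $I=\maxim^n$ and use the isomorphisms $R/\maxim^n\iso\complet R/\maxim^n\complet R$ and $S/\maxim^n S\iso\complet S/\maxim^n\complet S$ (the latter from Lemma~\ref{L:quot}, since $S$ too has finite embedding dimension) to conclude that for each $n$, $I\complet S\cap\complet R\sub I+\maxim^n\complet R$ whenever $I$ is $\maxim\complet R$-primary; then pass to arbitrary ideals of $\complet R$ via the Artin--Rees / closedness arguments of Lemma~\ref{L:quot} and the fact that every ideal of the Noetherian ring $\complet R$ is closed. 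Actually it is cleaner to argue that $R\to S$ cyclically pure implies $R/\maxim^n\to S/\maxim^n S$ is injective for all $n$, hence $\complet R\to\complet S$ has the property that $\complet R/\maxim^n\complet R$ injects into $\complet S/\maxim^n\complet S$, and then deduce cyclic purity of $\complet R\to\complet S$ by a standard completion argument (cyclic purity can be tested modulo $\maxim$-primary ideals, cf.\ \cite{HoHT} or the usual criterion that $R\to S$ is cyclically pure iff $E_R(k)\to E_R(k)\tensor_R S$ is injective, which localizes/completes well). (3) Invoke the classical Hochster--Roberts theorem (in the form valid in equal \ch, e.g.\ via big \CM\ algebras, see \cite{HoHT} or \cite[\S9]{BH}): a cyclically pure subring of a regular (Noetherian) local ring of equal \ch\ is \CM. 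This yields that $\complet R$ is \CM. (4) By definition, $R$ \cCM\ means $\complet R$ is \CM, so we are done.

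The main obstacle I expect is Step (2): verifying that cyclic purity descends correctly to the completions without the Noetherian/flatness hypotheses that would make this routine. In the Noetherian case $R\to\complet R$ is faithfully flat and one argues instantly; here flatness of $R\to\complet R$ can fail (as emphasized in \S\ref{s:ul}), so one must work with the $\maxim$-adic filtration explicitly. The right lever is that cyclic purity only needs to be checked against $\maxim$-primary ideals $I$ (equivalently against the injectivity of $R/I\to S/IS$ for such $I$, or against the injective hull of the residue field), combined with the canonical isomorphisms $R/\maxim^n\iso\complet R/\maxim^n\complet R$ and $S/\maxim^n S\iso\complet S/\maxim^n\complet S$ from Theorem~\ref{T:finembcomp} and Lemma~\ref{L:quot}; since every $\maxim\complet R$-primary ideal of $\complet R$ contains some $\maxim^n\complet R$ and is contracted from nowhere problematic, the purity statements transfer verbatim. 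Once cyclic purity of $\complet R\to\complet S$ is in hand, the remaining steps are immediate citations.
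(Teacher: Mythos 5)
Your skeleton is the paper's: pass to $\complet R\to\complet S$, note that $\complet R$ is equi\ch\ and $\complet S$ regular, prove cyclic purity of this completed map, and quote the classical Hochster--Roberts theorem. But your step (2), which you rightly identify as the crux, has a genuine gap. The isomorphism $S/\maxim^nS\iso\complet S/\maxim^n\complet S$ that you attribute to Lemma~\ref{L:quot} ``since $S$ has finite embedding dimension'' is false in general and is not what that lemma asserts: $\complet S/\maxim^n\complet S$ is the completion of $S/\maxim^nS$ at the maximal ideal of $S$, and since the closed fibre $S/\maxim S$ may have positive dimension, $S/\maxim^nS$ is neither Artinian nor complete. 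What is needed (and suffices) is only the injection $S/\id S\into\complet S/\id\complet S$, i.e.\ the closedness of $\id S$ in $S$, for every $\maxim$-primary ideal $\id\sub R$ --- and this is exactly where the hypothesis that $R\to S$ is locally of finite type must enter: $S/\id S$ is then locally of finite type over the Artinian ring $R/\id$, hence Noetherian, hence separated, so $\id S$ is closed and Lemma~\ref{L:quot} applies. Your argument never actually uses the finite-type hypothesis (or the weaker condition that the closed fibre be Noetherian); finite embedding dimension of $S$ alone does not make such ideals closed.

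The second defect is that you only feed the cyclic purity of $R\to S$ into the powers $I=\maxim^n$. Knowing $\maxim^n\complet S\cap\complet R=\maxim^n\complet R$ for all $n$ (equivalently, injectivity of $\complet R/\maxim^n\complet R\to\complet S/\maxim^n\complet S$) does not imply $I\complet S\cap\complet R=I$ for an arbitrary $\maxim\complet R$-primary ideal $I$, nor does it yield your intermediate inclusion $I\complet S\cap\complet R\sub I+\maxim^n\complet R$; and the appeal to the injective-hull criterion, which is supposed to ``complete well'', is unsubstantiated here since $R\to\complet R$ need not be flat. The correct bridge, and the one the paper uses, is that every $\maxim\complet R$-primary $I$ is \emph{extended} from $R$: $I=\id\complet R$ with $\id\sub R$ an $\maxim$-primary ideal, because $R/\maxim^n\iso\complet R/\maxim^n\complet R$. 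Then cyclic purity of $R\to S$ applied to $\id$ itself (not merely to powers of $\maxim$) gives injectivity of $R/\id\to S/\id S$, and the composite $\complet R/I\iso R/\id\to S/\id S\into\complet S/I\complet S$ yields $I\complet S\cap\complet R=I$. With these two repairs --- test against all $\maxim$-primary ideals of $R$, and use the finite-type hypothesis to make $\id S$ closed --- your proof becomes exactly the paper's.
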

\begin{proof} 
It suffices to show that $\complet R\to \complet S$ is 
cyclically pure, for then the classical
Hochster-Roberts theorem shows that   $\complet R$ is \CM\ by 
\cite[Theorem 2.3]{HHbigCM2}, since $\complet S$ is
regular and equi\ch. To prove cyclical purity, we need to
show that $I=I\complet S\cap \complet R$ for each ideal $I$ in 
$\complet R$. Since any ideal is an intersection of
$\maxim\complet R$-primary ideals, it suffices to show this for $I$ 
an $\maxim\complet R$-primary ideal, where $\maxim$ is the maximal ideal of $R$.
By Lemma~\ref{L:Art}, any such ideal is extended from $R$, that is to 
say of the form $I=\id \complet R$ with $\id$ an
$\maxim$-primary ideal of $R$. Since $S/\id S$ is locally of finite type over
the Artinian local ring $R/\id$, it is Noetherian. Therefore, $\id S$ is
closed, so that 
$I\complet S\cap S=\id\complet S\cap S=\id S$ by Lemma~\ref{L:quot}. 
Hence, in the composition
\begin{equation*}
\complet R/I\iso R/\id\to S/\id S\to \complet S/I\complet S
\end{equation*}
 all maps are injective, as the first is an 
isomorphism by Lemma~\ref{L:quot} and the second is injective
  by assumption. This proves that $I\complet S\cap \complet R=I$, as required.
\end{proof}

%

\begin{remark}
Combining this result with Theorems~\ref{T:pCM}, \ref{T:psing} and \ref{T:cflatcCM}, and Corollary~\ref{C:bertin}  yields Corollary~\ref{C:GlazConj} from the introduction.
In the theorem, less than cyclical purity is required; it suffices that
$R\to S$ is \emph{pure-closed}, in the sense that $IS\cap R=I$ for every closed
(equivalently, every $\maxim$-primary) ideal $I\sub R$. Furthermore, we may
weaken the condition that $R\to S$ is locally of finite type to requiring that
its closed fiber $S/\maxim
S$ is Noetherian. In
order to apply the techniques from
\S\ref{s:mix} and deduce
an asymptotic version of the Hochster-Roberts theorem in mixed \ch, we would
like to prove a stronger result: namely, under an additional
isodimensionality assumption, may we conclude that $R$ is \pCM?
\end{remark}

To obtain other homological properties, we follow Hochster's treatment
\cite{HoHT}, by generalizing the notion of big \CM\ modules. In fact, as
in the Noetherian case, we can even put a ring structure on the latter:

\subsection*{Big \CM\ algebras} 
We call an $R$-algebra $B$ a 
\emph{big \CM\ algebra} if some
generic sequence   of $R$ is $B$-regular; we call $B$ a
\emph{balanced big \CM\ algebra} if  every generic sequence is $B$-regular.

\begin{theorem}\label{T:BCM}
 Let $R$ be a local ring of finite embedding dimension. If $R$ has equal \ch\ or
is infinitely unramified, then it admits a
 balanced big \CM\ algebra.
\end{theorem}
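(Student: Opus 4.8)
The plan is to obtain the required algebra by descent of scalars from the completion. By Theorem~\ref{T:finembcomp} the completion $\complet R$ is a complete Noetherian local ring whose maximal ideal is $\maxim\complet R$, and by the hypothesis together with the remarks in \S\ref{s:ram} — the completion of an infinitely ramified local ring is equi\ch\ — the ring $\complet R$ is Noetherian and equi\ch. I would then invoke the classical existence theorem for balanced big \CM\ algebras over equi\ch\ Noetherian local rings (in \ch\ $p$ via the absolute integral closures of the complete domain quotients of maximal dimension, and in \ch\ zero by the standard reduction modulo $p$; see \cite{HoHT,HHbigCM2}): there is a $\complet R$-algebra $B$ with $B\neq(\maxim\complet R)B$ such that every system of parameters of $\complet R$ is a $B$-regular sequence.

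Next I would view $B$ as an $R$-algebra through the canonical composite $R\to\complet R\to B$. Since $\maxim$ generates $\maxim\complet R$, one has $\maxim B=(\maxim\complet R)B\neq B$, so $B$ is a proper $R$-algebra; consequently any tuple of $\complet R$ generating an ideal contained in $\maxim\complet R$ still generates a proper ideal of $B$, and the notion of a $B$-regular sequence is not vacuous for it. Now let $\tuple x$ be an arbitrary generic sequence of $R$. By Proposition~\ref{P:sop} its image in $\complet R$ is a system of parameters, hence a $B$-regular sequence by the choice of $B$. Thus every generic sequence of $R$ is $B$-regular, which is exactly what it means for $B$ to be a balanced big \CM\ algebra over $R$, so the proof is complete.

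I do not expect a genuine obstacle: the whole weight of the argument rests on the two imported facts — the equi\ch\ existence theorem for balanced big \CM\ algebras and Proposition~\ref{P:sop} — and the only point demanding even a moment's attention is that \emph{balancedness} (regularity for \emph{all} generic sequences, not merely one) is preserved under restriction of scalars along $R\to\complet R$. This holds because Proposition~\ref{P:sop} exhibits the generic sequences of $R$ as a subfamily of the systems of parameters of $\complet R$, on which $B$ was chosen balanced. The equi\ch\ hypothesis on the completion enters only through the cited existence theorem, which is why it cannot be relaxed by the present method.
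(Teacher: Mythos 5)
Your proposal is correct and follows essentially the same route as the paper: pass to the completion $\complet R$, which is Noetherian (Theorem~\ref{T:finembcomp}) and equi\ch\ by the discussion of infinite ramification in \S\ref{s:ram}, invoke the classical equi\ch\ existence theorem for balanced big \CM\ algebras, and then restrict scalars using Proposition~\ref{P:sop} to see that every generic sequence of $R$, being a system of parameters of $\complet R$, is $B$-regular. The extra remark on properness of $\maxim B$ is a harmless refinement the paper leaves implicit.
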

\begin{proof} By the work of Hochster and Huneke 
(\cite{HHbigCM,HHbigCM2}) or the more canonical construction of
\cite[\S7]{SchAsc} (note that   the algebras in the latter paper are 
local), any equi\ch\ Noetherian local ring admits a
balanced big \CM\ algebra. This applies in particular to the completion
$\complet  R$ as it is always equi\ch\ by the discussion in \S\ref{s:ram}. So
remains to show that any balanced big
\CM\
$\complet R$-algebra $B$ is a balanced big \CM\ $R$-algebra. 
However, this is clear for if $\tuple x$ is a generic
sequence, then it is a system of parameters in $\complet R$ by 
Proposition~\ref{P:sop}, whence $B$-regular.
\end{proof}

\begin{remark}
We may drop the requirement on the \ch\ when $R$ has \pdim\ at most three, since in that case, regardless of \ch, $\complet R$ admits a balanced big \CM\ algebra by \cite{HoBCM3}. In particular, all the homological theorems below also hold under this assumption.
\end{remark}

\begin{remark}\label{R:BCMfunc} 
In fact, we may choose   balanced 
big \CM\ algebras in a weakly functorial way in the
following sense. We will call a local \homo\ $R\to S$ \emph{\c-permissible}, if
$\complet R\to \complet S$ is permissible
in the sense of \cite[\S9]{HuTC} or \cite[\S7.9]{SchAsc}. In that 
case, we may choose a balanced big \CM\ $\complet
R$-algebra $B$ (whence a balanced big \CM\ $R$-algebra), a 
balanced big \CM\ $\complet S$-algebra $B'$ (whence a
balanced big \CM\ $S$-algebra) and a \homo\ $B\to B'$ extending 
$\complet R\to \complet S$, whence also extending
$R\to S$. Recall from the cited sources that any local algebra is 
permissible over an equidimensional and universally
catenary Noetherian local ring (e.g., a complete local domain).
\end{remark}

\begin{proposition}\label{P:cycpur} 
If $R$ is \preg\ with residue field $k$ and if $B$ is a balanced
big \CM\ $R$-algebra, then all $\tor RiBk$ vanish
for $i>0$,  and $IB\cap R$ is equal
to the  closure of $I$ for each 
ideal $I\sub R$.
\end{proposition}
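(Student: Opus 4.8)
The plan is to exploit the fact that, by Theorem~\ref{T:sreg}, a \preg\ local ring $R$ of \pdim\ $d$ has a residue field $k$ admitting a finite free resolution, and—as noted in Remark~\ref{R:sreg}—this resolution is the Koszul complex $K_\bullet(\tuple x)$ of any generic sequence $\tuple x=\rij xd$ generating $\maxim$ (recall that in a \preg\ ring, by Proposition~\ref{P:pregCM} and Theorem~\ref{T:preg}, the maximal ideal \emph{is} generated by a generic sequence). Thus to compute $\tor RiBk$ we may use $K_\bullet(\tuple x)\tensor_R B=K_\bullet(\tuple x;B)$, the Koszul complex of $\tuple x$ on $B$. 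Since $B$ is a \emph{balanced} big \CM\ $R$-algebra, every generic sequence—in particular $\tuple x$—is $B$-regular, so $H_i(\tuple x;B)=0$ for all $i>0$. This gives $\tor RiBk=0$ for $i>0$ at once. (One should check that $\tuple x/\tuple xB$ is not the unit ideal, i.e.\ $B\neq \tuple xB$; this follows because $\tuple x$ being a $B$-regular sequence in particular means $x_d$ is a nonzerodivisor on $B/\rij x{d-1}B\neq 0$, so $\rij xdB\neq B$.)

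For the second assertion—that $IB\cap R$ equals the closure of $I$ for every ideal $I\sub R$—I would argue as follows. First, $IB\cap R$ is a closed ideal: indeed $\maxim^n B\supseteq \tuple x^{n}B$ for a generic $d$-tuple $\tuple x$, and one checks that $\bigcap_n \tuple x^n B=0$ because $\tuple x$ is $B$-regular and, via the associated graded argument (the initial forms of a $B$-regular sequence generate a polynomial ring over $B/\tuple xB$, cf.\ \cite[Theorem 16.2]{Mats}), a Krull-intersection-type statement holds for $B$ along the $\tuple x$-adic, hence $\maxim$-adic, filtration; consequently $\bigcap_n(IB+\maxim^nB)\cap R=IB\cap R$, so $IB\cap R$ is closed in $R$. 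Since it obviously contains $I$, it contains the closure $\bar I=I\complet R\cap R$ of $I$ (using the description of the closure from the discussion after Lemma~\ref{L:quot}). For the reverse inclusion, it suffices to show $IB\cap R\sub \bar I$, and since $\bar I=\bigcap_n(I+\maxim^n)$ and each $I+\maxim^n$ is $\maxim$-primary, it is enough to prove $\id B\cap R=\id$ for every $\maxim$-primary ideal $\id\sub R$, i.e.\ that $R\to B$ is \emph{cyclically pure} with respect to $\maxim$-primary ideals.

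The heart of the matter—and the step I expect to be the main obstacle—is this cyclic purity for $\maxim$-primary ideals. The clean way is to observe that $R\to B$ factors through the faithfully flat map $R\to\complet R$ by the construction in Theorem~\ref{T:BCM} (a balanced big \CM\ $\complet R$-algebra is a balanced big \CM\ $R$-algebra), and that $\complet R$ is a \CM\ (indeed regular, since $\mult R=1$ by Corollary~\ref{C:pregmult}) Noetherian local ring with $B$ a balanced big \CM\ $\complet R$-algebra. For a \emph{regular} Noetherian local ring $\complet R$, any balanced big \CM\ algebra $B$ is faithfully flat over $\complet R$ (this is exactly the fact cited in the proof of Theorem~\ref{T:flatsep}, e.g.\ \cite[Theorem IV.1]{SchFPD} or \cite[Lemma 2.1(d)]{HHbigCM2}), hence in particular pure, so $\id\complet R=\id\complet R\cdot B\cap\complet R$ for every ideal of $\complet R$. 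Combining with Lemma~\ref{L:Art} (every $\maxim$-primary $\id\sub R$ satisfies $R/\id\iso\complet R/\id\complet R$, so $\id\complet R\cap R=\id$) and faithful flatness of $R\to\complet R$ gives $\id B\cap R=\id$ for $\maxim$-primary $\id$, and then $IB\cap R\sub\bigcap_n(I+\maxim^n)=\bar I$ as desired. The only delicate point is making sure the chosen balanced big \CM\ algebra $B$ really does sit over $\complet R$ in the way Theorem~\ref{T:BCM} provides; granting that, the two inclusions combine to give $IB\cap R=\bar I$, completing the proof.
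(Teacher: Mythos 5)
The first half of your argument is correct and is in fact a slight streamlining of the paper's: since $R$ is \preg, the maximal ideal is generated by a generic sequence $\tuple x$ and the Koszul complex $K_\bullet(\tuple x)$ resolves $k$ (this is precisely the first step in the proof of Theorem~\ref{T:sreg}), so $\tor RiBk\iso H_i(\tuple x;B)=0$ for $i>0$ because $\tuple x$ is $B$-regular. The paper reaches the same vanishing by first passing to a Nagata extension so that $\tuple x$ becomes an honest $R$-regular sequence and then using the deformation isomorphism $\tor RiBk\iso\tor {R/\tuple xR}i{B/\tuple xB}k$; either route is fine.

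The second half has genuine gaps. Your inclusion $\bar I\sub IB\cap R$ rests on the claim that $\bigcap_n\maxim^nB=0$ (and that $IB$ is then closed in $B$); but a balanced big \CM\ algebra need not be $\maxim$-adically separated, and quasi-regularity of $\tuple x$ on the non-Noetherian ring $B$ yields no Krull-intersection statement. Indeed this inclusion fails in general: if $R$ is an ultra-DVR (a \preg\ domain of \pdim\ one), then $B:=R$ is itself a balanced big \CM\ $R$-algebra, and for $I=(0)$ one gets $IB\cap R=(0)$, whereas the closure of $(0)$ is $\inf R\neq(0)$. So the substance of the statement, and what the paper's proof actually delivers, is the contraction $\mathfrak n=\mathfrak nB\cap R$ for every $\maxim$-primary (hence every closed) ideal, and with it $IB\cap R\sub\bar I$. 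More importantly, your proof of that contraction assumes that $R\to B$ factors through $\complet R$ with $B$ a balanced big \CM\ algebra over the regular ring $\complet R$; this is true for the particular algebra produced by Theorem~\ref{T:BCM}, but the proposition concerns an arbitrary balanced big \CM\ $R$-algebra, and no such factorization is given---you flag this yourself but do not close it. The paper's proof avoids completion entirely: from the Tor vanishing of the first part it deduces $\tor {R/\mathfrak n}1{B/\mathfrak nB}k=0$ for any $\maxim$-primary ideal $\mathfrak n$ (via \cite[Lemma 2.1]{SchBetti}), and then the local flatness criterion over the Artinian ring $R/\mathfrak n$ shows that $B/\mathfrak nB$ is faithfully flat over $R/\mathfrak n$, whence $\mathfrak nB\cap R=\mathfrak n$; intersecting over all $\maxim$-primary ideals containing $I$ concludes. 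This homological step is the idea missing from your proposal.
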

\begin{proof} 
It is not hard to verify that $B\tensor_RS$ is a balanced big \CM\
$S$-algebra, for $S:=R(\xi)$ and $\xi$ a tuple of indeterminates. Since $R\to S$
is faithfully flat, we may pass from $R$ to $S$ and therefore assume in view
of Remark~\ref{R:NC} that the maximal ideal of $R$ is generated by  a regular   sequence
$\tuple x$. Since  $\tuple x$ is also
$B$-regular and $k=R/\tuple xR$,
\begin{equation*}
\tor RiBk\iso\tor {R/\tuple xR}i{B/\tuple xB}k=0
\end{equation*} 
for all $i>0$. Therefore, for  any $\maxim$-primary ideal $\mathfrak n$,
we get $\tor {R/\mathfrak n}1{B/\mathfrak nB}k=0$ by \cite[Lemma
2.1]{SchBetti}.  Since $R/\mathfrak n$ is Artinian, $B/\mathfrak
nB$ is faithfully flat by the Local Flatness Criterion, and hence in
particular $\mathfrak n=\mathfrak nB\cap R$. The last assertion then follows
since any closed ideal is the intersection of all $\maxim$-primary ideals
containing it.
\end{proof}

Using an argument similar to the one in the proof of Corollary~\ref{C:tor}, one can show that under the above assumptions, each $\tor RiBM$ vanishes, for $i>0$ and $M$ a finitely generated \pCM\ module: for the Artinian case,   induct on the length of $M$, and for the general case, on the depth of $M$, using that $\inf RB=0$ by construction; details are left for the reader.
Before stating the next result, we need to introduce some 
terminology. We will follow the treatment in
\cite[\S9.4]{BH} and refer to this source for more details. Let 
$F_\bullet$ be a complex
\begin{equation*} 0\to F_s\map{\varphi_s} F_{s-1}\map{\varphi_{s-1}} 
\dots\map{\varphi_2} F_1\map{\varphi_1} F_0\to 0
\end{equation*} with each $F_i$ a finitely generated free $R$-module. 
We call $s$ the length of $F_\bullet$ and we call
the cokernel of $\varphi_1$   the \emph{cokernel} of the complex. For 
each $1\leq n\leq s$, we will define the $n$-th
\emph{Fitting} ideal $I_n(F_\bullet)$ of $F_\bullet$ as follows. Fix 
$1\leq n\leq s$ and put
\begin{equation*} 
r:=\sum_{i=n}^s (-1)^{i-n}\op{rank}F_i.
\end{equation*} 
Let $\Gamma$ be a matrix representing the morphism 
$\varphi_n$ (by choosing bases for $F_n$ and
$F_{n-1}$) and let $I_n(F_\bullet)$ be the ideal in $R$ generated by 
all $r\times r$-minors of $\Gamma$. One shows that
this is independent from the choices made.

We say that $F_\bullet$ is \emph{acyclic} if all $H_i(F_\bullet)$ 
vanish, for $i>0$; if also $H_0(F_\bullet)$ vanishes
(that is to say, if the cokernel of $F_\bullet$ is zero), then we say 
that $F_\bullet$ is \emph{exact}. In particular,  if
$F_\bullet$ is acyclic,  then it is a finite free resolution of its cokernel.

\begin{theorem}\label{T:INIT} 
Let $(R,\maxim)$ be an equi\ch\ or an infinitely ramified local 
ring of finite embedding dimension.  Let $F_\bullet$
be a finite complex of finitely generated free $R$-modules of length $s$ and let $M$ 
be its cokernel. If the \cheight\ of $I_n(F_\bullet)$ is at least $n$
for each $n=\range 1s$, then the \cheight\ of $\ann R\mu$  is at most $s$, for any
non-zero minimal generator $\mu$ of $M$.
\end{theorem}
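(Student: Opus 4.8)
The plan is to push everything to the completion $\complet R$, which by Theorem~\ref{T:finembcomp} is a complete Noetherian local ring and which is automatically equi\ch\ under our hypotheses---this is clear if $R$ itself is equi\ch, and when $R$ is infinitely ramified it is noted in \S\ref{s:ram}. Over a complete equi\ch\ local ring the assertion is the Fitting-ideal form of the Improved New Intersection Theorem, which I would deduce from the balanced big \CM\ algebra furnished by Theorem~\ref{T:BCM}.

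First I would record how the data transform under base change along $R\to\complet R$. Fitting ideals of a complex of finite free modules are generated by minors of the representing matrices, so $I_n(F_\bullet\tensor_R\complet R)=I_n(F_\bullet)\complet R$; also $\op{coker}(F_\bullet\tensor_R\complet R)=M\tensor_R\complet R=:\complet M$, and since $R/\maxim\iso\complet R/\maxim\complet R$ the image $\bar\mu$ of $\mu$ remains a nonzero minimal generator of $\complet M$. Next, Proposition~\ref{P:ght} combined with Theorem~\ref{T:pdim} and Lemma~\ref{L:quot} yields, for every ideal $I\sub R$,
\[
\cht I=\pd R-\pd{R/I}=\dim\complet R-\dim(\complet R/I\complet R),
\]
so the hypothesis $\cht{I_n(F_\bullet)}\geq n$ reads $\dim\bigl(\complet R/I_n(F_\bullet\tensor\complet R)\bigr)\leq\dim\complet R-n$. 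Finally $\ann R\mu\cdot\complet R\sub\ann{\complet R}{\bar\mu}$ forces $\dim(\complet R/\ann R\mu\cdot\complet R)\geq\dim(\complet R/\ann{\complet R}{\bar\mu})$, so it suffices to show $\dim(\complet R/\ann{\complet R}{\bar\mu})\geq\dim\complet R-s$. Everything is thus reduced to: over a complete Noetherian equi\ch\ local ring, a finite free complex of length $s$ whose $n$-th Fitting ideal has codimension $\geq n$ for $n\leq s$ cannot have a minimal generator in its cokernel whose annihilator has codimension $>s$.

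For this I would run the usual big \CM\ argument. Let $B$ be a balanced big \CM\ $\complet R$-algebra---equivalently, by Theorem~\ref{T:BCM}, a balanced big \CM\ $R$-algebra---so that every generic sequence of $R$, i.e.\ every system of parameters of $\complet R$, is $B$-regular. The codimension hypotheses say precisely that each $I_n(F_\bullet\tensor\complet R)$ contains a $B$-regular sequence of length $n$. Suppose, for contradiction, $\cht{\ann R\mu}\geq s+1$; then $\ann R\mu$ contains a tuple $z_1,\dots,z_{s+1}$ that is part of a generic sequence, hence $B$-regular, and that annihilates the image of $\mu$ in $M\tensor_R B$---still a minimal generator, as $\maxim B\neq B$. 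Writing $\mu=e+\op{im}\varphi_1$ for a basis vector $e$ of $F_0$, the relations $z_ie\in\op{im}\varphi_1$ lift to a comparison map from the length-$(s+1)$ Koszul resolution $K_\bullet(z_1,\dots,z_{s+1};B)$ of $B/(z_1,\dots,z_{s+1})B$ to $F_\bullet\tensor_R B$; since the target has length $s$, an acyclicity argument over $B$---its free modules carry a $B$-regular sequence of length $\dim\complet R>s$, and the Fitting hypotheses keep the higher homology of $F_\bullet\tensor_R B$ in the right codimensions---forces the image of $\mu$ into $\maxim(M\tensor_R B)$, a contradiction. One may instead localize $F_\bullet$ at a minimal prime of $\ann{\complet R}{\bar\mu}$ realizing its codimension and quote the classical theorem.

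The genuinely hard ingredient is, as always for the New Intersection Theorem, the existence and acyclicity properties of balanced big \CM\ algebras in this generality, which is exactly what Theorem~\ref{T:BCM} supplies (via Hochster--Huneke, building on the Monomial Theorem, Corollary~\ref{C:mon}). What is specific to the present setting, and the only real labor here, is the dictionary between the geometric invariant $\cht{\cdot}$ and ordinary codimension in the completion, the invariance of Fitting ideals and of minimal generators under the base changes $R\to\complet R\to B$, and the bookkeeping of the Koszul comparison over the non-Noetherian algebra $B$. It bears emphasis that the mixed-characteristic applications of \S\ref{s:mix} hinge entirely on the first reduction: $\complet R$ can be equi\ch\ even when $R$ is not.
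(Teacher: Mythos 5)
Your reduction to the completion is correctly executed---the dictionary $\cht I=\kd{\complet R}-\kd{\complet R/I\complet R}$ via Proposition~\ref{P:ght}, Theorem~\ref{T:pdim} and Lemma~\ref{L:quot}, the base change of Fitting ideals, and the inclusion $\ann R\mu\complet R\sub\ann{\complet R}{\bar\mu}$ all point the right way---but it buys little, because what you then need over $\complet R$ is \emph{not} an off-the-shelf classical theorem: the annihilator of $\bar\mu$ is in general not primary to the maximal ideal, and it is precisely this case where your argument has a genuine gap. Your Koszul-comparison step ends with ``an acyclicity argument \dots forces the image of $\mu$ into $\maxim(M\tensor_RB)$,'' which is unjustified and is not even the right target; the correct contradiction is that $\mu\tensor 1$ would have to \emph{vanish}. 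Concretely: once $F_\bullet\tensor_RB$ is acyclic (Buchsbaum--Eisenbud, as you say), $M\tensor_RB$ has a finite free resolution of length $s$ over $B$, so $\tor{B}{s+1}{M\tensor_RB}{B/(z_1,\dots,z_{s+1})B}=0$; computing the same Tor from the Koszul resolution of $B/(z_1,\dots,z_{s+1})B$ (the $z_i$ being $B$-regular) identifies it with $\ann{M\tensor_RB}{(z_1,\dots,z_{s+1})B}$, which contains the non-zero element $\mu\tensor1$---that Tor/grade-sensitivity step is the missing idea, and your sketch does not supply it. Your fallback---localize at a minimal prime $\pr$ of $\ann{\complet R}{\bar\mu}$ and quote the classical Improved New Intersection Theorem---genuinely fails: $\bar\mu$ need not remain a \emph{minimal} generator after localizing at $\pr$, and the improved theorem is false without that hypothesis (already for the length-zero complex $F_0=R$ over a depth-zero local ring of positive dimension, a socle element has $\maxim$-primary annihilator while $s=0$).

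For comparison, the paper does not complete at all and does not attempt a one-stroke argument: writing $d=\pd R$ and $e=\pd{R/\ann R\mu}$, it proves $d-e\leq s$ by induction on $e$. The base case $e=0$ is exactly your depth count over a balanced big \CM\ algebra (acyclicity of $F_\bullet\tensor_RB$, depth of $M\tensor_RB$ at least $d-s$, and depth zero because $\ann{}{\mu\tensor1}$ is $\maxim$-primary). For $e>0$ it chooses, by prime avoidance, an element $x$ outside the threshold primes of $R$, of $R/\ann R\mu$, and of those $I_n(F_\bullet)$ that are not $\maxim$-primary, and passes to $R/xR$, where Corollary~\ref{C:tre} and Lemma~\ref{L:gen} give the required drop of all the \pdim{s} by one. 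Either that induction or the Tor-vanishing argument sketched above would close your gap; as written, the proposal does not.
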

\begin{proof} 
Let $d$ and $e$ be the \pdim\ of $R$ and $R/\ann{R}\mu$ respectively. In view
of Proposition~\ref{P:ght}, we need to show that $d-e\leq s$, and we do this
by induction on $e$. Assume first that $e=0$, so
that $\ann R\mu$ is $\maxim$-primary. By Theorem~\ref{T:BCM}, there exists  a
balanced big 
\CM\ $R$-algebra $B$. By  Proposition~\ref{P:ght}, we can find part of a generic
sequence
of length $n$ in $I_n(F_\bullet)$, which is therefore $B$-regular. Hence
each $I_n(F_\bullet)B$ has grade at least $n$, and the
Buchsbaum-Eisenbud Acyclicity criterion (\cite[Theorem
9.1.6]{BH}) then yields that the complex $F_\bullet\tensor_RB$ is acyclic.   
Since $B$, whence each module
in $F_\bullet\tensor_R B$, has depth $d$, and since  $M\tensor_R B$ is the 
cokernel of $F_\bullet\tensor_R B$, the depth of $M\tensor_RB$ is at 
least $d-s$ by \cite[Proposition 9.1.2(e)]{BH}. 
By Nakayama's lemma, the image of $\mu$ in $M/\maxim M$ is non-zero, which
implies that $\mu\tensor 1$ is   non-zero in $M\tensor_R B$. Since the
annihilator of $\mu\tensor 1$ contains $\ann R\mu$, it is  $\maxim$-primary.
It follows that $M\tensor_R B$ has depth zero, and hence that $d-s\leq 0$.

Assume now that $e>0$. The threshold primes of $R$ and $\ann R\mu$ are all
different
from $\maxim$, and so are the threshold primes of those $I_n(F_\bullet)$ that
are not $\maxim$-primary. 
By prime avoidance, we may therefore choose $x\in\maxim$ outside all these
finitely many threshold primes.  Put $R_n:=R/I_n(F_\bullet)$ and  $S:=R/xR$.
We want to apply the induction
hypothesis to the complex $F_\bullet\tensor_RS$   and the image of
$\mu$ in $M\tensor_RS$. By Corollary~\ref{C:tre}, the \pdim\ of  $S$ and
$R_0\tensor_RS$ are $d-1$ and $e-1$ respectively, and the \pdim\ of $S/I_n(F_\bullet\tensor_RS)\iso
R_n\tensor_RS$ is at most $d-n-1$ (this is trivially true if $I_n(F_\bullet)$  
is  $\maxim$-primary and follows from Lemma~\ref{L:gen} in the remaining
case).  Since $S/\ann S\mu$ is a residue ring of
$R_0\tensor_RS$, its \pdim\ is at most $e-1$, so that our induction hypothesis
applies, yielding $d-1-(e-1)\leq \cht{\ann S\mu}\leq s$.
%
%
%
\end{proof}

We can now generalize  the new intersection theorems due to 
Evans-Griffith and Peskine-Szpiro-Roberts.

\begin{corollary}\label{C:INIT} 
Let $(R,\maxim)$ be an equi\ch\ or an infinitely
ramified local 
ring of finite embedding dimension. Let $F_\bullet$ be a
finite complex of finitely generated free $R$-modules of length $s$ 
and let $M$ be its cokernel.
\begin{enumerate}
\item\label{i:EG} If $F_\bullet$ is acyclic when localized at any 
closed prime ideal of $R$ different from $\maxim$ and there
exists a non-zero minimal generator of $M$ whose annihilator is 
$\maxim$-primary, then $\pd R\leq s$.
\item\label{i:PS}  If $F_\bullet$ is exact when localized at any 
closed prime ideal of $R$ different from $\maxim$ and $s<\pd R$,
then $F_\bullet$ is exact.
\end{enumerate}
\end{corollary}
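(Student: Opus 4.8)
The strategy is to derive both \eqref{i:EG} and \eqref{i:PS} from Theorem~\ref{T:INIT}, the Improved New Intersection Theorem in this setting, the only substantive point being to verify its Fitting-ideal hypothesis. Thus the heart of the argument is the implication: \emph{if $F_\bullet$ becomes acyclic after localizing at every closed prime $\pr\neq\maxim$ of $R$, then $\cht{I_n(F_\bullet)}\geq n$ for $n=1,\dots,s$.} I would first record the trivial case $s\geq\pd R$: in \eqref{i:EG} the desired conclusion $\pd R\leq s$ is then automatic, and in \eqref{i:PS} it is excluded by hypothesis; so from now on assume $s<\pd R$, and note $\cht\maxim=\pd R>s$.

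To prove the implication, fix $n\leq s$ and suppose towards a contradiction that $\cht{I_n(F_\bullet)}=h<n$. By Proposition~\ref{P:ght}, Theorem~\ref{T:pdim} and Lemma~\ref{L:quot} we have $\dim\bigl(\complet R/I_n(F_\bullet)\complet R\bigr)=\pd R-h$, so there is a minimal prime $\mathfrak P$ of $I_n(F_\bullet)\complet R$ in the Noetherian ring $\complet R$ with $\dim(\complet R/\mathfrak P)=\pd R-h$, whence $\op{ht}\mathfrak P\leq\pd R-\dim(\complet R/\mathfrak P)=h<n$. Since $n\leq s<\pd R=\op{ht}(\maxim\complet R)$, we have $\mathfrak P\neq\maxim\complet R$, so $\pr:=\mathfrak P\cap R$ is a closed prime of $R$ different from $\maxim$ by Corollary~\ref{C:spec}. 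By hypothesis $(F_\bullet)_\pr$ is acyclic, that is, a finite free resolution of $M_\pr$ where $M:=\op{coker}(F_\bullet)$. \textbf{The main obstacle} is to propagate this acyclicity to $\complet R$, i.e.\ to show $(F_\bullet\otimes_R\complet R)_{\mathfrak P}$ is acyclic: by flat base change along $R\to R_\pr$ and along $\complet R\to(\complet R)_{\mathfrak P}$, its $i$-th homology is $\tor Ri{M}{\complet R}\otimes_{\complet R}(\complet R)_{\mathfrak P}$, which vanishes for $i>0$ as soon as the higher modules $\tor Ri{M}{\complet R}$ are $\maxim\complet R$-torsion---a ``vestige of flatness'' of $R\to\complet R$ in the spirit of the remarks following Lemma~\ref{L:quot} (alternatively one may try to argue directly over the possibly non-Noetherian ring $R_\pr$ via the polynomial-grade form of the Buchsbaum--Eisenbud criterion, the difficulty being to bound the relevant grade of $R_\pr$ by $\cht\pr$). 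Granting this, $(F_\bullet\otimes_R\complet R)_{\mathfrak P}$ is an acyclic finite free complex over the \emph{Noetherian} local ring $(\complet R)_{\mathfrak P}$, so the Buchsbaum--Eisenbud acyclicity criterion gives $\op{grade}\bigl(I_n(F_\bullet)(\complet R)_{\mathfrak P}\bigr)\geq n$; but that ideal lies in $\mathfrak P(\complet R)_{\mathfrak P}$, forcing its grade to be at most $\op{ht}\mathfrak P<n$, a contradiction. This proves the implication.

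Granting the Fitting conditions $\cht{I_n(F_\bullet)}\geq n$, part \eqref{i:EG} is immediate: the given nonzero minimal generator $\mu$ of $M$ has $\maxim$-primary annihilator, so $\pd{R/\ann R\mu}=0$ and hence $\cht{\ann R\mu}=\pd R$ by Proposition~\ref{P:ght}, and Theorem~\ref{T:INIT} yields $\pd R=\cht{\ann R\mu}\leq s$. For part \eqref{i:PS}, first suppose $M\neq 0$. Exactness of $(F_\bullet)_\pr$ for every closed $\pr\neq\maxim$ means $M_\pr=0$ for all such $\pr$, so $\maxim$ is the only closed prime containing $\ann RM$; by Corollary~\ref{C:clmin} the minimal primes of the closure of $\ann RM$ are closed, which forces that closure---and a fortiori the closure of $\ann R\mu$ for any minimal generator $\mu$ of $M$---to be $\maxim$-primary, so $\pd{R/\ann R\mu}=0$, $\cht{\ann R\mu}=\pd R$, and Theorem~\ref{T:INIT} gives $\pd R\leq s$, contradicting $s<\pd R$. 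Hence $M=0$, that is, $\varphi_1$ is surjective. To upgrade this to exactness of $F_\bullet$ I would pass to a balanced big \CM\ $R$-algebra $B$, which exists by Theorem~\ref{T:BCM} (this is where the equi\ch\ or infinitely-ramified hypothesis is used): a length-$n$ part of a generic sequence contained in $I_n(F_\bullet)$, available since $\cht{I_n(F_\bullet)}\geq n$, is $B$-regular, so each $I_n(F_\bullet)B$ has grade $\geq n$ and $F_\bullet\otimes_R B$ is acyclic by Buchsbaum--Eisenbud; since $\varphi_1\otimes B$ is onto, $F_\bullet\otimes_R B$ is in fact exact, hence split, so $I_{r_n}(\varphi_n)B=B$ for each $n$ (with $r_n$ as in the definition of $I_n(F_\bullet)$), and because $\maxim B\neq B$ this forces $I_{r_n}(\varphi_n)=R$ for all $n$. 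A finite free complex over a commutative ring whose expected-rank Fitting ideals are all the unit ideal and whose last differential is surjective is split exact, so $F_\bullet$ is exact, completing \eqref{i:PS}.
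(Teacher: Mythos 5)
The overall strategy is right: reduce both parts to Theorem~\ref{T:INIT}, the only substantive point being that acyclicity of $(F_\bullet)_\pr$ at every closed prime $\pr\neq\maxim$ forces $\cht{I_n(F_\bullet)}\geq n$. Your treatment of the trivial case, your deduction of \eqref{i:EG} from Theorem~\ref{T:INIT} once this is known, and your reduction of \eqref{i:PS} to $M=0$ (and the subsequent big \CM/Fitting-ideal splitting argument, which is a workable, if more roundabout, substitute for the paper's ``split off the last term and induct'') are all fine. But the step you yourself label ``the main obstacle'' and then grant is exactly the heart of the corollary, and your sketch of it does not close: the homology of $(F_\bullet\tensor_R\complet R)_{\mathfrak P}$ is not $\tor Ri{M}{\complet R}\tensor_{\complet R}(\complet R)_{\mathfrak P}$, since $F_\bullet$ is not assumed acyclic over $R$ and hence does not compute Tor; the correct identification is $\tor{R_\pr}i{M_\pr}{(\complet R)_{\mathfrak P}}$, and its vanishing is precisely a flatness-type property of $R_\pr\to(\complet R)_{\mathfrak P}$ that is not available here --- the paper repeatedly stresses that $R\to\complet R$ is in general not flat, and the ``vestiges of flatness'' it does establish (Lemma~\ref{L:quot}, Proposition~\ref{P:betti1}, Proposition~\ref{P:tor}) require hypotheses (e.g.\ \pCM, finite length) that are not present. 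So the key implication remains unproven in your argument.

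The paper's proof avoids any base change of the complex to $\complet R$. Given $n$ with $I:=I_n(F_\bullet)$ not $\maxim$-primary, it uses Remark~\ref{R:qnid} to choose a \c-normalization $A_0\to R$ together with an ideal $J\sub A_0$ such that $JR=I$, and replaces $A_0$ by its image $A\sub R$: a Noetherian subring with $\complet A\to\complet R$ injective and integral, so $\dim A=\pd R=:d$. Taking a minimal prime $\mathfrak q$ of $JA$ of height $h$, lifting it by lying over to some $\mathfrak P\in\spec{\complet R}$, and contracting to $\pr:=\mathfrak P\cap R$, one gets a closed prime $\pr\neq\maxim$; the hypothesis gives acyclicity of $(F_\bullet)_\pr$, whence $\op{grade}(IR_\pr)\geq n$ by the polynomial-grade (non-Noetherian) Buchsbaum--Eisenbud criterion, and this grade is transferred \emph{down} to $\op{grade}(JA_{\mathfrak q})\geq n$ in the Noetherian local ring $A_{\mathfrak q}$, forcing $n\leq h$, hence $\dim A/JA\leq d-n$; finally, integrality of $\complet A/J\complet A\to\complet R/I\complet R$ bounds $\dim\complet R/I\complet R\leq d-n$, i.e.\ $\pd{R/I}\leq d-n$ and $\cht I\geq n$ by Proposition~\ref{P:ght} and Theorem~\ref{T:pdim}. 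In other words, your parenthetical alternative (working with polynomial grade over the possibly non-Noetherian $R_\pr$) is indeed the paper's direction, but the dimension bound is extracted in the auxiliary Noetherian ring $A_{\mathfrak q}$ coming from a \c-normalization adapted to $I_n(F_\bullet)$, not from $R_\pr$ or from any flatness of $R\to\complet R$; this descent is the missing idea in your proposal.
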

\begin{proof} 
To prove \eqref{i:EG}, assume  $s<d:=\pd R$. We reach the 
desired contradiction from Theorem~\ref{T:INIT}, if we
can show that $R/I_n(F_\bullet)$ has \pdim\ at most $d-n$, for all 
$n=\range 1s$. Fix $n$ and let $I:=I_n(F_\bullet)$.
There is nothing to show if $I$ is $\maxim$-primary, so that we may 
exclude this case. By Remark~\ref{R:qnid}, we can
choose a \c-normalization $A_0\to R$ and an ideal $J\sub A_0$ such
 that $JR=I$ (note that $I$ is finitely generated by
construction). Let $A$ be the image of $A_0$ in $R$, so that $A\sub R$ is 
also \c-integral and \c-injective (although Noetherian, $A$ will,  in general,   no
longer be regular). Since $\complet A\to
\complet R$ is integral and injective, $\complet A$,
 whence also $A$, has dimension $d$. Suppose $JA$ has
height $h$ and let $\mathfrak q$ be a minimal prime of $JA$ of height 
$h$. By \cite[Theorem 9.3]{Mats}, we can find a prime ideal $\mathfrak P$ in
$\complet R$ lying above $\mathfrak q$. Let $\pr:=\mathfrak P\cap R$ (which is therefore closed by
Corollary~\ref{C:spec}).
Note that since $I$ is not
 $\maxim$-primary, $h<d$, and therefore
$\pr\neq\maxim$. By assumption, $(F_\bullet)_\pr$ is acyclic, so that 
the grade of $IR_\pr$ is at least $n$ by the
Buchsbaum-Eisenbud Acyclicity criterion (\cite[Theorem 9.1.6]{BH}). 
By \cite[Proposition 9.1.2(g)]{BH}, the grade of
$JA_{\mathfrak q}$ is therefore also at least $n$. In particular, 
$A_{\mathfrak q}$ has depth at least $n$, showing that
$n\leq h$.  This in turn implies that $A/JA$ has dimension at most 
$d-n$. Since $\complet A/J\complet A\to \complet
R/I\complet R$ is integral, the dimension of the first ring is at 
least that of the second ring. Hence we showed that
$\complet R/I\complet R$ has dimension at most $d-n$. By 
Lemma~\ref{L:quot} and Theorem~\ref{T:pdim}, this in turn
implies that $R/I$ has \pdim\ at most $d-n$, as required.

The second assertion follows from the first by a standard argument 
(see for instance the proof of \cite[Corollary
9.4.3]{BH}). Namely, it implies that the cokernel $M$ of $F_\bullet$ 
has finite length. The only way that this does not
contradict \eqref{i:EG} is that $M=0$ (by Nakayama's Lemma). This in 
turn implies that we can split of the last term in
$F_\bullet$ and then an inductive argument on $s$ finishes the proof.
\end{proof}

We can translate these results to more familiar versions  of the
homological   theorems.

\begin{theorem}[Superheight]\label{T:supht} 
Let $R\to S$ be a local \homo\ of  equi\ch\ or   infinitely ramified local
rings of finite embedding dimension  and let $M$ be an $R$-module admitting a finite free
resolution $F_\bullet$ of length $s$. If  $M\tensor_RS$ has finite 
length, then $S$ has \pdim\ at most $s$.
\end{theorem}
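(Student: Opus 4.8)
The plan is to deduce this from Corollary~\ref{C:INIT}\eqref{i:EG}, applied to the ring $S$ and to the base change of the given resolution along $R\to S$. Write the resolution as $0\to F_s\to\dots\to F_1\xrightarrow{\varphi_1}F_0\to 0$ with cokernel $M$, and set $G_\bullet:=F_\bullet\tensor_R S$; by right exactness this is a finite complex of finitely generated free $S$-modules of length $s$ whose cokernel is $M\tensor_R S$. Since $R\to S$ is a local \homo\ of local rings of finite embedding dimension and $S$ is equi\ch\ or infinitely ramified, Corollary~\ref{C:INIT} is available over $S$, so it suffices to verify its two hypotheses: (a) $G_\bullet$ becomes acyclic after localization at every closed prime of $S$ distinct from its maximal ideal $\mathfrak n$; and (b) $M\tensor_R S$ has a non-zero minimal generator whose annihilator is $\mathfrak n$-primary.

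Condition (b) is routine: one may assume $M\neq 0$ (the statement being vacuous or degenerate otherwise), and then, $M$ being finitely generated and $\maxim S\sub\mathfrak n$, Nakayama's Lemma gives $M\tensor_R S\neq 0$; as $M\tensor_R S$ has finite length, any non-zero minimal generator $\mu$ works, since $\ann S\mu\supseteq\ann S{M\tensor_R S}$ is $\mathfrak n$-primary. The substance lies in (a). First I would observe that for a closed prime $\pr\neq\mathfrak n$ of $S$ one has $M\tensor_R S_\pr=(M\tensor_R S)_\pr=0$, because $M\tensor_R S$ has finite length and $\pr$ does not contain its annihilator. The key step is then to promote this to the vanishing of \emph{all} the modules $\tor RiM{S_\pr}$: localizing $\tor RiMS$ at $\pr$ gives $\tor{R_{\pr\cap R}}i{M_{\pr\cap R}}{S_\pr}$, so if some $\tor RiM{S_\pr}$ were non-zero we would have $M_{\pr\cap R}\neq 0$, hence $M\tensor_R\kappa(\pr\cap R)\neq 0$ by Nakayama, hence $M\tensor_R\kappa(\pr)\neq 0$ by faithfully flat base change along the residue-field extension $\kappa(\pr\cap R)\sub\kappa(\pr)$; but $M\tensor_R\kappa(\pr)$ is a quotient of $M\tensor_R S_\pr=0$, a contradiction. (Equivalently: since $M$ has a finite free resolution over $R$, the support of each $\tor RiMS$ is contained in $\op{Supp}_S(M\tensor_R S)=\{\mathfrak n\}$.) Thus $F_\bullet\tensor_R S_\pr$ is exact, so $(G_\bullet)_\pr$ is acyclic, and Corollary~\ref{C:INIT}\eqref{i:EG} yields $\pd S\le s$.

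The main obstacle is precisely this passage from "$M\tensor_R S$ has finite length" to the local acyclicity of $G_\bullet$ at the non-maximal closed primes: it is here that the finiteness of $\op{pd}_R M$ enters essentially, via the containment of supports of the higher Tor modules; granted that, the conclusion falls out of Corollary~\ref{C:INIT}. A minor point to keep in mind is the case $M=0$, which must be excluded (or the notion of a "finite free resolution of length $s$" understood minimally) for the statement not to be vacuously false.
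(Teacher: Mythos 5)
Your argument is correct and is essentially the paper's own proof: both base-change the resolution to $S$, use the contraction $\pr\cap R$ together with Nakayama's Lemma to deduce $M_{\pr\cap R}=0$ from the vanishing of $(M\tensor_RS)_\pr$ at every non-maximal closed prime $\pr$ of $S$, conclude that the localized complex is exact, and finish by Corollary~\ref{C:INIT}. That you invoke part~\eqref{i:EG} directly (verifying the minimal-generator hypothesis) while the paper invokes part~\eqref{i:PS}, and that you obtain local exactness from the vanishing of the Tor modules rather than from split-exactness of $(F_\bullet)_{\pr\cap R}$, are only cosmetic differences; your caveat about the degenerate case $M=0$ is sensible and is left implicit in the paper's proof.
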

\begin{proof} 
Let $\maxim$ and $\mathfrak n$ be the respective maximal ideals of $R$ and $S$.
Let $\mathfrak q$ be an ideal in $S$ different from $\mathfrak n$ and put 
$\pr:=\mathfrak q\cap R$. Since the localization of $M\tensor_R
S$ at $\mathfrak q$ is zero, we get   
$$M_\pr/\pr M_\pr\tensor_{k(\pr)} 
S_{\mathfrak q}/\pr S_{\mathfrak q}=0,
$$
 where $k(\pr)$
is the residue field of $\pr$. Since $S_{\mathfrak q}/\pr S_{\mathfrak q}$ is 
non-zero, $M_\pr/\pr M_\pr$ must be zero, and
therefore $M_\pr=0$, by Nakayama's Lemma. Hence $(F_\bullet)_\pr$ is 
exact whence split exact. Therefore, this remains
so after tensoring with $S_{\mathfrak q}$. In other words, the 
conditions of \eqref{i:PS} are met for the complex
$F_\bullet\tensor_RS$ over the ring $S$,  showing that $S$ must have 
\pdim\ at most $s$.
\end{proof}

\begin{theorem}[Intersection Theorem] 
Let $R$ be an equi\ch\ or an infinitely
ramified local 
ring of finite embedding dimension  and let $M,N$ be $R$-modules.
If $M$ has a finite free resolution of length $s$, then $\pd N\leq 
s+\pd{M\tensor_RN}$.
\end{theorem}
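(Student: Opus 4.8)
The plan is to derive this from the Superheight Theorem (Theorem~\ref{T:supht}), bootstrapping in the same spirit in which the New Intersection Theorem (Corollary~\ref{C:INIT}) was extracted from its acyclicity form. Set $I:=\ann RN$, so that $\pd N=\pd{R/I}$ by the very definition of the \pdim\ of a module; write $d:=\pd N$ and $q:=\pd{M\tensor_RN}$. First I note that $M$, having a finite free resolution, is finitely generated; taking $N$ finitely generated as well (the relevant case---see the last paragraph), right exactness of $\tensor$ gives $\op{Supp}(M\tensor_RN)=\op{Supp}M\cap\op{Supp}N=V(\ann RM+I)$, so $\ann R{M\tensor_RN}$ and $\ann RM+I$ have the same radical, whence $\pd{R/(\ann RM+I)}=q$ (recall from Theorem~\ref{T:pdim} that $\pd{R/\mathfrak c}$ depends only on $\sqrt{\mathfrak c}$, since it equals $\dim\complet R/\mathfrak c\complet R$).

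The key step is to manufacture a suitable quotient of $R$. By Lemma~\ref{L:mingen} applied to the local ring $R/(\ann RM+I)$ (which has finite embedding dimension, being a quotient of $R$), I can choose $z_1,\dots,z_q\in\maxim$ whose images generate an $\maxim$-primary ideal there; equivalently $\ann RM+I+(z_1,\dots,z_q)$ is $\maxim$-primary in $R$. Put $\mathfrak a:=I+(z_1,\dots,z_q)$ and $S:=R/\mathfrak a$. Then $S$ again has finite embedding dimension and is again equi\ch\ or infinitely ramified, since both conditions pass to quotients (see \S\ref{s:ram}). Moreover $M\tensor_RS=M/\mathfrak aM$ is finitely generated with $\op{Supp}(M/\mathfrak aM)=\op{Supp}M\cap V(\mathfrak a)=V(\ann RM+\mathfrak a)=\{\maxim\}$, so it has finite length; and iterating Lemma~\ref{L:dimdef} (the \pdim\ drops by at most one upon adjoining one element of $\maxim$) gives $\pd S=\pd{R/(I+(z_1,\dots,z_q))}\geq d-q$.

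Now Superheight (Theorem~\ref{T:supht}), applied to the local \homo\ $R\to S$, the module $M$, and its finite free resolution of length $s$, yields $\pd S\leq s$ because $M\tensor_RS$ has finite length. Combining with the lower bound above gives $d-q\leq s$, i.e. $\pd N\leq s+\pd{M\tensor_RN}$, as required. The only delicate point is the support/radical bookkeeping of the first paragraph: it is precisely what couples the auxiliary ideal $\mathfrak a$ to the finite-length hypothesis of Superheight (through $\ann RM$) and to the lower bound $d-q$ for $\pd S$ (through $\ann RN$), and keeping $\ann RM$, $\ann RN$ and $\ann R{M\tensor_RN}$ apart is where one must be careful; everything else is routine \pdim-calculus from \S\ref{s:pdim} together with the already-proved Superheight Theorem.

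A remark on generality: when $N$ is not finitely generated, $\op{Supp}(M\tensor_RN)$ need not equal $\op{Supp}M\cap\op{Supp}N$, and the inequality can genuinely fail (take $R$ a regular local ring of dimension $2$, $M=R/xR$ with $x$ part of a regular system of parameters, and $N$ the injective hull of the residue field, so that $M\tensor_RN=0$ by divisibility while $\pd N=2$). As in the classical Peskine--Szpiro setting, the natural statement is the one for finitely generated $M$ and $N$, and that is the version I would record.
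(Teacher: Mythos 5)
Your proof is correct, and it runs on the same engine as the paper's: both arguments reduce the statement to the Superheight Theorem (Theorem~\ref{T:supht}) by forcing $M\tensor_RS$ to have finite length for a suitable quotient $S$ of $R$. The organization differs, though. The paper inducts on $\pd{M\tensor_RN}$, choosing at each step (via Proposition~\ref{P:sop}, i.e.\ prime avoidance of threshold primes) an element that is simultaneously generic for $R/\ann RN$ and $R/\ann R{M\tensor_RN}$, so that both \pdim{s} drop by exactly one, and only in the base case applies Superheight to $S=R/\ann RN$. You instead apply Superheight once, to $S=R/(\ann RN+(z_1,\dots,z_q))$ with the $z_i$ supplied by Lemma~\ref{L:mingen} modulo $\ann RM+\ann RN$, and get the lower bound $\pd S\geq \pd N-q$ for free from Lemma~\ref{L:dimdef}; this trades the induction and the simultaneous-genericity step for the explicit radical bookkeeping $\sqrt{\ann R{M\tensor_RN}}=\sqrt{\ann RM+\ann RN}$ together with the observation (via Theorem~\ref{T:pdim} and Lemma~\ref{L:quot}) that \pdim\ of a quotient depends only on the radical of the ideal. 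Note that the paper's base-case step ``it follows that $M\tensor_RS$ has finite length'' rests on exactly the same support identity, so your bookkeeping is not an extra cost but an explicit rendering of what the paper leaves implicit. Your closing caveat is also well taken: that support identity, and hence both proofs, need $N$ finitely generated ($M$ is automatic from the finite free resolution), a hypothesis absent from the statement as printed; your example with $N$ the injective hull of the residue field shows the unrestricted statement fails, so recording the theorem for finitely generated modules is the right call.
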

\begin{proof} 
Assume first that  $M\tensor_RN$ has finite length and 
let   $S:=R/\ann RN$. It follows that $M\tensor_RS$
has finite length, so that the \pdim\ of $S$ is at most $s$ by Theorem~\ref{T:supht}. For the general
case, we induct on the \pdim\   of $M\tensor N$. Using 
Proposition~\ref{P:sop}, one can find $x\in R$ such that it is
part of a generic subset of both $R/\ann RN$ and $R/\ann R{M\tensor 
N}$. It follows that the \pdim{s} of $N/xN$ and
$M\tensor N/xN$ both have dropped by one, so that we are done by induction.
\end{proof}

\begin{theorem}[Canonical Element Theorem]\label{T:CE}
Let $(R,\maxim)$ be an equi\ch\ or an infinitely ramified local 
ring of finite embedding dimension. Let $F_\bullet$ be a  free
resolution  of the residue field $k$ of $R$ and let $\tuple
x$ be a generic sequence in $R$. If $\gamma$ is a complex morphism
from the Koszul complex $K_\bullet(\tuple x)$ to $F_\bullet$, extending the
natural map $\gamma_0\colon K_0(\tuple x)=R/\tuple xR\to k$, then the morphism
$\gamma_d\colon K_d(\tuple x)\to F_d$
is non-zero, where $d$ is the \pdim\ of $R$.
\end{theorem}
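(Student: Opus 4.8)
The plan is to reduce the statement to the classical Canonical Element Theorem applied to the completion. First I would record that, by Theorem~\ref{T:pdim}, $\complet R$ is a complete Noetherian local ring of dimension $d$, and that it is equi\ch: if $R$ contains a field, that field embeds into $\complet R$ since it meets $\inf R=\ker(R\to\complet R)$ only in $(0)$; and if $R$ is infinitely ramified, $\complet R$ is equi\ch\ by the discussion in \S\ref{s:ram}. Being an equi\ch\ Noetherian local ring, $\complet R$ admits a balanced big \CM\ algebra (cf. the proof of Theorem~\ref{T:BCM}), so it satisfies the usual Canonical Element Theorem \cite{HoHT}: for every system of parameters $\tuple y$ of $\complet R$, every free resolution $G_\bullet$ of the residue field $k$, and every chain map $K_\bullet(\tuple y)\to G_\bullet$ lifting the canonical surjection $\complet R/\tuple y\complet R\to k$, the component in homological degree $d$ is non-zero.

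Next I would set up the comparison. By Proposition~\ref{P:sop} the image $\tuple y$ of $\tuple x$ in $\complet R$ is a system of parameters, and $K_\bullet(\tuple x)\tensor_R\complet R$ is the Koszul complex of $\tuple y$ over $\complet R$. Tensoring the given chain map $\gamma$ with $\complet R$ produces a chain map $\gamma\tensor\complet R\colon K_\bullet(\tuple y)\to F_\bullet\tensor_R\complet R$ of complexes of free $\complet R$-modules, and since $\maxim\complet R$ is the maximal ideal of $\complet R$ by Theorem~\ref{T:finembcomp}, one has $H_0(F_\bullet\tensor_R\complet R)=k\tensor_R\complet R=k$, with the natural surjection onto $k$. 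Now choose an honest free resolution $G_\bullet$ of $k$ over the Noetherian ring $\complet R$. The standard comparison argument lifts $\mathrm{id}_k$ to a chain map $\psi\colon F_\bullet\tensor_R\complet R\to G_\bullet$; the key point is that this construction uses only that the source is a bounded-below complex of projective modules and that $G_\bullet$ is acyclic in positive degrees, never that $F_\bullet\tensor_R\complet R$ is itself acyclic. Composing, $\psi\after(\gamma\tensor\complet R)\colon K_\bullet(\tuple y)\to G_\bullet$ is a chain map whose effect on $H_0$ is the canonical surjection $\complet R/\tuple y\complet R\to k$, because $\gamma$ lifts $R/\tuple xR\to k$ and $\psi$ lifts $\mathrm{id}_k$.

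Finally, applying the Canonical Element Theorem for $\complet R$ to this chain map, its degree-$d$ component $\psi_d\after(\gamma_d\tensor\complet R)$ is non-zero; in particular $\gamma_d\tensor_R\complet R\neq 0$, and a fortiori $\gamma_d\neq 0$, which is the desired conclusion. The only genuine obstacle is the failure of flatness of $R\to\complet R$, which means one cannot simply base-change the resolution $F_\bullet$ and invoke the Noetherian theorem; this is precisely why the argument is routed through the auxiliary resolution $G_\bullet$ and the one-sided comparison theorem. Everything else is a transparent translation through the isomorphisms $R/\maxim^n\iso\complet R/\maxim^n\complet R$ together with the identification of the relevant top components.
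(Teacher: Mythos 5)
Your proposal is correct, but it takes a different route from the paper. You transport everything to the completion: base-change $\gamma$ along $R\to\complet R$, use the one-sided comparison theorem (correctly noting that only projectivity of the source and acyclicity of the target $G_\bullet$ are needed, so the non-flatness of $R\to\complet R$ is harmless) to map into an honest free resolution of $k$ over $\complet R$, and then quote the classical equal-characteristic Canonical Element Theorem for the Noetherian ring $\complet R$; all the supporting facts you invoke (that $\complet R$ is Noetherian of dimension $d=\pd R$ with maximal ideal $\maxim\complet R$, that it is equi\ch\ in both cases, and that the image of $\tuple x$ is a system of parameters) are exactly Theorems~\ref{T:finembcomp} and \ref{T:pdim}, \S\ref{s:ram}, and Proposition~\ref{P:sop}. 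The paper instead argues directly over $R$: assuming $\gamma_d=0$, it takes a balanced big \CM\ algebra $B$ for $R$ (Theorem~\ref{T:BCM}, itself obtained from one for $\complet R$), picks a non-zero socle element $y$ of $B/\tuple xB$, lifts $1\mapsto y$ to a chain map $F_\bullet\to K_\bullet(\tuple x;B)$, and compares the composite with the multiplication-by-$y$ map by a homotopy to force $y\in\tuple xB$, a contradiction. The two arguments hinge on the same completion step, but yours is a black-box reduction whose payoff is brevity and the transparent observation that only $\complet R$ matters, while the paper's is self-contained given Theorem~\ref{T:BCM}: its socle/homotopy computation is precisely the standard derivation of the canonical element statement from big \CM\ algebras, run over the non-Noetherian $R$ itself, so it never needs to cite the Noetherian theorem. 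One cosmetic caveat: the equal-characteristic Canonical Element Theorem is not literally in \cite{HoHT} (it postdates that source), though it does follow from the big \CM\ modules constructed there by exactly the argument the paper writes out, so your appeal to it is mathematically legitimate.
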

\begin{proof}
Suppose $\gamma_d$ is zero. Let $B$ be a local balanced big \CM\
algebra for $R$ and let $y\in B$ be such that its image in $B/\tuple
xB$ is a non-zero socle element. Define $\psi_0\colon  R \to B$ by sending $1$ to  $y$. Since
$\tuple x$ is $B$-regular, the
Koszul
complex $K_\bullet(\tuple x;B):=K_\bullet(\tuple x)\tensor B$ is acyclic. It
follows that 
$\psi_0$ extends to  a morphism of complexes $\psi\colon F_\bullet\to
K_\bullet(\tuple
x;B)$. Let $\alpha:=\psi\after\gamma $ be the composition $K_\bullet(\tuple x)
\to K_\bullet(\tuple x;B)$. In particular $\alpha_0(1)=y$  and $\alpha_d=0$. On
the other hand, $\alpha_0$ induces by
tensoring a morphism of complexes $\beta :=1\tensor\alpha_0\colon
K_\bullet(\tuple x)
\to K_\bullet(\tuple x)\tensor B=K_\bullet(\tuple x;B)$. Since
$K_\bullet(\tuple x;B)$ is acyclic,   $\alpha$ and $\beta$
differ by a homotopy $\sigma$. In particular,
$\beta_d=\beta_d-\alpha_d=\sigma_{d-1}\after \delta_d$, where $\delta_d\colon
K_d(\tuple x)=R\to K_{d-1}(\tuple x)=R^d$ is the left most  map in the Koszul
complex. Since the image of $\delta_d$ lies in $\tuple x R^d$, we get
$y=\beta_d(1)=\sigma_{d-1}\after \delta_d(1)\in\tuple x B$, contradicting our
choice of $y$.
\end{proof}

To formulate the next result, which extends a result of Eisenbud and Evans in
\cite{EE},   recall
that for an $R$-module $M$ and an element $z\in M$, the \emph{order ideal} of $z$
is the ideal $\loc_M(z)$ consisting of all images $\alpha(z)$ for $\alpha\in\hom
RMR$. Moreover, if $R$ is a domain  with field of fractions  $K$, then the \emph{rank} of $M$  is defined as the dimension of the vector space $M\tensor_R K$.

\begin{theorem}[Generalized Principal Ideal Theorem]\label{T:GPIT}
Let $(R,\maxim)$ be an equi\ch\ or an infinitely ramified local 
domain of finite embedding dimension,  and let $M$ be a
finitely generated
$R$-module. If $z\in\maxim M$, then the \cheight\ of
$\loc_M(z)$ is at most the rank of $M$.
\end{theorem}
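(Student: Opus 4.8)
The plan is to reduce the statement to the classical Eisenbud--Evans order ideal theorem applied to a suitable quotient of the completion $\complet R$, which is Noetherian by Theorem~\ref{T:finembcomp}. First, since $R$ is a domain, every $R$-linear map $M\to R$ annihilates the torsion submodule of $M$, so replacing $M$ by its torsion-free quotient (and $z$ by its image) changes neither $\loc_M(z)$ nor $\op{rank}M$; thus we may assume $M$ is torsion-free, so that $M\into M\tensor_RK\iso K^r$ with $r:=\op{rank}M$. Next, by Proposition~\ref{P:ght} the desired inequality $\cht{\loc_M(z)}\le r$ is equivalent to $\pd{R/\loc_M(z)}\ge\pd R-r$, and by Theorem~\ref{T:pdim} together with Lemma~\ref{L:quot} this is the assertion that $\dim\bigl(\complet R/\loc_M(z)\complet R\bigr)\ge\pd R-r$.

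Now set $\complet M:=M\tensor_R\complet R$ and let $\complet z\in\maxim\complet M$ be the image of $z$. Each $\alpha\in\hom RMR$ induces $\alpha\tensor1\in\hom{\complet R}{\complet M}{\complet R}$ with $(\alpha\tensor1)(\complet z)=\alpha(z)\tensor1$, whence $\loc_M(z)\complet R\subseteq\loc_{\complet M}(\complet z)$; so it suffices to bound $\dim\bigl(\complet R/\loc_{\complet M}(\complet z)\bigr)$ from below by $\pd R-r$. Choose a minimal prime $\mathfrak Q$ of $\complet R$ with $\dim\complet R/\mathfrak Q=\pd R$ (possible since $\pd R=\dim\complet R$ by Theorem~\ref{T:pdim}). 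Its contraction $\mathfrak q:=\mathfrak Q\cap R$ is a threshold prime of $R$ by Corollary~\ref{C:spec}, the induced map $R/\mathfrak q\to\complet R/\mathfrak Q$ is injective, and $\bar S:=\complet R/\mathfrak Q$ is a complete local \emph{domain} of dimension $\pd R$, hence catenary and equidimensional. Applying the classical Eisenbud--Evans order ideal theorem over $\bar S$ to the finitely generated module $\bar M:=\complet M\tensor_{\complet R}\bar S=M\tensor_R\bar S$ and the element $\bar z\in\maxim\bar M$ gives $\op{ht}_{\bar S}\bigl(\loc_{\bar M}(\bar z)\bigr)\le\op{rank}_{\bar S}\bar M$. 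Since $\loc_{\complet M}(\complet z)$ maps into $\loc_{\bar M}(\bar z)$ modulo $\mathfrak Q$, there is a prime $\mathfrak P\supseteq\loc_{\complet M}(\complet z)+\mathfrak Q$ with $\op{ht}_{\bar S}(\mathfrak P/\mathfrak Q)\le\op{rank}_{\bar S}\bar M$, so $\dim\complet R/\mathfrak P=\dim\bar S-\op{ht}_{\bar S}(\mathfrak P/\mathfrak Q)\ge\pd R-\op{rank}_{\bar S}\bar M$. As $\loc_M(z)\complet R\subseteq\mathfrak P$, this yields $\pd{R/\loc_M(z)}\ge\pd R-\op{rank}_{\bar S}\bar M$, i.e. $\cht{\loc_M(z)}\le\op{rank}_{\bar S}\bar M$.

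It remains to show $\op{rank}_{\bar S}\bar M\le r$, and I expect this to be the main obstacle. When $R$ is \emph{separated} it is immediate: then $(0)$ is a closed prime, so the threshold prime $\mathfrak q$ --- which by definition contains no proper closed prime --- must be $(0)$; hence $R\into\bar S$ is an extension of domains with $K=\op{Frac}R\into L:=\op{Frac}\bar S$, and $\bar M\tensor_{\bar S}L=(M\tensor_RK)\tensor_KL$ has $L$-dimension $r$. In general $\mathfrak q\supseteq\inf R$ may be nonzero and $\op{rank}_{\bar S}\bar M=\dim_L(M\tensor_RL)=\mu_{R_{\mathfrak q}}(M_{\mathfrak q})$, which a priori need not equal $r$. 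The point is that $M_{\mathfrak q}$ is a finitely generated torsion-free module over the local domain $R_{\mathfrak q}$ with $\op{Frac}(R_{\mathfrak q})=K$ and of generic rank $r$, and one must see that it is in fact $R_{\mathfrak q}$-free; this should follow by tracking a $K$-basis of $M\tensor_RK$ chosen among the generators of $M$ and using that $\mathfrak q$, being a threshold prime, cannot absorb the ``denominator'' ideals involved without contradicting that it contains no proper closed prime of $R$.

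As an alternative to the completion argument one can imitate the proof of Theorem~\ref{T:INIT} directly over $R$, never leaving the category of rings of finite embedding dimension: induct on $c:=\cht{\loc_M(z)}$, the case $c=0$ being trivial, and for $c>0$ pass to $R/xR$ for a generic element $x\in\loc_M(z)$ (which exists by Corollary~\ref{C:tre}, as $c>0$), using that $x=\beta(z)$ for some $\beta\in\hom RMR$ so that $\loc_M(z)/xR\subseteq\loc_{M/xM}(\bar z)$ while $\cht_{R/xR}$ of the former drops to $c-1$ by Lemma~\ref{L:gen}; the rank is controlled by tensoring with a balanced big \CM\ algebra supplied by Theorem~\ref{T:BCM} and applying the Buchsbaum--Eisenbud acyclicity criterion, exactly as in Theorem~\ref{T:INIT}. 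The delicate point there is that $R/xR$ need not be a domain, so one must carry along a torsion-free $R$-module of the correct rank in the induction rather than the ring-theoretic rank; this is again the same bookkeeping obstruction as above, now phrased homologically.
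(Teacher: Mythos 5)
Your translation of the statement through Proposition~\ref{P:ght}, Theorem~\ref{T:pdim} and Lemma~\ref{L:quot}, and the application of the classical Eisenbud--Evans theorem over the complete equi\ch\ domain $\bar S$, are fine; the proof collapses exactly at the point you flag, and the obstruction is not bookkeeping that a better choice of basis can remove: the inequality $\op{rank}_{\bar S}\bar M\leq r$ is false in general. Since $\op{rank}_{\bar S}\bar M=\dim_{k(\mathfrak q)}(M\tensor_Rk(\mathfrak q))=\mu(M_{\mathfrak q})$, you are asking that $M_{\mathfrak q}$ be free over $R_{\mathfrak q}$, and for a non-separated domain this fails. Take for $R$ the ultrapower $\ul S$ of $\pow k{x,y}$, an equi\ch\ local domain of embedding dimension two to which the theorem applies; its completion is $\ulsep S\iso\pow{\ul k}{x,y}$, a domain, so $\mathfrak Q=(0)$, $\bar S=\ulsep S$, and the unique threshold prime is $\mathfrak q=\inf{\ul S}$. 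Let $\epsilon_1,\epsilon_2\in\inf{\ul S}$ be the ultraproducts of $x^n$ and of $y^n$, and let $M:=(\epsilon_1,\epsilon_2)\ul S$, a torsion-free module of rank $r=1$. Any $k(\mathfrak q)$-dependence between the images of $\epsilon_1,\epsilon_2$ in $M_{\mathfrak q}/\mathfrak qM_{\mathfrak q}$ would, after clearing denominators, yield a relation $u\epsilon_1=v\epsilon_2$ (or the symmetric one) with $u\notin\inf{\ul S}$; by \los\ this gives $u_nx^n=v_ny^n$ for almost all $n$, forcing $y^n\mid u_n$, hence $\ord{}{u_n}\geq n$, contradicting that $u$ has finite order. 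So $\mu(M_{\mathfrak q})=\op{rank}_{\bar S}\bar M=2$, the Eisenbud--Evans bound you import only gives $\cht{\loc_M(z)}\leq 2$, which is vacuous in \pdim\ two, and the hoped-for ``denominator'' argument cannot exist: any $c\neq0$ with $cM$ contained in a cyclic submodule necessarily lies in $\inf{\ul S}\sub\mathfrak q$. Passing to the torsion-free quotient of $\bar M$ over $\bar S$ does not help, since it has the same rank as $\bar M$.

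The paper's proof never leaves $R$ and so never has to compare ranks across the completion. With $d:=\pd R$ and $h:=\cht{\loc_M(z)}$, it replaces $M$ by $M\oplus R^{d-h}$ and $z$ by $(z,x_{h+1},\dots,x_d)$, which raises both the rank and the \cheight\ by $d-h$, so that one may assume $\loc_M(z)$ contains a full generic sequence $\tuple x$. Then, as in \cite[Theorem 9.3.2]{BH}, the surjection $R/\tuple xR\to R/\maxim$ induces a morphism of Koszul complexes $\alpha\colon K_\bullet(\tuple x)\to K_\bullet(\tuple y)$, with $\tuple y$ generating $\maxim$, and composing with a comparison map to a free resolution of the residue field one invokes the Canonical Element Theorem (Theorem~\ref{T:CE}), itself established via the balanced big \CM\ algebras of Theorem~\ref{T:BCM}, to get $\alpha_d=\bigwedge^d\alpha_1\neq0$; since $\alpha_1$ factors through $\hom RMR$, its rank is at most $r$, whence $d\leq r$. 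Your second, ``alternative'' sketch is closer to this in spirit, but as written it defers to the same unresolved rank bookkeeping and is likewise not a proof.
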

\begin{proof}
Let $h$ be the \cheight\ of $\loc_M(z)$, let $r$ be the rank of $M$,  and let $d$ be the \pdim\ of $R$. By
definition, there exists a generic sequence $\rij xd$ with $x_i\in \loc_M(z)$,
for $i=\range 1h$. Replacing $M$ by $M\oplus R^{d-h}$ and $z$ by the element
$(z,x_{h+1},\dots,x_d)$, so that both the rank of $M$ and the \cheight\ of $\loc_M(z)$ increase by $d-h$, we may assume that $\loc_M(z)$ contains a generic
sequence $\tuple x$. Let $\tuple y$ be a
finite tuple generating $\maxim$.  
As explained in the proof of \cite[Theorem 9.3.2]{BH}, the canonical \homo\ $R/\tuple xR\to
R/\tuple yR$ induces a morphism of Koszul complexes $\alpha\colon
K_\bullet(\tuple x)\to
K_\bullet(\tuple y)$. Let $F_\bullet$ be a free resolution of the residue field
$R/\tuple yR$ of $R$ and $\beta\colon K_\bullet(\tuple
y)\to F_\bullet$ be an induced morphism of complexes.
By Theorem~\ref{T:CE}, applied to
the composition $\beta\after\alpha$, we get in degree $d$ a non-zero morphism
$\beta_d\after\alpha_d$, showing in particular that $\alpha_d$ is non-zero as
well. Since $\alpha_d$ is just the $d$-th exterior power of $\alpha_1$, the rank
of $\alpha_1$ is at least $d$. On the other hand, $\alpha_1$ factors
by construction through $\hom RMR$, whence has rank at most $r$, yielding the
desired inequality $d\leq r$ (see \cite[Theorem 9.3.2]{BH} for more details).
\end{proof}

\section{Uniform bounds on Betti numbers}\label{s:ub}

In the next two sections, we apply the previous theory to derive uniformity
results for
Noetherian local rings. In this section, we study  Betti numbers.
Recall
that given a module $M$ over a local ring $R$ with residue field $k$, its
\emph{$n$-th Betti number} $\beta_n(M)$ is defined as the vector space dimension
of $\tor RnMk\iso\ext RnMk$. It is equal to the rank of the $n$-th   module 
in a minimal free resolution of $M$ (provided such a resolution exists), and by
Nakayama's Lemma, it is then also equal to the least number of generators of an
$n$-th syzygy of $M$. 
One usually studies the behavior of these Betti
numbers for a fixed module as $n$ goes to infinity. In contrast, we will study
their behavior for fixed $n$ as we vary the module.

\begin{theorem}\label{T:unifbetti}
For each quadruple $(d,e,l,n)$ of non-negative integers, there exists a bound
$\Delta(d,e,l,n)$ with the following property. If $R$ is a $d$-dimensional local
\CM\ ring of multiplicity $e$, and $M$ is a \CM\ $R$-module of multiplicity at
most $l$, then 
$$
\beta_n(M)\leq \Delta(d,e,l,n).
$$
\end{theorem}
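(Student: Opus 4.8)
The plan is to produce the bound $\Delta(d,e,l,n)$ non-constructively, by an ultraproduct contradiction argument that packages together the results of \S\S\ref{s:ul}--\ref{s:ulsing}. Suppose no such bound exists. Then for each $k\in\nat$ there is a $d$-dimensional local \CM\ ring $\seq Rk$ of multiplicity $e$ and a \CM\ $\seq Rk$-module $\seq Mk$ of multiplicity at most $l$ with $\beta_n(\seq Mk)>k$. Fix a non-principal ultrafilter on $\nat$, and let $\ul R$ and $\ul M$ be the ultraproducts of the $\seq Rk$ and the $\seq Mk$ respectively, so that $\ul M$ is an $\ul R$-module. Since a \CM\ local ring of dimension $d$ and multiplicity $e$ has parameter degree $e$ by Lemma~\ref{L:pardegmult}, and hence embedding dimension at most $d+e-1$ by Remark~\ref{R:isodim}, the $\seq Rk$ have bounded embedding dimension and $\ul R$ is ultra-Noetherian.

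First I would verify that $\ul R$ is \pCM. By Lemma~\ref{L:pardegmult} again, almost all $\seq Rk$ have the same dimension $d$ and the same parameter degree $e$, so $\ul R$ is isodimensional by Theorem~\ref{T:isodim}; and since almost all $\seq Rk$ are \CM, $\ul R$ is ultra-\CM\ by Theorem~\ref{T:ulCM}, whence \pCM\ (being isodimensional). Next I would fix a common module multiplicity: the values $\mult{\seq Mk}$ lie in the finite set $\{0,1,\dots,l\}$, so exactly one value $l'$ is attained on a set belonging to the ultrafilter; after passing to that set we may assume almost all $\seq Mk$ are \CM\ of multiplicity $l'$. By Lemma~\ref{L:pCMmod}, $\ul M$ is then a finitely generated \pCM\ $\ul R$-module, so Proposition~\ref{P:uCMfpt} applies and tells us that for the given $n$, almost all $\seq Mk$ have the same $n$-th Betti number as $\ul M$. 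Write $N:=\beta_n(\ul M)$ for this common finite value.

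The contradiction is now immediate: the set $S=\{k:\beta_n(\seq Mk)=N\}$ belongs to the ultrafilter, hence is infinite, so it contains some index $k\ge N$; but then $\beta_n(\seq Mk)=N\le k$, contradicting the choice $\beta_n(\seq Mk)>k$. Therefore $\Delta(d,e,l,n)$ exists (one may take it to be the supremum of the relevant Betti numbers, or simply invoke the contradiction to get finiteness).

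The substantive content is not in this bookkeeping but in the earlier structural results it invokes, so the ``main obstacle'' has essentially been discharged already: the crucial point is that the \CM\ hypothesis on $R$ forces $\pardeg R=\mult R$, which is exactly what lets equal multiplicities of the $\seq Rk$ guarantee isodimensionality of $\ul R$ (and hence that $\ul R$ is \pCM, so that Corollary~\ref{C:tor} and Proposition~\ref{P:uCMfpt} are available); without \CM ness one only controls $\mult$, not $\pardeg$, and the argument breaks, as the example following Theorem~\ref{T:isodim} shows. The one remaining point needing care is the reduction to a single module multiplicity $l'$, which uses nothing more than finiteness of $\{0,\dots,l\}$ together with the ultrafilter property. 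As usual with this method, the proof yields no information on the size of $\Delta(d,e,l,n)$, only its existence.
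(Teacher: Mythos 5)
Your proposal is correct and follows essentially the same route as the paper's own proof: an ultraproduct contradiction argument using Theorem~\ref{T:isodim} for isodimensionality, Lemma~\ref{L:pCMmod} to get a finitely generated \pCM\ ultraproduct module, and Proposition~\ref{P:uCMfpt} to identify almost all $n$-th Betti numbers with the finite Betti number of the cataproduct. Your explicit reduction to a single module multiplicity $l'$ (left implicit in the paper, since Lemma~\ref{L:pCMmod} is stated for a fixed multiplicity) and the bounded-embedding-dimension check via Lemma~\ref{L:pardegmult} and Remark~\ref{R:isodim} are welcome refinements but do not change the argument.
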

\begin{proof}
Suppose not, so that for some quadruple $(d,e,l,n)$, we cannot define such an
upper bound. This means that for every $w$, we can find a $d$-dimensional \CM\
local ring $\seq Rw$ of multiplicity $e$, and a \CM\ $\seq Rw$-module $\seq Mw$
of multiplicity at most $l$, such that $\beta_n(\seq Mw)\geq w$. By
Theorem~\ref{T:isodim}, the ultraproduct $\ul R$ is isodimensional, and by
Lemma~\ref{L:pCMmod}, the ultraproduct $\ul M$ is   finitely generated and \pCM. 
Since the cataproduct $\ulsep M$ is therefore finitely
generated over the (Noetherian) cataproduct $\ulsep R$, its $n$-th Betti
number $\beta_n(\ulsep M)$ is finite, and by
Proposition~\ref{P:uCMfpt},  equal to almost all $\beta_n(\seq Mw)$,
contradiction.
\end{proof}

Theorem~\ref{T:unifbetti} applied to the residue field of $R$ yields
Corollary~\ref{C:Poincare} from the introduction. We can also reformulate the
previous theorem  in terms of  universal resolutions:

\begin{corollary}\label{C:univres}
For each triple $(d,e,l)$, there exists a countably generated $\zet$-algebra $Z$
and a complex $\mathcal F_\bullet$ of finite free $Z$-modules,  with
the following property. If $R$ is a $d$-dimensional local \CM\ ring of
multiplicity $e$, and $M$   a finitely generated \CM\ module of multiplicity at
most $l$, then there exists a \homo\ $Z\to R$, such that for any $n$ and any
$R$-module $N$, we have
$$
\tor RnMN\iso H_n(\mathcal F_\bullet\tensor_ZN) \qquad\text{and}\qquad \ext
RnMN\iso H^n(\hom Z{\mathcal F_\bullet}N).
$$

If we impose furthermore that $R$ is regular (whence $e=1$) or, more generally,
that $M$ has finite projective dimension, then we may take $Z$ to be a finitely
generated $\zet$-algebra and $\mathcal F_\bullet$ a complex of length $d$.
\end{corollary}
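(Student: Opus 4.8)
The plan is to manufacture a single ``generic complex'' $\mathcal F_\bullet$ over a large polynomial $\zet$-algebra $Z$ whose specializations, one for each admissible pair $(R,M)$, recover the relevant minimal free resolutions; Theorem~\ref{T:unifbetti} enters only to make the sizes of the matrices uniform. First I would set $\gamma_n:=\Delta(d,e,l,n)$ and $\beta_n:=\gamma_0+\cdots+\gamma_n$, take $Z_0$ to be the polynomial ring over $\zet$ whose variables are the entries of generic matrices $\Psi_n$ of size $\beta_{n-1}\times\beta_n$ (for $n\ge1$), and put $Z:=Z_0/I$, where $I$ is generated by all entries of the products $\Psi_n\Psi_{n+1}$. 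Then $Z$ is countably generated, and over it one has the complex $\mathcal F_\bullet$ with $\mathcal F_n:=Z^{\beta_n}$ and $\partial_n$ equal to the image of $\Psi_n$.

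Next, given $R$ and $M$, I would take a minimal free resolution $F_\bullet$ of $M$; by Theorem~\ref{T:unifbetti} its $n$-th term has rank $b_n:=\beta_n(M)\le\gamma_n\le\beta_n$. The step I expect to be the main obstacle is that the Betti numbers are only \emph{bounded}, not constant, so before writing down a specialization $Z\to R$ one must inflate $F_\bullet$ to a free resolution of $M$ with the \emph{fixed} ranks $\beta_n$. I would do this by adjoining a split exact (hence contractible) complex $S_\bullet$ of finite free $R$-modules with $\op{rank}S_n=\beta_n-b_n$; such an $S_\bullet$, supported in non-negative degrees, exists exactly when the alternating partial sums $\sigma_n:=\sum_{k=0}^n(-1)^{n-k}(\beta_k-b_k)$ are all non-negative, and the point is that with this choice of $\beta_n$ the $\beta$-part of $\sigma_n$ telescopes to $\sum_{k\le n,\,k\equiv n\,(2)}\gamma_k$, which dominates the corresponding $b$-part, so indeed $\sigma_n\ge0$ for every $n$. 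Now $G_\bullet:=F_\bullet\oplus S_\bullet$ is a free resolution of $M$ with all ranks $\beta_n$; fixing bases its differentials become $\beta_{n-1}\times\beta_n$ matrices $\Gamma_n$ satisfying $\Gamma_n\Gamma_{n+1}=0$, so sending the entries of $\Psi_n$ to those of $\Gamma_n$ gives a map $Z_0\to R$ killing $I$, hence a map $\phi\colon Z\to R$ with $\mathcal F_\bullet\tensor_{Z,\phi}R\iso G_\bullet$. Since $S_\bullet$ is contractible, $S_\bullet\tensor_RN$ and $\hom R{S_\bullet}N$ are contractible for every $R$-module $N$, so $H_n(\mathcal F_\bullet\tensor_ZN)=H_n(G_\bullet\tensor_RN)=H_n(F_\bullet\tensor_RN)=\tor RnMN$ and dually $H^n(\hom Z{\mathcal F_\bullet}N)=\ext RnMN$.

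Finally, for the improved statement: if $R$ is regular --- or, more generally, if $M$ has finite projective dimension --- then, $R$ being \CM\ of depth $d$, the Auslander--Buchsbaum formula forces the projective dimension of $M$ to be at most $d$, so $F_\bullet$ has length $\le d$ and the Betti sequence $(b_0,\dots,b_d)$, which is bounded entrywise by $(\gamma_0,\dots,\gamma_d)$, now ranges over a finite set. The natural plan is to replace the single infinite generic complex by the finite family indexed by these sequences, bundled over the finite product of their (now finitely generated) coordinate rings, using that a local ring receives a ring map from such a finite product through exactly one factor, so that $\mathcal F_\bullet\tensor_ZR$ recovers the minimal resolution for the relevant Betti sequence. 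The subtle point I would flag here is that a complex of free modules of fixed length has a fixed Euler characteristic, whereas $\op{Eul}(M)$ genuinely varies over the admissible $M$; thus the constant-rank inflation used above is not available in length $\le d$, and one is forced either to let $\mathcal F_\bullet$ consist of finitely generated \emph{projective} $Z$-modules over the product, or to permit its length to exceed $d$ by a bounded amount, in each case re-checking that the specializations still compute $\op{Tor}$ and $\op{Ext}$.
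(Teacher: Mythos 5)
For the main statement you are following the paper's own construction: the paper likewise takes generic matrices $\tuple\Xi_n$ of size $\delta_{n-1}\times\delta_n$, with $\delta_n:=\Delta(d,e,l,n)$, imposes the relations $\tuple\Xi_n\cdot\tuple\Xi_{n+1}=0$, sets $\mathcal F_n:=Z^{\delta_n}$, and specializes the entries to the differentials of a free resolution of $M$. Where you genuinely add something is at the specialization step, which the paper dispatches in one sentence (``we may assign to each entry \dots a value in $R$''): since every free resolution of a finitely generated module over a Noetherian local ring is the minimal resolution plus a split exact complex, a resolution with ranks \emph{exactly} $\delta_n$ exists only when the alternating partial sums of the deficits $\delta_k-\beta_k(M)$ are non-negative, and the raw bounds $\Delta(d,e,l,n)$ do not guarantee this. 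Your cumulative ranks $\beta_n=\gamma_0+\cdots+\gamma_n$, the explicit split-exact padding, and the telescoping check that every $\sigma_n\ge0$ repair exactly this point (equivalently: replace $\Delta(d,e,l,n)$ by its partial sums, which is what the paper's argument implicitly needs). So for the first assertion your proof is correct and is essentially the paper's proof carried out carefully.

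For the final assertion the paper offers no argument, and your Euler-characteristic objection is sound: $\mathcal F_\bullet\tensor_ZR$ is free of the same ranks as $\mathcal F_\bullet$, so a single free complex of length $d$ would force all admissible $M$ of finite projective dimension to share one value of $\op{Eul}(M)$, which already fails for $M=R$ and $M=R^2$ over a regular local ring once $l\ge 2$. Of your two suggested repairs, only the first one works: since $\op{pd}_RM\le d$ by Auslander--Buchsbaum, there are only finitely many admissible Betti sequences $(b_0,\dots,b_d)$ with $b_n\le\gamma_n$; taking $Z$ to be the (still finitely generated) product of the corresponding generic algebras and $\mathcal F_\bullet$ a length-$d$ complex of finitely generated \emph{projective} $Z$-modules whose ranks vary over the factors, a map $Z\to R$ into a local ring factors through exactly one factor, the other factors are annihilated by $\tensor_Z$ and by $\hom Z{-}{N}$, and the Tor/Ext computation goes through as before. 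Allowing the length to exceed $d$ by a bounded amount does not help, because adding elementary split-exact complexes never changes the Euler characteristic of a finite free complex; if one insists on genuinely free modules over a connected $Z$, one is forced back to the infinite complex (and infinitely generated $Z$) of the first part.
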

\begin{proof}
For each $n$, let $\delta_n:=\Delta(d,e,l,n)$ be the bound given by
Theorem~\ref{T:unifbetti}, and let
$\tuple \Xi_n$ be a tuple of indeterminates   viewed as a
$\delta_{n-1}\times\delta_n$-matrix. Let $Z$ be the
polynomial ring over $\zet$ generated by all indeterminates $\tuple \Xi_n$
modulo the relations $\tuple \Xi_n\cdot\tuple \Xi_{n+1}=0$, expressing that the
product of two consecutive matrices is zero.  We then define the complex  $\mathcal F_\bullet$
by letting its $n$-th term  be $Z^{\delta_n}$, and its $n$-th  differential
  the matrix $\tuple \Xi_n$. By construction, $\mathcal F_\bullet$ is a free
complex. Now, given  $R$ and $M$ as in the statement,
Theorem~\ref{T:unifbetti} implies that we may assign to each entry in
$\tuple \Xi_n$, a value in $R$ so that under the induced map $Z\to R$, the complex $\mathcal
F_\bullet\tensor_ZR$ becomes a free resolution of $M$. The statement now follows
from the definition of Tor and Ext.
\end{proof}

The $n$-th \emph{Bass number} $\mu_n(M)$ of a finitely generated $R$-module $M$ is the vector space dimension of $\ext RnkM$, where $k$ is the residue field of $R$. The $q$-th Bass number, with $q$ equal to the depth of $M$, is also called the \emph{type} of $M$.

\begin{corollary}\label{C:unifbass}
The type (respectively, for each $n$, the $n$-th Bass number) of a finitely generated module $M$ over a local \CM\ ring $R$ is bounded above by a function (in $n$) depending only on the dimension and multiplicity of $R$, and on the minimal number of generators of $M$.\end{corollary}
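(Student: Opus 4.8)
The plan is to run the ultraproduct argument of Theorem~\ref{T:unifbetti}, now for $\operatorname{Ext}$ instead of $\operatorname{Tor}$. First I would reduce: the type of $M$ being the Bass number $\mu_{\op{depth}M}(M)$, it suffices to bound each $\mu_n(M)=\dim_k\ext RnkM$; and --- as in Theorem~\ref{T:unifbetti} --- the real content is for a maximal \CM\ module $M$ over a $d$-dimensional \CM\ local ring $R$ of multiplicity $e$, where a bound in terms of $(d,e,n,\nu(M))$ is the same as one in terms of $(d,e,n,\op{mult}_R(M))$, since $\nu(M)\le\op{mult}_R(M)\le\nu(M)\cdot e$ (take a minimal reduction of $\maxim$, available after the faithfully flat extension $R\to R(\xi)$, which, like completion, leaves Bass numbers, $\dim$, $\op{mult}$ and $\nu$ untouched). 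A \CM\ module that is not maximal I would treat afterwards, by deforming along a maximal $M$-regular sequence contained in a system of parameters --- which is $R$-regular because $R$ is \CM\ (cf. Proposition~\ref{P:sop}) --- to land in the finite-length case, the relevant invariant there being the length of the resulting quotient rather than $\nu(M)$.

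So assume the bound fails for some $(d,e,n,s)$: for each $w$ we get a $d$-dimensional \CM\ local ring $(\seq Rw,\seq\maxim w)$ of multiplicity $e$ and a maximal \CM\ $\seq Rw$-module $\seq Mw$ with $\nu(\seq Mw)\le s$ and $\mu_n(\seq Mw)\ge w$; shrinking the index set inside the ultrafilter, I may assume the $\seq Mw$ all have the same number of minimal generators and the same multiplicity. Let $\ul R,\ul M$ be the ultraproducts. Then $\ul R$ is isodimensional by Theorem~\ref{T:isodim} (the $\seq Rw$, being \CM, share their dimension $d$ and parameter degree $e$) and ultra-\CM, hence \pCM\ by Theorem~\ref{T:ulCM}; $\ul M$ is finitely generated and \pCM\ by Lemma~\ref{L:pCMmod}. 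Let $\ulsep R=\complet{\ul R}$, Noetherian by Lemma~\ref{L:ulcomp}, and $\ulsep M:=\ul M\tensor_{\ul R}\ulsep R$, finitely generated over $\ulsep R$. By Theorem~\ref{T:unifbetti} applied to the finite-length, \CM, multiplicity-$1$ modules $\seq kw$, the Betti numbers $b_i:=\beta_i(\seq kw)$ are bounded, hence (after shrinking again) independent of $w$, and by \los\ the ultraproduct of the minimal free resolutions of the $\seq kw$ is a minimal free resolution $\ul{F_\bullet}$ of $\ul k$ over $\ul R$ with $\ul{F_i}=\ul R^{b_i}$.

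Now two computations. Since $C^\bullet:=\hom{\ul R}{\ul{F_\bullet}}{\ul M}$ (with $C^i=\ul M^{b_i}$) is the ultraproduct of the complexes $\hom{\seq Rw}{F^w_\bullet}{\seq Mw}$ and ultraproducts commute with homology, $\ext{\ul R}n{\ul k}{\ul M}=H^n(C^\bullet)$ is the ultraproduct of the $\ext{\seq Rw}n{\seq kw}{\seq Mw}$, hence a $\ul k$-vector space of \emph{infinite} dimension (the $\mu_n(\seq Mw)$ being unbounded). On the other hand, $\tor{\ul R}i{\ulsep R}{\ul k}=0$ for $i>0$ by Proposition~\ref{P:tor} ($\ul R$ is \pCM, $\ul k$ has finite length), so $\ul{F_\bullet}\tensor_{\ul R}\ulsep R$ resolves $\ul k$ over $\ulsep R$; and since $\hom{\ul R}{\ul R^{b_i}}{\ul M}\tensor_{\ul R}\ulsep R=\hom{\ulsep R}{\ulsep R^{b_i}}{\ulsep M}$, the complex $C^\bullet\tensor_{\ul R}\ulsep R$ computes $\ext{\ulsep R}\bullet{\ul k}{\ulsep M}$. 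Each $C^i=\ul M^{b_i}$ is finitely generated \pCM, so $\tor{\ul R}j{\ulsep R}{C^i}=0$ for $j>0$ by Corollary~\ref{C:tor}, and each $H^i(C^\bullet)=\ext{\ul R}i{\ul k}{\ul M}$ is a $\ul k$-vector space, hence also $\operatorname{Tor}$-acyclic against $\ulsep R$ by Proposition~\ref{P:tor}; so the base-change (hyper-$\operatorname{Tor}$) spectral sequence degenerates, giving $H^n(C^\bullet\tensor_{\ul R}\ulsep R)\cong H^n(C^\bullet)\tensor_{\ul R}\ulsep R\cong\ext{\ul R}n{\ul k}{\ul M}$. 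Thus $\ext{\ul R}n{\ul k}{\ul M}\cong\ext{\ulsep R}n{\ul k}{\ulsep M}$, which is finite-dimensional over $\ul k$ since $\ulsep R$ is Noetherian and $\ulsep M$ finitely generated --- contradiction.

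The hard part will be exactly this last base change: $\ul R\to\ulsep R$ need not be flat, so cohomology does not automatically pass through the tensor product; it works only because the \pCM\ hypotheses make every term \emph{and} every cohomology module of $C^\bullet$ Tor-acyclic over $\ulsep R$ --- this is where Proposition~\ref{P:tor}, Corollary~\ref{C:tor} and (for the residue field's Betti numbers) Theorem~\ref{T:unifbetti} are all needed; everything else is quoted verbatim from the earlier results. As a cross-check, over the completion one could instead invoke the duality $\mu_n^R(M)=\beta^R_{n-d}(\hom RM{\omega_R})$ valid for maximal \CM\ $M$, which deduces the Bass-number bound directly from Theorem~\ref{T:unifbetti} for the maximal \CM\ module $\hom RM{\omega_R}$ (which has the same multiplicity as $M$).
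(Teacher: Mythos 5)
Your route is entirely different from the paper's: the printed proof never reruns the ultraproduct machinery for modules at all, but simply takes the universal bounded-rank resolution $F_\bullet$ of $k$ from Corollary~\ref{C:univres}, notes that $\op{Ext}^n_R(k,M)$ is the $n$-th cohomology of $\hom R{F_\bullet}M$, and bounds its length by the number of generators of $\hom R{F_n}M\iso M^{\beta_n(k)}$ --- two lines, no \CM\ hypothesis on $M$, no cataproducts, no base change.

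The first and most serious gap is that the statement concerns an \emph{arbitrary} finitely generated $M$, while your argument only covers maximal \CM\ modules: Lemma~\ref{L:pCMmod} and Corollary~\ref{C:tor} are exactly where the \CM{}ness of the $\seq Mw$ enters, and your proposed reduction of a lower-dimensional \CM\ module to the finite-length case does not preserve the bounding data (neither the multiplicity of $R/\tuple xR$ nor the length of $M/\tuple xM$ is a function of $d$, $e$ and $\nu(M)$; already $M=R/(x^m)$ over $R=\pow k{x,y}$ gives unbounded $\ell(M/yM)$ with $\nu(M)=1$). Moreover this gap cannot be filled by any argument, because outside the maximal \CM\ case the stated bound is simply false: over $R=\pow k{x,y}$ (dimension $2$, multiplicity $1$) the finite-length --- hence \CM\ --- modules $M_t=R/\maxim^t$ have $\nu(M_t)=1$ but type $\mu_0(M_t)=\dim_k\op{soc}(M_t)=t$. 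The same example exposes the weak point of the paper's own proof: $\op{Ext}^n_R(k,M)$ is only a subquotient of $M^{\beta_n(k)}$ killed by $\maxim$, and such a subquotient need not have dimension at most $\beta_n(k)\nu(M)$ (at $n=0$ it is the socle of $M$). So your instinct to restrict to maximal \CM\ modules produces a true theorem, but not the corollary as printed.

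The second gap is the base-change step you yourself flag as the hard part. Tor-acyclicity of the terms $C^i=\ul M^{b_i}$ and of the cohomology modules does \emph{not} by itself give $H^n(C^\bullet\tensor_{\ul R}\ulsep R)\iso H^n(C^\bullet)\tensor_{\ul R}\ulsep R$: the complex $C^\bullet$ is unbounded above and $\ulsep R$ has no finite flat dimension over the non-Noetherian ring $\ul R$, so the hyper-Tor spectral sequence you invoke has a genuine convergence problem in precisely this direction; the elementary d\'evissage only yields $\op{Tor}^{\ul R}_1(\ulsep R,B^i)\iso\op{Tor}^{\ul R}_2(\ulsep R,B^{i+1})\iso\cdots$, pushing the obstruction up the complex without ever killing it, and what is actually needed --- that the cocycle inclusions $Z^i\sub C^i$ stay injective after $\tensor_{\ul R}\ulsep R$ --- is not among your hypotheses. (This is why Proposition~\ref{P:uCMfpt} is stated for Betti numbers: tensoring a \emph{resolution} with $\ulsep R$ is controlled by Corollary~\ref{C:tor} alone, whereas the Hom complex computing Ext is not.) The clean way to rescue your maximal \CM\ case is the duality you relegate to a cross-check: pass to the completion, use $\mu_{d+i}(M)=\beta_i(\hom RM{\omega})$ with $\hom RM{\omega}$ maximal \CM\ of the same multiplicity (at most $\nu(M)\mult R$), together with $\mu_j(M)=0$ for $j<d$, and quote Theorem~\ref{T:unifbetti}; this avoids both the spectral sequence and any ultraproduct of Ext modules.
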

\begin{proof}
Since the depth of $M$ is at most the dimension of $R$, it suffices to prove the claim for a fixed $n$. By Corollary~\ref{C:univres}, there is a resolution $F_\bullet$ of     $k$ by finite free $R$-modules $F_n$ whose ranks $\beta_n(k)$ are bounded by the dimension and multiplicity of $R$.  Since $\ext RnkM$ is the $n$-th cohomology of $\hom R{F_\bullet}M$, its length $\mu_n(M)$ is at most the number of generators of $\hom R{F_n}M\iso M^{\beta_n(k)}$, and the claim follows.
\end{proof}

%
%
%
%

Let us extend some    definitions from \cite{SchClassSing}. We will call a \homo\  $R\to S$ of Noetherian local rings \emph{formally etale} (or a \emph{scalar extension}), if it is faithfully flat and unramified (=the maximal ideal of $R$ extends to the maximal ideal of $S$).   Let $(R,\maxim)$ and $(S,\mathfrak n)$  be Noetherian local rings, and let $M$ be a finitely generated $R$-module and $N$ a finitely generated $S$-module.
We define the
\emph{infinitesimal neighborhood distance}
between   $M$ and $N$ as the real
number 
$$
 d(M,N):=e^{-\alpha}
$$
where $\alpha$ is the (possibly infinite)  supremum of all $j$ such that there exists an Artinian local ring $T$, together with formally etale extensions $R/\maxim^j\to T$ and $S/\mathfrak n^j\to T$, yielding $M\tensor_RT\iso N\tensor_ST$.   As shown in \cite{SchClassSing} (where the distance is only defined between rings), the
infinitesimal neighborhood distance is a (quasi-)metric, and, roughly speaking,  up to a formally etale
base change,  limits in this metric space can
be calculated by means of cataproducts.  

\begin{theorem}\label{T:deformbetti}
For each quadruple of positive integers $(d,e,l,n)$, there exists a
bound $\varepsilon:=\varepsilon(d,e,l,n)>0$
such that if $R$ and $S$ are $d$-dimensional local \CM\ rings of multiplicity $e$, and $M$ and $N$ are finitely generated \CM\ modules of multiplicity at most $l$ over $R$ and $S$ respectively, with $
d(M,N)\leq \varepsilon$, then $\beta_n(N)=\beta_n(M)$.
\end{theorem}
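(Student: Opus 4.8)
The plan is to argue by contradiction and pass to cataproducts, in the same spirit as the proof of Theorem~\ref{T:unifbetti}, but keeping track of the infinitesimal closeness. First I would suppose the statement fails for some $(d,e,l,n)$, producing for each $w$ a pair of $d$-dimensional local \CM\ rings $\seq Rw,\seq Sw$ of multiplicity $e$ and \CM\ modules $\seq Mw$ over $\seq Rw$, $\seq Nw$ over $\seq Sw$ of multiplicity at most $l$, with $d(\seq Mw,\seq Nw)\le e^{-w}$ but $\beta_n(\seq Mw)\ne\beta_n(\seq Nw)$. By Theorem~\ref{T:unifbetti} these Betti numbers, and also the numbers of generators and first syzygies of $\seq Mw$ and $\seq Nw$, are bounded in terms of $(d,e,l)$; hence, after fixing a non-principal ultrafilter, there are $b\ne b'$ with $\beta_n(\seq Mw)=b$ and $\beta_n(\seq Nw)=b'$ for almost all $w$.

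Next I would pass to the ultraproducts $\ul R,\ul S,\ul M,\ul N$. Since almost all $\seq Rw$ are \CM\ of dimension $d$ and multiplicity $e$, their parameter degree equals $e$ by Lemma~\ref{L:pardegmult}, so $\ul R$ is isodimensional by Theorem~\ref{T:isodim} and Remark~\ref{R:isodim}; thus the cataproduct $\ulsep R$ is a $d$-dimensional Noetherian local \CM\ ring, and likewise for $\ulsep S$. By Lemma~\ref{L:pCMmod} the ultraproducts $\ul M,\ul N$ are finitely generated and \pCM, hence the cataproducts $\ulsep M,\ulsep N$ are finitely generated \CM\ modules over $\ulsep R,\ulsep S$, and Proposition~\ref{P:uCMfpt} gives $\beta_n(\ulsep M)=b$ and $\beta_n(\ulsep N)=b'$. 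So it is enough to derive the contradiction $\beta_n(\ulsep M)=\beta_n(\ulsep N)$.

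To bring in the hypothesis $d(\seq Mw,\seq Nw)\le e^{-w}$, I would choose for each $w$ an integer $j_w$, with $j_w\to\infty$ along the ultrafilter, and an Artinian local ring $\seq Tw$ with formally etale maps $\seq Rw/\seq\maxim w^{j_w}\to\seq Tw$ and $\seq Sw/\seq{\mathfrak n}w^{j_w}\to\seq Tw$ for which $\seq Mw\tensor_{\seq Rw}\seq Tw\iso\seq Nw\tensor_{\seq Sw}\seq Tw$ as $\seq Tw$-modules. As these maps are unramified and flat, the maximal ideal of $\seq Tw$ is $\seq\maxim w\seq Tw$ and $\maxim_{\seq Tw}/\maxim_{\seq Tw}^2\iso(\seq\maxim w/\seq\maxim w^2)\tensor_{\seq kw}(\seq Tw/\maxim_{\seq Tw})$, so $\ed{\seq Tw}=\ed{\seq Rw}\le d+e-1$ by Remark~\ref{R:isodim}; hence the $\seq Tw$ have bounded embedding dimension and admit a cataproduct $\ulsep T$, Noetherian by Lemma~\ref{L:ulcomp}. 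Since $\seq\maxim w^m\seq Tw=\maxim_{\seq Tw}^m$ for all $m$, the maps $\seq Rw\to\seq Tw$ and $\seq Sw\to\seq Tw$ kill infinitesimals and induce local homomorphisms $\ulsep R\to\ulsep T\leftarrow\ulsep S$; invoking the description of the infinitesimal neighborhood metric in \cite{SchClassSing}, adapted from rings to modules, these maps are again formally etale and $\ulsep M\tensor_{\ulsep R}\ulsep T\iso\ulsep N\tensor_{\ulsep S}\ulsep T$. As Betti numbers are computed by $-\tensor k$ and hence are invariant under formally etale base change, this yields
\[
b=\beta_n^{\ulsep R}(\ulsep M)=\beta_n^{\ulsep T}(\ulsep M\tensor_{\ulsep R}\ulsep T)=\beta_n^{\ulsep T}(\ulsep N\tensor_{\ulsep S}\ulsep T)=\beta_n^{\ulsep S}(\ulsep N)=b',
\]
the desired contradiction.

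The hard part will be the penultimate step: verifying that the cataproduct $\ulsep T$ genuinely serves as a common formally etale cover of $\ulsep R$ and $\ulsep S$ over which $\ulsep M$ and $\ulsep N$ become isomorphic. Unramifiedness transfers by \los, and the module isomorphism transfers as well once one notes (from Theorem~\ref{T:unifbetti}) that $\seq Mw$ and $\seq Nw$ admit presentations of bounded size, so that $\ulsep M\tensor_{\ulsep R}\ulsep T$ and $\ulsep N\tensor_{\ulsep S}\ulsep T$ are cokernels of ultraproducts of matrices; but flatness of $\ulsep R\to\ulsep T$ is not first-order. I expect to obtain it from the local flatness criterion, after checking --- using that the ranks of the free $\seq Rw/\seq\maxim w^m$-modules $\seq Tw/\maxim_{\seq Tw}^m$ are bounded in terms of $m$, $d$, $e$ --- that $\ulsep T/\maxim^m\ulsep T$ is free of constant rank over $\ulsep R/\maxim^m$ for every $m$. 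This is precisely the module-theoretic refinement of the ``limits are computed by cataproducts'' principle of \cite{SchClassSing}, and carrying it out carefully is where the genuine work of the proof lies.
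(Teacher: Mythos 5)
Your overall architecture (argue by contradiction, pass to ultraproducts, get isodimensionality from Theorem~\ref{T:isodim}, finite generation and \pCM{}ness of $\ul M,\ul N$ from Lemma~\ref{L:pCMmod}, and identify $\beta_n(\seq Mw)$ with $\beta_n(\ulsep M)$ via Proposition~\ref{P:uCMfpt}) is exactly the paper's. Where you diverge is in how the hypothesis $d(\seq Mw,\seq Nw)\leq e^{-w}$ is converted into an isomorphism statement at the limit. The paper never takes cataproducts of the Artinian witnesses $\seq Tw$: since Betti numbers are invariant under formally etale extensions, it first invokes the techniques of \cite{SchClassSing} to replace the data at each finite level $w$ by formally etale extensions, so that one may assume outright that $\seq Rw/\seq\maxim w^w\iso\seq Sw/\seq{\mathfrak n}w^w$ and $\seq Mw/\seq\maxim w^w\seq Mw\iso\seq Nw/\seq{\mathfrak n}w^w\seq Nw$; then \los\ and modding out infinitesimals give $\ulsep R\iso\ulsep S$ and $\ulsep M\iso\ulsep N$ directly, and no flatness of any cataproduct map has to be established.

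Your route does require $\ulsep R\to\ulsep T$ to be faithfully flat, and the justification you sketch for this is where the proposal breaks: you propose to check that the $\seq Rw/\seq\maxim w^m$-modules $\seq Tw/\maxim_{\seq Tw}^m$ are free of rank bounded in terms of $m,d,e$, hence that $\ulsep T/\maxim^m\ulsep T$ is free of constant finite rank. That claim is false: a formally etale map in the sense of the paper is only faithfully flat and unramified, so its closed fibre can be an arbitrary residue field extension---this is precisely the point of ``analytic extensions of scalars'' in \cite{SchClassSing}---and then $\seq Tw$ is free of infinite rank over $\seq Rw/\seq\maxim w^{j_w}$ (for instance, in the equicharacteristic case with coefficient field $k$, take $T:=(R/\maxim^j)\tensor_kK$ for an infinite extension $k\sub K$). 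So the bounded-rank step would fail, and with it your verification of flatness. The gap is repairable: either perform the paper's finite-level reduction, or prove flatness of the ultraproduct of the truncations by the equational criterion for flatness, using that the lengths of the $\seq Rw/\seq\maxim w^m$ are bounded in terms of $m,d,e$, so that syzygy modules of $r$-tuples are generated by a uniformly bounded number of elements and the relevant instances of the criterion become first-order statements to which \los\ applies; the local flatness criterion (applicable since $\ulsep T$ is Noetherian local, hence idealwise separated over $\ulsep R$) then yields flatness of $\ulsep R\to\ulsep T$. But as written, the crux step of your argument rests on a false finiteness claim.
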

\begin{proof}
Suppose no such bound exists for the pair $(d,e,l,n)$, resulting in a
counterexample  for each $w$, given by  $d$-dimensional \CM\ local rings $(\seq Rw,\seq\maxim w)$ and $(\seq Sw,\seq{\mathfrak n}w)$ of multiplicity $e$, and finitely generated \CM\ modules $\seq Mw$ and $\seq Nw$ 
 of multiplicity at most $e$ over $\seq Rw$ and $\seq Sw$ respectively, such that $ d(\seq Mw,\seq Nw)\leq e^{-w}$, but
$\beta_n(\seq Mw)\neq\beta_n(\seq Nw)$. Since Betti numbers are preserved under formally etale extensions, the techniques in \cite{SchClassSing} allows us to reduce to the case that the distance condition means that 
\begin{equation}\label{eq:distmod}
\seq Rw/\seq\maxim w^w\iso \seq Sw/\seq{\mathfrak n}w^w\quad\text{and}\quad \seq Mw/\seq\maxim w^w\seq Mw\iso \seq Nw/\seq{\mathfrak n}w^w\seq Nw
\end{equation}
Let $\ul M$ and $\ul N$ be the respective ultraproducts of the $\seq Rw$, $\seq Sw$, $\seq Mw$, and $\seq Nw$.
By Corollary~\ref{C:CMulsep}, the 
respective ultraproducts $\ul R$ and $\ul S$ are \pCM\ local rings, and by Lemma~\ref{L:pCMmod}, the respective ultraproducts $\ul M$ and $\ul N$ are finitely generated \pCM\ modules over $\ul R$ and $\ul S$ respectively.
 Moreover, by \los\ and modding out infinitesimals, we get from \eqref{eq:distmod} that the respective cataproducts $\ulsep R$ and $\ulsep S$ are isomorphic, as are the respective cataproducts $\ulsep M$ and $\ulsep N$. By Proposition~\ref{P:uCMfpt}, we therefore get for almost
all $w$, the following contradictory equalities
$$
\beta_n(\seq Mw)=\beta_n(\ulsep M)=\beta_n(\ulsep N)=\beta_n(\seq Nw).
$$
\end{proof}

\begin{corollary}\label{C:maxCMbetti}
 Given a local \CM\ ring $R$,  there exists, for each  $n\in\nat$, a
bound $\delta:=\delta(n)>0$
such that if $M$ and $N$  are maximal \CM\ modules with 
$d(M,N)\leq \varepsilon$, then $\beta_n(N)=\beta_n(M)$.
\end{corollary}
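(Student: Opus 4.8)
The plan is to deduce Corollary~\ref{C:maxCMbetti} from Theorem~\ref{T:deformbetti} by fixing the ambient ring and noting that over a fixed local \CM\ ring $R$ of dimension $d$ and multiplicity $e$, the maximal \CM\ modules $M$ and $N$ that we compare are already defined over the \emph{same} ring, so we are in an even more favorable situation than in the theorem. First I would recall that a maximal \CM\ module over a $d$-dimensional local \CM\ ring is itself \CM\ of dimension $d$; hence, to invoke Theorem~\ref{T:deformbetti}, the only parameter left to control is the multiplicity bound $l$ on $M$ and $N$.

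The key step is therefore to observe that once $n$ is fixed, a uniform bound on the multiplicity of the relevant maximal \CM\ modules suffices. Indeed, if $d(M,N)$ is small enough, then by the definition of the infinitesimal neighborhood distance (and the preservation of Betti numbers under formally etale extensions, as used in the proof of Theorem~\ref{T:deformbetti}) the modules agree modulo a high power of the maximal ideal, and in particular have the same minimal number of generators and the same multiplicity. So it is enough to run the argument separately for each value $l$ of the multiplicity: for each $l$, Theorem~\ref{T:deformbetti} applied to the quadruple $(d,e,l,n)$ produces a threshold $\varepsilon(d,e,l,n)>0$, and we may take $\delta(n)$ to be $\varepsilon(d,e,\ell_0(n),n)$ where $\ell_0(n)$ is chosen so that any two maximal \CM\ modules within distance $\delta(n)$ automatically have multiplicity at most $\ell_0(n)$. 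Concretely, one can take $\delta(n):=e^{-1}$ at a first pass: if $d(M,N)\le e^{-1}$ then $M/\maxim M\iso N/\maxim N$ after a formally etale base change, forcing $M$ and $N$ to have the same minimal number of generators $\mu$, and then the multiplicity of a maximal \CM\ module generated by $\mu$ elements over $R$ is bounded by $\mu\cdot e$; now feed $l:=\mu e$ into Theorem~\ref{T:deformbetti} and shrink $\delta(n)$ to $\min(e^{-1},\varepsilon(d,e,\mu e,n))$.

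The main obstacle, such as it is, is purely bookkeeping: we must make sure that the bound $\delta(n)$ depends only on $R$ (through $d$, $e$, and the coarse invariant $\mu e$) and on $n$, and not on $M$ or $N$ — but this is exactly the content of Theorem~\ref{T:deformbetti} once the multiplicity parameter has been pinned down, together with the elementary fact that two modules at small infinitesimal distance share their number of generators. There is a minor circularity to avoid: the threshold $\delta(n)$ must be chosen \emph{before} knowing $\mu$, yet $\mu$ is needed to pick the multiplicity bound; the resolution is the two-stage choice indicated above, where the first stage ($\delta\le e^{-1}$) already determines $\mu$, after which the second stage refines $\delta$.

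Alternatively, and perhaps more cleanly, one avoids the $\mu$-bookkeeping entirely by reproving the statement from scratch along the lines of Theorem~\ref{T:deformbetti}: suppose no $\delta(n)>0$ works, obtain maximal \CM\ $\seq Rw$-modules $\seq Mw,\seq Nw$ with $d(\seq Mw,\seq Nw)\le e^{-w}$ but $\beta_n(\seq Mw)\ne\beta_n(\seq Nw)$, pass to ultraproducts $\ul M,\ul N$ over the ultrapower $\ul R$ of $R$, note that $\ul R$ is \pCM\ and isodimensional (it is a catapower of $R$ up to the ideal of infinitesimals, cf.\ Corollary~\ref{C:CMulsep}), that $\ul M$ and $\ul N$ are finitely generated \pCM\ by Lemma~\ref{L:pCMmod}, and that \los\ together with modding out infinitesimals identifies the cataproducts $\ulsep M\iso\ulsep N$; Proposition~\ref{P:uCMfpt} then gives $\beta_n(\seq Mw)=\beta_n(\ulsep M)=\beta_n(\ulsep N)=\beta_n(\seq Nw)$ for almost all $w$, the desired contradiction. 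I would present the short deduction from Theorem~\ref{T:deformbetti} as the main line and remark that this ultraproduct argument gives an independent proof.
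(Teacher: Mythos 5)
Your main line is essentially the paper's own proof: the paper likewise deduces the corollary from Theorem~\ref{T:deformbetti} by observing that $d(M,N)<1$ forces $M$ and $N$ to have the same minimal number of generators $m$, and then bounding their multiplicity in terms of $m$; the only difference is how that bound is obtained --- you compare Hilbert--Samuel functions along the surjection $R^m\to M$ to get $\mult M\leq m\cdot\mult R$, whereas the paper localizes at the finitely many $d$-dimensional primes of $R$ and invokes \cite[Corollary 4.6.8]{BH} --- and either route is fine. Note that, exactly as in the paper's proof, the threshold you produce depends on the common number of generators $m$ (through the multiplicity bound $m\cdot\mult R$) as well as on $n$, so your ``bookkeeping'' discussion does not add anything beyond what the paper already does. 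One caveat on the ultraproduct argument you offer as an independent proof: Lemma~\ref{L:pCMmod} and Proposition~\ref{P:uCMfpt} require the modules $\seq Mw$ to have bounded multiplicity, and in a putative counterexample sequence to the fully uniform statement the number of generators, hence the multiplicity, of the maximal \CM\ modules $\seq Mw$ need not be bounded, in which case their ultraproduct is not finitely generated and those results do not apply; so that argument only becomes valid after the same preliminary reduction to a fixed bound on the number of generators, and is not genuinely independent of it.
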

\begin{proof}
If $d(M,N)<1$, then $M$ and $N$ have the same minimal number of generators $m$. In view of Theorem~\ref{T:deformbetti}, it suffices to show that the multiplicity of $M$ and $N$ are uniformly bounded in terms of $m$. 
Let $\tilde e$ and $\tilde q$ be respectively the maximum multiplicity of $R/\pr$ and the maximal length of $R_\pr$, where $\pr$ runs over the finitely many $d$-dimensional prime ideals of $R$. Since we have a surjective map $R^m\to M$, tensoring with one of these $d$-dimensional prime  ideals $\pr$ shows that the length of $M_\pr$ is at most $m\tilde q$. The bound on the multiplicity  now follows from   \cite[Corollary 4.6.8]{BH}. 
\end{proof}

\subsection*{Proofs of Corollaries~\ref{C:BT} and \ref{C:invertible}}
The first corollary follows immediately from the definitions and Theorem~\ref{T:unifbetti}. To prove the second,
 let $e$ be the multiplicity of $R/I$. Since $I=xR$ for some regular element
$x\in R$, the residue ring $R/I$ is \CM\ and has projective dimension one.
Hence $\beta_1(R/I)=1$ and $\beta_2(R/I)=0$. Choose some $\varepsilon>0$ as given by
Theorem~\ref{T:deformbetti} such that $d(R/I,M)\leq \varepsilon$ implies that
$R/I$ and $M$ have the same zero-th, first and second Betti number, for $M$ a \CM\
module of multiplicity at most $e$. Note that from
$\beta_0(M)=\beta_0(R/I)=1$ it   follows that $M$ is   of the
form $R/J$, so that in the statement, we did not even need to assume that $M$ was cyclic. Choose $a$ such that $e^{-a}\leq\varepsilon$. In particular,   $d(R/I,R/J)\leq
\varepsilon$, and therefore $\beta_1(R/J)=1$, yielding that $J$ is cyclic, and
$\beta_2(R/J)=0$, yielding that it is invertible.
\qed

In terms of the \emph{Poincare series} of a
module $M$, defined as $P_R(M;t):=\sum_n\beta_n(M)t^n$, our results yield:

\begin{corollary}
Over a fixed local \CM\   ring, the Poincare series is a continuous map from the metric
space of
\CM\
modules
 of multiplicity at most $e$ (respectively,   from the space of all maximal \CM\ modules), to $\pow\zet t$ with   its $t$-adic
topology.
\end{corollary}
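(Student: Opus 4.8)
\subsection*{Proof of the final corollary}

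The plan is to read the assertion off Theorem~\ref{T:deformbetti} (for the space of \CM\ modules of bounded multiplicity) and Corollary~\ref{C:maxCMbetti} (for the space of maximal \CM\ modules), since each of these is precisely a statement that sufficiently close modules have the same Betti number in a prescribed degree. First I would unwind the topology on the target: in $\pow\zet t$ equipped with the $t$-adic topology, the cosets $P(t)+t^n\pow\zet t$ (for $P(t)=\sum_i\beta_it^i$ and $n\in\nat$) form a basis of neighbourhoods of the point $P(t)$, and two Poincar\'e series lie in a common such coset exactly when their Betti numbers agree in all degrees $<n$. Thus continuity of the map $M\mapsto P_R(M;t)$ at a module $M$ is equivalent to: for every $n\in\nat$ there is some $\varepsilon>0$ such that every $N$ in the relevant metric space with $d(M,N)\le\varepsilon$ satisfies $\beta_i(N)=\beta_i(M)$ for all $i<n$.

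Next I would produce such an $\varepsilon$. Fix the ambient local \CM\ ring $R$, and put $d:=\dim R$ and $m:=\mult R$ (these are well defined, e.g. by Remark~\ref{R:mult}); let $e$ denote the given bound on the multiplicity of the modules in the first metric space. For that space, apply Theorem~\ref{T:deformbetti} with the ring parameters $d$ and $m$, the module bound $e$, and $S=R$: for each $i<n$ it yields a number $\varepsilon_i:=\varepsilon(d,m,e,i)>0$ with the property that $d(M,N)\le\varepsilon_i$ forces $\beta_i(M)=\beta_i(N)$. Setting $\varepsilon:=\min_{i<n}\varepsilon_i>0$ (a finite minimum of positive reals, hence positive), any $N$ with $d(M,N)\le\varepsilon$ then has $\beta_i(M)=\beta_i(N)$ for all $i<n$, i.e. $P_R(M;t)-P_R(N;t)\in t^n\pow\zet t$, which is what we wanted. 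For the space of all maximal \CM\ modules one argues identically, replacing Theorem~\ref{T:deformbetti} by Corollary~\ref{C:maxCMbetti}, which for each $i$ furnishes a bound $\delta(i)>0$ with the analogous property, and taking $\varepsilon:=\min_{i<n}\delta(i)$.

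I expect there to be essentially no obstacle here beyond invoking Theorem~\ref{T:deformbetti} and Corollary~\ref{C:maxCMbetti}; the only subtlety worth flagging in the write-up is that the argument proceeds one Betti degree at a time and uses a finite minimum of the $\varepsilon_i$, so it does not deliver any modulus of continuity uniform in $n$ (and indeed one should not expect the map to be uniformly continuous). I would also remark that one never needs to verify that a module close to $M$ is again \CM\ of multiplicity $\le e$, respectively again maximal \CM: that is built into restricting to the stated metric subspace, whose inherited (quasi-)metric is all that enters the argument.
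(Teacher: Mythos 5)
Your proof is correct and follows essentially the same route as the paper: invoke Theorem~\ref{T:deformbetti} (respectively Corollary~\ref{C:maxCMbetti}) to get, for each $n$, an $\varepsilon>0$ forcing agreement of the Betti numbers in degrees below $n$, and translate this into $P_R(M;t)\equiv P_R(N;t)\bmod t^n\pow\zet t$. The only difference is that you make explicit the finite minimum over the degrees $i<n$, which the paper leaves implicit.
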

\begin{proof}
For each $n$, we can choose by Theorem~\ref{T:deformbetti} (respectively, by Corollary~\ref{C:maxCMbetti}), an $\varepsilon>0$
such that $d(M,N)\leq \varepsilon$ implies that the first $n$ Betti numbers of $M$
and $N$ are the same, for $M$ and $N$ \CM\ modules of multiplicity at most
$e$ (respectively, maximal \CM\ modules). Hence $P_R(M;t)\equiv P_R(N;t)\mod t^n\pow\zet t$.
\end{proof}

Although we did not formulate it here, we may even extend this result by also  varying the base ring over all local \CM\ rings of a fixed dimension and multiplicity; see \cite[\S8]{SchClassSing}. Applied to a regular local ring, we immediately get:

\begin{corollary}
Let $R$ be a regular local ring. For each $e$, there exists
$\delta:=\delta(e)>0$ such that if $M$ and $N$ are \CM\ modules of
multiplicity at most $e$ and $d(M,N)\leq\delta$, then $P_R(M;t)=P_R(N;t)$. 
\qed
\end{corollary}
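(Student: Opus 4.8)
The plan is to reduce the desired equality $P_R(M;t)=P_R(N;t)$ to the coincidence of only finitely many Betti numbers, and then quote Theorem~\ref{T:deformbetti}. First I would use regularity: since $R$ is regular, its global dimension equals its Krull dimension $d:=\dim R$, so every finitely generated $R$-module has projective dimension at most $d$. Hence $\beta_n(M)=0$ for all $n>d$, and the Poincare series $P_R(M;t)$ is in fact a polynomial of degree at most $d$. Consequently, for finitely generated $R$-modules $M$ and $N$ one has $P_R(M;t)=P_R(N;t)$ exactly when $\beta_n(M)=\beta_n(N)$ for every $n$ in the finite range $0\le n\le d$.

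Next I would apply Theorem~\ref{T:deformbetti} with $S=R$, regarding $R$ as a $d$-dimensional local \CM\ ring of multiplicity $1$: for each $n=1,\dots,d$ it furnishes a bound $\varepsilon(d,1,e,n)>0$ such that any two \CM\ $R$-modules of multiplicity at most $e$ at infinitesimal neighborhood distance $\le\varepsilon(d,1,e,n)$ have equal $n$-th Betti number. Then I would set $\delta(e):=\min\{\,1/2,\ \varepsilon(d,1,e,1),\ \dots,\ \varepsilon(d,1,e,d)\,\}$ (and $\delta(e):=1/2$ when $d=0$, in which case $R$ is a field and the assertion is trivial). If $d(M,N)\le\delta(e)$, then $\beta_n(M)=\beta_n(N)$ for $1\le n\le d$ by the theorem, while $\beta_0(M)=\beta_0(N)$ because $d(M,N)\le 1/2<1$ forces $M/\maxim M$ and $N/\maxim N$ to become isomorphic after a common formally etale base change, which does not change the minimal number of generators. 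By the first paragraph, this gives $P_R(M;t)=P_R(N;t)$.

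The argument has essentially no independent obstacle: all the content is already contained in Theorem~\ref{T:deformbetti}, which in turn rests on the cataproduct computations in Proposition~\ref{P:uCMfpt} and Corollary~\ref{C:CMulsep}. The only point deserving a line of care is uniformity with respect to the dimension of the modules, since a \CM\ $R$-module may have any dimension $r\le d$; but Theorem~\ref{T:deformbetti} is stated for \CM\ modules without fixing that dimension, so its bounds already hold across all of them, and in any case the relevant projective dimension $d-r$ never exceeds $d$, so taking the minimum over $n\le d$ yields a single $\delta$ depending only on $e$ (and on the fixed ring $R$). Equivalently, one may read the corollary as saying that the continuous map $M\mapsto P_R(M;t)$ from the preceding corollary takes values, thanks to regularity, in the discrete subspace of polynomials of degree at most $d$ inside $\pow\zet t$, hence is locally constant, with the uniform modulus $\delta(e)$ coming from Theorem~\ref{T:deformbetti}.
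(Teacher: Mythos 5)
Your proof is correct and follows essentially the same route as the paper: regularity makes $P_R(M;t)$ a polynomial of degree at most $\dim R$, so equality reduces to finitely many Betti numbers, which Theorem~\ref{T:deformbetti} (equivalently, the continuity corollary it yields) controls uniformly; taking the minimum of the finitely many bounds gives $\delta(e)$. Your separate treatment of $\beta_0$ via $d(M,N)<1$ matches the paper's own observation in the proof of Corollary~\ref{C:maxCMbetti}, so there is nothing to add.
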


\section{Uniform arithmetic}\label{s:ua}

 In this section, we prove several    uniform bounds, and show that the existence of such   bounds is often 
equivalent with a certain ring theoretic property. We
start with examining
the domain  property. It is not true in general that the catapower of a domain is again a
domain: let $R$ be the local ring at the origin of the plane curve  over a field $k$
given by $f:=\xi^2-\zeta^2-\zeta^3$. The catapower of $R$ is $\pow
{\ul k}{\xi,\zeta}/f\pow {\ul k}{\xi,\zeta}$, where $\ul k$ is the ultrapower of
$k$, and this  is not a domain (since $1+\zeta$ has a square root in $\pow{\ul 
k}{\xi,\zeta}$). Clearly, the problem is that $R$ is not \emph{analytically irreducible}, that is to
say, not a \c-domain. 

Before we  give a necessary and sufficient condition for a catapower
to be a domain, let us introduce some terminology which makes for a smoother
presentation of our results. Put $\bar\nat:=\nat\cup\{\infty\}$. By an \emph{$n$-ary numerical} function, we mean
a map from $f\colon\bar\nat^n\to\bar \nat$,  
with the property that $f\rij sn=\infty$ \iff\ one of the entries $s_i$ is equal
to $\infty$. Moreover, we will always assume that a numerical function $f$ is
non-decreasing in any of its arguments, that is to say, if $s_i\leq t_i$ for
$i=\range 1n$,
then
$f\rij sn\leq f\rij tn$. To indicate that a numerical function depends on a ring $R$, we
will write the ring as a subscript.

Recall that $R$ has
\emph{bounded multiplication} if
there exists a binary numerical function $\mu_R$ (called a
\emph{uniformity function}) such that 
\begin{equation*}
\ord{}{xy}\leq \mu_R(\ord{} x,\ord{} y)
\end{equation*}  
for all $x,y\in R$ (see \S\ref{s:adic} for the definition of order).  In view
of our  definition of numerical function,  the ideal of infinitesimals in a
local ring with bounded multiplication is a prime ideal, and hence the separated
quotient is a domain.

\begin{theorem}\label{T:domCh} 
Let $(R,\maxim)$ be a Noetherian local ring. The following are equivalent:
\begin{enumerate}
\item\label{i:au} $R$ is analytically irreducible;
\item\label{i:bm} $R$ has bounded multiplication;
\item\label{i:ulsepdom} some (equivalently, any)  catapower $\ulsep
R$ of $R$ is a domain.
\end{enumerate}
\end{theorem}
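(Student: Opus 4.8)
The plan is to prove the cycle of implications \eqref{i:au}~$\Rightarrow$~\eqref{i:ulsepdom}~$\Rightarrow$~\eqref{i:bm}~$\Rightarrow$~\eqref{i:au}, using the catapower as the bridge between the analytic and the arithmetic conditions. First, for \eqref{i:au}~$\Rightarrow$~\eqref{i:ulsepdom}: if $R$ is analytically irreducible, then $\complet R$ is a domain, and by Corollary~\ref{C:ulsepreg} (applied to the excellent ring $\complet R$, or directly since $\complet R\to\ulsep{\complet R}$ is regular whence preserves reducedness and irreducibility of the minimal primes) the catapower $\ulsep R$ is again a domain; note $\ulsep R$ is the catapower of $\complet R$ by Corollary~\ref{C:compulsep}, so there is no loss in passing to the completion. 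Since any two catapowers arise from ultrapowers over possibly different index sets but the conclusion "is a domain" is preserved under the same regular base change, the phrase "some (equivalently, any)" is justified.

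Next, \eqref{i:ulsepdom}~$\Rightarrow$~\eqref{i:bm}: suppose $R$ does \emph{not} have bounded multiplication. Then for each $w\in W$ (take $W=\nat$) there exist $x_w,y_w\in R$ with $\ord{}{x_w}=a_w$, $\ord{}{y_w}=b_w$, but $\ord{}{x_wy_w}\geq w$. Passing to a subsequence we may assume the pairs $(a_w,b_w)$ are all distinct or all equal; in the former case the orders $a_w$ (say) are unbounded, which forces $\ul x\in\inf{\ul R}$, a triviality we avoid by instead arranging, after dividing by suitable powers of a fixed generator or by a diagonal argument, that $a_w$ and $b_w$ are \emph{bounded} (this is the standard reduction: if $\ord{}{xy}$ can be made arbitrarily large while $\ord{}x,\ord{}y$ stay bounded, we can even take them to lie in fixed powers $\maxim^a\setminus\maxim^{a+1}$ and $\maxim^b\setminus\maxim^{b+1}$ for fixed $a,b$). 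Let $\ul x,\ul y$ be the ultraproducts of the $x_w,y_w$ in the ultrapower $\ul R$. By \los, $\ul x\in\maxim^a\ul R$ but $\ul x\notin\maxim^{a+1}\ul R$, so $\ul x\notin\inf{\ul R}$, and likewise $\ul y\notin\inf{\ul R}$; hence their images in $\ulsep R$ are nonzero. On the other hand $\ord{}{x_wy_w}\geq w\to\infty$, so by \los\ $\ul x\ul y\in\maxim^N\ul R$ for every $N$, i.e.\ $\ul x\ul y\in\inf{\ul R}$, so the product of the two nonzero images in $\ulsep R$ is zero, contradicting that $\ulsep R$ is a domain.

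Finally, \eqref{i:bm}~$\Rightarrow$~\eqref{i:au}: assume $R$ has bounded multiplication with uniformity function $\mu_R$. Then $\ord{}{\cdot}$ satisfies $\ord{}{xy}\leq\mu_R(\ord{}x,\ord{}y)$; in particular, reading off that $\inf R=(0)$ (as $R$ is Noetherian) is a prime ideal is automatic from the definition of numerical function, so $R$ itself is a domain, but we want $\complet R$ a domain. The point is that the order function extends to $\complet R$ (since $\maxim^n/\maxim^{n+1}\iso\maxim^n\complet R/\maxim^{n+1}\complet R$ by Theorem~\ref{T:finembcomp}, the $\complet R$-adic order of an element of $\complet R$ agrees with a limit of $R$-orders), and the bound $\mu_R$ transfers: for $\hat x,\hat y\in\complet R$ with orders $a,b<\infty$, approximate them by $x,y\in R$ of the same orders $a,b$ modulo a high power of $\maxim$, and conclude $\ord{}{\hat x\hat y}\leq\mu_R(a,b)<\infty$, hence $\hat x\hat y\neq0$. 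Therefore $\complet R$ has no zero-divisors among elements of finite order, and since every nonzero element of $\complet R$ has finite order (it is separated), $\complet R$ is a domain, i.e.\ $R$ is analytically irreducible.

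The main obstacle is the reduction in the middle implication ensuring that the witnessing elements $x_w,y_w$ can be chosen with \emph{bounded} orders rather than merely with $\ord{}{x_wy_w}\to\infty$: without this, passing to the ultraproduct only produces infinitesimals and no contradiction. The fix is the observation that unbounded product-order already with, say, $\ord{}{x_w}$ unbounded is not what "bounded multiplication fails" gives us —failure of boundedness means that for each pair of values $(a,b)$ there is \emph{no} finite bound, so we may fix $a=\ord{}{x_w}$ and $b=\ord{}{y_w}$ to be constant in $w$; this is exactly how the negation of "there exists a numerical function $\mu_R$" unwinds, since a numerical function assigns a \emph{single} finite value to each finite pair $(a,b)$. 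One must also check the harmless point that $\ulsep R\neq 0$, i.e.\ $\ulmax\neq\inf{\ul R}$, which holds because $\maxim^n\supsetneq\maxim^{n+1}$ for all $n$ in a Noetherian local ring of positive dimension (and the zero-dimensional case is trivial, $\ulsep R$ being Artinian reduced hence a product of fields, but then $R$ is a field and all three conditions hold vacuously — one should dispatch this degenerate case at the outset).
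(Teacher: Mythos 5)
Your implications \eqref{i:ulsepdom}~$\Rightarrow$~\eqref{i:bm} and \eqref{i:bm}~$\Rightarrow$~\eqref{i:au} are fine and essentially coincide with the paper's own arguments (for the middle one, the negation of bounded multiplication already hands you a \emph{fixed} pair $(a,b)$ with witnesses of exactly those orders, so the ``reduction'' you agonize over is automatic, as in the paper). The genuine gap is in \eqref{i:au}~$\Rightarrow$~\eqref{i:ulsepdom}. You pass to $\complet R$ (correct, via Corollary~\ref{C:compulsep}) and then conclude that $\ulsep R$ is a domain because $\complet R\to\ulsep R$ is regular and such maps ``preserve reducedness and irreducibility of the minimal primes,'' citing Corollary~\ref{C:ulsepreg}. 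But that corollary lists regular, normal, reduced, \CM\ and Gorenstein --- \emph{not} ``domain'' --- and the omission is essential: a faithfully flat map with geometrically regular fibers does not in general preserve the domain property. The paper's example immediately preceding the theorem is a counterexample to the principle you invoke: for $R$ the local ring at the origin of $\xi^2-\zeta^2-\zeta^3$, the map $R\to\ulsep R$ is regular by Corollary~\ref{C:ulsepreg} and $R$ is a domain, yet $\ulsep R\iso\pow{\ul k}{\xi,\zeta}/f\pow{\ul k}{\xi,\zeta}$ is not. So irreducibility of the spectrum is not preserved by regular base change; what saves the day is precisely the completeness of $\complet R$, and that has to be used through an actual argument, not through the citation.

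The paper's way of closing this hole: after reducing to $R$ complete, treat first the case that $R$ is normal, where Corollary~\ref{C:ulsepreg} does apply (normality is on its list) and a normal local ring is a domain; in general, let $S$ be the normalization of $R$, which is finite over $R$ and again local because $R$ is complete (Henselian), and invoke Proposition~\ref{P:int} (the finite case, whose proof rests on Artin--Rees to compare the filtration $\mathfrak n^{n}\cap R$ with the $\maxim$-adic one) to get an injection $\ulsep R\into\ulsep S$; since $\ulsep S$ is a domain by the normal case, so is $\ulsep R$. Some such device --- normalization plus Proposition~\ref{P:int}, or an equivalent unibranch/valuation-theoretic argument --- is needed; as written, your step from ``$\complet R$ is a domain'' to ``$\ulsep R$ is a domain'' is unsupported, and it is exactly the nontrivial content of this implication.
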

\begin{proof} 
The implication \implication {i:bm}{i:au} is clear from
the above discussion, since having bounded
multiplication is easily seen to be preserved under completions. 
In order to
prove \implication{i:au}{i:ulsepdom},
assume $R$ is analytically irreducible and let   $\ulsep R$ be its
catapower. Since $\complet R$ has the same catapower by
Corollary~\ref{C:compulsep}, we may moreover assume that $R$ is a complete
Noetherian local ring.
If
$R$ is normal, then so is $\ulsep R$ by Corollary~\ref{C:ulsepreg}, and hence
again a domain. For the general case, let $S$ be the normalization of $R$, so
that $R\sub S$ is a finite extension. Since $R$ is complete, $S$ is again
local. By Proposition~\ref{P:int}, we get an extension $\ulsep R\sub\ulsep S$.
Since we argued that $\ulsep S$ is a domain, the same therefore is true for
$\ulsep R$.

Remains to show \implication{i:ulsepdom}{i:bm}. By way of contradiction,
suppose no bound exists for the pair $(a,b)$. In other words, we can find
$\seq xn,\seq yn\in R$ such that $\ord{}{\seq xn}=a$, $\ord{}{\seq yn}=b$
and $\seq xn\seq yn\in\maxim^n$. Letting $\ul x$ and $\ul y$ be their
respective ultraproducts, we get $\ord{}{\ul x}=a$, $\ord{}{\ul y}=b$ and
$\ul x\ul y\in\inf{\ul R}$. Since $\inf{\ul R}$ is by assumption prime, $\ul
x$ or $\ul y$ lies in $\inf{\ul R}$, neither of which is possible.
\end{proof}

\begin{remark}\label{R:Swa}
The equivalence of \eqref{i:au} and \eqref{i:bm} is well-known and is usually
proven by a valuation argument. By
\cite[Theorem 3.4]{Swa} and
\cite[Proposition
2.2]{HueSwa} these conditions are also equivalent with
the existence of a
\emph{linear} uniformity function: 
$\mu_R(a,b):=k_R \op{max}\{a,b\}$, for some $k:=k_R\in\nat$, in which case we say
that $R$
has \emph{$k$-bounded multiplication}. For a further result along these lines,
see \cite[Proposition 5.6]{OlbSaySha}.
\end{remark}

By the same argument proving implication \implication{i:ulsepdom}{i:bm}, we
get:

\begin{corollary}\label{C:bm}
Let $\seq Rn$ be  Noetherian local rings of bounded
embedding dimension. If (almost) all $\seq Rn$ have bounded multiplication with
respect to the same uniformity function $\mu=\mu_{R_n}$, then so do their
ultraproduct
 $\ul R$ and cataproduct
$\ulsep R$. In particular, $\ulsep R$ is a domain.\qed
\end{corollary}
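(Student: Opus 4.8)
The plan is to transfer the $\mu$-bounded multiplication through the ultraproduct by \los, and then observe that it descends to the separated quotient for free. First I would fix the common uniformity function $\mu$ and check that $\ul R$ has $\mu$-bounded multiplication. Take $\ul x,\ul y\in\ul R$, realized as ultraproducts of $\seq xn,\seq yn\in\seq Rn$. If $\ord{}{\ul x}$ or $\ord{}{\ul y}$ is infinite, then $\mu$ of that pair is $\infty$ by the convention on numerical functions and the inequality $\ord{}{\ul x\ul y}\leq\mu(\ord{}{\ul x},\ord{}{\ul y})$ is vacuous; so assume both orders are finite, say $a$ and $b$. Since $\seq\maxim n$ is generated by a tuple of length bounded by the common embedding-dimension bound, each $\seq\maxim n^i$ is generated by the (boundedly many) degree-$i$ monomials in that tuple, so membership in $\seq\maxim n^i\setminus\seq\maxim n^{i+1}$ is first-order; hence by \los\ almost all $\seq xn$ have order $a$ and almost all $\seq yn$ have order $b$. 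By hypothesis $\ord{}{\seq xn\seq yn}\leq\mu(a,b)=:N$ for almost all $n$, i.e.\ $\seq xn\seq yn\notin\seq\maxim n^{N+1}$; a second application of \los\ (using that $\ulmax^{N+1}$ is the ultraproduct of the $\seq\maxim n^{N+1}$) gives $\ul x\ul y\notin\ulmax^{N+1}$, that is, $\ord{}{\ul x\ul y}\leq N=\mu(\ord{}{\ul x},\ord{}{\ul y})$.

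Next I would pass to $\ulsep R$. By Lemma~\ref{L:ulcomp}, $\ulsep R=\ul R/\inf{\ul R}$ is the separated quotient of $\ul R$, and since $\inf{\ul R}\subseteq\ulmax^n$ for all $n$, the canonical surjection $\ul R\to\ulsep R$ carries $\ulmax^n$ onto $\ulmax^n\ulsep R$ with preimage exactly $\ulmax^n$; therefore $\ord{\ulsep R}{\bar z}=\ord{\ul R}{z}$ for every $z\in\ul R$ with image $\bar z$, and the inequality just proved passes verbatim to $\ulsep R$, establishing $\mu$-bounded multiplication there. For the final assertion, since $\ul R$ has bounded multiplication its ideal of infinitesimals is prime (if $\ord{}{\ul x\ul y}=\infty$ then $\mu(\ord{}{\ul x},\ord{}{\ul y})=\infty$, forcing one of $\ord{}{\ul x},\ord{}{\ul y}$ to be $\infty$, by the defining property of numerical functions), so $\ulsep R=\ul R/\inf{\ul R}$ is a domain; alternatively, one may invoke Theorem~\ref{T:domCh} once one knows that $\ulsep R$ is Noetherian, which it is by Lemma~\ref{L:ulcomp}.

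I do not expect a genuine obstacle here: the argument is the contrapositive of the implication \eqref{i:ulsepdom}$\Rightarrow$\eqref{i:bm} in Theorem~\ref{T:domCh}, run with the extra bookkeeping that the uniformity function is common to almost all $\seq Rn$. The only points requiring a line of care are the two invocations of \los\ for powers of the maximal ideal --- which is exactly where the bounded-embedding-dimension hypothesis is used --- and the elementary remark that $\maxim$-adic orders are unchanged on passing to the separated quotient. If a more uniform exposition alongside Theorem~\ref{T:domCh} were wanted, I would phrase the whole thing by contradiction as there; the direct version above is the shortest.
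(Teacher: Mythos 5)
Your proof is correct and follows essentially the same route the paper intends: the corollary is stated there as "by the same argument proving \implication{i:ulsepdom}{i:bm}" in Theorem~\ref{T:domCh}, i.e.\ a \los\ transfer of the order inequality (using bounded embedding dimension so that $\ulmax^i$ is the ultraproduct of the $\seq\maxim n^i$), together with the observation preceding that theorem that bounded multiplication makes $\inf{\ul R}$ prime. Your direct (contrapositive) phrasing and the remark that orders are unchanged in the separated quotient are exactly the bookkeeping the paper leaves implicit.
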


Note that the converse is not true. For instance, if $R$ is a complete
Noetherian local domain, then the cataproduct of the $R/\maxim^n$
is a domain by Corollaries~\ref{C:filt} and \ref{C:bm}.     If instead of order,
we use degree (see \S\ref{s:pardeg} for the definition),
we get the following analogue of bounded multiplication, this time in terms of
a bound whose dependence on the ring is only through its embedding dimension.

\begin{theorem}\label{T:multdeg}
There exists a ternary numerical  function $\omega$ with the
following property. For every   Noetherian local ring $R$ 
and any two elements $x,y\in R$, we have an inequality 
$$
\op{deg}(xy)\leq \omega(\ed R,\op{deg}(x),\op{deg}(y)).
$$
\end{theorem}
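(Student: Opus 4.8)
The plan is to run the standard ultraproduct argument that turns such a uniform bound into a statement about cataproducts, using the genericity criteria of \S\ref{s:pardeg}. First I would normalize the statement so that $\omega$ may be taken to be a genuine numerical function: when $\op{deg}(x)$ or $\op{deg}(y)$ is infinite the asserted inequality is vacuous, and when one of them is $0$ the corresponding element is a unit (for $z$ in the maximal ideal, $xR+zR=R$ forces $x$ to be a unit), whence $\op{deg}(xy)=\op{deg}(y)$ and one is reduced to the other entry. It thus suffices, for each triple $(m,a,b)$ of positive integers, to produce a finite bound on $\op{deg}(xy)$ valid for all Noetherian local rings $R$ with $\ed R\le m$ and all non-units $x,y$ in the maximal ideal with $\op{deg}(x)\le a$ and $\op{deg}(y)\le b$; one then sets $\omega(m,a,b)$ to be the supremum over all such configurations (finite by the bound, and automatically non-decreasing in each argument), and $\omega(m,a,b):=\infty$ whenever an argument is $\infty$.

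Next I would suppose, for contradiction, that no such bound exists for some $(m,a,b)$, and pick for each $n$ a Noetherian local ring $(\seq Rn,\seq\maxim n)$ with $\ed{\seq Rn}\le m$ together with non-units $\seq xn,\seq yn$ satisfying $\op{deg}(\seq xn)\le a$, $\op{deg}(\seq yn)\le b$, but $\op{deg}(\seq xn\seq yn)\ge n$. Since $\op{deg}$ presupposes positive \pdim, each $\seq Rn$ is non-Artinian. The crucial---and really only non-formal---point is that a bound on degree is also a bound on parameter degree: a tuple $\tuple z$ of length $\pd{\seq Rn}-1$ in $\seq\maxim n$ realizing $\op{deg}(\seq xn)\le a$ extends $\seq xn$ to a tuple of length $\pd{\seq Rn}$ generating an $\seq\maxim n$-primary ideal (its residue ring having finite length), hence to a generic sequence, so that $\pardeg{\seq Rn}\le a$. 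Therefore the $\seq Rn$ have bounded, whence---by pigeonhole, on a set of the ultrafilter---constant parameter degree, and the ultraproduct $\ul R$ of the $\seq Rn$, formed over a non-principal ultrafilter on $\nat$, is an isodimensional ultra-Noetherian local ring of finite embedding dimension by Theorem~\ref{T:isodim}.

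Finally I would invoke the genericity criteria for isodimensional ultra-Noetherian rings. Let $\ul x$ and $\ul y$ be the ultraproducts of the $\seq xn$ and $\seq yn$. Since $\ul R$ is isodimensional, Corollary~\ref{C:deggen} converts the boundedness of $\op{deg}(\seq xn)$ and of $\op{deg}(\seq yn)$ into the genericity of $\ul x$ and $\ul y$ in $\ul R$; by Corollary~\ref{C:tre}, their product $\ul x\ul y$ is then generic as well. But $\ul x\ul y$ is precisely the ultraproduct of the $\seq xn\seq yn$, so Corollary~\ref{C:deggen} read in the opposite direction gives that $\op{deg}(\seq xn\seq yn)$ is bounded, contradicting $\op{deg}(\seq xn\seq yn)\ge n$. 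I expect the main obstacle to be the degree-to-parameter-degree estimate in the middle step: it is what secures the isodimensionality of the ultraproduct, and without isodimensionality Corollary~\ref{C:deggen} does not apply and neither $\ul x$ nor $\ul y$ need remain generic. The remainder is routine ultraproduct bookkeeping.
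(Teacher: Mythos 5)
Your proposal is correct and is essentially the paper's own argument: assume a counterexample sequence for a triple $(m,a,b)$, note that a finite degree bound forces bounded parameter degree (your middle step is exactly the content of Corollary~\ref{C:degfin}, which the paper simply cites), conclude isodimensionality of the ultraproduct via Theorem~\ref{T:isodim}, and then apply Corollary~\ref{C:deggen} together with Corollary~\ref{C:tre} to the generic elements $\ul x$, $\ul y$ and their product to reach the contradiction. The preliminary normalizations (infinite or zero degree, units) are harmless additions that the paper handles implicitly through its conventions on numerical functions.
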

\begin{proof}
Towards a contradiction, suppose such a function cannot be defined on the
triple $(m,a,b)$. This means that for each $n$, we can find a  
Noetherian local ring $\seq Rn$ of embedding dimension   $m$ and elements
$\seq
xn,\seq yn\in\seq Rn$ such that $\op{deg}(\seq xn)=a$, $\op{deg}(\seq yn)=b$
and $\op{deg}(\seq xn\seq yn)\geq n$. Let $\ul R$, $\ul x$ and $\ul y$ be the
respective ultraproduct of the $\seq Rn$, the $\seq xn$ and the $\seq yn$. Let
$d$ be the ultra-dimension of $\ul R$, so that almost all $\seq Rn$   have
dimension $d$. By Corollary~\ref{C:degfin}, almost each $\seq Rn$ has
parameter degree at most $a$ and hence $\ul R$ is isodimensional by
Theorem~\ref{T:isodim}. Hence $\ul x$ and $\ul y$ are both generic by
Corollary~\ref{C:deggen}, and hence so is their product $\ul x\ul y$ by
Corollary~\ref{C:tre}. However, this contradicts Corollary~\ref{C:deggen} as
the $\seq xn\seq yn$ have unbounded degree.
\end{proof}

From the exact sequence
$$
 R/xR\map y R/xyR \to R/yR\to 0
$$
where the first map is induced by multiplication by $y$, we see that
$\op{deg}(xy)\leq\op{deg}(x)+\op{deg}(y)$ for all $x,y$ in a one-dimensional
Noetherian local ring (in fact, if $R$ is \CM, then the first map  is injective
and we even have equality). I do not know what happens in higher dimensions.

\subsection{Order versus degree}\label{s:orddeg}
We next investigate the relationship between order and degree. 
  If $R$ is \CM\ and $x$ is $R$-regular, then the degree of $R$ is
just the multiplicity of $R/xR$.  By \cite[Theorem 14.9]{Mats}, we get
$\ord{}x\leq \op{deg}(x)/\mult R$. In
particular,  $\ord{}x\leq\op{deg}(x)$, and this latter inequality could very well
always be true (see also \S\ref{R:reg} below). At any rate, we have:

\begin{corollary}\label{C:orddeg}
There exists a binary numerical function $\pi$ with the
following property. For every   Noetherian local ring $R$ 
and every   element $x\in R$, we have an inequality
$$
\ord{}x\leq \pi(\ed R,\op{deg}(x)).
$$
\end{corollary}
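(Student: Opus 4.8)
The plan is to mimic the proof of Theorem~\ref{T:multdeg} by an ultraproduct-and-cataproduct argument, using the pieces already developed in \S\ref{s:ua}. First I would argue by contradiction: suppose no binary numerical function $\pi$ works for the pair $(m,a)$, so that for each $w$ we can find a Noetherian local ring $\seq Rw$ of embedding dimension $m$ together with an element $\seq xw\in\seq Rw$ such that $\op{deg}(\seq xw)=a$ but $\ord{}{\seq xw}\geq w$. Form the ultraproduct $\ul R$ of the $\seq Rw$ and let $\ul x$ be the ultraproduct of the $\seq xw$; since embedding dimensions are bounded by $m$, $\ul R$ is an ultra-Noetherian local ring of finite embedding dimension.

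The key structural step is to observe that $\op{deg}(\seq xw)=a<\infty$ forces, by Corollary~\ref{C:degfin}, that each $\seq xw$ is generic (in particular non-Artinian, so $\pd{\ul R}\geq1$) and that the parameter degree of $\seq Rw$ is at most $a$; hence by Theorem~\ref{T:isodim} the ultraproduct $\ul R$ is isodimensional, and by Corollary~\ref{C:deggen} the element $\ul x$ is generic in $\ul R$. Being generic, $\ul x$ is not contained in any threshold prime of $\ul R$ by Corollary~\ref{C:tre}; in particular $\ul x\notin\inf{\ul R}$ (the ideal of infinitesimals is contained in every closed, hence in every threshold, prime). Therefore $\ul x$ has \emph{finite} $\maxim$-adic order, say $\ord{}{\ul x}=N<\infty$, meaning $\ul x\in\ulmax^N\setminus\ulmax^{N+1}$. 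By \los\ (noting $\ulmax$ and its powers are the ultraproducts of the $\seq\maxim w$ and their powers, using finite generation of $\maxim$ as in Theorem~\ref{T:finembcomp}), almost all $\seq xw$ then satisfy $\seq xw\in\seq\maxim w^N$, i.e. have order at most $N$. This contradicts the assumption that $\ord{}{\seq xw}\geq w$ for all $w$, completing the proof.

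The main obstacle I anticipate is the transfer of the order statement through \los: one must be careful that ``$\ord{}{\ul x}\leq N$'' is genuinely a first-order property of $\ul R$ once $\ulmax$ is known to be finitely generated — this is where the finite-embedding-dimension hypothesis is essential, since $\ulmax^N$ is then generated by the (finitely many) degree-$N$ monomials in a fixed generating tuple, and the condition $\ul x\in\ulmax^N$ becomes a first-order condition definable uniformly in the $m$ generators. Once that bookkeeping is in place the argument is routine. A secondary point worth stating cleanly in the write-up is why $\inf{\ul R}$ lies inside every threshold prime: by \S\ref{s:tre} each threshold prime is closed and contains no proper closed prime, and every closed ideal contains $\inf{\ul R}$ by construction (it is the intersection of the $\ulmax$-primary ideals $I+\ulmax^n$, each of which contains $\inf{\ul R}$); thus $\ul x$ generic $\Rightarrow$ $\ord{}{\ul x}<\infty$, which is the crux.

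I would also remark, after the proof, that the argument shows more: the bound $\pi(m,a)$ can be read off as the supremum of $\ord{}{\ul x}$ over all relevant ultraproducts — though as usual with this method no explicit form of $\pi$ is produced — and that combined with Remark~\ref{R:mult} and Lemma~\ref{L:pardegmult} one recovers in the \CM\ case the sharper estimate $\ord{}x\leq\op{deg}(x)/\mult R\leq\op{deg}(x)$ already noted in \S\ref{s:orddeg}, so that for \CM\ rings one may even take $\pi(m,a):=a$ independently of $m$.
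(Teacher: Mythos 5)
Your argument is correct and is essentially the paper's own proof: take an ultraproduct of counterexamples, use the finiteness of $\op{deg}(\seq xw)$ together with Corollary~\ref{C:degfin}, Theorem~\ref{T:isodim} and Corollary~\ref{C:deggen} to see that $\ul x$ is generic, and contrast this with $\ul x\in\inf{\ul R}$. One small repair: in your final \los\ step you should transfer $\ul x\notin\ulmax^{N+1}$ (not $\seq xw\in\seq\maxim w^N$) to conclude that almost all $\seq xw$ have order at most $N$ --- though this round-trip is redundant, since the hypothesis $\ord{}{\seq xw}\geq w$ already gives $\ul x\in\inf{\ul R}$ directly, which contradicts genericity of $\ul x$ as in the paper.
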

\begin{proof}
Suppose for some pair $(m,a)$, we have for each $n$, a counterexample $\seq
xn\in\seq\maxim n^n$ of degree $a$ in the Noetherian local ring $(\seq
Rn,\seq\maxim n)$ of embedding dimension   $m$. Let $\ul x\in\ul R$ be
the ultraproduct so that by Theorem~\ref{T:isodim}, the degree of $\ul x$ is
$a$, yet $\ul x\in\inf{\ul R}$,   contradicting Corollary~\ref{C:degfin}.
\end{proof}

 Applying Corollary~\ref{C:orddeg} to a product and then using
Theorem~\ref{T:multdeg},
we get the existence of a ternary numerical
function $\eta$ such that for any Noetherian local ring $R$ and elements
$x,y\in R$, we have
\begin{equation}\label{eq:bmdeg}
\ord{}{xy}\leq \eta(\ed R,\op{deg}(x),\op{deg}(y))
\end{equation}
For analytically irreducible Noetherian local rings, order and degree are
mutually bounded, and in fact, we have the following more precise result:

\begin{theorem}\label{T:bdeg}
There exists a quaternary numerical function $\zeta$ with the
following property.
For every $d$-dimensional Noetherian local domain $(R,\maxim)$ of parameter degree
at most $e$ and  $k$-bounded multiplication, and for every $x\in R$, we have 
an inequality
$$
\op{deg}(x)\leq \zeta(d,e,k,\ord{}x).
$$ 
\end{theorem}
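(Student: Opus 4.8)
The plan is to argue by contradiction through an ultraproduct, in the style of Theorems~\ref{T:multdeg} and \ref{T:domCh}. First observe that it suffices to bound $\op{deg}(x)$ by the supremum $s(d,e,k,a)$ of the degrees of all elements of order at most $a$ in $d$-dimensional Noetherian local domains of parameter degree at most $e$ and $k$-bounded multiplication, and to show this supremum is finite for every finite quadruple $(d,e,k,a)$; since $s$ is clearly non-decreasing in each argument, extending it by $\infty$ whenever an argument is $\infty$ then produces a legitimate numerical function $\zeta$ with the required property. So suppose instead that $s(d,e,k,a)=\infty$ for some finite $(d,e,k,a)$: for each $n$ there is a $d$-dimensional Noetherian local domain $(\seq Rn,\seq\maxim n)$ of parameter degree $\le e$ and $k$-bounded multiplication, together with an element $\seq xn$ with $\ord{}{\seq xn}\le a$ but $\op{deg}(\seq xn)\ge n$ (read as ``$>n$'' when the degree is infinite). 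By Remark~\ref{R:isodim} each $\seq Rn$ has embedding dimension at most $d+e-1$, so the $\seq Rn$ have bounded embedding dimension and their ultraproduct $\ul R$ is ultra-Noetherian; let $\ul x$ be the ultraproduct of the $\seq xn$.

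Next I would read off the relevant properties of $\ul R$ via \los. Since the parameter degrees of the $\seq Rn$ lie in the finite set $\{0,\dots,e\}$, almost all coincide, so $\ul R$ is isodimensional by Theorem~\ref{T:isodim}; as almost all $\seq Rn$ are $d$-dimensional, $\pd{\ul R}=\cad{\ul R}=d$, and by Lemma~\ref{L:ulcomp} and Theorem~\ref{T:pdim} the cataproduct $\ulsep R=\complet{\ul R}$ is a $d$-dimensional Noetherian local ring. Likewise the orders $\ord{}{\seq xn}$ lie in $\{0,\dots,a\}$, so almost all equal some $a'\le a$, whence $\ord{}{\ul x}=a'<\infty$, i.e.\ $\ul x\notin\inf{\ul R}$. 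Now comes the key point: the $\seq Rn$ are domains with $k$-bounded multiplication, all with the common uniformity function $(s,t)\mapsto k\max\{s,t\}$, so by Corollary~\ref{C:bm} the cataproduct $\ulsep R$ is again a domain; hence its unique prime of dimension $d$ is $(0)$. By the definition of threshold primes in \S\ref{s:tre} together with Corollary~\ref{C:spec}, the unique threshold prime of $\ul R$ is then $\inf{\ul R}$, the contraction of $(0)\subset\ulsep R$. Therefore, by Corollary~\ref{C:tre}, the element $\ul x\notin\inf{\ul R}$ is generic. On the other hand, the degrees $\op{deg}(\seq xn)$ are unbounded, so $\ul x$ is not generic by Corollary~\ref{C:deggen}. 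This contradiction finishes the argument.

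I expect the conceptual heart --- and the only place requiring genuine care --- to be the passage from ``the $\seq Rn$ have ($k$-)bounded multiplication'' to ``their cataproduct is a domain whose only threshold prime is its ideal of infinitesimals''; this is exactly what collapses ``$\ul x$ has finite order'' into ``$\ul x$ is generic'', and it explains why the bound must depend on $k$ (a \emph{uniform} bound on multiplication is precisely what lets $k$-bounded multiplication of the $\seq Rn$ pass to the cataproduct via Corollary~\ref{C:bm}, and thereby keeps the ideal of infinitesimals prime). The remaining bookkeeping --- verifying that the numerical quantities involved are finite so that \los\ applies to fixed values, and that the non-decreasingness of $s$ makes $\zeta$ a bona fide numerical function --- is routine. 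One could alternatively bypass Corollary~\ref{C:bm} and work directly with the uniformity function, as in the proof of Theorem~\ref{T:domCh}, but routing through the cataproduct keeps the proof parallel to the rest of this section.
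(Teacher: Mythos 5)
Your proof is correct and follows essentially the same route as the paper: an ultraproduct of putative counterexamples, isodimensionality via Theorem~\ref{T:isodim}, the cataproduct being a domain via Corollary~\ref{C:bm}, and the threshold-prime criterion of Corollary~\ref{C:tre}. The only (cosmetic) difference is the direction of the contradiction: you deduce from the finite order that $\ul x\notin\inf{\ul R}$, hence is generic, and contradict Corollary~\ref{C:deggen}, whereas the paper first uses the unbounded parameter degrees of the quotients $\seq Rn/\seq xn\seq Rn$ (i.e.\ the content of Corollary~\ref{C:deggen}) to see that $\ul x$ is not generic, then concludes $\ul x\in\inf{\ul R}$ and contradicts \los; the ingredients are identical.
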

\begin{proof}
It suffices to show that there exists a function $\beta$ such that if $\ord{}x<a$
for some $x\in R$ and some $a\in\nat$, then $\op{deg}(x)<\beta(d,e,k,a)$.
Suppose   no such bound exists for the quadruple $(d,e,k,a)$. Hence, for each
$n$, we can find a $d$-dimensional Noetherian local domain $(\seq Rn,\seq\maxim n)$
of parameter degree at most $e$ and  $k$-bounded multiplication, and an element
$\seq xn\notin\seq \maxim n^a$ whose degree is at least $n$. Let $(\ul
R,\ulmax)$ and $\ul x$ be the respective ultraproduct of the $(\seq Rn,\seq\maxim n)$ and
the $\seq xn$. By Theorem~\ref{T:isodim}, the \pdim\ of $\ul R$ is $d$. Since the
$\seq Rw/\seq xw\seq Rw$ have dimension $d-1$, but unbounded parameter degree, 
the same theorem shows that the \pdim\ of $\ul R/\ul
x\ul R$ is strictly bigger than its ultra-dimension $d-1$, whence is also equal
to $d$. 
In particular, $\ul x$ is not generic. Since
the cataproduct $\ulsep R$ is a domain by Corollary~\ref{C:bm}, we
get $\ul x\in \inf{\ul R}$ by
Corollary~\ref{C:tre}. However, by \los, $\ul x\notin\ulmax^a$, a
contradiction.
\end{proof}


Whereas order is a filtering function (see \S\ref{s:finemb}), inducing the
$\maxim$-adic filtration on $R$, this is no longer true for degree. For
instance, let $R$ be the local ring at the origin of the curve with equation
$\xi\zeta+\xi^3+\zeta^3=0$.
Then both $\xi$ and $\zeta$ have degree three, but their sum $\xi+\zeta$ has
degree
two. As we will see below in \S\ref{R:reg}, on regular local rings, degree is
filtering. Can one characterize in general  rings for which degree is
filtering? Is, for every $n$, the set of elements having degree at least $n$
always a finite union of ideals? In other words, as far as its properties are
concerned, degree
is still a mysterious function. However, its main use in this paper is to
characterize properties via its asymptotic behavior, as we will now discuss.

\subsection{Characterizations through uniform behavior}\label{s:charub}
Recall that a Noetherian
local ring is \emph{analytically unramified} if its completion is reduced. Any
reduced excellent local ring is analytically unramified (\cite[Theorem 32.2]{Mats}).

\begin{corollary}\label{C:bp}
A Noetherian local ring $R$ is analytically unramified \iff\ there exists a
numerical
function $\nu_R$, such that for every $x\in R$, we have
an inequality
$$
\ord{}{x^2}\leq \nu_R(\ord{}x).
$$ 
\end{corollary}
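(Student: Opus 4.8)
The plan is to reduce the assertion to the reducedness of a catapower of $R$, in the spirit of the proof of Theorem~\ref{T:domCh} and its corollaries. Note first that since $R$ is Noetherian it is separated, so $\inf R=0$ and the $\maxim$-adic order $\ord{}{\cdot}$ takes the value $\infty$ only at $0$; this is what gives the displayed inequality content. A preliminary reformulation: a numerical function $\nu_R$ as in the statement exists \iff\ for every $n\in\nat$ the set $\{\ord{}{x^2}\mid x\in R,\ \ord{}x=n\}$ is bounded in $\nat$. One direction is immediate; for the other one may simply set $\nu_R(n):=\sup\{\ord{}{x^2}\mid \ord{}x\le n\}$, which is automatically non-decreasing and, since only finitely many values of $\ord{}x$ lie below $n$, finite on finite inputs.

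Next I would pass to a catapower. By Corollary~\ref{C:compulsep} the catapower $\ulsep R$ of $R$ coincides with the catapower of $\complet R$, and $\complet R$ is a complete Noetherian local ring, hence excellent; so by Corollary~\ref{C:ulsepreg}, $\ulsep R$ is reduced \iff\ $\complet R$ is reduced, that is, \iff\ $R$ is analytically unramified (and this is independent of the chosen catapower). It therefore suffices to prove that $R$ admits a numerical function $\nu_R$ as required \iff\ some (equivalently every) catapower $\ulsep R$ of $R$ is reduced.

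For the forward implication, suppose $\nu_R$ exists and let $\ul R$ be an ultrapower of $R$ (over a countably incomplete ultrafilter, as is our standing assumption) with catapower $\ulsep R=\ul R/\inf{\ul R}$. Let $\ul a\in\ul R$ have image in $\ulsep R$ with square zero, i.e.\ $\ul a^2\in\inf{\ul R}$, and write $\ul a$ as the ultraproduct of elements $a_w\in R$. If $\ord{}{\ul a}$ were a finite integer $N$, then $\ord{}{a_w}=N$ for almost all $w$ by \los, whence $\ord{}{a_w^2}\le\nu_R(N)$ for almost all $w$, and a second application of \los\ gives $\ord{}{\ul a^2}\le\nu_R(N)<\infty$, contradicting $\ul a^2\in\inf{\ul R}$. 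Hence $\ord{}{\ul a}=\infty$, so the image of $\ul a$ in $\ulsep R$ is zero; since raising a nonzero nilpotent of a ring to a suitable power produces a nonzero element of square zero, $\ulsep R$ is reduced. For the converse I would argue contrapositively: if no $\nu_R$ exists, fix $n\ge 1$ and elements $x_k\in R$ for $k\in\nat$ with $\ord{}{x_k}=n$ and $\ord{}{x_k^2}\ge k$, and form the ultraproduct $\ul x$ over $\nat$ with respect to a non-principal (hence countably incomplete) ultrafilter. By \los, $\ord{}{\ul x}=n$ is finite, while for every $N$ the set $\{k\mid \ord{}{x_k^2}\ge N\}$ is cofinite, so $\ul x^2\in\inf{\ul R}$; thus the image of $\ul x$ in the catapower is a nonzero element of square zero, and the catapower is not reduced.

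The only points needing care — and they are minor, being direct transcriptions of the arguments used for Theorem~\ref{T:domCh} — are keeping the cases $\ord{}{\ul a}<\infty$ (a genuine integer, handled by \los) and $\ord{}{\ul a}=\infty$ (meaning $\ul a\in\inf{\ul R}$) apart, and the routine observation that a non-reduced ring carries a nonzero element whose square vanishes. No ideas beyond the cataproduct machinery of \S\ref{s:ul} are required.
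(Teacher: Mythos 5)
Your proposal is correct and follows essentially the same route as the paper's proof: transfer a failure of the bound to a square-zero element modulo the ideal of infinitesimals of an ultrapower, and invoke Corollary~\ref{C:ulsepreg} (reducedness of the catapower) to get the contradiction. The only cosmetic differences are that the paper passes to the completion at the outset instead of citing Corollary~\ref{C:compulsep}, and handles the easy direction ($\nu_R$ exists implies analytically unramified) by a direct order argument in $\complet R$ rather than routing it through the catapower again.
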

\begin{proof}
Since   order   remains unaffected by completion, we may assume 
that $R$ is moreover complete. Suppose $R$ is reduced. It suffices to
show that there exists a function
$\nu_R$ such that $x^2\in\maxim^{\nu_R(b)}$ implies $x\in\maxim^b$. By
way of contradiction, suppose this is false for   $b$.
Hence, we can find $\seq xn\in R$ such that $\seq xn^2\in \maxim^n$, but $\seq
xn\notin \maxim^b$. Let $\ul R$
  be the   ultrapower   of $R$
and let $\ul x$ be the ultraproduct of the $\seq xn$.   By \los,     $\ul x^2\in\inf
{\ul R}$ and $\ul x\notin\maxim^b\ul R$. However, $\inf{\ul R}$
is  radical by Corollary~\ref{C:ulsepreg}.
Hence $\ul x^2\in\inf {\ul R}$ implies   $\ul x\in\inf{\ul R}$, contradiction.

Conversely, let the function $\nu_R$ be as asserted. If $x^2$ is zero, then its  order is infinite. The only way that this can be  bounded by
$\nu_R(\ord{}{x})$, is for $x$ to have infinite order too, meaning that
$x=0$. This shows that $R$ is reduced.
\end{proof}

By a similar argument, one easily
shows that if all $\seq Rn$ have bounded squares 
(in the sense of the corollary) with respect to the same function
$\nu=\nu_{\seq Rn}$,
then their cataproduct is reduced.  If $R$ is analytically
irreducible, then the results of H\"ubl-Swanson (see Remark~\ref{R:Swa}) imply
that we may take  $\nu_R(b)$ of the form $k_Rb$ for some $k_R$ and
all $b$. I do not know whether  this is still true in general. Similarly, for
the bounds we are about to prove, is their still some vestige of linearity?

\begin{corollary}\label{C:anirr}
A Noetherian local ring $R$ is analytically irreducible \iff\ there exists a
 numerical function $\xi_R$ such that for every $x\in R$, we have
an inequality 
$$
\op{deg}(x)\leq \xi_R(\ord{}x).
$$
\end{corollary}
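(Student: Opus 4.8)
The plan is to mimic the proof of Theorem~\ref{T:domCh} (the equivalence \implication{i:au}{i:bm} and back), replacing order by degree and using the machinery of isodimensionality already available. The statement asserts that analytic irreducibility of a Noetherian local ring $R$ is equivalent to the existence of a numerical function $\xi_R$ bounding $\op{deg}(x)$ in terms of $\ord{}x$. Note that since $R$ is fixed throughout, the function $\xi_R$ is allowed to depend on $R$; this is what makes an ultrapower argument (rather than a uniform-over-all-rings argument) the natural tool.

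For the easy direction, suppose such a $\xi_R$ exists. I claim $R$ is analytically irreducible, i.e., $\inf{\complet R}=0$ is prime. First, $R$ must be a domain: if $xy=0$ with $x,y\neq0$, then $\ord{}x,\ord{}y<\infty$, but $\ord{}{xy}=\infty$; applying the bound to the product and invoking Theorem~\ref{T:multdeg} or, more directly, noting that $\op{deg}(xy)=\infty$ since $xy$ is not generic (it is zero in a positive-dimensional ring, hence contained in every threshold prime by Corollary~\ref{C:tre}) while $\ord{}{xy}$ would have to be infinite too, forcing $xy\neq0$ to fail—one must be a little careful here. Cleaner: apply the hypothesis together with \eqref{eq:bmdeg} to get that $R$ has bounded multiplication, hence by Theorem~\ref{T:domCh} is analytically irreducible. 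Actually the most economical route is: the hypothesis bounds $\op{deg}(x)$ by $\ord{}x$; combined with Theorem~\ref{T:multdeg} (bounding $\op{deg}(xy)$ in terms of $\ed R,\op{deg}(x),\op{deg}(y)$) and Corollary~\ref{C:orddeg} (bounding $\ord{}{xy}$ by $\ed R$ and $\op{deg}(xy)$), this yields a bound on $\ord{}{xy}$ in terms of $\ord{}x$ and $\ord{}y$, i.e., bounded multiplication; then Theorem~\ref{T:domCh} \implication{i:bm}{i:au} finishes it. I would spell out this chain of substitutions explicitly.

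For the forward direction, assume $R$ is analytically irreducible; I want to produce $\xi_R$. By way of contradiction, suppose no bound works for some input value $a$: for every $n$ there is $\seq xn\in R$ with $\ord{}{\seq xn}<a$ (equivalently $\seq xn\notin\maxim^a$) but $\op{deg}(\seq xn)\geq n$. Form the ultrapower $\ul R$ of $R$ and let $\ul x$ be the ultraproduct of the $\seq xn$. By Theorem~\ref{T:flatsep} (flatness of the catapower, hence of the catapower of a complete model) and Corollary~\ref{C:compulsep}, and since $R$ is analytically irreducible, the catapower $\ulsep R$ is a domain—this is exactly the argument in \implication{i:au}{i:ulsepdom} of Theorem~\ref{T:domCh} (pass to $\complet R$, then to its normalization $S$, use Corollary~\ref{C:ulsepreg} for normality of $\ulsep S$ and Proposition~\ref{P:int} for the injection $\ulsep R\hookrightarrow\ulsep S$). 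Now $\ul R$ is an ultrapower of a single $d$-dimensional Noetherian local ring, hence trivially isodimensional (all $\seq Rn$ equal $R$, so same dimension $d=\kd R$ and same parameter degree), so $\ul R$ has \pdim\ $d$ by Theorem~\ref{T:isodim}. Since $\op{deg}(\seq xn)\to\infty$, Corollary~\ref{C:deggen} shows $\ul x$ is \emph{not} generic in $\ul R$. By Corollary~\ref{C:tre}, $\ul x$ lies in some threshold prime of $\ul R$; but since $\ulsep R$ is a domain, $\ul R$ has a unique threshold prime, namely $\inf{\ul R}$ (the contraction of the unique minimal prime $(0)$ of $\ulsep R=\complet{\ul R}$), so $\ul x\in\inf{\ul R}$, i.e., $\ul x\in\ulmax^a\ul R$. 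This contradicts \los, since almost all $\seq xn\notin\maxim^a$ gives $\ul x\notin\maxim^a\ul R=\ulmax^a\ul R$.

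The main obstacle is making sure the isodimensionality/threshold-prime bookkeeping in the forward direction is airtight: one needs that $\ulsep R$ being a domain forces $\ul R$ to have exactly one threshold prime equal to $\inf{\ul R}$, and that a non-generic element not in any $\ulmax^a$ is genuinely impossible. Both follow from the definitions in \S\ref{s:tre} (threshold primes are contractions of the $d$-dimensional, here minimal, primes of $\complet{\ul R}=\ulsep R$) together with Corollary~\ref{C:deggen}, which requires the isodimensionality of $\ul R$—automatic for an ultrapower. A secondary technical point is the easy direction's appeal to Theorem~\ref{T:multdeg} and Corollary~\ref{C:orddeg}: these are uniform over all Noetherian local rings, so specializing to the fixed $R$ (with its fixed $\ed R$) produces the desired $R$-dependent bound on $\ord{}{xy}$ with no trouble.
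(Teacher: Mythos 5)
Your proof is correct, but it follows a genuinely different route from the paper's in both directions. For the implication ``analytically irreducible $\Rightarrow$ bound exists'', the paper does not argue by ultrapowers at this point: it invokes Remark~\ref{R:Swa} (the H\"ubl--Swanson result that analytic irreducibility gives \emph{linearly} bounded multiplication) and then simply specializes Theorem~\ref{T:bdeg} to the fixed ring $R$; your argument instead is a self-contained compactness argument in the ultrapower, using that the catapower of an analytically irreducible ring is a domain (the \implication{i:au}{i:ulsepdom} part of Theorem~\ref{T:domCh}), that an ultrapower is automatically isodimensional (Theorem~\ref{T:isodim}), and Corollaries~\ref{C:deggen} and \ref{C:tre} to force the offending element into $\inf{\ul R}$, contradicting \los. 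This avoids the external H\"ubl--Swanson input altogether (and in spirit re-runs the proof of Theorem~\ref{T:bdeg} for a single ring), at the cost of redoing an ultraproduct argument the paper gets for free by citation. For the converse, the paper argues directly: order and degree are unchanged under completion, so one may assume $R$ complete, and then every non-zero element has finite order, hence finite degree, hence is generic, so the zero ideal is the only prime of maximal dimension and is therefore prime; your route instead composes the hypothesis with \eqref{eq:bmdeg} (i.e., Theorem~\ref{T:multdeg} plus Corollary~\ref{C:orddeg}) to get bounded multiplication and then quotes \implication{i:bm}{i:au} of Theorem~\ref{T:domCh}. Both of your composed bounds do yield legitimate numerical functions (finiteness and monotonicity are preserved under composition, and $\ed R$ is fixed), so this is fine, though it is less elementary than the paper's three-line completion argument. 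One cosmetic point: the cleanest way to phrase the negation in your forward direction is that for some finite $a$ the degrees of elements of order at most $a$ are unbounded; your formulation covers this, including the degenerate case of a single element of finite order and infinite degree.
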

\begin{proof}
In  view of Remark~\ref{R:Swa}, the direct implication follows from an
application of Theorem~\ref{T:bdeg}. As for the converse, suppose degree
is bounded in terms of order. Since both order and degree remain the same after 
passing to the completion, we may moreover assume $R$ is complete. Since
a non-zero element has finite order, it has finite degree whence
is generic. This shows that there are no non-zero prime ideals of maximal
dimension, which in turn forces the zero  ideal to be a prime ideal.
\end{proof}

Tweaking \eqref{eq:bmdeg} slightly (for a fixed ring $R$), we can characterize
the following property. Recall
that a Noetherian local ring $R$ is called \emph{unmixed},  if   each
associated prime $\pr$ of its completion $\complet R$ has the same dimension as
$R$; if the above is only true for minimal primes of $\complet R$, then we say
that $R$ is \emph{quasi-unmixed} (also called \emph{formally equidimensional}).

\begin{lemma}\label{L:unm}
If a Noetherian   local ring is (quasi-)unmixed, then so is its catapower.
\end{lemma}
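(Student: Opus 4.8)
The plan is to reduce to the complete case and then transport everything along an explicit presentation of $\ulsep R$ inside a regular ring. First I would invoke Corollary~\ref{C:compulsep} to replace $R$ by $\complet R$: this does not change $\ulsep R$, and being (quasi-)unmixed is a condition on $\complet R$ alone, so we may assume $R$ is complete. By Cohen's structure theorem write $R=S/I$ with $S$ a complete regular local ring; since $S$ is Noetherian, $I$ is finitely generated, so the ultrapower of $R$ is $\ul S/I\ul S$ and, by Lemma~\ref{L:quot} (equivalently Corollary~\ref{C:clos}), $\ulsep R=\ulsep S/I\ulsep S$. By Theorem~\ref{T:flatsep} the map $S\to\ulsep S$ is faithfully flat and unramified, and by Corollary~\ref{C:ulsepreg} it is regular with $\ulsep S$ again regular; hence $\ulsep S$ is a catenary, equidimensional domain of the same dimension as $S$, and every fiber of $S\to\ulsep S$ is regular, in particular reduced. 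All the remaining work takes place inside the Noetherian ring $\ulsep S$.

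For the quasi-unmixed case: ``$R$ complete quasi-unmixed'' means $S/I$ is equidimensional, so, $S$ being catenary and equidimensional, all minimal primes of $I$ share a common height $h$ and $\kd R=\kd S-h$. Given a minimal prime $\mathfrak Q$ of $I\ulsep S$, a going-down argument for the flat map $S\to\ulsep S$ shows $\pr:=\mathfrak Q\cap S$ is a minimal prime of $I$, hence $\op{ht}_S\pr=h$, and that $\mathfrak Q$ is minimal over $\pr\ulsep S$; the dimension formula for flat local homomorphisms applied to $S_\pr\to\ulsep S_{\mathfrak Q}$ (whose closed fiber is zero-dimensional) then gives $\op{ht}_{\ulsep S}\mathfrak Q=h$, and catenarity plus equidimensionality of $\ulsep S$ yield $\kd{\ulsep S/\mathfrak Q}=\kd S-h=\kd R=\kd{\ulsep R}$. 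Since this holds for every minimal prime of $\ulsep R$, that ring is equidimensional, hence, being complete, quasi-unmixed.

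For the unmixed case I would additionally track associated primes: now $\op{Ass}_S(S/I)=\{\pr_1,\dots,\pr_s\}$ consists of minimal primes only, all of height $h$. Flat base change of associated primes along $S\to\ulsep S$ gives $\op{Ass}_{\ulsep S}(\ulsep S/I\ulsep S)=\bigcup_{i}\op{Ass}_{\ulsep S}(\ulsep S/\pr_i\ulsep S)$, and since the fibers are reduced, each $\op{Ass}_{\ulsep S}(\ulsep S/\pr_i\ulsep S)$ is precisely the set of minimal primes of $\pr_i\ulsep S$, each of height $h$ by the previous paragraph. As the minimal primes of $I\ulsep S$ also all have height $h$, no associated prime of $\ulsep R$ can strictly contain a minimal prime of $\ulsep R$; hence $\op{Ass}_{\ulsep S}(\ulsep R)$ consists of minimal primes only, each yielding a quotient of dimension $\kd{\ulsep R}$, so $\ulsep R$ is unmixed.

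The main obstacle I anticipate is making the two ``standard'' facts about the non-finite flat extension $S\to\ulsep S$ fully rigorous in this context: the going-down and dimension-formula bookkeeping for prime ideals, and the flat base-change description of $\op{Ass}$ together with the claim that reduced fibers force $\op{Ass}_{\ulsep S}(\ulsep S/\pr\ulsep S)$ to be just the minimal primes of $\pr\ulsep S$. Both hold because $\ulsep S$ is Noetherian and $S\to\ulsep S$ is flat with geometrically regular fibers, but some care is needed since $\ulsep S$ is very far from module-finite over $S$; the identification $\ulsep R=\ulsep S/I\ulsep S$, which places $\ulsep R$ inside the well-behaved regular ring $\ulsep S$, is precisely what makes these arguments available.
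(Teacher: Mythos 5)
Your proof is correct, and its first half (reduction to the complete case via Corollary~\ref{C:compulsep}, the Cohen presentation $R=S/I$, flatness and regularity of $S\to\ulsep S$, and the identification $\ulsep R\iso \ulsep S/I\ulsep S$) runs parallel to the paper's argument. For the quasi-unmixed part the paper proceeds slightly differently: instead of your direct height computation via going-down and the dimension formula for the flat local maps $S_\pr\to(\ulsep S)_{\mathfrak Q}$, it contracts a minimal prime $\mathfrak q$ of $\ulsep R$ to a minimal (hence $d$-dimensional) prime $\pr$ of $R$, identifies $\ulsep R/\pr\ulsep R$ with the catapower of $R/\pr$, and quotes \cite[Theorem 31.5]{Mats} on ascent of equidimensionality along the flat map $S\to\ulsep S$; the content is essentially the same, yours being more hands-on and the paper's leaning on a standard citation. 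Where you genuinely diverge is the unmixed case: you track associated primes through the flat base-change formula $\op{Ass}(\ulsep S/I\ulsep S)=\bigcup_i\op{Ass}(\ulsep S/\pr_i\ulsep S)$ and use reducedness of the fibers to identify each $\op{Ass}(\ulsep S/\pr_i\ulsep S)$ with the minimal primes of $\pr_i\ulsep S$, all of height $h$, whereas the paper simply observes that unmixed equals quasi-unmixed plus Serre's condition $(\op S_1)$ and transfers $(\op S_1)$ along the regular map $R\to\ulsep R$ of Corollary~\ref{C:ulsepreg} via \cite[Theorem 23.9]{Mats}. The paper's route is shorter and hides the fiberwise argument inside the standard $(\op S_1)$-ascent theorem; yours makes explicit where every associated prime of $\ulsep R$ comes from, at the cost of the two ``standard facts'' you flag, both of which do hold here since $\ulsep S$ is Noetherian and $S\to\ulsep S$ is flat with regular fibers (indeed your reduced-fibers step is the same $(\op S_1)$-of-fibers input the paper invokes).
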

\begin{proof}
By Corollary~\ref{C:compulsep}, we may assume $R$ is a complete (quasi-)unmixed
Noetherian local ring.  Let us first show that the catapower
$\ulsep R$ is quasi-unmixed. In any case, $R$ and $\ulsep R$ have the same
dimension, say $d$.  Since $\ulsep R$ is complete by Lemma~\ref{L:ulcomp},  we need to show that every minimal prime $\mathfrak q\sub \ulsep R$   has dimension
$d$.  Since $R$ is complete, it is of the form $S/I$ for some complete regular local ring $S$ and some ideal
$I\sub S$. By Corollary~\ref{C:ulsepreg}, the catapower $\ulsep S$ of $S$
is regular, whence a domain. Let $\pr:=\mathfrak q\cap R$ and $\mathfrak P :=\pr\cap
S$. By 
flatness,  $\pr$ is a minimal prime of $R$ by \cite[Theorem 15.1]{Mats},
whence has dimension $d$, as $R$ is equidimensional. 

Since   $S\to \ulsep S$ is flat, $\ulsep S/\mathfrak P \ulsep S$ is equidimensional by \cite[Theorem 31.5]{Mats}.
 Since $\ulsep S/I\ulsep S\iso \ulsep R$, we get  $\ulsep
S/\mathfrak P\ulsep S\iso\ulsep R/\pr\ulsep R$. Since $\mathfrak q$ is
necessarily a
minimal prime of $\pr\ulsep R$,   equidimensionality yields that $\ulsep
R/\mathfrak q$ and $\ulsep R/\pr\ulsep R$ have the same dimension. Since $\ulsep
R/\pr\ulsep R$ is the catapower of $R/\pr$, this 
dimension is just $d$, showing that $\mathfrak q$ is a
$d$-dimensional prime.

Assume next that $R$ is unmixed. Since $R$ has no embedded primes, it
satisfies Serre's condition $(\op S_1)$, whence so does $\ulsep R$ by
Corollary~\ref{C:ulsepreg} and \cite[Theorem 23.9]{Mats}. Since we already know
that $\ulsep R$ is quasi-unmixed, it is in fact unmixed. 
\end{proof}

\begin{theorem}\label{T:ubunmix}
A Noetherian  local ring $R$ is unmixed \iff\ there is a binary numerical function
$\chi_R$ such that for every $x,y\in R$, we have
an inequality
\begin{equation}\label{eq:ubunmix}
\ord{}{xy}\leq \chi_R(\op{deg}(x),\ord{}y).
\end{equation}
\end{theorem}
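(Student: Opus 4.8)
The plan is to mimic the pattern of Theorem~\ref{T:domCh} and Corollary~\ref{C:anirr}: one direction is a soft ultraproduct argument, the other is a degeneracy argument forcing a contradiction. For the direct implication, suppose $R$ is unmixed; I may pass to $\complet R$ since this changes neither $\ord{}{\cdot}$ nor $\op{deg}(\cdot)$ (the latter because $R/\tuple xR\iso\complet R/\tuple x\complet R$ for generic $\tuple x$ by Lemma~\ref{L:quot}, and $\complet R$ is unmixed by definition), so assume $R$ is complete and unmixed. Now suppose no function $\chi_R$ exists: fix a pair $(a,b)$ for which the bound fails, giving elements $\seq xn,\seq yn\in R$ with $\op{deg}(\seq xn)\le a$, $\ord{}{\seq yn}= b$ (we may assume $\ord{}{\seq yn}$ is exactly some fixed value $\le b$ after passing to a sub-ultrafilter, since there are only finitely many possibilities $\le b$), yet $\ord{}{\seq xn\seq yn}\ge n$. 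Let $\ul R$ be the ultrapower of $R$, and $\ul x,\ul y$ the ultraproducts of the $\seq xn,\seq yn$. By \los, $\op{deg}(\ul x)\le a<\infty$ so $\ul x$ is generic in $\ul R$ by Corollary~\ref{C:degfin} (here I need that $\ul R$ is isodimensional, which holds since it is the ultrapower of the fixed Noetherian ring $R$, so all the $\seq Rn$ have the same dimension; hence Corollary~\ref{C:deggen} applies and genericity of $\ul x$ is equivalent to boundedness of $\op{deg}(\seq xn)$). Also $\ord{}{\ul y}=b$ is finite, so $\ul y\notin\inf{\ul R}$, and $\ul x\ul y\in\inf{\ul R}$ by \los.

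The crux is now this: I want $\ul x\ul y\in\inf{\ul R}$ together with $\ul x$ generic to force $\ul y\in\inf{\ul R}$, contradicting $\ord{}{\ul y}=b$. This is exactly where unmixedness of $R$ enters. By Lemma~\ref{L:unm}, the catapower $\ulsep R$ is unmixed, so every associated prime of $\ulsep R$ has dimension $d:=\kd R=\pd{\ul R}$, i.e.\ is a threshold prime of $\ul R$ (by the description of threshold primes in \S\ref{s:tre}, these are precisely the contractions to $\ul R$ of the $d$-dimensional primes of $\ulsep R=\complet{\ul R}$, and in an unmixed ring the associated primes are exactly the $d$-dimensional minimal primes). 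Let $\bar x,\bar y$ be the images of $\ul x,\ul y$ in $\ulsep R$; then $\bar x$ is a nonzerodivisor on $\ulsep R$ (it lies in no associated prime, since $\ul x$ lies in no threshold prime by Corollary~\ref{C:tre}), and $\bar x\bar y=0$ because $\ul x\ul y\in\inf{\ul R}$. Hence $\bar y=0$, i.e.\ $\ul y\in\inf{\ul R}$, the desired contradiction. For the converse, if such a $\chi_R$ exists, take $y$ with $xy=0$ and $\ord{}x$ finite, say $x$ a nonzerodivisor candidate: if $x$ is a nonzerodivisor on $\complet R$ modulo a lower-dimensional associated prime $\pr$... more directly, the existence of the bound forces: whenever $\ord{}x<\infty$ and $\ord{}{xy}=\infty$ (i.e.\ $xy\in\inf{\ul R}$ in an ultrapower, or $xy=0$ passing to $\complet R$), then $\ord{}y=\infty$; transferring to $\ulsep R$ via an ultrapower argument and Lemma~\ref{L:unm}-style reasoning, every element of finite degree is a nonzerodivisor modulo $\inf{}$, forcing $\complet R$ to have no embedded primes and all associated primes of dimension $d$.

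The main obstacle I anticipate is the converse direction: I must extract from the purely numerical inequality \eqref{eq:ubunmix} the full unmixedness statement (no embedded primes \emph{and} equidimensionality of the associated primes), not merely that $\complet R$ is equidimensional or reduced. The natural route is to argue contrapositively: if $\complet R$ has an associated prime $\pr$ of dimension $e<d$, pick $y$ with $\ann{}{y}$ having $\pr$ as a minimal prime and pick $\tuple x$ a generic sequence (so $\op{deg}$ of its suitable combination $x$ is finite, $\le$ the multiplicity bound) with $x\notin\pr$ but $x^N y\to 0$ in the $\pr$-adic sense along powers; realizing this in an ultrapower of $R$ produces $\ul x$ of bounded degree (by Corollary~\ref{C:degfin}, since a generic element has finite degree) with $\ord{}{\ul x\ul y}$ unbounded yet $\ord{}{\ul y}$ bounded, violating \eqref{eq:ubunmix} in the ultrapower (which satisfies the same bound by \los\ applied to $\chi_R$). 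Making the choice of $x$ and $y$ precise — essentially producing, in $\complet R$, an element $x$ of controlled degree that is a nonzerodivisor on $\complet R/\pr^{(k)}$-filtration layers but annihilates a fixed $y$ only after high order — is the delicate bookkeeping step, but it is the same flavor of ``bad element from an embedded or low-dimensional prime'' construction used implicitly throughout \S\ref{s:ua}.
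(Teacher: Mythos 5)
Your direct implication is correct and is essentially the paper's own argument: reduce to $R$ complete, take a putative failure for a pair $(a,b)$, pass to the ultrapower, note that $\ul x$ has finite degree whence is generic, that $\ul y$ is non-zero in the catapower, and that by Lemma~\ref{L:unm} the catapower is unmixed, so the image of $\ul x$ avoids every associated prime and is a regular element, forcing $\ul y=0$ there --- a contradiction. (The passage to a sub-ultrafilter to fix the exact order of the $\seq yn$ is unnecessary: $\ord{}{\seq yn}\leq b$ already gives $\ul y\notin\maxim^{b+1}\ul R$ by \los, hence $\ul y\neq 0$ in $\ulsep R$.)

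The genuine gap is the converse, which you flag as the main obstacle and for which you only sketch a contrapositive construction (``pick $x$ generic with $x\notin\pr$ but $x^Ny\to 0$ in the $\pr$-adic sense'') that is never made precise and is not needed; note also that in your intermediate reformulation you use $\ord{}x<\infty$ where the hypothesis actually supplies $\op{deg}(x)<\infty$, and an element of finite order but infinite degree gives you nothing from \eqref{eq:ubunmix}. The paper's converse is a two-line local argument with no ultraproducts: after the same reduction to $R$ complete used in the forward direction, let $x$ be any generic element, so $\op{deg}(x)=a<\infty$ by Corollary~\ref{C:degfin}. If $xy=0$ with $y\neq 0$, then $\ord{}y=b<\infty$ (Krull intersection), while $\ord{}{xy}=\infty>\chi_R(a,b)$, contradicting \eqref{eq:ubunmix}; hence every generic element is $R$-regular. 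Your worry that this yields only equidimensionality or only absence of embedded primes is unfounded: for complete $R$ the generic elements are exactly the elements outside the finitely many $d$-dimensional (threshold) primes by Corollary~\ref{C:tre}, so the zerodivisors of $R$ lie in the union of those primes; by prime avoidance every associated prime is contained in one of them, and since a $d$-dimensional prime of a $d$-dimensional ring is minimal, it equals it. Thus every associated prime is $d$-dimensional, which is precisely unmixedness --- embedded primes and low-dimensional components are excluded in one stroke. The only point deserving care is the reduction to the complete case in this direction (the bound must control elements of $\complet R$, not just of $R$); the paper dispatches this at the outset by the invariance of order and degree under completion, the same reduction you already accept for the forward implication, so no new construction of ``bad elements'' from an embedded prime is required.
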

\begin{proof}
Assume first that $R$ is unmixed. Since degree and order remain the same
when we pass to the completion, we may assume $R$ is complete. By
Lemma~\ref{L:unm}, the catapower $\ulsep R$ is then also unmixed. By
way of contradiction,  assume that for some pair $(a,b)$, we can find
elements $\seq xn,\seq yn\in R$ with $\op{deg}(\seq xn)\leq a$ and $\ord{}{\seq
yn}\leq b$,  such that $\seq xn\seq yn\in\maxim^n$. Hence, in the ultrapower
$\ul R$ of $R$, the
 ultraproduct  $\ul x$ of the $\seq xn$ has degree at most $a$ and
the ultraproduct $\ul y$ of the $\seq yn$ has order at most $b$, but $\ul x\ul
y\in\inf{\ul R}$.   Since $\ul x$ has finite degree, it is generic and
hence its image in $\ulsep R$ lies outside any prime of maximal dimension.
Since $\ulsep R$ is unmixed, $\ul x$ is therefore $\ul R$-regular and hence $\ul
y=0$ in $\ulsep R$, contradicting that its order is at most $b$.

Conversely, assume a function $\chi$ with the proscribed properties exists and
let $x$ be a generic element, say, of degree $a$. We have to show that $x$ is
$R$-regular. If not,
then $xy=0$ for some non-zero $y$, say, of order $b$. However, the order of $xy$
is bounded by $\chi(a,b)$, contradiction.
\end{proof}

By the same argument, one easily proves that the cataproduct of
Noetherian local rings $\seq Rn$ of bounded embedding dimension is unmixed,
provided almost each $\seq Rn$ satisfies the hypothesis of the statement with
respect to the same uniformity function $\chi=\chi_{\seq Rn}$. In order to
characterize quasi-unmixedness, we have to introduce one more invariant. Given
a Noetherian local ring $R$, we define its \emph{nilpotency degree}  to 
be the least $t$ such that $\mathfrak n^t=0$, where $\mathfrak n$ is   the
nilradical of $R$. Hence $R$ is reduced
\iff\
its nilpotency degree is one.

\begin{proposition}\label{P:ubqunmix}
A Noetherian local ring $R$ of nilpotency degree at most $t$ is quasi-unmixed
\iff\
there exists a binary numerical function $\theta_R$ such that  for every $x,y\in R$, we have
an inequality
$$
\ord{}{(xy)^t}\leq \theta_R(\op{deg}(x),\ord{}{y^t}).
$$
\end{proposition}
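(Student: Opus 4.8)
The strategy is to run the same catapower argument used for Theorem~\ref{T:ubunmix}, inserting the nilpotency bookkeeping that the exponent $t$ is there to control. For the substantive direction (existence of $\theta_R$), first reduce to the case that $R$ is complete: both $\ord{}\cdot$ and $\op{deg}(\cdot)$ are unchanged under completion (Lemmas~\ref{L:quot} and \ref{L:Art}, together with the fact that parameter degree is insensitive to completion), and the Artinian case is trivial since then $\ord{}\cdot$ has finite range. So assume $R$ is complete and quasi-unmixed with $\op{nil}(R)^t=0$, of \pdim\ $d\geq1$, and suppose no bound works for some pair $(a,b)$; this yields, for each $n$, elements $\seq xn,\seq yn\in R$ with $\op{deg}(\seq xn)\leq a$, $\ord{}{\seq yn^t}\leq b$, but $\ord{}{(\seq xn\seq yn)^t}\geq n$. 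Pass to the ultrapower $\ul R$ and the catapower $\ulsep R=\complet{\ul R}$ (Lemma~\ref{L:ulcomp}). The ultraproduct $\ul x$ of the $\seq xn$ is a non-unit of finite degree, hence generic (Corollary~\ref{C:degfin}), so $\ul x^t$ is generic too (Corollary~\ref{C:tre}) and its image $\bar{\ul x}^t$ in $\ulsep R$ is part of a system of parameters (Proposition~\ref{P:sop}). Meanwhile $\ord{}{(\ul x\ul y)^t}=\infty$, so $\bar{\ul x}^t\bar{\ul y}^t=0$ in $\ulsep R$, while $\ord{}{\bar{\ul y}^t}\leq b$.

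The two structural facts needed about $\ulsep R$ are that it is quasi-unmixed and that it has nilpotency degree at most $t$. The first is Lemma~\ref{L:unm}. For the second, $\op{nil}(R)\ulsep R$ is a nilpotent ideal whose $t$-th power vanishes by \los, whereas $\ulsep R/\op{nil}(R)\ulsep R$ is the catapower of $\red R$, which is reduced (being complete it is excellent, so Corollary~\ref{C:ulsepreg} applies); hence $\op{nil}(\ulsep R)=\op{nil}(R)\ulsep R$ and $\op{nil}(\ulsep R)^t=0$. Now $\red{\ulsep R}$ is reduced and, being quasi-unmixed, equidimensional, so its associated primes are exactly its minimal primes, all of dimension $d$; since $\bar{\ul x}^t$ is part of a system of parameters it avoids all of them, so its image in $\red{\ulsep R}$ is a nonzerodivisor. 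From $\bar{\ul x}^t\bar{\ul y}^t=0$ we get $\bar{\ul y}^t\in\op{nil}(\ulsep R)$, hence $\bar{\ul y}$ is nilpotent, hence $\bar{\ul y}\in\op{nil}(\ulsep R)$, hence $\bar{\ul y}^t=0$ because $\op{nil}(\ulsep R)^t=0$. Therefore $\ul y^t\in\inf{\ul R}\subseteq\ulmax^{b+1}\ul R$, contradicting $\ord{}{\ul y^t}\leq b$.

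For the converse, assume $\theta_R$ exists but $R$ is not quasi-unmixed. Replace $R$ by $\complet R$ and pass to $\ulsep R$; then $\ulsep R$ is not equidimensional, since $\complet R\to\ulsep R$ is faithfully flat and unramified, catapowers preserve dimension (Theorems~\ref{T:pdim} and \ref{T:uldim}), and $\ulsep R/\pr\ulsep R$ is the catapower of $\complet R/\pr$ for each minimal prime $\pr$ of $\complet R$ (Corollary~\ref{C:clos}), so a minimal prime of $\complet R$ of dimension $<d$ forces one of $\ulsep R$. Work in $\red{\ulsep R}$, which is reduced, of dimension $d$, and not equidimensional: pick a minimal prime $\mathfrak q$ of dimension $<d$ and write $\mathfrak q=(0:\bar v)$ with $\bar v\neq0$, so $v^t\notin\op{nil}(\ulsep R)$ and $\ord{\ulsep R}{v^t}=:b<\infty$. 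By prime avoidance $\mathfrak q$ lies in no $d$-dimensional minimal prime of $\red{\ulsep R}$, so there is a generic element $c$ of $\ulsep R$ whose image lies in $\mathfrak q$; then $cv\in\op{nil}(\ulsep R)$, whence $(cv)^t=c^tv^t=0$ by $\op{nil}(\ulsep R)^t=0$. Lifting $c,v$ to the ultrapower and realizing them as sequences $\seq cn,\seq vn$ in $R$: Corollary~\ref{C:deggen} bounds the $\op{deg}(\seq cn)$, \los\ bounds the $\ord{}{\seq vn^t}$, yet $\ord{}{(\seq cn\seq vn)^t}$ is unbounded — contradicting the existence of $\theta_R$.

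The step I expect to be the main obstacle is verifying that the catapower inherits exactly the features the argument exploits — quasi-unmixedness in the forward direction, non-equidimensionality in the converse, and in both cases nilpotency degree $\leq t$ — and doing the careful translation of $\maxim$-adic order and parameter degree among $R$, its ultrapower, $\ulsep R$, and $\red{\ulsep R}$ (in particular that order can \emph{jump} under passage to $\red{\ulsep R}$, which is why the witnesses must be produced inside $\red{\ulsep R}$ and then lifted rather than the other way around). Everything else is the by-now-routine ultraproduct bookkeeping; the one genuinely new point is that the hypothesis $\op{nil}(R)^t=0$ is precisely what converts ``$\bar{\ul y}$ is nilpotent'' into ``$\bar{\ul y}^t=0$'' (and ``$cv$ nilpotent'' into ``$(cv)^t=0$''), which is the sole reason $t$ appears in the statement.
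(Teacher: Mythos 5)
Your proposal is correct, and for the substantive direction (quasi-unmixed implies the existence of $\theta_R$) it is essentially the paper's own argument: reduce to the complete case, suppose counterexamples for a pair $(a,b)$, pass to the ultrapower and catapower, invoke Lemma~\ref{L:unm} for quasi-unmixedness of $\ulsep R$, identify $\op{nil}(\ulsep R)$ with the extended nilradical via Corollaries~\ref{C:clos} and \ref{C:ulsepreg} so that the nilpotency degree stays at most $t$, and conclude $\ul y^t\in\inf{\ul R}$; whether one works with $\ul x^t$ or uses, as the paper does, that $\ul x$ itself is $\red{(\ulsep R)}$-regular is immaterial. The only genuine divergence is in the other direction, which the paper settles by a short direct argument inside the complete ring: given the bound, take a generic $x$ (finite degree) and a non-nilpotent $y$ with $xy$ nilpotent; then $y^t\neq 0$ has finite order while $(xy)^t=0$ has infinite order, contradicting the inequality, so generic elements avoid all minimal primes and the complete ring is equidimensional. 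Your version of this direction---ascending non-equidimensionality to $\ulsep R$, writing a low-dimensional minimal prime of $\red{\ulsep R}$ as an annihilator $(0:\bar v)$, picking a generic $c$ inside it, and lifting $c,v$ back to sequences in the ring that violate any candidate bound---is correct, but it buys nothing and costs extra verifications (the flatness/going-down step showing $\ulsep R$ inherits a minimal prime of dimension $<d$, Corollary~\ref{C:deggen} to bound the degrees of the lifted sequence, and once more the nilradical identification), all of which the direct argument avoids; note also that both your argument and the paper's rely on the same silent reduction that order, degree and the nilpotency-degree hypothesis can be read off after completion. One small citation point: to see that $\ul x$ is generic from the componentwise bound $\op{deg}(\seq xn)\leq a$, the relevant statement is Corollary~\ref{C:deggen} (or first transfer the degree bound to $\op{deg}_{\ul R}(\ul x)$ via \los\ and Proposition~\ref{P:ullen}, and only then apply Corollary~\ref{C:degfin}).
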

\begin{proof}
Again, we may pass to the completion of $R$, since all invariants remain
unchanged under completion, and assume from the start that
 $R$ is complete. Suppose that $\theta_R$ has the above property. To show that
$R$ is quasi-unmixed, which in the complete case is just being equidimensional,
we need to show that any generic element $x$ lies outside any minimal prime of
$R$. A moment's reflection shows that this is equivalent with showing that $x$
is $\red R$-regular. Hence, towards a contradiction, assume $y\in R$ is a 
non-nilpotent element in $R$ such that $xy$ is nilpotent. By definition of
$t$, this means $y^t\neq 0$, but $(xy)^t=0$. However, the order of $(xy)^t$ is
bounded by the finite number $\theta_R(\op{deg}(x),\ord{}{y^t})$, contradiction.

Conversely, assume $R$ is equidimensional, but no function $\theta_R$ can be
defined for some pair $(a,b)$. Hence we can find counterexamples $\seq xn\in R$
of degree $a$ and $\seq yn\in R$ such that $\seq yn^t\notin\maxim^{b+1}$, but
$(\seq
xn\seq yn)^t\in\maxim^n$. Let $\ul x$, $\ul y$ and $\ul R$ be the respective
ultraproducts, so that  $\ul x$ is generic by Corollary~\ref{C:deggen},
and $\ul y^t\notin\maxim^{b+1}\ul R$, but $(\ul x\ul y)^t\in \inf{\ul R}$ by \los.
However, by Lemma~\ref{L:unm}, the cataproduct $\ulsep R$ is again
equidimensional (note that $\ulsep R$ is complete), and therefore, $\ul x$,
being generic in $\ulsep R$, is $\red{(\ulsep R)}$-regular. Hence $(\ul x\ul
y)^t=0$ in $\ulsep R$ yields that $\ul y$ is nilpotent in $\ulsep R$. Let  $\mathfrak n$
be the nilradical of $R$. Since $\ulsep R/\mathfrak n\ulsep
R$ is the catapower of $\red R=R/\mathfrak n$ by
Corollary~\ref{C:clos}, it is
  reduced by Corollary~\ref{C:ulsepreg}. This proves that the nilradical of
$\ulsep R$ is just $\mathfrak n\ulsep R$ and hence in particular, $\ulsep R$
has nilpotency degree  $t$ too. Therefore, $\ul y^t=0$, contradicting that
$\ul y^t\notin\maxim^{b+1}\ulsep R$.
\end{proof}


\begin{theorem}\label{T:ubCM}
A $d$-dimensional Noetherian local ring $R$ is \CM\ \iff\
there exists a binary numerical function $\delta_R$ such that  for  all
$d$-tuples $\tuple x:=\rij xd$ and $\rij yd$ with $\tuple x$ a system of
parameters,
we have an inequality
\begin{equation}
\label{eq:ubCM}
\ord{R}{x_1y_1+\dots+ x_dy_d}\leq \delta_R(\ell(R/\tuple xR), \ord{R/\rij
x{d-1}R}{y_d}).
\end{equation}
Moreover, the function $\delta_R$ only depends on the dimension and the
multiplicity of $R$.
\end{theorem}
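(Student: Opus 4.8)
The plan is to prove the two implications separately, using ultraproducts only for the direction that yields the uniform bound and reducing its converse to a purely Noetherian statement.

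Suppose first that no binary numerical function $\delta_{d,e}$ works for $d$-dimensional \CM\ rings of multiplicity $e$. Then there are fixed natural numbers $a,b$ and, for each $n$, a $d$-dimensional \CM\ local ring $R_n$ of multiplicity $e$, a system of parameters $(x^{(n)}_1,\dots,x^{(n)}_d)$ with $\ell\bigl(R_n/(x^{(n)}_1,\dots,x^{(n)}_d)R_n\bigr)=a$, and elements $y^{(n)}_1,\dots,y^{(n)}_d$ with $\ord{R_n/(x^{(n)}_1,\dots,x^{(n)}_{d-1})R_n}{y^{(n)}_d}=b$, yet $\ord{}{\sum_i x^{(n)}_iy^{(n)}_i}\geq n$. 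Let $\ul R$ be the ultraproduct of the $R_n$, and let $\tuple a=(a_1,\dots,a_d)$ and $\tuple b=(b_1,\dots,b_d)$ in $\ul R^d$ be the ultraproducts of the tuples $(x^{(n)}_1,\dots,x^{(n)}_d)$ and $(y^{(n)}_1,\dots,y^{(n)}_d)$. Since the $R_n$ are \CM\ of common dimension $d$ and multiplicity $e$, they have constant parameter degree $e$ (Lemma~\ref{L:pardegmult}; see Remark~\ref{R:isodim}), hence $\ul R$ is isodimensional with $\pd{\ul R}=d$ by Theorem~\ref{T:isodim}, and its cataproduct $\ulsep R=\complet{\ul R}$ (Lemma~\ref{L:ulcomp}) is a $d$-dimensional \CM\ Noetherian local ring by Corollary~\ref{C:CMulsep}. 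As $\ell(\ul R/\tuple a\ul R)=a$ is finite by Proposition~\ref{P:ullen}, the sequence $\tuple a$ is generic, so its image $\bar a_1,\dots,\bar a_d$ in $\ulsep R$ is a full system of parameters by Proposition~\ref{P:sop}, and therefore an $\ulsep R$-regular sequence because $\ulsep R$ is \CM.

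By \los\ the order hypothesis gives $\sum_i a_ib_i\in\inf{\ul R}$, so in $\ulsep R=\ul R/\inf{\ul R}$ we have $\sum_i\bar a_i\bar b_i=0$; thus $\bar a_d\bar b_d\in(\bar a_1,\dots,\bar a_{d-1})\ulsep R$, and regularity of $\bar a_1,\dots,\bar a_d$ forces $\bar b_d\in(\bar a_1,\dots,\bar a_{d-1})\ulsep R$, i.e.\ $\bar b_d=0$ in $S:=\ulsep R/(\bar a_1,\dots,\bar a_{d-1})\ulsep R$. But $S$ is the cataproduct of the rings $R_n/(x^{(n)}_1,\dots,x^{(n)}_{d-1})R_n$ by Corollary~\ref{C:clos}, and since the ideal of infinitesimals of an ultra-Noetherian ring lies in every power of its maximal ideal, the $\maxim$-adic order of an element agrees with that of its image in the separated quotient; so by \los\ the order of $\bar b_d$ in $S$ equals $b<\infty$, contradicting $\bar b_d=0$. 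Since only the invariants $d$ and $e$ were used, the bound one finally obtains depends on nothing else.

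For the converse, let $R$ be a $d$-dimensional Noetherian local ring admitting such a function $\delta_R$. Given any system of parameters $x_1,\dots,x_d$ and any $z\in(x_1,\dots,x_{d-1})R:x_d$, write $x_dz=\sum_{i<d}x_iw_i$ and apply the inequality to $\tuple x=(x_1,\dots,x_d)$ and $\tuple y=(-w_1,\dots,-w_{d-1},z)$: the left side is $\ord{R}{0}=\infty$, while the first argument $\ell(R/\tuple xR)$ is finite, so by the defining property of a numerical function the second argument must be infinite; that is, $z$ lies in the ideal of infinitesimals of $R/(x_1,\dots,x_{d-1})R$, which is $(0)$ by Krull's intersection theorem. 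Hence $(x_1,\dots,x_{d-1})R:x_d=(x_1,\dots,x_{d-1})R$ for every system of parameters of $R$, and it remains to invoke the standard fact that this colon condition, imposed on all systems of parameters, forces $R$ to be \CM. I would supply its short proof by induction on $d$: the condition is inherited by $R/x_dR$, and by $R/xR$ for a non-zero-divisor parameter $x$, simply by reordering the parameters; so the only substantive point is $\op{depth}R\geq1$, and were $\maxim=(0:z)$ with $z\neq0$, then the colon condition applied to the systems of parameters $(x_1^{k_1},\dots,x_{d-1}^{k_{d-1}},x_d)$ --- whose last entry kills $z$ --- would place $z$ in $\bigcap_{k_1,\dots,k_{d-1}}(x_1^{k_1},\dots,x_{d-1}^{k_{d-1}})R\subseteq\bigcap_N(x_1,\dots,x_{d-1})^NR=(0)$, a contradiction.

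The step I expect to be the main obstacle is the isodimensionality input: one must be certain that \CM\ local rings of fixed dimension $d$ and multiplicity $e$ have \emph{bounded} parameter degree --- this is exactly where the multiplicity hypothesis is needed, and where it may first be necessary to replace the $R_n$ by Nagata extensions $R_n(\xi)$ to arrange infinite residue fields before invoking Lemma~\ref{L:pardegmult}. Were isodimensionality to fail, $\ulsep R$ could have dimension larger than $d$, the $d$-tuple $\bar a_1,\dots,\bar a_d$ would no longer be a full system of parameters of $\ulsep R$, and the implication ``regular sequence $\Rightarrow\bar b_d=0$'' would collapse.
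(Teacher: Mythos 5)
Your proposal is correct, and on the ultraproduct side it is essentially the paper's argument: take an ultraproduct of putative counterexamples, use bounded parameter degree (via Lemma~\ref{L:pardegmult} and Remark~\ref{R:isodim}, with your Nagata-extension caveat being a legitimate fix for finite residue fields, and one the paper glosses over) to get isodimensionality, conclude via Corollary~\ref{C:CMulsep} that the cataproduct is \CM, and derive a contradiction from the regularity of the image of the parameter tuple; your extra remark that infinitesimals lie in every power of the maximal ideal, so orders survive passage to the cataproduct, is a point the paper leaves implicit. Note that by proving only the uniform $(d,e)$-version you automatically get both the fixed-ring direction and the ``moreover'' clause, and you also sidestep the paper's appeal to Corollary~\ref{C:ulsepreg} (which formally assumes excellence) by routing everything through Corollary~\ref{C:CMulsep}. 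Where you genuinely diverge is the elementary direction: the paper, given a relation $a_1z_1+\dots+a_iz_i=0$, applies the inequality once to the reordered system of parameters $(z_{i+1}^k,\dots,z_d^k,z_1,\dots,z_i)$ with $\tuple y=(0,\dots,0,a_1,\dots,a_i)$ and concludes $a_i\in\rij z{i-1}R$ directly from Krull's intersection theorem, so every system of parameters is outright a regular sequence; you instead extract only the top colon identity $\rij x{d-1}R:x_d=\rij x{d-1}R$ for all systems of parameters and then prove separately, by induction on $d$, that this condition forces \CM{}ness. Your induction is sound --- the reordering trick does show the condition passes to $R/xR$ for a parameter non-zero-divisor $x$, and your powers-plus-Krull argument does give depth at least one --- but it silently uses prime avoidance (over the associated primes and the $d$-dimensional minimal primes) to produce a parameter which is a non-zero-divisor, a step you should make explicit; the paper's single substitution buys a shorter proof, while your version isolates a reusable lemma (``last colon closed for all systems of parameters implies \CM'') at the cost of that extra induction.
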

\begin{proof}
Assume first a function $\delta_R$ with the asserted properties exists. In
order to prove that $R$ is \CM, we take a system of parameters $\rij zd$ and
show that it is $R$-regular.  Fix some $i$ and suppose
$a_1z_1+\dots+a_iz_i=0$. We need to show that $a_i\in I:=\rij z{i-1}R$. Fix
some $k$ and define $x_j$ and $y_j$ as follows. If $j=\range 1{d-i}$, then
$x_j:=z_{i+j}^k$ and $y_j:=0$;  if $j=\range{d-i+1}d$, then $x_j:=z_{i+j-d}$
and $y_j:=a_{i+j-d}$.
In other
words, we have 
\begin{equation}\label{eq:xy}
\begin{aligned} 
\tuple x:=\rij xd&=(z_{i+1}^k,\dots,z_d^k,z_1,\dots,z_i)\\ 
\tuple y:=\rij yd&=(0,\dots,0,a_1,\dots,a_i)\\
x_1y_1+\dots+x_dy_d&=a_1z_1+\dots+a_iz_i=0.
\end{aligned}
\end{equation}
 Apply \eqref{eq:ubCM} to these two tuples $\tuple x$ and $\tuple y$. Since
$\tuple x$ is again a system of
parameters, $\ell(R/\tuple xR)$ is finite. Hence, by the last equation in
\eqref{eq:xy}, the order of $y_d=a_i$ in $R/\rij x{d-1}R$   must be infinite, that is to say, 
$$
a_i\in \rij x{d-1}R=I+(z_{i+1}^k,\dots,z_d^k)R.
$$
 Since this holds for all $k$, Krull's intersection theorem yields $a_i\in I$.

To prove the converse, suppose $R$ is \CM, but we cannot
define $\delta_R(a,b)$ for some pair $(a,b)$. This means that there exists for
each $n$, a system of parameters  $\seq{\tuple x}n:=(\seq{x_1}n,\dots,\seq{x_d}n)$
such that $R/\seq{\tuple x}nR$ has length $a$, and a $d$-tuple
$\seq{\tuple y}n:=(\seq{y_1}n,\dots,\seq{y_d}n)$, such that 
$$
\ord{R/(\seq{x_1}n,\dots,\seq{x_{d-1,}}n)R}{\seq{y_d}n}=b
$$
 and 
$\seq{x_1}n\seq{y_1}n+\dots+\seq{x_d}n\seq{y_d}n$ has order at least $n$.
Let  $\tuple{\ul x}:=(\ul{x_1},\dots,\ul{x_d})$ and $\ul {y_i}$ be the respective
ultraproducts of the $\seq{\tuple x}n$ and $\seq{y_i}n$ inside
the ultrapower $\ul R$ of $R$. By \los, the order  of $\ul {y_d}$ in 
$\ul R/(\ul{x_1},\dots,\ul {x_{d-1}})\ul R$ is $b$, the length of $\ul
R/\tuple{\ul x}\ul R$ is  $a$, and  the sum $\ul {x_1}\ul {y_1}+\dots+\ul {x_d}\ul
{y_d}$ is an infinitesimal. In particular, the image
of $\tuple{\ul x}$ in the catapower $\ulsep R$ is a system of
parameters, whence $\ulsep R$-regular, since $\ulsep R$ is \CM\ by
Corollary~\ref{C:ulsepreg}. Since $\ul {x_1}\ul {y_1}+\dots+\ul {x_d}\ul {y_d}=0$
in $\ulsep R$, regularity forces  $\ul {y_d}$
to be in the ideal $(\ul{x_1},\dots,\ul{x_{d-1}})\ulsep R$, contradicting that
its order in $\ulsep R/(\ul{x_1},\dots,\ul{x_{d-1}})\ulsep R$ is finite. 

To prove the final statement, observe that for fixed dimension $d$ and
multiplicity $e$, we may modify the above proof by
taking each counterexample $\seq{\tuple x}n$ and $\seq{\tuple y}n$ in some  
$d$-dimensional local \CM\ ring $\seq Rn$ of multiplicity $e$. Indeed, by
Corollary~\ref{C:CMulsep}, the cataproduct $\ulsep R$ of the $\seq Rn$
is again \CM\ so that we can copy the above argument.
\end{proof}

One can view the previous result as a quantitative version of the unmixedness
theorem. Namely, we can rewrite   condition~\eqref{eq:ubCM} as follows: for any
$d-1$-tuple $\tuple z$ and any $x,y\in R$, 
if $\tuple z$ is part of a system
of parameters, then 
\begin{equation}\label{eq:unifunmix}
\ord{R/\tuple zR}{xy}\leq \delta_R(\op{deg}_{R/\tuple zR}(x),\ord{R/\tuple
zR}{y}). 
\end{equation}
Comparing this   with \eqref{eq:ubunmix}, we can now rephrase
Theorem~\ref{T:ubCM} using the following terminology: by a
\emph{curve}, we mean
a one-dimensional subscheme $C$ of $X:= \spec R$; we call a curve $C$  a 
\emph{complete intersection in $X$} if it is of
the form $\spec{R/I}$ with $I$ an ideal generated by $\op{dim}R-1$
elements; we call $C$ \emph{unmixed}, if its coordinate ring is (note that this is equivalent with $C$ being \CM).

\begin{corollary}
A Noetherian local ring $R$ is \CM\ \iff\   every complete intersection curve
$C$ in $\spec R$  is unmixed with respect to a uniformity function  $\chi=\chi_C$
(as given by Theorem~\ref{T:ubunmix}) 
   independent from   $C$.
\qed
\end{corollary}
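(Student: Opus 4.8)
The plan is to read this off from Theorem~\ref{T:ubCM} and Theorem~\ref{T:ubunmix} via the reformulation~\eqref{eq:unifunmix} of condition~\eqref{eq:ubCM}. Write $d:=\kd R$. First I would record the elementary dictionary: a complete intersection curve in $\spec R$ is exactly a subscheme of the form $\spec{R/\tuple zR}$, where $\tuple z=\rij z{d-1}$ is a $(d-1)$-tuple that is part of a system of parameters of $R$; indeed $\spec{R/\tuple zR}$ is one-dimensional precisely when $\tuple z$ is part of a system of parameters, and then $A:=R/\tuple zR$ is a one-dimensional Noetherian local ring, so the curve $\spec A$ is unmixed if and only if $A$ is \CM. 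Note also that when $R$ is \CM, each such $A$ is \CM, being a quotient of $R$ by part of a system of parameters.

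For the direct implication, assume $R$ is \CM. By Theorem~\ref{T:ubCM} there is a binary numerical function $\delta_R$ satisfying~\eqref{eq:ubCM}, hence also its reformulation~\eqref{eq:unifunmix}: for every $(d-1)$-tuple $\tuple z$ that is part of a system of parameters and all $x,y\in R$,
\[
\ord{R/\tuple zR}{xy}\leq\delta_R\bigl(\op{deg}_{R/\tuple zR}(x),\ \ord{R/\tuple zR}{y}\bigr).
\]
Put $\chi:=\delta_R$. Given any complete intersection curve $C=\spec{R/\tuple zR}$, the ring $A:=R/\tuple zR$ is \CM, whence $C$ is unmixed, and the displayed inequality read inside $A$ is precisely the inequality~\eqref{eq:ubunmix} of Theorem~\ref{T:ubunmix} for $A$ with uniformity function $\chi_A=\chi$. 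Thus every complete intersection curve in $\spec R$ is unmixed with respect to the single, $C$-independent, uniformity function $\chi$.

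Conversely, suppose some numerical function $\chi$ has the property that every complete intersection curve $C$ in $\spec R$ is unmixed with uniformity function $\chi_C=\chi$. Let $\tuple z$ be an arbitrary $(d-1)$-tuple which is part of a system of parameters, and set $A:=R/\tuple zR$, so that $\spec A$ is a complete intersection curve; by hypothesis $A$ satisfies the bound of Theorem~\ref{T:ubunmix} with uniformity function $\chi$, that is, $\ord A{xy}\leq\chi(\op{deg}_A(x),\ord A{y})$ for all $x,y\in A$. Since $\tuple z$ was arbitrary and $\chi$ is independent of it, condition~\eqref{eq:unifunmix}, equivalently~\eqref{eq:ubCM}, holds with $\delta_R:=\chi$; by Theorem~\ref{T:ubCM}, $R$ is \CM.

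The only point requiring genuine care — and it is already carried out in the discussion following Theorem~\ref{T:ubCM} — is the equivalence of conditions~\eqref{eq:ubCM} and~\eqref{eq:unifunmix}, which underlies both implications above. Given $\tuple z=\rij z{d-1}$ part of a system of parameters and $x,y\in R$, a membership $xy\in\maxim^n+\tuple zR$, written $xy=m+\sum_{i<d}z_iw_i$ with $m\in\maxim^n$, is exactly a relation $\sum_{i\leq d}x_iy_i\in\maxim^n$ of the shape appearing in~\eqref{eq:ubCM} for the system of parameters $\tuple x:=(z_1,\dots,z_{d-1},x)$ (with $y_i:=-w_i$ for $i<d$ and $y_d:=y$); since $\ell(R/\tuple xR)=\op{deg}_{R/\tuple zR}(x)$, because $R/\tuple xR=(R/\tuple zR)/x(R/\tuple zR)$, and $\ord{R/\rij x{d-1}R}{y_d}=\ord{R/\tuple zR}{y}$, letting $n$ range over all such memberships (and using monotonicity of numerical functions) passes back and forth between the two conditions. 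Granting this, together with the fact that a one-dimensional Noetherian local ring is unmixed if and only if it is \CM, the corollary is a direct transcription of Theorems~\ref{T:ubCM} and~\ref{T:ubunmix}.
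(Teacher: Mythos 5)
Your proposal is correct and follows exactly the route the paper intends: the corollary is presented there as an immediate consequence of the preceding reformulation of condition~\eqref{eq:ubCM} as \eqref{eq:unifunmix}, together with the identification of complete intersection curves with quotients $R/\tuple zR$ by $(d-1)$-tuples that are part of a system of parameters and the remark that a one-dimensional local ring is unmixed \iff\ it is \CM. You merely make explicit the dictionary the paper asserts without proof (including the equality $\op{deg}_{R/\tuple zR}(x)=\ell(R/\tuple xR)$ and the trivial case $\op{deg}_{R/\tuple zR}(x)=\infty$, which is absorbed by the convention on numerical functions), so your argument coincides with the paper's.
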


We can depart from other criteria for \CM{ness} to get some more uniformity 
characterizations. For instance, we could use the criterion proven in  \cite[Corollary
5.2.11]{SchUlBook} that $R$ is \CM\ \iff\ every system of
parameters $\tuple x:=\rij xd$ is \emph{independent}, in the sense    that a
relation $x_1y_1+\dots+x_dy_d=0$ implies that all $y_i$ lie in $\tuple xR$.
Thus, we get the following modified form of \eqref{eq:ubCM}:  \emph{a $d$-dimensional
Noetherian local ring $R$ is \CM\ \iff\  there exists a binary numerical function
$\delta'_R$ such that for  every two $d$-tuples $\tuple x:=\rij xd$ and $\rij
yd$, we have an inequality}
\begin{equation*}
\ord R{x_1y_1+\dots+ x_dy_d}\leq \delta'_R(\ell(R/\tuple xR),\ord{R/\tuple
xR}{y_d}). 
\end{equation*}
Next, we characterize normality:

\begin{theorem}\label{T:ubnorm}
A   Noetherian local ring $R$ is normal \iff\
there exists a binary numerical function $\varepsilon_R$ such that  for  all
  $x,y,z\in R$, we have an inequality
\begin{equation}\label{eq:norm}
\min_k\{\ord{R/z^kR}{xy^k}\}\leq \varepsilon_R(\ord {}x,\ord{R/zR}y).
\end{equation}
\end{theorem}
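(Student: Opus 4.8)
The plan is to prove the two implications separately. The passage from the uniform bound to normality is elementary; the converse goes, as do the other characterizations in this section, through an ultraproduct argument, the point being that by Corollary~\ref{C:ulsepreg} the catapower of an excellent normal ring is again normal.

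\emph{The bound forces $R$ to be normal.} I would use only two special cases of the inequality. Taking $z=0$ turns it into: for all $x,y\in R$ the quantity $\min_k\ord{}{xy^k}$ (computed in $R$, since $R/z^kR=R$) is bounded in terms of $\ord{}x$ and $\ord{}y$. If $xy=0$ with $x,y\neq0$, then $xy^k=0$ for every $k\geq1$, so that minimum is $\infty$ while both arguments are finite (as $\inf R=0$ by Krull's intersection theorem), a contradiction; hence $R$ is a domain. For integral closure, let $q=y/z$ in the fraction field be integral over $R$, with $z\neq0$. Then $R[q]$ is a finitely generated $R$-module, so $z^mq^k\in R$ for all $k\geq0$ and some fixed $m$; putting $x:=z^m\neq0$ this says $xy^k\in z^kR$, hence $\ord{R/z^kR}{xy^k}=\infty$, for every $k\geq1$. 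The inequality (with $\ord{}x=\ord{}{z^m}<\infty$) then forces $\ord{R/zR}y=\infty$, i.e.\ $y\in\bigcap_N(\maxim^N+zR)=zR$ again by Krull's intersection theorem, so $q\in R$. Thus $R$ is an integrally closed domain, whence normal.

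\emph{Normality forces the bound.} Completion affects none of the quantities occurring in the inequality, so I may assume $R$ is complete, hence excellent; then by Corollary~\ref{C:ulsepreg} its catapower $\ulsep R$ is normal, so a normal Noetherian local domain (it is Noetherian by Lemma~\ref{L:ulcomp}). Suppose no admissible function exists. Then for some pair $(a,b)$ one can choose, for each $n$, elements $\seq xn,\seq yn,\seq zn\in R$ with $\ord{}{\seq xn}\leq a$, $\ord{R/\seq zn R}{\seq yn}\leq b$, and $\ord{R/\seq zn^kR}{\seq xn\seq yn^k}\geq n$ for all $1\leq k\leq n$. Let $\ul x,\ul y,\ul z$ be the ultraproducts in the ultrapower $\ul R$ of $R$. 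By \los, $\ord{}{\ul x}\leq a$ and $\ord{\ul R/\ul z\ul R}{\ul y}\leq b$, and for each fixed $k$ and each $N$ almost all $n$ satisfy $\seq xn\seq yn^k\in\maxim^N+\seq zn^kR$, whence $\ul x\ul y^k\in\maxim^N\ul R+\ul z^k\ul R$; letting $N\to\infty$ and reducing modulo $\inf{\ul R}$, and using that $\ul z^k\ulsep R$ is a closed ideal in the Noetherian ring $\ulsep R$, gives $\bar x\bar y^k\in\bar z^k\ulsep R$ for every $k\geq1$. Now $\bar x\neq0$ since $\ord{}{\ul x}<\infty$; and since $\inf{\ul R}\subseteq\maxim^{b+1}\ul R$, the bound on $\ord{\ul R/\ul z\ul R}{\ul y}$ descends to $\ulsep R$, so $\bar y\notin\bar z\ulsep R$. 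If $\bar z=0$ this is already absurd, as $\bar x\bar y^k=0$ and $\bar x\neq0$ force $\bar y=0$. If $\bar z\neq0$, then $\bar x(\bar y/\bar z)^k\in\ulsep R$ for all $k\geq0$, so $\ulsep R[\bar y/\bar z]\subseteq\bar x^{-1}\ulsep R$ is module-finite, $\bar y/\bar z$ is integral over $\ulsep R$, hence lies in $\ulsep R$ by normality, i.e.\ $\bar y\in\bar z\ulsep R$, a contradiction.

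\emph{Where the difficulty lies.} The only delicate point is the bookkeeping in the second direction: because one is bounding a \emph{minimum over $k$}, a failure of uniformity must be packaged as configurations that are bad for all $k\leq n$ at once, and then \los\ is applied with $k$ held standard, so that the ultraproduct genuinely witnesses $\bar x\bar y^k\in\bar z^k\ulsep R$ for every $k$. Noetherianity of $\ulsep R$ is what upgrades this ``almost integrality'' of $\bar y/\bar z$ to integrality, after which normality of $\ulsep R$ (Corollary~\ref{C:ulsepreg}) closes the argument; the degenerate case $\bar z=0$ and the reduction to the complete case are routine but must be addressed. As with Corollary~\ref{C:bp} and Theorem~\ref{T:ubunmix}, the same computation will also show that the cataproduct of Noetherian local rings of bounded embedding dimension satisfying \eqref{eq:norm} with a common numerical function is normal.
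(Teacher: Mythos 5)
Your proof is correct and follows essentially the same route as the paper's: the elementary direction via the case $z=0$ plus the observation that an infinite left-hand side forces $y\in zR$, and the converse by taking an ultraproduct of counterexamples, using closedness of $\bar z^k\ulsep R$ in the Noetherian catapower, and invoking normality of $\ulsep R$ via Corollary~\ref{C:ulsepreg} to turn the almost-integral element $\bar y/\bar z$ into an element of $\ulsep R$. The one caveat, which your argument shares with the paper's own proof, is that reducing to the complete case (equivalently, applying Corollary~\ref{C:ulsepreg}) tacitly uses that normality of $R$ passes to $\complet R$, i.e.\ that $R$ is analytically normal, which is automatic when $R$ is excellent.
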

\begin{proof}
Suppose $R$ is normal, but $\varepsilon_R$ cannot be defined for a pair
$(a,b)$. Hence,  for each $n$, there exist elements $\seq xn,\seq yn,\seq zn\in
R$ such that $\seq xn$ has order $a$ and $\seq yn$ has order $b$ modulo $\seq
znR$, but    $\ord{R/\seq zn^kR}{\seq xn\seq yn^k}\geq n$ for all $k$. Let $\ul
x,\ul y,\ul z\in \ul R$ be the respective ultraproducts of $\seq xn,\seq yn,\seq zn\in
R$. In particular, $\ul x$ is non-zero in the catapower $\ulsep R$   
and $\ul y\notin\ul z\ulsep R$. On the other hand, since
$\ul x\ul y^k\in\ul z^k\ulsep R$ for all $k$, a well-known criterion shows
that $\ul y$ lies in the integral closure of $\ul z\ulsep R$. Since $\ulsep R$ is normal by
Corollary~\ref{C:ulsepreg}, any principal ideal is integrally closed, so that
$\ul y\in\ul z\ulsep R$, contradiction.

Conversely, assume a numerical function $\varepsilon_R$ exists with the
proscribed
properties. Taking $z=0$ in \eqref{eq:norm}, we see that $R$ is a domain by
Theorem~\ref{T:domCh}. Suppose $y/z$ is an element in the field of fractions
of $R$ which is integral over $R$. We want to show that $y/z\in R$. Since $y$ is
then in the integral closure of $zR$, there exists a non-zero $x$ such that $xy^k\in z^kR$ for
all  $k$. The left hand side in \eqref{eq:norm} is therefore
infinite, whence so must the right hand side be, forcing $y\in zR$. 
\end{proof}

In our last two examples, we show how also tight closure conditions fit in
our present program of characterizing properties by
certain uniform behavior. We will adopt the usual tight closure notation of
writing $I^{[q]}$ as an abbreviation for the ideal $(w_1^q,\dots,w_n^q)R$,
where $I:=\rij wnR$ is some ideal and $q$
is some power of the  prime \ch\ $p$ of $R$. An element $y\in R$ lies in the 
\emph{tight closure}  $I^*$ of $I$, if there exists   $c\in R$ outside all
minimal prime ideals,
such that $cy^q\in I^{[q]}$ for all powers $q$ of $p$. We say that $R$ is
\emph{F-rational} if some parameter ideal is tightly closed, in which case
every parameter ideal is tightly closed (recall that a
\emph{parameter ideal} is an ideal generated by a system of parameters). On the
other hand, if every ideal is tightly closed, then we call $R$
\emph{weakly F-regular}.

\begin{theorem}\label{T:ubFrat}
An    excellent local ring $R$ of \ch\ $p$ is pseudo-rational \iff\
there exists a ternary numerical function $\varphi_R$ such that  for all
elements $x,y\in R$ and every (equivalently, some) parameter ideal  $I$, we have
an inequality
\begin{equation}
\label{eq:Frat}
\min_q\{\ord{R/I^{[q]}}{xy^q}\}\leq \varphi_R(\op{deg}(x),\ell(R/I),\ord{R/I}y)
\end{equation}
where $q$ runs over all powers of $p$.
\end{theorem}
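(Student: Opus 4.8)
The plan is to follow the pattern of Theorems~\ref{T:domCh}, \ref{T:ubunmix}, \ref{T:ubCM} and \ref{T:ubnorm}, using two standard inputs from the theory of F-singularities: first, that for an excellent local ring of \ch\ $p$ the notions \emph{pseudo-rational} and \emph{F-rational} coincide, and that an F-rational local ring is a Cohen--Macaulay normal domain in which some---hence every---parameter ideal is tightly closed (and conversely one tightly closed parameter ideal forces F-rationality); second, that F-rationality is preserved under a regular base change. Combined with Corollary~\ref{C:ulsepreg}, this last fact shows that the catapower $\ulsep R$ of an excellent pseudo-rational local ring $R$ is again F-rational, in particular a domain, of the same dimension as $R$.

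For the direct implication, assume $R$ is pseudo-rational and, towards a contradiction, that no $\varphi_R$ works for a triple $(a,b,c)$. Then for each $n$ there is a parameter ideal $\seq In\sub R$ with $\ell(R/\seq In)\leq b$ and elements $\seq xn,\seq yn\in R$ with $\op{deg}(\seq xn)\leq a$, $\ord{R/\seq In}{\seq yn}\leq c$, yet $\ord{R/\seq In^{[q]}}{\seq xn\seq yn^q}>n$ for every power $q$ of $p$. Let $\ul R$ be the ultrapower of $R$ and let $\ul x,\ul y,\ul I$ be the ultraproducts of the $\seq xn$, the $\seq yn$ and the $\seq In$. By Corollary~\ref{C:degfin} the element $\ul x$ is generic, and $\ord{\ul R/\ul I\ul R}{\ul y}\leq c$ by \los. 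For each \emph{fixed} $q$, \los\ gives $\ul x\,\ul y^q\in \ul I^{[q]}\ul R+\inf{\ul R}$, using that $q$ is a standard integer so that ultraproducts commute with the $q$-th power operation; since $\ul I\ul R\supseteq\ulmax^N\ul R$ for a suitable $N$ we have $\inf{\ul R}\sub\ul I\ul R$, so in $\ulsep R=\ul R/\inf{\ul R}$ the image of $\ul x\,\ul y^q$ lies in $(\ul I\ulsep R)^{[q]}$ for \emph{all} $q$. Now $\ul I\ulsep R$ is a parameter ideal of $\ulsep R$, and $\ul x$ maps to a non-zero---hence non-zerodivisor---element of the domain $\ulsep R$ by Corollary~\ref{C:tre}; therefore $\ul y\in(\ul I\ulsep R)^{*}=\ul I\ulsep R$ because $\ulsep R$ is F-rational. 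But $\ulsep R/\ul I\ulsep R\iso\ul R/\ul I\ul R$ is the ultraproduct of the Artinian rings $R/\seq In$, in which the image of $\ul y$ has order $\leq c$---a contradiction.

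For the converse, suppose a $\varphi_R$ exists for some parameter ideal $I$; it suffices to prove $I$ is tightly closed. Let $y\in I^{*}$, witnessed by some $c\in R$ avoiding all minimal primes of $R$ with $cy^q\in I^{[q]}$ for every power $q$ of $p$. The essential point is that $c$ is automatically generic: since $R$ is excellent it is catenary, and a threshold prime $\pr$ of $R$ satisfies $\dim R/\pr=\dim R$ by Theorem~\ref{T:pdim} (cf.\ \S\ref{s:tre}), so by catenarity $\pr$ cannot properly contain a minimal prime of $R$, whence $\pr$ is itself minimal; thus $c$ lies outside every threshold prime and is generic by Corollary~\ref{C:tre}, so $\op{deg}(c)<\infty$. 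Since $cy^q\in I^{[q]}$ gives $\ord{R/I^{[q]}}{cy^q}=\infty$ for every $q$, we get $\min_q\{\ord{R/I^{[q]}}{cy^q}\}=\infty\leq\varphi_R(\op{deg}(c),\ell(R/I),\ord{R/I}y)$, and as $\ell(R/I)<\infty$ this forces $\ord{R/I}y=\infty$, i.e.\ $y$ lies in the closure of $I$, which equals $I$ because $R$ is Noetherian. Hence $I^{*}=I$, so $R$ is F-rational, hence pseudo-rational. The main obstacle is the ascent input invoked in the direct implication---that F-rationality (equivalently, pseudo-rationality in characteristic $p$) passes from $R$ to its regular extension $\ulsep R$; the rest is a routine variation on the ultraproduct arguments already developed in this section, once one records the (standard but essential) facts that $R^{\circ}$ consists of generic elements for excellent $R$ and that $\ulsep R$ is a domain.
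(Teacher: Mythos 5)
Your proof is correct and follows essentially the same route as the paper's: Smith's equivalence of pseudo-rationality with F-rationality, the catapower $\ulsep R$ being F-rational because $R\to\ulsep R$ is regular (Corollary~\ref{C:ulsepreg}) and tight closure of parameter ideals persists under regular base change, genericity of the ultraproduct of the $\seq xn$ providing a tight-closure witness in the domain $\ulsep R$, and, in the converse, finiteness of $\op{deg}(c)$ for $c\in R^\circ$ forcing $\ord{R/I}y=\infty$, whence $y\in I$. Your additional bookkeeping (pushing $\ul x\,\ul y^q\in\ul I^{[q]}\ul R+\inf{\ul R}$ into the catapower via Corollary~\ref{C:clos}, and noting that threshold primes are minimal so that the witness is generic---where the appeal to catenarity is unnecessary, since $\dim R/\pr=\dim R$ already forces $\pr$ minimal) merely makes explicit what the paper leaves implicit.
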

\begin{proof}
We will use Smith's tight closure characterization     \cite{SmFrat} that $R$
is    pseudo-rational \iff\  it is F-rational. Assume first that
$R$ is pseudo-rational whence F-rational, but a numerical function $\varphi_R$
cannot be defined on the triple $(a,b,c)$. Hence there exist for each $n$,
elements $\seq xn,\seq yn\in R$ and a parameter ideal $\seq In$ in 
$R$ such that $\seq xn$ has degree $a$ and 
$R/\seq{I}n$ is an Artinian local ring of length $b$ in which $\seq
yn$ has order $c$, but  $\ord{R/ \seq{I}n^{[q]}}{xy^q}\geq n$ for all
powers $q$ of $p$. Let $\ul x,\ul y,\ul I$ be the respective
ultraproducts of the $\seq xn,\seq yn,\seq{I}n$ and let $\ulsep R$ be
the catapower of $R$. Let $J$ be a parameter ideal in $R$.
Hence $J\ulsep R$ is a parameter ideal in $\ulsep R$. Since
$R\to\ulsep R$ is regular   by
Corollary~\ref{C:ulsepreg} and since $J$ is tightly closed, so is 
$J\ulsep R$ by \cite[Theorem 131.2]{HuTC} or \cite{HHFreg}, showing
that $\ulsep R$ is   F-rational.

Since a pseudo-rational local ring is a domain, $\ul x$ is generic in $\ulsep
R$ and $\ul{I}\ulsep R$ is a   parameter ideal in $\ulsep R$. Moreover,
$\ul y\notin\ul{I}\ulsep R$, but
$\ul x\ul y^q\in \ul{I}^{[q]}\ulsep R$ for all $q$. By definition of
tight closure, $\ul y\in(\ul{I}\ulsep R)^*$. 
In particular, every parameter ideal,
including $\ul{I}\ulsep R$, is tightly closed and hence $\ul
y\in\ul{I}\ulsep R$, contradiction.

Conversely, assume $\varphi_R$ satisfies \eqref{eq:Frat} for some parameter ideal $I$. To verify that
$R$ is F-rational, let  
$y\in I^*$. Hence, for some $x\in R$ not in any minimal prime, $xy^q\in  I^{[q]}$ for all
$q$. The left hand side of  \eqref{eq:Frat} is
therefore infinite whence so is the right hand side. Since $x$ is generic, whence has finite degree, the third argument
must be infinite, that is to say,   $y\in I$. 
\end{proof}

\begin{theorem}\label{T:ubFreg}
A    Noetherian local ring $(R,\maxim)$ of \ch\ $p$ is weakly F-regular \iff\
there exists a ternary numerical function $\psi_R$ such that  for all
elements $x,y\in R$ and all $\maxim$-primary ideals $I$, we have
an inequality
\begin{equation}
\label{eq:Freg}
\min_q\{\ord{R/I^{[q]}}{xy^q}\}\leq \psi_R(\op{deg}(x),\ell(R/I),\ord{R/I}y)
\end{equation}
where $q$ runs over all powers of $p$.
\end{theorem}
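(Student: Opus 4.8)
The plan is to mirror the proof of Theorem~\ref{T:ubFrat}, replacing F\nobreakdash-rationality and parameter ideals by weak F\nobreakdash-regularity and $\maxim$\nobreakdash-primary ideals. For the ``if'' direction, first reduce to showing that every $\maxim$\nobreakdash-primary ideal of $R$ is tightly closed: if $y\in I^*$ for an arbitrary proper ideal $I$, then $I^{[q]}\sub(I+\maxim^n)^{[q]}$ gives $y\in(I+\maxim^n)^*$ for every $n$, each $I+\maxim^n$ is $\maxim$\nobreakdash-primary, and so if all of them are tightly closed then $y\in\bigcap_n(I+\maxim^n)=I$ by Krull's intersection theorem. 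Now let $I$ be $\maxim$\nobreakdash-primary and $y\in I^*$, witnessed by some $x$ outside every minimal prime with $xy^q\in I^{[q]}$ for all powers $q$ of $p$. Since the threshold primes of $R$ are among its minimal primes, $x$ avoids every threshold prime, hence is generic (Corollary~\ref{C:tre}), hence has finite degree (Corollary~\ref{C:degfin}); also $\ell(R/I)<\infty$. The left side of \eqref{eq:Freg} is $\infty$, and since $\psi_R$ is a numerical function it takes the value $\infty$ only when an argument is $\infty$; the first two arguments being finite forces $\ord{R/I}{y}=\infty$, i.e.\ $y\in I$ as $R/I$ is Artinian.

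For the converse, argue by contradiction in the usual way. If no $\psi_R$ works for $(a,b,c)$, obtain for each $w$ elements $\seq xw,\seq yw\in R$ and an $\maxim$\nobreakdash-primary ideal $\seq Iw$ with $\op{deg}(\seq xw)=a$, $\ell(R/\seq Iw)=b$, $\ord{R/\seq Iw}{\seq yw}=c$, yet $\ord{R/\seq Iw^{[q]}}{\seq xw\seq yw^q}\geq w$ for every power $q$ of $p$. Let $\ul R,\ulsep R$ be the ultrapower and catapower of $R$ and $\ul x,\ul y,\ul I$ the ultraproducts. Being an ultrapower of a single ring, $\ul R$ is isodimensional (Theorem~\ref{T:isodim}), so $\ul x$, having finite degree, is generic in $\ul R$ (Corollary~\ref{C:deggen}), hence its image is part of a system of parameters in $\ulsep R=\complet{\ul R}$ (Proposition~\ref{P:sop}, Lemma~\ref{L:ulcomp}). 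By Proposition~\ref{P:ullen}, $\ul I\ul R$ and all $\ul I^{[q]}\ul R$ are $\ulmax$\nobreakdash-primary, hence the corresponding quotients are Artinian, complete, and equal to their $\ulsep R$\nobreakdash-counterparts by Lemma~\ref{L:quot}. For each fixed $q$, \los\ gives $\ord{\ul R/\ul I^{[q]}\ul R}{\ul x\ul y^q}=\infty$, so $\ul x\ul y^q\in\ul I^{[q]}\ul R\sub\ul I^{[q]}\ulsep R$; likewise $\ord{\ulsep R/\ul I\ulsep R}{\ul y}=c<\infty$, so $\ul y\notin\ul I\ulsep R$. Granting that $\ulsep R$ is weakly F\nobreakdash-regular, it is a normal local ring, hence a domain, and since $\ul x$ is generic it lies outside the unique minimal prime; thus $\ul x\ul y^q\in\ul I^{[q]}\ulsep R$ for all $q$ forces $\ul y\in(\ul I\ulsep R)^*$, and tight\nobreakdash-closedness of $\ul I\ulsep R$ gives $\ul y\in\ul I\ulsep R$, a contradiction.

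The step I expect to require the most care is precisely this last one: \emph{the catapower of a weakly F\nobreakdash-regular Noetherian local ring is weakly F\nobreakdash-regular}. By Theorem~\ref{T:flatsep} the canonical map $R\to\ulsep R$ is faithfully flat and unramified, and when $R$ is excellent it is regular by Corollary~\ref{C:ulsepreg}, so the tight closure of each $\maxim$\nobreakdash-primary (indeed of every) ideal is preserved under this base change by the ascent results invoked in the proof of Theorem~\ref{T:ubFrat} (\cite[Theorem~131.2]{HuTC}, \cite{HHFreg}), whence $\ulsep R$ is weakly F\nobreakdash-regular. In the stated Noetherian generality one cannot quote Corollary~\ref{C:ulsepreg}, so the remaining work is to check directly that the formal/cataproduct fibres of $R\to\ulsep R$ are geometrically regular --- or to circumvent this via a uniform ``test exponent'' argument descending tight\nobreakdash-closedness of the $\seq Iw$ to $\ul I\ulsep R$ --- and this, together with the delicacy of tight closure under ultraproducts, is where the genuine difficulty lies.
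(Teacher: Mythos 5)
Your proposal follows the paper's own route essentially line for line: the paper likewise reduces weak F-regularity to tight-closedness of $\maxim$-primary ideals via Krull's intersection theorem, handles the ``if'' direction exactly as you do (the test element avoids all minimal primes, hence is generic of finite degree, so the finiteness of the first two arguments forces $\ord{R/I}y=\infty$, i.e.\ $y\in I$), and for the other direction runs the same ultrapower/catapower contradiction as in Theorem~\ref{T:ubFrat}. The single step you leave open---that the catapower $\ulsep R$ of a weakly F-regular Noetherian local ring is again weakly F-regular---is precisely the point the paper does not re-derive either: it asserts it by citing \cite[Theorem 7.3]{HHFreg} in conjunction with Corollary~\ref{C:ulsepreg} and then declares the remaining details identical to Theorem~\ref{T:ubFrat}. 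So no idea is missing from your write-up relative to the paper; the difference is only that you flag the ascent lemma as ``remaining work'' instead of closing it by that citation.

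Your reservation about excellence is well taken and in fact applies to the paper itself: Corollary~\ref{C:ulsepreg} is proved for excellent local rings, whereas Theorem~\ref{T:ubFreg} is stated for an arbitrary Noetherian local ring of characteristic $p$, and the paper offers no reduction to the excellent or complete case (one cannot simply replace $R$ by $\complet R$, since it is not clear that weak F-regularity passes to the completion in general, nor that the bound for $\complet R$ descends without knowing $\complet R$ is weakly F-regular). If you accept the same implicit hypothesis under which Theorem~\ref{T:ubFrat} is proved, then quoting \cite[Theorem 7.3]{HHFreg} together with Corollary~\ref{C:ulsepreg}, exactly as the paper does, completes your argument; in the stated generality, one of the repairs you sketch (direct verification that $R\to\ulsep R$ has geometrically regular fibres, or a uniform test-exponent descent) would genuinely be needed, and the paper provides no help there.
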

\begin{proof}
Note that for $R$ to be weakly F-regular, it suffices that every $\maxim$-primary ideal
is tightly closed, since by Krull's Intersection Theorem, any ideal is an intersection of $\maxim$-primary
ideals. Moreover, if $R$ is weakly F-regular, then so is its catapower
$\ulsep R$ by \cite[Theorem 7.3]{HHFreg} in conjunction with
Corollary~\ref{C:ulsepreg}. In view of these facts, the proof is now almost
identical to the one for Theorem~\ref{T:ubFrat}; details are left to the reader.
\end{proof}

\subsection{Epilogue: characterization of regularity}\label{R:reg}
Let me make a few further observations, although they do no longer relate to
our proof method. If $R$ is regular, then in fact
$\ord{}{xy}=\ord{}x+\ord{}y$. However, the latter condition does not
characterize regularity, but   only the strictly weaker condition that the 
associated graded ring $\gr{} R$ is a domain. The following condition, however,
does characterize regularity: \emph{a Noetherian local ring $R$ is regular \iff\
$\ord{}x=\op{deg}(x)$ for all $x\in R$}. Indeed, if $R$ is regular and
$\ord{}x=a$, then by judiciously choosing  a regular system of parameters
$\rij xd$, we can ensure that $x$ still has order $a$ in $V:=R/\rij x{d-1}R$.
Since $V$ is a \DVR\ with
uniformizing parameter $x_d$, one checks that $\ell(V/xV)=a$. Since
$\op{deg}(x)\leq \ell(R/(x,x_1,\dots,x_{d-1})R)=a$, we get
$\op{deg}(x)\leq\ord{}x$. The other inequality follows from our discussion in
\S\ref{s:orddeg}. 

Conversely, if order and degree agree, then in particular there exists an
element of degree one, and hence a system of parameters $\tuple x$ such that
$R/\tuple xR$ has length one, whence is a field, showing that $\tuple x$ is a regular system of 
parameters.\qed

\section{Asymptotic homological conjectures in mixed \ch}\label{s:mix}

In \cite{SchMixBCMCR,SchMixBCM}, we derived asymptotic versions of the 
homological conjectures for local rings of mixed \ch\ $p$, where by \emph{asymptotic},
we mean that the residual \ch\ $p$ must be large with respect to the
complexity of the data. In the above papers,   complexity was
primarily given in terms of the degrees of the  polynomials
defining the data. In this paper, we phrase complexity   in terms of (natural) invariants of the ring and the data only.

\subsection*{Improved New Intersection Theorem}

To not have to repeat each time the conditions from this theorem, we make the
following definition: given a finite complex $F_\bullet$  of
finitely generated free $R$-modules, a \emph{finite free complex}, for short, we say that its \emph{rank} is at most
$r$, if all $F_i$ have rank at most $r$; and we say that its
\emph{\INIT} is at most $l$, if  each $H_i(F_\bullet)$, for $i>0$, has
length at most $l$, and 
$H_0(F_\bullet)$ has a   minimal generator generating a
submodule of length at most $l$. Recall that the \emph{length} of $F_\bullet$
is the largest $n$ such that $F_n\neq0$.

\begin{theorem}[Asymptotic Improved New Intersection Theorem] 
For 
each triple of non-negative integers $(m,r,l)$, there
exists a bound $\kappa(m,r,l)$ with the following property. Let $R$ be 
a Noetherian local ring of mixed \ch\ $p$ and of embedding dimension at most $m$. 
If $F_\bullet$ is a finite free complex of rank at most $r$ and \INIT\ at
most $l$, then its length   is at 
least the dimension of $R$, provided  $p\geq \kappa(m,r,l)$.
\end{theorem}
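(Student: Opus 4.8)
The plan is to argue by contradiction via the ultraproduct/cataproduct machinery developed in the paper, reducing the mixed-characteristic statement to the equicharacteristic Improved New Intersection Theorem already available in the form of Corollary~\ref{C:INIT} (or rather its consequence Theorem~\ref{T:INIT}) for local rings of finite embedding dimension whose completion is equicharacteristic. Suppose the bound $\kappa(m,r,l)$ does not exist for some fixed triple $(m,r,l)$. Then for each $w$ we can find a mixed characteristic Noetherian local ring $(\seq Rw,\seq\maxim w)$ of embedding dimension at most $m$, with residual characteristic $\seq pw\geq w$, and a finite free complex $\seq{F_\bullet}w$ over $\seq Rw$ of rank at most $r$ and $\INIT$ at most $l$, whose length $s$ is strictly less than $\dim\seq Rw$. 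By passing to a subsequence (i.e.\ choosing the ultrafilter appropriately) we may assume $s$ and the ranks of all the modules are constant, and that all the $\seq Rw$ have a common dimension $d$, so $s<d$.

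Next I would form the ultraproduct $\ul R$ of the $\seq Rw$ and the ultraproduct $\ul F_\bullet$ of the complexes $\seq{F_\bullet}w$ (a finite free complex over $\ul R$ of the same length $s$ and with the same ranks). Since the $\seq pw$ grow unboundedly, $p$ has infinite order in $\ul R$, i.e.\ $\ul R$ is infinitely ramified in the sense of \S\ref{s:ram}; hence its completion, which is the cataproduct $\ulsep R$ by Lemma~\ref{L:ulcomp}, is equicharacteristic. By Theorem~\ref{T:uldim} (or directly Theorem~\ref{T:isodim}) the $\pd{}$ of $\ul R$ equals its $\cad{}$, the dimension of $\ulsep R$; I need this common value to be $d$, which follows once I know the $\seq Rw$ may be taken with common dimension \emph{and} common parameter degree --- this is arranged by an additional application of the pigeonhole/ultrafilter argument, bounding the parameter degree of the $\seq Rw$ in terms of $m$ and $l$ (using that a finite free complex of bounded rank and bounded $\INIT$ over a ring of embedding dimension at most $m$ whose length is less than the dimension forces, by Corollary~\ref{C:INIT}\eqref{i:EG}, a uniform bound; alternatively one simply restricts to those $w$ where the parameter degree takes its "ultrafilter value", which is finite since embedding dimension is bounded). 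With isodimensionality in hand, $\pd{\ul R}=d>s$.

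Now I would transfer the hypotheses on the complexes through \los. The Koszul/Fitting-ideal conditions defining "$F_\bullet$ is acyclic (or exact) after localizing at every closed prime different from the maximal ideal'' are, via Proposition~\ref{P:ulscheme} and Corollary~\ref{C:spec}, compatible with ultraproducts: a minimal generator of $H_0(\seq{F_\bullet}w)$ generating a submodule of length at most $l$ gives, by Proposition~\ref{P:ullen}, a minimal generator of $H_0(\ul F_\bullet)$ whose annihilator has $\cheight$ equal to $\pd{\ul R}$ (length being finite), i.e.\ the annihilator is $\ulmax$-primary; and each $H_i(\ul F_\bullet)$ for $i>0$ has length at most $l$, so after inverting a single generic element (or, more precisely, after passing to the appropriate localizations at closed primes) the higher homology vanishes, exactly as in the hypotheses of Theorem~\ref{T:INIT}. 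Applying Theorem~\ref{T:INIT} (legitimate because $\ul R$ is infinitely ramified, so the hypothesis "equicharacteristic or infinitely ramified'' is met), we conclude $\cheight{\ann{\ul R}\mu}\leq s$ for the chosen minimal generator $\mu$; but that annihilator is $\ulmax$-primary, so $\cheight{\ann{\ul R}\mu}=\pd{\ul R}=d$, giving $d\leq s$, contradicting $s<d$. This contradiction establishes the bound.

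The main obstacle I anticipate is the bookkeeping in the second and third steps: ensuring that after the initial pigeonhole reduction the $\seq Rw$ genuinely have a common dimension \emph{and} bounded (hence common) parameter degree, so that $\ul R$ is isodimensional and $\pd{\ul R}=d$; and, relatedly, checking that the precise $\INIT$-type hypotheses (which involve the behavior of $F_\bullet$ after localizing at \emph{all} closed primes, not just the maximal one) survive the passage to the ultraproduct. The first point is handled by the finite-embedding-dimension bound on parameter degree together with Theorem~\ref{T:isodim}; the second requires care with Proposition~\ref{P:ulscheme} identifying $\conspec{\ul R}$ with the ultraproduct of the $\spec{\seq Rw}$, so that "acyclic/exact at every closed prime $\neq\ulmax$'' for $\ul F_\bullet$ follows from the corresponding statement holding for almost all $\seq{F_\bullet}w$ over $\seq Rw$. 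Everything else is a routine application of \los\ and the results of \S\S\ref{s:finemb}--\ref{s:INIT}.
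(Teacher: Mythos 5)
Your strategy is essentially the paper's: assume counterexamples with unbounded residual characteristic, take the ultraproduct $\ul R$ and the ultraproduct complex $\ul{F_\bullet}$, transfer rank, length and the \INIT-data by \los\ and Proposition~\ref{P:ullen}, and invoke the finite-embedding-dimension Improved New Intersection statement of \S\ref{s:INIT} (Corollary~\ref{C:INIT}\eqref{i:EG}, which rests on Theorem~\ref{T:INIT}) to get a contradiction with the dimension inequality. The one place where your write-up goes astray is the detour through isodimensionality. Your claim that the parameter degrees of the $\seq Rw$ can be bounded in terms of $m$ and $l$ is unjustified: Corollary~\ref{C:INIT} says nothing about parameter degree, and the ``ultrafilter value'' of the parameter degrees is finite only if they are almost all equal to a fixed value, which bounded embedding dimension does not guarantee (the example following Theorem~\ref{T:isodim} has embedding dimension two and parameter degree $w$). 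Fortunately this step is superfluous. Theorem~\ref{T:uldim} gives $\ud{\ul R}\leq\pd{\ul R}$ unconditionally, so $\pd{\ul R}\geq d>s$; since $H_i(\ul{F_\bullet})$ has finite length for $i>0$, the complex is acyclic at every closed prime different from $\ulmax$, and the annihilator of the transferred minimal generator $\ul\mu$ is $\ulmax$-primary, so Corollary~\ref{C:INIT}\eqref{i:EG} yields $s\geq\pd{\ul R}\geq d$, the desired contradiction --- no equality $\pd{\ul R}=d$, and hence no control of parameter degree, is needed. This is exactly how the paper argues; deleting your isodimensionality paragraph repairs the proof. Likewise, the elaborate transfer of ``acyclicity at all closed primes'' through $\conspec{\ul R}$ is unnecessary: finite length of the higher homology already gives it.

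A minor terminological slip: when the residual characteristics $\seq pw$ grow without bound, $\ul R$ is not ``infinitely ramified'' in the sense of \S\ref{s:ram} (that notion presupposes a residue field of a fixed positive characteristic $p$, and is the relevant mechanism for the ramification-index version, Theorem~\ref{T:asymhc}); here every prime integer becomes a unit in $\ul R$, so $\ul R$ is equicharacteristic of characteristic zero, and the hypothesis of Theorem~\ref{T:INIT} and Corollary~\ref{C:INIT} is met through the equicharacteristic clause. The conclusion you draw is the same, so this does not affect the argument.
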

\begin{proof} 
Since the dimension of $R$ is at most $m$, there is 
nothing to show for complexes of length $m$ or higher.
Suppose the result is false for some triple $(m,r,l)$. This means 
that for infinitely many distinct prime numbers $\seq pw$, we can find a $\seq
dw$-dimensional Noetherian local ring $(\seq Rw,\seq\maxim 
w)$ of mixed \ch\ $\seq pw$ and  
embedding dimension  at most $m$, and we can find a finite free  complex 
$\seq{F_\bullet}w$ of rank at most $r$, of length $\seq sw\leq m$, and of \INIT\
at most $l$, such
that  $\seq sw<\seq dw$. Choose a non-principal ultrafilter   and let $(\ul R,\ulmax)$ 
be the ultraproduct of the $(\seq Rw,\seq\maxim w)$.  Since $\seq sw<\seq dw\leq m$, their respective ultraproducts satisfy    $s<d\leq m$.
By Theorem~\ref{T:uldim}, the
\pdim\ of $\ul R$ is at least $d$.    Let 
$\ul {F_\bullet}$ be the ultraproduct of the complexes
$\seq{F_\bullet}w$. Since the ranks are at most $r$, each module in 
$\ul {F_\bullet}$ is a free $\ul R$-module of rank at most
$r$. Since ultraproducts commute with homology, and preserve 
uniformly bounded length by Proposition~\ref{P:ullen},
the higher homology  groups $H_i(\ul {F_\bullet})$ have finite length (at most
$l$). Furthermore, by assumption, we can find a minimal generator $\seq\mu w$ of
$H_0(\seq{F_\bullet}w)$ generating a submodule of length at most $l$. Hence the
ultraproduct $\ul\mu$ of the $\seq\mu w$ is by \los\ a minimal generator of $H_0(\ul
{F_\bullet})$, generating a submodule of length at most $l$. In conclusion, $\ul{F_\bullet}$ has \INIT\ at most $l$.  
In particular,
$\ul {F_\bullet}$ is acyclic when localized at a non-maximal prime ideal, 
and hence \eqref{i:EG} from Corollary~\ref{C:INIT}
applies, yielding that $s\geq \pd {\ul R}\geq d$,   contradiction.
\end{proof}

We can even give an asymptotic version of Theorem~\ref{T:INIT}, 
albeit in terms of some less natural bounds.

\begin{theorem}\label{T:INITmix} 
For each triple of non-negative 
integers $(m,r,l)$, there exists a bound
$\sigma(m,r,l)$ with the following property. Let $(R,\maxim)$ be a 
Noetherian local ring of mixed \ch\ $p$ and of embedding dimension at most $m$, and let
$F_\bullet$ be a finite free complex   of rank at most $r$. Let  $M$ be the cokernel of $F_\bullet$, and let $\mu$ be a
non-zero minimal generator of $M$. Assume
  each 
$R/I_k(F_\bullet)$ has  dimension at most
$\op{dim}R-k$ and parameter degree at most $l$, for $k\geq 1$, 
and $R/\ann R\mu$ has parameter degree at most $l$.

If  $p\geq \sigma(m,r,l)$, then the length of the complex $F_\bullet$ is at
least  the codimension of $\ann R\mu$.
\end{theorem}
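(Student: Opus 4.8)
The plan is to argue by contradiction through an ultraproduct, just as in the proof of the Asymptotic Improved New Intersection Theorem above, but now feeding the data into Theorem~\ref{T:INIT} (the \INIT\ for local rings of finite embedding dimension) instead of Corollary~\ref{C:INIT}. So suppose the bound $\sigma(m,r,l)$ fails to exist for some triple $(m,r,l)$: then for a strictly increasing sequence of primes $\seq pw$ we can find, for each $w$, a Noetherian local ring $(\seq Rw,\seq\maxim w)$ of mixed \ch\ $\seq pw$ and embedding dimension at most $m$, a finite free complex $\seq{F_\bullet}w$ of rank at most $r$ and length $\seq sw$ with cokernel $\seq Mw$ and a non-zero minimal generator $\seq\mu w$, such that each $\seq Rw/I_k(\seq{F_\bullet}w)$ has dimension at most $\dim\seq Rw-k$ and parameter degree at most $l$ (for $k\geq 1$) and $\seq Rw/\ann{\seq Rw}{\seq\mu w}$ has parameter degree at most $l$, yet $\seq sw$ is strictly less than the codimension of $\ann{\seq Rw}{\seq\mu w}$. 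Since that codimension is at most $\dim\seq Rw\leq m$, the $\seq sw$ are bounded; fixing a non-principal ultrafilter and passing to the ultraproduct $(\ul R,\ulmax)$ of the $\seq Rw$, almost all $\seq sw$ equal a common value $s<m$, while $\ul R$ has embedding dimension at most $m$ and, because the $\seq pw$ are unbounded, is equi\ch. Taking ultraproducts $\ul{F_\bullet}$, $\ul M$, $\ul\mu$ of the $\seq{F_\bullet}w$, $\seq Mw$, $\seq\mu w$ and using that ultraproducts commute with cokernels and turn families of free modules of bounded rank into free modules (Proposition~\ref{P:ullen}), $\ul{F_\bullet}$ is a finite free complex over $\ul R$ of length $s$ with cokernel $\ul M$, and by \los\ $\ul\mu$ is a non-zero minimal generator of $\ul M$.

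The next step is to verify that the hypothesis of Theorem~\ref{T:INIT} holds for $\ul R$, $\ul{F_\bullet}$, $\ul\mu$, namely $\cht{I_n(\ul{F_\bullet})}\geq n$ for $n=1,\dots,s$. Since Fitting ideals are generated by minors of the matrices of the differentials, \los\ gives that $I_n(\ul{F_\bullet})$ is the ultraproduct of the $I_n(\seq{F_\bullet}w)$; likewise $\ann{\ul R}{\ul\mu}$ is the ultraproduct of the $\ann{\seq Rw}{\seq\mu w}$. The key auxiliary fact — essentially the implication $\eqref{i:pardeg}\Rightarrow\eqref{i:dimw}$ from the proof of Theorem~\ref{T:isodim} — is that a family of Noetherian local rings of common dimension $e$ and parameter degree at most $l$ has ultraproduct of \pdim\ at most $e$: in each ring pick a system of parameters of length $e$ whose quotient has length at most $l$, arrange these lengths to be constant (they lie in $\{1,\dots,l\}$), take the ultraproduct of the sequences, and apply Proposition~\ref{P:ullen} to see the corresponding quotient of the ultraproduct still has finite length, hence is Artinian. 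Applying this to the $\seq Rw/I_n(\seq{F_\bullet}w)$, whose common dimension is some $h_n\leq\ud{\ul R}-n$, yields $\pd{\ul R/I_n(\ul{F_\bullet})}\leq\ud{\ul R}-n$; applying it to the $\seq Rw/\ann{\seq Rw}{\seq\mu w}$, of common dimension $e_0$, yields $\pd{\ul R/\ann{\ul R}{\ul\mu}}\leq e_0$. Using $\ud{\ul R}\leq\pd{\ul R}$ from Theorem~\ref{T:uldim} and $\cht J=\pd{\ul R}-\pd{\ul R/J}$ from Proposition~\ref{P:ght}, we get
\[
\cht{I_n(\ul{F_\bullet})}=\pd{\ul R}-\pd{\ul R/I_n(\ul{F_\bullet})}\geq \ud{\ul R}-(\ud{\ul R}-n)=n ,
\]
so Theorem~\ref{T:INIT} applies.

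Theorem~\ref{T:INIT} then gives $\cht{\ann{\ul R}{\ul\mu}}\leq s$, i.e.\ $\pd{\ul R}-\pd{\ul R/\ann{\ul R}{\ul\mu}}\leq s$. Combining with $\pd{\ul R}\geq\ud{\ul R}$ and $\pd{\ul R/\ann{\ul R}{\ul\mu}}\leq e_0$ we obtain $s\geq\ud{\ul R}-e_0$. But $\ud{\ul R}$ and $e_0$ are the common dimensions of almost all $\seq Rw$ and of almost all $\seq Rw/\ann{\seq Rw}{\seq\mu w}$, so $\ud{\ul R}-e_0$ is exactly the codimension of $\ann{\seq Rw}{\seq\mu w}$ for almost all $w$. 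Hence $\seq sw=s\geq\op{codim}(\ann{\seq Rw}{\seq\mu w})$ for almost all $w$, contradicting the choice of the counterexamples.

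The main obstacle — and the reason this theorem is stated with the somewhat unnatural parameter-degree hypotheses rather than a clean length bound as in the Improved New Intersection Theorem — is exactly the auxiliary fact in the second paragraph: without a uniform bound on the parameter degrees of the rings $\seq Rw/I_k(\seq{F_\bullet}w)$ and $\seq Rw/\ann{\seq Rw}{\seq\mu w}$ one cannot control the \pdim\ (or even the finiteness of embedding dimension) of the relevant ultraproducts, and then $\cht{I_n(\ul{F_\bullet})}$ might drop below $n$ and Theorem~\ref{T:INIT} would no longer be available. The delicate part of the write-up is therefore the extraction of uniform-length systems of parameters with uniformly bounded-length quotients and the transfer of those lengths through Proposition~\ref{P:ullen}; everything else is bookkeeping with \los\ and the dimension inequalities of Theorem~\ref{T:uldim}.
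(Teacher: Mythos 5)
Your proposal is correct and follows essentially the same route as the paper: assume a family of counterexamples, pass to the ultraproduct (equi\ch\ since the primes grow), use \los\ and Proposition~\ref{P:ullen} together with the parameter-degree bounds to control the \pdim{s} of $\ul R/I_k(\ul{F_\bullet})$ and $\ul R/\ann{\ul R}{\ul\mu}$, and then contradict Theorem~\ref{T:INIT} via Proposition~\ref{P:ght} and Theorem~\ref{T:uldim}. The only cosmetic difference is that you re-derive the implication ``bounded parameter degree $\Rightarrow$ \pdim\ equals ultra-dimension'' directly, whereas the paper simply invokes Theorem~\ref{T:isodim}.
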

\begin{proof} 
Suppose the result is false for some triple 
$(m,r,l)$. This means that for  infinitely many distinct prime numbers
$\seq pw$,
we can find a $\seq dw$-dimensional mixed \ch\ Noetherian local ring 
$(\seq Rw,\seq\maxim w)$ whose residue field has
\ch\ $\seq pw$ and whose embedding dimension is at most $m$, and we can 
find a  finite free complex $\seq {F_\bullet}w$ of length $\seq sw$
and of rank at 
most $r$, and a non-zero minimal generator $\seq\mu w$ of its cokernel $\seq Mw$ such that
$\seq Rw/I_k(\seq {F_\bullet}w)$
has dimension at most $\seq dw-k$ and parameter degree at most $l$, for all 
$k=\range 1{\seq sw}$, and such that $\seq Rw/\ann{\seq Rw}{\seq\mu w}$ has
parameter
degree at most $l$, but dimension strictly less than $\seq dw-\seq sw$.  Choose a
non-principal ultrafilter   and let $(\ul R,\ulmax)$ be the 
ultraproduct of the $(\seq Rw,\seq\maxim w)$.  Since
$\seq sw\leq \seq dw\leq m$, their respective ultraproducts satisfy $s\leq d\leq
m$. 
By Theorem~\ref{T:uldim}, the
\pdim\ of $\ul R$ is  at
least $d$. Let $\ul {F_\bullet}$ and $\mu$ be the 
ultraproduct of the complexes $\seq {F_\bullet}w$ and  the minimal generators
$\seq\mu w$ respectively.
Since the ranks are at most $r$, each module in $\ul {F_\bullet}$ will be a free 
$\ul R$-module of rank at most $r$. By
Theorem~\ref{T:isodim}, the \pdim\ of $\ul R/I_k(\ul {F_\bullet})$ is at 
most $d-k$, for all $k=\range 1s$. Also by \los,
$\mu$ is a minimal generator of the 
cokernel of $\ul {F_\bullet}$ and $\ul R/\ann{\ul R}\mu$, being the ultraproduct
of the $\seq Rw/\ann{\seq Rw}{\seq\mu w}$, 
has
\pdim\
strictly less than $d-s$ by Theorem~\ref{T:isodim}. 
However, this is in contradiction with Theorem~\ref{T:INIT}, which yields that $\ul
R/\ann{\ul R}\mu$ has \pdim\ at least $d-s$.
\end{proof}

Using the same techniques, we can deduce from Theorem~\ref{T:CE} the following
asymptotic version (details are left to the reader).

\begin{theorem}[Asymptotic Canonical Element Theorem] 
For 
each triple of non-negative integers $(m,r,l)$, there
exists a bound $\rho(m,r,l)$ with the following property. Let $R$ be 
a $d$-dimensional Noetherian local ring of mixed \ch\ $p$ and embedding dimension at most $m$, and let
$F_\bullet$ be a   free resolution of the residue field $k$ of 
$R$, of rank at most $r$.  

If $\tuple x$ is a system of parameters in $R$ such that $R/\tuple xR$ has
length at most $l$ and if the morphism of complexes $\gamma\colon
K_\bullet(\tuple
x)\to F_\bullet$  extends the natural \homo\ $R/\tuple xR\to k$,
then $\gamma_d\neq 0$,   provided $p\geq \rho(m,r,l)$.\qed
\end{theorem}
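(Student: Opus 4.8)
The plan is to run the same ultraproduct argument as for the preceding asymptotic homological theorems, now extracting the conclusion from the Canonical Element Theorem (Theorem~\ref{T:CE}). First I would argue by contradiction: if the assertion fails for some triple $(m,r,l)$, then I can choose an infinite set $W$ of primes and, for each $w\in W$, a Noetherian local ring $(\seq Rw,\seq\maxim w)$ of mixed \ch\ $w$, of dimension $\seq dw$ and embedding dimension at most $m$, a free resolution $\seq{F_\bullet}w$ of the residue field $\seq kw$ of $\seq Rw$ by finite free modules of rank at most $r$, a system of parameters $\seq{\tuple x}w$ with $\ell(\seq Rw/\seq{\tuple x}w\seq Rw)\leq l$, and a morphism of complexes $\seq\gamma w\colon K_\bullet(\seq{\tuple x}w)\to\seq{F_\bullet}w$ extending the natural surjection onto $\seq kw$, whose top-degree component $(\seq\gamma w)_{\seq dw}$ vanishes. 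I would then fix a non-principal ultrafilter on $W$ and form the ultraproduct $(\ul R,\ulmax)$ of the $(\seq Rw,\seq\maxim w)$, with residue field $\ul k$. Because the index primes are pairwise distinct they are unbounded, so $\ul k$ has \ch\ zero; since each $\seq Rw$ has \ch\ zero so does $\ul R$, and hence $\ul R$ is equi\ch\ (it contains $\mathbb Q$). By \los, $\ul R$ has embedding dimension at most $m$, so it is an ultra-Noetherian local ring of finite embedding dimension.

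Next I would pin down the invariants. Since $\seq dw\leq m$, almost all $\seq dw$ equal a common value $d=\ud{\ul R}$, and almost all $\seq{\tuple x}w$ have length $d$; let $\tuple{\ul x}$ be their ultraproduct. By \los\ and Proposition~\ref{P:ullen}, $\ell(\ul R/\tuple{\ul x}\ul R)\leq l$ is finite, so $\tuple{\ul x}$ generates an $\ulmax$-primary ideal, whence $\pd{\ul R}\leq d$ by Lemma~\ref{L:mingen}; combined with $\ud{\ul R}\leq\pd{\ul R}$ from Theorem~\ref{T:uldim} this forces $\pd{\ul R}=d$, so that $\tuple{\ul x}$, being an $\ulmax$-primary generating sequence of length $\pd{\ul R}$, is a generic sequence of $\ul R$ (alternatively, the parameter degrees $\pardeg{\seq Rw}\leq l$ take only finitely many values, so $\ul R$ is isodimensional by Theorem~\ref{T:isodim}). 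Forming the ultraproduct $\ul{F_\bullet}$ of the $\seq{F_\bullet}w$, I would observe that by \los\ each of its terms is free of rank at most $r$, and that since the ultraproduct functor is exact it commutes with homology, so $H_i(\ul{F_\bullet})=0$ for $i>0$ and $H_0(\ul{F_\bullet})=\ul k$; thus $\ul{F_\bullet}$ is a (possibly infinite) free resolution of $\ul k$, and the ultraproduct $\ul\gamma$ of the $\seq\gamma w$ is a morphism of complexes $K_\bullet(\tuple{\ul x})\to\ul{F_\bullet}$ extending the natural surjection $\ul R/\tuple{\ul x}\ul R\to\ul k$.

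Finally I would invoke Theorem~\ref{T:CE} for the equi\ch\ ring $\ul R$: it gives $(\ul\gamma)_d\neq 0$ with $d=\pd{\ul R}$, whereas $(\ul\gamma)_d$ is the ultraproduct of the vanishing maps $(\seq\gamma w)_{\seq dw}$ and so is zero by \los, a contradiction; the bound $\rho(m,r,l)$ is then one more than the largest residual \ch\ occurring in a counterexample over a ring of embedding dimension at most $m$. I expect the main point demanding care to be the passage to $\ul{F_\bullet}$: although each $\seq{F_\bullet}w$ may be an infinite resolution, the uniform rank bound $r$ keeps every term of $\ul{F_\bullet}$ a finite free $\ul R$-module and exactness of the ultraproduct makes $\ul{F_\bullet}$ a genuine resolution of $\ul k$, so that Theorem~\ref{T:CE} applies verbatim; the only other thing to watch is that $\ul R$ is equi\ch, which is exactly where the largeness of the residual characteristic enters.
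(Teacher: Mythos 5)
Your proposal is correct and follows exactly the route the paper intends: the paper explicitly leaves the proof to the reader as an application of "the same techniques" used for the other asymptotic theorems, namely taking an ultraproduct of putative counterexamples with unbounded residual characteristic, noting the resulting ultra-Noetherian ring is equicharacteristic and isodimensional (so the ultraproduct of the parameter systems is generic and the degrees match), and contradicting Theorem~\ref{T:CE}. Your attention to the rank bound keeping each term of $\ul{F_\bullet}$ finite free and to exactness of ultraproducts giving acyclicity is exactly the right care to take.
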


\begin{remark}
Perhaps it is not entirely justified to call this theorem a `canonical element
theorem', since it does not necessarily produce a canonical element in local 
cohomology like it does in the
equi\ch\ case. This is due to the fact that we can not apply the theorem to the
various `powers' of a system of parameters as in the discussion in
\cite[p.~346-347]{BH}  without having to raise the bound $\rho(n,r,l)$. In particular, the
above
result does not imply an asymptotic version of the Direct Summand conjecture.
\end{remark}

\subsection*{Ramification}
Instead of requiring that the residual \ch\ is large in the above asymptotic
results, we can also require the ramification to be large, as we will now
explain.  For the proofs, we only need to apply the corresponding versions in 
\S\ref{s:INIT} for infinitely ramified local rings of finite embedding
dimension. The main observation is the following
immediate corollary of \los:

\begin{lemma}\label{L:infram}
Let $\seq Rw$ be Noetherian local rings of    mixed   \ch\ $p$ and
 embedding dimension $m$. If for each $n$, almost all $\seq
Rw$ have ramification index at least $n$, then
their ultraproduct $\ul R$  is infinitely ramified and hence their cataproduct
$\ulsep R$ has equal \ch\ $p$.\qed
\end{lemma}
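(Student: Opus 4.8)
The plan is to invoke \los\ twice: first to push the prime $p$ into every power of the maximal ideal of $\ul R$, and then to transfer the characteristic of the residue field to $\ulsep R$.

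First I would record the routine consequences of \los\ for $\ul R$ and for its cataproduct $\ulsep R=\ul R/\inf{\ul R}$. Since each $\seq Rw$ has embedding dimension $m$, so has $\ul R$, and $\ulsep R$, being the separated quotient of $\ul R$, has the same residue field $\ul k$ as $\ul R$. Since each $\seq Rw$ has \ch\ zero, no non-zero integer vanishes in almost all $\seq Rw$, hence none vanishes in $\ul R$, so $\ul R$ has \ch\ zero; and $\ul k$, being the ultraproduct of the residue fields $\seq kw$, has \ch\ $p$. Thus $\ul R$ has mixed \ch\ $p$.

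Next I would show that $\ul R$ is infinitely ramified, that is, that $p\in\inf{\ul R}$. Pick generators $\ul{x_1},\dots,\ul{x_m}$ of $\ulmax$ and elements $\seq{x_i}w\in\seq Rw$ whose ultraproducts are the $\ul{x_i}$; by \los, almost all $\seq\maxim w=(\seq{x_1}w,\dots,\seq{x_m}w)\seq Rw$. Since $\maxim^k$ is generated by the monomials of degree $k$ in any generating set of $\maxim$, \los\ shows that $\ulmax^k$ is the ultraproduct of the ideals $\seq\maxim w^k$, for every $k$. Now fix $k$. By hypothesis almost all $\seq Rw$ have ramification index at least $k$, that is $p\in\seq\maxim w^k$; applying \los\ (the image of $p$ in $\ul R$ being the ultraproduct of the constant sequence $p$) gives $p\in\ulmax^k$. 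As $k$ was arbitrary, $p\in\bigcap_k\ulmax^k=\inf{\ul R}$.

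Finally, by the discussion in \S\ref{s:ram}, the separated quotient of an infinitely ramified local ring is equi\ch; since that separated quotient is $\ulsep R$, which has residue field $\ul k$ of \ch\ $p$, we conclude that $\ulsep R$ has equal \ch\ $p$ (and it is Noetherian by Lemma~\ref{L:ulcomp}). There is essentially no obstacle here---the statement really is an immediate corollary of \los---so the only step demanding a little care is the identification of $\ulmax^k$ with the ultraproduct of the $\seq\maxim w^k$, which relies on the uniform finite bound $m$ on the number of generators of the $\seq\maxim w$; without such a bound one could not transfer membership in $\maxim^k$ through \los.
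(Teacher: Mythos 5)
Your proof is correct and is exactly the argument the paper has in mind: the lemma is stated there as an immediate corollary of \los\ with no written proof, and your verification that $\ulmax^k$ is the ultraproduct of the $\seq\maxim w^k$ (using the uniform bound $m$ on generators) followed by the observation from \S\ref{s:ram} on separated quotients of infinitely ramified rings is precisely the intended reasoning.
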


\begin{theorem}\label{T:asymhc}
For each triple of non-negative 
integers $(m,r,l)$, there exists a non-negative integer
$\kappa(m,r,l)$ with the following properties. Let $(R,\maxim)$ be a $d$-dimensional 
mixed \ch\  Noetherian local ring of embedding dimension at most $m$, and let
$F_\bullet$ be a finite
free complex of rank at most $r$. If the ramification index of $R$ is at least
$\kappa(m,r,l)$, then the following are true:
\begin{enumerate}
\item\label{i:asymhc} If $F_\bullet$ has \INIT\ at most $l$, then the length of
$F_\bullet$ is at 
least $d$.

\item If  each $R/I_k(F_\bullet)$ has 
dimension at most $d-k$ and parameter degree at most $l$, for $k\geq 1$, 
and if $\mu$ is a non-zero minimal
generator of the cokernel of $F_\bullet$ such that $R/\ann R\mu$ has parameter
degree at most $l$, then the length of $F_\bullet$ is at 
least  the codimension of $\ann R\mu$. 

\item If $F_\bullet$ is a   free resolution of  $R/\maxim$, if $\tuple x$ is a
system of parameters in $R$ such that $R/\tuple xR$ has
length at most $l$ and if the morphism of complexes $\gamma\colon
K_\bullet(\tuple x)\to F_\bullet$  extends the natural \homo\ $R/\tuple xR\to
R/\maxim$,
then $\gamma_d\neq 0$.
\end{enumerate}
\end{theorem}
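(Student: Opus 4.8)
The plan is to argue by contradiction through ultraproducts, following verbatim the proofs of the asymptotic theorems earlier in \S\ref{s:mix}, but replacing the hypothesis ``residual \ch\ large'' by ``ramification large''. The one new ingredient is Lemma~\ref{L:infram}: an ultraproduct of mixed \ch\ Noetherian local rings of bounded embedding dimension whose ramification indices grow without bound is infinitely ramified, hence falls under the scope of the homological results of \S\ref{s:INIT}, which were proved for equi\ch\ \emph{or infinitely ramified} local rings of finite embedding dimension.

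First I would fix a triple $(m,r,l)$ and suppose no common bound $\kappa(m,r,l)$ works for the three assertions; treating each assertion separately, it suffices to produce, for each $w$, a counterexample to one of \eqref{i:asymhc}, (2), (3) inside a Noetherian local ring $(\seq Rw,\seq\maxim w)$ of mixed \ch, embedding dimension at most $m$, and ramification index at least $w$. After passing to a sub-ultrafilter I may assume the residual characteristics $\seq pw$ are either all equal to a fixed prime $p$ or strictly increasing; in the first case Lemma~\ref{L:infram} shows the ultraproduct $\ul R$ of the $\seq Rw$ is infinitely ramified, and in the second case the ultraproduct of the residue fields has \ch\ $0$, so $\ul R$ is equi\ch; in either case $\ul R$ is a local ring of finite embedding dimension to which Corollary~\ref{C:INIT}, Theorem~\ref{T:INIT} and Theorem~\ref{T:CE} apply. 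Write $d$ for the ultra-dimension of $\ul R$ (so almost all $\seq Rw$ have dimension $d$); by Theorem~\ref{T:uldim}, $\pd{\ul R}\geq d$.

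For \eqref{i:asymhc} I form the ultraproduct complex $\ul{F_\bullet}$ of the $\seq{F_\bullet}w$: by \los\ it is a finite free complex of rank at most $r$ whose length $s$ is the common value $\seq sw<d$, and since ultraproducts commute with homology and preserve uniformly bounded length (Proposition~\ref{P:ullen}) it has \INIT\ at most $l$; thus $\ul{F_\bullet}$ is acyclic after localizing at any closed prime ideal distinct from $\ulmax$ and its cokernel has a minimal generator with $\ulmax$-primary annihilator, so Corollary~\ref{C:INIT}\eqref{i:EG} gives $\pd{\ul R}\leq s<d$, contradicting Theorem~\ref{T:uldim}. Assertion (2) is identical with Corollary~\ref{C:INIT} replaced by Theorem~\ref{T:INIT}: using \los, Theorem~\ref{T:isodim} and Proposition~\ref{P:ullen} one checks that $\ul R/I_k(\ul{F_\bullet})$ has \pdim\ at most $d-k$ while $\ul R/\ann{\ul R}\mu$ has \pdim\ strictly below $d-s$, whereas Theorem~\ref{T:INIT} forces $\cht{\ann{\ul R}\mu}\leq s$, i.e.\ $\pd{\ul R/\ann{\ul R}\mu}\geq d-s$. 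For assertion (3), since each $R/\tuple xR$ has length at most $l$, Proposition~\ref{P:ullen} and Proposition~\ref{P:sop} show the ultraproduct $\ul x$ of the $\seq{\tuple x}w$ is a generic sequence in $\ul R$, the ultraproduct $\ul{F_\bullet}$ is a free resolution of the residue field $\ul k$ of $\ul R$, and the ultraproduct $\ul\gamma$ of the $\seq\gamma w$ is a complex morphism $K_\bullet(\ul x)\to\ul{F_\bullet}$ extending $\ul R/\ul x\ul R\to\ul k$ with $\ul\gamma_d=0$ by \los, contradicting Theorem~\ref{T:CE}. I expect the main obstacle to be bookkeeping rather than a single hard step: one must verify that every hypothesis of the \S\ref{s:INIT} theorems --- in particular ``equi\ch\ or infinitely ramified'' and the various \pdim\ and \cheight\ statements --- is inherited by $\ul R$ and its derived data, which is where Lemma~\ref{L:infram}, Theorem~\ref{T:uldim}, Theorem~\ref{T:isodim} and Proposition~\ref{P:ullen} are all invoked, and to dispose cleanly of the case of unbounded residual characteristics, handled by the sub-ultrafilter reduction above.
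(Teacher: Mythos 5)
Your proposal is correct, and its engine is the same as the paper's: pick counterexamples whose ramification indices grow without bound, pass to the ultraproduct, use Lemma~\ref{L:infram} to see that the ``equicharacteristic or infinitely ramified'' hypothesis of the \S\ref{s:INIT} theorems is met, and then repeat the earlier asymptotic arguments verbatim (Corollary~\ref{C:INIT}, Theorem~\ref{T:INIT}, Theorem~\ref{T:CE}) to reach a contradiction. The one point where you organize things differently is the uniformity in the residual characteristic $p$: the paper first fixes $p$ (so Lemma~\ref{L:infram} applies directly and yields a $p$-dependent bound) and then removes the dependence on $p$ by citing the bounds $\kappa(m,r,l)$, $\sigma(m,r,l)$, $\rho(m,r,l)$ of the three preceding asymptotic theorems, which cover all sufficiently large residual characteristics irrespective of ramification, leaving only finitely many primes to handle; you instead run a single ultraproduct argument and split inside it according to whether one prime dominates (then Lemma~\ref{L:infram} gives infinite ramification) or the residual characteristics are unbounded along the ultrafilter (then every prime becomes a unit in $\ul R$, so $\ul R$ is equicharacteristic zero and the \S\ref{s:INIT} results apply exactly as in the large-$p$ theorems). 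Both routes are valid; yours is more self-contained, since it does not quote the earlier asymptotic statements as black boxes, while the paper's turns the present proof into a two-line addendum to results already established. A cosmetic remark: no ``sub-ultrafilter'' is needed---for any fixed nonprincipal ultrafilter exactly one of your two cases holds on an ultrafilter-large set, and in the second case the accurate phrasing is that the residual characteristics are unbounded along the ultrafilter rather than strictly increasing.
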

\begin{proof}
Suppose first that such a bound for a triple $(m,r,l)$ cannot be found in a
fixed residual \ch\
$p$. In other words, we can find mixed
\ch\ $p$
Noetherian local rings $\seq Rw$, whose  embedding dimension is at most $m$,
and whose ramification index is
at least $w$, satisfying the negation of one of the above properties. By
Lemma~\ref{L:infram}, their cataproduct is   equi\ch\ and
the
proof follows by the previous discussion; details are left to the reader. To
make this bound independent from $p$ as well, we use the corresponding bounds
from the previous theorems.
\end{proof}

\subsection*{Monomial Theorem} 
By the same process as above, we can derive some asymptotic version of the
Monomial Theorem from Corollary~\ref{C:mon}. Unfortunately, the bounds will
also depend on the monomials involved, and hence does not lead to an
asymptotic version of the Direct Summand conjecture.
More precisely, given
$\nu_0,\dots,\nu_s\in\nat^d$ with $\nu_0$ not a positive linear combination of
the $\nu_i$ and given $l,m$, there is a bound $N$ depending on these data,
such that for every mixed \ch\ $p$ Noetherian local ring $R$ of
embedding dimension
at most $m$ and dimension $d$, and for every system of parameters $\tuple
x:=\rij xd$
in $R$ such that $R/\tuple xR$ has length at most $l$, if either $p$ or the
ramification index of $R$ is at least $N$, then $\tuple
x^{\nu_0}$ does not belong to the ideal in $R$ generated by the $\tuple
x^{\nu_i}$.

In particular, for fixed $m$ and $l$, we get a bound $N_t$, for each $t\geq 1$,
such that \eqref{eq:mon} holds, whenever $\tuple x$ and  $R$ satisfy the
assumptions  from the previous
paragraph. To derive from this an asymptotic version of the Direct Summand
conjecture, we need to show that the $N_t$ can be chosen independently from
$t$. 
To derive this
conclusion, we would like to establish the following result. Let $(\ul R,\ulmax)$ be
an isodimensional ultra-Noetherian local ring, say the
ultraproduct of $d$-dimensional  Noetherian local rings $(\seq Rw,\seq\maxim w)$
of bounded embedding dimension and parameter degree. Let $H_\infty^d(\ul R)$ be the
ultraproduct of the local cohomology groups $H^d_{\seq\maxim w}(\seq Rw)$. There
is a natural
map $H^d_{\ulmax}(\ul R)\to H_\infty^d(\ul R)$. 

\begin{conjecture}
The canonical map $H^d_{\ulmax}(\ul R)\to H_\infty^d(\ul R)$ is injective.
\end{conjecture}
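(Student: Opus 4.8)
The plan is to compute both sides through the extended \Cech\ (stable Koszul) complex on one carefully chosen generic sequence, and then to reduce injectivity to an elementary, \emph{uniform} length estimate. First I would fix a generic sequence $\tuple x=(x_1,\dots,x_d)$ of $\ul R$ realising the parameter degree, i.e.\ with $\ell(\ul R/\tuple x\ul R)=\pardeg{\ul R}=:e$; such a sequence exists by the very definition of parameter degree. Writing each $x_i$ as the ultraproduct of elements $x_{i,w}\in\seq Rw$ and setting $\tuple x_w:=(x_{1,w},\dots,x_{d,w})$, isodimensionality together with Theorem~\ref{T:isodim} and Propositions~\ref{P:sop} and \ref{P:ullen} lets us assume that for almost all $w$ the tuple $\tuple x_w$ is a system of parameters of $\seq Rw$ with $\ell(\seq Rw/\tuple x_w\seq Rw)=e$. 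Since $\ulmax$ is finitely generated and $\tuple x\ul R$ is $\ulmax$-primary, the extended \Cech\ complex on $\tuple x$ computes $H^\bullet_{\ulmax}(\ul R)$ — this holds over an arbitrary ring once the ideal is finitely generated, and $\tuple x\ul R$ has radical $\ulmax$ — and its top cohomology is, as usual, the direct limit (with transition maps given by multiplication by $x_1\cdots x_d$)
\[
H^d_{\ulmax}(\ul R)\ \cong\ \varinjlim_n\ \ul R/\tuple x^{[n]}\ul R ,
\]
where $\tuple x^{[n]}\ul R$ denotes $(x_1^n,\dots,x_d^n)\ul R$; the analogous description holds for almost all $\seq Rw$. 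Moreover $\ul R/\tuple x^{[n]}\ul R$ is the ultraproduct of the $\seq Rw/\tuple x_w^{[n]}\seq Rw$, and under these identifications the canonical map $H^d_{\ulmax}(\ul R)\to H^d_\infty(\ul R)$ is the colimit over $n$ of the ultraproducts of the stage-$n$ structure maps $\seq Rw/\tuple x_w^{[n]}\seq Rw\to H^d_{\seq\maxim w}(\seq Rw)$.

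Now let $\xi\in H^d_{\ulmax}(\ul R)$ have trivial image in $H^d_\infty(\ul R)$. Represent $\xi$ at some finite stage $n_0$ by $\bar a$, with $a\in\ul R$ the ultraproduct of elements $\seq aw\in\seq Rw$. By the description of the canonical map, the hypothesis means that for almost all $w$ the class of $\seq aw$ in $\seq Rw/\tuple x_w^{[n_0]}\seq Rw$ becomes $0$ in $\varinjlim_n\seq Rw/\tuple x_w^{[n]}\seq Rw$; equivalently, $\seq aw$ lies in the increasing union $\bigcup_{j\ge 0}\mathfrak a_{w,j}$, where
\[
\mathfrak a_{w,j}:=\bigl(\tuple x_w^{[n_0+j]}\seq Rw\;:\;(x_{1,w}\cdots x_{d,w})^{j}\bigr).
\]
The crucial point is that this union is reached after a number of steps bounded only in terms of $d$, $e$ and $n_0$. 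Indeed, $\mathfrak a_{w,0}\subseteq\mathfrak a_{w,1}\subseteq\cdots$ is an increasing chain of ideals each containing $\tuple x_w^{[n_0]}\seq Rw$, hence a chain of submodules of $\seq Rw/\tuple x_w^{[n_0]}\seq Rw$, and the length of the latter is at most $n_0^d\,\ell(\seq Rw/\tuple x_w\seq Rw)=n_0^d e$, by the elementary bound
\[
\ell\bigl(R/(x_1^{a_1},\dots,x_d^{a_d})R\bigr)\ \le\ a_1\cdots a_d\cdot\ell\bigl(R/(x_1,\dots,x_d)R\bigr),
\]
valid for any commutative ring $R$ and any elements $x_1,\dots,x_d$ (a one-line filtration induction: filter $R/x^aR$ by the submodules $x^iR/x^aR$, whose successive quotients are cyclic images of $R/xR$). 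Thus, with $J:=n_0^d e$, we have $\seq aw\in\mathfrak a_{w,J}$ for almost all $w$, that is $(x_{1,w}\cdots x_{d,w})^{J}\seq aw\in\tuple x_w^{[n_0+J]}\seq Rw$. By \los, $(x_1\cdots x_d)^{J}a\in\tuple x^{[n_0+J]}\ul R$, so $\bar a$ already maps to $0$ at stage $n_0+J$ in $H^d_{\ulmax}(\ul R)$, i.e.\ $\xi=0$.

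The step I expect to demand the most care is not the length estimate, which is elementary and genuinely uniform, but the foundational bookkeeping of the first paragraph: confirming that over the non-Noetherian ring $\ul R$ the extended \Cech\ complex on $\tuple x$ really does compute $H^\bullet_{\ulmax}(\ul R)$ and identifies $H^d$ with the stated direct limit, and pinning down that the ``canonical map'' of the statement is precisely the colimit of ultraproducts of structure maps rather than some a priori different comparison morphism. Passing through the Noetherian ring $\ulsep R=\complet{\ul R}$ (Lemma~\ref{L:ulcomp}), over which $\ulmax$-power-torsion modules and the quotients $\ul R/\tuple x^{[n]}\ul R$ are unchanged, should make this routine. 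Finally, it is worth noting that the argument uses isodimensionality exactly once and essentially: it is what forces almost all $\seq Rw$ to have dimension $d=\pd{\ul R}$, so that $H^d_{\seq\maxim w}(\seq Rw)\neq 0$; without it $H^d_\infty(\ul R)$ would vanish while $H^d_{\ulmax}(\ul R)$ need not, and injectivity would fail for a trivial reason.
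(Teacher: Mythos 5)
Your reduction to the extended \Cech\ complex on a generic sequence, and your identification of the canonical map as the colimit over $n$ of the ultraproducts of the stage-$n$ maps, are both fine; this is exactly how the paper itself realizes $H^\bullet_{\ulmax}(\ul R)$ in the discussion surrounding this statement (which, note, is presented there as an open conjecture: the paper offers no proof, and only asserts, without proof, the special case where $\ul R$ is ultra-\CM). The genuine gap is your uniform stabilization claim. From $\ell(\seq Rw/\tuple x_w^{[n_0]}\seq Rw)\leq n_0^de$ you may conclude that the increasing chain $\mathfrak a_{w,0}\subseteq\mathfrak a_{w,1}\subseteq\cdots$ admits at most $n_0^de$ \emph{strict} inclusions, hence stabilizes at some finite index; but nothing bounds the index at which the last strict inclusion occurs. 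An ascending chain in a module of length $L$ can be constant for an arbitrarily long stretch and only then jump, so ``the union is reached within $J=n_0^de$ steps'' does not follow, and the least $j_w$ with $\seq aw\in\mathfrak a_{w,j_w}$ may grow unboundedly with $w$. Bounding that index uniformly in $w$ is precisely the content of the conjecture, so the argument begs the question at its only non-routine point.

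Moreover, the gap cannot be repaired in the stated generality. Take $\seq Rw:=\pow k{u,v}/(u^2,uv^w)\pow k{u,v}$ with $x_w:=v$, $n_0:=1$ and $a_w:=u$. These rings are one-dimensional of embedding dimension two and parameter degree two, so their ultraproduct $\ul R$ is isodimensional with $d=1$; yet $u\notin(v^{1+j}\seq Rw:v^j)$ for all $j<w$, while $uv^w=0$, so the colon chain stabilizes only at index $w$. Consequently, by \los, $\ul u\,\ul v^{n-1}\notin\ul v^n\ul R$ for every $n$, so the stage-one class of $\ul u$ is a nonzero element of $H^1_{\ulmax}(\ul R)$, whereas its image in $H^1_\infty(\ul R)$ vanishes, because the stage-one class of $u$ dies at stage $w+1$ in each $H^1_{\seq\maxim w}(\seq Rw)$. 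Thus the canonical map fails to be injective for this (non-ultra-\CM, non-\pCM) family, which is consistent with the paper restricting its unproved claim to the ultra-\CM\ case: there the colon ideals $(\tuple x_w^{[n_0+j]}\seq Rw:(x_{1,w}\cdots x_{d,w})^j)$ all equal $\tuple x_w^{[n_0]}\seq Rw$, which is exactly the uniformity your argument needs. Any correct proof must therefore exploit a \CM-type hypothesis (or some other control on depth), not merely isodimensionality together with the elementary length bound.
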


Without proof, I state that the conjecture is  true when $\ul R$ is
ultra-\CM. Let us show how this conjecture implies that the $N_t$ can be
chosen to be
independent from $t$, thus yielding a true asymptotic version of the Monomial
Theorem (whence also of the Direct Summand Theorem) in mixed \ch. Indeed,
assume the conjecture and let $(\ul{x_1},\dots,\ul{x_d})$ be a generic sequence
in $\ul R$ and choose
$\seq x{iw}\in\seq Rw$ so that their ultraproduct is $\ul {x_i}$. Since the
(image of
 the) element $1/(\ul {x_1}\cdots \ul {x_d})$ in
the top local cohomology module $H^d_{\ulmax}(\ul R)$ is non-zero by
Corollary~\ref{C:mon}---here
we realize $H^\bullet_{\ulmax}(\ul R)$ as the cohomology of the \Cech\ complex
associated to $(\ul{x_1},\dots,\ul{x_d})$---its image in $H_\infty^d(\ul R)$
is therefore also
non-zero, whence   almost each $1/(\seq x{1w}\cdots \seq x{dw})$ is non-zero in
$H^d_{\seq\maxim w}(\seq Rw)$. Hence \eqref{eq:mon} is valid for almost each $(\seq
x{1w},\dots,\seq x{dw})$ and  all $t$.

\subsection*{Towards a proof of the full Improved New Intersection
Theorem} 
Although our methods can in principle only prove asymptotic versions, a better
understanding of the bounds can in certain cases lead to a complete solution
of the conjecture. To formulate such a result, let us say that a  numerical
function $f$
\emph{grows
sub-linearly} if there exists some $0\leq \alpha<1$ such that 
$f(n)/n^\alpha$ remains bounded when $n$ goes to infinity.

\begin{theorem}
Suppose that for each pair $(m,r)$ the numerical function
$f_{m,r}(l):=\kappa(m,r,l)$ grows sub-linearly, where $\kappa$ is the numerical
function given in \eqref{i:asymhc}, then the  Improved New
Intersection 
Theorem holds.
\end{theorem}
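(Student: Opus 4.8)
The plan is to deduce the mixed \ch\ case of the Improved New Intersection Theorem from Theorem~\ref{T:asymhc}\eqref{i:asymhc} by artificially boosting the ramification index of the ring through adjunction of a $t$-th root of $p$; the equal \ch\ case is classical and also follows from Corollary~\ref{C:INIT}\eqref{i:EG}. So let $(R,\maxim)$ be a $d$-dimensional Noetherian local ring of mixed \ch\ $p$, put $m:=\ed R$, and let $F_\bullet$ be a finite free complex over $R$ whose ranks are bounded by $r$ and whose \INIT\ equals some (finite) $l$; the goal is to show that the length of $F_\bullet$ is at least $d$. For a positive integer $t$, I would form
$$
R_t:=\pol R\pi/(\pi^t-p)\pol R\pi .
$$
Since $\pi^t-p$ is monic in $\pi$, the ring $R_t$ is free of rank $t$ over $R$ on $1,\pi,\dots,\pi^{t-1}$, hence faithfully flat; it is finite over $R$, hence Noetherian of dimension $d$; it is local with maximal ideal $\maxim R_t+\pi R_t$ and residue field $k$, hence of embedding dimension at most $m+1$; and it has \ch\ zero with residue \ch\ $p$, so it is again mixed \ch\ $p$. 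Finally $p=\pi^t$ in $R_t$ forces $\ord{R_t}p\geq t$, so the ramification index of $R_t$ is at least $t$ (and finite, since a Noetherian infinitely ramified local ring is equi\ch).

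Next I would transport the complex along $R\to R_t$. By faithful flatness $F_\bullet\tensor_RR_t$ is a finite free complex over $R_t$ of the same length as $F_\bullet$ and of rank at most $r$, and I claim its \INIT\ is at most $tl$. Indeed, flatness gives $H_i(F_\bullet\tensor_RR_t)\iso H_i(F_\bullet)\tensor_RR_t$, and since $R_t$ is free of rank $t$ with $k\tensor_RR_t\iso\pol k\pi/\pi^t\pol k\pi$ of $R_t$-length $t$, a d\'evissage yields $\ell_{R_t}(M\tensor_RR_t)=t\,\ell_R(M)$ for every finite length $R$-module $M$; this bounds the higher homology by $tl$. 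For $H_0$: if $\mu$ is a non-zero minimal generator of $H_0(F_\bullet)$ with $\ell_R(R\mu)\leq l$, then $(R\mu)\tensor_RR_t\hookrightarrow H_0(F_\bullet)\tensor_RR_t$ is the submodule generated by $1\tensor\mu$, of length at most $tl$; and $1\tensor\mu$ is still a minimal generator, for otherwise $1\tensor\mu\in\maxim(H_0\tensor R_t)+\pi(H_0\tensor R_t)$, which modulo $\pi$ (where $H_0\tensor_RR_t$ becomes $H_0/pH_0$) would give $\mu\in\maxim H_0+pH_0=\maxim H_0$, a contradiction.

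Finally I would apply Theorem~\ref{T:asymhc}\eqref{i:asymhc} to $R_t$ and $F_\bullet\tensor_RR_t$ with the triple $(m+1,r,tl)$: whenever the ramification index of $R_t$ is at least $\kappa(m+1,r,tl)=f_{m+1,r}(tl)$, the length of $F_\bullet\tensor_RR_t$ — equal to that of $F_\bullet$ — is at least $d$. Since that ramification index is at least $t$, it is enough to exhibit a $t$ with $t\geq f_{m+1,r}(tl)$. By hypothesis $f_{m+1,r}$ grows sub-linearly, so $f_{m+1,r}(n)\leq Cn^\alpha$ for all large $n$, with $C$ a constant and $0\leq\alpha<1$; then $t\geq f_{m+1,r}(tl)$ holds once $t^{1-\alpha}\geq Cl^\alpha$, hence for all sufficiently large $t$ (a threshold depending only on $m$, $r$, $l$), and any such $t$ finishes the proof. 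The main obstacle is the second step: one has to be certain that the root-adjunction raises the ramification linearly in $t$ while inflating the \INIT\ by a factor of only $t$ and the embedding dimension by at most one, so that the sub-linear (exponent $<1$) growth of $\kappa$ in its last variable wins against the linear gain in ramification; the remaining verifications are routine facts about monic finite free extensions.
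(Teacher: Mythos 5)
Your proof is correct, and it exploits the same mechanism as the paper's: a module-finite, faithfully flat extension of degree $t$ that boosts the ramification index at least linearly while multiplying every relevant finite length, hence the \INIT, by exactly $t$, so that the sub-linear growth of $\kappa(m,r,\cdot)$ in its last argument must eventually lose. The packaging, however, is genuinely different. The paper argues by contradiction: for each $n$ it introduces the supremum $f(n)$ of the ramification indices of counterexamples with \INIT\ at most $n$ (finite by Theorem~\ref{T:asymhc}), picks a counterexample attaining $f(l)$, completes it, writes it as $\pow V\xi/I$ by Cohen's structure theorem, and adjoins an $n$-th root of the uniformizer of $V$ (passing to $W:=\pol Vt/(t^n-\pi)\pol Vt$) so as to manufacture a counterexample at level $nl$ whose ramification $nf(l)$ exceeds $f(nl)$. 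You instead argue directly on an arbitrary $R$, adjoining a $t$-th root of $p$ itself; this dispenses with the completion, with Cohen's theorem, and with the extremal-counterexample bookkeeping, at the price of raising the embedding dimension by one, which is harmless because the sub-linearity hypothesis is assumed for every pair $(m,r)$, so you may invoke $f_{m+1,r}$. This explicit bookkeeping is in fact a point where your argument is tighter than the paper's, which asserts that its boosted ring $S=\pow W\xi/I\pow W\xi$ has the same embedding dimension as $R$; that claim can fail (for instance $R=\pow Vx/(x^2-\pi)$ has embedding dimension one, while $\pow Wx/(x^2-t^n)$ has embedding dimension two for $n\geq 2$), and the clean repair is precisely the $(m+1,r)$ device you use. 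Your auxiliary verifications (locality of $R_t$ via integrality, the length formula $\ell_{R_t}(M\tensor_R R_t)=t\,\ell_R(M)$ by d\'evissage to $M=k$, and preservation of a minimal generator by reduction modulo $\pi$) are all sound.
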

\begin{proof}
Let $\mathcal I_{m,r,l}$ be the collection of counterexamples   with
invariants $(m,r,l)$,
that is to say, all
mixed
\ch\ Noetherian local rings $R$ of embedding dimension at most $m$, admitting
a   finite free complex $F_\bullet$  of rank at most $r$ and \INIT\ at most
$l$, such that the length of $F_\bullet$ is
strictly less than the dimension of $R$. We have to show that $\mathcal
I_{m,r,l}$ is empty for all $(m,r,l)$, so by way of contradiction, assume it is
not for the triple $(m,r,l)$. For each $n$, let 
$f(n)$ be the supremum of the ramification indexes of  counterexamples in $\mathcal
I_{m,r,n}$ (and equal to $0$ if there is no counterexample). By
Theorem~\ref{T:asymhc}, this supremum is always finite.
By assumption, $f$ grows sub-linearly, so that for some positive real numbers $c$ and 
$\alpha<1$, we have $f(n)\leq cn^\alpha$, for all
$n$. In particular, for $n$  larger
  than the $(1-\alpha)$-th root of $\frac {cl^\alpha}{f(l)}$, we have
\begin{equation}\label{eq:subpr}
f(ln)<nf(l).
\end{equation}
 Let $(R,\maxim)$  be
a counterexample in $\mathcal I_{m,r,l}$ of ramification index $f(l)$,
witnessed by the finite free complex $F_\bullet$ of length 
strictly less than the dimension of $R$.
Since
the completion of $R$ will be again a counterexample in $\mathcal I_{m,r,l}$ of the same ramification
index, we may assume $R$ is complete, whence by Cohen's structure theorem of the
form $R=\pow V\xi/I$ for some \DVR\ $V$, some tuple of indeterminates $\xi$, and
some ideal $I\sub\pow V\xi$.
Let $n\gg0$ so that
\eqref{eq:subpr}
holds, and let $W:=\pol Vt/(t^n-\pi)\pol Vt$, where $\pi$ is a uniformizing
parameter of $V$. Let $S:=\pow W\xi/I\pow W\xi$, so that $R\to S$ is faithfully
flat and $S$ has the same dimension and embedding dimension as $R$. By
construction, its ramification index is equal to $nf(l)$. By faithful flatness,
$F_\bullet\tensor_RS$
is a finite free complex of length strictly less than the dimension of $S$,
with homology equal to $\op H_\bullet(F_\bullet)\tensor_RS$. I claim that
if  $H$ is an $R$-module of   length $a$, then $H\tensor_RS$ has length
$na$. Assuming this claim, it follows that $S$ belongs to $\mathcal
I_{m,r,nl}$, and hence its ramification is by definition at most $f(ln)$,
contradicting \eqref{eq:subpr}.

The claim is easily reduced by induction to the case $a=1$, that is to
say, when $H$ is equal to the residue field $R/\maxim=V/\pi V=k$. In that case,
$H\tensor_RS=S/\maxim S=W/\pi W$, and
this is isomorphic to $\pol kt/t^n\pol kt$, a module of length
$n$.
\end{proof}

 \providecommand{\bysame}{\leavevmode\hbox to3em{\hrulefill}\thinspace}
\providecommand{\MR}{\relax\ifhmode\unskip\space\fi MR }
\providecommand{\MRhref}[2]{%
  \href{http://www.ams.org/mathscinet-getitem?mr=#1}{#2}
}
\providecommand{\href}[2]{#2}


\begin{thebibliography}{10}

\bibitem{AkiId}
Y.~Akizuki, \emph{Zur ldealtheorie der einartigen {R}ingbereiche mit dem
  {T}eilerkettensatz}, Proc. Imp. Acad. \textbf{13} (1937), 53--55.

\bibitem{And}
M.~Andr{\'e}, \emph{Localisation de la lissit\'e formelle}, Manuscripta Math.
  \textbf{13} (1974), 297--307.

\bibitem{AschBou}
M.~Aschenbrenner, \emph{Bounds and definability in polynomial rings}, Quart. J.
  Math. \textbf{56} (2005), no.~3, 263--300.

\bibitem{SchAsc}
M.~Aschenbrenner and H.~Schoutens, \emph{Lefschetz extensions, tight closure
  and big {C}ohen-{M}acaulay algebras}, Israel J. Math. \textbf{161} (2007),
  221--310.

\bibitem{AK}
J.~Ax and S.~Kochen, \emph{Diophantine problems over local fields {I, II}},
  Amer. J. Math. \textbf{87} (1965), 605--630, 631--648.

\bibitem{BDDL}
J.~Becker, J.~Denef, L.~van~den Dries, and L.~Lipshitz, \emph{Ultraproducts and
  approximation in local rings {I}}, Invent. Math. \textbf{51} (1979),
  189--203.

\bibitem{Bert}
J.~Bertin, \emph{Anneaux coh\'erents r\'eguliers}, C. R. Acad. Sci. Paris
  \textbf{273} (1971), 1--2.

\bibitem{BourAlg}
N.~Bourbaki, \emph{Alg{\`e}bre}, vol. 1-10, Masson, New York, 1970.

\bibitem{BH}
W.~Bruns and J.~Herzog, \emph{Cohen-{M}acaulay rings}, Cambridge University
  Press, Cambridge, 1993.

\bibitem{CohId}
I.~S. Cohen, \emph{Commutative rings with restricted minimum condition}, Duke
  Math. J. \textbf{17} (1950), 27--42.

\bibitem{DieRed}
E.~Dieterich, \emph{Reduction of isolated singularities}, Comment. Math. Helv.
  \textbf{62} (1987), no.~4, 654--676.

\bibitem{Eis}
D.~Eisenbud, \emph{Commutative algebra with a view toward algebraic geometry},
  Graduate Texts in Mathematics, vol. 150, Springer-Verlag, New York, 1995.

\bibitem{EE}
D.~Eisenbud and E.~G. Evans, \emph{A generalized principal ideal theorem},
  Nagoya Math. J. \textbf{62} (1976), 41--53.

\bibitem{EkUP}
P.~Eklof, \emph{Ultraproducts for algebraists}, Handbook of Mathematical Logic,
  North-Holland Publishing, 1977, pp.~105--137.

\bibitem{Ers65}
Y.~Er\v{s}hov, \emph{On the elementary theory of maximal normed fields {I}},
  Algebra i Logica \textbf{4} (1965), 31--69.

\bibitem{Ers66}
\bysame, \emph{On the elementary theory of maximal normed fields {II}}, Algebra
  i Logica \textbf{5} (1966), 8--40.

\bibitem{GlazCoh}
S.~Glaz, \emph{Commutative coherent rings}, Lect. Notes in Math., vol. 1371,
  Springer-Verlag, Berlin, 1989.

\bibitem{Glaz}
\bysame, \emph{Commutative coherent rings: Historical perspective and current
  developments}, Nieuw Arch. Wisk. \textbf{10} (1992), 37--56.

\bibitem{GlazCohReg}
\bysame, \emph{Coherence, regularity and homological dimensions of commutative
  fixed rings}, Commutative algebra (Trieste, 1992), World Sci. Publ., River
  Edge, NJ, 1994, pp.~89--106.

\bibitem{HamMar}
T.~Hamilton and T.~Marley, \emph{Non-{N}oetherian {C}ohen-{M}acaulay rings},
  ArXiv: math.AC/0601567, 2006.

\bibitem{HoHT}
M.~Hochster, \emph{Topics in the homological theory of modules over commutative
  rings}, CBMS Regional Conf. Ser. in Math, vol.~24, Amer. Math. Soc.,
  Providence, RI, 1975.

\bibitem{HoBCM3}
\bysame, \emph{Big {C}ohen-{M}acaulay algebras in dimension three via
  {H}eitmann's theorem}, J. Algebra \textbf{254} (2002), 395--408.

\bibitem{HHbigCM}
M.~Hochster and C.~Huneke, \emph{Infinite integral extensions and big
  {C}ohen-{M}acaulay algebras}, Ann. of Math. \textbf{135} (1992), 53--89.

\bibitem{HHFreg}
\bysame, \emph{{$F$}-regularity, test elements, and smooth base change}, Trans.
  Amer. Math. Soc. \textbf{346} (1994), 1--62.

\bibitem{HHbigCM2}
\bysame, \emph{Applications of the existence of big {C}ohen-{M}acaulay
  algebras}, Adv. in Math. \textbf{113} (1995), 45--117.

\bibitem{Hod}
W.~Hodges, \emph{Model theory}, Cambridge University Press, Cambridge, 1993.

\bibitem{HueSwa}
R.~H{\"u}bl and I.~Swanson, \emph{Discrete valuations centered on local
  domains}, J. Pure Appl. Algebra \textbf{161} (2001), no.~1-2, 145--166.

\bibitem{HuTC}
C.~Huneke, \emph{Tight closure and its applications}, CBMS Regional Conf. Ser.
  in Math, vol.~88, Amer. Math. Soc., 1996.

\bibitem{Mats}
H.~Matsumura, \emph{Commutative ring theory}, Cambridge University Press,
  Cambridge, 1986.

\bibitem{Nag}
M.~Nagata, \emph{Local rings}, Interscience Tracts in Pure and Applied
  Mathematics, Interscience Publishers, 1962.

\bibitem{OlbSay}
B.~Olberding and S.~Saydam, \emph{Ultraproducts of commutative rings},
  Commutative ring theory and applications (Fez, 2001), Lecture Notes in Pure
  and Appl. Math., vol. 231, Dekker, New York, 2003, pp.~369--386.

\bibitem{OlbSaySha}
B.~Olberding, S.~Saydam, and J.~Shapiro, \emph{Completions, valuations and
  ultrapowers of {N}oetherian domains}, J. Pure Appl. Algebra \textbf{197}
  (2005), no.~1-3, 213--237.

\bibitem{OlbSha}
B.~Olberding and J.~Shapiro, \emph{Prime ideals in ultraproducts of commutative
  rings}, J. Algebra \textbf{285} (2005), no.~2, 768--794.

\bibitem{PopMCM}
D.~Popescu, \emph{Indecomposable {C}ohen-{M}acaulay modules and their
  multiplicities}, Trans. Amer. Math. Soc. \textbf{323} (1991), no.~1,
  369--387.

\bibitem{Roth}
P.~Rothmaler, \emph{Introduction to model theory}, Algebra, Logic and
  Applications, vol.~15, Gordon and Breach Science Publishers, Amsterdam, 2000.

\bibitem{SchEC}
H.~Schoutens, \emph{Existentially closed models of the theory of {A}rtinian
  local rings}, J. Symbolic Logic \textbf{64} (1999), 825--845.

\bibitem{SchAA}
\bysame, \emph{Artin approximation via the model theory of {C}ohen-{M}acaulay
  rings}, {L}ogic {C}olloquium '98: proceedings of the 1998 {ASL} {E}uropean
  {S}ummer {M}eeting held in {P}rague, {C}zech {R}epublic (S.~Buss, P.~H\'ajek,
  and P.~Pudl\'ak, eds.), Lect. Notes in Logic, vol.~13, Association for
  Symbolic Logic, 2000, pp.~409--425.

\bibitem{SchSymPow}
\bysame, \emph{Lefschetz principle applied to symbolic powers}, J. Algebra
  Appl. \textbf{2} (2003), 177--187.

\bibitem{SchMixBCMCR}
\bysame, \emph{Mixed characteristic homological theorems in low degrees}, C. R.
  Acad. Sci. Paris \textbf{336} (2003), 463--466.

\bibitem{SchNSTC}
\bysame, \emph{\newcommand\complexfield{{\mathbb {C}}}{N}on-standard tight
  closure for affine $\complexfield$-algebras}, Manuscripta Math. \textbf{111}
  (2003), 379--412.

\bibitem{SchBS}
\bysame, \emph{A non-standard proof of the {B}rian\c{c}on-{S}koda theorem},
  Proc. Amer. Math. Soc. \textbf{131} (2003), 103--112.

\bibitem{SchFPD}
\bysame, \emph{Projective dimension and the singular locus}, Comm. Algebra
  \textbf{31} (2003), 217--239.

\bibitem{SchBCM}
\bysame, \emph{Canonical big {C}ohen-{M}acaulay algebras and rational
  singularities}, Illinois J. Math. \textbf{48} (2004), 131--150.

\bibitem{SchBetti}
\bysame, \emph{On the vanishing of {T}or of the absolute integral closure}, J.
  Algebra \textbf{275} (2004), 567--574.

\bibitem{SchLogTerm}
\bysame, \emph{Log-terminal singularities and vanishing theorems via
  non-standard tight closure}, J. Alg. Geom. \textbf{14} (2005), 357--390.

\bibitem{SchABCM}
\bysame, \emph{Absolute bounds on the number of generators of
  {C}ohen-{M}acaulay ideals of height at most two}, Bull. Soc. Math. Belg.
  \textbf{13} (2006), 719--732.

\bibitem{SchMixBCM}
\bysame, \emph{Asymptotic homological conjectures in mixed characteristic},
  Pacific J. Math. \textbf{230} (2007), 427--468.

\bibitem{SchRatSing}
\bysame, \emph{Pure subrings of regular rings are pseudo-rational}, Trans.
  Amer. Math. Soc. \textbf{360} (2008), 609--627.

\bibitem{SchUlBook}
\bysame, \emph{Use of ultraproducts in commutative algebra}, in preparation,
  2008.

\bibitem{SchClassSing}
\bysame, \emph{Classifying singularities up to analytic extensions of scalars},
  arXiv: AC/0508236, in preparation.

\bibitem{SmFrat}
K.~Smith, \emph{F-rational rings have rational singularities}, Amer. J. Math.
  \textbf{119} (1997), 159--180.

\bibitem{Soub}
J.-P. Soublin, \emph{Anneaux et modules coh\'erents}, J. Algebra \textbf{15}
  (1970), 455--472.

\bibitem{Swa}
I.~Swanson, \emph{Linear equivalence of ideal topologies}, Math. Z.
  \textbf{234} (2000), no.~4, 755--775.

\bibitem{YoBT}
Y.~Yoshino, \emph{Brauer-{T}hrall type theorem for maximal {C}ohen-{M}acaulay
  modules}, J. Math. Soc. Japan \textbf{39} (1987), no.~4, 719--739.

\end{thebibliography}
\end{document}